\documentclass[10pt]{amsart}
\usepackage{amsfonts, mathabx}
\usepackage{calrsfs}

\DeclareMathAlphabet{\pazocal}{OMS}{zplm}{m}{n}

\numberwithin{equation}{section}
\usepackage{pzccal}

\usepackage{etoolbox}

\usepackage[dvipsnames]{xcolor}
\usepackage{mathrsfs}
\usepackage[alphabetic,initials]{amsrefs}
\usepackage{enumitem}
\usepackage{graphicx}

\usepackage{pdflscape}
\usepackage{chngcntr}
\usepackage{stmaryrd}
\usepackage{tikz}
\usepackage{mathtools}

\usepackage{tikz-cd}

\input xy
\xyoption{all}
\CompileMatrices
\usepackage{xcolor,sseq,amssymb}

\usepackage{hyperref}
\hypersetup{
  colorlinks=true,
  linkcolor=blue,
  citecolor=green,
  urlcolor=blue,
  filecolor=black, 
  backref
}

\usepackage[capitalise,nameinlink,noabbrev]{cleveref}

\usepackage{gjm-cyc} 

\usepackage{wasysym}
\usepackage{pdflscape}
\usepackage{todonotes}

\usepackage{tikz}

\usepackage{mathrsfs} 
\newcommand{\mcTop}{{\rm Top}}
\newcommand{\mcSet}{{\rm Set}}

\renewcommand{\EE}{\mathbb{E}}
\newcommand{\SpGn}{\Sp^{G}_{{\rm naive}}}

\usepackage{xcolor}

\colorlet{RED}{red}
\usepackage{etoolbox}

\preto\appendix{%
  \renewcommand{\thesection}{\Alph{section}}%

  \renewcommand{\theequation}{\thesection.\arabic{equation}}%

  \renewcommand{\theHequation}{\thesection.\arabic{equation}}%

  \setcounter{equation}{0}%
}

\title
{What are cyclotomic spectra and why do we need them?}

\author{Douglas C. Ravenel}
\address{Department of Mathematics\\
University of Rochester\\
Rochester, NY 14627}

\email{dcravenel@gmail.com}
\date{\today}

\begin{document}

\begin{abstract}
  This paper is an expository account of cyclotomic spectra. They are
  spectra (in the sense of homotopy theory) with additional structure
  that includes an action of the circle group, which we will denote by
  $\mT$, for torus.  Such objects come up in algebraic $K$-theory and
  its close relatives topological Hochschild homology $THH$ and
  topological cyclic homology $TC$.  They figure prominently in the
  recent disproof of the telescope conjecture for chromatic heights
  greater than 1 by Robert Burklund, Jeremy Hahn, Ishan Levy and Tomer
  Schlank.  They show that for each $n\geq 1$ and each prime $p$,
  there is a $p$-local ring spectrum $X$ of chromatic height $n$ such
  that $L_{K (n+1)}TC (X)$ and $L_{T (n+1)}TC (X)$ are distinct.  The
  present work is part of my attempt to understand their proof.
\end{abstract}

\maketitle

\begin{center}
\includegraphics[width=11cm]{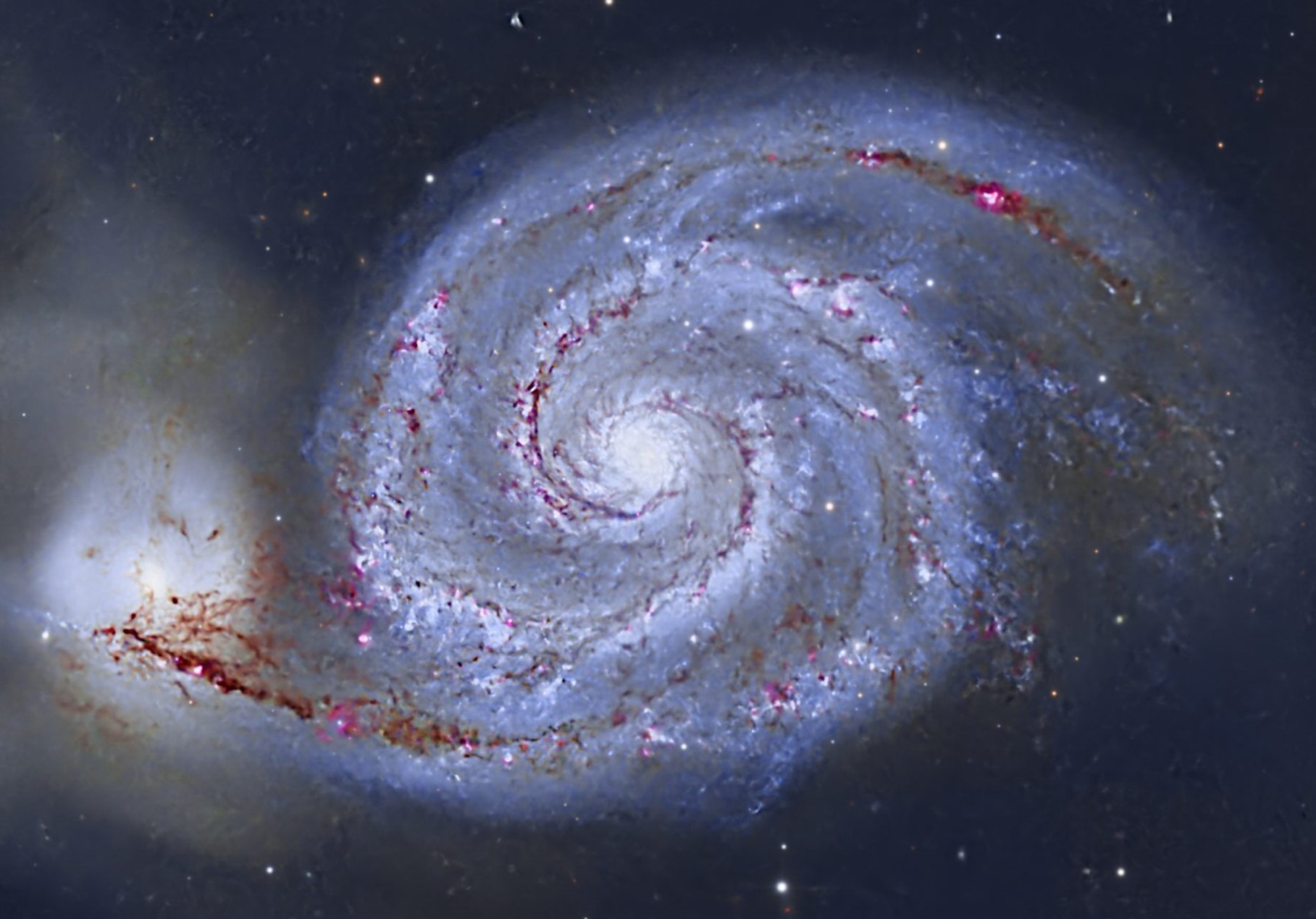}\\
M51, the Whirlpool Galaxy, photo by Dave Payne.
\end{center}

\tableofcontents

\section{Introduction}\label{sec-intro}

This paper is an expository account of cyclotomic spectra. They are
spectra (in the sense of homotopy theory) with additional structure
that includes an action of the circle group, which we will denote by
$\mT$, for torus.  Such objects come up in algebraic $K$-theory and
its close relatives topological Hochschild homology $\THH$ and
topological cyclic homology $\TopC$.  They figure prominently in the
recent disproof of the {\TC} for chromatic heights greater than 1 by
Robert Burklund, Jeremy Hahn, Ishan Levy and Tomer Schlank (hereafter
referred to collectively as BHLS) \cite{BHLS}.  Those authors show
that for each $n\geq 1$ and each prime $p$, there is a $p$-local ring
spectrum $X$ of chromatic height $n$ such that $L_{K (n+1)}\TopC (X)$
and $L_{T (n+1)}\TopC (X)$ (see \cref{def-KT-KK}) are distinct.  

The present work is part of my attempt to understand theirs.  It
includes some historical narrative based partly on this old
mathematician's personal recollections, such as the comments in
\cref{sec-Segal-conjecture}, and partly on things he has learned only
recently, such as most of \cref{sec-space-level,sec-BHM}, after
steering clear of algebraic $K$-theory for decades.

More precisely, for each prime $p$ and each integer $n>0$ they
consider a form of the truncated Brown-Peterson spectrum $\BPn$,
originally defined by Dave Johnson and Steve Wilson in \cite{JW2}, and
whose algebraic $K$-theory is the subject of a recent paper by 
Dylan Wilson and Hahn \cite{Hahn-Wilson}.  In \cite[\S5]{BHLS} the
authors define an action of the additive group of integers $\Z$, and
hence of its subgroups $p^{k}\Z$, via Adams operations.  They prove
that the ``$K$-theoretic coassembly map'' (see \cref{def-assembly})
\begin{numequation}\label{eq-loc-coassembly}
\begin{split}
L_{T (n+1)}\rK\left(\BPn^{hp^{k}\Z} \right) 
\to \left(L_{T (n+1)} \rK(\BPn) \right)^{hp^{k}\Z}
\end{split}
\end{numequation}%

\noindent is not an equivalence, but becomes one after $K(n +
1)$-localization. (\cite[Theorem A]{BHLS} says the spectrum on the
left is not $K (n+1)$-local for all $k\geq 0$.) In other words, the
algebraic $K$-theory functor on ring spectra with $\Z$-action does
does not commute with passage to homotopy fixed points, even $T
(n+1)$-locally, but its does so commute $K (n+1)$-locally.  Thus, the
telescope conjecture, which equates localizations with respect to
$T(n+1)$ and $K(n+1)$, fails.

\begin{remark}\label{rem-font}
We will follow the font convention of \cite{BHLS} and denote the
algebraic $K$-theory of a ring spectrum $R$ by $\rK(R)$ to avoid confusion
with its $nth$ Morava $K$-theory $K (n)_{*} (R)$.
\end{remark}

The reader of \cite{BHLS} should note, as its authors indicate in a
footnote on page 6, that when they speak of ``cyclotomic
hyperdescent,'' ``chromatic cyclotomic extensions'' and ``cyclotomic
redshift'' as in the title of \cite{BMCSY23}, they are using the word
``cyclotomic'' differently from its use in the title of this paper.
Their use has to do with adjoining roots of unity, or higher chromatic
analogs thereof.  In order to avoid confusion, we will sometimes refer
to that construction as ``discrete cyclotomy,'' and the subject of
the present work as ``smooth cyclotomy'' since it involves an action of the
circle group $\mT$.

Then one has the discrete cyclotomic extensions of the $T (n)$ and $K
(n)$-local sphere spectra studied in various papers of Schlank, Shay
Ben-Moshe, Shachar Carmeli and Lior Yanovski.  The {\CSY} use of the
word has deeper historical roots (by over a century) than that of
{\BHM}.  We will review discrete cyclotomy in the companion paper
\cite{Rav:gjmcyc-ext}.

Returning to smooth cyclotomy, the term ``cyclotomic spectrum'' was first
introduced by Ib Madsen in \cite[Definition 2.6]{Madsen94}, and
further studied by him and Lars Hesselholt in \cite[Definition
2.2]{HM1}.  The related notion of the cyclotomic trace was first
introduced in 1993 by Madsen, Marcel {\Bok} and Wu-Chung Hsiang in
\cite{BHM93}. The construction involves an action of $\mT$ where one
is interested in the fixed point sets of its finite subgroups.  It is
related to Alain Connes' notion of cyclic sets (see
\cref{sec-Connes}), which are simplicial sets with additional
structure.

BHLS take their discrete cyclotomic extensions and apply functors
such as algebraic $K$-theory, $\TopC$ and $\THH$, which are defined in
terms of smooth cyclotomy.  Their ``cyclotomic completion'' has to do
with discrete cyclotomy, while cyclotomic boundedness (in relation to
the Antieau-Nikolaus $t$-structure of \cite{AN21}, which we discuss in
\cref{sec-AN-t-structure}) has to do with smooth cyclotomy.  Most of
their proof takes place in the {\qcat} $\qCycSp$ of smoothly cyclotomic
spectra as in \cref{def-cyc-spectra}.

\subsection{Roadmap}\label{sec-roadmap} 
Here is a description of how the contents of this paper and
\cite{Rav:gjmcyc-ext} relate to that of \cite{BHLS}. In
\cite[\S2]{BHLS} they review cyclotomic spectra and the functors
$\TopC$, topological cyclic homology and $\TR$, topological
restriction homology.

We give two definitions of cyclotomic spectra: \cref{def-cyclo-spec}
describes them as orthogonal $\mT$-spectra with some additional
structure, and \cref{def-cyclo-spec2} describes them as objects in a
suitable {\qcat}.  The most important source of examples is
topological Hochschild homology, which we introduce at length in
\cref{sec-space-level,sec-BHM}.  It is defined from the two
perspectives in \cref{def-THHF,def-THH-qcat}.  $\TopC$ is
defined in \cref{def-TCF,def-TC}. $\TR$ is defined in
\cref{def-TR}.


The material of \cite[\S3]{BHLS} is partially treated here in the
similarly titled \cref{sec-BHLS}.  \cite[\S4]{BHLS} concerns locally
unipotent (see \cite[Definition 5.1]{Rav:gjmcyc-ext})
actions of integers on certain spectra and the {\LQ} property of
\cite[Definition 2.7]{Rav:gjmcyc-ext}.  The subject of
\cite[\S5]{BHLS} is Adams operations on $\BPn$, which we will treat in
\cite{Rav:gjmcyc-ext}.  

In the climactic \cite[\S6]{BHLS} they study two coassembly maps
 as in \cref{def-assembly}:
\begin{displaymath}
L_{T (n+1)}{\rm K} (\BPn^{hp^{k}\Z})
\to L_{T (n+1)}{\rm K} (\BPn)^{hp^{k}\Z}
\end{displaymath}

\noindent and
\begin{displaymath}
L_{K (n+1)}{\rm K} (\BPn^{hp^{k}\Z})
\to L_{K (n+1)}{\rm K} (\BPn)^{hp^{k}\Z}.
\end{displaymath}

\noindent They show that the first one is not an equivalence, but the
second one is, thereby disproving the {\TC}. In their words,

\begin{quote}
We do this by looking at the coassembly map from two highly divergent
perspectives, which are connected via trace theorems:
\begin{enumerate}
  \item From the perspective of locally unipotent $\mathbb{Z}$-actions
on ring spectra, the results of \cite[\S4]{BHLS} tell us that the coassembly
map cannot be an isomorphism.

  \item From the perspective of cyclotomic redshift of \cite{BMCSY23},
the map 

\[ L_{T(n)}\BPn ^{h p^k \mathbb{Z}}
\;\longrightarrow\; L_{T(n)}\BPn
\] 

\noindent splits after base change to the maximal abelian extension of
the $K(n)$-local sphere, and therefore the coassembly map is a
$K(n+1)$-local isomorphism.
\end{enumerate}
\end{quote}

We will discuss the maximal abelian extension of the $K(n)$-local
sphere in \cite{Rav:gjmcyc-ext}.  Like the maximal abelian extension
of the $p$-adic numbers, it is discretely cyclotomic.

\subsection{Actions of the circle group}\label{sec-circle-actions}

\begin{defin}\label{def-rho-n}
{\bf The $r$th root functor $\rho_{r}^{*}$.} 
\cite[Notation 4.1]{Blumberg-Mandell-12} 
For an integer $r\geq 1$, let 
\begin{displaymath}
\rho_{r}:\mT \to \mT /\rC _{r}
\end{displaymath}

\noindent denote the map which sends $\omega \in \mT$ to the image of
$\omega^{1/r}$ in $\mT /\rC _{r}$.  (This isomorphism is not to be
confused with the projection of $\mT$ to its quotient, which has a
kernel of order $r$, nor with the regular {\rep} $\varrho_{r}$ of
$\rC_{r}$ of \cref{def-action}.)  For a $\mT /\rC _{r}$-space $X$
(meaning a $\mT$-space on which $\rC_{r}\subseteq \mT$ acts
trivially), let $\rho_{r}^{*}X$ denote the $\mT$-space induced by the
isomorphism $\rho_{r}$.  Similarly for a {\rep} $V$ of $\mT /\rC
_{r}$, $\rho_{r}^{*}V$ denotes the {\rep} of $\mT$ induced by the same
isomorphism.
\end{defin}

\begin{defin}\label{def-smooth-cyclo}
A $\mT$-space $X$ is {\bf smoothly cyclotomic} if for each finite
subgroup $\rC_{r}\subseteq \mT$, there is a $\mT$-{\eqvr} equivalence
\begin{displaymath}
\varphi_{r}:\rho_{r}^{*}X^{\rC_{r}}\to X,
\end{displaymath}

\noindent the {\bf cyclotomic structure map}, where
$\rho_{r}^{*}X^{\rC_{r}}$ denotes the fixed point space $X^{\rC_{r}}$
with the residual action of $\mT/\rC_{r}$, which is isomorphic to
$\mT$.
\end{defin}


This definition should be compared with \cref{eq-BM-def} and
\cref{eq-NS-def} below.








\begin{ex}\label{ex-free-loop}
{\bf The free loop space.}  For any space $X$, the free loop space
$\mcL X$, the space of maps of $\mT$ into $X$, is smoothly cyclotomic.
Here we are regarding $\mT$ as the unit circle in the complex numbers
$\cxs $, and it acts on $\mcL X$ by rotation of loops.  For each
$r>1$, a loop is fixed by $C_{r}\subseteq \mT$ precisely when it
repeats itself $r$ times, meaning that it factors through the $r$-fold
covering of $\mT$.  Such a loop determines another (possibly
nonrepeating) loop by restriction to the subspace
\begin{displaymath}
\left\{e^{2\pi t\, \sqrt[]{-1}}:0\leq t\leq 1/r \right\}\subseteq \mT.
\end{displaymath}

\noindent (We do not denote $\sqrt[]{-1}$ by $i$ since we often
use that symbol as an index.) This defines a $\mT$-{\eqvr}
homeomorphism
\begin{displaymath}
\varphi_{r}:\rho_{r}^{*}(\mcL X)^{C_{r}}\to \mcL X.
\end{displaymath}
\end{ex}

\begin{prop}\label{prop-base-free}
{\bf Based loops and free loops.} Let 
\begin{displaymath}
\eval:\mcL X\to X
\end{displaymath}

\noindent be the map
given by evaluation at a given point on the circle. Then for each
point $x\in X$, $\eval^{-1} (x)$ is the space of based loops at $x$.
The map $\eval$ has a section sending each $x\in X$ to the constant
loop at that point.
\end{prop}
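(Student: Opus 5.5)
The plan is to prove this directly from the definitions, working with $\mcL X = \mathrm{Map}(\mT, X)$ in the compact-open topology, as in \cref{ex-free-loop}. Fix the basepoint $1\in\mT\subseteq\cxs$ and let $\eval(\gamma)=\gamma(1)$. Evaluation at any other point $\omega\in\mT$ gives the same result after precomposing with rotation by $\omega$, which is a homeomorphism of $\mcL X$. So it suffices to treat $\omega=1$.

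First I will identify the fibre. As a set, $\eval^{-1}(x)=\{\gamma:\mT\to X \mid \gamma(1)=x\}$, and this is by definition the set of based loops $\Omega_x X$. Its topology is the subspace topology inherited from the compact-open topology on $\mathrm{Map}(\mT,X)$. That is exactly the usual topology on $\Omega_x X$, since based mapping spaces are defined as subspaces of unbased ones. If one prefers loops parametrized by $[0,1]$ with equal endpoints, I will compose with the quotient map $[0,1]\to\mT$, $t\mapsto e^{2\pi t\sqrt{-1}}$. Because $[0,1]$ is compact Hausdorff, precomposition with this quotient map gives a homeomorphism from $\mathrm{Map}(\mT,X)$ onto the subspace of $\mathrm{Map}([0,1],X)$ consisting of paths with $\gamma(0)=\gamma(1)$. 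Under this identification, the based condition becomes $\gamma(0)=\gamma(1)=x$.

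Second, I will construct the section $s:X\to\mcL X$, $s(x)=c_x$, where $c_x$ is the constant loop at $x$. The identity $\eval\circ s=\mathrm{id}_X$ is immediate. The one point that needs an argument is that $s$ is continuous. I will check this on subbasic open sets $V(K,U)=\{\gamma \mid \gamma(K)\subseteq U\}$, with $K\subseteq\mT$ compact and $U\subseteq X$ open. If $K\neq\emptyset$, then $s^{-1}V(K,U)=U$, and if $K=\emptyset$, then $s^{-1}V(K,U)=X$; both are open. Equivalently, $s$ is adjoint to the projection $X\times\mT\to X$, and this adjunction direction holds for all spaces. Continuity of $\eval$ itself follows in the same way: $\eval^{-1}(U)=V(\{1\},U)$.

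I do not expect a real obstacle here. The only place requiring care is the point-set topology, namely matching the fibre topology with the standard topology on $\Omega_x X$ and checking that $s$ is continuous, and the subbasis check above handles both without any local compactness hypothesis on $X$. Finally, I will note that $s$ is $\mT$-equivariant when $X$ has the trivial action, because constant loops are rotation-invariant. This is the form in which the section will be used later, since its image lies in $(\mcL X)^{\mT}$.
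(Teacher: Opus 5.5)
Your proposal is correct and matches what the paper intends. The paper gives no proof and treats both claims as immediate from the definitions: the fibre over $x$ is by definition the space of based loops at $x$, and sending $x$ to the constant loop at $x$ is a continuous section of $\eval$. Your subbasis check and your adjunction remark verify exactly these two facts, and your extras (moving the evaluation point to $1\in\mT$ by rotation, the $[0,1]$ reparametrization, and the section landing in $(\mcL X)^{\mT}$) are also correct, though not needed.
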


Under suitable hypotheses on $X$, including path connectivity, these
fibers are all equivalent and a base point can be chosen so that the
homotopy fiber of $\eval$ is $\Omega X$.

\subsection{Topological Hochschild homology and related
notions}\label{sec-THH-related-notions}

A more important example of a smoothly cyclotomic space for our
purposes is the topological Hochschild homology $\THH (R)$ of a
topological ring $R$, defined below in \cref{eq-THH} and shown to be
smoothly cyclotomic in \cref{prop-smooth-cyclo}.

Making similar definitions (of $\THH$ and cyclotomic objects) for
spectra is more delicate.  Like the definition of spectra themselves,
those of $\THH$ and cyclotomic spectra have undergone several
upheavals in the past 3 decades.  For the original definition we refer
the reader to {\Bok}'s remarkable preprint \cite{Bokstedt}, the work
of {\Bok}, Hsiang and Madsen \cite{BHM93}, and Madsen's subsequent
expositions of it \cite{Madsen, Madsen94}.  All were written in the
days before we knew how to define a smash product in the category of
spectra that is strictly associative.  This required them to tread
very carefully.  Their key idea is the use of a {\bf {\Bok} functor}
as in \cref{def-FSP}, which they call a {\bf functor with smash
product} or FSP.  Such a functor $\mcF$ determines a spectrum
$\mcF(\mS)$ with appropriate multiplicative structure. We will review
this material in \cref{sec-BHM}.

In order to do this in the category of spectra, we need to define a
symmetric monoidal structure in that category.  We know how to do this
now, but did not when $\THH$ was first considered.

The first category of spectra with a symmetric monoidal structure was
that of {\em $S$-modules} constructed by Tony Elmendorf, Igor Kriz,
Mike Mandell and Peter May in \cite{EKMM}, where $\THH$ is treated
briefly in Chapter IX.

This was followed shortly by the simpler definition of {\em symmetric
spectra} by Mark Hovey, Brooke Shipley and Jeff Smith
\cite{HoveySS}. $\THH$ in this setting is the subject of Shipley's
paper \cite{Shipley:THH}.  In it she explains how {\Bok} anticipated
the definition of symmetric spectra.  She also has to deal with
technical problems created by the unfortunate fact that an equivalence
of symmetric spectra need not induce an isomorphism of stable homotopy
groups.

These difficulties are not present in the category of {\em orthogonal
spectra}, the subject of the book \cite{MandellMay} by Mandell and May.
The study of cyclotomic orthogonal spectra is taken up by Andrew
Blumberg and Mandell in \cite[\S4]{Blumberg-Mandell-12}, \cite{BM15}
and \cite{BM24}.  We will discuss it in \cref{sec-orth-spec}.

An {\qcatal} treatment of cyclotomic spectra and related matters is
given by Thomas Nikolaus and Peter Scholze in their seminal paper
\cite[Chapter II]{NS18}, which we will discuss in \cref{sec-NS}.

In all three approaches (those of {\BHM}, Blumberg-Mandell and
Nikolaus-Scholze) there is both a global (in the sense of number
theory) definition involving the circle group $\mT$ and a $p$-adic
definition involving the Pr\"uffer group $\rC_{p^{\infty}}\subseteq
\mT$.  Only the latter is relevant to \cite{BHLS} since all spectra
there are assumed to be $p$-adically complete.

\subsection{Telescopic and chromatic localization}\label{sec-tel-chrom-loc} Recall
that the subject of the {\TC} is the relation between Bousfield
localizations with respect to $K (n)$, the $n$th Morava $K$-theory,
and ``the'' telescope $T (n)$.  The quotation marks will be explained
shortly.  First we need

\begin{defin}\label{def-MR99}
  {\bf Two notions of chromatic height.}

\begin{enumerate}[label={(\roman*)},itemindent=0em]
\item \label{def-MR99ii} \cite[Definition 1.5.3.]{Rav:NP} A $p$-local
finite spectrum $Y$ {\bf has type $n$} if\linebreak  $K (n)_{*}X\neq 0$ and $K
(m)_{*}X=0$ for all $m<n$.

\item \label{def-MR99i} 
\cite[\S3]{MahoRezk99} 
A $p$-complete
bounded below spectrum $Y$  {\bf has {\fp}-type $n$} if it has finite
type and for each finite spectrum $U$ of type $n+1$, $U\otimes Y$ is
$\pi$-finite, meaning that is has only finitely many nontrivial
homotopy groups, each of which is finite.
\end{enumerate}
\end{defin}

It is known \cite[Theorem 2.11]{Rav:Loc} that for finite spectra $X$, $K
(m)_{*}X=0$ implies $K (m-1)_{*}X=0$, but this is far from true for
infinite CW-spectra $Y$.  For example we have 
\begin{displaymath}
K (m)_{*}\BPn = \mycases{    
K (m)_{*}\otimes  H_{*} (\BPn;\Z/p)
       &\mbox{for }m\leq n\\
0      &\mbox{for }m>n,
}
\end{displaymath}

\noindent and this spectrum has  {\fp}-type $n$.  A spectrum need not
have an {\fp}-type even if it is connective and of finite type.

\begin{theorem}\label{thm-HS-per}
{\bf Hopkins-Smith periodicity.} \cite[Theorem 9]{HS}
Each type $n$ finite spectrum $V$ admits a map $v:V
\to \Sigma^{-d}V$ for some $d>0$ (when $n>0$) inducing an isomorphism
in $K (n)$ homology and a nilpotent map in ordinary mod $p$ homology.
The former condition implies that the cofiber of $v$ has type
$n+1$. We denote by $T (n)$ the filtered colimit obtained by iterating
$v$, the {\bf height $n$ telescope}.
\end{theorem}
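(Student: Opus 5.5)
The plan follows the original strategy of Hopkins and Smith. It has two halves: a formal \emph{thick subcategory} argument, which reduces the existence statement to producing a single example, and a hard \emph{construction} of one example. For a finite $p$-local spectrum $V$, call a map $v:V\to\Sigma^{-d}V$ a $v_n$-self map if two conditions hold. First, $K(n)_*(v)$ is an isomorphism. Second, $K(m)_*(v)$ is nilpotent for every $m\neq n$, which is the sense in which the mod $p$ homology condition is used. Let $\mathcal{C}_n$ be the class of finite $p$-local spectra admitting such a map.

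The first step is to show that $\mathcal{C}_n$ is thick, i.e.\ closed under retracts and cofiber sequences. The key input is the nilpotence theorem of Devinatz--Hopkins--Smith. It implies \emph{asymptotic uniqueness and centrality}: if $v,w$ are two $v_n$-self maps of $V$, then some powers satisfy $v^i=w^j$. The argument applies nilpotence to the difference $v^i-w^j$ viewed in the ring spectrum $\operatorname{End}(V)=V\otimes DV$, after arranging that it is zero in every $K(m)$, and uses a $p$-power binomial trick. Asymptotic uniqueness also gives naturality: for $f:V\to W$ one can choose powers with $f v^i=w^j f$. Given a cofiber sequence $V\to W\to U$ with $V,W\in\mathcal{C}_n$, one then chooses compatible powers, obtains an induced self map of $U$, and checks the $K(m)$-conditions by the five lemma, replacing the map by a further power to kill the ambiguity. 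Retracts are handled in the same way using idempotents.

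The second step is to show that $\mathcal{C}_n$ contains at least one spectrum of type $n$. This is the main obstacle, and I would follow Jeff Smith's construction. Start with any finite type $n$ complex $X$, for instance a generalized Moore spectrum or a skeleton of $BP$, and form $X^{\otimes k}$ for $k$ large. Apply a suitable idempotent $e\in\Z_{(p)}[\Sigma_k]$, coming from modular representation theory of symmetric groups, to split off a summand $eX^{\otimes k}$. The idempotent is chosen so that the mod $p$ cohomology of the summand is free over a large finite sub-Hopf algebra of the Steenrod algebra, except for the Margolis $Q_n$-homology. Vanishing-line results in the Adams spectral sequence for $\operatorname{End}(eX^{\otimes k})$, through Margolis homology and the Palmieri/Anderson--Davis style vanishing lines, then show that a class detecting $v_n$ in the $E_2$-term survives and is central enough. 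This produces a self map with the required $K(m)$-behaviour. I expect the delicate part to be this Adams spectral sequence and vanishing-line analysis, not the representation theory.

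The third step combines the first two. By the thick subcategory theorem (Hopkins--Smith), the thick subcategories of finite $p$-local spectra are exactly the classes of spectra of type $\ge m$. Since $\mathcal{C}_n$ is thick, contains a type $n$ spectrum, and contains no type $<n$ spectrum, it is precisely the class of spectra of type $\ge n$. Hence every type $n$ spectrum $V$ admits $v$.

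Finally, let $C$ be the cofiber of $v$ and use the long exact sequences in $K(m)_*$. For $m<n$ we have $K(m)_*V=0$, so $K(m)_*C=0$. For $m=n$, $v$ is a $K(n)$-isomorphism, so $K(n)_*C=0$. For $m=n+1$, replace $v$ by a power that is zero in $K(n+1)_*$. This is harmless, because an octahedral/thick argument shows the cofiber of $v^k$ lies in the thick subcategory generated by $C$ and conversely. With that choice,
\[
K(n+1)_*C\cong K(n+1)_*V\oplus \Sigma^{d+1}K(n+1)_*V .
\]
This is nonzero because $K(n)_*V\neq0$ forces $K(n+1)_*V\neq0$ by \cite[Theorem 2.11]{Rav:Loc}. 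Hence $C$ has type $n+1$.
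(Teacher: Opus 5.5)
The paper gives no argument here. The statement is quoted from \cite[Theorem 9]{HS}, so the relevant comparison is with Hopkins and Smith's proof, and you reproduce its architecture. The nilpotence theorem gives centrality and asymptotic uniqueness of $v_n$-self maps, hence thickness of $\mathcal{C}_n$; Smith's construction supplies one example; the thick subcategory theorem finishes. Steps 1 and 3 are fine. In Step 3 you should add why $\mathcal{C}_n$ contains no $V$ of type $m<n$: a $v_n$-self map $v$ of such a $V$ would make $V/v$ $K(n)$-acyclic but not $K(m)$-acyclic, since a nilpotent endomorphism of the nonzero module $K(m)_*V$ is not an isomorphism. That contradicts \cite[Theorem 2.11]{Rav:Loc}.

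The step that fails as written is the input to Smith's construction: it cannot be an arbitrary type $n$ complex. Type is detected by Morava $K$-theory, which sees Margolis homology only through the first Atiyah--Hirzebruch differential. A symmetric-group idempotent can kill a Margolis homology group of $X^{\otimes k}$ only if the corresponding group of $X$ is already small. Your own suggestion shows the problem. $S/p^2$ is a generalized Moore spectrum of type $1$, but the Bockstein vanishes on $H^*(S/p^2;\mathbb{F}_p)$ and no other operation can act for degree reasons. So the Steenrod algebra acts trivially on it, hence (Cartan formula) on $H^*((S/p^2)^{\otimes k})$ and on every summand $e\,H^*((S/p^2)^{\otimes k})$. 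The $Q_0$-Margolis homology of any nonzero summand is then everything, so no $e$ gives the vanishing the vanishing-line argument needs. (Also, skeleta of $BP$ have type $0$.)

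Smith instead starts from a specific input built from skeleta of $B\mathbb{Z}/p$. Its Morava $K$-theory ranks and Margolis homology dimensions grow with the height; for $B\mathbb{Z}/p$ itself $K(i)_*$ has rank $p^i$. He pairs this with an idempotent $e\in\mathbb{Z}_{(p)}[\Sigma_k]$ whose effect on $W^{\otimes k}$ detects whether $\dim W$ exceeds a chosen threshold. Künneth formulas for $K(i)_*$ and for Margolis homology then show the summand is $K(i)$-acyclic for $i<n$ and satisfies the Margolis hypotheses, but is \emph{not} $K(n)$-acyclic. That last point never appears in your sketch, and it is essential: every spectrum of type $>n$ lies in $\mathcal{C}_n$ trivially via the zero map.

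Likewise, $K(m)$-nilpotence for $m>n$ is not automatic. The Adams filtration of $v^k$ grows with slope $1/|v_n|$. This is steeper than the slope $1/|v_m|$ of the positive-filtration part of $\operatorname{Ext}_{E(Q_m)}(H^*(V\otimes DV),\mathbb{F}_p)$, the Adams $E_2$-term for connective $k(m)\otimes V\otimes DV$. So the Hurewicz image of $v^k$ there vanishes for large $k$.

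Your last paragraph is correct but roundabout, because no power is needed. $K(n+1)_*(v)$ is nilpotent on the nonzero module $K(n+1)_*V$, so it is not an isomorphism and $K(n+1)_*C\neq 0$ at once. This is exactly where your strengthened definition matters. A $K(n)$-isomorphism alone (the ``former condition'' in the statement) only gives type $\geq n+1$. Nilpotence on mod $p$ homology is automatic for $d\neq 0$, since $H_*(V;\mathbb{F}_p)$ is bounded. Finally, $d>0$ is inherited: Smith's map has positive degree, and the thickness argument only produces maps whose degrees are positive multiples of degrees already in hand.
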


For $n=0$, we need a map inducing an isomorphism in rational homology
and a nilpotent map in mod $p$ homology.  The degree $p$ map fits this
description.

For a given prime $p$ and height $n$, neither the finite spectrum $V$,
the map $v$, nor the telescope $T (n)$ is unique, hence the phrase
``the'' telescope above.  However for a given finite $V$ of type $n$,
the homotopy type of $T (n)$ is known to be independent of the choice
of $v$.  Better still, the Bousfield localization functor $L_{T (n)}$
associated with $T (n)$ is known to be independent of the choice of
$V$ as well, hence the notation.  It depends only on the height $n$
and the implicit prime $p$.

The original conjecture of \cite{Rav:Loc} was that the functors
$L_{T (n)}$ (telescopic localization, also known as $L_{n}^{f}$ or
$L_{n}^{\fin}$) and $L_{K (n)}$ (chromatic localization) are the same.
This was known at the time to be true for $n=0$ and $n=1$.  A few
years later it became apparent that the statement was likely to be
false for $n>1$.  {\em The authors of \cite{BHLS} use algebraic
  $K$-theory to construct counterexamples,} as their title indicates.


\begin{ex}\label{ex-BPn}
The Johnson-Wilson spectrum $BP\langle n\rangle$ has {\fp}-type $n$.
This includes $BP\langle 0\rangle:=H\Z_{(p)}$ and $BP\langle
-1\rangle:=H\Z/p$.
\end{ex}

\begin{ex}\label{ex-yn}
  For each prime $p$ and each height $n>0$, there are $p$-local Thom spectra
  $y(n)$ introduced by Mahowald in \cite{Mah:Ring} with
\begin{displaymath}
H_{*}(y(n) ;\Z/p) \cong \mycases{    
P(\xi_{i} :1\leq i\leq n)
       &\mbox{for }p=2\\
E(\tau_{i} :0\leq i\leq n-1) \otimes P(\xi_{i} :1\leq i\leq n)
       &\mbox{for }p>2
}
\end{displaymath}
 
\noindent as comodules over the dual Steenrod algebra.  Each is an
associative ({\ie} $\EE_{1}$) ring spectrum. They were studied
extensively in \cite{MRS4} by Paul Shick, Mahowald and the author in
hopes of disproving the {\TC}.

It is known that $K(m)_{*}(y(n))=0$ iff $m<n$. In other words $y(n)$
behaves as if it has type $n$ even though it is not a finite
complex. It does not have an $\fp$-type.  It is also known that there is
a self-map $\Sigma^{|v_{n}|}y(n) \to y(n)$ inducing multiplication by
$v_{n}$ in $K(n)_{*}(-)$, as \cref{thm-HS-per} would lead us to
expect.
\end{ex}

\begin{remark}\label{rem-fp-type}
The condition on $U\otimes X$ above implies that $K (m)_{*} (U\otimes
X)=0$ for all $m\geq 0$.  In the language of \cite[Definition
4.1]{Rav:Loc}, $U\otimes X$ is {\bf dissonant}.  This condition is
weaker than $\pi $-finiteness.  An infinite wedge of suspended mod $p$
{\SESM} spectra, such as $\THH (\Z/p)$ (see \cref{thm-Bokstedt}), is
dissonant but not $\pi $-finite.

If $X$ has {\fp}-type $n$, then $K (m)_{*}X=0$ for all $m>n$. On the
other hand, a type $n$ spectrum $Y$ has $K (m)_{*} (Y)\neq 0$ for all
$m\geq n$.
\end{remark}

In order to describe the counterexamples to the {\TC}, the following
notation is convenient.

\begin{defin}\label{def-KT-KK}
{\bf Telescopic and chromatic localizations of $K$-theory and 
$\TopC$.} For an $\EE_{1}$-ring spectrum $R$ and $n\geq 0$,
\begin{align*}
\rK_{T (n)} (R)
 & := L_{T (n)}\rK(R),  &
\rK_{K (n)} (R)
 & := L_{K (n)}\rK(R) , \\
\TopC_{T (n)} (R)
 & := L_{T (n)}\TopC (R),  &
\TopC_{K (n)} (R)
 & := L_{K (n)}\TopC (R).
\end{align*}
\end{defin}

The reader not familiar with the operads $\EE_{n}$ may find a brief
introduction to them (with references to other such works) in
\cite[Appendix B]{Rav:gjmcyc-ext}.

\subsection{Outline}\label{sec-outline}
We now describe the rest of the paper in more detail.

In \cref{sec-classical-algebra} we describe some classical algebra
starting with some definitions in Gerhard Hochschild's 1945 paper ``On
the cohomology groups of an associative algebra'' \cite{Hochschild}
and their generalizations due to Cartan-Eilenberg
\cite{CE}. Hochschild homology is the subject of \cref{def-HHA}.

The simplicial category $\bdelt$, simplicial sets, and related notions
are introduced in \cref{sec-simp-cat}.  Hochschild's chain complex is
reinterpreted as that of a simplicial abelian group in
\cref{eq-simp-chain}.

Connes' cyclic category $\blamb$ and cyclic sets are the subject of
\cref{sec-Connes}. The cyclic circle $\blamb^{0}$ is the subject of
\cref{cor-circle}.  $\blamb$ has the same objects as $\bdelt$, the
finite ordered sets $[n]$.  In both cases one has Yoneda functors
represented by $[n]$, the standard $n$-simplex
\cref{def-simp-sets} and the standard $n$-cyclex of
\cref{def-r-cyclic}.  In \cref{sec-edgewise-sub} we introduce the
paracyclic and $r$-cyclic categories in \cref{def-paracyclic} along
with edgewise subdivision in \cref{def-edgewise}.  We summarize these
indexing categories in \cref{sec-summary}.

In \cref{sec-double-complex} we describe two double complexes
associated with cyclic objects in an abelian category.  They fit into
two short exact sequences, \cref{eq-first-SES} and
\cref{eq-second-SES}, that we call {\bf Tate sequences}.
There is another related to cyclic spectra, \cref{eq-PPP},
which leads to a long exact sequence in homology.

In \cref{sec-BHM} we describe the groundbreaking work of {\Bok},
Hsiang and Madsen \cite{BHM93}.  Their main tool is the {\Bok} functor
of \cref{def-FSP}, which {\Bok} himself calls a ``functor with smash
product.''  We list some common examples in \cref{ex-BokF}.  To each
such functor $\mcF $ we associate spaces $\THH (\mcF )$ in
\cref{def-THHF} and $\rK (\mcF )$ in \cref{def-KF}.  They are related
by the Dennis trace of \cref{def-Dennis-trace}.

This brings us to the cyclotomic trace.  The space $\THH (\mcF )$,
which is defined to be the geometric realization of a certain
simplicial set $\THH_{\bullet} (\mcF )$, comes equipped with an action
of the circle group $\mT$ hence of each of its finite subgroups
$\rC_{r}$.  Replacing $\THH_{\bullet} (\mcF)$ with its $r$th
edgewise subdivision (which does not alter its topology) makes this
action of $\rC_{r}$ simplicial as explained in \cref{def-action}.
Topological cyclic homology $\TopC (\mcF )$ and the cyclotomic trace,
a map it receives from $\rK (\mcF )$, are the subject of \cref{def-TCF}.

We take up the ordinary (meaning without {\qcats}) theory of spectra
in \cref{sec-orth-spec}.  Let $\mcT$ denote the category of pointed
topological spaces. Initially, around 1960, a spectrum $X$ was defined
to be a sequence of pointed spaces $X_{n}$ for $n\geq 0$ with
structure maps $\epsilon^{X}_{n}:\Sigma X_{n}\to X_{n+1}$.  One could
require each of the spaces to have an action of a compact Lie group
$G$ so that the structure maps are {\eqvr} as in \cref{def-seq-spec}.

We call such spectra {\bf sequential}. They can be reinterpreted as
enriched $\mcT$-valued functors on a certain $\mcT$-enriched indexing
category $\msJ^{\bf N}$ having as objects the natural numbers.  It
turns out that $\msJ^{\bf N}$ lacks a symmetric monoidal structure,
and in hind sight this is the reason for the lack of a workable smash
product on the original category of spectra.  This was a major
headache for a generation.  In the late 90s it was found that
$\msJ^{\bf N}$ could be fattened up into an indexing category that
{\em is} symmetric monoidal.  Two instances of this are spelled out in
\cref{eq-symm-orth}, and they lead to the categories of symmetric and
orthogonal spectra of \cref{def-symm-orth}.

In \cref{sec-MM-cat} we study the Mandell-May category of
\cref{def-MM-cat}, the appropriate indexing category for orthogonal
$G$-spectra, the subject of \cref{sec-G-spectra}.  Such spectra come
equipped with two different kinds of fixed points, categorical and
geometric, spelled out in \cref{def-fixed}.

In \cref{sec-Loday-functor} we pay a brief visit to the functor of
Jean-Louis Loday, which is a method of tensoring a simplicial or
cyclic set $X$ with a commutative ring spectrum $A$.  When $X$ is the
simplicial circle it yields $\THH (A)$.

In \cref{sec-Tate-diagram} we describe the Greenlees-May diagram,
usually called the Tate diagram.  The latter name is used because the
construction imitates (in the category of spectra) the group
cohomology of \cref{eq-Tate-H*} originally defined by John Tate
(1925-2019) in \cite{Tate-coh}.

It is an essential tool in {\ESHT}. For a $G$-spectrum for
finite $G$, it leads to a cofiber sequence \cref{eq-Tate2} relating
the homotopy orbit spectrum $X_{hG}$, the homotopy fixed point
spectrum $X^{hG}$, and a third spectrum $X^{tG}$, the Tate
construction of $X$ \cref{eq-tG}.  When the group is compact but not
finite, $X_{hG}$ needs to be suspended by the adjoint {\rep}, whose
degree is the dimension of underlying manifold of $G$.

In \cref{sec-NS} we discuss {\qcats} and the work of Nikolaus
and Scholze.  Jacob Lurie has written thousands of pages on
{\qcats}, and we do not expect the reader to be familiar with all of
it.  We give specific references to this material when needed. As in
\cite{Rav:Whatis}, we will write {\qcats} which are not ordinary
categories in the color {\color{cyan}cyan}.

Elementary {\qcatal} notions are discussed in \cref{sec-elem-qcat},
limits and colimits in \cref{sec-lim-colim}, and some additional
structures in \cref{sec-additional}.  $\THH$ is defined in {\qcatal}
terms in \cref{sec-THH-qcat}. The {\qcat} of cyclotomic spectra is
defined in \cref{sec-qcat-cycsp}.  Polygonic spectra are recalled in
\cref{sec-polygonic}, and epicyclic spaces and spectra are the subject
of \cref{sec-epicyc}.

Topological cyclic homology $\TopC$ and the related functors $\TopC^{-}$
and $\TP$ are reviewed in \cref{sec-TC-et-al}.  Nikolaus-Scholze's
simplified way of computing $\TopC$ is the subject of \cref{prop-TC}.
It is applied to the mod $p$ {\SESM} spectrum in \cref{sec-TC(Zp)}, and
the integer version due to \cite{Bokstedt-Madsen94} is discussed
briefly in \cref{sec-Bok-Mad}.  The latter is  earliest instance of
{\bf chromatic redshift}.

In \cref{sec-t-structures} we introduce $t$-structures, which are
systems of full subcategories of stable {\qcats}.  The standard
example is the Postnikov $t$-structure on the {\qcat} of spectra, which
has to do with connectivity and coconnectivity.  In
\cref{sec-AN-t-structure} we describe the Antieau-Nikolaus
$t$-structure on the {\qcat} of cyclotomic spectra with its surprising
definition of coconnectivity.

Every $t$-structure determines a subcategory known as the heart, which
has an abelian homotopy category.  In the Postnikov case the latter is
the derived category of abelian groups.  In the Antieau-Nikolaus case
they call its objects $p$-typical Cartier modules
(\cref{def-Cart-mod}), which are abelian $p$-groups equipped with
natural endomorphisms $\ANF$ and $\ANV$, the Frobenius and
Verschiebung maps.  They are related to similar maps (see
\cref{def-WVrF}) in the theory of Witt vectors, which we review in
\cref{sec-Witt}.

Topological restriction homology $\TR$ (\cref{def-TR}) is the subject
of \cref{sec-TR}.  It is a functor that converts a cyclotomic spectrum
to one with an additional structure called a Frobenius lift as in
\cref{def-cyclo-spec2}\cref{def-cyclo-spec2i}.  It is known to be a
fully faithful right adjoint of the corresponding forgetful functor.

In \cref{sec-BHLS} we indicate how the machinery of the previous
sections can be brought to bear on the {\TC}.  Given a $p$-complete
$\EE_{1}$-ring spectrum $R$ with an action of the integers $\Z$, we get a diagram
of cyclotomic spectra
\begin{displaymath}
\THH (R^{h\Z})
   \to\THH (R^{h (p\Z)})
      \to\THH (R^{h (p^{2}\Z)})
          \to \dotsb \to \THH (R).
\end{displaymath}
 
\noindent and of ordinary spectra with $\THH$ replaced by $\TopC $,
$\rK$ or their localizations with respect to $K(n+1)$ or $T(n+1)$.
Whenever one applies a functor to a limit, such as homotopy fixed
points, one has a coassembly map $\epsilon$ of \cref{def-assembly}
from the value of the functor on the limit to the limiting value of
the functor.

Thus when $R$ is the sphere spectrum $\mS$ with trivial $\Z$-action, we have
\begin{displaymath}
  \mS^{h (p^{i}\Z)}\simeq \mS^{B (p^
    {i}\Z)_{+}}
\simeq \mS\vee \Sigma^{-1}\mS,
\end{displaymath}

\noindent also known as the dual circle $\DD S^{1}$.
The structure of $\THH(\DD S^{1})$ is the subject of
\cref{thm-dual-circle}, which is due to Cary Malkiewich.  Its underlying
spectrum has a single cell in dimension $-1$, but infinitely many in
dimension $0$.  On the other hand, $\THH(\mS)=\mS$, so
$\THH(\mS)^{B\Z}$ just has a single cell in dimensions $0$ and $-1$.
Hence the coassembly map
\begin{displaymath}
\epsilon :\THH (\mS^{B\Z})\to \THH (\mS)^{B\Z}
\end{displaymath}
 
\noindent
is very far from being an equivalence.

\cref{thm-failure} says that the same is true if we replace $\mS$ by a
$T(n)$-local ring spectrum $R$ with trivial $Z$-action on which
$\rK_{T (n+1)}$ is nontrivial, then the coassembly map
\begin{displaymath}
\epsilon :\rK_{T (n+1)} (R^{B\Z}) \to \rK_{T (n+1)} (R)^{B\Z},
\end{displaymath}

\noindent is not an equivalence.

If we knew that the $K (n+1)$-local analog of the coassembly map of
\cref{thm-failure} was an equivalence for $n\geq 1$, we would know
that the {\TC} is false.  What we do know is two steps removed from
this. There is a {\em particular $R$}, namely $L_{T (n)}BP\langle n
\rangle$, with a {\em nontrivial} action of $\Z$ for which the
coassembly map is a $K (n+1)$-local but not a $T (n+1)$-local
equivalence. This will be discussed in \cite{Rav:gjmcyc-ext}, where we
will see that a crucial ingredient is \cite[Theorem C]{BMCSY23}.

\bigskip

It is a pleasure to acknowledge helpful conversations with Tomer
Schlank, Ishan Levy, Jeremy Hahn, Robert Burklund, Hari Rau-Murthy,
Siddharth Gurumurthy, John Rognes, Mike Mandell, Inbar Klang, Liam
Keenan, and Mike Hopkins.

We also benefited from the notes of the 2024 Talbot Workshop \cite{Talbot24}. 

\section{Algebraic and space level constructions}\label{sec-space-level}

\subsection{Some classical algebra}\label{sec-classical-algebra}

We begin by recalling the relevant algebra.  For more background on
this material, we recommend Chuck Weibel's book \cite[Chapter
9]{Weibel-H}.

Let $A$ be an associative algebra over a field $k$ and let $M$ be a
two-sided $A$-module.  In Gerhard Hochschild's 1945 paper \cite{Hochschild},
he considered a cochain complex $C^{\bullet} (A;M)$ in which
\begin{numequation}\label{eq-CnAM}
\begin{split}
C^{n} (A;M)
 := \Hom_{k} (A^{\otimes (n+1)},M)
\end{split}
\end{numequation}%

\noindent (where the tensor products are over $k$) with coboundary
operator $\delta $ defined as follows for $f\in C^{n} (A;M)$ and
$a_{i}\in A$ for $0\leq i\leq n+1$.
\begin{numequation}\label{eq-Hochschild-cochain}
\begin{split}
(\delta f) (a_{0}, \dotsb , a_{n+1})
 & := a_{0}f (a_{1}, \dotsb , a_{n+1})  \\
 & \qquad   + \sum_{1\leq i\leq n} (-1)^{i}f (a_{0}, \dotsb, a_{i-1}
            , a_{i}a_{i+1}, a_{i+2},\dotsb , a_{n+1})  \\
 & \qquad    + (-1)^{n}f (a_{0}, \dotsb , a_{n})a_{n+1}.
\end{split}
\end{numequation}%

\noindent Note here that $f$ is $M$-valued, and the first and last
terms above make use of the left and right $A$-module structures on
$M$.  Note also that in no term on the right has the order the $a_{i}$s
changed.  We will see such a change in \cref{eq-HAM} below.


Following Henri Cartan and Sammy Eilenberg in \cite[Chapter IX]{CE}
(where $k$ was no longer assumed to be a field, but a commutative ring
over which $A$ is projective; in more recent literature only flatness
over $k$ is needed), define the {\em enveloping algebra} $A^{e}$ of
$A$ by
\begin{numequation}\label{eq-env-alg}
\begin{split}
A^{e} := A\otimes_{k}A^{\op},
\end{split}
\end{numequation}%

\noindent where $A^{\op}$ denotes $A$ with the opposite
multiplication.  When $A$ is a commutative $k$-algebra, $A^{e}\cong
A\otimes_{k} A$.  

In any case a two-sided $A$-module $M$ becomes a left
$A^{e}$-module by the formula
\begin{displaymath}
(a \otimes b ^{*} )m := a mb  
\qquad \mbox{for $a \in A$, $m\in M$ and $b ^{*}\in A^{\op}$.} 
\end{displaymath}

\noindent It is also a right $A^{e}$-module by the formula
\begin{displaymath}
m(a \otimes b ^{*})  := b  ma. 
\end{displaymath}

\noindent In particular $A$ itself is a two-sided $A^{e}$-module,
leading to an augmentation\linebreak $\epsilon :A^{e}\to A$ defined by
$\epsilon (a \otimes b ^{*})=a b $.

Cartan-Eilenberg \cite[\S IX.4]{CE} then defined the homology of a right
$A^{e}$-module $M$ by
\begin{numequation}\label{eq-Tor}
\begin{split}
H_{n} (A;M) := \Tor^{A^{e}}_{n} (M,A)
\end{split}
\end{numequation}%

\noindent and the cohomology of a left $A^{e}$-module $M$ by 
\begin{displaymath}
H^{n} (A;M) := \Ext^{n}_{A^{e}} (A,M).
\end{displaymath}

\noindent They showed that the latter coincides with the cohomology of
Hochschild's complex of \cref{eq-Hochschild-cochain}.  This will be
generalized topologically in \cref{def-THH-M}.

In both cases we need a projective $A^{e}$-resolution of $A$ as a left
$A^{e}$-module. To define one, let
\begin{displaymath}
S_{n}A:=A^{\otimes (n+2)}
\qquad \aand 
\tilde{S}_{n} (A):=A^{\otimes n},
\end{displaymath}

\noindent where the tensor products are over $k$.  Make $S_{n}A$ a
two-sided $A$-module, {\ie } a left $A^{e}$-module, by
\begin{displaymath}
(a \otimes b ^{*}) (a_{0}\otimes \dotsb \otimes a_{n+1})
 := (a a_{0})\otimes a_{1}\otimes \dotsb \otimes a_{n}
                   \otimes (a_{n+1}b  ).
\end{displaymath}

\noindent  We define $\partial_{n}:S_{n} (A)\to S_{n-1} (A)$ by 
\begin{align*}
\lefteqn{\partial_{n} (a_{0}\otimes \dotsb \otimes a_{n+1})}\qquad\qquad\\
 & := \sum_{0\leq i\leq n} (-1)^{i}a_{0}\otimes \dotsb \otimes a_{i-1}
          \otimes (a_{i}a_{i+1})\otimes a_{i+2}
           \otimes \dotsb \otimes a_{n+1}.
\end{align*}

\noindent We have
\begin{displaymath}
S_{n} (A)=A\otimes_{k} \tilde{S}_{n} (A) \otimes_{k}A
 = A^{e}\otimes_{k}\tilde{S}_{n} (A) 
\end{displaymath}

\noindent and 
\begin{displaymath}
M\otimes_{A^{e}}S_{n} (A)
 = M\otimes_{A^{e}}A^{e}\otimes_{k}\tilde{S}_{n} (A)
 = M\otimes_{k}\tilde{S}_{n} (A).
\end{displaymath}


It follows that $H_{*} (A;M)$ as in \cref{eq-Tor} is the homology of
the complex $M\otimes_{k}\tilde{S} (A)$ in which
\begin{numequation}\label{eq-HAM}
\begin{split}
\partial_{n} (m\otimes a_{1}\otimes \dotsb \otimes a_{n})
 & := ma_{1}\otimes a_{2}\otimes \dotsb \otimes a_{n}  \\
 &\qquad +\sum_{0<i<n} (-1)^{i}m\otimes a_{1}\otimes \dotsb \otimes
       a_{i}a_{i+1}\otimes \dotsb \otimes a_{n}\\
 &\qquad  + (-1)^{n}a_{n}m
\otimes a_{1}\otimes \dotsb \otimes a_{n-1}.
\end{split}
\end{numequation}%

\noindent Note here that in the last term on the right, the order of
the $a_{i}$s is cyclically permuted, unlike in \cref{eq-Hochschild-cochain}.


\begin{defin}\label{def-HHA}
{\bf Hochschild homology.}  The {\bf cyclic bar construction}
or\linebreak {\bf Hochschild complex} $C^{\Hoch} (A; M/k)$ is the chain
complex of \cref{eq-HAM}.  When $k$ is $\Z$, or it is understood from
the context, we drop it from the notation.  When $M$ is $A$ itself, we
denote it by $C^{\Hoch} (A/k)$ and we have
\begin{align*}
\partial_{n} (a_{0}\otimes  \dotsb \otimes a_{n})
 & := \sum_{0\leq i<n} (-1)^{i}a_{0}\otimes  \dotsb \otimes
       a_{i}a_{i+1}\otimes \dotsb \otimes a_{n}\\
 &\qquad\qquad  + (-1)^{n}a_{n}a_{0}
\otimes a_{1}\otimes \dotsb \otimes a_{n-1}.
 \end{align*}

\noindent The homology in this case is {\bf the Hochschild homology of
$A$}, denoted by $\HH_{*} (A/k)$.

The {\bf acyclic Hochschild complex} $C^{\acyc} (A/k)$ has the same chain
groups with boundary operator $\partial_{n}'$ given by
\begin{displaymath}
 \partial_{n}' (a_{0}\otimes  \dotsb \otimes a_{n})
:=\sum_{0\leq i<n} (-1)^{i}a_{0}\otimes  \dotsb \otimes
       a_{i}a_{i+1}\otimes \dotsb \otimes a_{n},
\end{displaymath}

\noindent in which the last term of $\partial_{n} (a_{0}\otimes \dotsb \otimes
a_{n})$ is missing.
\end{defin}


The complex $C^{\Hoch} (A)$ is denoted by $ZA$ by Tom Goodwillie in
\cite{Goodwillie-cyc}, and by $A\natural$ by Alain Connes in
\cite{Connes}.

\begin{ex}\label{ex-easy}{\bf Some easy cases.}
\begin{enumerate}[label={(\roman*)},itemindent=0em]
\item \label{ex-easyi}
For  $M=A=k$, we find that the $C^{\Hoch} (A/k)$ has the form
\begin{displaymath}
\xymatrix
@R=0mm
@C=5mm
{
{0}
  &{1}
    &{2}
      &{3}
        &{4}
          &{5}\\
{k}
  &{k}\ar[l]_(.5){0}
    &{k}\ar[l]_(.5){1}
      &{k}\ar[l]_(.5){0}
        &{k}\ar[l]_(.5){1}
          &{k}\ar[l]_(.5){0}
            &{\dotsb }\ar[l]_(.5){},
}
\end{displaymath}

\noindent leading to 
\begin{displaymath}
\HH_{i} (k/k) = \mycases{    
k      &\mbox{for }i=0\\
0      &\mbox{otherwise.}
}
\end{displaymath}
\item \label{ex-easyii} For $M=A$ with $A$ commutative, it begins with
\begin{displaymath}
\xymatrix
@R=0mm
@C=5mm
{
{0}
  &{1}
    &{2}
      &{}
        &{}
          &{}\\
{A}
  &{A\otimes_{k}A}\ar[l]^(.5){}
    &{A\otimes_{k}A\otimes_{k}A}\ar[l]^(.5){}
      &{\dotsb }\ar[l]^(.5){}\\
{a_{0}a_{1}-a_{1}a_{0}=0}
  &{a_{0}\otimes a_{1}}\ar@{|->}[l]^(.5){}\\
  &{a_{0}a_{1}\otimes a_{2}-a_{0}\otimes a_{1}a_{2}}
    &{a_{0}\otimes a_{1}\otimes a_{2}}\ar@{|->}[l]^(.5){}\\
  & {+a_{2}a_{0}}\otimes a_{1}
}
\end{displaymath}

\noindent leading to $\HH_{0} (A/k)=A$ and $\HH_{1} (A/k)$ being a
certain quotient of $A\otimes A$, namely the $A$-module of
K\"ahler differentials $\Omega^{1}_{R/k}$.  This is the $A$-module
generated by symbols $\dd x$ for $x\in A$, subject to the rules
\begin{align*}
\dd c & = 0 &\mbox{for }c\in  k,    \\
\dd (x+y)&=\dd x+ \dd y&\mbox{for }x,y\in A, \\
\aand 
\dd (xy)&=y\dd x+ x\dd y.
\end{align*}

\item \label{ex-easyiii} For $M=A$ with $A$ noncommutative, the above
shows that
\begin{displaymath}
\HH_{0} (A/k) \cong A/[A,A]\cong A\otimes_{A^{e}}A.
\end{displaymath}

\noindent It turns out that for a finitely generated projective
$A$-module $P$, the trace of its identity map has its value in this
quotient.  The group $\rK_{0} (A)$ is the Grothendieck completion of the
monoid of isomorphism classes of such $P$.  The resulting map
$K_{0} (A)\to \HH_{0} (A)$ is the {\bf Hattori-Stallings trace}
\cite{Hattori, Stallings}.  We will see in \cref{sec-Dennis} that it
refines to the topological Dennis trace
\begin{displaymath}
\Tr : K (A) \to \THH (A).
\end{displaymath}
\end{enumerate}

\noindent We will define $\rK(A)$ in \cref{def-KF} and $\THH$ in
\cref{eq-THH}.   The Dennis trace is is the subject of  \cref{sec-Dennis}.
\end{ex}

For a commutative $k$-algebra $A$, there is a ring structure on
$\HH_{*} (A/k)$ that arises from the fact that the chain complex
$C^{\Hoch} (A)$ is that of a simplicial ring.  This is proved by Achim
Krause and Thomas Nikolaus in \cite[Lemma 2.3]{KN21a}; the
construction involves shuffle maps.  We can also build a differential
graded algebra out of $\Omega^{1}_{A/k}$ by forming the free exterior
algebra $\Omega^{*}_{A/k}:=\Lambda_{A} \Omega^{1}_{A/k}$, also known
as the {\em de Rham complex of $A$ over $k$}.  When $A$ is a
polynomial algebra over $k$ on $n$ variables $x_{i}$,
$\Omega^{*}_{A/k}$ is a graded exterior algebra on the $n$ variables
$\dd x_{i}$.  This multiplicative structure leads to a map
\begin{displaymath}
\Omega^{*}_{A/k} \to \HH_{*} (R/k).
\end{displaymath}

\noindent The theorem of Hochschild, Bertram Kostant and Alexander Rosenberg
\cite{HKR62} says it is an isomorphism when $A$ satisfies a certain
smoothness condition, such as being a polynomial algebra over $k$.

The Hochschild complex is reinterpreted in \cref{eq-simp-chain} as the
chain complex of a simplicial abelian group.  The complex $C^{\acyc}
(A)$ is acyclic because there is a chain homotopy $u:C^{\acyc}
(A)_{n}\to C^{\acyc} (A)_{n+1}$ given by
\begin{numequation}\label{eq-contracting}
\begin{split}
u (a_{0}\otimes  \dotsb \otimes a_{n})
:= 1\otimes  a_{0}\otimes  \dotsb \otimes a_{n}.
\end{split}
\end{numequation}%

\noindent We will see in \cref{eq-contracting2} that this acyclicity
holds in any chain complex associated with a {\em cyclic (in the
categorical sense) abelian group} as in \cref{def-cyc-object}.

\begin{defin}\label{def-cyc-bar-construction} 
Let $E$ be a two-sided $\Gamma $-space, where $\Gamma $ is a grouplike
topological monoid, meaning one for which $\pi_{0}\Gamma $ is a group.
The {\bf topological cyclic bar construction $N^{\cyc}_{\bullet} (E;
\Gamma )$} is defined by formulas similar to those of \cref{eq-HAM}.
It is a simplicial space whose $n$th component is $E\times \Gamma
^{n}$.  When $E$ is $\Gamma $ itself, we denote it by
$N^{\cyc}_{\bullet} (\Gamma )$, and we have
\begin{displaymath}
\tau_{n} (g_{0},\dotsc ,g_{n}) = (g_{n}, g_{0},\dotsc ,g_{n-1})
\qquad \mbox{for }g_{i}\in \Gamma  .
\end{displaymath}
\end{defin}

\subsection{Morita equivalence}
The following is originally due to Kiiti
Morita \cite{Morita58}.

\begin{defin}
Two unital $k$-algebras $R$ and $S$ are {\bf Morita equivalent} if
there is a bimodule $_{R}P_{S}$ (meaning a left $R$-module and a right
$S$-module), a bimodule $_{S}Q_{R}$, an isomorphism of $R$-bimodules
$u : P \otimes_{S} Q \cong R$ and an isomorphism of $S$-bimodules $v:
Q \otimes_{R} P \cong S$.
\end{defin}

\begin{ex}
{\bf Morita equivalence of matrix rings.} 
Let $\mcM_{m} (A)$ denote the ring of $m\times m$ matrices
over $A$. For $R = A$ and $S = \mcM_{m}(A)$, take $P = A^{m}$ (row
vectors of rank $m$) and $Q = A_{m}$ (column vectors).
\end{ex}

\begin{theorem}
\cite[Theorem 1.2.7]{Loday}
If $R$ and $S$ are Morita equivalent $k$-algebras and $M$ is an
$R$-bimodule, then there is a natural isomorphism
\begin{displaymath}
H_{*} (R;M)\cong H_{*} (S; Q\otimes_{R}M\otimes_{R}P),
\end{displaymath}

\noindent for $H_{*} (R;M)$ as in \cref{eq-Tor}.
\end{theorem}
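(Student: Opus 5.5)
The plan is to identify both sides as derived tensor products over enveloping algebras and to transport everything through the equivalence of bimodule categories induced by $P$ and $Q$. By \cref{eq-Tor}, $H_{*}(R;M)=\Tor^{R^{e}}_{*}(M,R)$, which is the homology of $M\otimes^{\mathbf L}_{R^{e}}R$. Equivalently, it is the homology of the complex $M\otimes_{R^{e}}S_{\bullet}(R)$, where $S_{\bullet}(R)$ is the bar resolution constructed above; this requires $R$ to be projective (or flat) over $k$, as in Cartan--Eilenberg, and I assume the same of $S$. I would first record the familiar ``trace-like'' description: for an $R$-bimodule $M$ and any $R^{e}$-module $N$,
\[
M\otimes_{R^{e}}N\;\cong\;(M\otimes_{R}N)\otimes_{R^{e}}R=(M\otimes_{R}N)/[R,\,M\otimes_{R}N].
\]
In particular $H_{0}(R;M)=M/[R,M]$.

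The key algebraic step is the isomorphism
\[
M/[R,M]\;\cong\;(Q\otimes_{R}M\otimes_{R}P)/[S,\,Q\otimes_{R}M\otimes_{R}P].
\]
In the forward direction, $m$ goes to $\sum_{i} q_{i}\otimes p_{i} m$, where $u^{-1}(1)=\sum_{i} p_{i}\otimes q_{i}$. In the reverse direction, $q\otimes m\otimes p$ goes to $u(p\otimes q m)$, read in $M$ via the actions $P\otimes_{S} Q\to R$ and $R\otimes_{R}M=M$. More precisely, $q\otimes m\otimes p\mapsto \sum u(p\otimes q)$ acting suitably, i.e.\ $q\otimes m\otimes p\mapsto (p\cdot q)\,m$, interpreted modulo commutators so that cyclic rotation is allowed. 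To check that these maps are well defined and mutually inverse, I only need the associativity compatibility between $u$ and $v$ (the standard Morita context identities $u(p\otimes q)p'=p\,v(q\otimes p')$, etc.) together with the fact that commutators are killed. This gives the $H_{0}$ case, naturally in $M$.

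Then I would extend to all degrees by a standard derived-functor argument. The functor $M\mapsto Q\otimes_{R}M\otimes_{R}P$ is an equivalence of categories from $R$-bimodules to $S$-bimodules, with inverse $N\mapsto P\otimes_{S}N\otimes_{S}Q$. Since $P$ and $Q$ are finitely generated projective on each side (a consequence of the Morita isomorphisms), this functor is exact. It sends $R$ to $Q\otimes_{R}P\cong S$, and it sends free bimodules $R\otimes_{k}V\otimes_{k}R$ to $Q\otimes_{k}V\otimes_{k}P$. The latter are projective $S$-bimodules because $Q$ is projective as a left $S$-module and $P$ is projective as a right $S$-module. Hence it carries the projective resolution $S_{\bullet}(R)\to R$ to a projective resolution $F_{\bullet}\to S$ of $S$ as an $S$-bimodule. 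Applying the degree-zero isomorphism termwise to $M\otimes_{R^{e}}S_{n}(R)$, using the trace-like description, gives a natural chain isomorphism
\[
M\otimes_{R^{e}}S_{\bullet}(R)\;\cong\;(Q\otimes_{R}M\otimes_{R}P)\otimes_{S^{e}}F_{\bullet}.
\]
Taking homology gives $\Tor^{S^{e}}_{*}(Q\otimes_{R}M\otimes_{R}P,S)$, as required.

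The main obstacle is the degree-zero identification and its naturality in both variables, so that it applies termwise in a way compatible with the differentials. This is where the Morita compatibility identities enter, and they must be stated carefully, since the bare definition given above lists only $u$ and $v$ without those identities. My first move would be to show that the isomorphisms $u$ and $v$ can be chosen to satisfy the compatibilities, which is the standard normalization in a Morita context. After that, naturality in the bimodule variable makes the chain-level argument formal. Working with derived tensor products avoids the need for any explicit comparison of resolutions.
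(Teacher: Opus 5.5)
The paper does not prove this statement; it only cites Loday's Theorem 1.2.7, so there is no in-paper argument to compare against. Your proof is correct and is the standard one: identify both sides as $\mathrm{Tor}$, transport along the equivalence $\Phi(X)=Q\otimes_R X\otimes_R P$ of bimodule categories, and reduce to a natural degree-zero isomorphism. Several details should be cleaned up, and one step you flag as necessary can be dropped.

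Your forward formula does not typecheck: $p_i m$ has no meaning, since $P$ is an $R$--$S$ bimodule. The intended map is $m\mapsto\sum_i q_i\otimes m\otimes p_i$, and the backward map is $q\otimes m\otimes p\mapsto m\,u(p\otimes q)$.

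The Morita-context compatibilities are not needed, and so neither is your proposed first step of renormalizing $u$ and $v$. For an $R$-bimodule $X$ write $X_R:=X/[R,X]$, and similarly $Y_S$ for an $S$-bimodule. For any $S$--$R$ bimodule $X$ and $R$--$S$ bimodule $Y$, the rotation $x\otimes y\mapsto y\otimes x$ is a well-defined isomorphism $(X\otimes_R Y)_S\cong(Y\otimes_S X)_R$. Take $X=Q$ and $Y=M\otimes_R P$, and follow the rotation with $1_M\otimes u$. This shows that your backward map is an isomorphism using only the fact that $u$ is a bimodule isomorphism. Since it sends $\sum_i q_i\otimes m\otimes p_i$ to $m$, your forward map is its inverse, with no separate well-definedness check. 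The isomorphism $v$ is needed only to identify $\Phi(R)=Q\otimes_R P$ with $S$.

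For the chain-level step, you should write out the termwise isomorphism explicitly as
\[
M\otimes_{R^e}N\cong (M\otimes_R N)_R\cong \bigl(\Phi(M\otimes_R N)\bigr)_S\cong \bigl(\Phi(M)\otimes_S\Phi(N)\bigr)_S\cong \Phi(M)\otimes_{S^e}\Phi(N).
\]
Here the third step inserts $u^{-1}\colon R\cong P\otimes_S Q$ between $M$ and $N$, a step your sketch leaves implicit. The composite is natural in $N$, and that is what makes it commute with the differentials of the resolution.

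Finally, the paper defines $H_*$ directly as $\mathrm{Tor}$, so you need neither the bar resolution nor $k$-projectivity of $R$ and $S$. The functor $\Phi$ is an equivalence of abelian categories, hence exact and projective-preserving. It therefore carries any projective $R^e$-resolution of $R$ to a projective $S^e$-resolution of $Q\otimes_R P\cong S$.
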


The following is discussed in more detail by Loday in
\cite[1.2]{Loday} and by Weibel in \cite[9.5]{Weibel-H}. We have
homomorphisms
\begin{displaymath}
\inc:A \to \mcM_{m} (A)
\qquad \aand 
\Tra: \mcM_{m} (A)\to A,
\end{displaymath}

\noindent where the former sends $a\in A$ to the square matrix with
$a$ in the upper left corner and zeroes elsewhere, and the latter
sends a matrix to the sum of its diagonal entries. 

Loday \cite[Definition 1.2.1]{Loday} defines a {\em generalized
trace map}
\begin{displaymath}
\Tra: \mcM_{m} (M)\otimes \mcM_{m} (A)^{\otimes n}
      \to M\otimes A^{\otimes n},
\end{displaymath}

\noindent by 
\begin{displaymath}
\Tra (\omega \otimes \alpha \otimes \beta  \otimes \dotsb
          \otimes \alpha ^{(n)} )
:= \sum \omega _{i_{0},i_{1}} \otimes 
       \alpha _{i_{1},i_{2}} \otimes \beta  _{i_{2},i_{3}} 
       \otimes \dotsb \otimes \alpha ^{(n)}_{i_{n},i_{0}},
\end{displaymath}

\noindent where $\alpha ^{(n)}$ is the $n$th letter of the Greek
alphabet, and the sum is over all possible indices $(i_{0},\dotsc ,
i_{n})$.

\begin{theorem}\label{thm-Morita}
{\bf Morita equivalence for matrix tensor products.} \cite[Theorem
1.2.4]{Loday}.  The maps $\inc$ and $\Tra$ above induce inverse
isomorphisms between $H_{*} (A;M)$ and $H_{*} (\mcM_{m} (A);\mcM_{m}
(M))$.  In particular $\HH_{*} (\mcM_{m} (A))$ is naturally isomorphic
to $\HH_{*} (A)$.
\end{theorem}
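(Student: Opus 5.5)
The plan is to work at the level of the Hochschild complexes $M\otimes_{k}\tilde S(A)$ of \cref{eq-HAM} and $\mcM_{m}(M)\otimes_{k}\tilde S(\mcM_{m}(A))$, and to show directly that $\inc$ and $\Tra$ are chain maps inducing inverse isomorphisms in homology. The map $\inc$ acts on each tensor factor: a matrix in $\mcM_m(M)$ or $\mcM_m(A)$ is sent to the matrix with that entry in position $(1,1)$ and zeros elsewhere. The map $\Tra$ is Loday's generalized trace, applied in every simplicial degree.

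\textbf{Step 1: chain maps.} That $\inc$ commutes with $\partial$ is immediate, because $E_{11}aE_{11}b=E_{11}ab$; this covers the inner faces and also the two outer faces, including the cyclic one $a_nm$. For $\Tra$, I would check that it commutes with each face operator $d_i$ separately.
\begin{itemize}
\item For an inner face, multiplying $\alpha^{(i)}\alpha^{(i+1)}$ sums over the shared index $i_{i+1}$. This is exactly what the generalized trace does to adjacent entries.
\item The last face $d_n$ replaces $\omega$ by $\alpha^{(n)}\omega$ and drops the last tensor factor. Using the formula for $\Tra$, the indices close up cyclically: $\sum \alpha^{(n)}_{i_n,i_0}\omega_{i_0,i_1}\otimes\cdots$ becomes the trace of the cyclically rotated tensor.
\end{itemize}
Thus $\Tra$ is a map of simplicial $k$-modules, and in fact of cyclic ones.

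\textbf{Step 2: one composite is the identity.} We have $\Tra\circ\inc=\mathrm{id}$ on the nose, since only the index tuple $(1,\dots,1)$ contributes. So the remaining work is to show that $\inc\circ\Tra$ is chain homotopic to the identity on the matrix complex.

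\textbf{Step 3: the homotopy (main obstacle).} Write a general element as a sum of elementary tensors $E_{i_0j_0}m\otimes E_{i_1j_1}a_1\otimes\cdots\otimes E_{i_nj_n}a_n$. The trace kills such a tensor unless $j_s=i_{s+1}$ cyclically. I would then build the explicit homotopy following Loday:
\[
h=\sum_{s=0}^{n}(-1)^{s}h_s, \qquad h_s(\alpha_0\otimes\cdots\otimes\alpha_n)=\sum_{i,j,\dots}\alpha_0E_{i1}\otimes E_{11}\alpha_1E_{j1}\otimes\cdots\otimes E_{11}\alpha_s\otimes E_{1\ast}\otimes\alpha_{s+1}\otimes\cdots\otimes\alpha_n .
\]
The idea is to insert the factors $E_{i1}$ and $E_{1i}$, summed over $i$, in position $s$, so that the boundary of $h$ telescopes: the consecutive $d_ih_s$ terms cancel, leaving $\mathrm{id}-\inc\circ\Tra$. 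Verifying this telescoping is the delicate part. I would first do it in degrees $0$ and $1$, where for degree $0$ it is the identity $\omega-E_{11}\,\mathrm{tr}(\omega)=\sum_{i,j}[\,\omega E_{ij},E_{ji}]$ modulo commutators. I would then generalize using the relations $\sum_i E_{i1}E_{1i}=I$ and $E_{1i}E_{i1}=E_{11}$.

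\textbf{Alternative route to Step 3.} If the bookkeeping becomes unmanageable, I would instead use the Morita-invariance theorem just quoted (\cite[Theorem 1.2.7]{Loday}) with $P=A^m$ and $Q=A_m$. This gives $H_*(A;M)\cong H_*(\mcM_m(A);Q\otimes_A M\otimes_A P)$, and $Q\otimes_A M\otimes_A P\cong \mcM_m(M)$. It then remains to check that this abstract isomorphism is induced by $\inc$; since $\Tra\circ\inc=\mathrm{id}$, it follows that $\Tra$ is its inverse.

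\textbf{Conclusion.} The final clause of the theorem is the special case $M=A$.
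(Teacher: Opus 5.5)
Your strategy is Loday's proof of \cite[Theorem 1.2.4]{Loday}, which the paper cites without giving an argument. Steps 1 and 2, and the reduction of the last clause to $M=A$, are fine. The gap is Step 3, which carries all the content: the homotopy you display does not work as written. Its indices are not chained. The factor $E_{11}\alpha_1E_{j1}$ in position $1$ does not involve the index $i$ of $\alpha_0E_{i1}$, and the inserted factor $E_{1\ast}$ is tied to nothing. So $d_0$ produces $\sum_i\alpha_0E_{i1}E_{11}\alpha_1E_{j1}$ rather than $\alpha_0\alpha_1E_{j1}$, the face $d_s$ does not collapse the inserted pair, and nothing telescopes. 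Up to notation, the homotopy in Loday's proof is
\begin{multline*}
h_s(\alpha_0\otimes\cdots\otimes\alpha_n)=\sum_{j_0,\dots,j_s}\alpha_0E_{j_0,1}\otimes E_{1,j_0}\alpha_1E_{j_1,1}\otimes\cdots\\
\otimes E_{1,j_{s-1}}\alpha_sE_{j_s,1}\otimes E_{1,j_s}\otimes\alpha_{s+1}\otimes\cdots\otimes\alpha_n .
\end{multline*}
For this $h_s$ one checks the following face relations:
\begin{itemize}
\item $d_ih_s=h_{s-1}d_i$ for $i<s$;
\item $d_sh_s=d_sh_{s-1}$ for $1\le s\le n$, which is where $\sum_jE_{j1}E_{1j}=I$ is used;
\item $d_ih_s=h_sd_{i-1}$ for $s+1<i\le n+1$;
\item $d_0h_0=\mathrm{id}$;
\item $d_{n+1}h_n=\inc\circ\Tra$, because $E_{1a}\,x\,E_{b1}$ is the matrix with $x_{ab}$ in the $(1,1)$ slot.
\end{itemize}
Summing with signs, $h=\sum_s(-1)^sh_s$ satisfies $bh+hb=\mathrm{id}-\inc\circ\Tra$.

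Your degree-$0$ check is also misstated. In fact $\sum_{i,j}[\omega E_{ij},E_{ji}]=m\,\omega-\mathrm{tr}(\omega)I$, and adding ``modulo commutators'' makes the claim empty. The identity you need is the exact one $\omega-\mathrm{tr}(\omega)E_{11}=\sum_j[\omega E_{j1},E_{1j}]=b\,h_0(\omega)$.

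The fallback route does not let you skip this computation. The Morita theorem quoted just before the statement gives only an abstract isomorphism $H_*(A;M)\cong H_*(\mcM_m(A);\mcM_m(M))$. Combined with $\Tra\circ\inc=\mathrm{id}$, that does not force $\inc_{*}$ to be onto, since a split injection between isomorphic modules need not be surjective (a shift on a countable direct sum, say). To finish you would have to unwind how that isomorphism is built and identify it with $\inc_{*}$ at chain level, which is the same kind of bookkeeping. So the step to carry out is Step 3 with the corrected $h_s$; with it, your argument is complete.
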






\bigskip

\subsection{Witt vectors}\label{sec-Witt}
This review follows the
treatment of Jean-Pierre Serre in \cite[\S II.6]{Serre-Local}.  For a
prime $p$ one has {\bf Witt polynomials}
\begin{displaymath}
w_{n} (x_{0}, x_{1},\dotsc
,x_{n})\in \Z[x_{0}, x_{1},\dotsc ,x_{n}]
\end{displaymath}
 
\noindent defined by
\begin{align*}
w_n (x) & := \sum_{i=0}^n p^{i} x_{i}^{p^{n-i}}
     = x_{0}^{p^{n}} + p x_{1}^{p^{n-1}} + \cdots + p^{n} x_n
\qquad \mbox{for }n\geq 0.
\end{align*}

\begin{theorem}\label{thm-Serre-thm-6}
\cite[Theorem II.6]{Serre-Local} Given a second series $(y_{0},
y_{1},\dotsc )$ of indeterminates, for each
\begin{displaymath}
\Phi \in \Z[X,Y]
\end{displaymath}

\noindent there exists a unique sequence of polynomials
\begin{displaymath}
\varphi_{n}\in \Z[x_{0},\dotsc x_{n};y_{0},\dotsc ,y_{n}]
\qquad \mbox{for }n\geq 0 
\end{displaymath}

\noindent such that 
\begin{displaymath}
w_{n} (\varphi_{0},\dotsc ,\varphi_{n})
= \Phi (w_{n} (x),w_{n} ( y)).
\end{displaymath}

In particular we have polynomials $S_{0}, S_{1},\dotsc $ and $P_{0},
P_{1},\dotsc $ associated with\linebreak  $\Phi (X,Y)=X+Y$ and $\Phi  (X,Y)=XY$
respectively.
\end{theorem}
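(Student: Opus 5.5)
The plan is the standard argument from Serre: construct the $\varphi_n$ by induction on $n$, which gives uniqueness immediately and existence over $\Z[p^{-1}]$, and then prove integrality. Solving the defining equation for the top coefficient gives the recursion
\begin{displaymath}
p^{n}\varphi_{n} = \Phi\bigl(w_{n}(x),w_{n}(y)\bigr) - \sum_{i=0}^{n-1} p^{i}\varphi_{i}^{p^{n-i}} .
\end{displaymath}
In $\Z[p^{-1}][x,y]$ this determines $\varphi_{n}$ uniquely once $\varphi_{0},\dots,\varphi_{n-1}$ are known. The base case is $\varphi_{0}=\Phi(x_{0},y_{0})$. Since $\Z[x,y]$ is torsion free, uniqueness over $\Z$ follows as well. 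Equivalently, the Witt polynomials $w_n$ are triangular in the $x_n$ with coefficient $p^n$, so $x_n$ can be expressed as a polynomial in $w_0,\dots,w_n$ with coefficients in $\Z[p^{-1}]$.

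The substance is showing that each $\varphi_{n}$ actually has integer coefficients. Assume inductively that $\varphi_{0},\dots,\varphi_{n-1}$ lie in $\Z[x,y]$. It then suffices to show that the right-hand side above is divisible by $p^{n}$ in $\Z[x,y]$.

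The key lemma I would prove first: if $a\equiv b \pmod{p^{k}}$ in a commutative ring, with $k\geq 1$, then $a^{p}\equiv b^{p}\pmod{p^{k+1}}$. This follows by writing $a=b+p^{k}c$ and expanding with the binomial theorem. Now let $\sigma$ be the ring endomorphism of $\Z[x,y]$ given by $x_{i}\mapsto x_{i}^{p}$ and $y_{i}\mapsto y_{i}^{p}$. It satisfies $\sigma(f)\equiv f^{p}\pmod p$. From the definition of the Witt polynomials one gets the identity
\begin{displaymath}
w_{n}(x)=w_{n-1}\bigl(x_{0}^{p},\dots,x_{n-1}^{p}\bigr)+p^{n}x_{n}.
\end{displaymath}
Applying this with $\varphi$ in place of $x$, together with the defining identity at level $n-1$ evaluated through $\sigma$, shows
\begin{displaymath}
\sum_{i<n}p^{i}\sigma(\varphi_{i})^{p^{n-1-i}} = \Phi\bigl(w_{n}(x),w_{n}(y)\bigr)
\end{displaymath}
exactly, using $\sigma(w_{n-1}(x))=w_{n-1}(x^p)=w_n(x)-p^nx_n$ modulo $p^n$; this reduction uses that $\Phi$ has integer coefficients. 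So it remains to compare $\sum p^{i}\sigma(\varphi_{i})^{p^{n-1-i}}$ with $\sum p^{i}\varphi_{i}^{p^{n-i}}$ modulo $p^{n}$. Since $\sigma(\varphi_{i})\equiv\varphi_{i}^{p}\pmod p$, iterating the lemma $n-1-i$ times gives $\sigma(\varphi_{i})^{p^{n-1-i}}\equiv\varphi_{i}^{p^{n-i}}\pmod{p^{n-i}}$. Multiplying by $p^{i}$ gives the required congruence modulo $p^{n}$. The main obstacle is keeping this bookkeeping straight, in particular the exact reduction of $\Phi(w_n(x),w_n(y))$ modulo $p^n$ to $\Phi$ evaluated on $\sigma(w_{n-1})$. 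For that step I would cite the congruence $w_{n}(x)\equiv\sigma(w_{n-1}(x))\pmod{p^{n}}$ and the fact that polynomial maps with integer coefficients preserve congruences.

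The special cases $S_{n}$ and $P_{n}$ come from taking $\Phi=X+Y$ and $\Phi=XY$.
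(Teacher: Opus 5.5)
Your argument is correct and is essentially Serre's proof in \emph{Local Fields}, which the paper cites without reproducing. In both, uniqueness and existence over $\Z[p^{-1}]$ come from the triangular recursion, and integrality comes by induction from $\sigma(\varphi_i)\equiv\varphi_i^p \pmod p$, the congruence $w_n(x)\equiv w_{n-1}(x_0^p,\dots,x_{n-1}^p)\pmod{p^n}$, and the lemma that $a\equiv b\pmod{p^k}$ implies $a^p\equiv b^p\pmod{p^{k+1}}$. One slip: $\sum_{i<n}p^i\sigma(\varphi_i)^{p^{n-1-i}}=\Phi(w_n(x),w_n(y))$ holds only modulo $p^n$, not exactly (for $\Phi=X$ the left side is $w_n(x)-p^nx_n$), but the congruence is all your argument needs.
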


If $A$ is an arbitrary commutative ring with
\begin{displaymath}
a = (a_0, \ldots, a_n, \ldots),
\quad 
b = (b_0, \ldots, b_n, \ldots) \in A^{\mN},
\end{displaymath}

\noindent set 
\[
a \boxplus  b := \bigl(S_0(a,b), \ldots, S_n(a,b), \ldots \bigr)
\qquad\aand 
a \boxtimes b := \bigl(P_0(a,b), \ldots, P_n(a,b), \ldots \bigr).
\]
  
\noindent For example
\begin{align*}
S_{0} (a,b) & = a_{0}+b_{0}  &
S_{1} (a,b) & = a_{1}+b_{1} 
      +\frac{a_{0}^{p}+b_{0}^{p}- (a_{0}+b_{0})^{p}}{p} \\
P_{0} (a,b) & =  a_{0}b_{0}  &
P_{1} (a,b) & =  a_{0}^{p}b_{1}+  a_{1} b_{0}^{p}+pa_{1}b_{1}.  
\end{align*}

\begin{theorem}\label{thm--Serre-thm-7}
\cite[Theorem II.7]{Serre-Local} The laws of composition
defined above make $A^{\mathbb{N}}$ into a commutative unitary ring
called the {\bf ring of Witt vectors with coefficients in $A$} and
denoted by $W(A)$.
\end{theorem}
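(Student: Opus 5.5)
We will follow Serre's "permanence of identities" strategy, so the ring axioms for $W(A)$ are deduced from the ring axioms of $A^{\mN}$ under the ghost map and not verified by hand from the polynomials $S_{n}$ and $P_{n}$.

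\textbf{Step 1: the ghost map.} Define the ghost map
\[
w:A^{\mN}\to A^{\mN},\qquad a\mapsto \bigl(w_{0}(a),w_{1}(a),\dotsc\bigr),
\]
and give the target the usual componentwise ring structure. By \cref{thm-Serre-thm-6}, and the defining property of $S_{n}$ and $P_{n}$, we have $w(a\boxplus b)=w(a)+w(b)$ and $w(a\boxtimes b)=w(a)w(b)$. In the same way \cref{thm-Serre-thm-6} supplies the remaining structure polynomials. With $\Phi(X)=-X$ (the one-variable case) it gives polynomials $I_{n}$ defining a negative. The constants $\Phi=0$ and $\Phi=1$ give the zero vector $(0,0,\dotsc)$ and the unit $(1,0,0,\dotsc)$; one checks that $w_{n}(1,0,\dotsc)=1$ for all $n$.

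\textbf{Step 2: when $p$ is invertible in $A$.} The equations $w_{n}(a)=c_{n}$ can be solved recursively for $a_{n}$, since $a_{n}$ enters $w_{n}$ as $p^{n}a_{n}$ plus terms in $a_{0},\dotsc,a_{n-1}$. Hence $w$ is a bijection. It carries $\boxplus,\boxtimes$, the negative, $0$ and $1$ to the componentwise operations. Transporting structure, $W(A)$ is therefore a commutative unital ring, and $w$ is a ring isomorphism.

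\textbf{Step 3: universal rings.} Take $A_{0}=\Z[x_{i},y_{i},z_{i}]$. It injects into $A_{0}\otimes\mathbb{Q}$, where $p$ is invertible. Each ring axiom is an identity between finitely many polynomial expressions in $S_{n}$, $P_{n}$ and $I_{n}$; for example, associativity is $P_{n}(P(x,y),z)=P_{n}(x,P(y,z))$. Such an identity holds in $W(A_{0}\otimes\mathbb{Q})$ by Step 2. Since the polynomials have integer coefficients and $A_{0}\to A_{0}\otimes\mathbb{Q}$ is injective, the identity already holds in $\Z[x,y,z]$.

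\textbf{Step 4: arbitrary $A$.} For any commutative ring $A$ and any $a,b,c\in A^{\mN}$, we specialise along the homomorphism $\Z[x,y,z]\to A$ given by $x_{i}\mapsto a_{i}$, $y_{i}\mapsto b_{i}$, $z_{i}\mapsto c_{i}$. The polynomial identities of Step 3 then hold in $A$, which proves the theorem.

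\textbf{Main obstacle.} The main delicacy is Step 3. We must check that every axiom really is a polynomial identity involving only finitely many variables in each coordinate $n$. This holds because $S_{n}$, $P_{n}$ and $I_{n}$ depend only on the indices $\le n$. We must also check that the integrality of the coefficients supplied by \cref{thm-Serre-thm-6} is exactly what lets us pass from $\mathbb{Q}$ back to $\Z$.
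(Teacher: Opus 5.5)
Your proposal is correct, and it is essentially Serre's own argument: the ghost map makes the ring axioms trivial when $p$ is invertible, you then pass to the universal polynomial ring $\Z[x_{i},y_{i},z_{i}]\subseteq \Z[x_{i},y_{i},z_{i}]\otimes\mathbb{Q}$, and finally specialise to an arbitrary $A$. The paper gives no proof of its own here and simply cites \cite[Theorem II.7]{Serre-Local}, so there is nothing further to compare.
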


When $A=\FFp $, $W (A)$ is the $p$-adic integers $\Z_{p}$. For
$A=\FF_{p^{k}}$, $W (A)$ is the degree $k$ extension of $\Z_{p}$
obtained by adjoining $(p^{k}-1)$th roots of unity, the integer lift
of the extension $\FF_{p^{k}}$ of $\FFp $.

\begin{defin}\label{def-WVrF}
{\bf The maps $W_{*}$, $\ANV$, $r$, and $\ANF$.}  
For a Witt vector $a= (a_{0},
a_{1},\dotsc )$, let 
\begin{displaymath}
  W_{*} (a)
  := (w_{0} (a), w_{1} (a), \dotsc )
 = (a_{0}, a_{0}^{p}+pa_{1},\dotsc )
\end{displaymath}

\noindent and
let its {\bf Verschiebung}  or {\bf shift} vector be
\begin{displaymath}
\ANV a := (0,a_{0},a_{1},\dotsc ).  
\end{displaymath}

For $x\in A$, let 
\begin{displaymath}
r (x):= (x,0,0,\dotsc )\in W (A).
\end{displaymath}

When $A$ has characteristic $p$, define the {\bf Frobenius} $\ANF:W
(A)\to W (A)$ by
\begin{displaymath}
\ANF (a_{0}, a_{1}, \dotsc ):= (a_{0}^{p}, a_{1}^{p}, \dotsc ).
\end{displaymath}
\end{defin}

Then we find that 
\begin{align*}
r (xy)
 & = r (x)\boxtimes r (y) = (xy,0,0,\dotsc ) , \\
(a_{0},a_{1},\dotsc )
 & =  r (a_{0})\boxplus \ANV  r (a_{1})\boxplus \ANV ^{2} r (a_{2})
                 \boxplus \dotsb  \\ 
 & =  \sum_{n\geq 0}\ANV ^{n}r (a_{n}), \\ 
r (x)\boxtimes (a_{0},a_{1},\dotsc )
 & = (x a_{0}, x^{p}a_{1},\dotsc ,x^{p^{n}}a_{n},\dotsc ),\\
\aand 
\ANV \ANF= \ANF \ANV  
 & = p.
\end{align*}

\subsection{The simplicial category and simplicial objects}
\label{sec-simp-cat} Simplicial sets were originally defined by
Eilenberg and Joseph Zilber in \cite{EZ}.  Here we use the indexing
conventions of Paul Goerss and Rick Jardine \cite[I.1]{Goerss-Jardine}.

\begin{defin}\label{def-simp-cat}
The {\bf simplical category} $\bdelt$ is that of finite ordered sets\linebreak 
$[n]=\left\{0,1,\dotsc ,n \right\}$ for $n\geq 0$, and order
preserving maps.  Such maps include
\begin{numequation}\label{eq-face-degen}
\begin{split}
d^{i}:[n-1]\to [n],
   &\mbox{ the injective map not having $i$ in its image}\\
\mbox{and } 
s^{i}:[n+1]\to [n],
   &\mbox{ the surjection sending both $i$ and $i+1$ to $i$,} 
\end{split}
\end{numequation}%

\noindent (both for $0\leq i\leq n$) known as coface and codegeneracy maps.
All morphisms in $\bdelt  $ are composites of them. These
satisfy the following {\bf cosimplicial identities}:
\begin{enumerate}
\item [(i)] $d^{j}d^{i}=d^{i}d^{j-1}$ for $i<j$
\item [(ii)] $s^{i}d^{j}=d^{i}s^{j-1}$ for $i<j$
\item [(iii)] $s^{j}d^{i}=\Id$ for $i=j$ and for $i=j+1$
\item [(iv)] $s^{j}d^{i}=d^{i-1}s^{j}$ for $i>j+1$
\item [(v)] $s^{j}s^{i}=s^{i}s^{j+1}$ for $i\leq j$.
\end{enumerate} 
\end{defin}

{\bf Warning.} {\em The symbol $\bdelt$ is not to be confused with
$\Delta $, which we sometimes use to denote a diagonal map.}

\begin{defin}\label{def-simp-object}
  A {\bf simplicial object $X$} in a category
$\mcC$ (sometimes denoted by $X_{\bullet}$) is a $\mcC$-valued
functor on $\bdelt ^{\op}$, in which we denote the image of $[n]$ by
$X_{n}$.  Any such functor comes equipped with face maps
$d_{i}:X_{n}\to X_{n-1}$ and degeneracy maps $s_{i}:X_{n}\to X_{n+1}$
induced by the morphisms $d^{i}$ and $s^{i}$ in $\bdelt$.  We denote
the category of such functors by $\mcC_{\bdelt}$.

When $\mcC=\Set$, the category of sets, an element in the set
$X_{n}$ is called an {\bf $n$-simplex}. It is {\bf degenerate} if it
is in the image of a degeneracy map. Otherwise it is {\em
nondegenerate}.

Similarly a {\bf cosimplicial object $X^{\bullet}$} is a $\mcC$-valued
functor on $\bdelt$, in which we denote the image of $[n]$ by $X^{n}$.

\end{defin}

The corresponding {\bf simplicial identities} are
\begin{numequation}\label{eq-simp-id}
\begin{split}
d_{i}d_{j}
 & = d_{j-1}d_{i}\qquad\hspace{1.2mm} \mbox{for }i<j   \\
d_{i}s_{j}
 & = \mycases{    
s_{j-1}d_{i}
       &\mbox{for }i<j\\
\Id    &\mbox{for }i=j,\,j+1\\
s_{j}d_{i-1}
       &\mbox{for }i>j+1\\
}\hspace{-8cm}  \\ 
s_{i}s_{j}
 & = s_{j+1}s_{i}\qquad\hspace{1.4mm} \mbox{for }i\leq j . 
\end{split}
\end{numequation}%

\begin{defin}\label{def-simp-sets}
{\bf Some simplicial sets.}
The simplicial set $\bdelt^{n}$, the {\bf standard $n$-simplex}, is
defined by
\begin{displaymath}
(\bdelt ^{n})_{k} := \bdelt ([k],[n]) = \bdelt^{\op} ([n],[k]).
\end{displaymath}

\noindent This is the Yoneda functor represented by $[n]$, so we could
denote it by $\yo^{[n]}$. The symbol $\yo$ is the Japanese hiragana
character ``yo,'' the first syllable of Yoneda's name.

In its {\bf boundary $\partial \bdelt ^{n}$}, the set of $k$-simplices
is the set of such morphisms in $\bdelt$ which are not surjective.
 
In its {\bf $i$th face}, the set of $k$-simplices is the set of such
morphisms whose image does not contain $i$.

In the {\bf $i$th horn} $\partial \bdelt ^{n}_{i}\subseteq \partial \bdelt
^{n}$ for $0\leq i\leq n$, the set of $k$-simplices is the set of
nonsurjective morphisms whose image does contain $i$.
\end{defin}

The $i$th horn is usually denoted (for example in
\cite{Goerss-Jardine} and \cite{Lurie:HTT}) by $\Lambda^{n}_{i}$, but
we will  use that symbol differently in \cref{def-r-cyclic}.

The following is an exercise for the reader.

\begin{prop}\label{prop-card}
The cardinality of $(\bdelt^{n})_{k}$ is $\binom{n+k+1}{k+1}$, and the
number of nondegenerate $k$-simplices in $\bdelt^{n}$ is $\binom{n+1}{k+1}$.
\end{prop}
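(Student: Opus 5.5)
The plan is to use the identification $(\bdelt^{n})_{k}=\bdelt([k],[n])$ from \cref{def-simp-sets} and count order preserving maps $f:[k]\to[n]$ directly. Such an $f$ is the same thing as a weakly increasing sequence $0\leq f(0)\leq f(1)\leq \dotsb \leq f(k)\leq n$. I would count these with the standard stars-and-bars bijection. Send the sequence to the strictly increasing sequence $g(i):=f(i)+i$. This satisfies
\begin{displaymath}
0\leq g(0)<g(1)<\dotsb <g(k)\leq n+k,
\end{displaymath}
so it is a $(k+1)$-element subset of $\{0,1,\dotsc ,n+k\}$. The inverse subtracts $i$ from the $i$th smallest element. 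This gives $\binom{n+k+1}{k+1}$ morphisms, which is the first claim.

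For the second claim I first identify the nondegenerate $k$-simplices. The degeneracy $s_{j}:(\bdelt^{n})_{k-1}\to(\bdelt^{n})_{k}$ is precomposition with the codegeneracy $s^{j}:[k]\to[k-1]$ of \cref{eq-face-degen}. So a map $f:[k]\to[n]$ is in the image of $s_{j}$ exactly when $f(j)=f(j+1)$.

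\begin{itemize}
\item If $f(j)=f(j+1)$, then $f=f'\circ s^{j}$, where $f'$ is $f$ with the repeated entry deleted.
\item Conversely, any map of the form $f'\circ s^{j}$ repeats the value at positions $j$ and $j+1$.
\end{itemize}

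Hence $f$ is nondegenerate if and only if it is strictly increasing, that is, injective. Injective order preserving maps $[k]\to[n]$ correspond bijectively to their images, which are $(k+1)$-element subsets of $[n]$. That gives $\binom{n+1}{k+1}$ nondegenerate $k$-simplices, which is $0$ when $k>n$, as expected.

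The only step that needs any care is the description of degeneracy. It comes down to checking that degeneracy maps of the representable functor act by precomposition with the $s^{j}$. This is immediate from the definition $(\bdelt^{n})_{k}=\bdelt^{\op}([n],[k])$ together with the convention in \cref{def-simp-object}. The rest is routine combinatorics.
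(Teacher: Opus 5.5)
Your proof is correct and complete. The shift $f(i)\mapsto f(i)+i$ identifies order preserving maps $[k]\to[n]$ with $(k+1)$-element subsets of $\{0,\dotsc,n+k\}$, and identifying the image of $s_{j}$ with the maps satisfying $f(j)=f(j+1)$ reduces nondegeneracy to injectivity, with the case $k=0$ holding vacuously. The paper gives no proof of its own and leaves this as an exercise, so there is nothing to compare against; your argument is the standard one for this count.
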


Let $\mcTop $ denote the category of compactly generated weak
Hausdorff spaces.  The {\em topological $n$-simplex} is the space
\begin{numequation}\label{eq-Delta-top}
\begin{split}
\tdelt{n}
:=\left\{(x_{0}, x_{1},\dotsc ,x_{n})\in \reals^{n+1}: x_{i}\geq 0
\mbox{ and }\sum x_{i}=1 \right\}.
\end{split}
\end{numequation}%

\noindent One can check that $\tdelt{2}$ is an equilateral triangle
and $\tdelt{3}$ is a regular tetrahedron.

\begin{defin}
The {\bf geometric realization} $|X|$ of a
simplicial set $X$ is the colimit of the $\mcTop$-valued functor
\begin{displaymath}
[n]\mapsto X_{n}\times \tdelt{n}.
\end{displaymath}

The geometric realization of a simplicial space (as in
\cref{def-simp-object}) is similarly defined.  More generally if $X$
is a simplicial object in a cocomplete category $\mcC $ that is tensored over
$\mcTop$, then $|X|$ is also an object in $\mcC$.
\end{defin}
 
This space turns out to be a quotient of the disjoint union of
geometric realizations of the {\em nondegenerate} topological
simplices of $X$, meaning ones not in the image of any degeneracy map.
The data given by the face maps determine how they are glued together.
In particular, $|\bdelt^{n}| = \tdelt{n}\approx D^{n}$, $|\partial
\bdelt^{n}|\approx S^{n-1}$, and the geometric realizations of both
the the $i$th face and $i$th horn of $\bdelt^{n}$ are homeomorphic to
$D^{n-1}$.

\begin{defin}\label{def-nerve}
{\bf The nerve and classifying space of a small
category.}  For a small category $\mcC$, the {\bf nerve $N_{\bullet}
(\mcC)$} is the simplicial set given by
\begin{displaymath}
N_{n} (\mcC):=\mcCat ([n],\mcC)
\end{displaymath}

\noindent where $\mcCat (-,-)$ denotes the set of functors from one
small category to another and $[n]$ here denotes the linearly ordered
set $\left\{0,\dotsc,n \right\}$ regarded as a category. The {\bf
classifying space $B\mcC$} is the geometric realization of the nerve,
$|N(\mcC)|$.  When the category $\mcC$ is topological, we get a simplicial
space.

For a topological monoid $\Gamma$, we denote by $\mcB \Gamma$  the topological
category with one object and a morphism for each point in $\Gamma $.
Composition of morphisms is determined by the monoid structure of
$\Gamma $.

When $\mcC=\mcB \Gamma$, we denote $N_{\bullet} (\mcC)$ by
$N_{\bullet} (\Gamma)$, leading to $N_{n} (\Gamma)=\Gamma^{n}$.
\end{defin}

In other words, $N_{n} (\mcC)$ and $N^{\cyc}_{n} (\mcC)$ are the sets of
diagrams in $\mcC$ of the form
\begin{displaymath}
\xymatrix
@R=4mm
@C=4mm
{
{c_{0}}\ar[r]^(.5){}
  &{c_{1}}\ar[r]^(.5){}
    &{\dotsb }\ar[r]^(.5){}
      &{c_{n-1}}\ar[r]^(.5){}{}
        &{c_{n}}
}
\end{displaymath}

\noindent and 
\begin{displaymath}
\xymatrix@1
@R=4mm
@C=5.5mm
{
{c_{0}}\ar[r]^(.5){}
  &{c_{1}}\ar[r]^(.5){}
    &{\dotsb }\ar[r]^(.5){}
      &{c_{n-1}}\ar[r]^(.5){}{}
        &{c_{n},}\ar@/^1pc/[llll]^(.5){}
}
\end{displaymath}

\noindent where for each $i$, the $(n+1)$-fold composite morphism
$c_{i}\to c_{i}$ is {\em not} required to be the identity. Of the
$n+1$ face maps $N_{n} (\mcC)\to N_{n-1} (\mcC)$, $n-1$ are obtained
by composing each of the $n-1$ pairs of adjacent arrows above, and the
other two are obtained by ignoring the maps from $c_{0}$ and to
$c_{n}$.  All face maps 
\begin{displaymath}
N^{\cyc}_{n} (\mcC)\to N^{\cyc}_{n-1} (\mcC)
\end{displaymath}

\noindent
are obtained by such composition.  In both cases the $n+1$ degeneracy
maps are obtained by inserting the identity map on $c_{i}$ for each
$i$.

The inclusion functors $[n]\to [n]^{\cyc}$ induce maps of simplicial sets 
\begin{displaymath}
N^{\cyc}_{\bullet} (\mcC ) \to
N_{\bullet} (\mcC ) 
\end{displaymath}

\noindent and their geometric realizations. Composing with the action
of $\mT$ on the cyclic space $|N^{\cyc}_{\bullet} (\mcC )|$, we get maps
\begin{numequation}\label{eq-maps}
\begin{split}
\xymatrix
@R=3mm
@C=10mm
{
{\mT \times |N^{\cyc}_{\bullet} (\mcC )|}\ar[r]^(.55){\mu }
  &{|N^{\cyc}_{\bullet} (\mcC)|} \ar[r]^(.5){\pi }
    &{|N_{\bullet} (\mcC ) |}\\
{|N^{\cyc}_{\bullet} (\mcC )|}\ar[rr]^(.53){f}
  & &{\mcL |N_{\bullet} (\mcC )|}
}
\end{split}
\end{numequation}%

\noindent where $f$ is corresponds to $\pi \mu $ under the
topological adjunction
\begin{displaymath}
\Map (\mT\times X,\,\,Y)\,\,\cong\,\, \Map (X,\,\,\mcL Y).
\end{displaymath}

\begin{theorem}\label{thm-eqvr-equiv}
{\bf A $\mT$-{\eqvr} equivalence.}  \cite[Theorem 7.3.11]{Loday15}.
When $\mcC $ is the one object category associated with a topological
or simplicial group $G$, the map $f$ of \cref{eq-maps} is a
$\mT$-{\eqvr} equivalence.
\end{theorem}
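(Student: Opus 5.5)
The plan is to prove the statement in two layers. First show that $f:|N^{\cyc}_{\bullet}(G)|\to \mcL BG$ is an ordinary (nonequivariant) weak equivalence, where $BG=|N_{\bullet}(G)|$. Then upgrade this to a $\mT$-equivariant equivalence by checking that $f$ induces weak equivalences on $\rC_{r}$-fixed points for every finite subgroup $\rC_{r}\subseteq \mT$. The point of this reduction is that, in the family of finite subgroups, a $\mT$-map that is an equivalence on all $\rC_{r}$-fixed points is what "$\mT$-equivariant equivalence" means here. We cannot demand it on $\mT$-fixed points: $(\mcL BG)^{\mT}=BG$, whereas the $\mT$-fixed points of $|N^{\cyc}_{\bullet}(G)|$ are just the constant cyclic simplices.

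For the nonequivariant step, map the evaluation fibration of \cref{prop-base-free}, $\Omega BG\to \mcL BG\to BG$, against a fibration built from the cyclic bar construction. By \cref{def-cyc-bar-construction}, $N^{\cyc}_{n}(G)=G^{n+1}$. The map $(g_{0},\dots,g_{n})\mapsto (g_{1},\dots,g_{n})$ to $N_{n}(G)$ realizes to $\pi$ of \cref{eq-maps}, and $N^{\cyc}_{\bullet}(G)$ identifies with the two-sided bar construction $B_{\bullet}(*,G,G^{\ad})$, that is, the Borel construction $EG\times_{G}G^{\ad}$ for the conjugation action. This is a quasifibration over $BG$ with fiber $G$, because the realization of a levelwise quasifibration of simplicial spaces with group fibers is a quasifibration. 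Using $\mu$, the map $f$ sends this fiber sequence to the evaluation fibration: on fibers it is the standard equivalence $G\simeq \Omega BG$, the adjoint of $\Sigma G\to BG$ coming from the $1$-skeleton, and on bases it is the identity. The five lemma on homotopy groups, together with a $\pi_{0}$ check that uses only that $G$ is grouplike, then shows $f$ is a weak equivalence. It also suffices to treat each component of $\mcL BG$, indexed by conjugacy classes in $\pi_{0}G$, separately.

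For the equivariant step, use the $r$th edgewise subdivision from the cyclic-set machinery (\cref{sec-edgewise-sub}). After subdivision, $\rC_{r}$ acts simplicially and $|N^{\cyc}_{\bullet}(G)|^{\rC_{r}}$ is the realization of the levelwise fixed points. Levelwise, these fixed points are $r$-periodic tuples, so they are again $N^{\cyc}_{\bullet}(G)$, giving $\rho_{r}^{*}|N^{\cyc}(G)|^{\rC_{r}}\cong|N^{\cyc}(G)|$. Combined with the homeomorphism $\varphi_{r}$ of \cref{ex-free-loop}, this produces a commutative square. Its vertical maps are the two cyclotomic structure maps, its top map is $f^{\rC_{r}}$, and its bottom map is $f$ itself. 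The nonequivariant result then gives the $\rC_{r}$-fixed point statement.

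I expect the main obstacle to be verifying that this square commutes, possibly only up to a $\mT$-homotopy. Doing so requires an explicit description of the $\mT$-action on realizations of cyclic spaces and its compatibility with subdivision. I would first try this on the standard cyclex and then extend by naturality.
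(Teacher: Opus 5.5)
You are right that $f$ cannot be an equivalence on $\mT$-fixed points, and your first step is the standard argument. It uses the isomorphism $N^{\cyc}_{\bullet}(G)\cong B_{\bullet}(*,G,G^{\ad})$, $(g_0,\dots,g_n)\mapsto(g_1,\dots,g_n;\,g_0g_1\cdots g_n)$, and then compares fibrations over $BG$. On the quasifibration claim: what you need is that the face maps induce equivalences on fibers, which holds here since they act by the identity or by conjugation; alternatively, just use $EG\times_G G^{\ad}$.

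Since $f(wy)(z)=\pi(zwy)=f(y)(zw)$, the map $f$ is $\mT$-equivariant by construction. So in the Borel sense (an equivariant map that is an underlying weak equivalence) you are already done after this step. The paper itself gives nothing beyond the citation. Your second step aims at the stronger statement that every $f^{\rC_r}$ is an equivalence. That statement is true, but the square you describe does not commute. Take $r=2$ and let $y$ be the vertex $(g)$ with $g\neq 1$. Then $f(y)(1)=\pi(y)$ is the vertex of $BG$. The corresponding $\rC_2$-fixed point is the midpoint of the $1$-simplex $(g,g)$, and $\varphi_2$ does not change the value at $1$. So the other composite gives a loop starting at the midpoint of the edge $g$ of $BG$. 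Strict commutativity therefore cannot be checked cyclex by cyclex; you need a homotopy, and that is the missing idea.

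Here is one way to supply it. Let $\iota_r$ be the composite $|N^{\cyc}(G)|\cong|N^{\cyc}(G)|^{\rC_r}\subseteq|N^{\cyc}(G)|$. By the $\mT$-equivariance of $\rho_r^*|N^{\cyc}(G)|^{\rC_r}\cong|N^{\cyc}(G)|$ (\cref{prop-smooth-cyclo}), the other side of your square sends $y$ to the loop $z\mapsto\pi\iota_r(zy)$, while $f(y)$ is $z\mapsto\pi(zy)$. Hence any homotopy $H_s\colon\pi\simeq\pi\iota_r$ yields a homotopy through $\mT$-maps, $y\mapsto(z\mapsto H_s(zy))$.

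To construct $H$:
\begin{enumerate}
\item Write $\iota_r=D_r\circ|\Delta_r|$. Here $\Delta_r\colon N^{\cyc}(G)\to\sd^{*}_{r}N^{\cyc}(G)$ repeats a tuple $r$ times, and $D_r$ is as in \cref{lem-subdiv-homeo}.
\item For any simplicial space $Y$, $D_r$ is homotopic to $|p|$, where $p\colon\sd^{*}_{r}Y\to Y$ is induced by the first-block inclusion $[n-1]\to[rn-1]$, $j\mapsto j$. This inclusion is natural in $[n-1]$. Both it and $d_{(r)}$ are maps of cosimplicial spaces with affine structure maps, so the straight-line homotopy between them descends to realizations.
\item By naturality of $D_r$ and of $p$,
\[
\pi\iota_r=D_r\,|\sd^{*}_{r}\pi|\,|\Delta_r|\simeq|p\circ\sd^{*}_{r}\pi\circ\Delta_r|=|\pi\circ p\circ\Delta_r|.
\]
\item A direct computation gives $p\Delta_r(g_0,\dots,g_{n-1})=\bigl((g_0\cdots g_{n-1})^{r-1}g_0,\,g_1,\dots,g_{n-1}\bigr)$. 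This is the $r$th power map on the holonomy coordinate and changes only the $0$th entry, so $\pi\circ p\circ\Delta_r=\pi$.
\end{enumerate}
Thus the square commutes up to $\mT$-homotopy. Its vertical maps are homeomorphisms, so your first step then shows each $f^{\rC_r}$ is a weak equivalence.
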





\begin{defin}\label{def-chain-complex}
{\bf The chain complex of a simplicial abelian group.}  Let $C$ be a
simplicial abelian group or more generally a simplicial object in an
abelian category. The chain complex $\Ch (C)$, in which the $n$th
chain group (or abelian object) is $C_{n}$ and the $n$th boundary
operator $\partial_{n}:C_{n}\to C_{n-1}$ for $n>0$ is given by
\begin{numequation}\label{eq-part-n}
\begin{split}
\partial_{n}:=\sum_{0\leq i\leq n} (-1)^{i}d_{i}.
\end{split}
\end{numequation}%

When $C$ is the free abelian group $\Z X$ on a simplicial set $X$, the
chain complex $\Ch ( \Z X)$ is called the {\bf Moore complex} of $X$.

The {\bf normalized chain complex} $N\Ch (C)$ has as its $n$th chain group
\begin{displaymath}
N\Ch (C)_{n} := \bigcap_{i=0}^{n-1}\ker (d_{i})\subseteq C_{n},
\end{displaymath}


\noindent and $n$th boundary operator is  $d_{n}$.

\end{defin}

The geometric realization $|\Ch (C) |$ of the underlying simplicial
set is known to be a generalized {\SESM} space with
\begin{numequation}\label{eq-DK}
\begin{split}
\pi_{*}|C| =H_{*}C.
\end{split}
\end{numequation}%

\noindent This is the {\em Dold-Kan correspondence}, which is nicely
explained by Akhil Mathew in \cite{Mathew-DK}.

\begin{defin}\label{def-Sing}
For a topological space $X$, the {\bf singular simplicial set 
$\Sing_{\bullet} X$} is defined by
\begin{displaymath}
\Sing_{n} X:=\Map (\tdelt{n},X),
\end{displaymath}

\noindent the set of continuous maps $\tdelt{n}\to X$.  Face and
degeneracy maps are defined in terms of maps among the $\tdelt{n}$s.
\end{defin}

The functor $\Sing$ is the right adjoint of geometric realization.
This was stated without proof by Dan Kan in \cite{Kan58}. The singular
chain complex of $X$ is by definition the Moore complex of $\Sing(X)$
as in \cref{def-chain-complex}.

The Hochschild chain complex $C^{\Hoch} (A)$ of \cref{def-HHA} is ${\rm
Ch}(\mathbf{HH}_{\bullet} (A))$, where the simplicial abelian group
$\mathbf{HH}_{\bullet} (A)$ (which Connes denotes by $A^{\natural}$ in
\cite[\S3]{Connes}) is defined by $\mathbf{HH}_{n} (A) =
A^{\otimes (n+1)}$ with face and degeneracy maps 
\begin{numequation}\label{eq-simp-chain}
\begin{split}
d_{i} (a_{0}\otimes \dotsb \otimes a_{n})
 & := \mycases{
a_{0}\otimes \dotsb \otimes a_{i}a_{i+1}\otimes \dotsb \otimes  a_{n}
       &\mbox{for }i<n\\
a_{n}a_{0}\otimes a_{1}\otimes \dotsb \otimes  a_{n-1}
       &\mbox{for }i=n
}  \\
\aand 
s_{i} (a_{0}\otimes \dotsb \otimes a_{n})
 & := \mycases{    
1\otimes a_{0}\otimes \dotsb \otimes a_{n}
       &\mbox{for }i=0\\
a_{0}\otimes \dotsb \otimes a_{i-1}\otimes 1\otimes 
    a_{i}\otimes \dotsb \otimes a_{n}\hspace{-1.1cm}\\
       &\mbox{for }i>0.
}
\end{split}
\end{numequation}%

Suppose we have a symmetric monoidal category $(\mcC,\otimes )$ with a
monoid object $R$, {\ie } an object equipped with a map $R\otimes R\to
R$ with suitable properties that include strict associativity.  Then
we could define a cyclic (and hence simplicial) object
$\mathbf{HH}_{\bullet} (R)$ in $\mcC$ using the formulas of
\cref{eq-simp-chain} and \cref{eq-last-arc}.  Suppose in addition that
$\mcC $ is tensored over the category of topological spaces, meaning
that for an object $C$ in $\mcC $ and a space $X$, we can make sense
of $C\times X$ as another object in $\mcC$.  Suppose further that
$\mcC$ is cocomplete, meaning closed under colimits.  Then we can make
sense of the geometric realization of a simplicial or cyclic object in
$\mcC $ and thus define
\begin{numequation}\label{eq-THH}
\begin{split}
\THH (R) := |\mathbf{HH}_{\bullet} (R)|, 
\end{split}
\end{numequation}%

\noindent an object in $\mcC $, the {\bf topological Hochschild
homology} of the monoid object $R$.  This term was invented by {\Bok}.
For reasons explained \cref{sec-cyc-object}, it comes equipped with an
action of the group $\mT$.

It follows from \cref{eq-DK} that $|\mathbf{HH}_{\bullet} (A)|$ is a
generalized {\SESM} space with
\begin{numequation}\label{eq-bold-HHA}
\begin{split}
\pi_{*}|\mathbf{HH}_{\bullet} (A)|
  =\HH_{*} (A)\qquad \mbox{as in \cref{def-HHA}.} 
\end{split}
\end{numequation}%





\begin{defin}\label{def-EX}
For a discrete set $X$, $E_{\bullet}X$ is the simplicial set with 
\begin{displaymath}
E_{n}X:=\Map ([n],X) = X^{n+1},
\end{displaymath}

\noindent the set of maps to $X$ from a set with $n+1$ elements.
\end{defin}

\begin{prop}\label{prop-EX}
{\bf Contractibility and freeness.}
\begin{enumerate}[label={(\roman*)},itemindent=0em]
\item \label{prop-EXi} 
The geometric realization $|EX|$ is contractible.

\item \label{prop-EXii} 
If $X$ is a discrete (abelian) group $G$, then
$E_{\bullet}G$ is a simplicial (abelian) group, and $|EG|$ is a
contractible free $G$-space with orbit space
\begin{displaymath}
|EG|_{G}=|N (\mcB G) |\qquad \mbox{as in \cref{def-nerve}}.
\end{displaymath}



\end{enumerate}
\end{prop}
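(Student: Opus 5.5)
The plan is to prove both parts with the elementary simplicial machinery of \cref{def-simp-cat} and \cref{def-nerve}. For \cref{prop-EXi} I will show that $E_{\bullet}X$ has an extra degeneracy, which yields a simplicial contraction to a point. For \cref{prop-EXii} I will check freeness levelwise and identify the quotient with the nerve directly.

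For \cref{prop-EXi}, assume $X$ is nonempty and fix $x_{0}\in X$. An $n$-simplex is a tuple $(y_{0},\dotsc ,y_{n})$, with face maps deleting an entry and degeneracies repeating one. Define
\begin{displaymath}
s_{-1}:E_{n}X\to E_{n+1}X,\qquad (y_{0},\dotsc ,y_{n})\mapsto (x_{0},y_{0},\dotsc ,y_{n}).
\end{displaymath}
This satisfies $d_{0}s_{-1}=\Id$ and $d_{i}s_{-1}=s_{-1}d_{i-1}$ for $i>0$, the same pattern as the contracting homotopy $u$ of \cref{eq-contracting}. The standard consequence is that the augmented simplicial set $E_{\bullet}X\to \ast$ is simplicially homotopy equivalent to the constant simplicial set $\ast$, so $|EX|$ is contractible.

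If one prefers not to quote that fact, there is a more concrete route. $E_{\bullet}X$ is the nerve of the indiscrete (chaotic) category on $X$: an $n$-simplex is a string of $n$ composable arrows $y_{0}\to \dotsb \to y_{n}$, each arrow being unique. In this category every object is initial, so the category is equivalent to the terminal category. A natural transformation between functors $\mcC\to\mcD$ induces a homotopy between the maps $B\mcC\to B\mcD$, since it is a functor $\mcC\times[1]\to\mcD$ and $B$ preserves products. Applying this to $\Id$ and the constant functor at $x_{0}$ gives contractibility.

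For \cref{prop-EXii}, the group structure is componentwise. Face and degeneracy maps are given by deleting or repeating coordinates, so they are homomorphisms, and $E_{\bullet}G$ is a simplicial (abelian) group. Let $G$ act diagonally: $g\cdot(g_{0},\dotsc ,g_{n})=(gg_{0},\dotsc ,gg_{n})$. This action is free on each $E_{n}G$. Geometric realization of a levelwise free action of a discrete group gives a free action on the CW complex $|EG|$ that permutes the open cells freely. Hence $|EG|\to |EG|/G$ is a covering, and $|EG|$ is a free $G$-space, contractible by \cref{prop-EXi}.

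For the orbit space, realization commutes with quotients because both are colimits, so $|EG|/G=|E_{\bullet}G/G|$. The map
\begin{displaymath}
(g_{0},\dotsc ,g_{n})\mapsto (g_{0}^{-1}g_{1},\, g_{1}^{-1}g_{2},\dotsc ,g_{n-1}^{-1}g_{n})\in G^{n}
\end{displaymath}
is invariant under the diagonal action and induces a bijection $(E_{n}G)/G\cong G^{n}=N_{n}(\mcB G)$; the inverse sends $(h_{1},\dotsc,h_{n})$ to the orbit of $(1,h_{1},h_{1}h_{2},\dotsc)$. It remains to check that this bijection respects the simplicial structure:
\begin{itemize}
\item deleting an interior $g_{i}$ corresponds to multiplying the adjacent $h_{i}h_{i+1}$;
\item deleting $g_{0}$ or $g_{n}$ corresponds to dropping $h_{1}$ or $h_{n}$;
\item repeating an entry corresponds to inserting the identity $1$.
\end{itemize}
These are exactly the face and degeneracy maps of the nerve described after \cref{def-nerve}.

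The only real obstacle is bookkeeping, namely matching the face maps with the nerve's conventions. The homotopy-theoretic input reduces to the extra degeneracy argument in \cref{prop-EXi}, which I expect to be routine.
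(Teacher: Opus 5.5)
The paper states this proposition without proof, so there is no argument of the author's to compare yours with. Your proof is correct, and I have only a few refinements.

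First, the hypothesis $X\neq\emptyset$ that you add in (i) is not just convenient. It is a necessary correction to the statement, because $E_{\bullet}\emptyset$ is empty and so is its realization.

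Second, the extra-degeneracy lemma you invoke needs more than $d_{0}s_{-1}=\mathrm{id}$ and $d_{i}s_{-1}=s_{-1}d_{i-1}$. It also needs the degeneracy identities $s_{i}s_{-1}=s_{-1}s_{i-1}$ for $i\geq 0$ (read as $s_{0}s_{-1}=s_{-1}s_{-1}$ when $i=0$). These are what make the resulting homotopy compatible with degeneracies. They hold trivially here, since $s_{-1}$ prepends $x_{0}$ and $s_{i}$ repeats the $i$th entry.

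You could also skip the lemma and write the homotopy down directly. Define $E_{\bullet}X\times\bdelt^{1}\to E_{\bullet}X$ by sending $((y_{0},\dotsc,y_{n}),\alpha)$ to the tuple whose $i$th entry is $x_{0}$ if $\alpha(i)=0$ and $y_{i}$ if $\alpha(i)=1$. This map is visibly compatible with precomposition by maps $[m]\to[n]$. It restricts to the identity at $\alpha\equiv 1$ and to the constant map at $\alpha\equiv 0$. It is exactly the nerve of the natural transformation from the constant functor at $x_{0}$ to the identity that appears in your second argument, so your two routes are really the same.

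Finally, in (ii) the covering-space assertion is more than the statement requires, since freeness suffices. It is nevertheless true for free $G$-CW complexes with $G$ discrete.
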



\subsection{Connes' cyclic category and cyclic objects}\label{sec-Connes} 

In \cite{Connes} Connes defines the {\em cyclic category} $\blamb $,
which has the same objects as $\bdelt$, but more morphisms.  A formal
description can be found in Loday's book \cite[6.1]{Loday}, where it
is denoted by $\Delta C$. His category $\blamb$ is isomorphic to that
of \cref{def-paracyclic} below. Connes regards $[n]$ (a set with $n+1$
elements) as the set of $(n+1)$th roots of unity sitting in the unit
circle of the complex numbers $\cxs $; see \cite[III.A.$\beta
$]{Connes94}.  His morphisms are homotopy classes of orientation
preserving self-maps of degree 1 of that circle that preserve these
subsets.  One such map is $\tau_{n}:[n] \to [n]$, which denotes
counterclockwise rotation by $2\pi / (n+1)$, so
$\tau_{n}^{n+1}=1_{[n]}$. We denote the corresponding morphism in
$\blamb^{\op}$ by $t_{n}$. There are some obvious identities involving
$t_{n}$ with the morphisms $d_{i}$ and $s_{i}$ of \cref{eq-simp-id}
spelled out by Loday in \cite[ 6.1.2]{Loday}, namely
\begin{numequation}\label{eq-tau-d-s}
\begin{split}
d_{i}t_{n}
 & = \mycases{    
d_{n}  &\mbox{for }i=0\\
t_{n-1}d_{i-1}
       &\mbox{for }1\leq i\leq n
}  \\
s_{i}t_{n}
 & = \mycases{    
t_{n+1}^{2}s_{n}
       &\mbox{for }i=0\\
t_{n+1}s_{i-1}
       &\mbox{for }1\leq i\leq n.
}
\end{split}
\end{numequation}%

\begin{prop}\label{prop-unique}
\cite[Theorem 6.1.3]{Loday}
Any morphism $[n]\to [k]$ in $\blamb$ can be written uniquely as the
composite of some iterate of $\tau_{n}$ with a morphism $[n]\to [k]$
in $\bdelt $.
\end{prop}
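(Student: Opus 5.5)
We will work with Connes' geometric description of $\blamb$: a morphism $[n]\to[k]$ is a homotopy class of orientation preserving degree one maps $f$ of the circle sending the $(n+1)$th roots of unity into the $(k+1)$th roots of unity. The plan has three steps: build a homotopy invariant ``cyclic shift'' $j$ of $f$, use $j$ to factor $f$ as an order-preserving map after a rotation, and then check uniqueness.

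\emph{Step 1: the invariant $j$.} Label the points of $[n]$ as $\zeta^{0},\dots,\zeta^{n}$ in counterclockwise order, with $\zeta=e^{2\pi\sqrt{-1}/(n+1)}$, and similarly for $[k]$. Since $f$ has degree one and preserves orientation, it is monotone (weakly nondecreasing) on lifts to $\reals$. Hence the cyclic sequence $f(\zeta^{0}),\dots,f(\zeta^{n})$ wraps around $[k]$ exactly once. So there is a cyclic position where the labels ``drop back'', namely an index $j$ with $f(\zeta^{j})\ne$ the point labeled $k$ followed by something smaller, or all labels constant. More precisely, define $j$ to be the smallest index such that the sequence
\[
f(\zeta^{j}),f(\zeta^{j+1}),\dots,f(\zeta^{j+n})
\]
(indices mod $n+1$) is nondecreasing when read through the lifts in the arc starting at $1\in[k]$. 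Concretely, $j$ is determined by which arc between consecutive points of $[n]$ is mapped across the base point $1$ of the target. This datum depends only on the homotopy class of $f$ within maps preserving the finite sets, so $j$ is well defined on morphisms.

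\emph{Step 2: existence.} Precompose $f$ with $\tau_{n}^{-j}$, so that the arc crossing the base point comes last. The composite then sends $0,\dots,n$ to a weakly increasing sequence in $\{0,\dots,k\}$. Its degree-one, orientation-preserving homotopy class is exactly the class of the standard piecewise-linear realization of that order-preserving map, which is a morphism $\phi$ of $\bdelt$. Thus $f=\phi\circ\tau_{n}^{j}$. One must also handle the case where $f$ is constant on all of $[n]$. There the wrap-around is carried entirely by one arc, namely the arc from $\zeta^{j-1}$ to $\zeta^{j}$ for some $j$, and that arc determines $j$ via homotopy rel the marked points. This is the one subtle point, and it is why morphisms are homotopy classes rather than maps of sets.

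\emph{Step 3: uniqueness, the expected main obstacle.} Suppose $\phi\tau_{n}^{j}=\phi'\tau_{n}^{j'}$ with $0\le j,j'\le n$. The invariant of Step 1 evaluated on $\phi\tau_{n}^{j}$ returns $j$, because $\phi$ is order preserving and so its only base-point crossing is on the arc from $\zeta^{n}$ to $\zeta^{0}$. Hence $j=j'$, and then $\phi=\phi'$ since $\tau_{n}$ is invertible. The care needed is in verifying that the homotopy class genuinely remembers the crossing arc, including in the constant case. We will argue this via lifts to $\reals$: a lift $\tilde f$ with $\tilde f(0)\in[0,1)$ is unique up to homotopy rel marked points, and $j$ is read off from where $\tilde f$ passes the integer $1$. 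Alternatively, one could cite a counting check: $|\blamb([n],[k])|=(n+1)|\bdelt([n],[k])|$, and this count follows from the lift description.
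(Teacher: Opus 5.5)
Your strategy works: a homotopy-invariant ``crossing arc'' in Connes' model, used to normalize by a power of $\tau_n$. The one real defect is the invariant given in your ``more precisely'' sentence of Step~1.

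\textbf{The defect in Step~1.} Taking the smallest $j$ for which the labels of $f(\zeta^{j}),\dots,f(\zeta^{j+n})$ are nondecreasing is right only when the marked points have at least two distinct images. In that case the cyclic label sequence has exactly one descent, and $j$ is the index just after it. When $f$ sends all of $[n]$ to a single point (any point, not only $1$), every $j$ qualifies, so your rule returns $j=0$ for all $n+1$ such morphisms. For example, the two morphisms $[1]\to[0]$ get the same invariant. Step~2 then fails for the one not coming from $\bdelt$, and Step~3's claim that $\phi\tau_n^{j}$ has invariant $j$ is false for constant $\phi$.

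\textbf{The fix.} Use the lift definition you allude to in Step~3, with the boundary convention made explicit. Normalize so that $\tilde f(0)\in[0,1)$, and let $j\in\{1,\dots,n+1\}$ be the unique index with
\[
\tilde f\Bigl(\frac{j-1}{n+1}\Bigr)<1\le\tilde f\Bigl(\frac{j}{n+1}\Bigr).
\]
Such $j$ exists and is unique because $\tilde f$ is nondecreasing with $\tilde f(1)=\tilde f(0)+1$. The half-open inequality is exactly what decides which arc ``crosses'' $1$ when a marked point lands on $1$. This $j$ has the properties you need:
\begin{enumerate}
\item The values of $\tilde f$ on $\frac{1}{n+1}\Z$ lie in the discrete set $\frac{1}{k+1}\Z$, so they, and hence $j$, are homotopy invariants.
\item Precomposing with $\tau_n^{s}$ replaces $j$ by $j-s$ modulo $n+1$.
\item The morphisms coming from $\bdelt$ are exactly those with $j=n+1$.
\end{enumerate}
With the paper's counterclockwise $\tau_n$ (so $\zeta^i\mapsto\zeta^{i+1}$), you should therefore precompose with $\tau_n^{j}$ and get $f=\phi\tau_n^{-j}$. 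This sign is harmless.

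\textbf{Two smaller points.} Each needs one sentence.
\begin{itemize}
\item Step~2 identifies the class of $f\tau_n^{j}$ with the standard realization of $\phi$. This uses that a class is determined by the restriction of $\tilde f$ to $\frac{1}{n+1}\Z$. Indeed, nondecreasing lifts with $\tilde f(x+1)=\tilde f(x)+1$ and prescribed values on $\frac{1}{n+1}\Z$ form a convex set, so the straight-line homotopy stays admissible.
\item Step~3 ends with $\phi=\phi'$ in $\blamb$. To conclude $\phi=\phi'$ in $\bdelt$ you need $\bdelt\to\blamb$ to be injective on morphisms. This holds because the induced map on marked points recovers $\phi$.
\end{itemize}
With these repairs the proof is complete. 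In effect you have reduced to the paracyclic description of \cref{def-paracyclic}, where the factorization is just the choice of a normalized representative of a $\Z$-orbit.

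\textbf{Comparison with the cited route.} The paper itself gives no argument beyond the citation of Loday. If one starts from the presentation it records ($\bdelt$ together with the $\tau_n$, modulo the duals of \cref{eq-tau-d-s} and $\tau_n^{n+1}=1$), existence is a normal-form argument that moves powers of $\tau$ to the right past cofaces and codegeneracies. Uniqueness then needs a model in which distinct normal forms stay distinct. Your route gets existence and uniqueness at once, and the count $|\blamb([n],[k])|=(n+1)|\bdelt([n],[k])|$ falls out as a corollary. The price is that you take Connes' geometric description (as in \cref{sec-Connes}) as the definition, rather than relating it to the relations.
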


In particular there are $n+1$ distinct morphisms $[n]\to [0]$ in
$\blamb $ even though there is just one map between the underlying
sets.  Let 
\begin{displaymath}
\zeta =e^{2\pi \sqrt{-1}/ (n+1)}\in \mT .
\end{displaymath}

\noindent
Then for $0\leq i\leq n$ there is a map sending the arc joining
$\zeta^{i}$ and $\zeta^{i+1}$ to the full unit circle for $[0]$, and
sending its complement to the point $1\in \mT $.  Hence the forgetful
functor from $\bdelt$ to the category of sets extends to
$\blamb$, {\em but not faithfully}.

Next consider morphisms $[n+1]\to [n]$ in $\blamb $ for
which the underlying map of sets is onto.  If the underlying map fixes
0, it is obtained by collapsing one of the $n+2$ arcs (adjoining
adjacent roots unity) in the circle for $[n+1]$ to a single point.
Let $\omega =e^{2\pi \sqrt{-1}/ (n+2)}\in \mT $.  If the arc being collapsed
is not the one from $\omega^{-1}$ to $1$, the {\em last arc}, then the
map corresponds to one of the the simplicial degeneracy operators
$s^{i}$ of \cref{eq-face-degen} for $0\leq i\leq n$.  Thus we denote
the last arc collapse map by $s^{n+1}$, which we will also refer to as
a degeneracy operator.
 
The following should be compared with \cref{def-nerve}. 

\begin{defin}\label{def-cyc-nerve}
For a small category $\mcC$, the {\bf cyclic nerve}
$N^{\cyc}_{\bullet} (\mcC)$ is the simplicial set given by
\begin{displaymath}
N^{\cyc}_{n} (\mcC):=\mcCat ([n]^{\cyc},\mcC)
\end{displaymath} 

\noindent where $[n]^{\cyc}$ is the category $[n]$ equipped with an extra
morphism from $n$ to 0.  We denote its geometric realization by $B^{\cyc}\mcC$.

When $\mcC$ is the one object category $\mcB \Gamma$ associated with a
topological monoid $\Gamma$, we denote $N^{\cyc}_{\bullet} (\mcC)$ by
$N^{\cyc}_{\bullet} (\Gamma)$, leading to $N^{\cyc}_{n}
(\Gamma)=\Gamma^{n+1}$.
\end{defin}

\subsubsection{The work of Bill Dwyer, Mike Hopkins and Dan
Kan}\label{sec-DHK} In \cite{DHK-Cyc}, the authors take a slightly
different but {\eqt} approach to the passage from $\bdelt$ to
$\blamb$.  They define an additional degeneracy map $s^{n+1}:[n+1]\to
[n]$ (our last arc collapse map) and stipulate that
$(s^{n+1}d^{0})^{n+1}= 1_{[n]}$.  Note that $s^{n+1}$ is related to
the other degeneracy maps of \cref{eq-face-degen} via the action of
$\rC _{n+2}$ on the set $[n+1]$.

They give a geometric proof of \cref{prop-geo-r-cyc} for $r=1$. The
key point in their argument is that the simplicial set $U\blamb^{n}$
has $n+1$ nondegenerate $(n+1)$-simplices corresponding to the
composites of the last arc map $s^{n+1}:[n+1]\to [n]$ with powers of
the rotation $\tau_{n}$.

More explicitly, note that the {\em $(n+1)$-dimensional prism} $I\times
|\bdelt^{n}|$ is the space
\begin{displaymath}
\left\{(t;x_{0}, x_{1},\dotsc ,x_{n})\in I\times \reals^{n+1}:  0\leq t\leq 1,\,
    x_{i}\geq 0,\,\sum_{0\leq i\leq n}x_{i}=1 \right\}.
\end{displaymath}

\noindent For each $j$ with $0\leq
j\leq n$, define a subspace $P_{j}\subseteq I\times |\bdelt^{n}|$ by
\begin{displaymath}
P_{j} =\left\{(t; x_{0}, x_{1},\dotsc ,x_{n}):
\sum_{0\leq i\leq j-1}x_{i}
\leq t\leq 
\sum_{0\leq i\leq j}x_{i} \right\}.
\end{displaymath}

\noindent The union of these subspaces is the entire prism, and the
topological $(n+1)$-simplex $P_{j}$ is the convex hull of the set of
$n+2$ points
\begin{displaymath}
\left\{(0;\delta_{k}):0\leq k\leq j \right\}
\cup 
\left\{(1;\delta_{k}):j\leq k\leq n \right\},
\end{displaymath}

\noindent where $\delta_{k}\in \reals^{n+1}$ for $0\leq k\leq n$ is
the vector whose $k$th coordinate is 1 and all others are 0.
The geometric realization of $\blamb^{n}$ is the quotient of the prism
obtained by identifying $(0; x_{0}, x_{1},\dotsc ,x_{n})$ with
$(1;x_{0}, x_{1},\dotsc ,x_{n})$, thereby converting it to
$\mT \times |\bdelt^{n}|$. 

\subsubsection{Connes' cyclic objects}\label{sec-cyc-object} 
\begin{defin}\label{def-cyc-object}  
A {\bf cyclic object $X$} in a category $\mcC $ is a $\mcC $-valued
functor on $\blamb ^{\op}$ and therefore a simplicial object
(by restriction of the functor to $\bdelt^{\op}$) with some additional
structure, which includes an action of the cyclic group $\rC _{n+1}$ on
$X_{n}$ for each $n$.  We denote the category of such functors by
$\mcC_{\blamb}$, and the forgetful functor $\mcC_{\blamb} \to
\mcC_{\bdelt}$ by $U$.  The {\bf geometric realization $|X|$ of a
cyclic set $X$} is that of the underlying simplicial set $UX$, which
we will often denote abusively by $X$.

The cyclic set $\blamb^{n}$ the {\bf standard $n$-cyclex}, is defined by
\begin{numequation}\label{eq-lambda1-n}
\begin{split}
(\blamb^{n})_{k} := \blamb ([k],[n])= \blamb^{\op} ([n],[k]).
\end{split}
\end{numequation}%
\end{defin}

The term {\bf cyclex} (plural {\bf cyclices}) is new and is meant to be the
cyclic analog of {\bf simplex}.  Like the standard $n$-simplex of
\cref{def-simp-sets}, the standard $n$-cyclex is a Yoneda functor.

The above will be generalized in \cref{def-r-cyclic}, for which it is
the case $r=1$.

See \cite[Chapter 7]{Loday} for a formal treatment of cyclic spaces.


This object was denoted by ${\bC _{\bullet}}$ (note the Roman font) in
\cite[Proposition 6.1.9]{Loday}.  For $k=1$, the group $\rC _{2}$
interchanges the degenerate and nondegenerate edges.

Note that the cardinality of the free $\rC _{k+1}$-set $(\blamb^{n})_{k}$
exceeds that of $(\bdelt^{n} )_{k}$ (see \cref{prop-card}) by a factor
of $k+1$ by \cref{prop-unique}.

\begin{prop}\label{prop-geocyc}
{\bf The geometric realization of the standard $n$-cyclex
$\blamb^{n}$} is homeomorphic to $\mT \times \bdelt^{n}_{\rm top}$,
where $\bdelt^{n}_{\rm top}=|\bdelt^{n}|$ is the standard topological
$n$-simplex of \cref{eq-Delta-top}.
\end{prop}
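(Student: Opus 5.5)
We will follow the Dwyer--Hopkins--Kan argument sketched in \cref{sec-DHK}: build an explicit homeomorphism from the prism model. The aim is to show that $|U\blamb^{n}|$ is obtained from the prism $I\times|\bdelt^{n}|$ by identifying its two ends, $(0;x)\sim(1;x)$.

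The first step is to count nondegenerate simplices of $U\blamb^{n}$ using \cref{prop-unique}. A $k$-simplex is a pair $(\tau_k^{j},\alpha)$ with $0\leq j\leq k$ and $\alpha\in\bdelt([k],[n])$. Writing $\sigma_j$ for $\tau_{n}^{j}\circ s^{n+1}$, we claim that:
\begin{itemize}
\item there are no nondegenerate simplices above dimension $n+1$;
\item the nondegenerate $(n+1)$-simplices are exactly the $n+1$ morphisms $\sigma_j:[n+1]\to[n]$ for $0\leq j\leq n$, each a last arc collapse composed with a rotation;
\item every nondegenerate simplex of lower dimension is a face of one of these.
\end{itemize}
We would check this by noting that a morphism $[k]\to[n]$ in $\blamb$ is a degree one map of circles carrying $(k+1)$th roots of unity to $(n+1)$th ones. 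Such a map is nondegenerate iff no arc collapses except possibly one arc that wraps once around.

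The second step is to identify the faces of the $\sigma_j$ and match them with the prism decomposition $P_0\cup\dotsb\cup P_n$. Using \cref{eq-tau-d-s}, the face $d_{j+1}$ of $\sigma_j$ should agree with the face $d_{j+1}$ of $\sigma_{j+1}$; this corresponds to the common wall $P_j\cap P_{j+1}$, where $t=\sum_{i\leq j}x_i$. The remaining faces split as follows:
\begin{itemize}
\item the first face of $\sigma_0$ and the last face of $\sigma_n$ are both the nondegenerate $n$-simplex $1_{[n]}$, matching $\{0\}\times|\bdelt^{n}|$ and $\{1\}\times|\bdelt^{n}|$;
\item all other faces lie in lower strata and correspond to $I\times\partial|\bdelt^{n}|$.
\end{itemize}
Since a geometric realization is glued from nondegenerate simplices along face maps, this gives a continuous bijection
\[
\bigl(I\times|\bdelt^{n}|\bigr)/\bigl((0;x)\sim(1;x)\bigr)\to|U\blamb^{n}|.
\]
The map sends $P_j$ to $\sigma_j$ via the vertex ordering $(0;\delta_0),\dotsc,(0;\delta_j),(1;\delta_j),\dotsc,(1;\delta_n)$. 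Both spaces are compact Hausdorff, so the bijection is a homeomorphism. The left side is $\mT\times\tdelt{n}$.

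The main obstacle is the combinatorial bookkeeping in the second step. One must verify that every identification among faces in $|U\blamb^{n}|$, including those among lower-dimensional faces lying over $\partial\bdelt^n$, is exactly the one forced by the prism. In particular, no extra identification should wrap the boundary strata around $\mT$ in an unexpected way. We would handle this by induction on $n$, using that the faces of $\blamb^{n}$ over the $i$th face of $\bdelt^{n}$ form a copy of $\blamb^{n-1}$. By induction their realization is $\mT\times|\bdelt^{n-1}|$, and the prism gluing restricted to $I\times(\text{$i$th face})$ is the same construction one dimension lower. The case $n=0$ is the cyclic circle: one nondegenerate $0$-simplex and one nondegenerate $1$-simplex $s^{1}$, with both faces equal to it, so the realization is $\mT$.
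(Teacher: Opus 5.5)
You are following the same Dwyer--Hopkins--Kan route that the paper sketches in \cref{sec-DHK}, and the three claims of your first step are correct. However, the face bookkeeping in your second step is wrong, and with the vertex ordering you chose, the map you describe does not exist.

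On elements, $\sigma_j=\tau_n^j\circ s^{n+1}$ is $i\mapsto i+j$ (mod $n+1$) for $i\le n$ and $n+1\mapsto j$. Hence $d_0\sigma_j$ and $d_{n+1}\sigma_{j+1}$ are both the automorphism $\tau_n^{j+1}$. So consecutive top simplices are glued \emph{first face to last face}, and the two faces equal to $1_{[n]}$ are $d_{n+1}\sigma_0$ and $d_0\sigma_n$. Each inner face $d_i\sigma_j$, $0<i<n+1$, has image missing $i+j$. It therefore lies over $\partial\bdelt^{n}$ and is a face of no other $\sigma$. In particular $d_{j+1}\sigma_j$ misses $2j+1$ while $d_{j+1}\sigma_{j+1}$ misses $2j+2$, so they differ for every $n\ge 1$; for $n=1$ they are the two boundary loops of the annulus. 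Likewise $d_0\sigma_0=\tau_n$ and $d_{n+1}\sigma_n=\tau_n^{-1}$, and neither is $1_{[n]}$.

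In your ordering $(0;\delta_0),\dots,(0;\delta_j),(1;\delta_j),\dots,(1;\delta_n)$, the wall $P_j\cap P_{j+1}$ is $d_{j+1}$ of both simplices. So the characteristic maps of $\sigma_j$ and $\sigma_{j+1}$ would send it to two different $n$-simplices of $U\blamb^{n}$. Your nondegeneracy criterion also needs restating. A $k$-simplex is degenerate iff one of the $k$ arcs from $i$ to $i+1$, $0\le i<k$, is collapsed. The last arc, from $k$ back to $0$, may collapse, since $s^{k}$ is not a simplicial degeneracy of $U\blamb^{n}$; the $\sigma_j$, and $s^{1}$ in $\blamb^{0}$, do exactly this. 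As written, your criterion would make the $\sigma_j$ degenerate.

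The repair is to order the vertices of each $P_j$ cyclically, so that its $m$th vertex lies over $\delta_{j+m}$ (indices mod $n+1$). For the hull listed in \cref{sec-DHK} this means $(1;\delta_j),\dots,(1;\delta_n),(0;\delta_0),\dots,(0;\delta_j)$. Note that the inequalities defining $P_j$ actually cut out the hull of $\{(0;\delta_k):k\ge j\}\cup\{(1;\delta_k):k\le j\}$; already for $n=1$, $(1;\delta_1)\notin P_0$. That hull should be ordered $(0;\delta_j),\dots,(0;\delta_n),(1;\delta_0),\dots,(1;\delta_j)$. With the cyclic ordering:
\begin{itemize}
\item $P_j\cap P_{j+1}=d_0P_j=d_{n+1}P_{j+1}$;
\item the two ends of the prism are $d_{n+1}P_0$ and $d_0P_n$;
\item $d_iP_j$ for $0<i<n+1$ lies over the face of $|\bdelt^{n}|$ opposite $\delta_{i+j}$.
\end{itemize}
This is exactly the pattern in $U\blamb^{n}$.

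The lower strata are then most easily handled by lifting to the paracyclic cyclex $\blamb_\infty(-,[n])$ (see \cref{def-paracyclic}) rather than by induction. Its $k$-simplices are the integer sequences $a_0\le\dots\le a_k\le a_0+n+1$; faces delete entries and degeneracies repeat them. $U\blamb^{n}$ is its quotient by the free action $a_i\mapsto a_i+n+1$. Send $a=q(n+1)+r$ with $0\le r\le n$ to $(q;\delta_r)$. This identifies the lift, equivariantly, with the staircase triangulation of $\reals\times|\bdelt^{n}|$ by the translates $q+P_j$. That triangulation is a genuine simplicial complex whose simplices are the subsets of $\Z$ of span at most $n+1$. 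Passing to the quotient by $\Z$ then gives $\mT\times|\bdelt^{n}|$.
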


For the proof, 
see \cite[\S2.9]{DHK-Cyc}.

\begin{cor}\label{cor-circle} 
{\bf The cyclic circle or standard 0-cyclex.}  The cyclic set
$\blamb^{0}$ of \cref{eq-lambda-n} is isomorphic as a simplicial set
to the {\bf circle $\bdelt^{1}/\partial \bdelt ^{1}$}.  Its geometric
realization is the circle $\mT$.  $U\blamb^{0}$ is isomorphic to the
simplicial set with a single vertex $x_{0}$ and a single nondegenerate
edge $x_{1}$.  Thus we have
\begin{numequation}\label{eq-S1}
\begin{split}
(U\blamb_{0})_{k}=\mycases{    
\left\{x_{0} \right\}
       &\mbox{for }k=0\\
\left\{x_{1},\,s_{0} (x_{0}) \right\}
       &\mbox{for }k=1\\
\left\{s_{i} (s_{0})^{k-2} (x_{1}):0\leq i\leq k-1 \right\}
\cup \left\{(s_{0})^{k} (x_{0}) \right\}\hspace{-4cm}\\
       &\mbox{for }k\geq 2,
}
\end{split}
\end{numequation}%

\noindent making a total of $(k+1)$ $k$-simplices for each $k\geq 0$.
The cyclic group $\rC _{k+1}$ (which is the automorphism group
$\Aut_{\blamb^{\op}} ([k])$) acts freely on this set, so there is single
orbit for each $k$. 
\end{cor}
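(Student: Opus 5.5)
Since $\blamb^{0}$ is the Yoneda functor represented by $[0]$, its set of $k$-simplices is $(\blamb^{0})_{k}=\blamb([k],[0])$. By \cref{prop-unique}, each morphism $[k]\to[0]$ in $\blamb$ factors uniquely as a power $\tau_{k}^{j}$ ($0\leq j\leq k$) followed by the unique map $[k]\to[0]$ in $\bdelt$. So $(\blamb^{0})_{k}$ has exactly $k+1$ elements, and $\Aut_{\blamb^{\op}}([k])\cong\rC_{k+1}$ acts on it freely and transitively by precomposition with powers of $\tau_{k}$. This settles the counting statement and the claim that there is a single free orbit. Geometrically, these are the $k+1$ maps described after \cref{prop-unique}, which send the arc from $\zeta^{i}$ to $\zeta^{i+1}$ around the circle and collapse the rest to $1$.

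Next I would determine which of these simplices are nondegenerate. In degree $0$ there is one vertex, $x_{0}=\Id_{[0]}$. In degree $1$ there are two elements. One is $s_{0}(x_{0})$, the composite $[1]\xrightarrow{s^{0}}[0]$. The other, $x_{1}=s^{0}\tau_{1}$, is not in the image of $s_{0}$, because $(\blamb^{0})_{0}$ has only one element and its image under $s_{0}$ is already accounted for. So $x_{1}$ is nondegenerate, and its faces $d_{0}x_{1}$ and $d_{1}x_{1}$ both equal $x_{0}$, since there is only one vertex.

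For $k\geq2$ I would show that all $k+1$ simplices are degenerate and are exactly those listed in \cref{eq-S1}. First, the $k$ elements $s_{i}s_{0}^{k-2}(x_{1})$ for $0\leq i\leq k-1$ are pairwise distinct. To see this, apply iterated face maps to recover the position of the single ``nondegenerate arc''; concretely, a $k$-simplex of $\bdelt^{1}/\partial\bdelt^{1}$ is a surjection-type map $[k]\to[1]$, and the $s_{i}s_{0}^{k-2}x_{1}$ correspond to the distinct maps $[k]\to[1]$ with jump after position $i$. Second, none of them equals $s_{0}^{k}(x_{0})$. This gives $k+1$ distinct elements, which by the count above is all of $(\blamb^{0})_{k}$. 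The same comparison gives the isomorphism with $\bdelt^{1}/\partial\bdelt^{1}$: the map of simplicial sets $\bdelt^{1}\to U\blamb^{0}$ classifying $x_{1}$ identifies the two endpoints, and it is bijective in each degree because $(\bdelt^{1})_{k}$ has $k+2$ elements, two of which are the constant maps that get identified, leaving $k+1$.

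Finally, the geometric realization of $\bdelt^{1}/\partial\bdelt^{1}$ is one vertex with one $1$-cell attached, which is $\mT$. This also agrees with \cref{prop-geocyc} for $n=0$, since $\mT\times\tdelt{0}=\mT$. The main obstacle is to check carefully that the simplicial structure maps agree with those of $\bdelt^{1}/\partial\bdelt^{1}$, and not just the cardinalities. I would handle this by writing the comparison map via Yoneda and checking injectivity degreewise with the uniqueness part of \cref{prop-unique}, using the relations \cref{eq-tau-d-s}.
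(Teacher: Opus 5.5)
Your count via \cref{prop-unique}, the free transitive action of $\rC_{k+1}$, and the identification of $x_1=s^0\tau_1$ as the only nondegenerate edge are fine. The step that fails is the claim that the $k$ elements $s_i s_0^{k-2}(x_1)$, $0\le i\le k-1$, are pairwise distinct.

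For $k\ge 3$ this is false in any simplicial set. The identity $s_is_j=s_{j+1}s_i$ ($i\le j$) of \cref{eq-simp-id} gives $s_0s_0=s_1s_0$, so already $s_0s_0x_1=s_1s_0x_1$. To see it concretely, view simplices of $\bdelt^1$ as monotone sequences in $\{0,1\}$. Then $s_0^{k-2}x_1$ is $0^{k-1}1$, and $s_i$ repeats its $i$th entry. So $s_is_0^{k-2}x_1=0^k1$ for every $i\le k-2$, and only $s_{k-1}s_0^{k-2}x_1=0^{k-1}11$ differs. Your ``jump after position $i$'' correspondence is therefore wrong, and the set displayed in \cref{eq-S1} has only three elements when $k\ge 3$.

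The error is in the displayed formula of the statement itself, so this step cannot be carried out as stated. The paper never verifies \cref{eq-S1}; the corollary is just placed after \cref{prop-geocyc}, whose $n=0$ case gives the realization. The correct list, valid for all $k\ge 1$ and agreeing with \cref{eq-S1} for $k\le 2$, is
\[
\bigl\{s_0^{\,i}s_1^{\,k-1-i}(x_1):0\le i\le k-1\bigr\}\cup\bigl\{s_0^{\,k}(x_0)\bigr\},
\]
because $s_0^{\,i}s_1^{\,k-1-i}x_1=0^{i+1}1^{k-i}$ has its jump at position $i+1$.

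With this correction your strategy goes through, and more simply than you planned, without using \cref{eq-tau-d-s}. Let $\chi\colon\bdelt^1\to U\blamb^0$ classify $x_1$. For $\alpha\colon[k]\to[1]$ and an edge $e=\{m<m'\}$ of $[k]$, the face $e^*\chi(\alpha)=\chi(\alpha e)$ equals $x_1$ if $\alpha(m)=0$ and $\alpha(m')=1$. Otherwise it equals $s_0x_0$, since both faces of $x_1$ are $x_0$. Because $x_1\neq s_0x_0$, these edge faces recover the jump of a nonconstant $\alpha$. They also separate it from the two constant maps, which both go to $s_0^{\,k}x_0$.

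So $\chi$ induces a degreewise injective simplicial map $\bdelt^1/\partial\bdelt^1\to U\blamb^0$. Your count makes it bijective, hence an isomorphism, and $|U\blamb^0|\cong\mT$ follows. As you note at the end, the cardinality comparison alone does not give bijectivity. Injectivity is the real content, and it is exactly where the faulty list was being used.
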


\begin{theorem}\label{thm-circle-action}
{\bf The action of the circle group $\mT$.} \cite[Theorem 7.1.4]{Loday} 
Let $X$ be a cyclic space ({\eg} a cyclic set) and let $|X|$ be the
geometric realization of its underlying simplicial space. Then

\begin{enumerate}[label={(\roman*)},itemindent=0em]
\item $|X|$ is endowed with a canonical action of the circle $\mT$, and 
\item $X\mapsto |X|$ is a functor from cyclic spaces to $\mT$-spaces. 
\end{enumerate}
\end{theorem}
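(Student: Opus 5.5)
We will build the circle action on $|X|$ from the standard cyclices, using \cref{prop-geocyc}. Every cyclic space is canonically a colimit of representables: by the co-Yoneda lemma,
\begin{displaymath}
X\cong \int^{[n]\in\blamb} X_{n}\times \blamb^{n}
\end{displaymath}
in the category of cyclic spaces. Geometric realization of the underlying simplicial space commutes with colimits, since it is a left adjoint (levelwise realization followed by a coend). Hence
\begin{displaymath}
|X|\cong \int^{[n]\in\blamb} X_{n}\times |U\blamb^{n}|.
\end{displaymath}
One point needs checking here: the coend over $\blamb$ of $X_{n}\times|U\blamb^{n}|$ agrees with the usual coend over $\bdelt$ of $X_{n}\times\tdelt{n}$. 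This amounts to the statement that the functor $[n]\mapsto|U\blamb^{n}|$ is the left Kan extension of $[n]\mapsto \tdelt{n}$ along $\bdelt\to\blamb$, i.e.\ that $U\blamb^{n}\cong \coprod$-free extension $\blamb([\,\cdot\,],[n])$ restricted to $\bdelt$ realizes correctly. This follows from \cref{prop-unique}, which says $U$ has the left adjoint $Y\mapsto \mT$-extended in level $k$ by $\rC_{k+1}\times Y_{k}$.

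The key step is to show that the functor $[n]\mapsto |U\blamb^{n}|$ from $\blamb$ to spaces lifts canonically to $\mT$-spaces, with $\blamb$-morphisms acting $\mT$-equivariantly. Granting this, $\mT$ acts on each term $X_{n}\times|U\blamb^{n}|$ through the second factor, compatibly with the coend relations. The action then descends to $|X|$, because a colimit of $\mT$-spaces is computed on underlying spaces, and $\mT\times-$ commutes with colimits in compactly generated spaces since $\mT$ is compact. Naturality in $X$, which is part~(ii), is then immediate, since a map of cyclic spaces induces a map of coends that is the identity on the $|U\blamb^{n}|$ factors.

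To construct the lift, we use the Dwyer--Hopkins--Kan prism description from \cref{sec-DHK}, under which $|U\blamb^{n}|\approx \mT\times\tdelt{n}$, and let $\mT$ act by rotation on the first coordinate. The main obstacle is checking that the maps induced by generators of $\blamb$ are equivariant for this action. For the coface and codegeneracy maps inherited from $\bdelt$, the induced map on prisms is $\mathrm{id}\times(\text{affine map})$ on each $P_{j}$, so equivariance is clear. For $\tau_{n}$ this is not so: the induced self-map permutes the simplices $P_{j}$ and shifts the parameter $t$ by the barycentric coordinate $x_{0}$, so it is not a product map. We will verify directly from the formula for $P_{j}$ that it is a rotation-equivariant homeomorphism. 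To avoid coordinate bookkeeping, it is cleaner to use Connes' description: identify $|U\blamb^{n}|$ with the space of pairs consisting of a point of $\mT$ and a degree-one monotone map of the circle carrying the $(n+1)$st roots of unity to marked points, modulo homotopy. Morphisms of $\blamb$ then act by precomposition and rotations act by postcomposition, so the two actions commute automatically. This reduces the theorem to the identification in \cref{prop-geocyc}, which we are allowed to assume.
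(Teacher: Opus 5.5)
Your architecture is sound, and it is genuinely different from the paper's. The paper, following Loday, uses the left adjoint $F$ of $U$ and the homeomorphism $|FY|\approx\mT\times|Y|$ of \cite[Lemma 7.1.8]{Loday}. It takes the action to be the realization of the counit $\epsilon_Y\colon FUY\to Y$, and part (ii) comes from naturality of the counit. You instead write $|X|\cong\int^{[n]\in\blamb}X_n\times|U\blamb^n|$ and let $\mT$ act through a lift of $[n]\mapsto|U\blamb^n|$ to $\mT$-spaces.

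Your route has a real advantage: unitality, associativity and continuity of the action are automatic, since they hold on each $|U\blamb^n|$ and $\mT\times-$ commutes with the coend. In the counit construction they require $|FY|\approx\mT\times|Y|$ to be compatible with the comonad structure of $FU$, which the paper leaves implicit. Your ``one point needs checking'' is purely formal: $\int^{[k]\in\bdelt}\blamb([k],[n])\times\tdelt{k}$ is by definition $|U\blamb^n|$, so Fubini for coends gives the formula. \cref{prop-unique} plays no role there, and the garbled sentence invoking it should be deleted.

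The gap is that the one non-formal step, lifting $[n]\mapsto|U\blamb^n|$ to a functor from $\blamb$ to $\mT$-spaces, is asserted rather than proved. The shortcuts you offer do not supply it.
\begin{itemize}
\item \cref{prop-geocyc} gives a homeomorphism $|U\blamb^n|\approx\mT\times\tdelt{n}$ for each $n$ separately. It says nothing about how maps induced by morphisms of $\blamb$ look in these coordinates, so the theorem does not reduce to it.
\item Your Connes-style description is wrong as stated. Taking maps ``modulo homotopy'' leaves a discrete set and loses the simplex coordinates. Letting $\blamb$ act by precomposition makes the construction contravariant in $[n]$, while $[n]\mapsto|U\blamb^n|$ is covariant. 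A corrected version needs Connes' self-duality of $\blamb$, and identifying it with $|U\blamb^n|$ naturally in $[n]$ is again the whole point.
\item The claim that cofaces and codegeneracies act by $\mathrm{id}\times(\text{affine})$ only has content once the homeomorphisms for different $n$ are chosen compatibly.
\end{itemize}

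To close the gap, use \cref{prop-geo-r-cyc} (with $r=1$).
\begin{enumerate}
\item On $\mT\times\tdelt{\bullet}$, let $\alpha\in\bdelt([m],[n])$ act by $\mathrm{id}_{\mT}\times\alpha_*$, and let $\tau_n$ act by \cref{eq-tau-n-action}, $(\theta;x_0,\dotsc,x_n)\mapsto(\theta-x_0;x_1,\dotsc,x_n,x_0)$. This visibly commutes with translation in $\theta$.
\item A direct computation gives the duals of \cref{eq-tau-d-s}: $\tau_nd^0=d^n$; $\tau_nd^i=d^{i-1}\tau_{n-1}$ and $\tau_ns^i=s^{i-1}\tau_{n+1}$ for $1\le i\le n$; $\tau_ns^0=s^n\tau_{n+1}^2$; and $\tau_n^{n+1}=1$ because $\sum x_i=1$. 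So this is a cocyclic $\mT$-space.
\item Check that the map of cocyclic spaces $|U\blamb^{\bullet}|\to\mT\times\tdelt{\bullet}$ adjoint to $u\mapsto(0;u)$, namely $(\phi,u)\mapsto\phi_*(0;u)$, is a homeomorphism in each degree. This is what \cite[Lemma 1.6]{BHM93} provides. Alternatively, it is a continuous map from a finite CW complex to a Hausdorff space, so it suffices to check bijectivity simplex by simplex using \cref{prop-unique}.
\end{enumerate}
With that in place, the rest of your argument goes through as written.
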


\begin{prop}\label{prop-smooth-cyclo}
{\bf The smooth cyclotomy of $B^{\cyc} (\Gamma )$.} For a topological
monoid $\Gamma $, the fixed point set $|B^{\cyc} (\Gamma ) |^{\rC _{r}}$
(as in \cref{def-cyc-nerve}) is $\mT$-{\eqvr}ly homeomorphic to $|B^{\cyc}
(\Gamma ) |$ for each integer $r\geq 1$.
\end{prop}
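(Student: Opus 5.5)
The plan is to use Segal/Bökstedt–Hsiang–Madsen edgewise subdivision. The $\rC_r$-action on $|B^{\cyc}(\Gamma)|$ becomes simplicial after $r$-fold subdivision, and the fixed points of that simplicial action can then be identified with $N^{\cyc}_\bullet(\Gamma)$ itself.

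First, recall the $r$th edgewise subdivision $\operatorname{sd}_r X$ of a simplicial space $X$. It has $(\operatorname{sd}_r X)_{n}=X_{r(n+1)-1}$, with face and degeneracy maps given by $r$-fold diagonal iterates of the original ones. There is a natural homeomorphism $|\operatorname{sd}_r X|\approx |X|$, induced on simplices by the diagonal map $\Delta^{n}_{\rm top}\to \Delta^{r(n+1)-1}_{\rm top}$, $x\mapsto \frac{1}{r}(x,\dots,x)$.

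When $X$ is cyclic, the operator $t_{r(n+1)-1}^{\,n+1}$ on $X_{r(n+1)-1}$ generates a simplicial action of $\rC_r$ on $\operatorname{sd}_r X$. The key compatibility step is to check, using \cref{eq-tau-d-s} and the construction behind \cref{thm-circle-action}, that under the homeomorphism $|\operatorname{sd}_r X|\approx|X|$ this action agrees with the restriction of the $\mT$-action to $\rC_r\subseteq\mT$. This is the step I expect to be the main technical obstacle. It requires comparing the explicit formula for the circle action on $\mT\times\Delta^n_{\rm top}$ (\cref{prop-geocyc}, via the prism decomposition of \cref{sec-DHK}) with the diagonal embedding. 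I would first verify it on the standard cyclices $\blamb^n$ and then extend by naturality, since every cyclic space is a colimit of these.

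Because realization commutes with taking fixed points of a simplicial action of a finite group (fixed points are a finite limit, computed levelwise), we get
\[
|B^{\cyc}(\Gamma)|^{\rC_r}\approx |(\operatorname{sd}_r N^{\cyc}_\bullet\Gamma)^{\rC_r}|.
\]
Levelwise the computation is straightforward. An element of $\Gamma^{r(n+1)}$ is fixed by rotation by $n+1$ places exactly when it has the form $(g_0,\dots,g_n,g_0,\dots,g_n,\dots)$, repeated $r$ times. Hence the fixed level is $\Gamma^{n+1}=N^{\cyc}_n(\Gamma)$. Under this identification the subdivided faces and degeneracies restrict to the ordinary cyclic faces and degeneracies. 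For example, the last face multiplies $g_n$ onto the next copy of $g_0$, which is the cyclic face $d_n$.

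This gives a simplicial isomorphism $(\operatorname{sd}_r N^{\cyc}_\bullet\Gamma)^{\rC_r}\cong N^{\cyc}_\bullet\Gamma$. It remains to check that the residual cyclic operator on the fixed points, namely $t_{r(n+1)-1}$ restricted, is the cyclic operator $t_n$. Then, on realizations, the residual $\mT/\rC_r$-action corresponds under $\rho_r$ to the standard $\mT$-action. This is again the same edgewise-subdivision compatibility, now with the $r$th-root reparametrization of \cref{def-rho-n}. Together these yield the $\mT$-equivariant homeomorphism $\rho_r^*|B^{\cyc}(\Gamma)|^{\rC_r}\approx|B^{\cyc}(\Gamma)|$.
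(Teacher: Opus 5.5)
Your proposal is correct and is essentially the paper's argument. The paper's sketch identifies, for $r\mid n+1$, the $\rC_{r}$-fixed part of $\Gamma^{n+1}\times|\bdelt^{n}|$ as the $r$-periodic tuples $\Gamma^{m+1}$ times the diagonal copy of $|\bdelt^{m}|$ (with $m+1=(n+1)/r$), which is your edgewise-subdivision computation in disguise; your version is more careful in that it explicitly isolates what the sketch takes for granted, namely that the simplicial $\rC_{r}$-action on the subdivision agrees with the restricted $\mT$-action and that the residual action matches the standard one via $\rho_{r}$.
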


We sketch the proof here. The space $|B^{\cyc} (\Gamma ) |$
is a quotient of
\begin{displaymath}
\coprod_{n\geq 0}\Gamma^{n+1}\times |\blamb^{n}|
=\coprod_{n\geq 0}\Gamma^{n+1}\times |\bdelt^{n}|\times \mT.
\end{displaymath}

\noindent When $r$ divides $n+1$, let $m=(n+1-r)/r$.  The actions of
$\rC _{r}$ on $\Gamma^{n+1}$ (by permutation of coordinates) and
$|\bdelt^{n}|$ (by permutation of vertices) fix subspaces homeomorphic
to $\Gamma^{m+1}$ and $|\bdelt^{m}|$ respectively. It follows that
$|B^{\cyc} (\Gamma ) |^{\rC _{r}}$ is a quotient of
\begin{displaymath}
\coprod_{m\geq 0}\Gamma^{m+1}\times |\blamb^{m}|
=\coprod_{m\geq 0}\Gamma^{m+1}\times |\bdelt^{m}|\times \mT,
\end{displaymath}

\noindent making it $\mT$-{\eqvr}ly homeomorphic to
$|B^{\cyc} (\Gamma ) |$ as claimed.

\begin{defin}
The {\bf cyclic geometric realization of a cyclic space $X$} is 
\begin{displaymath}
|X|^{\cyc} := E\mT \times_{\mT}|X|.
\end{displaymath}
\end{defin}

In particular for $X=*$, we have $|*|^{\cyc} = B\mT$.

Loday \cite[7.1.5]{Loday} defined a functor $F:\Set_{\bdelt}\to
\Set_{\blamb}$ (with $F\bdelt^{n}=\blamb^{n}$) that is left adjoint to
the forgetful functor $U:\Set_{\blamb}\to \Set_{\bdelt}$.  For a
simplicial set $X$, $|FX|\approx \mT\times |X|$ with free $\mT$-action
by \cite[Lemma 7.1.8]{Loday}, even though $UFX$ is not a product as a
simplicial set.  For a simplicial set $X$ and a cyclic set $Y$, we
have an adjunction isomorphism
\begin{displaymath}
\Set_{\bdelt} (X,UY) \cong  \Set_{\blamb} (FX,Y).
\end{displaymath}

\noindent  When  $X=UY$, this reads
\begin{displaymath}
\Set_{\bdelt} (UY,UY) \cong  \Set_{\blamb} (FUY,Y).
\end{displaymath}

\noindent The morphism set on the left has a distinguished element,
the identity map on $UY$, and the corresponding morphism on the right
is $\epsilon_{Y} :FUY\to Y$, the counit of the adjunction.  On
geometric realizations we have
\begin{displaymath}
\xymatrix
@R=4mm
@C=20mm
{
{|FUY| \approx  \mT\times |UY|  := \mT\times |Y|}
    \ar[r]^(.7){|\epsilon_{Y}|}
  &{|Y|,}
}
\end{displaymath}

\noindent making $|Y|$ a $\mT$-space.

\bigskip

We can extend the simplicial structure on
$\mathbf{HH}_{\bullet} (A)$ as in \cref{eq-simp-chain} to a cyclic
structure by defining the last arc degeneracy map as
\begin{numequation}\label{eq-last-arc}
\begin{split}
s^{n+1} (a_{0}\otimes \dotsb \otimes a_{n})
  = a_{0}\otimes \dotsb \otimes a_{n}\otimes 1  .
\end{split}
\end{numequation}%

Hence $\mathbf{HH}_{\bullet} (A)$ is a cyclic (in the sense of Connes)
$k$-module, and the space $|\mathbf{HH}_{\bullet} (A)|$ of
\cref{eq-bold-HHA} is a $\mT$-space.

\subsection{The paracyclic and $r$-cyclic  categories, and edgewise subdivision}
\label{sec-edgewise-sub}

The following is taken from \cite[page 380]{NS18}.

\begin{defin}\label{def-paracyclic}
The {\bf paracyclic category $\blamb_{\infty }$} has as objects the
linearly ordered sets
\begin{displaymath}
\pow{n}_{\blamb_{\infty }}:= (1/ n )\Z
\end{displaymath}

\noindent on which $\Z$ acts by addition. A morphism is a map of sets
$f:\pow{m}_{\blamb_{\infty }}\to \pow{n}_{\blamb_{\infty }}$
satisfying
\begin{numequation}\label{eq-f-para}
\begin{split}
f (x)\leq f (y)\mbox{ when }x\leq y
\qquad \aand  
f (x+1)= f (x)+1.
\end{split}
\end{numequation}%

\noindent Hence each morphism set has an action of $\Z$ by pointwise
addition.

For an integer $r\geq 1$, the {\bf $r$-cyclic category} $\blamb_{r }$
(denoted simply by $\blamb$ when $r=1$) has the same objects, now
denoted by $\pow{n}_{\blamb_{r } }$ (or simply $\pow{n}_{\blamb }$
when $r=1$), and each morphism set is the quotient of that in
$\blamb_{\infty }$ by the action of $r\Z$.

We will often drop the subscript on $\pow{n}$.
\end{defin}

The conditions of \cref{eq-f-para} mean that $f$ is determined by its
behavior on any interval of length 1.

The category $\blamb=\blamb_{1}$ coincides with Connes' cyclic
category $\blamb$ of \cref{def-cyc-object}.

\begin{remark}\label{rem-lambda}
{\bf The symbols $\blamb$, $[n]$ and $\pow{n}$.} The symbol $\blamb$
is not to be confused with $\Lambda (-) $, which we sometimes use to
denote an exterior algebra, or $\Lsmash$ as in \cref{def-LXR}.

Our notation differs from that of \cite[page 380]{NS18} and
\cite[Example 2.1.5]{McCandless-curves}, in which our
$\pow{n}_{\blamb_{r} }$ is denoted by $[n]_{\blamb_{r} }$.  In this
paper (starting in \cref{sec-simp-cat}) and many others, $[n]$ denotes
the finite ordered set $\left\{0,1,\dotsc ,n \right\}$, which has
$n+1$ elements, while $\pow{n}_{\blamb}$ behaves like a set with $n$
elements.  Thus we write $[n]\times \Z\simeq \pow{n+1}_{\blamb_{\infty } }$ for
the isomorphism written in \cite[Example 2.1.6]{McCandless-curves} as
$[n]\times \Z\simeq [n+1]_{\Lambda }$.
\end{remark}

Thus we have projection functors
\begin{numequation}\label{eq-proj-functor}
\begin{split}
\xymatrix
@R=1mm
@C=4mm
{
{\blamb_{\infty }}\ar[r]^(.5){\Proj_{\infty ,r}}
  &{\blamb_{r}}\ar[r]^(.5){\Proj_{r,r/d}}
    &{\blamb_{r/d}}\\
{\Map_{\blamb_{\infty }}(\pow{m},\pow{n})}
  \ar@{|->}[r]^(.5){}
 &{\Map_{\blamb_{r}}(\pow{m},\pow{n})}
        \ar@{|->}[r]^(.5){}
        \ar@{=}[dd]^(.5){}
   &{\Map_{\blamb_{r/d}}(\pow{m},\pow{n})} \ar@{=}[dd]^(.5){}
\\
{}\\
  &{\Map_{\blamb_{\infty }}(\pow{m},\pow{n})/r\Z} 
    &{\Map_{\blamb_{r}}(\pow{m},\pow{n})/ (\Z/d)} 
}
\end{split}
\end{numequation}%

\noindent for each $r\geq 1$ and each divisor $d$ of $r$.   The
groups $r\Z \subseteq \Z$ and $\Z/d \subseteq \Z/r$ act freely on the
morphism sets in question.

{\Eqt}ly $\blamb_{r }$ for $1\leq r\leq \infty $ is the category which
contains $\bdelt$ and additional morphisms $\tau_{n}:[n]\to [n]$
subject to the relations of \cref{eq-tau-d-s} and the relation
$\tau_{n}^{r (n+1)}= 1_{[n]}$ when $r<\infty$.

The objects of $\blamb$ behave like finite sets and there is an
inclusion functor
\begin{align*}
V:\blamb
 & \to \Fin\\
\pow{n}_{\blamb}
 & \mapsto  \langle n\rangle
\end{align*}

\noindent to the category of finite sets of \cref{def-fin*} sending
$\pow{n}_{\blamb}$, which has $n$ elements, to the unpointed set with
the same cardinality.  We know by \cite[Proposition B.1]{NS18} that it
lifts to a similarly named functor
\begin{displaymath}
V:\blamb \to \Assoc^{\otimes }_{\act },
\end{displaymath}

\noindent where the codomain is the associative operad of
\cref{eq-Ass-act}.  Precomposing with the isomorphism between $\blamb$
and its opposite gives a functor
\begin{numequation}\label{eq-V-op}
\begin{split}
V^{\op}:\blamb^{\op} \to \Assoc^{\otimes }_{\act }.
\end{split}
\end{numequation}%

\noindent The self-duality of $\blamb_{\infty }$, which implies that
of each $\blamb_{r}$, is spelled out in \cite[page 381]{NS18}.

For each $r$, there is an embedding
\begin{numequation}\label{eq-lambda-r}
\begin{split}
\iota _{r} :\bdelt \to \blamb_{r}
\qquad [n]\mapsto \pow{n+1}_{\blamb_{r}}
\end{split}
\end{numequation}%

\noindent Simplicial and $r$-cyclic (paracyclic for $r=\infty $) sets
are contravariant $\mcSet$-valued functors on $\bdelt$ and $\blamb_{r}
$ respectively. See \cref{def-r-cyclic} below. Both have geometric
realizations, and that of an $r$-cyclic set for finite $r$ has a
natural action of the circle $\mT$.  That of a paracyclic set has
natural action of $\reals$.

We will want to look at the fixed point sets of finite subgroups of
$\mT$.  In order to describe these simplicially, the following notion
of subdivision is helpful.

\begin{defin}\label{def-edgewise} 
For each positive integer $r$, the {\bf $r$-fold edgewise subdivision
functor} $\sd_{r}:\bdelt \to \bdelt $ sends $[n-1]$ to $[rn-1]$ for
$n\geq 1$, and each morphism to its $r$th iterated disjoint
union. (Face and degeneracy maps can also be written as $r$-fold
compositions as in \cref{eq-edgewise-face}.) Let $d_{(r)}:\tdelt{n-1}
\to \tdelt{rn-1}$ be the map of topological simplices induced by the
diagonal embedding
\begin{displaymath}
(x_{0},\dotsc ,x_{n-1}) \mapsto \frac{(x_{0},\dotsc ,x_{n-1},
x_{0},\dotsc ,x_{n-1},\dotsc, x_{0},\dotsc ,x_{n-1})}{r}
\end{displaymath}

\noindent where the coefficients in the codomain are repeated $r$
times with
\begin{displaymath}
x_{i}\geq 0 \qquad \aand \sum_{0\leq i<n}x_{i}=1.
\end{displaymath}

For a simplicial set $X$, we define the {\bf $r$th subdivided
simplicial set} $\sd^{*}_{r}X$ to be the composite functor
\begin{numequation}
\begin{split}
\xymatrix
@R=4mm
@C=10mm
{
{\bdelt^{\op}}\ar[r]^(.5){\sd_{r}}
  &{\bdelt^{\op}}\ar[r]^(.5){X}
    &{\Set ,}
}
\end{split}
\end{numequation}%

\noindent so that $(\sd^{*}_{r}X)_{n-1}=X_{rn-1}$. Its $i$th face map 
$d_{i}$ (for $0\leq i\leq n-1$) is the composite of $r$ face maps in $X$,
\begin{numequation}\label{eq-edgewise-face}
\begin{split}
\xymatrix
@R=1mm
@C=15mm
{
{(\sd^{*}_{r}X)_{n-1}}\ar@{=}[ddd]^(.5){}\ar[rrr]^(.5){d_{i}}
  &{}
    &{}
      &{(\sd^{*}_{r}X)_{n-2}}\ar@{=}[ddd]^(.5){}\\
{}\\
{}\\
{X_{rn -1}}\ar[r]^(.55){d_{i+ (r-1)n}}
  &{\dotsb }\ar[r]^(.4){d_{i+n}}
    &{X_{r (n-1)}}\ar[r]^(.45){d_{i}}
      &{X_{r (n-1)-1}}
}
\end{split}
\end{numequation}%

\noindent and its degeneracy maps are defined similarly.
\end{defin}

\begin{lem}\label{lem-subdiv-homeo}
{\bf Edgewise subdivision induces a homeomorphism.} \cite[Lemma
1.1]{BHM93} The map $D_{r}:|\sd^{*}_{r}X|\to |X|$ induced by
\begin{displaymath}
1\times d_{(r)}:
(\sd^{*}_{r}X)_{n-1}\times \bdelt^{n-1}
\to X_{rn-1}\times\bdelt^{rn-1}
\end{displaymath}

\noindent is a homeomorphism.
\end{lem}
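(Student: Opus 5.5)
The plan is to reduce to the universal case of standard simplices, treat that case by explicit affine geometry, and then glue along the coend defining realization.

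\emph{Step 1: naturality and colimits.} Both $|\sd^{*}_{r}X|$ and $|X|$ are colimits over the simplex category of $X$. Precisely, $|X|$ is the coend of $X_{\bullet}\times\tdelt{\bullet}$, and
\begin{displaymath}
|\sd^{*}_{r}X| \;=\; \int^{[n-1]} X_{rn-1}\times\tdelt{n-1}.
\end{displaymath}
Rewriting $X_{rn-1}=\bdelt^{\op}([rn-1],-)$ contracted against $X$ by Yoneda, the second coend becomes
\begin{displaymath}
\int^{[m]} X_{m}\times |\sd^{*}_{r}\bdelt^{m}|.
\end{displaymath}
So $D_r$ is the map induced on coends by natural maps $D_r:|\sd^{*}_{r}\bdelt^{m}|\to\tdelt{m}$. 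First I will check that the diagonal maps $d_{(r)}$ are compatible with the face and degeneracy formulas \cref{eq-edgewise-face}. This is a routine index check: the $r$ coordinate blocks are acted on identically. Compatibility makes $D_r$ well defined and natural in $X$. Since geometric realization commutes with colimits and every simplicial set is a colimit of its simplices, it then suffices to prove that $D_r$ is a homeomorphism for $X=\bdelt^{m}$, naturally in $[m]$.

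\emph{Step 2: the standard simplex.} For $X=\bdelt^m$, a $k$-simplex of $\sd^*_r\bdelt^m$ is an order-preserving map $[r(k+1)-1]\to[m]$. Equivalently, it is a weakly increasing sequence $v_0\le\dots\le v_{r(k+1)-1}$. The map $D_r$ sends a point $(x_0,\dots,x_k)$ of its simplex to the point
\begin{displaymath}
\frac1r\sum_{j=0}^{r-1}\sum_{i=0}^{k} x_i\,\delta_{v_{i+j(k+1)}}\in\tdelt{m}.
\end{displaymath}
The nondegenerate simplices should therefore be thought of as the cells of the triangulation of $\tdelt{m}$ obtained by cutting with the hyperplanes $\sum_{i\le \ell}t_i\in \frac1r\Z$. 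This is the classical edgewise (Freudenthal/Segal) subdivision. I plan to show that this map is a continuous bijection. Since $|\sd^*_r\bdelt^m|$ is compact (finitely many nondegenerate simplices) and $\tdelt{m}$ is Hausdorff, a continuous bijection is a homeomorphism.

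\emph{Step 3: bijectivity, the main obstacle.} Write a point of $\tdelt{m}$ in cumulative coordinates $y_\ell=\sum_{i\le\ell}t_i$, and use the fractional-part decomposition of $ry_\ell$. Each point then lies in the interior of a unique cell. The vertices of that cell are read off from the integer parts $\lfloor ry_\ell\rfloor$, and the barycentric coordinates from the fractional parts. This is exactly the standard argument that the staircase (Kuhn) triangulation is a triangulation. I would follow the proof in \cite[Lemma 1.1]{BHM93}, or Segal's original, by induction on $m$, and check that the cells so described correspond precisely to nondegenerate simplices of $\sd^*_r\bdelt^m$, with faces matching faces.

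I expect this combinatorial identification, showing interiors are disjoint and cover everything, to be the only real work. An alternative route, if this bookkeeping becomes unpleasant, is to construct the inverse map directly from the fractional-part formula and verify continuity cellwise.
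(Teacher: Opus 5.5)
The paper gives no proof of this lemma; it only cites \cite[Lemma 1.1]{BHM93}, so your proposal has to stand on its own. Your Step~1 is correct: $d_{(r)}$ is natural because $\sd_{r}(\theta)=\theta\sqcup\cdots\sqcup\theta$, and co-Yoneda then reduces everything to $X=\bdelt^{m}$. The compact-to-Hausdorff reduction in Step~2 is also correct.

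The problem is the one piece of content you supply for the key step. The cells of $|\sd^{*}_{r}\bdelt^{m}|$ are \emph{not} cut out by the hyperplanes $\sum_{i\le\ell}t_i\in\frac1r\Z$ alone. Nor are their vertices determined by the integer parts $\lfloor ry_\ell\rfloor$ alone.

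Already for $m=r=2$, the lines $t_0=\frac12$ and $t_0+t_1=\frac12$ leave the quadrilateral $\{t_0\le\frac12,\ t_2\le\frac12\}$. This region is the union of two of the four nondegenerate $2$-simplices, separated by $t_1=\frac12$:
\begin{itemize}
\item the middle triangle, the image of $v=(0,0,1,1,2,2)$;
\item the corner at vertex $1$, the image of $v=(0,1,1,1,1,2)$.
\end{itemize}
Both have integer parts $(0,1)$. They are distinguished only by whether $\{2y_0\}$ is larger or smaller than $\{2y_1\}$.

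So the cell also depends on the relative order of the fractional parts. Equivalently, you need all hyperplanes $\sum_{a\le i\le b}t_i\in\frac1r\Z$. Your appeal to the Kuhn triangulation points to the right picture, but the check as you literally describe it would fail.

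Once this is corrected, bijectivity is a direct computation. It needs no induction on $m$ and no matching of faces.

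Take a $k$-simplex $v=(v_0\le\cdots\le v_{r(k+1)-1})$ and $x\in\tdelt{k}$. Let $y_\ell$ be the cumulative coordinates of $t=D_r(v,x)$. Write
\begin{itemize}
\item $P_\ell=\#\{p: v_p\le\ell\}=q_\ell(k+1)+s_\ell$ with $0\le s_\ell\le k$;
\item $X_s=x_0+\cdots+x_{s-1}$.
\end{itemize}
Since $v$ is monotone, your formula for $D_r$ becomes
\[
r y_\ell \;=\; q_\ell+X_{s_\ell}\qquad (0\le\ell\le m).
\]

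A strict increase $v_{i+j(k+1)}<v_{i+1+j(k+1)}$ means exactly that some $P_\ell$ equals $i+1+j(k+1)$. Hence $v$ is nondegenerate if and only if $\{s_0,\dots,s_{m-1}\}\supseteq\{1,\dots,k\}$. Also, $x$ is interior if and only if $0<X_1<\cdots<X_k<1$.

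So for nondegenerate $v$ and interior $x$:
\begin{itemize}
\item $q_\ell=\lfloor ry_\ell\rfloor$;
\item $X_1<\cdots<X_k$ are exactly the distinct nonzero fractional parts of $ry_0,\dots,ry_{m-1}$.
\end{itemize}
These data recover $k$, $x$, every $s_\ell$ and $q_\ell$, hence every $P_\ell$, hence $v$.

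Conversely, for any $t\in\tdelt{m}$ the same recipe produces weakly increasing $P_\ell$, because $q+X_s\mapsto q(k+1)+s$ is order preserving. It also gives $P_m=r(k+1)$. This yields a nondegenerate $v$ and an interior $x$ with $D_r(v,x)=t$.

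Every point of a realization has a unique representation as a nondegenerate simplex together with an interior point. Hence $D_r$ is bijective, and your compactness argument finishes the proof.
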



\begin{defin}\label{def-r-cyclic}\cite[Definition 1.5]{BHM93}
An {\bf $r$-cyclic (or paracyclic) object} in a category $\mcC $ is a
contravariant $\mcC $-valued functor on $\blamb_{r}$ for finite $r$
(or $\blamb_{\infty }$), and we denote the category of such functors
by $\mcC_{\blamb_{r}}$.  The value of such a functor $X$ on $[n]$ is
denoted by $X_{n}$.  The {\bf geometric realization} $|X|$ of an
$r$-cyclic set $X$ is that of the underlying (via the embedding of
\cref{eq-lambda-r}) simplicial set.

The $r$-cyclic set $\blamb_{r}^{n}$ the {\bf standard $n$-cyclex of
degree $r$}, is defined by
\begin{numequation}\label{eq-lambda-n}
\begin{split}
(\blamb^{n}_{r})_{k} := \blamb_{r} ([k],[n])= \blamb^{\op}_{r} ([n],[k]).
\end{split}
\end{numequation}%

\noindent We will omit the subscript when it is 1.
\end{defin}

\begin{prop}\label{prop-geo-r-cyc}
\cite[Lemma 1.6]{BHM93}
{\bf The geometric realization of $\blamb^{n}_{r}$} is homeomorphic
to $\reals/r\ints \times \bdelt^{n}_{\rm top}$.  The action
of $\tau_{n}$ on $|\blamb^{n}_{r} |$ given by
\begin{numequation}\label{eq-tau-n-action}
\begin{split}
\tau_{n} (\theta ;x_{0},\dotsc ,x_{n}) := (\theta -x_{0};x_{1},\dotsc
,x_{n},x_{0}),
\end{split}
\end{numequation}%

\noindent where $\theta \in \reals/r\ints$.
\end{prop}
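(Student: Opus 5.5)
Following the approach of Dwyer-Hopkins-Kan for $r=1$ recalled in \cref{sec-DHK}, I would build an explicit prism model and identify $|\blamb^{n}_{r}|$ as a quotient of $\reals\times\bdelt^{n}_{\rm top}$ by the action of $r\Z$.

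The first step is to describe the paracyclic case. By \cref{prop-unique} and its analog for $\blamb_{\infty}$, every morphism $[k]\to[n]$ in $\blamb_{\infty}$ factors uniquely as a power $\tau^{j}$, $j\in\Z$, composed with a morphism of $\bdelt$. Hence $(\blamb^{n}_{\infty})_{k}$ is in bijection with $\Z\times(\bdelt^{n})_{k}$ as a set, although not as a simplicial set, since face and degeneracy maps twist through the relations \cref{eq-tau-d-s}. I would then show that $|\blamb^{n}_{\infty}|\approx\reals\times\bdelt^{n}_{\rm top}$. To do this, enumerate the nondegenerate $(n+1)$-simplices of $U\blamb^{n}_{\infty}$: they are the composites of the last arc map $s^{n+1}$ with $\tau^{j}$ for all $j\in\Z$. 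Each unit slab $[m,m+1]\times\bdelt^{n}_{\rm top}$ is triangulated by $n+1$ simplices $P_{j}$, exactly as in \cref{sec-DHK}, and consecutive slabs are glued along $\{m\}\times\bdelt^{n}_{\rm top}$. One must check that the face maps of these simplices match the faces of the $P_{j}$ shared inside the slab and across slab boundaries. This check uses \cref{eq-tau-d-s}.

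The second step passes to $\blamb_{r}$. The morphism sets of $\blamb_{r}$ are the quotients of those of $\blamb_{\infty}$ by the free action of $r\Z$, so $\blamb^{n}_{r}=\blamb^{n}_{\infty}/r\Z$ levelwise. Because the action is free and simplicial, and realization commutes with colimits, we get $|\blamb^{n}_{r}|\approx(\reals\times\bdelt^{n}_{\rm top})/r\Z$. Here $r\Z$ acts by $\theta\mapsto\theta+r$, since $\tau_{n}^{n+1}$ corresponds to translation by $1$ in the slab coordinate. The quotient is $\reals/r\Z\times\bdelt^{n}_{\rm top}$.

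The final step computes $\tau_{n}$. It acts on representables by postcomposition. On the nondegenerate top simplices it sends the one indexed $j$ to the one indexed $j+1$, so geometrically it carries the simplex $P_{j}$ to $P_{j+1}$, possibly shifted between slabs. In coordinates, a point $(\theta;x)$ lies in $P_{j}$ with $\theta$ measured against the partial sums $\sum_{i<j}x_{i}$. Cyclically relabeling the vertices $x\mapsto(x_{1},\dotsc,x_{n},x_{0})$ shifts every partial sum by $x_{0}$, and this forces the formula \cref{eq-tau-n-action}, $\theta\mapsto\theta-x_{0}$.

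I expect the main obstacle to be the bookkeeping in the first step. It requires verifying that the gluing determined by the simplicial identities of $U\blamb^{n}_{\infty}$ is exactly the standard prism triangulation, and that no extra identifications arise. I would attack this by checking the inner faces $d_{i}$, $0<i<n+1$, of the simplex $s^{n+1}\tau^{j}$ directly against $P_{j}\cap P_{j\pm1}$. The outer faces $d_{0},d_{n+1}$ would then be handled using $d_{0}t=d_{n}$ from \cref{eq-tau-d-s}.
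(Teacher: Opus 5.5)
Your overall strategy is the same as the paper's. The paper sketches the Dwyer--Hopkins--Kan prism argument of \cref{sec-DHK} for $r=1$ and otherwise cites \cite{BHM93}. Unrolling to $\blamb^n_\infty$ and dividing by $r\Z$ is a clean way to reach general $r$, and your quotient step and $\tau_n$ step are fine.

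The face bookkeeping you single out as the crux is set up backwards. Record a $k$-simplex $f\colon[k]\to[n]$ of $U\blamb^n_\infty$ by its values $y_0\le y_1\le\dotsb\le y_k\le y_0+1$ in $\tfrac1{n+1}\Z$ at $0,\tfrac1{k+1},\dotsc,\tfrac{k}{k+1}$. Then $d_i$ deletes $y_i$ and $s_i$ repeats it. The $j$th top simplex $s^{n+1}\tau^j$ is the chain $a<a+\tfrac1{n+1}<\dotsb<a+1$ with $a=\tfrac{j}{n+1}$.

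Its inner faces $d_i$, $0<i<n+1$, still contain both $a$ and $a+1$, so they lie in no other top simplex. They are the free faces making up $\reals\times\partial\bdelt^n_{\rm top}$, not $P_j\cap P_{j\pm1}$. The faces shared with neighbours are the outer ones, $d_0(s^{n+1}\tau^j)=d_{n+1}(s^{n+1}\tau^{j+1})$, which is exactly what the relation $d_0t=d_n$ of \cref{eq-tau-d-s} encodes. The slab walls $\{m\}\times\bdelt^n_{\rm top}$ are simply some of these shared faces.

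This also shows the gluing is not the prism one as a simplicial set. The first and last vertices $a$ and $a+1$ of each top simplex lie over the same vertex of $\bdelt^n_{\rm top}$. In the prism ordering $(0;\delta_0)<\dotsb<(0;\delta_j)<(1;\delta_j)<\dotsb<(1;\delta_n)$ of $P_j$ they do not, once $n\ge1$. For example, when $n=1$, $U\blamb^1$ has one edge in each direction between its two vertices, while the prism quotient with that ordering has two edges in the same direction. So a face-by-face match against the prism's face maps, as you propose, would fail; only the underlying geometric triangulations agree.

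The chain description removes the bookkeeping altogether. It exhibits $U\blamb^n_\infty$ as the ordered simplicial complex on the vertex set $\tfrac1{n+1}\Z$ whose simplices are the finite subsets of diameter at most $1$. Each such subset is contained in some $\{a,a+\tfrac1{n+1},\dotsc,a+1\}$, for instance with $a$ its minimum.

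Use the vertex map $m+\tfrac{i}{n+1}\mapsto(-m;\delta_i)$, for $m\in\Z$ and $0\le i\le n$. It sends $\{a,\dotsc,a+1\}$, with $a=m+\tfrac{j}{n+1}$, onto the vertex set of the translate of $P_j$ by $-(m+1)$ in the $\reals$-coordinate. On both sides simplices are determined by their vertex sets, so the affine extension is a homeomorphism onto $\reals\times\bdelt^n_{\rm top}$, with no separate gluing check needed. It carries translation by $r$ on vertices to $\theta\mapsto\theta-r$, so it descends to the $r$-cyclic quotient.

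The formula \cref{eq-tau-n-action} then follows as you indicate. Postcomposition with $\tau_n$ is $y\mapsto y+\tfrac1{n+1}$ on vertices, and the stated map is globally affine and permutes these vertices the same way. Whether it describes $\tau_n$ or $\tau_n^{-1}$ depends on how you orient the identification.
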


\cref{eq-tau-n-action} implies that 
\begin{displaymath}
\tau_{n}^{n+1} (\theta ;x_{0},\dotsc ,x_{n})
  = (\theta-1; x_{0},x_{1},\dotsc,x_{n}),
\end{displaymath}

\noindent making $\tau_{n}^{(n+1)r}$ the identity map for finite $r$.
The duals of the relations of \cref{eq-tau-d-s} imply that
$\tau_{n}^{n+1}$ commutes with $d^{i}$ and $s^{i}$ in that for $0\leq
i\leq n$,
\begin{displaymath}
\tau^{n+1}_{n}d^{i}= d^{i}\tau^{n}_{n-1}
\qquad \aand 
\tau^{n+1}_{n}s^{i}= s^{i}\tau^{n+2}_{n+1}.
\end{displaymath}

\noindent From this it follows that an $r$-cyclic space or set has an
action of the cyclic group $\rC _{r}$.  

For $r=1$, \cref{prop-geo-r-cyc} was first proved by Dwyer, Hopkins
and Kan in \cite[\S2.9]{DHK-Cyc}.  We describe their proof in \cref{sec-DHK}.

\subsection{The epicyclic category}\label{sec-epicyclic}

The ideas of this subsection were first published in \cite{BFG94},
whose authors attribute them to \cite{Goodwillie-letter}.  A more
contemporary reference is \cite{McCandless-curves}, which lists
several other papers on the topic.

We will define analogs $\widetilde{\blamb}_{r}$ of the categories of
\cref{def-paracyclic} having the same objects but more morphisms.
Recall that Connes regards the finite ordered set $[n]$ as the set of
$(n+1)$th roots of unity in the unit circle.  Morphisms in $\blamb$
are homotopy classes of degree one maps of the circle that preserve
the subset. The categories $\blamb_{r}$ and $\blamb_{\infty }$ can be
described in similar terms with the unit circle replaced by its
$r$-fold and universal covers.  Maps in the former case have degree 1
and on the real line they are equivariant with respect to the action
of the integers by addition.  {\em In the epicyclic version we drop
the degree one requirement.  } We allow maps of arbitrary positive
degree $k$ in $\widetilde{\blamb}_{r}$, and a map $f$ in
$\widetilde{\blamb}_{\infty }$ must satisfy $f (x+1)=f (x)+k$.  The
resulting morphism sets still have an action of $\Z$ by pointwise
addition.

\begin{defin}\label{def-epicyclic}
\cite[Definition 1.1]{BFG94}.  The
{\bf epicyclic category $\widetilde{\blamb}_{\infty }$} has as objects
the linearly ordered sets
\begin{displaymath}
\pow{n}_{\widetilde{\blamb}_{\infty }}:= (1/ n )\Z
\end{displaymath}

\noindent on which $\Z$ acts by addition, as in
\cref{def-paracyclic}. A morphism is a map of sets $f:\pow{m}\to
\pow{n}$ satisfying
\begin{numequation}\label{eq-f-para-again}
\begin{split}
f (x)\leq f (y)\mbox{ when }x\leq y
\qquad \aand  
f (x+1)= f (x)+k,
\end{split}
\end{numequation}%

\noindent for a positive integer $k$, the {\bf degree} of $f$. For
$k=1$, this is the same as \cref{eq-f-para}. Hence each morphism set
has an action of $\Z$ by pointwise addition.

For an integer $r\geq 1$, the {\bf $r$-epicyclic category}
$\widetilde{\blamb}_{r }$ (denoted simply by $\widetilde{\blamb}$ when
$r=1$) has the same objects, and each morphism set is the quotient of
that in $\widetilde{\blamb}_{\infty }$ by the action of $r\Z$.
\end{defin}

Equivalently (see \cite[Definition 1.1 ]{BFG94} ),
$\widetilde{\blamb}$ has the same objects and includes the morphisms
of $\blamb$ as in \cref{eq-tau-d-s} with additional morphisms
\begin{numequation}\label{eq-pi-nk}
\begin{split}
\pi^{k}_{n}:[k (n+1)-1]\to [n], \qquad k,n\in \mN,\quad k\geq 1
\end{split}
\end{numequation}%

\noindent defined by $a (n+1)+b\mapsto b$ for $0\leq b\leq n$ and
$0\leq a<k$.  These are subject to the relations

\begin{enumerate}[label={(\roman*)},itemindent=0em]
\item $\pi^{1}_{n} = 1_{[n]}$, 
 $\pi^{\ell }_{n}\pi^{k}_{\ell (n+1)-1}=\pi^{k\ell }_{n}$
\item $\alpha \pi^{k}_{m}=\pi^{k}_{n}\sd_{k} (\alpha )$ 
for $\alpha \in \bdelt ([m],[n])$ for $\sd_{k}$ as in \cref{def-edgewise} 
\item $\tau_{n}\pi^{k}_{n}=\pi^{k}_{n}\sd_{k} (\tau_{n})$ for
$\tau_{n}$  as in \cref{sec-Connes}.
\end{enumerate}

There is an analog of \cref{prop-unique} that says any morphism $[m]\to
[n]$ in $\widetilde{\blamb}$ can be written uniquely as a composite
\begin{numequation}\label{eq-epi-unique}
\begin{split}
\xymatrix
@R=4mm
@C=10mm
{
{[m]}\ar[r]^(.5){(\tau_{m})^{i}}
  &{[m]}\ar[r]^(.35){\alpha  }
    &{[k(n+1)-1]}\ar[r]^(.67){\pi^{k}_{n}}
      &{[n]}
}
\end{split}
\end{numequation}%

\noindent for some $k>0$ with $\alpha  $ being a morphism in $\bdelt$.

A more conceptual definition of $\widetilde{\blamb}$ due to Nikolaus
is \cite[Definition 2.1.9]{McCandless-curves}.

\begin{defin}\label{def-pownr}
{\bf The categories  $\pow{n}_{\blamb_{r }}$.}
The objects $\pow{n}_{\blamb_{r }}=\pow{n}_{\widetilde{\blamb}_{r }}$
of \cref{def-paracyclic,def-epicyclic} for $1\leq r\leq \infty $ are
themselves categories.  For $r=\infty $ it is a poset, which is a
category whose objects are the elements in the set and in which there
is a unique morphism from each element to each larger one.  The sets
of morphisms and objects have an action of the integers by addition.
For each $r<\infty $ we get a new category by passage to orbit sets.
Thus $\pow{n}_{\blamb_{r }}$ has $nr$ objects and the geometric
realization of its nerve is a circle.  That of
$\pow{n}_{\blamb_{\infty }}$ is the real line.
\end{defin}

\begin{defin}\label{def-epicyc-object}
An {\bf epicyclic object $X$} in a category $\mcC $ is a $\mcC
$-valued functor on $\widetilde{\blamb} ^{\op}$ and therefore a
simplicial object (by restriction of the functor to $\bdelt^{\op}$)
with some additional structure, which includes an action of the cyclic
group $\rC _{n+1}$ on $X_{n}$ for each $n$.  We denote the category of
such functors by $\mcC_{\widetilde{\blamb}}$, and the forgetful
functor $\mcC_{\widetilde{\blamb}} \to \mcC_{\bdelt}$ by
$\widetilde{U}$.  The {\bf geometric realization $|X|$ of an epicyclic set
$X$} is that of the underlying simplicial set $\widetilde{U}X$, which
we will often denote abusively by $X$.

\begin{numequation}\label{eq-lambda1-n-epi}
\begin{split}
(\widetilde{\blamb}^{n})_{k}
 := \widetilde{\blamb} ([k],[n])= \widetilde{\blamb}^{\op} ([n],[k]).
\end{split}
\end{numequation}%
\end{defin}


Since $\blamb$ is a wide subcategory (consisting of morphisms of
degree 1) of $\widetilde{\blamb}$, an epicyclic object is also a
cyclic object.  We know by \cref{thm-circle-action} that the geometric
realization of a cyclic comes equipped with an action of the circle
group $\mT$.  The corresponding structure in the epicyclic case
involves the following.

\begin{defin}\label{def-Witt-monoid}
\cite[(0.1)]{BFG94}.  The {\bf {\Wittmonoid}} $\mathscr{M} := \mT \rtimes
\mN^{\times }$, where $\mN^{\times }$ denotes the
multiplicative monoid of positive integers, is the monoid with
multiplication given by
\begin{displaymath}
(z_{1} ,k_{1}) (z_{2} ,k_{2} )
  = (z_{1} z_{2}^{k_{1}}, k_{1}k_{2} )
\qquad \mbox{for $z_{i}\in \mT$ and $k_{i}\in \mN^{\times }$.} 
\end{displaymath}

For a prime number $p$, the {\bf $p$-typical {\Wittmonoid}} is
\begin{displaymath}
\mathscr{M}_{p} :=
\rC_{p^{\infty }} \rtimes \mu_{p^{\mN}},
\end{displaymath}

\noindent where $\rC_{p^{\infty }}\subseteq \mT$ is the Pr\"uffer group and 
\begin{displaymath}
\mu_{p^{\mN}} := \left\{1, p, p^{2},\dotsc  \right\}.
\end{displaymath}
\end{defin}

Jonas McCandless \cite{McCandless-curves} calls $\mathscr{M}$ the
Witt monoid and denotes it by $\bW$. It is studied but not named
in \cite{BFG94}.

$\mathscr{M}$ acts on the free loop space $\mcL X$ with $\mT$ acting by
rotation of loops and $\mN^{\times }$ acting by power maps, hence the
name.  This means that for a point $f:\mT\to X$ in $\mcL X$ and
$(z,k)\in \mathscr{M}$, we have
\begin{numequation}\label{eq-Witt-free-loop}
\begin{split}
(z,k) (f) (u):f (zu^{k})\in X \qquad \mbox{for each $u\in \mT$}.
\end{split}
\end{numequation}%

\begin{theorem}\label{thm-Witt-action}
\cite[Theorems A and B]{BFG94}.  
\begin{enumerate}[label={(\roman*)},itemindent=0em]
\item \label{thm-Witt-actioni}
The geometric realization of an epicyclic space has a canonical,
right action of the {\Wittmonoid} $\mathscr{M}$ of \cref{def-Witt-monoid}.

\item \label{thm-Witt-actionii} The classifying space $B\mathscr{M}$ is
homotopy equivalent to $|N (\widetilde{\blamb}) |$.

\item \label{thm-Witt-actioniii}
The fundamental group of $B\mathscr{M}$ is isomorphic to the multiplicative
group of positive rational numbers. The universal covering of $B\mathscr{M}$
has the homotopy type of the {\SESM} space $K (\rats ,2)$ with
$\pi_{1}$ acting by multiplication of rational numbers.
\end{enumerate}
\end{theorem}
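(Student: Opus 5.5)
Part (i) will be proved by extending the argument behind \cref{thm-circle-action} from $\blamb$ to $\widetilde{\blamb}$. Write the realization of an epicyclic space $X$ as the coend $|X| = \int^{[n]\in\widetilde{\blamb}} X_n\times |\widetilde{\blamb}^n|$. This is legitimate because the left Kan extension of $|\bdelt^{\bullet}|$ along $\bdelt\to\widetilde{\blamb}$ recovers the simplicial realization of $\widetilde{U}X$; this is the analog of Loday's adjunction $F\dashv U$ together with $|FX|\approx\mT\times|X|$. The problem then reduces to the representables. Following \cref{prop-geo-r-cyc} and the Dwyer--Hopkins--Kan prism model of \cref{sec-DHK}, identify $|\widetilde{\blamb}^n|$ with a space of equivariant monotone ``degree $k$'' data. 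Concretely, a point consists of a weakly monotone map $\reals\to\pow{n+1}$-type configuration with $f(x+1)=f(x)+k$, modulo $\Z$. On this space $(z,k')\in\mathscr{M}$ acts on the right by precomposition with $u\mapsto zu^{k'}$, exactly as in \cref{eq-Witt-free-loop}. Precomposition commutes with postcomposition by morphisms of $\widetilde{\blamb}$, so the action passes to the coend. Canonicity and functoriality are then automatic, and restricting to $k'=1$ recovers the $\mT$-action of \cref{thm-circle-action}. The one thing to check is that the action of $k'$ sends $|\widetilde{\blamb}^n|$ to itself. This is where the maps $\pi^{k'}_n$ and relation (ii), $\alpha\pi^k_m=\pi^k_n\sd_k(\alpha)$, enter: combined with \cref{lem-subdiv-homeo}, they show that the power map $k'$ on realizations is the composite of the edgewise subdivision homeomorphism $D_{k'}$ with the map induced by $\pi^{k'}$.

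For (ii), use the unique factorization \cref{eq-epi-unique}. Every morphism of $\widetilde{\blamb}$ is a rotation, then a $\bdelt$-map, then some $\pi^k$. Hence there is a functor $\widetilde{\blamb}\to\mathcal{B}\mN^{\times}$ recording the degree, whose fiber over the unique object is $\blamb$. Quillen's Theorem B (or Thomason's homotopy colimit theorem) then gives $|N\widetilde{\blamb}|\simeq \mathrm{hocolim}_{\mathcal{B}\mN^\times}|N\blamb|$. Now $|N\blamb|\simeq B\mT\simeq\mathbb{CP}^\infty$; this is Connes' computation, which follows from (i) since $\blamb$ is the category of simplices of the cyclic point with contractible total space. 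Here $\mN^\times$ acts by the degree-$k$ maps of $B\mT$, and this homotopy colimit is a model for $B(\mT\rtimes\mN^\times)=B\mathscr{M}$.

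For (iii), apply the long exact sequence of the fibration $B\mT\to B\mathscr{M}\to B\mN^\times$. Since $\mN^\times$ is free commutative on the primes, $B\mN^\times\simeq\prod'_p S^1$ is a $K(\rats^\times_{>0},1)$ by group completion, because a free commutative monoid has aspherical classifying space with $\pi_1$ its group completion. The sequence gives $\pi_1B\mathscr{M}\cong\rats^\times_{>0}$ and $\pi_2\cong\varinjlim(\Z\xrightarrow{k}\Z)=\rats$, with the monodromy of $k$ acting on $\pi_2$ by multiplication by $k$. The universal cover therefore has only $\pi_2=\rats$, so it is $K(\rats,2)$.

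The main obstacle I expect is in (i): making the action of $\mN^\times$ strictly well defined on realizations. Power maps do not preserve the simplicial structure on the nose, and one must show that subdivision via $D_k$ and the $\pi^k$ are compatible with all the relations of $\widetilde{\blamb}$. The homotopy colimit step in (ii) is second, since localization at a monoid requires care with Theorem B hypotheses; I would use Thomason's theorem to avoid that.
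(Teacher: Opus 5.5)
The paper gives no proof of this theorem; it only quotes \cite{BFG94}. So I am judging your outline on its own terms.

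Your reduction in (i) to natural $\mathscr{M}$-actions on the representables $|\widetilde{\blamb}^{n}|$ is sound. In your step-function model, preservation of $|\widetilde{\blamb}^{n}|$ is automatic. The step you have not done is showing that this model is homeomorphic to the realization, naturally in $\widetilde{\blamb}$.

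The genuine gap is in (ii) and (iii). Both rest on a fiber sequence $B\blamb\simeq B\mT\to B\mathscr{M}\to B\mN^{\times}$ that does not exist.

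Your own numbers show this. $B\mN^{\times}$ is aspherical and $B\mT$ is simply connected, so the long exact sequence of such a fibration would give $\pi_{2}B\mathscr{M}\cong\Z$, not $\mathbb{Q}$. The colimit $\varinjlim(\Z\xrightarrow{k}\Z)$ does not come out of any long exact sequence. Also, the monodromy of a fibration acts by automorphisms, and multiplication by $k$ is not one. Since $\mN^{\times}$ acts on $\mT$ by non-invertible endomorphisms, $B\mathscr{M}\to B\mN^{\times}$ is not a quasifibration with fiber $B\mT$. Its homotopy fiber is $K(\mathbb{Q},2)$ (see below).

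The same conflation sits in (ii). The degree functor $d\colon\widetilde{\blamb}\to\mathcal{B}\mN^{\times}$ has strict fiber $\blamb$, but neither theorem you cite uses the strict fiber.
\begin{itemize}
\item Thomason's theorem needs $\widetilde{\blamb}$ to be a Grothendieck construction, and $d$ is neither a fibration nor an opfibration. Already the obvious lifts $\pi^{k}_{n}$ are not cartesian. Relation (iii), iterated $n+1$ times, gives $\pi^{k}_{n}\tau^{n+1}_{k(n+1)-1}=\tau_{n}^{n+1}\pi^{k}_{n}=\pi^{k}_{n}$, so lifts along $\pi^{k}_{n}$ are determined only up to $\rC_{k}$.
\item There is also no action of $\mN^{\times}$ on $\blamb$ in sight to take a homotopy colimit of. The degree-$k$ self-map of $B\blamb$ appears only through the zigzag $\blamb\leftarrow\blamb_{k}\rightarrow\blamb$ of edgewise subdivision and projection.
\item Quillen's Theorem B, when it applies, identifies the homotopy fiber with the comma category $*\downarrow d$, not with $d^{-1}(*)=\blamb$. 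These cannot agree here: a fiber sequence $B\blamb\to B\widetilde{\blamb}\to B\mN^{\times}$ would again force $\pi_{2}\cong\Z$, contradicting (ii) and (iii).
\end{itemize}
So $|N\widetilde{\blamb}|\simeq\operatorname{hocolim}_{\mathcal{B}\mN^{\times}}|N\blamb|$ is true a posteriori, but your argument does not establish it.

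To repair (iii), keep the correct part of (ii) and pass to covers instead of using a long exact sequence.
\begin{itemize}
\item The functor $\mathcal{B}\mathscr{M}\to\mathcal{B}\mN^{\times}$ is a split opfibration with transport $z\mapsto z^{k}$, so Thomason gives $B\mathscr{M}\simeq\operatorname{hocolim}_{\mathcal{B}\mN^{\times}}B\mT$.
\item Pull back the universal cover of $B\mN^{\times}\simeq K(\mathbb{Q}^{\times}_{>0},1)$. The result is the homotopy colimit of $B\mT$ over the category of elements of $\mN^{\times}$ acting on $\mathbb{Q}^{\times}_{>0}$.
\item That category is the poset $\mathbb{Q}^{\times}_{>0}$ with $q\le q'$ iff $q'/q\in\mN^{\times}$, and the transition maps have degree $q'/q$. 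This poset is directed, so the cover is $K(\varinjlim\Z,2)=K(\mathbb{Q},2)$.
\item The cover is simply connected, hence universal. This gives $\pi_{1}B\mathscr{M}\cong\mathbb{Q}^{\times}_{>0}$, acting on $\pi_{2}\cong\mathbb{Q}$ by multiplication.
\end{itemize}

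For (ii), the input has to come from (i) rather than from $d$. One route that works is the following.
\begin{itemize}
\item The degree-$k$ part of $|\widetilde{\blamb}^{n}|$ is $|\blamb^{k(n+1)-1}|/\rC_{k}$, with $\rC_{k}$ generated by $\tau^{n+1}$ and acting freely. It is therefore homotopy equivalent to a circle.
\item Consequently the orbit map of a degree-one point is an equivalence $\mathscr{M}\to|\widetilde{\blamb}^{n}|$ of right $\mathscr{M}$-spaces.
\item The Dwyer--Hopkins--Kan argument, with $B(-,\mathscr{M},*)$ in place of $E\mT\times_{\mT}-$, then gives $\operatorname{hocolim}_{\widetilde{\blamb}^{\op}}X\simeq B(|X|,\mathscr{M},*)$. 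Taking $X=*$ yields (ii).
\end{itemize}
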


\subsection{Summary of our indexing categories}\label{sec-summary}

We have introduced the categories
\begin{displaymath}
\xymatrix
@R=4mm
@C=4mm
{
{[n-1]}\ar@<1ex>[r]^(.6){d^{i}}
  &{[n]}\ar@<1ex>[l]^(.4){s^{i}}
    &{\bdelt}\ar[d]_(.5){\subseteq }\\
{[n]}\ar[r]^(.5){\tau_{n}}
  &{[n]} 
    &{\blamb}\ar[d]_(.5){\subseteq }
        &&{\blamb_{r/d}}\ar@{->>}[ll]_(.5){\Proj_{r/d,1}}
             &&{\blamb_{r}}\ar[d]^(.5){\subseteq }
                 \ar@{->>}[ll]_(.5){\Proj_{r ,r/d}}
                 &&{\blamb_{\infty }}\ar[d]^(.5){\subseteq }
                         \ar@{->>}[ll]_(.5){\Proj_{\infty ,r}}\\
{[kn+k-1]}\ar[r]^(.7){\pi^{k} _{n}}
  &{[n]} 
   &{\widetilde{\blamb}}
        &&&&{\widetilde{\blamb}_{r}}\ar@{->>}[llll]_(.5){\Proj_{r,1}}
                &&{\widetilde{\blamb}_{\infty }}
                    \ar@{->>}[ll]_(.5){\Proj_{\infty ,r}}
                   &{\pow{n}_{\widetilde{\blamb}_{\infty }}:= (1/ n )\Z}
}
\end{displaymath}

\noindent with the indicated objects and morphisms in
\cref{def-simp-cat,def-paracyclic,def-epicyclic}. These are
respectively the simplicial, cyclic/paracyclic and epicyclic
categories.  The coface maps $d^{i}$ and the codegeneracy maps $s^{i}$
are given in \cref{def-simp-cat}, $\tau_{n}$ is the automorphism of
order $r (n+1)$ given in the first paragraph of \cref{sec-Connes}, and
$\pi^{k}_{n}$ is given in \cref{eq-pi-nk}.  In each case we are
interested in contravariant functors to the category of sets or spaces.

\subsection{The double complexes of Tsygan-Goodwillie and Connes}
\label{sec-double-complex}

The first quadrant version of the following appeared first in less
general form in a paper of Tsygan \cite{Tsygan}.  It was later studied
in \cite{Loday-Quillen} and \cite[2.1.2]{Loday}.  The treatment here
is taken from Goodwillie's paper \cite[II.2]{Goodwillie-cyc} and is
also in Weibel's book \cite[9.6]{Weibel-H}.  To our knowledge,
Goodwillie's paper is the first to treat the case of a general cyclic
object in an abelian category rather than the specific case
$\mathbf{HH}_{\bullet} (A)$ as in \cref{eq-simp-chain}.


\begin{defin}\label{def-Goodwillie-double}
{\bf Tsygan and Goodwillie's first double complex.} Let $X$ be a
cyclic object in an abelian category. Then $C^{\rm per}_{**} (X)$, the
{\bf periodic double complex of $X$}, is the upper half plane double
complex with a contracting chain homotopy $u$ in its oddly indexed
columns 
\begin{numequation}\label{eq-C**}
\begin{split}
\xymatrix
@R=6mm
@C=10mm
{
{}
  &{}\ar[d]^(.4){b}
    &{}\ar@<.5ex>[d]^(.4){-b'}
      &{}\ar[d]^(.4){b}\\
{}
  &{C^{\rm per}_{2i,j+1} (X)}\ar[l]_(.65){N}
                             \ar[d]^(.5){b}
    &{C^{\rm per}_{2i+1,j+1} (X)}\ar[l]_(.5){\epsilon }
                     \ar@<.5ex>[d]^(.5){-b'}\ar@<.5ex>[u]^(.6){u}
      &{C^{\rm per}_{2i+2,j+1} (X)}\ar[l]_(.5){N }\ar[d]^(.5){b}
        &{ }\ar[l]_(.3){\epsilon }\\
{ }
  &{C^{\rm per}_{2i,j} (X)}\ar[l]_(.65){N}\ar[d]^(.6){b}
    &{C^{\rm per}_{2i+1,j} (X)}\ar[l]_(.5){\epsilon }
               \ar@<.5ex>[d]^(.6){-b'}\ar@<.5ex>[u]^(.5){u}
      &{C^{\rm per}_{2i+2,j} (X)}\ar[l]_(.5){N }\ar[d]^(.6){b}
        &{ }\ar[l]_(.3){\epsilon }\\
  &{}
    &{}\ar@<.5ex>[u]^(.4){u}
      &{}
}
\end{split}
\end{numequation}%

\noindent where 
\begin{align*}
C^{\rm per}_{i,j} (X)
 &: =X_{j}
    &&  \mbox{for $i\in \Z$ and $j\geq 0$}\\
b
 & := \sum_{0\leq k\leq j} (-1)^{k} d_{k}, 
    && \mbox{the Hochschild boundary,}\\
b'&  := \sum_{0\leq k <j} (-1)^{k} d_{k},  
    &&  \mbox{the modified Hochschild boundary,}\\
u &:= (-1)^{j}s_{j}, 
    &&  \mbox{the contracting chain homotopy,}\\
\epsilon
 & := 1- (-1)^{j}t_{j}
    &&  \mbox{for $t_{j}$ as in \cref{eq-tau-d-s},} \hspace{-5cm}  \\ 
\aand N
 & := \sum_{0\leq k\leq j}\left((-1)^{j}t_{j} \right)^{k}.
\end{align*}

\noindent The definition of each arrow in the picture depends on the
vertical coordinate, which we suppress from the notation to avoid
clutter, and the parity of the horizontal coordinate. We denote the
evenly and oddly indexed columns by $C^{\Hoch}_{*} (X)$, the {\em
Hochschild complex of $X$}, and $C^{\acyc}_{*} (X)$, the {\em acyclic
complex of $X$}, respectively.  We denote the homology of the former
by $\HH_{*} (X)$.  We will denote the corresponding first ($i\geq 0$)
and third ($i<0$) quadrant double complexes by $C_{**} (X)$ and
$C^{-}_{**} (X)$, so there is a {\SES} of double complexes
\begin{numequation}\label{eq-first-SES}
\begin{split}
0\to C^{-}_{**} (X) \to C^{\rm per}_{**} (X) \to C_{**} (X) \to 0.
\end{split}
\end{numequation}%

When $X$ is $\mathbf{HH}_{\bullet} (A)$ as in \cref{eq-simp-chain}, we
will denote $C^{\rm per}_{*,*} (X)$ by $C^{\rm per}_{*,*} (A)$, and
similarly for the related complexes $C^{\Hoch}_{*}$, $C^{\acyc}_{*}$,
$C^{-}_{*,*}$ and $C_{*,*}$.
\end{defin}

We will see analogs \cref{eq-first-SES} below in \cref{eq-second-SES} and 
\cref{eq-PPP}. The latter two have to do with
$\mT$-spectra rather than cyclic objects in an abelian category.  We
call them {\bf Tate sequences}. See \cref{def-P}.

The acyclic complex is so named because the map
\begin{numequation}\label{eq-contracting2}
\begin{split}
u= (-1)^{j}s_{j}:X_{j}\to X_{j+1}
\end{split}
\end{numequation}%

\noindent satisfies $ b'u+ub'=1_{X_{j}}$, making it a contracting
chain homotopy generalizing \cref{eq-contracting}.

\begin{remark}
{\bf The curious nature of the Tsygan  double complex.}  The Hochschild
complex $C^{\Hoch} (X)$ is the complex $\Ch(X)$ of
\cref{def-chain-complex}.  The acyclic complex $C^{\acyc} (X)$, which has
the same chain objects as $C^{\Hoch} (X)$, is also defined for any
simplicial abelian object $X$. No use of the cyclic structure is made
in defining either.  Only the horizontal arrows in \cref{eq-C**} are
defined in terms of it.

However in the simplicial setting there is no obvious map between the
two chain complexes, so the existence of boundary operator on $\Ch
(X)$ that renders it acyclic appears to be an idle curiosity.
\end{remark}

Let $d^{h}$ and $d^{v}$ denote the horizontal and vertical arrows in
\cref{eq-C**}.  Goodwillie shows that that $d^{h}d^{v}+d^{v}d^{h}$
vanishes on each $C_{i,j} (X)$, generalizing a similar argument in
\cite[Lemma 1.1]{Loday-Quillen}.

\begin{defin}
{\bf Cyclic homology.}  With notation as above, $\HC_{*} (X)$, the
{\bf cyclic homology of $X$}, is the homology of the first quadrant
total complex $\Tot^{\oplus } (C_{**} (X))$.  This is the chain complex defined
by
\begin{displaymath}
\Tot^{\oplus } (C_{**} (X))_{n}:=\bigoplus_{i+j=n\atop i,j\geq 0}C_{i,j} (X)
\end{displaymath}

\noindent with boundary operator $d^{h}+d^{v}$.
\end{defin}

Following \cite[page 191]{Goodwillie-cyc}, the double complex map
$s:C_{i,j} (X)\to C_{i-2,j} (X)$, which is an
isomorphism for $i\geq 2$, leads to a {\SES} 
\begin{displaymath}
0\to \ker (s)\to \Tot^{\oplus} (C_{**} (X))_{*}\to \Tot^{\oplus} (C_{**} (X))_{*-2}\to 0
\end{displaymath}

\noindent and a {\LES} 
\begin{numequation}\label{eq-SBI}
\begin{split}
\dotsb \to \HH_{*} (X)\to \HC_{*} (X)
       \to \HC_{*-2} (X)\to \HH_{*-1} (X)\to \dotsb 
\end{split}
\end{numequation}%

\noindent due in cohomological form to Connes \cite{Connes} and
\cite[III.1.$\gamma $]{Connes94}; see \cite[Theorem
2.2.1]{Loday}. Weibel refers to this as the SBI sequence
\cite[Proposition 9.6.11]{Weibel-H}.  It follows that a map of cyclic
objects inducing an isomorphism in $\HH_{*}$ also induces one in
$\HC_{*}$.

Composing the map $\HH_{*} (X)\to \HC_{*} (X)$ in \cref{eq-SBI} with
the reindexed 
\begin{displaymath}
\HC_{*} (X)\to \HH_{*+1} (X)
\end{displaymath}

\noindent gives the {\bf Connes operator}
\begin{numequation}\label{eq-Connes-operator}
\begin{split}
B:\HH_{j} (X)\to \HH_{j+1} (X).
\end{split}
\end{numequation}%

\noindent  It is induced by the composite 
\begin{displaymath}
\xymatrix
@R=4mm
@C=20mm
{
{X_{j}=C_{2i+2,j} (X)}\ar[r]^(.5){\epsilon uN}
  &{C_{2i,j+1} (X)=X_{j+1}}
}
\end{displaymath}

\noindent in \cref{eq-C**}, which we will also denote by $B$. Note that
\begin{displaymath}
BB= (\epsilon u N) (\epsilon uN) =\epsilon u ( N\epsilon) uN=0,
\end{displaymath}

\noindent and the cochain complex $(\HH_{*} (X), B)$ is the {\bf
de~Rham complex of $X$}.

\begin{defin}
The {\bf second double complex of a cyclic abelian object $X$} is
obtained from that of \cref{eq-C**} by removing the acyclic oddly
indexed columns of \cref{eq-C**} and suitably regrading, thereby
obtaining the half plane complex $\mcB^{\rm per}_{**} (X)$ in which
$\mcB^{\rm per}_{i,j} (X)=X_{j-i}$ for $j\geq i$ and vanishes for
$j<i$.  In the following picture, the horizontal and vertical
coordinates are $i$ and $j$.
\begin{numequation}\label{eq-B**}
\begin{split}
\xymatrix
@R=.5mm
@C=10mm
{
  &  &{-1}&{0}&{1}&{2}\\
  &  &{\vdots}\ar[dd]_(.45){-b}
          &{\vdots}\ar[dd]_(.45){b}
              &{\vdots}\ar[dd]_(.45){-b}
                  &{\vdots}\ar[dd]_(.45){b}\\
\\
{2}
  &{\cdots}
     &{X_{3}}\ar[l]_(.5){B}\ar[ddd]_(.5){-b}
          &{X_{2}}\ar[l]_(.45){B}\ar[ddd]_(.5){b}
              &{X_{1}}\ar[l]_(.45){B}\ar[ddd]_(.5){-b}
                  &{X_{0}}\ar[l]_(.45){B}\\
\\
\\
{1}
  &{\cdots}
     &{X_{2}}\ar[l]_(.5){B}\ar[ddd]_(.5){-b}
         &{X_{1}}\ar[l]_(.45){B}\ar[ddd]_(.5){b}
             &{X_{0}}\ar[l]_(.45){B}\\
\\
\\
{0}
  &{\cdots}
     &{X_{1}}\ar[l]_(.5){B}\ar[ddd]_(.5){-b}
         &{X_{0}}\ar[l]_(.45){B}\\
\\
\\
{-1}
  &{\cdots}
       &{X_{0},}\ar[l]_(.5){B}
}
\end{split}
\end{numequation}%

\noindent where on $X_{j}$,
\begin{align*}
b & = \sum_{0\leq k\leq j} (-1)^{k}d_{k}  \\
\aand 
B & = \epsilon u N
    = ((-1)^{j} + t_{j})s_{j}
    \sum_{0\leq k\leq j} ((-1)^{j}t_{j})^{k}.
\end{align*}

\noindent The horizontal and vertical differentials are $B$ and
$(-1)^{i}b$ respectively.

There is a subcomplex $\mcB^{-}_{*,*} (X)$ obtained by replacing the groups
with $i\geq 0$ by 0, which is concentrated in the second quadrant plus
half of the third one. Its quotient $\mcB_{**} (X)$ is concentrated in
the first quadrant.
\end{defin}

Thus this complex is concentrated in the northwest half plane defined
by $j\geq i$.  It is both a chain complex and a cochain complex, and
is called a {\em mixed complex} by Kassel in \cite{Kassel}.  See
\cite[9.8]{Weibel-H} for further discussion.  It should be compared
with the bicomplex $\mcB(A)$ of \cite[2.1.7]{Loday}.  A cohomological
variant of it is studied by Connes in \cite[3.1.$\gamma$]{Connes94}.

The corresponding $\Tot^{\oplus}$s, in all three versions, are known to be
quasi-isomorphic (up to regrading) to those of \cref{eq-C**}.  We have
\begin{align*}
\Tot^{\oplus} (\mcB_{**} (X))_{n}
 & =\bigoplus_{0\leq i\leq n/2} X_{n-2i},\\
\Tot^{\oplus} (\mcB^{\rm per}_{**} (X))_{n}
 & = \mycases{    
\displaystyle{\bigoplus_{i\geq 0}B_{2i}}
       &\mbox{for $n$ even}\\
\displaystyle{\bigoplus_{i\geq 0}B_{2i+1}}
       &\mbox{for $n$ odd,}
}\\
\aand 
\Tot^{\oplus} (\mcB^{-}_{**} (X))_{n}
 & = \mycases{    
\displaystyle{\bigoplus_{i\geq \max((n+2)/2,0)}B_{2i}}
       &\mbox{for $n$ even}\\
\displaystyle{\bigoplus_{i\geq  \max((n+1)/2,0)}B_{2i+1}}
       &\mbox{for $n$ odd.}
} 
\end{align*}

It follows that we have a {\SES}  of double complexes
similar to \cref{eq-first-SES},
\begin{numequation}\label{eq-second-SES}
\begin{split}
0\to \mcB^{-}_{**} (X)
  \to \mcB^{\rm per}_{**} (X)
  \to \mcB_{**} (X)
  \to 0
\end{split}
\end{numequation}%

\noindent and similarly for their $\Tot^{\oplus}$s.  We denote their
homology groups by $\HC^{-} (X)$, $\Hper (X)$, and $\HC (X)$
respectively.  In each case one can filter the double complex
increasingly by its columns and obtain a {\em Connes spectral
sequence} converging to its homology with input $\HH_{*} (X)$ in which
the first differential is the Connes operator $B$.




\subsection{The free loop space of the circle}\label{sec-LS1}

We recall properties of the free loop space $\mcL S^{1}$ for future
reference.  The circle group $\mT$ acts (as it does on any free loop
space) on by rotation of loops.  Each point in it has a winding number
or degree associated with it, and we denote by $\mcL_{r} S^{1}$ the
space of loops with degree $r$.  The {\Wittmonoid} $\mathscr{M}$ of
\cref{def-Witt-monoid} acts on $\mcL S^{1}$ as explained in
\cref{eq-Witt-free-loop}.

Let 
\begin{displaymath}
p:\reals\to S^{1}\subseteq \cxs 
\qquad \mbox{with}\qquad p (t):=e^{2\pi t\,\sqrt[]{-1}},
\end{displaymath}

\noindent where $S^{1}\subseteq \cxs$ is the unit circle.  Then a loop
of degree $r$ lifts via $p$ to a path  in $\reals$ between two points $|r|$
units apart, for example to a closed path when $r=0$.  The
space of such loops is equivalent to $S^{1}$ with $\mT$ acting via the
$r$th power map.  It follows that we have a $\mT$-{\eqvr} equivalence
\begin{numequation}\label{eq-LS1}
\begin{split}
\mcL S^{1} \simeq \coprod_{r\in \Z}\mT/\rC_{|r|}.
\end{split}
\end{numequation}%

\noindent Here ``$\mT/\rC_{0}$'' is understood to be the circle with
trivial $\mT$-action, there being no such group as $\rC_{0}$.  The
underlying space is {\eqt} to $\Z\times S^{1}$, and $\mT$ acts on the
$r$th component  via the $r$th power map.  This
result is the $G=\Z$ case of the following.

\begin{lem}\label{lem-}
\cite[Lemma 9.1]{KSS} 
Let $G$ be any topological group of CW type. Then there is a fibrewise
homotopy equivalence
\begin{displaymath}
\mcL BG \simeq EG\times_{G}G^{\ad},
\end{displaymath}

\noindent where $G^{\ad}$ denotes $G$ with the conjugation action, of
fibrewise H-spaces over $BG$. In particular if $G$ is abelian,
\begin{displaymath}
\mcL BG \simeq BG\times G.
\end{displaymath}
\end{lem}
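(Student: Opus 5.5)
The natural route is to compare the free loop fibration with the Borel construction on the adjoint action. Let $P_{*}BG$ be the space of paths in $BG$ starting at the base point. The endpoint evaluation $P_{*}BG\to BG$ is a fibration with contractible total space and fiber $\Omega BG\simeq G$; this uses that $G$ has CW type, so the canonical map $G\to \Omega BG$ is an equivalence. I would take $BG=EG/G$ with $EG$ a functorial contractible free $G$-space, for instance $|E_{\bullet}G|$ as in \cref{prop-EX} for discrete $G$, or the bar construction in general.

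With this model, a free loop in $BG$ lifts to a path in $EG$ whose endpoints differ by a unique group element $g$, so $\gamma(1)=\gamma(0)g$. Sending $(\gamma)$ to the class of $(\gamma(0),g)$ defines a map
\begin{displaymath}
\Phi:\{\text{paths } \gamma \text{ in } EG \text{ with } \gamma(1)\in \gamma(0)G\}/G \longrightarrow EG\times_{G}G^{\ad}.
\end{displaymath}
The $G$-quotient is taken with $G$ acting diagonally. Replacing $\gamma$ by $\gamma h$ replaces $g$ by $h^{-1}gh$, and this is exactly why the adjoint action appears. The source of $\Phi$ maps homeomorphically onto $\mcL BG$, using unique path lifting for the principal bundle $EG\to BG$, which holds up to homotopy in the CW setting. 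Its fiber over a point $e$ is the space of paths starting at $e$ and ending somewhere in the orbit $eG$.

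Since $EG$ is contractible, that fiber is equivalent to $G$ by evaluation at the endpoint. So $\Phi$ is a map of fibrations over $BG$, given by evaluation at $0$ on the source and projection on the target, and it is an equivalence on fibers. The fibrewise homotopy equivalence then follows from Dold's theorem. Fibrewise H-space compatibility comes from checking that concatenation of loops corresponds to multiplication in $G$ fibrewise, which holds up to coherent homotopy.

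The main obstacle is making path lifting and Dold's theorem rigorous for general topological $G$ of CW type; there I would first replace $G$ by a CW group model. In the abelian case the conjugation action is trivial, so
\begin{displaymath}
EG\times_{G}G^{\ad}=BG\times G.
\end{displaymath}
For $G=\Z$ this recovers \cref{eq-LS1}.
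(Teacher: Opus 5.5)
The paper gives no proof of this lemma; it only cites \cite[Lemma 9.1]{KSS}, so your argument has to be judged on its own. The strategy is the standard one and works: compare both sides with the space $\widetilde{\mcL}/G$ of paths $\gamma$ in $EG$ with $\gamma(1)\in\gamma(0)G$, modulo the diagonal $G$-action, and apply Dold's theorem over $BG$. But one step is false as stated. The claim that the source of $\Phi$ maps homeomorphically onto $\mcL BG$ fails for non-discrete $G$, i.e.\ whenever $EG\to BG$ is not a covering. For $G=\mT$, the constant loop at $b=[e]$ is the image of every path in the fibre $e\mT$ starting at $e$, and these end at every point $eg$. So the holonomy $g$ is not even a function of the loop, and $\Phi$ does not factor through $\mcL BG$. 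The fibre of $q\colon[\gamma]\mapsto p\circ\gamma$ over a loop is the space of its lifts with a fixed initial point. That space is contractible but not a point, so $q$ can at best be a homotopy equivalence, and this has to be proved. Contractibility of the space of lifts has nothing to do with the CW hypothesis. It comes from $EG\to BG$ being a Hurewicz fibration; Milnor's $EG$ is numerable for every topological group, so no CW replacement of $G$ is needed.

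To repair this, treat $q$ the same way you treat $\Phi$. It is a map over $BG$ (evaluation at $0$ on both sides). Over $b=[e]$ it restricts to the map $F_e=\{\gamma:\gamma(0)=e,\ \gamma(1)\in eG\}\to\Omega_bBG$. This is the restriction over $\Omega_bBG\subset P_bBG$ of $P_eEG\to P_bBG$, $\gamma\mapsto p\circ\gamma$. That map is a trivial Hurewicz fibration, since lifts of a path with prescribed starting point form a contractible space, so $q$ is an equivalence on fibres. You must also check that $\widetilde{\mcL}/G\to BG$ is a fibration; over a local section of $EG\to BG$ it is evaluation at $0$. Then Dold's theorem applies to both $q$ and $\Phi$. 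The lemma follows from the zigzag $\mcL BG\xleftarrow{\,q\,}\widetilde{\mcL}/G\xrightarrow{\,\Phi\,}EG\times_{G}G^{\ad}$ by inverting $q$ up to fibre homotopy.

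For the H-structures, $\widetilde{\mcL}/G$ has a strict fibrewise product. Take representatives with $\gamma(0)=\delta(0)=e$, $\gamma(1)=eg$, $\delta(1)=eh$, and set $[\gamma][\delta]=[\gamma*(\delta g)]$; $q$ carries this to loop concatenation. With your convention $\gamma(1)=\gamma(0)g$, the product has holonomy $hg$, not $gh$, so $\Phi$ is an anti-H-map. Composing with the conjugation-equivariant inversion $g\mapsto g^{-1}$ fixes this. No coherence is needed: a fibre homotopy inverse of a fibrewise H-equivalence is automatically a fibrewise H-map. For discrete $G$, such as $G=\Z$ in \cref{eq-LS1}, your homeomorphism claim is correct and the argument goes through as written.
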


For $G=\Z$, $BG\simeq S^{1}$, and this is our description of the space
underlying $\mcL S^{1}$.  Similarly for $G=\Z_{p}$, we have 
\begin{displaymath}
\mcL B\Z_{p}\simeq B\Z_{p}\times \Z_{p}.
\end{displaymath}

\noindent Like any free loop space, this is a $\mT$-space.  In order to
describe it as such, note that as topological spaces,
\begin{displaymath}
\Z_{p}\cong \left\{0 \right\} \amalg 
\coprod_{j\geq 0}      p^{j}\Z_{p}^{\times }.
\end{displaymath}

\noindent This is the stratification of $\Z_{p}$ by $p$-adic
valuation.  Each subspace on the right other than $\left\{0 \right\}$
is open, and all of them are closed.

Since $\Z_{p}$ is abelian, its classifying space is also an abelian
group which we denote by $\mT_{p}$.  The homomorphism $\Z \to \Z_{p}$
induces $\mT\to \mT_{p}$.  For each $j\geq 0$ we have
\begin{displaymath}
\rC_{p^{j}}\subseteq \mT\subseteq \mT_{p}
\end{displaymath}

\noindent with $\mT_{p}/\rC_{p^{j}}\cong \mT_{p}$ as underlying
spaces, since $\Z_{p}$ and $p^{j}\Z_{p}$ are isomorphic as groups.
Then we find that as $\mT$-spaces
\begin{numequation}\label{eq-LBZp}
\begin{split}
\mcL B\Z_{p}\simeq B\Z_{p}  \amalg 
\coprod_{j\geq 0} 
\left(p^{j}\Z_{p}^{\times }\times  \mT_{p}/\rC_{p^{j}} \right),
\end{split}
\end{numequation}%

\noindent where $\mT$ acts trivially on the first summand on the
right.  This is a special case of \cite[Theorem 7.3.11]{Loday}.

\bigskip

\section{The work of {\Bok}, Hsiang and Madsen}\label{sec-BHM} 
Now we quote some results from \cite[\S1]{BHM93} indicating the action
of $\mT$ on spaces related to $\THH$.  Madsen's expository accounts
\cite{Madsen, Madsen94} of this work are very helpful.

\subsection{$\THH$ via {\Bok} functors}

In most papers on the subject, what we are calling a {\Bok} functor is
referred to as a {\bf functor with smash products} or {\bf FSP} and
is usually denoted by the letter $F$.

\begin{defin}\label{def-FSP} 
\cite[Definition 1.1]{Bokstedt} 
A {\bf {\Bok} functor} is an endofunctor $\mcF$  on the category of
pointed topological spaces equipped with natural transformations
\begin{align*}
\mu :\mcF (-)\wedge \mcF (-)
  & \implies \mcF (-\wedge -)   &
 \aand 
\iota :\Id
  & \implies \mcF
\end{align*}

\noindent (which we will denote by $\mu (\mcF)$ and $\iota (\mcF)$
when there is more than one such functor in play) inducing morphisms
\begin{align*}
\mu_{X,Y}
  &:\mcF (X)\wedge \mcF (Y)\to \mcF (X\wedge Y)&
\aand 
\iota_{X}
  &:X\to \mcF (X)
\end{align*}

\noindent for which the following three diagrams commute:
\begin{numequation}\label{eq-Bok1}
\begin{split}
\xymatrix
@R=1mm
@C=20mm
{
  &{\mcF (X)\wedge \mcF (Y)}\ar[dd]^(.5){\mu_{X,Y}}\\
{X\wedge Y}\ar[ur]^(.45){\iota_{X}\wedge \iota_{Y}}
           \ar[dr]_(.45){\iota_{X\wedge Y}}\\
  &{\mcF (X\wedge Y),}
}
\end{split}
\end{numequation}%

\begin{numequation}\label{eq-Bok2}
\begin{split}
\xymatrix
@R=6mm
@C=20mm
{
{\mcF (X)\wedge \mcF (Y)\wedge \mcF (Z)}
      \ar[r]^(.53){\mu_{X,Y}\wedge \mcF (Z)}
      \ar[d]_(.5){\mcF (X)\wedge \mu_{Y,Z}}
  &{\mcF (X\wedge Y)\wedge \mcF (Z)}\ar[d]^(.5){\mu_{X\wedge Y,Z}}\\
{\mcF (X)\wedge \mcF (Y\wedge Z)}\ar[r]^(.5){\mu_{X,Y\wedge Z}}
  &{\mcF (X\wedge Y\wedge Z),}
}
\end{split}
\end{numequation}%

\noindent and 
\begin{numequation}\label{eq-Bok3}
\begin{split}
\xymatrix
@R=6mm
@C=20mm
{
{X\wedge \mcF (Y)}\ar[r]^(.45){\iota_{X}\wedge \mcF (Y)}
               \ar[d]_(.5){t}             
  &{\mcF (X)\wedge \mcF (Y)}\ar[r]^(.5){\mu_{X,Y}}
    &{\mcF (X\wedge Y)}\ar[d]^(.5){\mcF (t)}\\
{\mcF (Y)\wedge X}\ar[r]^(.5){\mcF (Y)\wedge \iota_{X}}
  &{\mcF (Y)\wedge \mcF (X)}\ar[r]^(.5){\mu_{Y,X}}
    &{\mcF (Y\wedge X),}
}
\end{split}
\end{numequation}%

\noindent where $t$ is transposition of smash product factors.  $\mcF$ is {\bf
commutative} if the following diagram also commutes.
\begin{numequation}\label{eq-Bok4}
\begin{split}
\xymatrix
@R=6mm
@C=20mm
{
{\mcF (X)\wedge \mcF (Y)}\ar[r]^(.5){\mu_{X,Y}}\ar[d]_(.5){t}
    &{\mcF (X\wedge Y)}\ar[d]^(.5){\mcF (t)}\\
{\mcF (Y)\wedge \mcF (X)}\ar[r]^(.5){\mu_{Y,X}}
    &{\mcF (Y\wedge X),}
}
\end{split}
\end{numequation}%

The adjoint {\bf stabilization and costabilization maps} $\sigma_{X}$
and $\eta_{X}$ are defined by
\begin{numequation}\label{eq-stabilization}
\begin{split}
\xymatrix
@R=0mm
@C=20mm
{
{S^{1}\wedge \mcF (X)}\ar[r]^(.46){\iota_{S^{1}}\wedge \mcF (X)}
                   \ar@{=}[dd]^(.5){}
  &{\mcF (S^{1})\wedge \mcF (X)}\ar[r]^(.53){\mu_{S^{1},X}}
    &{\mcF (S^{1}\wedge X)}
                   \ar@{=}[dd]^(.5){}\\
{}\\
{\Sigma \mcF (X)}\ar[rr]^(.5){\sigma_{X}}
  & &{\mcF (\Sigma  X)}\\
  &{\perp}\\
{\mcF (X)}\ar[rr]_(.5){\eta _{X}}
  & &{\Omega \mcF (\Sigma  X)}
}
\end{split}
\end{numequation}%

\noindent We require in addition that $\mcF$ preserves connectivity and
that $\pi_{n+i}\mcF (\Sigma^{n}X)$ is independent of $n$ for $n\gg i$.
\end{defin}

\begin{defin}\label{def-cofib-Bok}
A {\Bok} functor $\mcF $ is {\bf cofibrant} if for each CW-complex $X$,
$\mcF (X)$ is again a CW-complex, and the stabilization map
$\sigma_{X}$ is a cofibration.
\end{defin}
 
Nothing like this last definition is found in \cite{BHM93}, where is
no discussion of model structures.  They are only considered in
subsequent work on cyclotomic spectra as orthogonal spectra, starting
with that of Blumberg and Mandell \cite{Blumberg-Mandell-12}.  The
condition of \cref{def-cofib-Bok} insures that the spectrum $\mcF
(\mS)$ is cofibrant in the stable projective model structure on the
categories of sequential, symmetric and orthogonal spectra of
\cref{def-seq-spec,def-symm-orth}.  The cofibrancy condition is the
subject of \cite[Corollary 7.1.37]{HHR:ESHT}, and we will need it in
\cref{def-action}. There is a concise review of the homotopy theory of
(meaning model structures in) the categories of associative and
commutative ring spectra in \cite[\S2.3]{ABGHLM} by Blumberg, Mandell,
Vigleik Angeltveit, Teena Gerhardt, Mike Hill and Tyler Lawson.

\begin{ex}\label{ex-BokF}
{\bf Some {\Bok} functors.}
\begin{enumerate}[label={(\roman*)},itemindent=0em]

\item \label{ex-BokFii}

For a group-like topological monoid $\Gamma $, let 
$\mcFG:=(-\wedge \Gamma _{+})$, making $\mcFG
(\mS)$ the suspension spectrum for $\Gamma _{+}$.  It is denoted by
$\underline{\underline{\Gamma }}$ in \cite[Example 3.2
(i)]{BHM93}. When $\Gamma $ is a single point, $\mcFG$ is the identity
functor $\Id$.  For a pointed space $X$, the Moore loop space
$\Omega^{M} X$ (see \cite[5.1]{Carlsson-Milgram}), which we will
denote simply by $\Omega X$, is such a monoid.  $\mcFG$ is commutative
as when $\Gamma $ is.  We will say that a {\Bok} functor of this form
is {\bf monoidal}.  It is cofibrant as in \cref{def-cofib-Bok} if
$\Gamma $ is a CW-complex.

\item \label{ex-BokFi}
For a discrete ring $R$, let $\mcF_{R}$ be the {\Bok} functor  defined by 
\begin{displaymath}
\mcF_{R} (X) := |R\Sing_{\bullet} (X)/R\Sing_{\bullet}(\pt) |,
\end{displaymath}

\noindent where $\Sing_{\bullet}$ is as in \cref{def-Sing} and
$R\Sing_{\bullet} (-)$ denotes the free simplicial $R$-module on the
indicated simplicial set.  It is denoted by $\underline{\underline{R
}}$ in \cite[Example 3.2 (iii)]{BHM93}. This is a generalized {\SESM}
space with
\begin{displaymath}
\pi_{*}\mcF_{R} (X)\cong \overline{H}_{*} (X;R). 
\end{displaymath}

\noindent This means the spectrum $\mcF_{R} (\mS)$ is the {\SESM}
spectrum for $R$. $\mcF_{R}$ is commutative as in \cref{eq-Bok4} when
$R$ is commutative.  It is cofibrant for any $R$.

\item \label{ex-BokFiii}
For a {\Bok} functor $\mcF$ and integer $m>0$, we can form the
corresponding {\bf matrix {\Bok} functor }
\begin{displaymath}
\mcM_{m} (\mcF) (X)
 := \Map (\langle m \rangle, \langle m \rangle\wedge  \mcF (X)),
\end{displaymath}

\noindent where $\langle m \rangle := \left\{1,\dotsc ,m
\right\}$.  A point in this mapping space is an $(m^{2})$-tuple of
points in $\mcF (X)$.

We need to specify the natural transformations $\mu (\mcM_{m} (\mcF))$
and $\iota (\mcM_{m} (\mcF))$.  For $\alpha_{1}\in \mcM_{m} (\mcF)
(X)$ and $\alpha_{2}\in \mcM_{m} (\mcF) (Y)$, the composite
\begin{displaymath}
\xymatrix
@R=6mm
@C=10mm
{
{\langle m \rangle}\ar[r]^(.35){\alpha_{2}}
  &{\langle m \rangle\wedge \mcF (Y)}
                    \ar[d]^(.5){\alpha_{1}\wedge \mcF (Y)}\\
  &{\langle m \rangle\wedge \mcF (X)\wedge \mcF (Y)}
                    \ar[rr]^(.53){\langle m \rangle\wedge \mu_{X,Y}}
      &&{\langle m \rangle\wedge \mcF (X\wedge Y),}
}
\end{displaymath}

\noindent which lives in $\mcM_{m} (\mcF) (X\wedge Y)$, behaves like
matrix multiplication.  This can be reinterpreted as a map
\begin{displaymath}
\mu (\mcM_{m} (\mcF))_{X,Y}:
\mcM_{m} (\mcF) (X)\wedge \mcM_{m} (\mcF) (Y)\to \mcM_{m} (\mcF)
(X\wedge Y)
\end{displaymath}

\noindent as in \cref{def-FSP}, with $\iota (\mcM_{m} (\mcF))_{X}:X\to
\mcM_{m} (\mcF) (X)$ given by
\begin{displaymath}
(\iota (\mcM_{m} (\mcF))_{X} (x)) (i):=i\wedge \iota (\mcF)_{X} (x)
\qquad \mbox{for $x\in X$ and $i\in \langle m \rangle$.} 
\end{displaymath}

\noindent It is cofibrant when $\mcF $ is.
\end{enumerate}
\end{ex}










\begin{defin}\label{def-THHF} 
\cite[(3.4)]{BHM93} 

\begin{enumerate}[label={(\roman*)},itemindent=0em]
\item \label{def-THHFi} 

The {\bf topological Hochschild homology  $\THH(\mcF)$ of a {\Bok}
functor $\mcF$} is the geometric realization of the simplicial space
$\THH_{\bullet} (\mcF)$ defined by
\begin{numequation}\label{eq-THHn}
\begin{split}
\THH_{n} (\mcF)
:=\hocolim{k} \Omega^{|k|}
       \left( \mcF (S^{k_{0}})\wedge \cdots \wedge \mcF (S^{k_{n}}) \right),
\end{split}
\end{numequation}%

\noindent where the colimit runs over all $(n+1)$-tuples of
nonnegative integers
\begin{displaymath}
k=(k_{0},\dotsc ,k_{n})
\qquad \mbox{with}\qquad  
|k|:=k_{0}+\dotsb +k_{n},
\end{displaymath}

\noindent and the maps are induced by the costabilization map of
\cref{eq-stabilization}.  The face and degeneracy maps are defined as
in \cref{eq-simp-chain}, with the unit 1 being replaced by $S^{0}$.

\item \label{def-THHFii} 
For a pointed space $X$, we define $\THH^{X}_{\bullet} (\mcF)$ similarly by 
\begin{displaymath} 
\THH^{X}_{n} (\mcF)
:=\hocolim{k} \Omega^{|k|}
   \left( \mcF (S^{k_{0}})\wedge \cdots 
   \wedge \mcF (S^{k_{n}})\wedge X \right).
\end{displaymath}

\item \label{def-THHFiii} 
The simplicial space $\THH_{\bullet} (\mcF)$ is a simplicial
infinite loop space, and we denote the corresponding spectrum by
$\tHH (\mcF)$. The spectrum associated with $\THH^{X}_{\bullet} (\mcF)$ is
$\tHH (\mcF)\wedge X$.

\item \label{def-THHFiv} 
When $\mcF =\mcF_{R}$ for a discrete ring $R$ as in
\cref{ex-BokF}\cref{ex-BokFi}, we denote the spectrum $\tHH(\mcF_{R})$
by $\THH(R)$.  It is denoted by $T (R)$ in
\cite[\S4]{Bokstedt-Madsen94}.

\end{enumerate}

\end{defin}

Here we are replacing the associative ring $A$ of
\cref{sec-classical-algebra} by the associative ring spectrum $\mcF
(\mS)$.  There is a spectral sequence converging to $H_{*}\tHH
(\mcF)\wedge X$ (with field coefficients) whose input is $\HH_{*}
(H_{*}\mcF (\mS)\wedge X)$.

\begin{prop}\label{prop-THH-loop}
{\bf $\THH$ and the free loop space.}\cite[Proposition 3.7 for $r=1$]{BHM93} 
 For $\mcF_{\Omega X}$ as in
\cref{ex-BokF}\cref{ex-BokFii}, we have
\begin{displaymath}
\tHH (\mcF_{\Omega X})\simeq \Sigma^{\infty }_{+} (\mcL X).
\end{displaymath}
\end{prop}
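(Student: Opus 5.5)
The plan is to reduce $\tHH(\mcF_{\Omega X})$ to the suspension spectrum of a cyclic bar construction, and then identify that cyclic bar construction with the free loop space using \cref{thm-eqvr-equiv}.

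\emph{Step 1: the levels of $\THH_{\bullet}$.} Write $\Gamma=\Omega X$, the Moore loop space, which is a strictly associative grouplike monoid. Then
\begin{displaymath}
\mcF_{\Gamma}(S^{k_{0}})\wedge\dotsb\wedge\mcF_{\Gamma}(S^{k_{n}})
\;\cong\; S^{|k|}\wedge(\Gamma^{n+1})_{+}.
\end{displaymath}
It follows from \cref{eq-THHn} that
\begin{displaymath}
\THH_{n}(\mcF_{\Gamma})\simeq \hocolim{k}\,\Omega^{|k|}\Sigma^{|k|}(\Gamma^{n+1})_{+}\simeq Q\bigl((\Gamma^{n+1})_{+}\bigr).
\end{displaymath}
Next I check that the face and degeneracy maps of \cref{eq-simp-chain}, with $1$ replaced by $S^{0}$, match the cyclic bar construction. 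The multiplication $\mu$ of $\mcF_{\Gamma}$ is induced by the monoid product, and the unit comes from $e\in\Gamma$. So the structure maps are exactly $Q(-)_{+}$ applied to those of $N^{\cyc}_{\bullet}(\Gamma)$ in \cref{def-cyc-bar-construction} (equivalently \cref{def-cyc-nerve}). This is also compatible with the cyclic operators $t_{n}$. Hence $\THH_{\bullet}(\mcF_{\Gamma})\simeq Q(N^{\cyc}_{\bullet}(\Gamma)_{+})$ as simplicial (indeed cyclic) infinite loop spaces. On spectra, \cref{def-THHF}\cref{def-THHFiii} gives $\tHH(\mcF_{\Gamma})_{n}$-levelwise $\Sigma^{\infty}_{+}N^{\cyc}_{n}(\Gamma)$.

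\emph{Step 2: commute realization with $\Sigma^{\infty}_{+}$.} Geometric realization commutes with the suspension spectrum functor, since both are colimits or left adjoints. A levelwise equivalence of simplicial spectra whose degeneracies are cofibrations realizes to an equivalence. Good simplicial spaces are ensured by $\Gamma$ being a CW monoid and the unit being a cofibration, i.e.\ the cofibrancy of \cref{def-cofib-Bok}. Combining these, $\tHH(\mcF_{\Gamma})\simeq\Sigma^{\infty}_{+}|N^{\cyc}_{\bullet}(\Gamma)|=\Sigma^{\infty}_{+}B^{\cyc}\Gamma$.

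\emph{Step 3: $B^{\cyc}\Gamma\simeq\mcL X$.} Replace $\Gamma=\Omega X$ by an equivalent topological group $G$, for instance Kan's loop group or Milnor's construction, with $BG\simeq X$ for connected $X$. The cyclic bar construction sends equivalences of grouplike monoids to equivalences, by levelwise comparison and goodness again. Then \cref{thm-eqvr-equiv} supplies the $\mT$-equivariant equivalence $f\colon B^{\cyc}G\to\mcL BG\simeq\mcL X$ of \cref{eq-maps}. For nonconnected $X$ one works componentwise at a basepoint; the statement is really about the based component, so I would assume $X$ connected.

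The main obstacle is Step 1–2 bookkeeping: verifying that the homotopy colimit over $k$ in \cref{eq-THHn} is compatible with the simplicial structure. Compatibility is needed so that the levelwise identification with $Q(N^{\cyc}_{n}(\Gamma)_{+})$ is an actual map of simplicial spaces and not merely a levelwise equivalence. I would handle this by writing down the natural map
\begin{displaymath}
\Sigma^{|k|}(\Gamma^{n+1})_{+}\to\mcF_{\Gamma}(S^{k_{0}})\wedge\dotsb\wedge\mcF_{\Gamma}(S^{k_{n}}),
\end{displaymath}
which is a homeomorphism up to reordering the smash factors. I would then check that it commutes with face maps, using the twist axiom \cref{eq-Bok3} for the last face $d_{n}$, which moves the last factor to the front.
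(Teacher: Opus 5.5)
Your proposal is correct and is essentially the argument behind \cite[Proposition 3.7]{BHM93}, which the paper cites in place of a proof: you identify $\THH_{\bullet}(\mcF_{\Omega X})$ levelwise with $Q\bigl(N^{\cyc}_{\bullet}(\Omega X)_{+}\bigr)$ compatibly with the cyclic structure, realize to get $\Sigma^{\infty}_{+}B^{\cyc}(\Omega X)$, and conclude with the $\mT$-equivalence $B^{\cyc}G\simeq\mcL BG$ of \cref{thm-eqvr-equiv}. Your restriction to connected $X$ is genuinely needed, since $\Omega X$ only sees the basepoint component while $\mcL X$ sees all of them; as a small point, the goodness used in Step 2 comes from $\Omega X$ being well-pointed, not from the cofibrancy condition of \cref{def-cofib-Bok}.
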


$\THH_{\bullet} (\mcF)$ is also a cyclic space as in
\cref{def-cyc-object}, which means that the space $\THH(\mcF)$ and the
spectrum $\tHH(\mcF)$ each have a $\mT$-action.

The following is \cite[Proposition 3.7]{BHM93} in case of monoidal
$\mcF $ (as in \cref{ex-BokF}\cref{ex-BokFii}) and can be deduced from
\cite{ABGHL} for $\mcF $ cofibrant as in \cref{def-cofib-Bok}.

\begin{theorem}\label{def-action}
{\bf The action of $\rC _{r}$ on $\THH$.} For each monoidal or
cofibrant {\Bok} functor $\mcF $ and each integer $r\geq 1$, the
simplicial $\rC _{r}$-space $\sd^{*}_{r}\THH^{X}_{\bullet} (\mcF)$ is
\begin{numequation}\label{eq-sdr-THHn}
\begin{split}
\sd^{*}_{r}\THH^{X}_{n} (\mcF)
=\hocolim{k} \Omega^{|k|\varrho_{r}}
       \left( \mcF (S^{k_{0}})^{\wedge r}\wedge 
      \cdots \wedge \mcF (S^{k_{n}})^{\wedge r}\wedge X \right),
\end{split}
\end{numequation}%

\noindent where  $\varrho_{r}$ denotes the real regular {\rep} of the
cyclic group $\rC _{r}$, $\Omega^{k\varrho_{r}} (-)$ denotes the twisted
loop space of \cref{def-piHX}, and $\rC _{r}$ permutes the factors of
each of the the $r$-fold smash powers of the spaces $\mcF (S^{k_{i}})$.
\end{theorem}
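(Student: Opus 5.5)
The plan is to unwind the definition of edgewise subdivision from \cref{def-edgewise} and then reorganize the homotopy colimit. By definition
\begin{displaymath}
(\sd^{*}_{r}\THH^{X}_{\bullet}(\mcF))_{n}=\THH^{X}_{r(n+1)-1}(\mcF),
\end{displaymath}
so the first step is simply to write this out via \cref{def-THHF}\cref{def-THHFii}. It is a homotopy colimit, over $r(n+1)$-tuples $(k_{0},\dotsc,k_{r(n+1)-1})$, of
\begin{displaymath}
\Omega^{|k|}\left(\mcF(S^{k_{0}})\wedge\dotsb\wedge\mcF(S^{k_{r(n+1)-1}})\wedge X\right).
\end{displaymath}
The $\rC_{r}$-action comes from $t_{r(n+1)-1}^{\,n+1}$, the generator of the subgroup of the cyclic group $\rC_{r(n+1)}$ acting on the $(r(n+1)-1)$-simplices. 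It permutes the $r(n+1)$ smash factors in blocks of length $n+1$. It also acts compatibly on the indexing category of tuples and on the loop coordinates, since $S^{|k|}=S^{k_{0}}\wedge\dotsb$ is permuted in the same way.

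The second step is to restrict the homotopy colimit to the diagonal subcategory of $\rC_{r}$-invariant tuples, i.e.\ those of the form $(k_{0},\dotsc,k_{n})$ repeated $r$ times. I will argue that this subcategory is cofinal, since any tuple maps to a diagonal one by taking blockwise maxima. For such a tuple, the sphere $S^{|k|}$ with its permutation action is $S^{(k_{0}+\dotsb+k_{n})\varrho_{r}}$, because $r$ permuted copies of $\reals^{k_{i}}$ form $k_{i}\varrho_{r}$. Likewise the smash product regroups as $\mcF(S^{k_{0}})^{\wedge r}\wedge\dotsb\wedge\mcF(S^{k_{n}})^{\wedge r}\wedge X$ with $\rC_{r}$ permuting the factors of each power. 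The loop space then becomes the twisted loop space $\Omega^{|k|\varrho_{r}}$ with conjugation action, which yields \cref{eq-sdr-THHn} as a $\rC_{r}$-space. I also need to check that the face and degeneracy maps of \cref{eq-edgewise-face}, which are $r$-fold composites applied blockwise, match the obvious maps on the right-hand side. This should be routine bookkeeping.

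The main obstacle is \emph{equivariant} cofinality. The diagonal restriction must be a $\rC_{r}$-equivalence, meaning it induces weak equivalences on $H$-fixed points for every subgroup $H\subseteq\rC_{r}$, not merely an underlying equivalence. For this I would first treat the monoidal case $\mcF=\mcFG$ as in \cref{ex-BokF}\cref{ex-BokFii}, following \cite[Proposition 3.7]{BHM93}. There $\mcF(S^{k})=S^{k}\wedge\Gamma_{+}$, so the twisted loop spaces can be compared directly via the equivariant Freudenthal suspension theorem: the stabilization maps along $\varrho_{r}$ become equivariantly highly connected as $k$ grows. For general cofibrant $\mcF$ I would invoke \cref{def-cofib-Bok}. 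Cofibrancy makes $\mcF(\mS)$ a cofibrant orthogonal ring spectrum, and then I would appeal to the norm/cyclotomic comparison of \cite{ABGHL}. That comparison identifies the geometric input with $N_{e}^{\rC_{r}}$ of the underlying spectrum and guarantees that the fixed-point homotopy types are computed correctly by the diagonal colimit. I expect this connectivity and fixed-point analysis to be the step needing genuine care.
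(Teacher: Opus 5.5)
Your proposal is correct in outline and follows the paper's own treatment. The paper gives no independent argument: it attributes the statement to \cite[Proposition 3.7]{BHM93} in the monoidal case and to \cite{ABGHL} in the cofibrant case, and your unwinding of $\sd^{*}_{r}$, the regrouping into $r$-fold smash powers, and the identification of $r$ permuted copies of $\reals^{k_{i}}$ with $k_{i}\varrho_{r}$ faithfully expand what those references supply.

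Two small refinements. First, if the homotopy colimit is taken over B\"okstedt's category of finite sets and injections rather than a poset, the underlying diagonal comparison needs B\"okstedt's approximation lemma, using the connectivity hypotheses in \cref{def-FSP}; the cofinality argument you give does not work there, because the diagonal of that category into its $r$-fold power is not homotopy cofinal. Second, for the full group $\rC_{r}$ no comparison is needed at all: the $\rC_{r}$-fixed points of the Bousfield--Kan homotopy colimit are exactly the homotopy colimit of the fixed-point diagram over the fixed, that is diagonal, subcategory, so the real content lies in the underlying and intermediate-subgroup cases.
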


The following is not proved in \cite{BHM93}, so we
will sketch a proof here. 

\begin{prop}\label{prop-fixed-point-sets}
{\bf Fixed point sets.}  For a monoidal or cofibrant {\Bok} functor
$\mcF $ and each subgroup $\rC _{q}\subseteq \rC _{r}$, the fixed
point set $(\sd^{*}_{r}\THH^{X} (\mcF))^{\rC _{q}}$ has a residual
action (\cref{def-residual}) of $\rC _{r/q}$ and is {\eqvr}ly
homeomorphic to $\sd^{*}_{r/q}\THH^{X} (\mcF)$ with the action of
\cref{def-action}.
\end{prop}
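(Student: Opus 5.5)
The plan is to compute the fixed points levelwise, using the explicit description \cref{eq-sdr-THHn} of \cref{def-action}, and then check that the identifications are compatible with the simplicial structure maps and with the residual action. Fixed points commute with geometric realization of simplicial $\rC_r$-spaces, because realization is a colimit of products $(\sd^{*}_{r}\THH^{X}_{n}(\mcF))\times\tdelt{n}$ and $\rC_r$ acts trivially on $\tdelt{n}$. So it suffices to produce $\rC_{r/q}$-equivariant homeomorphisms
\begin{displaymath}
\left(\sd^{*}_{r}\THH^{X}_{n}(\mcF)\right)^{\rC_q}\cong \sd^{*}_{r/q}\THH^{X}_{n}(\mcF)
\end{displaymath}
natural in $[n]\in\bdelt^{\op}$. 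Here $\rC_{r/q}\cong\rC_r/\rC_q$ acts residually on the left, as in \cref{def-residual}.

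For the levelwise step, I first pass fixed points inside the homotopy colimit. Since $\rC_q$ acts trivially on the indexing category of tuples $k$, the fixed points of $\hocolim{k}$ are $\hocolim{k}$ of the fixed points. This uses that the hocolim is a realization of a simplicial space built from coproducts, and fixed points commute with coproducts and realization. Next I use the restriction $\varrho_r|_{\rC_q}\cong (r/q)\varrho_q$ and the standard formula for fixed points of a twisted loop space:
\begin{displaymath}
\left(\Omega^{|k|\varrho_r}Y\right)^{\rC_q}=\Map_{\rC_q}(S^{|k|\varrho_r},Y)\cong \Map\!\left((S^{|k|\varrho_r})^{\rC_q},Y^{\rC_q}\right)\ \text{on fixed points},
\end{displaymath}
more precisely the $\rC_q$-equivariant maps, whose fixed-point parts are $\Omega^{|k|\varrho_{r/q}}$ after identifying $(\varrho_r)^{\rC_q}\cong\rho^{*}_{q}\varrho_{r/q}$ as $\rC_r/\rC_q$-representations. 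For the smash powers, $\rC_q$ acts on $\mcF(S^{k_i})^{\wedge r}$ by permuting $r$ factors in $r/q$ free orbits of size $q$. The fixed points are therefore the diagonal copies, giving a homeomorphism $(\mcF(S^{k_i})^{\wedge r})^{\rC_q}\cong \mcF(S^{k_i})^{\wedge r/q}$, on which $\rC_{r/q}$ acts by cyclic permutation. Since $X$ carries the trivial action, $X^{\rC_q}=X$.

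The main obstacle is that a $\rC_q$-fixed map $S^{|k|\varrho_r}\to Y$ is more data than a map of fixed points. The naive formula above only gives a restriction map $\left(\Omega^{|k|\varrho_r}Y\right)^{\rC_q}\to\Omega^{|k|\varrho_{r/q}}Y^{\rC_q}$, not a homeomorphism at each level $k$. I would handle this the way \cite{BHM93} does for monoidal $\mcF$. Because the indexing is by tuples and the colimit runs over representations of the form $|k|\varrho_r$, I would replace the indexing with the cofinal system of representation spheres whose structure maps are the costabilizations. I would then argue that the restriction maps become a homeomorphism onto $\sd^{*}_{r/q}\THH^{X}_{n}$ once the colimit is taken. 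For monoidal $\mcF=\mcFG$ this can be checked directly: $\mcF(S^{k})^{\wedge r}=(S^{k}\wedge\Gamma_+)^{\wedge r}$, whose fixed points are $S^{k\varrho_{r/q}}\wedge(\Gamma^{r/q})_+$. In the cofibrant case I would defer to \cite{ABGHL} as the statement of \cref{def-action} does. This is the step where I expect the ``homeomorphism'' to require the most care, and where I would be prepared to settle for a weak equivalence.

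Finally, I check compatibility with the face and degeneracy maps. By \cref{eq-edgewise-face} these are $r$-fold composites of the multiplication and unit maps of $\mcF$. Restricted to diagonals, they become the $(r/q)$-fold composites defining $\sd^{*}_{r/q}$. This is immediate from naturality of $\mu$ and $\iota$ together with the associativity diagram \cref{eq-Bok2}. The cyclic generator of $\rC_r$ maps to a generator of $\rC_{r/q}$, which gives equivariance.
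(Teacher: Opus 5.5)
The step that fails is the one you flagged yourself. You hope that the restriction maps $(\Omega^{|k|\varrho_r}Z)^{\rC_q}\to\Omega^{|k|\varrho_{r/q}}Z^{\rC_q}$ become a homeomorphism, or even a weak equivalence, after passing to the colimit over $k$. No cofinality argument can achieve this, because the part of a $\rC_q$-equivariant map living away from the fixed sphere $(S^{|k|\varrho_r})^{\rC_q}$ survives stabilization. Take $\mcF=\Id$ (monoidal, with $\Gamma=\pt$), $X=S^{0}$ and $r=q=p$. Already in simplicial degree $0$ the fixed points are the colimit of the spaces $\Map_{\rC_p}(S^{k\varrho_p},S^{k\varrho_p})$, i.e.\ the zeroth space of the genuine fixed point spectrum $\mS^{\rC_p}$. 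By tom Dieck splitting this is $Q_{+}(\pt)\times Q_{+}(B\rC_p)$, with $\pi_0$ the Burnside ring $A(\rC_p)\cong\Z^{2}$, whereas the corresponding space for $\THH(\Id)$ is $Q_{+}(\pt)$, with $\pi_0=\Z$. The paper's own \cref{ex-THH-Id} records the same answer, $\THH(\Id)^{\rC_p}\simeq Q_{+}(\pt)\times Q_{+}(B\rC_p)\not\simeq Q_{+}(\pt)\simeq\THH(\Id)$. So for categorical fixed points the statement fails even up to weak equivalence, and settling for a weak equivalence does not rescue the argument. Your ``direct check'' in the monoidal case does not touch the issue: it computes the fixed points of the target $(S^{k}\wedge\Gamma_{+})^{\wedge r}$, while the extra data comes from the source $S^{|k|\varrho_r}$.

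The paper's sketch does not get past this point either. It asserts that the space of $\rC_r$-equivariant maps is the image of the $k$th space of \cref{eq-THHn} under the smash-power map $f\mapsto f^{\wedge r}$, which is exactly the identification you rightly distrusted. For $\mcF=\Id$, a smash power $f^{\wedge p}$ with $f$ of degree $m$ has fixed-point degree $m$ and underlying degree $m^{p}$. So smash powers never produce the transfer class $[\rC_p/e]\in A(\rC_p)$, which has fixed-point degree $0$ and underlying degree $p$.

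What your levelwise analysis does establish is correct and worth keeping. Smash powers have the diagonal as fixed points, and restriction to fixed points gives $\rC_{r/q}$-equivariant maps compatible with faces and degeneracies. Together these yield a natural map $(\sd^{*}_{r}\THH^{X}(\mcF))^{\rC_q}\to\sd^{*}_{r/q}\THH^{X}(\mcF)$. For $r=q=p$ and $X=S^{V}$ these are the maps $r_{p,V}$ of \cref{def-cyclo-spec}. After stabilization they give an equivalence on \emph{geometric} fixed points, $\rho_p^{*}\Phi^{\rC_p}\THH(\mcF)\simeq\THH(\mcF)$, as in \cref{eq-BM-def}. The categorical fixed points instead sit in the cofiber sequence $\THH(\mcF)_{h\rC_p}\to\THH(\mcF)^{\rC_p}\to\Phi^{\rC_p}\THH(\mcF)$, the top row of \cref{eq-Tate2} for $G=\rC_p$. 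The homotopy orbit term there is precisely the obstruction you found. A genuine homeomorphism of fixed point spaces holds only unstably, without loop coordinates, as for the cyclic bar construction in \cref{prop-smooth-cyclo}.
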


\proof We will do this for $q=r$, leaving the general case to the
reader.  The space of \cref{eq-sdr-THHn} is a filtered colimit of
$\rC_{r}$-spaces.  Filtered colimits commute with finite limits, so it
suffices to determine the colimit of fixed point sets.  For each $k$
we have a space of non{\eqvr} maps between $\rC_{r}$-spaces on which
$\rC_{r}$ acts by conjugation.  Its fixed point set is the
corresponding space of {\eqvr} maps. This is the image of the $k$th
space of \cref{eq-THHn} under the norm map that sends a pointed space
$X$ to its $r$-fold smash product with $\rC_{r}$ permuting the
factors.  It follows that
\begin{displaymath}
(\sd^{*}_{r}\THH^{X}_{n} (\mcF))^{\rC_{r}} = \THH^{X}_{n} (\mcF),
\end{displaymath}

\noindent and similarly
\begin{displaymath}
(\sd^{*}_{r}\THH^{X}_{n} (\mcF))^{\rC_{q}} = \sd^{*}_{r/q}\THH^{X}_{n} (\mcF).
\qedhere
\end{displaymath}

\begin{cor}\label{cor-}
{\bf Smooth cyclotomy of $\THH$.}  For each monoidal or cofibrant
{\Bok} functor $\mcF $, $\THH (\mcF )$ is smoothly cyclotomic as in
\cref{def-smooth-cyclo}.
\end{cor}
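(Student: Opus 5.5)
Since \cref{def-smooth-cyclo} asks, for each $r\geq 1$, for a $\mT$-equivariant equivalence $\varphi_{r}:\rho_{r}^{*}\THH(\mcF)^{\rC_{r}}\to \THH(\mcF)$, I will build $\varphi_r$ in three steps: subdivide so that $\rC_r$ acts simplicially, take fixed points levelwise, then realize. Throughout I take $X=S^{0}$.

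First, \cref{lem-subdiv-homeo} supplies a homeomorphism $D_{r}:|\sd^{*}_{r}\THH_{\bullet}(\mcF)|\to \THH(\mcF)$. I will check that $D_r$ carries the simplicial $\rC_{r}$-action of \cref{def-action} to the restriction of the $\mT$-action of \cref{thm-circle-action}. This reduces to the cyclex level: on $|\blamb^{n}_{r}|\cong \reals/r\Z\times\tdelt{n}$ (\cref{prop-geo-r-cyc}), the operator $\tau_{n}^{n+1}$ shifts the circle coordinate by $1$. Hence the generator of $\rC_r$, which is $t_{rn-1}^{\,n}$ on $(\sd_r^*)_{n-1}$, realizes to rotation by $2\pi/r$. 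Consequently
\[
\THH(\mcF)^{\rC_{r}}\cong |\sd^{*}_{r}\THH_{\bullet}(\mcF)|^{\rC_{r}}.
\]

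Second, I need to move fixed points past realization. Since $\rC_r$ is finite and acts simplicially, $|Y_\bullet|^{\rC_r}\cong|Y_\bullet^{\rC_r}|$: geometric realization is a colimit of finite products with simplices carrying trivial action, and fixed points of a finite group commute with such realizations for simplicial $G$-spaces in compactly generated spaces. Combining this with \cref{prop-fixed-point-sets} in the case $q=r$, which identifies $(\sd^{*}_{r}\THH_{n}(\mcF))^{\rC_{r}}$ with $\THH_{n}(\mcF)$ levelwise, gives a homeomorphism
\[
\varphi_r:\THH(\mcF)^{\rC_{r}}\cong |\THH_{\bullet}(\mcF)| = \THH(\mcF).
\]
I will check that these levelwise identifications are compatible with faces and degeneracies, since they are natural in the tuple $k$.

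Third, and this is the main obstacle, $\varphi_r$ must be $\mT$-equivariant when the source carries the residual $\mT/\rC_r$-action pulled back along $\rho_r$. Because $\sd^*_r\THH_\bullet(\mcF)$ is an $r$-cyclic space in the sense of \cref{def-r-cyclic}, its realization carries a $\reals/r\Z$-action extending the $\rC_r$-action. The fixed-point levelwise identification intertwines the cyclic operator of $\sd^*_r$ restricted to fixed points, namely $t_{rn-1}$, with the cyclic operator $t_{n-1}$ of $\THH_\bullet$. By \cref{prop-geo-r-cyc}, $t_{rn-1}$ moves the circle coordinate $r$ times more slowly, and that factor of $r$ is exactly what $\rho_r$ absorbs. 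I would verify this on the standard cyclices, comparing $|\blamb^{n-1}_r|^{\rC_r}$ with $|\blamb^{n-1}|$ as in the sketch after \cref{prop-smooth-cyclo}, and then extend to $\THH_\bullet(\mcF)$ by naturality of \cref{thm-circle-action}. Once this holds, $\varphi_r$ is a $\mT$-equivariant homeomorphism, and in particular an equivalence.

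For the monoidal case there is an independent check: by \cref{prop-THH-loop}, $\THH$ is $\Sigma^\infty_+\mcL X$, and the resulting map should agree with the structure map of \cref{ex-free-loop}.
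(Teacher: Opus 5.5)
Your route is the paper's own: there the corollary is read off from \cref{prop-fixed-point-sets} (with $q=r$) together with the homeomorphism $D_r$ of \cref{lem-subdiv-homeo}. Your Steps 1 and 3 (equivariance of $D_r$, commuting $\rC_r$-fixed points with realization, the factor $r$ absorbed by $\rho_r$) are fine as far as they go. The gap is Step 2, and it cannot be repaired: the levelwise identification $(\sd^{*}_{r}\THH_{n}(\mcF))^{\rC_{r}}\cong\THH_{n}(\mcF)$ that you import is false.

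Write $Y=\mcF(S^{k_0})\wedge\dotsb\wedge\mcF(S^{k_n})$. The $\rC_r$-fixed points of $\Omega^{|k|\varrho_r}(Y^{\wedge r})$ in \cref{eq-sdr-THHn} are \emph{all} $\rC_r$-equivariant maps $S^{|k|\varrho_r}\to Y^{\wedge r}$. They are not only the smash powers $f^{\wedge r}$ of maps $f:S^{|k|}\to Y$, which is what the sketch of \cref{prop-fixed-point-sets} asserts. Restricting an equivariant map to fixed points, $S^{|k|}=(S^{|k|\varrho_r})^{\rC_r}\to (Y^{\wedge r})^{\rC_r}=Y$, gives a natural $\mT$-map $\rho_r^*\THH(\mcF)^{\rC_r}\to\THH(\mcF)$; for $r=p$ this is the map $\Res$ of \cref{eq-Phi-D}. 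It is a retraction of $f\mapsto f^{\wedge r}$, but it is not an equivalence.

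Already for $\mcF=\Id$ and $r=p$, the stable value of $\pi_0\Map_{\rC_p}(S^{k\varrho_p},S^{k\varrho_p})$ is the Burnside ring $A(\rC_p)$. It is detected by the pair (degree on the fixed sphere, underlying degree), subject only to a congruence mod $p$. Smash powers realize only pairs of the form $(d,d^{p})$, so, for instance, the class $(0,p)$ of $\rC_p/e$ is missed. This is exactly \cref{ex-THH-Id}: $\THH(\Id)^{\rC_p}\simeq Q_+(\pt)\times Q_+(B\rC_p)$ has $\pi_0\cong\Z^2$, whereas $\THH(\Id)\simeq Q_+(\pt)$ has $\pi_0\cong\Z$. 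So no equivalence $\varphi_p$ as in \cref{def-smooth-cyclo} exists at all, equivariant or not. (The label $\cong$ on $\Res$ in \cref{eq-FR} reflects the same conflation.)

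What is true is weaker. For $B^{\cyc}\Gamma$ and $\mcL X$ the space-level statement does hold (\cref{prop-smooth-cyclo}, \cref{ex-free-loop}). There the simplices $\Gamma^{rn}$ carry no loop coordinates, so their $\rC_r$-fixed points really are the diagonal copies of $\Gamma^{n}$. For $\THH(\mcF)$ the correct replacement is the spectrum-level statement \cref{eq-BM-def}. The restriction maps induce $\mT$-equivalences $\rho_r^*\Phi^{\rC_r}\tHH(\mcF)\to\tHH(\mcF)$ on \emph{geometric} fixed points, while genuine fixed points carry extra homotopy-orbit factors.

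Your proposed independent check in the monoidal case actually exhibits this. By \cref{prop-THH-loop} the relevant $\mT$-space is the infinite loop space $Q_+(\mcL X)$, not $\mcL X$. By the tom Dieck splitting, its $\rC_r$-fixed points are a product of $Q_+((\mcL X)^{\rC_r})\cong Q_+(\mcL X)$ with the factors $Q_+\bigl(((\mcL X)^{\rC_s})_{h(\rC_r/\rC_s)}\bigr)$ for proper divisors $s$ of $r$.
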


\begin{ex}\label{ex-THH-Id}
{\bf $\THH$ of the identity functor.} \cite[Example 2.8]{Madsen94} It
follows from \cref{def-THHF} that $\Id (\mS)=\mcF_{\pt} (\mS) \simeq
\mS$.  The identification of its fixed point sets under finite cyclic
$p$-groups is more interesting.  We have
\begin{displaymath}
\THH (\Id)^{\rC_{p^{n}}}\simeq \prod_{i=0}^{n}Q_{+} (B\rC_{p^{i}}),
\end{displaymath}

\noindent where 
\begin{numequation}\label{eq-Q+}
\begin{split}
Q_{+} (X) :=\hocolim{j}\Omega^{j} \Sigma^{j} (X_{+}).
\end{split}
\end{numequation}%

\noindent  The maps $\Frob$ and $\Res$ of \cref{def-residual}
 are given by 
\begin{align*}
\Frob^{}_{} (x_{0},\dotsc ,x_{n})
 & = (x_{0}+T (x_{1}), T (x_{2}),\dotsc ,T (x_{n}))  \\
\aand 
\Res^{}_{} (x_{0},\dotsc ,x_{n})
 & =    (x_{0},\dotsc ,x_{n-1})
\end{align*}

\noindent where $ T : Q_{+} (B\rC_{p^{i}}) \to Q_{+} (B\rC_{p^{i-1}})$
is the transfer mapping associated to the degree $p$
covering $B\rC_{p^{i-1}}\to B\rC_{p^{i}}$.

It follows that the spectrum $\tHH (\Id )$ is the sphere spectrum
$\mS$.
\end{ex}

\begin{theorem}\label{thm-Bokstedt}
{\bf $\THH$ of $\Z$ and $\Z/p$.} \cite[Theorem 1.1]{Bokstedt2} For a
discrete ring $R$, let $\mcF_{R}$ be as in
\cref{ex-BokF}\cref{ex-BokFi}.
\begin{enumerate}[label={(\roman*)},itemindent=0em]
\item \label{thm-Bokstedti} 
For each prime $p$,
\begin{align*}
\THH (\mcF_{\Z/p})
 & \cong  \prod_{i=0}^{\infty }{^{'}}K (\Z/p, 2i)\\
\mbox{and } 
\pi_{*}\THH (\mcF_{\Z/p})
 & = \Z/p[b]\mbox{ with }b\in \pi_{2} ,
\end{align*}

\noindent where $\prod '$ denotes the restricted product, meaning the
colimit of finite products.  

\item \label{thm-Bokstedtii} 
\begin{displaymath}
\THH (\mcF_{\Z}) \cong  \Z\times  \prod_{i=1}^{\infty }{^{'}}K (\Z/i, 2i-1),
\end{displaymath}

\item \label{thm-Bokstedtvii} \cite[Theorems 1.3 and 1.4]{BCS} 
$\THH (\Z/p)$ and $\THH (\Z)$ (as in
\cref{def-THHF}\cref{def-THHFiv}) are the corresponding
{\SESM} spectra with
\begin{align*}
\THH (\Z/p)
 & \simeq H\Z/p \otimes \Omega S^{3}_{+}   \\
\aand 
\THH (\Z)
 & \simeq H\Z  \otimes \tau_{\geq 3}\Omega S^{3}_{+}.
\end{align*}

\noindent Here $\tau_{\geq 3}\Omega S^{3}$ (denoted by $\Omega
(S^{3}\langle 3\rangle)$ in \cite{BCS}) denotes the 3-connective cover
of $\Omega S^{3}$, which is the fiber of the evident map to $K
(\Z,2)$.  A classical calculation involving the Serre spectral
sequence for the fiber sequence
\begin{displaymath}
S^{1}\to \tau_{\geq 3}\Omega S^{3}\to \Omega S^{3}
\end{displaymath}

\noindent (see \cite[Lemma 1.2.3]{Rav:MU} for a closely related
computation) shows that its integer homology coincides with the
homotopy groups of \cref{thm-Bokstedtii}.

\item \label{thm-Bokstedtiii} 
The map between the these spaces induced by $\Z\to \Z/p$ preserves the
product decomposition, is the obvious reduction for $i=0$, is trivial
for $i$ not divisible by $p$, and is the Bockstein $K (\Z/pj,
2pj-1)\to K (\Z/p, 2pj)$ for $i=pj$ with $j>0$.

\item \label{thm-Bokstedtiv} In the decomposition of
\cref{thm-Bokstedti}, let $\iota_{2i}\in H^{2i} (K (\Z/p,2i);\Z/p)$ be the
fundamental class.  Then the coproduct in cohomology is given by
\begin{displaymath}
\iota_{2i}\mapsto \sum_{0\leq k\leq i}\iota_{2k}\otimes \iota_{2i-2k},
\end{displaymath}

\noindent which is dual to the multiplication of \cref{thm-Bokstedti}.

\item \label{thm-Bokstedtv} In the decomposition of
\cref{thm-Bokstedtii}, let 
\begin{displaymath}
\iota_{2pj-1}\in H^{2pj-1} (K (\Z/pj,2pj-1);\Z/p)
\end{displaymath}

\noindent be the fundamental class.  Then the coproduct in cohomology
is given by
\begin{displaymath}
\iota_{2pj-1}\mapsto \iota_{2pj-1}\otimes 1 +1\otimes \iota_{2pj-1} 
  +  \sum_{0<k<j}\beta (\iota_{2pk-1}\otimes \iota_{2p (j-k)-1}),
\end{displaymath}

\noindent where $\beta:H^{2pj-2}\to H^{2pj-1} $ denotes the mod $p$ Bockstein.

\end{enumerate}
\end{theorem}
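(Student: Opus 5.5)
The plan is to compute mod $p$ homology with the Bökstedt spectral sequence and then pass to homotopy, treating $\Z/p$ first and $\Z$ second. The spectral sequence is the skeletal filtration of the simplicial spectrum $\THH_{\bullet}(\mcF_{R})$ of \cref{def-THHF}. Its input, as remarked after \cref{def-THHF}, is $\HH_{*}$ of the dual Steenrod algebra $\mathcal{A}_{*}=H_{*}(H\Z/p;\Z/p)$, computed over $\Z/p$ as in \cref{def-HHA}. I write the argument for $p$ odd; $p=2$ is the same with $\xi_{i}$ in place of the $\tau_{i}$.

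For $R=\Z/p$, the algebra $\mathcal{A}_{*}=E(\tau_{i})\otimes P(\xi_{i})$ is smooth, so by the HKR theorem
\begin{displaymath}
E^{2}=\mathcal{A}_{*}\otimes E(\sigma\xi_{i})\otimes \Gamma(\sigma\tau_{i}).
\end{displaymath}
Because $\Z/p$ is commutative, the spectral sequence is one of $\mathcal{A}_{*}$-Hopf algebras. The algebra generators $\sigma\xi_{i}$ and $\gamma_{1}(\sigma\tau_{i})=\sigma\tau_{i}$ lie in filtration $1$. The divided powers $\gamma_{p^{k}}(\sigma\tau_{i})$ survive by a degree and coalgebra argument, since the lowest possible differential on them lands on primitives in the wrong degrees. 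So I expect $E^{2}=E^{\infty}$.

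The key step, and the main obstacle, is the multiplicative extension problem. Since $\THH(\Z/p)$ is an $\EE_{\infty}$ ring, its homology carries Dyer--Lashof operations, and $\sigma$ commutes with them. Steinberger's formula $Q^{p^{i}}\tau_{i}=\tau_{i+1}$ then gives $(\sigma\tau_{i})^{p}=\sigma\tau_{i+1}$. Similarly $\sigma\xi_{i+1}=\beta Q^{p^{i}}\sigma\tau_{i}$ up to sign, and this equals $\beta$ of a $p$th power, which is $0$. Hence
\begin{displaymath}
H_{*}(\THH(\Z/p);\Z/p)\cong\mathcal{A}_{*}\otimes P(\sigma\tau_{0}),\qquad |\sigma\tau_{0}|=2.
\end{displaymath}
Since $\THH(\Z/p)$ is an $H\Z/p$-module, it splits as a wedge of suspensions of $H\Z/p$. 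This gives $\pi_{*}=\Z/p[b]$ with $b$ the image of $\sigma\tau_{0}$. The infinite loop space is then the restricted product of \cref{thm-Bokstedti}, and the coproduct formula \cref{thm-Bokstedtiv} is the dual of multiplication in $P(b)$.

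For $R=\Z$, the same method with $H_{*}(H\Z)=P(\xi_{i})\otimes E(\tau_{i}:i\ge1)$ gives
\begin{displaymath}
H_{*}(\THH(\Z);\Z/p)\cong H_{*}(H\Z)\otimes E(\sigma\xi_{1})\otimes P(\sigma\tau_{1}).
\end{displaymath}
Here again the extension $(\sigma\tau_{i})^{p}=\sigma\tau_{i+1}$ is used. I would then run the $p$-primary Bockstein spectral sequence, or equivalently the Adams spectral sequence over $E(Q_{0})$. Its differential $\beta(\sigma\tau_{1})^{j}\mapsto$ (unit)$\cdot\sigma\xi_{1}(\sigma\tau_{1})^{j-1}$ produces $p$-torsion of exponent $p^{v_{p}(i)+1}$ in degree $2i-1$. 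Assembling over all primes, with rational $\THH(\Z)_{\mathbb{Q}}=H\mathbb{Q}$ in degree $0$, yields \cref{thm-Bokstedtii}. Statement \cref{thm-Bokstedtiii} then follows by naturality of the homology computation under $\Z\to\Z/p$, since $\sigma\tau_{1}\mapsto\sigma\tau_{1}=(\sigma\tau_{0})^{p}$. The coproduct formula \cref{thm-Bokstedtv} comes from the same Bockstein bookkeeping.

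For \cref{thm-Bokstedtvii}, I would follow \cite{BCS}. By Hopkins--Mahowald, $H\Z/p$ is the Thom spectrum of an $\EE_{2}$-map $\Omega^{2}S^{3}\to BGL_{1}\mS_{p}$, and $H\Z$ is the Thom spectrum over $\Omega^{2}(S^{3}\langle3\rangle)$. For the Thom spectrum $Mf$ of an $\EE_{2}$-map, one has $\THH(Mf)\simeq Mf\otimes B(\text{base})_{+}$. Taking the base to be $\Omega^{2}S^{3}$ and $\Omega^{2}(S^{3}\langle3\rangle)$ gives $B\Omega^{2}S^{3}=\Omega S^{3}$ and $B\Omega^{2}(S^{3}\langle3\rangle)=\tau_{\ge3}\Omega S^{3}$. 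This recovers the stated equivalences, and the Serre spectral sequence check mentioned in the statement confirms consistency with \cref{thm-Bokstedtii}.
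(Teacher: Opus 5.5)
The step that fails is your claim that the B\"okstedt spectral sequence for $\THH(\Z/p)$ collapses at $E^2$ for odd $p$. Your degree argument is wrong. The class $\gamma_p(\sigma\tau_i)$ lies in filtration $p$ and total degree $2p^{i+1}$. The primitive $\sigma\xi_{i+1}$ lies in filtration $1$ and total degree $2p^{i+1}-1$, which is exactly the target of $d^{p-1}$.

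Your own Dyer--Lashof computation shows that this differential is nonzero. If the spectral sequence collapsed, the class $\sigma\xi_{i+1}\in E^\infty_{1,*}$ would detect a nonzero homology class, namely $\sigma(\xi_{i+1})$. That contradicts $\sigma\xi_{i+1}=\beta\bigl((\sigma\tau_i)^p\bigr)=0$. A counting argument gives the same conclusion: $\Gamma(\sigma\tau_0)$ alone already has the Poincar\'e series of $P(\sigma\tau_0)$. So $\mathcal{A}_*\otimes E(\sigma\xi_i)\otimes\Gamma(\sigma\tau_i)$ is strictly larger than the answer $\mathcal{A}_*\otimes P(\sigma\tau_0)$ that you state.

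The argument has to run the other way:
\begin{enumerate}
\item The vanishing of $\sigma\xi_{i+1}$ forces $d^{p-1}\gamma_p(\sigma\tau_i)\doteq\sigma\xi_{i+1}$, and the Hopf algebra structure propagates this to $d^{p-1}\gamma_{p+k}(\sigma\tau_i)\doteq\sigma\xi_{i+1}\gamma_k(\sigma\tau_i)$.
\item This leaves $E^p=\mathcal{A}_*\otimes P_p(\sigma\tau_i:i\ge0)$. It equals $E^\infty$ because it is generated by permanent cycles of filtration at most $1$.
\item Only then does $(\sigma\tau_i)^p=\sigma\tau_{i+1}$ resolve the multiplicative extensions.
\end{enumerate}
At $p=2$, where $E^2=\mathcal{A}_*\otimes E(\sigma\xi_i)$, the spectral sequence really does collapse, so the two cases are not ``the same.'' The same differentials, for $i\ge1$, are needed for $\THH(\Z)$; there $\sigma\xi_1$ survives only because $\tau_0\notin H_*(H\Z)$.

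In the Bockstein step for $\Z$ there are two more problems. First, the first Bockstein sends $(\sigma\tau_1)^j$ to $j\,\sigma\xi_1(\sigma\tau_1)^{j-1}$, which is zero when $p\mid j$. You need a lemma on higher Bocksteins of $p$th powers to see that $(\sigma\tau_1)^j$ supports $\beta_{v_p(j)+1}$. Second, your exponent is off by one: the group in degree $2i-1$, with $i=pj$, is $\Z/p^{v_p(i)}$, not $\Z/p^{v_p(i)+1}$. Your formula would give $\Z/p^2$ in degree $2p-1$.

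There is also a gap in the Thom spectrum step. The untwisted formula $\THH(Mf)\simeq Mf\otimes BX_+$ is the $3$-fold loop case of \cite{BCS}. For the $2$-fold loop maps here you only get a Thom spectrum over $\mcL BX\simeq BX\times X$. You must add that the residual twist over $\Omega S^3$ (resp.\ $\tau_{\ge3}\Omega S^3$) becomes trivial after base change to $H\Z/p$ (resp.\ $H\Z$). This holds because these bases are simply connected and $BGL_1$ of $H\Z/p$ and of $H\Z$ is a $K(\pi,1)$.

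Once that is in place, the paper's route makes the B\"okstedt and Bockstein spectral sequences unnecessary. It reads off $\pi_*\THH(\Z/p)\cong H_*(\Omega S^3;\Z/p)$ and $\pi_*\THH(\Z)\cong H_*(\tau_{\ge3}\Omega S^3;\Z)$ directly. The second comes from the Serre spectral sequence of $S^1\to\tau_{\ge3}\Omega S^3\to\Omega S^3$. There $d_2(e\,\gamma_k(x))=(k+1)\gamma_{k+1}(x)$ in $H^*(\Omega S^3;\Z)=\Gamma(x)$, which produces the groups $\Z/m$ in degree $2m-1$ with their exact orders.
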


In \cref{sec-THHZp} we will describe the cyclotomic structure
(\cref{def-cyclo-spec2}) of the spectrum $\THH (\Z/p)$.
The above is merely a description of its underlying homotopy type.
We will see that the action of $\mT$ on it is nontrivial.

Here is a sketch of a proof of \cref{thm-Bokstedt} due to Blumberg,
Ralph Cohen and Christian Schlichtkrull \cite{BCS}, which we learned
from Tomer Schlank.  It is also treated by Krause and Nikolaus in
\cite[\S4]{KN21a}, who in addition give a description of $\THH
(\Z/p^{j})$ for all $j> 0$, and  by Joseph Hlavinka in \cite[\S3]{Talbot24}. 

For any space $X$, one has
\begin{displaymath}
\mcL \Omega X\simeq \Omega^{2} X\times \Omega X.
\end{displaymath}

\noindent Thus for $X=S^{3}$, we have $\mcL \Omega S^{3}\simeq
\Omega^{2} S^{3}\times \Omega S^{3}$.  A theorem of Mahowald
\cite{Mah:Ring} says there is a map $\Omega^{2}S^{3}\to BO$ for which
the Thom spectrum is $H\FF_{2}$.  It is the double loop functor
applied to a map $S^{3}\to B^{3}O$. Applying the functor $\mcL \Omega
$ gives a map
\begin{displaymath}
\xymatrix
@R=4mm
@C=10mm
{
{\mcL \Omega S^{3}}\ar[r]^(.5){}
  &{\mcL \Omega B^{3}O\simeq BO\times BBO}\ar[r]^(.75){p_{1}}
    &{ BO.}
}
\end{displaymath}

\noindent An odd primary analog due to Mike Hopkins can be found in
\cite[Lemma 3.3]{MRS3}.  






Let $Y=\tau_{\geq 4}S^{3}$ (sometimes denoted by
$S^{3}\langle 3 \rangle$) be the 4-connective (or 3-connected) cover
of $S^{3}$, the fiber of the map $S^{3}\to K (\Z,3)$.  We know that
the Thom spectrum associated with $\Omega^{2}Y$ is $H\Z$. By studying
the {\SSS} for the fiber sequence
\begin{displaymath}
\xymatrix
@R=4mm
@C=10mm
{
{S^{1}}\ar[r]^(.5){}
  &{\Omega Y}\ar[r]^(.5){}
    &{\Omega S^{3},}
}
\end{displaymath}

\noindent one can show
\begin{displaymath}
H_{i} (\Omega Y; \Z) = \mycases{    
\Z
       &\mbox{for $i=0$}\\
\Z/m
       &\mbox{for $i=2m-1$ and $m>1$}\\
0      &\mbox{otherwise,}
}
\end{displaymath}

\noindent and this leads to \cref{thm-Bokstedt}\cref{thm-Bokstedtii}.

\begin{defin}\label{def-KF}
The {\bf algebraic $K$-theory of a {\Bok} functor $\mcF$.}
\cite[Definition 2.3]{Bokstedt}.  Let $\GL_{m} (\mcF)$ be the union of
the invertible components of
\begin{displaymath}
\hocolim{k}\Omega^{k} \mcM_{m}(\mcF)(S^{k}),
\end{displaymath}

\noindent where $\mcM_{m}(\mcF)$ is the matrix {\Bok} functor of
\cref{ex-BokF}\cref{ex-BokFiii}.  It is a group-like topological
monoid, and
\begin{displaymath}
\rK(\mcF)
 := \Omega_{0} B\left(\coprod_{m}B\GL_{m} (\mcF) \right)
  = \Omega_{0} B\left(\coprod_{m}
       \left|N_{\bullet}\GL_{m} (\mcF)\right| \right),
\end{displaymath}

\noindent where $\Omega_{0}$ on the right indicates the degree zero
component of the indicated loop space, in which $\pi_{0}$ is $\Z$.
\end{defin}
  
As in Quillen's original definition of algebraic $K$-theory, the
space on the right is {\eqt} to $B\GL (\mcF)^{+}$, the plus
construction on the filtered homotopy colimit of the spaces $B\GL_{m}
(\mcF)$ for $m\geq 0$, which is described by Weibel in
\cite[IV.1]{Weibel-K}.

\begin{ex}\label{ex-QW}
{\bf The $K$-theories of Quillen and Waldhausen.}
\begin{enumerate}[label={(\roman*)},itemindent=0em]

\item \label{ex-QWi} For $\mcF_{R}$ as in \cref{ex-BokF}\cref{ex-BokFi},
$\pi_{*}\rK(\mcF_{R})=\rK_{*} (R)$ as defined by Quillen in
\cite{Quillen73}.  This is discussed in \cite[\S2.6]{Madsen}. 

\item \label{ex-QWii} For $\mcF_{\Omega X}$ as in
\cref{ex-BokF}\cref{ex-BokFii}, $\rK(\mcF_{\Omega X})$ is Waldhausen's
space $A (X)$ as in \cite{Wald2}. This is discussed in
\cite[\S6]{Madsen94}.
\end{enumerate}
\end{ex}

\begin{defin}\label{def-KF-cyc}
The {\bf cyclic $K$-theory of a {\Bok} functor
$\mcF$.} \cite[(5.7)]{BHM93}.  
\begin{displaymath}
\rK^{\cyc}(\mcF)
 := \Omega_{0} B\left(\coprod_{m}
       \left|N^{\cyc}_{\bullet}\GL_{m} (\mcF)\right| \right),
\end{displaymath}

\noindent where $\Omega_{0}$ on the right indicates the degree zero
component of the indicated loop space.
\end{defin}
  
\subsection{The Dennis trace}\label{sec-Dennis}
The topological Dennis trace $\Tr: \rK(\mcF)\to \THH (\mcF)$, was
first defined in algebraic form by Keith Dennis in an unpublished
paper in 1976. For a topological monoid $\Gamma $, we have simplicial
sets $N_{\bullet} (\Gamma )$ and $N^{\cyc}_{\bullet} (\Gamma )$ as in
\cref{def-nerve,def-cyc-nerve}.  There is a map
\begin{numequation}\label{eq-I}
\begin{split}
I_{\bullet}:N_{\bullet} (\Gamma ) \to N^{\cyc}_{\bullet} (\Gamma )
\quad \mbox{with }I (\gamma_{1},\dotsc ,\gamma_{n})
= \left(\left(\gamma_{1},\cdots ,\gamma_{n} \right)^{-1}, 
                      \gamma_{1},\dotsc ,\gamma_{n}\right) 
\end{split}
\end{numequation}%

\noindent whose geometric realization is homotopic to the inclusion of
$B\Gamma $ into the free loop space $\mcL B\Gamma $ as the constant
loops.

For $\Gamma =\GL_{m} (\mcF)$ as in \cref{def-KF}, we denote the map of
\cref{eq-I} by $I^{(m)}_{\bullet}$.  Consider the composite
\begin{displaymath}
\xymatrix
@R=4mm
@C=10mm
{
{N_{\bullet}\GL_{m} (\mcF)}\ar[r]^(.48){I^{(m)}_{\bullet}}
  &{N^{\cyc}_{\bullet}\GL_{m} (\mcF)}\ar[r]^(.48){S^{(m)}_{\bullet}}
    &{\THH_{\bullet} (\mcM_{m} (\mcF))}
}
\end{displaymath}

\noindent where
\begin{numequation}\label{eq-S}
\begin{split}
S^{(m)}_{n} (f_{0},\dotsc ,f_{n})=f_{0}\wedge \dotsb \wedge f_{n}
\qquad \mbox{for }f_{i}:S^{k_{i}}\to \mcM_{m} (\mcF) (S^{k_{i}}). 
\end{split}
\end{numequation}%

\noindent   Passing to
geometric realizations gives a map
\begin{numequation}\label{eq-GL-to-THH}
\begin{split}
S^{(m)}I^{(m)}:B\GL_{m} (\mcF)\to \THH (\mcM_{m} (\mcF)).
\end{split}
\end{numequation}%

The following analog of \cref{thm-Morita} was proved in \cite{Bokstedt}.

\begin{theorem}\label{thm-Morita-Bok}
{\bf Morita invariance of {\Bok} functors.}  For each {\Bok}
functor $\mcF$, there is an equivalence
\begin{displaymath}
\Tra:   \Omega B\left(\coprod_{m\geq 0}\THH (M_{m} (\mcF)) \right)
\to \THH (\mcF)\times \Z.
\end{displaymath}
\end{theorem}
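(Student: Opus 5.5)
The plan is to build the map $\Tra$ levelwise from Loday's generalized trace and then prove it is an equivalence by a Morita-type argument, parallel to \cref{thm-Morita}.

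\emph{Step 1: the levelwise trace.} On the simplicial level, define
\begin{displaymath}
\Tra_{n}:\THH_{n} (\mcM_{m} (\mcF))\to \THH_{n} (\mcF).
\end{displaymath}
A point of $\mcM_{m}(\mcF)(S^{k_{0}})\wedge\dotsb\wedge\mcM_{m}(\mcF)(S^{k_{n}})$ is a smash of $m\times m$ matrices with entries in the $\mcF(S^{k_{i}})$. Send it to the sum over index tuples $(i_{0},\dotsc ,i_{n})$ of $\alpha_{i_{0},i_{1}}\wedge\dotsb\wedge\alpha^{(n)}_{i_{n},i_{0}}$, exactly as in Loday's formula. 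The sum makes sense after applying $\Omega^{|k|}$ and passing to the homotopy colimit, because the target is then an infinite loop space; $\Map(\langle m\rangle,\langle m\rangle\wedge -)$ is a finite product, so pinch maps on the sphere coordinates supply the addition.

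One then checks that $\Tra_{\bullet}$ commutes up to coherent homotopy with face, degeneracy and cyclic operators. The cyclic index pattern $i_{n}\to i_{0}$ is precisely what makes the last face $d_{n}$ compatible. Realizing gives maps $\THH(\mcM_{m}(\mcF))\to\THH(\mcF)$. Block sum makes $\coprod_{m}\THH(\mcM_{m}(\mcF))$ a topological monoid, the trace is additive under block sum, and group completing gives the map $\Tra$ of the statement; the $\Z$ factor records $m$.

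\emph{Step 2: the inclusion.} Conversely, $\inc:\mcF\to\mcM_{m}(\mcF)$ (corner entry) is a map of {\Bok} functors and induces $\inc_{*}:\THH(\mcF)\to\THH(\mcM_{m}(\mcF))$. Clearly $\Tra\circ\inc_{*}=\Id$ on the nose. It therefore suffices to show that $\inc_{*}\circ\Tra$ is homotopic to the identity. Once that holds for each $m$, the group completion of the block-sum monoid is $\THH(\mcF)\times\Z$, since each component is equivalent to $\THH(\mcF)$ and block sum corresponds to addition in the infinite loop structure.

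\emph{Step 3: the homotopy (main obstacle).} In the algebraic case \cite[1.2]{Loday} one uses an explicit simplicial homotopy $h_{j}$ built from elementary matrices $E_{i1}$ and $E_{1i}$, inserted at position $j$. I would transport this formula, replacing matrix entries by smash factors and the units by the unit map $\iota:S^{0}\to\mcF(S^{0})$; the homotopy identities then follow by formal manipulation using the axioms \cref{eq-Bok1}--\cref{eq-Bok3}.

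The difficulty is that these identities only hold after summing, so everything must be done in the stable range, inside $\hocolim{k}\Omega^{|k|}$, with sums defined up to coherent homotopy. I expect this coherence bookkeeping to be the hardest part. To avoid it, I would first reduce to the spectrum level, $\tHH$, where addition is strict enough, using the connectivity hypothesis in \cref{def-FSP} to compare levelwise. Then it suffices to check that the maps induce isomorphisms on homotopy via the spectral sequence with input $\HH_{*}$, where the comparison is exactly \cref{thm-Morita} applied to $\pi_{*}\mcF(\mS)$ with matrix coefficients.
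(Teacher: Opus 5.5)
The paper gives no argument for this theorem; it defers to B\"okstedt. So your proposal has to stand on its own, and there is a genuine gap. The step everything rests on is asserted rather than proved: that $\Tra_{\bullet}$ is a map of simplicial spaces and is additive for block sum. Neither holds strictly.

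In B\"okstedt's model, $\mcM_{m}(\mcF)(X)=\Map(\langle m\rangle_{+},\langle m\rangle_{+}\wedge\mcF(X))\cong\prod_{m}\bigvee_{m}\mcF(X)$. Each column has at most one non-basepoint entry; that is what makes matrix multiplication strict, and the ``$m^{2}$-tuple'' description is only true stably. So your levelwise trace is really a map to $m$ copies of $\Omega^{|k|}(\mcF(S^{k_0})\wedge\dotsb\wedge\mcF(S^{k_n}))$, one for each $i_0$, followed by a pinch sum in a chosen sphere coordinate. Several structure maps then break strictness:
\begin{itemize}
\item The last face $d_n$ and the cyclic operator reindex the summands (the natural index becomes $i_n$) and move the pinch coordinate.
\item The maps in B\"okstedt's homotopy colimit permute sphere coordinates.
\item Loop sum is only homotopy associative and commutative.
\end{itemize}
Hence all the simplicial identities hold only up to homotopy. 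Realizing a levelwise map needs the whole tower of higher homotopies. Applying $\Omega B$ needs $\Tra$ to be an $A_{\infty}$-map for block sum, and block sum on $\THH$ is itself only defined through such sums. Your sentences ``one then checks \dots\ up to coherent homotopy'' and ``the trace is additive under block sum'' are exactly the content of the theorem, not routine checks.

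Your fallback does not close this gap, for two reasons. First, a spectral-sequence comparison needs a map of (semi-)simplicial objects to begin with, so it cannot be run on a $\Tra$ you have not constructed. Second, it is misstated. The skeletal spectral sequence in homotopy has $E^{1}_{n}=\pi_{*}(\mcF(\mS)^{\wedge(n+1)})$. Without flatness this is not $\pi_{*}\mcF(\mS)^{\otimes(n+1)}$, so $E^{2}$ is not $\HH_{*}(\pi_{*}\mcF(\mS))$; the spectral sequence quoted in the paper uses homology with field coefficients. Two smaller points:
\begin{itemize}
\item $\inc$ is not unital ($\inc(1)=E_{11}$), so $\inc_{*}$ commutes with faces but not with degeneracies.
\item $\Tra\circ\inc_{*}$ is a sum of one term with $m-1$ basepoints, so it is homotopic to the identity but not equal to it.
\end{itemize}

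What can be salvaged is the comparison applied to $\inc_{*}$ instead of $\Tra$. It is a strict map of semisimplicial spaces; use the fat realization, which agrees with the usual one when the degeneracies are cofibrations. Take the B\"okstedt spectral sequence with coefficients in $k=\mathbb{F}_{p}$ (for every $p$) or $k=\mathbb{Q}$, and use $H_{*}(\mcM_{m}(\mcF)(\mS);k)\cong M_{m}(H_{*}(\mcF(\mS);k))$. Then $\inc_{*}$ induces on $E^{2}$ the corner-inclusion map $\HH_{*}(A)\to\HH_{*}(M_{m}(A))$ with $A=H_{*}(\mcF(\mS);k)$. This is an isomorphism by the graded form of \cref{thm-Morita}. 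Everything is connective by \cref{def-FSP}, so $\inc_{*}\colon\THH(\mcF)\to\THH(\mcM_{m}(\mcF))$ is an equivalence for each $m$, with no trace and no explicit homotopy.

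The group-completion statement still needs its own argument, because a map that is additive only up to homotopy does not induce a map on $B$. One option is the group completion theorem. Set $M=\coprod_{m}\THH(\mcM_{m}(\mcF))$; block sum is homotopy commutative, and you can compute $\pi_{0}M$ and $H_{*}(M)[\pi_{0}M^{-1}]$ using $\inc_{*}$. This produces an equivalence $\THH(\mcF)\times\Z\to\Omega BM$, whose homotopy inverse can play the role of $\Tra$. The other option is the Dennis--Waldhausen Morita argument, which is how the literature avoids building the trace by hand.
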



\begin{defin}\label{def-Dennis-trace}
{\bf The Dennis trace of a {\Bok} functor $\mcF$} is the following
composite.
\begin{displaymath}
\xymatrix
@R=8mm
@C=40mm
{
{\Omega_{0} B\left(\coprod_{m}B\GL_{m} (\mcF) \right)}
          \ar[r]^(.48){\Omega_{0} B\left(\coprod_{m}S^{(m)}I^{(m)}\right)}
  &{\Omega_{0} B\left(\coprod_{m}\THH (\mcM_{m} (\mcF)) \right)}
                       \ar[d]^(.5){\Tra}_(.5){\simeq }\\
{\rK(\mcF)}\ar@{=}[u]^(.5){}
  &{\THH (\mcF),}
}
\end{displaymath}

\noindent where each of the coproducts is over $m\geq 0$, and the
equivalence on the right is that of \cref{thm-Morita-Bok}.
\end{defin}

\subsection{The cyclotomic trace}\label{sec-TC-BHM} 
Now we need to
consider the action of $\mT$ and its finite subgroups $\rC _{r}$ for
$r\geq 1$ on the cyclic space $\THH(\mcF)$ of \cref{def-THHF}.  

First we need some elementary definitions.

We will now make use of the edgewise subdivision of
\cref{def-edgewise}.  Recall that the geometric realization of
$r$-fold edgewise subdivision of a simplicial set $X$ is homeomorphic
to $|X|$ by \cref{lem-subdiv-homeo}.  Thus we get a residual action
map as in \cref{def-residual}\cref{def-residualii},
\begin{align*}
\Res _{p}:= \Res ^{\rC_{p^{n}}}_{\rC_{p}}
   &:(\sd^{*}_{p^{n}}\THH (\mcF))^{\rC _{p^{n}}}
  \to ((\sd^{*}_{p^{n}}\THH (\mcF))^{\rC_{p}})^{\rC _{p^{n}/p}}.
\end{align*}

\noindent Recall that 
\begin{displaymath}
(\sd^{*}_{p^{n}}\THH (\mcF))^{\rC _{p}}\simeq 
\sd^{*}_{p^{n-1}}\THH (\mcF)
\end{displaymath}

\noindent by \cref{prop-fixed-point-sets}.  The map $\Res _{p}:=\Res
_{p}^{p^{n}}$ figures in the original definition of $\TopC (\mcF ,p)$,
\cite[Definition 5.12 (i)]{BHM93}, where it is denoted by $\Phi_{p}$.

Since $\sd^{*}_{p^{n}}\THH (\mcF)\approx\THH (\mcF)$, we have  
\begin{displaymath}
\Res _{p} :\THH (\mcF)^{\rC _{p^{n}}}\to \THH (\mcF)^{\rC _{p^{n}/p}}.
\end{displaymath}

We can also regard $\rC _{p^{n}/p}$ as a subgroup of $\rC _{p^{n}}$, yielding a
restriction map as in \cref{def-residual}\cref{def-residuali}.
\begin{displaymath}
\Frob_{p}:=\Frob^{\rC _{p^{n}}}_{\rC _{p^{n}/p}} :\THH (\mcF)^{\rC _{p^{n}}}
     \to \THH (\mcF)^{\rC _{p^{n}/p}},
\end{displaymath}

\noindent which is denoted by $D$ in \cite[(5.10)]{BHM93}. 

Thus we get maps
\begin{numequation}\label{eq-Phi-D}
\begin{split}
\Res  ,\Frob :\THH (\mcF)^{\rC _{p^{n}}}\to \THH (\mcF)^{\rC _{p^{n-1}}}
\end{split}
\end{numequation}%

\noindent as in \cref{def-residual} for which the diagram
\begin{numequation}\label{eq-FR}
\begin{split}
\xymatrix
@R=4mm
@C=10mm
{
{\THH (\mcF)^{\rC _{p^{n+1}}}}\ar[r]^(.5){\Res  }_(.5){\cong  }
                              \ar[d]_(.5){\Frob   }
  &{\THH (\mcF)^{\rC _{p^{n}}}}\ar[d]^(.5){\Frob  }\\
{\THH (\mcF)^{\rC _{p^{n}}}}\ar[r]^(.47){\Res  }_(.47){\cong  }
  &{\THH (\mcF)^{\rC _{p^{n-1}}}}
}
\end{split}
\end{numequation}%

\noindent commutes as a special case of \cref{eq-RF}.

These induce maps of filtered homotopy limits
\begin{numequation}\label{eq-two-diagrams}
\begin{split}
\xymatrix
@R=4mm
@C=10mm
{
{\holim{\Res }\THH (\mcF)^{\rC _{p^{n}}}}\ar@<.5ex>[r]^(.5){\Frob  }
                               \ar@<-.5ex>[r]_(.5){1 }
  &{\holim{\Res  }\THH (\mcF)^{\rC _{p^{n}}}}\\
{\holim{\Frob  }\THH (\mcF)^{\rC _{p^{n}}}}\ar@<.5ex>[r]^(.5){\Res   }
                                   \ar@<-.5ex>[r]_(.5){1 }
  &{\holim{\Frob  }\THH (\mcF)^{\rC _{p^{n}}},}
}
\end{split}
\end{numequation}%

\noindent and these two diagrams have {\eqt} equalizers. 

\begin{defin}\label{def-TCF}   
\cite[Definition 5.12 ]{BHM93} 
\begin{enumerate}[label={(\roman*)},itemindent=0em]
\item \label{def-TCFi}
The {\bf topological cyclic homology at $p$ of a {\Bok} functor
$\mcF$}, $\TopC (\mcF,p)$, is the equalizer of either diagram of
\cref{eq-two-diagrams}.  Equivalently it is the homotopy limit of the
diagram
\begin{displaymath}
\xymatrix
@R=4mm
@C=10mm
{
{\dotsb }
            \ar@<1ex>[r]^(.35){\Res } \ar@<-1ex>[r]_(.35){\Frob}
  &{\THH (\mcF )^{\rC _{p^{2}}}}
            \ar@<1ex>[r]^(.5){\Res } \ar@<-1ex>[r]_(.5){\Frob}
     &{\THH (\mcF )^{\rC _{p}}}
            \ar@<1ex>[r]^(.5){\Res } \ar@<-1ex>[r]_(.5){\Frob}
        &{\THH (\mcF ).}
}
\end{displaymath}
   
\item \label{def-TCFii} The {\bf cyclotomic trace at $p$} is the
infinite loop map
\begin{displaymath}
\xymatrix
@R=4mm
@C=10mm
{
{\rK (\mcF )}\ar[r]^(.3){I}\ar@/_1.5pc/[rr]_(.5){\trc}
  &{(\holim{\Frob}\rK^{\cyc} (\mcF )^{\rC_{p^{n}}})^{h\Res }}
             \ar[r]^(.64){S}
    &{\TopC} (\mcF ,p)
}
\end{displaymath}

\noindent where $\rK^{\cyc}(\mcF)$ is as in \cref{def-KF-cyc}, and the
maps $I$ and $S$ are as in \cref{eq-I} and \cref{eq-S}.  The middle
object above is the homotopy limit of the diagram
\begin{displaymath}
\xymatrix
@R=4mm
@C=10mm
{
{\dotsb }
            \ar@<1ex>[r]^(.35){\Res } \ar@<-1ex>[r]_(.35){\Frob}
  &{\rK^{\cyc} (\mcF )^{\rC _{p^{2}}}}
            \ar@<1ex>[r]^(.5){\Res } \ar@<-1ex>[r]_(.5){\Frob}
     &{\rK^{\cyc} (\mcF )^{\rC _{p}}}
            \ar@<1ex>[r]^(.5){\Res } \ar@<-1ex>[r]_(.5){\Frob}
        &{\rK^{\cyc} (\mcF ).}
}
\end{displaymath}
\end{enumerate}
\end{defin}

This is the first of several definitions of $\TopC$.  In \cref{def-TC}
it is defined as a mapping object in the {\qcat} of cyclotomic
spectra; see \cref{rem-TC-fixed}.  The equivalence of these two is due
to Blumberg and Mandell \cite[Theorem 1.4]{blumberg_cyclotomic}.  A
new formula for computing it is given in \cref{prop-TC}.  A formula
relating it to $\TR$ is given in \cref{thm-TC-TR}.

\begin{remark}\label{rem-TC-free-loop-space}
{\bf $\TopC$ of a free loop space.}
We have homeomorphisms
\begin{displaymath}
\varphi_{p}^{-1}:\mcL X^{\rC_{p^{n}}}\to \rho_{p}^{*}\mcL X^{\rC_{p^{n+1}}}
\end{displaymath}

\noindent for $\varphi_{p}$ as in \cref{ex-free-loop}, induced by the
$p$th power map on $\mT$.  If we replace $\THH (\mcF )$ by $\mcL X$ in 
\cref{eq-FR}, the horizontal maps become homeomorphisms and 
\begin{displaymath}
\holim{\Res}\mcL X^{\rC_{p^{n}}}
 = \lim{\Res}\mcL X^{\rC_{p^{n}}}\cong \mcL X. 
\end{displaymath}
 
\noindent Then the first equalizer of the analog of \cref{eq-two-diagrams}
would consist only of constant loops.

The other limit,
\begin{displaymath}
\lim{\Frob}\mcL X^{\rC_{p^{n}}},
\end{displaymath}

\noindent is the space constant loops, on which $\Res$ is the
identity.  Thus the free loop space analog of $\TopC$ appears to be
the space of constant loops, so {\em $\TopC$ seems to undo the free loop
functor $\mcL$}.
\end{remark}

\section{Spectra and equivariant stable homotopy theory}
\label{sec-orth-spec}

A {\bf spectrum $X$} was originally defined to be a
sequence of pointed spaces $X_{n}$ for $n\geq 0$ with {\bf structure
maps} $\epsilon^{X}_{n}:\Sigma X_{n}\to X_{n+1}$, or {\eqt}ly {\bf
costructure maps} $\eta ^{X}_{n}:X_{n}\to \Omega X_{n+1}$.  Since this
is only one of several ways to define spectra these days, we will call
such  objects  {\bf sequential spectra}.

A sequential spectrum in which each structure map is a weak {\equi} is
called a {\bf suspension spectrum}, and is often denoted by
$\Sigma^{\infty }X_{0}$. One in which each costructure map is a weak
{\equi } is called an {\bf $\Omega$-spectrum}.  The objects of Lurie's
{\qcat} of spectra (see \cite[\S9]{Rav:Whatis} and \cite[Definition
1.4.3.1]{Lurie:HA}) are sequential $\Omega $-spectra, so we will
sometimes refer to them as {\bf Lurie spectra}.

If each $X_{n}$ has a $G$-action for some compact Lie group $G$, with
each structure map being {\eqvr} (where $\Sigma X_{n}$ and $\Omega
X_{n}$ are understood to be the ordinary suspension and loop space of
the pointed $G$-space $X_{n}$), the resulting object is called a {\em
sequential spectrum with $G$-action}.  This means that for each closed
subgroup $H\subseteq G$ one gets a spectrum $X^{H}$ in which the $n$th
space is the fixed point set $X_{n}^{H}$.  This notion is different
from that of a {\bf $G$-spectrum}, which we will discuss below.

A {\bf map of sequential spectra (with $G$-action)} $f:X\to Y$ is a
collection of ($G$-{\eqvr}) pointed maps $f_{n}:X_{n}\to Y_{n}$
compatible with the structure maps for $X$ and $Y$.  Such a map is
said to be a ($G$-{\eqvr}) {\bf stable {\equi}} if it induces an
isomorphism of stable homotopy groups (of fixed point sets for each
closed subgroup $H\subseteq G$) as in \cref{eq-stable-pi*-H*}.  This
condition is weaker than requiring each map $f_{n}$ to be a
($G$-{\eqvr}) {\equi}.

\begin{defin}\label{def-seq-spec}
We denote the category of sequential spectra by $\Sp_{\rm seq}$, and
the category of those with $G$-action by $\Sp^{BG}_{\rm seq}$.
\end{defin}

A sequential spectrum has stable homotopy (or simply homotopy) and
reduced homology groups defined by
\begin{numequation}\label{eq-stable-pi*-H*}
\begin{split}
\pi _{i}X
 & : = \colim{n}\pi _{i}\Omega^{n}X_{n}
     = \colim{n}\pi _{n+i}X_{n}      \\
\aand 
H_{i}X
 & := \colim{n}H_{n+i}X_{n} .
\end{split}
\end{numequation}%

\noindent When $X$ has a $G$-action, we can make similar definitions for
each closed subgroup $H\subseteq G$ in terms of fixed point sets, the
definitions above being those for the trivial subgroup.  
The definition does not require $X_{n}$ to be $(n-1)$-connected, so
the groups of \cref{eq-stable-pi*-H*} could be nonzero for $i<0$.

This was our only definition until the late 80's.  It suffered
from the lack of a convenient way to define the smash product of two
spectra.

A sequential spectrum can be reinterpreted as a $\mcT$-valued functor
on a certain $\mcT$-enriched indexing category $\msJ^{\bf N}$ with one
object for each   finite
set $\mathbf{n}=\left\{1,2,\dotsc n \right\} $ for $n\geq 0$ with
$\mathbf{0}$ being the empty set $\varnothing$.  Its morphism spaces are
\begin{numequation}\label{eq-JN}
\begin{split}
\msJ^{\bf N } (\mathbf{m},\mathbf{n} )
  = \mycases{    
S^{n-m} &\mbox{for $0\leq m\leq n$}\\
\pt       &\mbox{for $m>n\geq 0$}
}
\end{split}
\end{numequation}%

\noindent For more on this point of view, see \cite[Chapter
7]{HHR:ESHT}, specifically \cite[Definition 7.2.4]{HHR:ESHT}.  From
this perspective, the difficulty with the smash product is related to
the absence of a symmetric monoidal structure on $\msJ^{\bf N}$.  See
\cite[Remark 7.2.11]{HHR:ESHT} for more on this lack of symmetry.

One solution is to define a new indexing category which {\em is}
symmetric monoidal.  Two such categories are $\msJ^{\Sigma } $ and
$\msJ^{O}$; we will often omit the superscript in the latter.  They
have the same objects as $\msJ^{\bf N }$ with morphism spaces
\begin{numequation}\label{eq-symm-orth}
\begin{split}
\msJ^{\Sigma } (\mathbf{m},\mathbf{n} ) & = \mycases{
\Sigma_{n+}\smashove{\Sigma_{n-m}} S^{n-m}
        &\mbox{for $0\leq m\leq n$}\\
\pt     &\mbox{for $m>n\geq 0$}
}\\
\msJ^{O } (\mathbf{m},\mathbf{n} )
 & = \mycases{    
O (n)_{+}\smashove{O_{n-m}} S^{n-m}  
        &\mbox{for $0\leq m\leq n$}\\
\pt     &\mbox{for $m>n\geq 0$,}
}
\end{split}
\end{numequation}%

\noindent where $\Sigma_{n+}$ and $O (n)_{+}$ denote the symmetric and
orthgonal groups with disjoint basepoint.

\begin{defin}\label{def-symm-orth}
{\bf Symmetric and orthogonal spectra.}  The categories of
$\mcT$-valued functors on the indexing categories of
\cref{eq-symm-orth} are those of {\bf symmetric spectra} of
\cite{HoveySS}, which we denote by $\Sp^{\Sigma }$, and {\bf
orthogonal spectra} of \cite{MandellMay}, which we
denote by $\Sp^{O}$,  respectively.
\end{defin}

\begin{prop}\label{prop-forget}
There are forgetful functors $\Sp^{O} \to \Sp^{\Sigma }\to \Sp_{\rm seq}$
induced by the evident functors $\msJ^{O} \leftarrow \msJ^{\Sigma
}\leftarrow \msJ^{\mathbf{N}} $.
\end{prop}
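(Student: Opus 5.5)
The plan is to build the two functors from functors between the indexing categories, by restriction (precomposition). A $\mcT$-valued $\mcT$-enriched functor on $\msJ^{O}$ becomes one on $\msJ^{\Sigma}$ by precomposing with an enriched functor $\msJ^{\Sigma}\to\msJ^{O}$. In the same way, a functor on $\msJ^{\Sigma}$ becomes one on $\msJ^{\mathbf{N}}$ via an enriched functor $\msJ^{\mathbf{N}}\to\msJ^{\Sigma}$. Functoriality in maps of spectra is automatic, because a natural transformation restricts to a natural transformation along any functor. So the real content is constructing the two enriched functors of indexing categories and checking that they respect composition and identities.

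\textbf{The functor $\msJ^{\Sigma}\to\msJ^{O}$.} It is the identity on objects. On morphism spaces I would use the map
\begin{displaymath}
\Sigma_{n+}\smashove{\Sigma_{n-m}}S^{n-m}\to O(n)_{+}\smashove{O_{n-m}}S^{n-m}.
\end{displaymath}
This is induced by the inclusion $\Sigma_{n}\subseteq O(n)$ as permutation matrices. That inclusion carries $\Sigma_{n-m}$ into $O(n-m)$ compatibly with their actions on $S^{n-m}=(\reals^{n-m})^{+}$, so the map on balanced smash products is well defined.

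\textbf{The functor $\msJ^{\mathbf{N}}\to\msJ^{\Sigma}$.} It is again the identity on objects. On morphisms it sends $S^{n-m}$ to the $\mathbf{1}$-coset of $\Sigma_{n+}\smashove{\Sigma_{n-m}}S^{n-m}$, that is, $x\mapsto[e,x]$. For $m>n$ both morphism spaces are a point, so there is nothing to define there.

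\textbf{Compatibility with composition.} Composition in all three categories is described uniformly: $\msJ^{G}(\mathbf{m},\mathbf{n})$ is the Thom space of the complement bundle over the space of embeddings $\mathbf{m}\to\mathbf{n}$, namely injections, orthogonal isometric embeddings, or the single standard inclusion in the $\mathbf{N}$ case. Composition combines the embeddings and adds the complement vectors. The maps above come from the inclusions of embedding spaces (standard inclusion $\subseteq$ injections $\subseteq$ linear isometries) together with the identity on complement vectors. They therefore commute with composition and preserve identities.

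This check is the only nonformal step. The main obstacle is keeping track of conventions: in particular, confirming that the standard-inclusion composite in $\msJ^{\mathbf{N}}$, $S^{n-m}\wedge S^{p-n}\to S^{p-m}$, matches the block-sum convention used in \cite[Chapter 7]{HHR:ESHT}. With a fixed convention this is bookkeeping, and it is exactly the verification that \cite[Chapter 7]{HHR:ESHT} and \cite{MandellMay} carry out for these indexing categories.
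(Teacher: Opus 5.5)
Your proposal is correct and is the argument the paper intends. The paper gives no proof beyond the phrase ``induced by the evident functors'' $\msJ^{O}\leftarrow\msJ^{\Sigma}\leftarrow\msJ^{\mathbf{N}}$, and your restriction along the identity-on-objects enriched functors coming from $\Sigma_{n}\subseteq O(n)$ and $x\mapsto[e,x]$, with the composition check via the Thom-space (embeddings plus complement vectors) description, fills in exactly that; the convention issue you flag is indeed only bookkeeping.
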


\subsection{The Mandell-May category}\label{sec-MM-cat} 

Now we define the Mandell-May category $\msJ_{G}$  for a
compact Lie group $G$, which is enriched over $\mcT_{G}$. The case of
trivial $G$ is the category $\msJ^{O}$ of \cref{eq-symm-orth}.

\begin{defin}\label{def-MM-cat}
{\bf The Mandell-May category $\msJ_{G}$} has 
finite dimensional orthogonal {\rep}s $V$ of $G$ as objects. These
are actual {\rep}s, not virtual ones. For two such {\rep}s $V$ and $W$
we have a pointed morphism $G$-space $\msJ_{G} (V,W)$ defined as
follows.  Let $O (V,W)$ be the space of orthogonal embeddings
$t:V\subseteq W$, which need not be {\eqvr}. It is empty when
$|W|<|V|$. Otherwise each such embedding gives us an orthogonal complement $t
(V)^{\perp }\subseteq W$. This enables us to define a vector bundle
over $O (V,W)$ sitting inside the trivial bundle $O (V,W)\times W$.
{\bf The morphism space $\msJ_{G} (V,W)$ is its Thom space, which is a
pointed $G$-space.}  A point in it other than the base point is a pair
$(t,x)$ where $t:V\to W$ is an embedding as above and $x\in t
(V)^{\perp }$.  Composition of morphisms $U\to V\to W$ leads to a map 
\begin{displaymath}
\msJ_{G} (V,W)\wedge \msJ_{G} (U,V)\to \msJ_{G} (U,W)
\end{displaymath}

\noindent induced by composition of orthogonal embeddings.
\end{defin}

The following is immediate.

\begin{prop}\label{prop-JG}
{\bf Properties of $\msJ_{G}$.}
\begin{enumerate}[label={(\roman*)},itemindent=0em]
\item \label{prop-JGi}
 $\msJ_{G} (V,W)$ is a point when $|W|<|V|$,
\item \label{prop-JGii}
 $\msJ_{G} (0,W)=S^{W}$.
\item \label{prop-JGiii}
$\msJ_{G} (V,V)=O (V,V)_{+}$, the orthogonal group $O (V)$ of
$V$ with disjoint base point.
\item \label{prop-JGiv}
The inclusion of $V$ into $V\oplus W$ induces a map 
\begin{displaymath}
S^{V}=\msJ_{G} (0,V)\to \msJ_{G} (W,V\oplus V).
\end{displaymath}
\item \label{prop-JGv} $\msJ_{G}$ has a symmetric monoidal structure
(which is not closed) related to direct sum of {\rep}s.

\end{enumerate}
\end{prop}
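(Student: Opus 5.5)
The plan is to check the five items of \cref{prop-JG} one at a time, directly from \cref{def-MM-cat}. Throughout, $\msJ_{G}(V,W)$ is the Thom space of the complement bundle $\xi_{V,W}\to O(V,W)$, whose fiber over $t$ is $t(V)^{\perp}$. For \cref{prop-JGi}: when $|W|<|V|$ there are no orthogonal embeddings, so $O(V,W)=\varnothing$. The Thom space of a bundle over the empty set is the one-point compactification of the empty set, which is just the base point.

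For \cref{prop-JGii} and \cref{prop-JGiii}, I will identify the base and the fibers. If $V=0$, then $O(0,W)$ is a single point (the zero map), and its complement is all of $W$. So the bundle is $W$ over a point, and its Thom space is $S^{W}$, with the $G$-action coming from $W$. If $V=W$, every embedding is an isometry, so $O(V,V)=O(V)$ and every fiber is $t(V)^{\perp}=0$. The Thom space of the zero bundle is the base with a disjoint base point added, giving $O(V)_{+}$. Note that $G$ acts by conjugation here, since the embeddings need not be equivariant.

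For \cref{prop-JGiv}, I will read the statement as producing $S^{V}=\msJ_{G}(0,V)\to\msJ_{G}(W,V\oplus W)$; the target $V\oplus V$ is presumably a typo. The map sends $x\in V$ to the pair $(\iota_{W},x)$, where $\iota_{W}:W\to V\oplus W$ is the standard inclusion and $x\in V=\iota_{W}(W)^{\perp}$. Equivalently, this is the composite of $\msJ_{G}(0,V)\wedge \msJ_{G}(W,W)\to\msJ_{G}(W,V\oplus W)$, obtained from the monoidal structure of \cref{prop-JGv}, with the unit $S^{0}\to O(W)_{+}$. It is continuous and based, and it is $G$-equivariant because $G$ acts on $\iota_{W}$ trivially by conjugation.

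For \cref{prop-JGv}, I will define $\oplus$ on objects as direct sum and on morphisms by
\begin{displaymath}
(t,x)\wedge(t',x')\mapsto(t\oplus t',\,x+x').
\end{displaymath}
Here $x+x'$ lies in $(t\oplus t')(V\oplus V')^{\perp}=t(V)^{\perp}\oplus t'(V')^{\perp}$. The unit is $0$, and the symmetry is the swap isomorphism $V\oplus W\cong W\oplus V$ viewed as an element of $O(\,\cdot\,)_{+}$. Associativity and the coherence conditions are inherited from direct sum of inner product spaces. The main obstacle is not coherence but checking that $\oplus$ is a continuous map of Thom spaces, i.e.\ that it respects base points, sending any pair with a point at infinity to infinity. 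This holds because the map is a bundle map covering $O(V,W)\times O(V',W')\to O(V\oplus V',W\oplus W')$ that is an isometry on fibers, so it extends to one-point compactifications. For non-closedness, it suffices to note that $\msJ_{G}(V,-)$ has no right adjoint of the form $\msJ_{G}(U\oplus V,-)$. I will only remark on this rather than prove it, since the statement parenthetically asserts it and it is not needed later.
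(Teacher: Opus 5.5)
Your item-by-item check against \cref{def-MM-cat} is correct, and it is what the paper intends, since the paper gives no argument and only calls the proposition immediate. You are right to read the target in (iv) as $\msJ_{G}(W,V\oplus W)$. In (v), your continuity argument is fine, but you should also check that $\oplus$ respects composition, which is what makes it a functor. This is a one-line identity using the composition law $(s,y)\circ(t,x)=(st,\,y+s(x))$:
\[
(s\oplus s',(y,y'))\circ(t\oplus t',(x,x'))=(st\oplus s't',\,(y+s(x),\,y'+s'(x'))).
\]

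The real omission is the parenthetical claim in (v) that the structure is not closed. The remark you offer in its place does not state closedness correctly. Closedness would mean that each functor $-\oplus V$ has an enriched right adjoint $[V,-]$, that is, natural isomorphisms $\msJ_{G}(U\oplus V,W)\cong \msJ_{G}(U,[V,W])$. You can rule this out with the items you have already proved. Taking $U=0$ and using (ii) gives $\msJ_{G}(V,W)\cong S^{[V,W]}$, so every morphism space would be a sphere. But for $V=W=\reals$ with trivial action, (iii) gives $\msJ_{G}(V,V)=O(1)_{+}$, a three-point space, and no sphere has three points. So the monoidal structure cannot be closed.
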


The following is part of \cite[Definition V.4.1]{MandellMay}. 

\begin{defin}\label{def-phiGN}
{\bf The relative Mandell-May category.} Let $N\subseteq G$ be a
normal subgroup.  
\begin{enumerate}[label={(\roman*)},itemindent=0em]

\item \label{def-phiGNi} The $\mcT_{G/N}$ enriched category
$\msJ_{G,N}$ has as objects the representations $V$ of $G$, and the
morphism space $\msJ_{G,N} (V,W)$ is the fixed point space $\msJ_{G}
(V,W)^{N}$ with its residual action of $G/N$.  A nonbase point in this
space is a pair $(t,x)$, where $t:V \to W$ is an $N$-{\eqvr}
orthogonal embedding and $x\in (t (V)^{\perp })^{N}$.

\item \label{def-phiGNii} We have a functor 
\begin{displaymath}
\xymatrix
@R=4mm
@C=10mm
{
{\msJ_{G,N}}\ar[r]^(.5){\phi^{G,N}}
  &{\msJ_{G/N}}\\
{V}\ar@{|->}[r]
    &{V^{N}}\\
{(t,x)\in \msJ_{G,N} (V,W)=\msJ_{G} (V,W)^{N}}\ar@{|->}[r]
      &{(t^{N}, x)\in \msJ_{G/N} (V^{N}, W^{N})}
}
\end{displaymath}

\noindent where $t^{N}$ denotes the restriction of $t$ to $V^{N}$, and
since $x\in t (V)^{\perp}$ is fixed by $N$, it lies $W^{N}$ and hence
in the orthogonal complement of $t (V^{N})$ in $W^{N}$.
Thus for each $V$ and $W$, we have a map 
\begin{numequation}\label{eq-phi-GN-VW}
\begin{split}
\phi^{G,N}_{V,W}:\msJ_{G,N} (V,W)\to \msJ_{G/N} (V^{N}, W^{N}).
\end{split}
\end{numequation}%

\item \label{def-phiGNiii} The functor $\nu_{G,N}:\msJ_{G/N}\to
\msJ_{G,N}$ (the right adjoint of $\phi^{G,N}$) sends a {\rep} $W$ of
$G/N$ to the {\rep} of $G$ obtained by precomposition with the
homomorphism $G\to G/N$.

\end{enumerate}

\end{defin}

Note here that $O (V,W)^{N}$ is the space of $N$-{\eqvr} orthogonal
embeddings $V\to W$.  

We will be interested in the case where $G$ is the circle group $\mT$
and $N$ is a finite (and hence cyclic) subgroup.

\begin{remark}\label{rem-non-normal}
{\bf Non-normal subgroups.}  We could replace the pair $(G,N)$ for a
normal subgroup $N\subseteq G$ by $(N (H), W (H))$ (normalizer and
Weyl group) for arbitrary subgroup $H\subseteq G$.  We leave these
details to the reader.  The same goes for the discussion of fixed
point spectra in \cref{def-fixed}.
\end{remark}

\subsection{Defining orthogonal $G$-spectra}\label{sec-G-spectra}
The relative form of the following is \cite[Definition
V.4.2]{MandellMay} with different terminology.

\begin{defin}\label{def-orth-spec}
An {\bf orthogonal $G$-spectrum} is a $\mcT_{G}$-enriched
$\mcT_{G}$-valued functor on the Mandell-May category
$\msJ_{G}$ of \cref{def-MM-cat}.  We denote by $\Sp_{G}$ ($\,\Sp^{G}\!$) the
category of such functors (and {\eqvr} maps).  For such a functor $X$
we denote by $X_{V}$ its value on the object $V$ in $\msJ_{G}$.  For
$V=\reals^{n}$ with trivial $G$-action, we denote this space by
$X_{n}$.

We say that an orthogonal $G$-spectrum $X$ {\bf has trivial
$G$-action} if the same holds for $X_{n}$ for each $n\geq 0$. (In this
case the action on $X_{V}$ is necessarily nontrivial when the action
on $V$ is.)

More generally an {\bf orthogonal $(G,N)$-spectrum} is a
$\mcT_{G/N}$-enriched $\mcT_{G/N}$-valued functor on the relative
Mandell-May category $\msJ_{G,N}$ of
\cref{def-phiGN}\cref{def-phiGNi}.

A morphism $f:X\to Y$ of orthogonal $G$-spectra is a natural
transformation of functors.  This consists of a collection of suitably
compatible maps $f_{V}:X_{V}\to Y_{V}$, which are {\eqvr} if we are in
the category $\Sp^{G}$, but need not be if we are in $\Sp_{G}$.
\end{defin}

Hence an orthogonal $(G,N)$-spectrum $X$ consists of a collection
$\left\{X_{V} \right\}$ of pointed $G/N$-spaces (for each {\rep} $V$
of $G$) along with structure maps
\begin{numequation}\label{eq-structure}
\begin{split}
\epsilon^{X}_{V,W}:\msJ_{G,N} (V,W)\wedge X_{V}\to X_{W}
\end{split}
\end{numequation}%

\noindent with suitable properties that are spelled out in the
references cited above.  In particular each $X_{V}$ comes equipped
with an action of the orthogonal group $O (V)$.  An orthogonal
$(G,e)$-spectrum is an orthogonal $G$-spectrum.  The sphere spectrum
$\mathbb{S}$ is defined by $\mathbb{S}_{V}=S^{V}$.

It follows that the morphism space $\Sp^{G} (X,Y)$ is a certain
subspace of the product (over all $V$) of the spaces $\mcT^{G}
(X_{V},Y_{V})$. It can be described categorically as an enriched
end, and similarly for $\Sp_{G} (X,Y)$.  See \cite[\S9.1.C]{HHR:ESHT}. 

\begin{remark}\label{rem-naive}
{\bf Genuine and naive $G$-spectra.}  It is known (\cite[Lemma
V.1.1]{MandellMay}, reproved as \cite[Lemma 9.1.8]{HHR:ESHT}) that the
space $X_{V}$ depends only on the dimension $|V|$ of $V$, although the
group actions on the spaces $X_{V}$ and $X_{|V|}$ will differ.  For
example if $G$ acts trivially on $X_{|V|}$ but nontrivially on $V$,
then it will act nontrivially on
\begin{displaymath}
X_{V} = O (|V|,V)_{+}\wedge_{O (|V|)}X_{|V|}.
\end{displaymath}

\noindent
For this reason Schwede in \cite{Schwede:ESHT},
which is the reference for orthogonal $G$-spectra used in \cite{NS18},
defines such objects $X$ solely in terms of the spaces $X_{n}$.  He
discusses the relation between the two definitions in \cite[Remark
2.7]{Schwede:ESHT}.  

It follows that $\Sp^{G}$ as we have defined it is {\eqt} to the
category of $\mcT^{G}$-valued functors on $\msJ $, the Mandell-May
category for the trivial group.  Such objects are often called {\em
naive $G$-spectra}, while $\mcT^{G}$-valued functors on $\msJ_{G} $
are called {\bf genuine $G$-spectra}.  We will denote the former
category by $\SpGn$.  While $\Sp^{G}$ and $\SpGn$
are {\eqt} as categories, they come with different homotopical
structures, meaning different classes of stable equivalences.  See
\cite[Theorem 9.3.10 and Example 9.3.11]{HHR:ESHT} for more
discussion.  It turns out that $\SpGn$ has more genuine
{\weq }s than naive ones.
\end{remark}

Naive $G$-spectra are
sometimes called {\bf Borel $G$-spectra}, and the category of such is
sometimes denoted by $\Sp^{hG}$.  Recall that for a $G$-space $X$, the
homotopy fixed point set $X^{hG}$ is the mapping map $\Top^{G}
(EG,X)$.  When $G$ acts trivially on $X$, this is the same as $\Top
(BG, X)$.  Hence if $\qmcC$ is an {\qcat} on which $G$ acts trivially,
it would be reasonable to denote $\qFun (\mcB G, \qmcC)= \qFun
({\color{cyan}{BG}}, \qmcC)$ by $\qmcC^{hG}$.  For $\qmcC=\qSp$, this
the {\qcat} of Lurie spectra with $G$-action.  See \cite[page
3]{Ayala-MG-R} (where naive $G$-spectra are called {\bf homotopy
$G$-spectra}) for more discussion.



The categories $\Sp_{G}$ and $\Sp^{G}$ are both tensored and
cotensored over $\mcT_{G}$ and $\mcT^{G}$ respectively.  This means
that for a pointed $G$-space $K$ and an orthogonal $G$-spectrum $X$,
we can make sense of both the tensor product $X\wedge K$
and the cotensor product $X^{K}$.  These spectra are defined by 
\begin{numequation}\label{eq-tensor-cotensor}
\begin{split}
(X\wedge K)_{V}
 &  := X_{V}\wedge K\\
\aand 
\left(X^{K} \right)_{V}
 & := \mcT_{G} (K,X_{V}) \mbox{ or } \mcT^{G} (K,X_{V}).   
\end{split}
\end{numequation}%

\begin{defin}\label{def-Yoneda-spectrum}
For a {\rep} $V$ of $G$, the {\bf $V$th Yoneda spectrum} $\yo^{V}$ defined by
\begin{displaymath}
(\yo^{V})_{W}=\msJ_{G} (V,W).
\end{displaymath}

\noindent Note that the space on the right is a point when $|W|<|V|$,
and that $\yo^{0}=\mathbb{S}$.  As in \cref{def-orth-spec}, we denote
$\yo^{\reals^{n}}$ by $\yo^{n}$.
\end{defin}

$\yo^{V}$ is a twisted desuspension of the sphere spectrum $\mS$, and
$\Sigma^{V}\yo^{V}\simeq \mS$.

As noted before, the symbol $\yo$ is the Japanese hiragana character
``yo,'' the first syllable of Yoneda's name.  This spectrum is denoted
by $S^{-V}$ in \cite{HHR} and \cite{HHR:ESHT} (where $G$ is assumed to
be finite), while the ``shift desuspension functor'' $(\yo^{V}\wedge
-)$, in which the variable may be either an orthogonal $G$-spectrum or
a pointed $G$-space, is denoted by $F_{V}$ in both \cite[page
11]{MandellMay} and \cite[Definition 1.3]{MMSS}.

Both $\Sp_{G}$ and $\Sp^{G}$ are known to have a closed symmetric monoidal
structure defined in terms of the Day convolution; see
\cite[\S3.3]{HHR:ESHT}.  This means that the morphism space $\Sp_{G}
(V,W)$ is the $0$th space of a morphism or function spectrum $F_{G}
(X,Y)$ defined by
\begin{displaymath}
F_{G} (X,Y)_{V} = \Sp_{G} (\yo^{V}\wedge X,Y).
\end{displaymath}

Again we refer the reader to \cite[\S9.1]{HHR:ESHT} for details.  Even
though it was written with only finite groups $G$ in mind, most
of it holds as well for $G$ a compact Lie group. One feature of the
finite group case {\em not} present in the compact Lie group case is the
existence of a finite dimensional {\rep} (the regular one) containing
every irreducible one as a summand. The group $\mT$ has infinitely
many distinct irreducible {\rep}s.

\begin{defin}\label{def-pi-stable}
{\bf {\Eqvr} stable homotopy groups and stable {\eqiv}s.}  An orthogonal $G$-spectrum has
{\bf stable homotopy groups $\pi^{H}_{\star} (-)$} graded over $RO
(G)$ for each subgroup $H\subseteq G$.  For finite dimensional
orthogonal {\rep}s $V$ and $V'$ of $G$,
\begin{numequation}\label{eq-pi-stable}
\begin{split}
\pi^{H}_{V-V'}X
 :
 = \colim{W}\pi^{H}_{V}\Omega^{W}X_{W+V'},
\end{split}
\end{numequation}%

\noindent where $\pi^{H}_{V}$ and $\Omega^{W}$ on the right are as in
\cref{def-piHX}, and the colimit is over the category
of finite dimensional orthogonal {\rep}s $W$ of $G$ and {\eqvr}
inclusions.

We also have {\bf geometric homotopy groups} 
\begin{numequation}\label{eq-pi-stable=Phi}
\begin{split}
\pi^{\Phi H}_{V-V'}X
 :
 = \colim{W}\pi_{V}\Omega^{W^{H}}X_{W+V'}^{H}.
\end{split}
\end{numequation}%

An {\eqvr} map $f:X\to Y$ is a {\bf stable {\eqiv}} if it induces
isomorphisms in $\pi^{H}_{\star} (-)$ for all closed subgroups
$H\subseteq G$.
\end{defin}

For finite $G$, let $\varrho_{G}$ denote the {\bf regular {\rep} of $G$},
meaning its group ring over $\reals$ (a vector space of dimension
$|G|$) on which $G$ acts by left multiplication.  Then we know that every
finite dimensional {\rep} of $G$ is a summand of some finite multiple of
$\varrho_{G}$.  It follows that the colimits of \cref{eq-pi-stable} and
\cref{eq-pi-stable=Phi} can be simplified to the sequential colimits
\begin{align*}
\pi^{H}_{V-V'}X
 & = \colim{n}\pi^{H}_{V}\Omega^{n\varrho_{G}}X_{n\varrho_{G}+V'}\\
\aand 
\pi^{\Phi H}_{V-V'}X
 & = \colim{n}\pi_{V}\Omega^{n\varrho_{G}^{H}}X_{n\varrho_{G}+V'}^{H}.
\end{align*}

\noindent Such a simplification is {\em not} available for a compact
but infinite Lie group such as $\mT$.

A $G$-spectrum has two flavors of fixed points for each normal
subgroup $N\subseteq G$.

\begin{defin}\label{def-fixed}
{\bf Fixed point spectra.} Let $X$ be an orthogonal $G$-spectrum $X$
and let  $N\subseteq G$ be a normal subgroup.
\begin{enumerate}[label={(\roman*)},itemindent=0em]
\item \label{def-fixedi}
The $G/N$-spectrum $X^{N}$, its {\bf categorical fixed point spectrum}, is
defined by
\begin{displaymath}
(X^{N})_{W} = (X_{q (W)})^{N}
\end{displaymath}

\noindent for each {\rep} $W$ of $G/N$, where $q=q_{G/N}:\msJ_{G/N}\to
\msJ_{G,N}$ is the right adjoint of \cref{def-phiGN}\cref{def-phiGNiii}.

\item \label{def-fixedii}
The $(G,N)$-spectrum $\fix^{N}X$ is defined by 
\begin{displaymath}
(\fix^{N}X)_{V} = (X_{V})^{N}
\end{displaymath}

\noindent for each {\rep} $V$ of $G$. The $G/N$-spectrum $\Phi^{N}X$,
its {\bf geometric fixed point spectrum}, is the left Kan extension of
$\fix^{N}X$ along the left adjoint functor
\begin{displaymath}
\phi_{G,N}:\msJ_{G,N}\to \msJ_{G/N}
\end{displaymath}

\noindent of
\cref{def-phiGN}.
\end{enumerate}
\end{defin}

\subsection{The Loday functor}\label{sec-Loday-functor}
 
In this subsection we use the symbol $\uot $ for the categorical
tensor product, reserving $\otimes $ for its usual meaning as
algebraic tensor product or smash product.

A cocomplete category $\mcC $ is tensored over sets.  For an object
$W$ in $\mcC $ and a finite set $F$ with cardinality $f$, $W\uot F$ is
the $f$-fold coproduct of $W$. For a simplicial set $X$ of finite
type, $W\uot X$ is a simplicial object in $\mcC $ with $(W\uot
X)_{n}=W\uot X_{n}$, {\em provided that we can define the relevant
maps among the various finite coproducts of $W$}.  When in addition
$\mcC $ is tensored over topological spaces, we get a geometric
realization $|W\uot X|$ in $\mcC $.  Its homotopy type (assuming this
makes sense in $\mcC$) for a given $W$ depends only on the homotopy
type of $|X|$, meaning that weakly {\eqt} simplicial sets give weakly
{\eqt} objects in this construction.

Of particular interest is the case where $X$ is the simplicial circle
or standard 0-cyclex $\blamb^{0}=\bdelt^{1}/\partial \bdelt^{1}$ of
\cref{cor-circle}.  Recall that its $n$th component has $n+1$ elements
that are permuted by the cyclic group $\rC_{n+1}$.

In the category of commutative $k$-algebras for a discrete commutative
ring $k$, the coproduct is tensor product over $k$, denoted as usual
by $\otimes_{k}$.  The algebra multiplication gives us the maps we
need between finite coproducts to give us the desired simplicial
structure.  Thus for a commutative algebra $A$ and a simplicial set
$X$ we get a simplicial $k$-algebra which we denote by $A\uot_{k} X$,
which is also a simplicial abelian group, omitting the subscript when
$k=\Z$.  We denote the associated chain complex as in \cref{eq-part-n}
by $\Ch (A\uot_{k} X)$. For example the chain complex $\Ch (A\uot_{k}
\bdelt ^{0})$ is
\begin{displaymath}
\xymatrix
@R=0mm
@C=5mm
{
{0}
  &{1}
    &{2}
      &{3}
        &{4}\\
{A}
  &{A}\ar[l]_(.5){0}
    &{A}\ar[l]_(.5){1}
      &{A}\ar[l]_(.5){0}
        &{A}\ar[l]_(.5){1}
          &{\dotsb, }\ar[l]^(.5){}
}
\end{displaymath}

\noindent so its homology is $A$ concentrated in degree 0.  More
interestingly, 
\begin{displaymath}
A\uot_{k}  \blamb^{0}=\mathbf{HH}_{\bullet} (A),
\end{displaymath}

\noindent the simplicial abelian group of \cref{eq-simp-chain}.

In the category of commutative ring spectra which are algebras over a
fixed commutative ring spectrum $R$, the coproduct is the smash
product over $R$, denoted by $\otimes_{R}$.  When $A$ is such an
algebra, we denote its categorical tensor product with a simplicial
set $X$ by $A\uot_{R} X$.  When $R=\mS$, we omit it.

The following notation is due to Brun, Carlsson and Dundas \cite{BCD},
who study the Loday functor in detail.

\begin{defin}\label{def-LXR}
For a commutative ring spectrum $A$ and a simplicial set $X$, the {\bf
Loday functor} $\Lsmash_{X}A$ is the geometric realization of the
simplicial ring spectrum $A\uot X$.  More generally when $A$ is a
commutative algebra over a commutative ring spectrum $R$, the {\bf
Loday functor relative to $R$} is
\begin{displaymath}
\Lsmash_{X}A/R:= |A\uot _{R}X |.
\end{displaymath}
\end{defin}

Our $\Lsmash_{X}A$ is denoted by $\Lambda_{X}A$ and called ``smash $X$
of $A$'' in \cite{BCD}, and by $\mcL_{X} (A)$ in \cite{HHLRZ}.  In the
latter it is generalized (in a different direction from the above) to
$\Lsmash_{X,Y} (A,B;C)$, for $Y\subseteq X$ and commutative rings
$A\to B\to C$.  This relative version places $C$ over the basepoint of
$X$, $B$ over all points of $Y$ that are not the basepoint and $A$
over the complement of $Y$ in $X$.

\begin{remark}\label{rem-assoc-R-algebras}
{\bf The category of associative $R$-algebras.} When $A$ is an
associative (but not commutative) $R$-algebra and $X=\blamb^{0}$, we
still have a simplicial ring spectrum in which the $n$th component is
\begin{displaymath}
A\otimes_{R} A\otimes_{R}A\otimes_{R}\dotsb \otimes_{R}A
\qquad \mbox{with $n+1$ factors} 
\end{displaymath}

\noindent whose geometric realization is $\THH(A/R)$.  However the
$(n+1)$-fold coproduct in the category of {\em associative}
$R$-algebras is not the $(n+1)$-fold relative smash product, but
something far more complicated.  This is analogous to the fact that
while the coproduct in the category of abelian groups is the direct
product, that in the category of arbitrary groups is the far more
complicated free product.  Hence we cannot identify $\THH(A/R)$ with
$|A\uot_{R} \blamb^{0}|$.
\end{remark}

Note that $\Lsmash_{X}A$ is covariantly functorial in both $A$ and
$X$.  Jim McClure, Roland Schw\"anzel and RainerVogt \cite[Theorem
B]{MSV97} show that when $A$ is an $\EE_{\infty }$-ring spectrum and
$X=\blamb^{0} $ (which they denote simply by $S^{1}$),
$\Lsmash_{X} A=\THH (A)$.  In \cite[Theorem G]{MSV97} they show that
any map $A\to B$ to an $\EE_{\infty }$-ring spectrum with $\mT$-action
extends uniquely to $\THH(A)$.

Angeltveit and Rognes \cite[\S3]{AR05} show
that it has a Hopf algebra structure. Brun, Fiedorowicz and Vogt
\cite{BFV} show that for an $\EE_{n}$-ring spectrum $A$ with
$n\geq 2$, $\THH (A)$ can be defined in the same way and is an
$\EE_{n-1}$-ring spectrum.

In \cite{BCS}, Andrew Blumberg, Cohen and Schlichtkrull determine the
$\THH$ of certain Thom spectra.  Let $BF$ be the classifying space for
stable spherical fibrations and suppose we have a map $f:X\to BF$
which is the loop map associated to a map of connected based spaces
$Bf: BX \to B^{2}F $.  Then the resulting Thom spectrum $T (f)$ is an
$\EE_{1}$-ring.  They identify $\THH (T (f))$ as a certain Thom
spectrum associated with the free loop space $\mcL (BX)$.  When $f$ is
a triple loop map, \cite[Theorem 3]{BCS} says
\begin{displaymath}
\THH (T (f)) \simeq T (f)\wedge BX_{+}.
\end{displaymath}

\noindent  For example,
\begin{displaymath}
\THH (MU) \simeq MU\wedge SU_{+}.
\end{displaymath}

In \cite{Hahn-Wilson} the authors study the relative version
$\THH(\BPn/MU)$.  $MU$ is a commutative ring spectrum, but the
$\EE_{3}$-ring spectrum $\BPn$ is known not to admit an
$\EE_{\infty}$-structure by work of Lawson \cite{Lawson18} for $p=2$
and Andrew Senger \cite{senger24} for odd primes.

\subsection[The Greenlees-May or Tate diagram]
{The Greenlees-May or Tate diagram} \label{sec-Tate-diagram}

The standard reference for the material in this subsection is John
Greenlees and Peter May's classic book \cite{GreeMay-Tate}.  As noted
above and illustrated in \cref{ex-S1}, each compact Lie group $G$ has
a contractible free $G$-space $EG$. Let $EG_{+}$ denote $EG$ with a
disjoint base point, and consider the base point preserving map
$j:EG_{+}\to S^{0}$ that sends all of $EG$ to the nonbase point of
$S^{0}$.  The resulting cofiber sequence
\begin{numequation}\label{eq-useful}
\begin{split}
\xymatrix
@R=4mm
@C=10mm
{
{EG_{+}}\ar[r]^(.5){j}
  &{S^{0}}\ar[r]^(.5){k}
    &{\widetilde{E}G},
}
\end{split}
\end{numequation}%

\noindent is useful to us.  Since $EG$ is contractible, the first map
is underlain by a homotopy {\equi}, making the underlying space of
$\widetilde{E}G$ contractible. On the other hand for any nontrivial
subgroup $H\subseteq G$, the fixed point set $(EG_{_{+}})^{H}$ is a
point since $H$ acts freely away from the basepoint.  It follows that
\begin{displaymath}
(EG_{+})^{H}
\simeq \mycases{
S^{0}   &\mbox{for }H=e\\
\pt       &\mbox{otherwise}
}
\qquad \mbox{while}\qquad  
\widetilde{E}G^{H}
\simeq \mycases{
\pt       &\mbox{for }H=e\\
S^{0}  &\mbox{otherwise.}
}
\end{displaymath}

The cofiber sequence of \cref{eq-useful} is not to be confused with
the {\bf isotropy separation sequence} (see \cite[\S9.11A]{HHR:ESHT})
\begin{displaymath}
\begin{split}
\xymatrix
@R=4mm
@C=10mm
{
{E\mcP_{+}}\ar[r]^(.5){}
  &{S^{0}}\ar[r]^(.5){}
    &{\widetilde{E}\mcP },
}
\end{split}
\end{displaymath}

\noindent where $\mcP $ denotes the family of proper subgroups of $G$,
and $E\mcP$ is a $G$-space for which $E\mcP^{G}$ is empty but
$E\mcP^{H}$ is weakly contractible for each proper subgroup
$H\subseteq G$.  The two cofiber sequences coincide when $G=\rC _{p}$.

For an orthogonal $G$-spectrum $X$, cotensoring (as in
\cref{eq-tensor-cotensor}) with the first map of \cref{eq-useful}
gives a map
\begin{numequation}\label{eq-cotensored-map}
\begin{split}
\xymatrix
@R=4mm
@C=20mm
{
{X=X^{S^{0}}}\ar[r]^(.55){\epsilon_{X} :=X^{j}}
  &{X^{EG_{+}}}
}
\end{split}
\end{numequation}%

\noindent known as {\bf Borel completion}.

We now smash \cref{eq-useful} with \cref{eq-cotensored-map} and get
the {\bf Greenlees-May diagram} of \cite[Diagram (D)]{GreeMay-Tate},
\begin{numequation}\label{eq-Tate-diagram}
\begin{split}
\xymatrix
@R=2mm
@C=20mm
{
{X\wedge EG_{+}}\ar[r]^(.6){X\wedge j}
                 \ar[dd]_(.45){\epsilon_{X} \wedge EG_{+}}
   &{X}\ar[r]^(.4){X\wedge k}
       \ar[dd]_(.45){\epsilon_{X} }
     &{X\wedge \widetilde{E}G}
                 \ar[dd]^(.45){\epsilon_{X} \wedge \widetilde{E}G}
{}\\ \\
{X^{EG_{+}}\wedge EG_{+}}\ar[r]^(.6){X^{EG_{+}}\wedge j}
   &{X^{EG_{+}}}\ar[r]^(.4){X^{EG_{+}}\wedge k}
     &{X^{EG_{+}}\wedge \widetilde{E}G}\\
{f_{G} (X)}\ar@{:}@{=}[u]^(.5){}
   &{c_{G} (X)}\ar@{=}[u]^(.5){}
     &{t_{G} (X)}\ar@{=}[u]^(.5){} 
}
\end{split}
\end{numequation}%

\noindent in which each row is a cofiber sequence and the spectra in
the bottom row are defined by the indicated equalities. Here the
letters $f$, $c$ and $t$ stand for ``free,'' ``complete,'' and
``Tate.''  The left vertical map is an {\equi} by \cite[Proposition
1.2]{GreeMay-Tate}.  This means that {\em the square on the right is a
homotopy pullback diagram}.

Greenlees and May \cite[Introduction]{GreeMay-Tate} call $t_{G}(X)$
(which they denote by $t(X)$ without the subscript) the {\bf Tate
$G$-spectrum associated to $X$}, and its fixed point set
\begin{numequation}\label{eq-tG}
\begin{split}
X^{tG}:=t_{G}(X)^{G}
\end{split}
\end{numequation}%

\noindent {\bf the Tate construction of $X$}. It is surprisingly
interesting even when the action of $G$ on $X$ is trivial as in
\cref{def-orth-spec}.

When $X$ is a naive $G$-spectrum, \cite[Proposition 2.1]{GreeMay-Tate}
says that the fixed point sets of $f_{G}(X)$ and $c_{G} (X)$ can by be
identified with $(\Sigma^{\Ad (G)}X)_{hG}$, the homotopy orbit
spectrum of the twisted suspension (\cref{def-piHX}) of $X_{hG}$ of
\cref{eq-hos}, and $X^{hG}$, the homotopy fixed point set of
\cref{eq-hfps}.  Here $\Ad(G)$ denotes the {\bf adjoint {\rep} of
$G$}, whose degree is the dimension of $G$.  In particular it is
trivial when $G$ is finite.  It is defined in terms of the action of
$G$ on its Lie algebra, and that action is trivial when $G$ is
abelian.  Thus for the circle group $\mT$ acting on a spectrum $X$, we
have
\begin{numequation}\label{eq-AdT}
\begin{split}
(f_{\mT}X)^{\mT} 
 \simeq (\Sigma^{\Ad (\mT)}X)_{h\mT} 
 \simeq \Sigma X_{h\mT}.
\end{split}
\end{numequation}%

The map 
\begin{numequation}\label{eq-NormG}
\begin{split}
\Nm_{G}:(\Sigma^{\Ad (G)}X)_{hG}\simeq f_{G} (X)^{G}
    \to  c_{G} (X)^{G}\simeq X^{hG}
\end{split}
\end{numequation}%

\noindent is known as {\bf the norm} and has the $X^{tG}$ as its
cofiber. For finite $G$ it can be described as follows.

For a module $M$ over the integral group ring of $G$ there is a map 
\begin{displaymath}
\Nm_{G}:M_{G}\to M^{G},
\end{displaymath}

\noindent originally due to John Tate, which we describe in
\cref{sec-alg-norm}.  There is no comparable map $X_{G}\to X^{G}$ for
a $G$-space $X$, but there is one for a $G$-spectrum $X$ that we
describe in \cref{sec-ESHT-norm}.


\subsubsection{The norm map in algebra}\label{sec-alg-norm}
The original reference for this paragraph is \cite[Chapter XII]{CE},
whose authors attributed the ideas to Tate.  Let $\ZG$ denote the
integer group ring of a finite group $G$.  For each $\gamma \in G$, we
denote by $[\gamma ]$ the corresponding basis element of $\ZG$.  We
have the augmentation map
\begin{numequation}\label{eq-aug}
\begin{split}
\nabla  :\ZG \to \Z
\quad \mbox{given by}\quad  
[\gamma ]\mapsto 1
\quad \mbox{for each }\gamma \in G,
\end{split}
\end{numequation}%

\noindent
and we denote its kernel, the
augmentation ideal, by $I$.  A $\ZG$-module $M$ is an abelian group
with an action of $G$.  As such it has an orbit module
\begin{displaymath}
M_{G} = M/IM = M\otimes_{\ZG} \Z
\end{displaymath}

\noindent and a fixed point set
\begin{displaymath}
M^{G} = \left\{m\in M:[\gamma ]m=m \,\, \forall \gamma \in G \right\}
      = \Hom_{\ZG} (\Z,M)
      \subseteq M.
\end{displaymath}

\noindent  Now consider the {\bf Tate norm map} $\Nm_{G}:M \to M$ given by 
\begin{displaymath}
m\mapsto \sum_{\gamma \in G}[\gamma ]m.
\end{displaymath}

\noindent It sends each element $m\in M$ to the sum of the elements in
its $G$-orbit.  Hence two elements in an orbit have the same image
under it, so it factors through the quotient $M_{G}$.  Its image is
fixed by $G$ and thus contained in $M^{G}$, and we have 
\begin{displaymath}
\Nm_{G}: M_{G} \to M^{G}.
\end{displaymath}

\noindent When $M=\ZG$, it is an isomorphism between $\ZG/I$ and the
subgroup of $\ZG$ generated by the norm of the unit element.  Hence
its kernel and cokernel both vanish.  When $M=\Z$ with trivial
$G$-action, this map is multiplication by $|G|$, the order of $G$.  We
will describe its kernel and cokernel in \cref{eq-4term}.



Recall that
\begin{align*}
H^{i} (G;M)
 & := \Ext_{\ZG}^{i}\left(\Z,M \right),  \\
H_{i} (G;M)
 & := \Tor^{\ZG}_{i}\left(\Z,M \right),
\end{align*}

\noindent and both can be computed using a free (or projective)
$\ZG$-resolution of $\Z$ of the form
\begin{numequation}\label{eq-free-res}
\begin{split}
\xymatrix
@R=3mm
@C=8mm
{
{\dotsb }  \ar[r]^(.5){}
  &{P_{2}}\ar[r]^(.5){}
    &{P_{1}}\ar[r]^(.5){}
      &{P_{0}=\ZG}\ar[r]^(.65){\nabla }
        &{\Z}\ar[r]^(.5){}
          &{0,}
}
\end{split}
\end{numequation}%

\noindent where $\nabla$ is as in \cref{eq-aug}.

Then $H^{*} (G;M)$ is the cohomology of
the cochain complex
\begin{displaymath}
\dotsb \leftarrow \Hom_{\ZG} (P_{2},M) 
       \leftarrow \Hom_{\ZG} (P_{1},M) 
       \leftarrow \Hom_{\ZG} (P_{0},M)=M
\end{displaymath}

\noindent and  $H_{*} (G;M)$ is the homology of the chain complex
\begin{displaymath}
\dotsb \rightarrow  P_{2}\otimes_{\ZG}M 
       \rightarrow  P_{1}\otimes_{\ZG}M 
\rightarrow  P_{0}\otimes_{\ZG}M=M.
\end{displaymath}

\noindent The augmentation $\nabla  :\ZG \to \Z$ of
\cref{eq-aug} induces maps $H_{0} (G;M)\to M_{G}$ and $M^{G}\to
H^{0} (G;M)$. These lead to a 4-term exact sequence
\begin{numequation}\label{eq-4term}
\begin{split}
0\to H_{0} (G;M)
  \to M_{G}
  \xrightarrow{\Nm_{G}}M^{G}
  \to H^{0} (G;M)
  \to 0.
\end{split}
\end{numequation}%

\noindent {\bf Tate cohomology }
$\widehat{H}^{*}$ is defined by 
\begin{numequation}\label{eq-Tate-H*}
\begin{split}
\widehat{H}^{i} (G;M)=\mycases{
H^{i} (G;M)
       &\mbox{for }i>0\\
H_{-1-i} (G;M)
       &\mbox{for }i\leq 0,
}
\end{split}
\end{numequation}%

\noindent so we can write the  4-term exact sequence as 
\begin{displaymath}
0\to \widehat{H}^{-1} (G;M)
  \to M_{G}
  \xrightarrow{\Nm_{G}}M^{G}
  \to \widehat{H}^{0} (G;M)
  \to 0.
\end{displaymath}

When $M$ is a ring, there is a multiplication in $\widehat{H}^{*}
(G;M)$ that makes it a module over $H^{*} (G;M)$, its nonnegatively
graded part.

\begin{ex}\label{ex-CpZp}
{\bf The case $G=\rC_{p}$ and  $M=\Z/p$.} We find that 
\begin{displaymath}
\widehat{H}^{i} (\rC_{p}; \Z/p) = \Z/p
\qquad \mbox{for all $\Z$.} 
\end{displaymath}

\noindent Given generators $a\in \widehat{H}^{1}$ and $x\in
\widehat{H}^{2}$, $\widehat{H}^{2i+\epsilon }$ is generated by
$a^{\epsilon }x^{i}$ for $\epsilon =0,1$ and all $i\in \Z$.  As graded
rings we have (for $p>2$)
\begin{align*}
H^{*} (\rC_{p}; \Z/p)
 & \cong    \Lambda (a)\otimes \Z/p[x] \\
\aand 
\widehat{H}^{*} (\rC_{p}; \Z/p)
 & \cong   \Lambda (a)\otimes \Z/p[x, x^{-1}],
\end{align*}

\noindent and for $p=2$ there is a similar description with $x=a^{2}$.

The case $p=2$ is discussed further in \cref{sec-stunted}.

\end{ex}

\subsubsection{The norm map in {\eqvr} stable homotopy theory.}
\label{sec-ESHT-norm} For a finite group $G$ acting on a space $X$
(pointed or not) we do {\em not} get a similar map $X_{G}\to X^{G}$
because in general there is no way to sum the points in a $G$-orbit of
$X$. However we can do this in the stable world as follows. For a naive
$G$-spectrum $X$ there are $G$-maps
\begin{numequation}\label{eq-stable-norm}
\begin{split}
\xymatrix
@R=-1mm
@C=4mm
{
{EG_{+}\wedge_{G}X}\ar@{=}[r]^(.5){}
  &{X_{hG}}\ar@{-->}[rrr]^(.5){\Nm_{G}}
           \ar[dddd]_(.5){j\wedge_{G}X}
    & &&{X^{hG}}\ar@{=}[r]^(.5){}
        &{F (EG_{+},X)^{G}} \\
{}\\
{}\\
{}\\
{S^{0}\wedge_{G}X}\ar@{=}[r]^(.5){}
  &{X_{G}}\ar@{-->}[rrr]^(.5){}
    & &&{X^{G}}\ar@{=}[r]^(.5){}\ar[dddd]^(.5){}
              \ar[uuuu]_(.5){\epsilon_{X}^{G} }
        &{F (S^{0},X)^{G}}\\
{}\\
{}\\
{}\\
{x}\ar@{|->}[ddddd]^(.5){}
  &{X}\ar[dddd]_(.4){\rm diag}\ar@{-->}[rrruuuu]^(.5){}\ar[uuuu]^(.5){}
    & &&{X}
        &{\displaystyle{\sum_{\gamma \in G}\gamma (x)}}  \\
{}\\
{}\\
{}\\
  &{\displaystyle{\prod_{\gamma \in G}X}}\ar@<.5ex>@{-->}[rrr]_(.5){}
    & &&{\displaystyle{\bigvee_{\gamma \in G}X}}\ar[uuuu]_(.5){\rm fold}
                            \ar@<.5ex>[lll]^(.5){\simeq }\\
{(\gamma (x):\gamma \in G)}\ar@{|->}[rrrrr]^(.5){}
  & & && &{?} \ar@{|->}[uuuuu]^(.5){}    
}
\end{split}
\end{numequation}%


\noindent where $G$ acts as usual on $X$, and on the product and
coproduct by permuting factors and summands.  The diagonal map is
twisted as indicated, and the fold map is the usual one. The map from
the $G$-indexed coproduct to the $G$-indexed product is functorial,
and it is an equivalence because we are in the stable category, and
therefore it has an (admittedly mysterious) inverse whose composite
with the fold map is the indicated addition up to homotopy.  The sum
in the lower right is fixed by $G$, so the composite lifts to the
fixed point spectrum $X^{G}$ as indicated.  This lifting factors
through the orbit spectrum $X_{G}$ since $G$ acts trivially on
$X^{G}$.  The resulting composite map $X_{hG}\to X^{hG}$ is the norm.


This norm is {\em not} that of \cite{HHR}, which is a functor that
converts an $H$-spectrum to a $G$-spectrum for $G$ finite and
$H\subseteq G$.  A generalization of it to the case $e\subseteq
\mT$ is used in \cite{ABGHLM} to study the cyclotomic structure on
$\THH (R)$.

As noted in \cite[Definition I.1.13]{NS18}, \cref{eq-stable-norm}
makes sense in any stable {\qcat} in which limits and colimits indexed
by $G$ are defined.  The stability condition guarantees the
equivalence of the product and coproduct there.

Thus the fixed point diagram for \cref{eq-Tate-diagram}, the {\bf
Greenlees-May fixed point diagram}, is
\begin{numequation}\label{eq-Tate2}
\begin{split}
\xymatrix
@R=8mm
@C=15mm
{
{X_{hG}}\ar[r]^(.5){} \ar@{=}[d]_(.45){}
   &{X^{G}}\ar[r]^(.5){} \ar[d]_(.45){\epsilon_{X}^{G}}\ar@{}[dr]|-(.15){\pb}
     &{(X\wedge \widetilde{E}G)^{G}}\ar[d]^(.45){} \\
{X_{hG}}\ar[r]^(.5){\Nm_{G}^{X}}
   &{X^{hG}}\ar[r]^(.5){s_{X}^{G}}
     &{X^{tG}.}
}
\end{split}
\end{numequation}%

\noindent {\em We remind the reader that $G$ is assumed to be finite. }
We will omit the index $X$ in the three maps when it is
clear from the context.

\begin{defin}\label{def-can}
When $G$ acts trivially on $X$ (making $X^{G}=X$), the {\bf Tate map}
\begin{displaymath}
\theta^{G}_{X}:X \to X^{tG}
\end{displaymath}

\noindent is the composite $s^{G}_{X}\epsilon_{X}^{G}$ in
\cref{eq-Tate2}.  We will often drop the subscript $X$.
\end{defin}


\begin{defin}\label{def-TCTP}
{\bf The functors $\TopC^{-}$ and $\TP$.}
For a $\mT$-spectrum $X$,
\begin{displaymath}
\TopC^{-}(X)
 :=  X^{h\mT}
\qquad \aand 
\TP(X) :=  X^{t\mT}.
\end{displaymath}

\noindent 
For a $\rC_{p^{\infty }}$-spectrum $X$,
\begin{displaymath}
\TopC^{-}(X,p)
 :=  X^{h\rC_{p^{\infty }}}
\qquad \aand 
\TP(X,p) :=  X^{t\rC_{p^{\infty }}}.
\end{displaymath}
\end{defin}

For a cyclotomic spectrum $X$ (see \cref{def-cyclo-spec}), we have a 
diagram of cofiber sequences
\begin{numequation}\label{eq-Klein-norm}
\begin{split}
\xymatrix
@R=4mm
@C=15mm
{
{}
   &{\TopC^{-} (X)}\ar@{=}[d]^(.5){}
     &{\TP ( X)}\ar@{=}[d]^(.5){}\\
{\Sigma X_{h\mT}}\ar[r]^(.5){\Nm_{\mT}}\ar[d]^(.5){}
   &{X^{h\mT}}\ar[r]^(.5){s_{X}^{\mT}}\ar[d]^(.5){}
     &{X^{t\mT}}\ar[d]^(.5){}\\
{X_{h\rC_{p^{j}}}}\ar[r]^(.5){\Nm_{\rC_{p^{j}}}}
   &{X^{h\rC_{p^{j}}}}\ar[r]^(.5){s_{X}^{\rC_{p^{j}}}}
     &{X^{t\rC_{p^{j}}},}
}
\end{split}
\end{numequation}%

\noindent where the equalities are those of \cref{def-TCTP}, and the
norm maps are those of \cref{eq-NormG} and \cref{eq-Tate2}.  The one
for $\mT$ was constructed using different methods by John Klein in
\cite[\S3]{Klein-dual} and is defined for any $\mT$-spectrum $X$; the
cyclotomic structure is not needed for it.  It is known \cite[Lemma
2.18]{Bokstedt-Madsen94} that after $p$-adic completion $X^{h\mT}$ is
the homotopy limit of the $X^{h\rC_{p^{j}}}$ when $X$ has finite type.

The left vertical map in \cref{eq-Klein-norm} is constructed as follows.  Let 
\begin{displaymath}
H:=\rC_{p^{j}}\subseteq \mT =:G.
\end{displaymath}

\noindent The spaces in \cref{eq-Tate-diagram} for $G=\mT$ are defined
in terms of a contractible from $G$-space $EG$, which is also a
contractible free $H$-space.  Hence \cref{eq-Tate-diagram} is also a
Greenlees-May diagram for $H$.  Consider the inclusion map 
\begin{displaymath}
f_{G} (X)^{G} \to f_{G} (X)^{H}\simeq f_{H} (X)^{H}.
\end{displaymath}

\noindent These spectra are identified by \cite[Proposition
2.1]{GreeMay-Tate} as
\begin{displaymath}
\Sigma X_{h\mT}= (\Sigma^{\Ad (G)}X)_{hG} \to 
(\Sigma^{\Ad (H)}X)_{hH} = X_{h\orC_{p^{j}}}.
\end{displaymath}

\subsection{The case $G=\rC _{r}$}

When $G=\rC _{r}$ we denote the map $s^{G}_{X}$ of \cref{eq-Tate2} by
$s^{r}_{X}$, and we have
\begin{displaymath}
(X\wedge \widetilde{E}\rC _{r})^{\rC _{r}}=\Phi^{\rC _{r}}X,
\end{displaymath}

\noindent the geometric fixed point spectrum of
\cref{def-fixed}\cref{def-fixedii}.

The right square in \cref{eq-Tate2} is the pullback diagram
\begin{numequation}\label{eq-TateCp}
\begin{split}
\xymatrix
@R=10mm
@C=15mm
{
{X^{\rC _{r}}}\ar[r]^(.5){} \ar[d]_(.45){\epsilon_{X}^{\rC _{r}}}
              \ar@{}[dr]|-(.15){\pb}
     &{\Phi^{\rC _{r}}X}\ar[d]^(.45){} \\
{X^{h\rC _{r}}}\ar[r]^(.5){s_{X}^{r}}
     &{X^{t\rC _{r}}.}
}
\end{split}
\end{numequation}%

Blumberg and Mandell (see \cref{def-cyclo-spec}) define a cyclotomic spectrum
to be a $\mT$-spectrum equipped with an equivalence
\begin{numequation}\label{eq-BM-def}
\begin{split}
\Phi_{p}:\rho_{p}^{*}\Phi^{\rC _{p}}X\to X
\end{split}
\end{numequation}%

\noindent for each prime $p$, where $\Phi^{\rC _{p}}X$ is the geometric
fixed point spectrum of \cref{def-fixed}\cref{def-fixedii}.  There is
a residual action of $\omT:=\mT/\rC _{p}\cong \mT$ on $\Phi^{\rC
_{p}}X$, and $\rho_{p}^{*}$ is as in \cref{def-rho-n}.  The map is
required to be $\mT$-{\eqvr}.  

In \cref{def-cyclo-spec2} Nikolaus and Scholze define it in terms of a
$\mT$-{\eqvr} map
\begin{numequation}\label{eq-NS-def}
\begin{split}
\varphi_{p}:X\to \rho_{p}^{*}X^{t\rC _{p}}. 
\end{split}
\end{numequation}%

Nikolaus and Scholze show that the two definitions agree when $X$ is
bounded below.  Both \cref{eq-BM-def} and \cref{eq-NS-def} should be
compared with \cref{def-smooth-cyclo}.  We will discuss this further
in \cref{sec-NS}.



\bigskip

\subsubsection{The group $\rC_{2}$ and stunted projective spaces}
\label{sec-stunted}
When $X$ has trivial group action and $p=2$, we have
\begin{numequation}\label{eq-RP-infinity}
\begin{split}
X^{t\rC _{2}}\simeq \holim{i\to\infty } (\RP_{-i}\wedge \Sigma X),
\end{split}
\end{numequation}%

\noindent where $\RP_{-i}$ denotes the Thom spectrum for the
$(-i)$-fold Whitney sum of the canonical real real bundle over $\reals
P^{\infty }$.  There is an analogous statement for the group
$\rC _{p}$ for an odd prime $p$.  These are proved as \cite[Theorem
16.1]{GreeMay-Tate}. 

To explain \cref{eq-RP-infinity}, we need the following notation.  For
$0\leq i<j$, let
\begin{displaymath}
\RP_{i}^{j}:=\Sigma^{\infty }\RP^{j}/\RP^{i-1},
\end{displaymath}

\noindent the suspension spectrum of the {\bf stunted real projective space}, which is known to have the following properties.
\begin{itemize}
\item [$\bullet$] It has a CW-structure with a single cell in each
dimension ranging from $i$ to $j$.

\item [$\bullet$] For $i<j<k$ there is a cofiber sequence
\begin{numequation}\label{eq-cofiber-seq}
\begin{split}
\xymatrix
@R=4mm
@C=10mm
{
{\RP_{i}^{j}}\ar[r]^(.5){}
  &{\RP_{i}^{k}}\ar[r]^(.5){}
    &{\RP_{j+1}^{k}}
}
\end{split}
\end{numequation}%

\item [$\bullet$] $\RP_{i}^{j}$ is the suspension spectrum of the Thom
space for the $i$-fold Whitney sum of the canonical line bundle over
$\RP^{j-i}$.

\item [$\bullet$] There is an integer $\ell$ depending only on $j-i$
such that
\begin{displaymath}
\RP_{i+2^{\ell}}^{j+2^{\ell}}\simeq \Sigma^{2^{\ell}}\RP_{i}^{j}
\end{displaymath}

\noindent This was first proved by James in \cite{James3} and is known
as {\bf James periodicity}.  (The value of $\ell $ is known and is
roughly $(j-i)/2$, but we do not need it here.)  It means that for
each finite $k>0$, the homotopy type of $\Sigma^{-i}\RP_{i}^{i+k}$
varies periodically with $i$.
\end{itemize}
  
We can use James periodicity to make sense of $\RP_{i}^{j}$ for {\em
any integers} $i$ and $j$ with $i\leq j$, and define
\begin{numequation}\label{eq-RP-inf}
\begin{split}
\RP_{i}= \RP_{i}^{\infty } & := \hocolim{j\to \infty }\RP_{i}^{j},  \\
\RP_{-\infty }^{j}
 & := \holim{i\to \infty } \RP_{-i}^{j } ,\\
\aand 
\RP_{-\infty }^{\infty }
 & := \holim{i\to \infty } \RP_{-i}^{\infty }.
\end{split}
\end{numequation}%

Note that when $G$ acts trivially on a space $X$, its homotopy orbit
space is $BG\times X$, and its homotopy fixed point set is $\Map_{}
(BG, X)$. Hence for a $\rC _{2}$-spectrum $X$ with trivial group with
trivial action we have
\begin{align*}
X_{h\rC _{2}}
 & \simeq  \RP_{0}\wedge X  \\
\aand 
X^{h\rC _{2}}
 & \simeq  \map (\RP_{0},X) 
   \simeq  \map (\hocolim{m\to \infty}\RP_{0}^{m-1},X) \\ 
 & \simeq \holim{m\to \infty} \map (\RP_{0}^{m-1},X)\\
 & \simeq \holim{m\to \infty} (\map (\RP_{0}^{m-1},\mS)\wedge X)
\end{align*}

\noindent Now we claim that the Spanier-Whitehead dual (see
\cite[\S8.0B]{HHR:ESHT}) of $\RP_{0}^{m-1}$ is
\begin{numequation}\label{eq-claim}
\begin{split}
\DD\RP_{0}^{m-1}:=\map (\RP_{0}^{m-1},\mS)\simeq \Sigma \RP^{-1}_{-m};
\end{split}
\end{numequation}%

\noindent see \cref{eq-SW-dual}.  Assuming this and that $X$ is
finite, we have
\begin{align*}
X^{h\rC _{2}}
 & \simeq (\holim{m\to \infty }\Sigma \RP^{-1}_{-m} \wedge X)\\
 & \simeq \Sigma \RP^{-1 }_{-\infty } \wedge X.
\end{align*}

\noindent It follows that for finite $X$ and $G=\rC _{2}$, the bottom row of 
\cref{eq-Tate2} is the smash product of $X$ with
\begin{numequation}\label{eq-RP-cofib}
\begin{split}
\xymatrix
@R=4mm
@C=10mm
{
{\RP_{0}^{\infty }}\ar[r]^(.5){}\ar@{=}[d]^(.5){}
  &{\Sigma \RP^{-1 }_{-\infty }}
              \ar[r]^(.5){}\ar@{=}[d]^(.5){}
    &{\Sigma \RP^{\infty  }_{-\infty }}
              \ar[r]^(.5){}\ar@{=}[d]^(.5){}
      &{\Sigma \RP_{0}^{\infty }}\ar@{=}[d]^(.5){}\\
{\mS_{h\rC_{2}}}\ar[r]^(.5){}
  &{\mS^{h\rC_{2}}}\ar[r]^(.5){}
    &{\mS^{t\rC _{2}}}\ar[r]^(.5){}
      &{\Sigma \mS_{h\rC_{2}},}
}
\end{split}
\end{numequation}%
 
\noindent where the last map in the top row is the extension of the
cofiber sequence to the right.  It turns out that the last two maps of
the top row form the suspended limiting/colimiting case of the cofiber
sequence of \cref{eq-cofiber-seq} with $i=-\infty $, $j=-1$ and
$k=\infty $.  This gives us \cref{eq-RP-infinity} once we have proved
\cref{eq-claim}.

Atiyah Duality \cite{Atiyah-Dual} tells us that the Spanier-Whitehead
dual of the suspension spectrum of a closed manifold $M$ (such as
$\RP^{m-1}$) is up to suspension the Thom spectrum $M^{\nu }$ of its
normal bundle.  (See \cite[\S8.0B, especially Theorem
8.0.12]{HHR:ESHT} for more discussion.) We know that the tangent
bundle of $\RP^{m-1}$ is stably {\eqt} to the $m$-fold Whitney sum of
the canonical line bundle; see \cite[Theorem 4.5]{MilSta}. This means
its normal Thom spectrum is $\RP^{-1}_{-m}$.  The suspension on the
right-hand side of \cref{eq-claim} is needed to insure that the top
cell is in dimension 0.

We also know that the top cell of the normal Thom spectrum is
spherical.  This means there is a map $\mS \to \Sigma \RP^{-1}_{-m}$
that is nontrivial on mod 2 homology.  It is compatible with the maps
in the limit, leading to a diagram
\begin{displaymath}
\xymatrix
@R=4mm
@C=10mm
{
{\mS}\ar[r]^(.5){}\ar[dr]^(.5){}
  &{\Sigma \RP^{-1}_{-m}}\ar[r]^(.5){}
    &{\Sigma \RP^{\infty }_{-m}}\\
  &{\Sigma \RP^{-1}_{-\infty }}\ar[r]^(.5){}
                               \ar[u]^(.5){}
    &{\Sigma \RP^{\infty }_{-\infty }}\ar[u]^(.5){}
                                      \ar@{=}[r]^(.5){}
      &{\mS^{t\rC _{2}}}
}
\end{displaymath}

\noindent The Segal Conjecture says that the composite map $\mS \to
\mS^{t\rC _{2}}$ is 2-adic completion.

\subsubsection{The Segal conjecture}\label{sec-Segal-conjecture} 
For more information on this topic, we recommend the recent
account by Kaif Hilman \cite{Hilman}.  The conjecture was first stated
by Graeme Segal in \cite{Segal:ESHT} and later proved by Gunnar
Carlsson in \cite{Carlsson:Segal}.  In its simplest form it describes
the 0th stable cohomotopy group of the classifying space $BG$ of a
finite group $G$ with an isomorphism
\begin{numequation}\label{eq-pi0}
\begin{split}
\lim{k} \pi^{0}_{S}\left(BG^{(k)}_{+} \right)
\cong \widehat{A} (G). 
\end{split}
\end{numequation}%

The homotopy limit of \cref{eq-RP-infinity} was first examined by
Adams in \cite{Ad:RP}. It was the subject of extensive computations by
Mahowald in the 70s and 80s, some of which were eventually reported in
\cite{MR:Root}.  That paper and \cite{MahShi2} were the first
discussions of {\em blueshift phenomena} in chromatic homotopy theory
(not to be confused with the redshift of \cref{sec-Bok-Mad}), although
the term ``blueshift'' was not coined until later.  As Mahowald and
Shick said in their opening paragraph,

\begin{quote}
Calculations of various sorts lead to the following slogan:

The root invariant of $v_{n}$-periodic homotopy is $v_{n}$-torsion.
\end{quote}

\noindent Mahowald's computations also paved the way to W. H. Lin's
proof of the Segal Conjecture for the group $\rC _{2}$ in
\cite{Lin:RP} and \cite{LDMA}, and of Gunawardena's proof of it for
groups of odd prime order \cite{JHCG}.

For a pointed space $X$, $\pi^{0}_{S} (X)$ denotes the group of
homotopy classes of maps from the suspension spectrum of $X$,
$\Sigma^{\infty }X$, to the sphere spectrum,
$\mathbb{S}=\Sigma^{\infty }S^{0}$.  Like ordinary cohomology, stable
cohomotopy has a ring structure (cup product) induced by the diagonal
inclusion $X\to X \times X$.

The space $BG^{(k)}_{+}$ is the $k$-skeleton of $BG_{+}$, the
classifying space of $G$ with a disjoint basepoint.  The functor
$\pi^{0}_{S} (-)$ is contravariant, so it converts the sequential
colimit associated with the skeletal filtration of $BG_{+}$ to a limit
of abelian groups. While differing choices of a contractible free
$G$-space $EG$ (with orbit space $BG$) can lead to differing skeleta
$BG^{(k)}_{+}$, it is known that the limit in question is independent
of such choices.

$A (G)$ denotes the {\em Burnside ring of $G$}, originally defined by
William Burnside in \cite{Burnside2}.  In modern language it is the
Grothendieck group of the abelian monoid (under disjoint union) of
isomorphism classes of finite $G$-sets under disjoint union, with
multiplication induced by Cartesian product.  Additively it is the
free abelian group on the set of conjugacy classes of finite subgroups
$H\subseteq G$.  It admits an {\em augmentation} map $\epsilon :A
(G)\to \Z$, defined by sending a finite $G$-set to its cardinality,
with kernel $I$, the {\em augmentation ideal}. $\widehat{A} (G)$
denotes the $I$-adic completion of $A (G)$.

There is a stronger form of the conjecture, of which \cref{eq-pi0} is
just the tip of the iceberg.  It has to do with the function spectrum $F
(\Sigma^{\infty }BG_{+}, \mathbb{S})$, or {\eqvt}ly the function
$G$-spectrum $F^{G} (\Sigma^{\infty }EG_{+}, \mathbb{S})$ with $G$
acting trivially on $\mathbb{S}$.  It can also be thought of as the
Spanier-Whitehead dual of $\Sigma^{\infty }BG_{+}$.  The group of
\cref{eq-pi0} is its 0th homotopy group.  See \cite[\S4]{Hilman} for
details.

\begin{theorem}\label{thm-Tate}
{\bf Properties of the Tate construction.}
\begin{enumerate}[label={(\roman*)},itemindent=0em]
\item \label{thm-Tatei} \cite{JFA-Segal} 
The Segal Conjecture implies that for finite $X$ with trivial
$\rC _{p}$-action, $X^{t\rC _{p}}$ is the $p$-adic completion of $X$.

\item \label{thm-Tateii} 
Let $H/p$ be the mod $p$ {\SESM} spectrum. Then
\begin{displaymath}
(H/p)^{t\rC _{p}}
 = \holim{k\leq 0}\bigvee_{i\geq k}\Sigma^{i}H/p.
\end{displaymath}

\noindent For $p=2$ we can write this as 
\begin{displaymath}
\holim{i} (\RP_{-i}\wedge \Sigma H/2),
\end{displaymath}

\noindent which is wildly different from 
\begin{displaymath}
\holim{i} (\RP_{-i})\wedge \Sigma H/2 = H/2.
\end{displaymath}

\item \label{thm-Tateiii} \cite{AMS} Let $E (n)$ be the $n$th
Johnson-Wilson spectrum for the prime $p$ and $n>0$. Then $E (n)^{t\rC
_{p}}$ is a certain completion of the coproduct of all even supensions
of $E (n-1)$. This is another instance of chromatic blueshift.
\end{enumerate}
\end{theorem}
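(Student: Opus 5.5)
The theorem has three separate parts, and I would treat each with its own argument. Parts \cref{thm-Tatei} and \cref{thm-Tateii} both come from the Greenlees-May fixed point diagram \cref{eq-Tate2}, specialized to $G=\rC_{p}$ as in \cref{eq-TateCp}. Part \cref{thm-Tateiii} is a complex-oriented computation.

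For \cref{thm-Tatei}, I will first reduce to $X=\mS$. The functors $X\mapsto X^{h\rC_{p}}$, $X_{h\rC_{p}}$ and hence $X^{t\rC_{p}}$ are exact, and so is $p$-adic completion. The finite spectra $X$ for which $X\to X^{t\rC_{p}}$ is $p$-completion therefore form a thick subcategory. Since it contains $\mS$, it contains every finite spectrum. For $X=\mS$ with trivial action, I use the pullback square \cref{eq-TateCp}. The geometric fixed points are $\Phi^{\rC_{p}}\mS\simeq \mS$, and the right vertical map is the Tate map $\theta:\mS\to \mS^{t\rC_{p}}$ of \cref{def-can}. Because the square is a pullback, the fibers of the two vertical maps agree. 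The Segal conjecture, in its spectrum-level form, says the left map $\mS^{\rC_{p}}\to \mS^{h\rC_{p}}$ is completion at the augmentation ideal $I\subseteq A(\rC_{p})$. By the tom Dieck splitting, $\mS^{\rC_{p}}\simeq \mS\vee \Sigma^{\infty}B\rC_{p+}$. On the summand $\Sigma^{\infty}B\rC_{p+}$ this $I$-adic completion is an equivalence, because that summand is already $p$-complete and $I$-complete. On the $\mS$ summand it is $p$-completion, since for $\rC_{p}$ the $I$-adic and $p$-adic topologies agree on the relevant part of $A(\rC_{p})$. Hence the fiber of $\theta$ is the fiber of $\mS\to \mS^{\wedge}_{p}$, and $\theta$ is $p$-completion. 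For $p=2$ this matches the picture with $\RP^{\infty}_{-\infty}$ drawn just above (Lin's theorem). The main obstacle in this part is the bookkeeping of the Burnside ring completion, i.e.\ checking that the $I$-adic completion restricted to the $\mS$ summand is exactly $p$-completion.

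For \cref{thm-Tateii}, the action on $H/p$ is trivial. Hence $(H/p)^{h\rC_{p}}=F(B\rC_{p+},H/p)$ is an $H/p$-module, and $(H/p)^{t\rC_{p}}$ is a module over it, so it is an $H/p$-module. Every $H/p$-module splits as a product of suspensions of $H/p$. It therefore suffices to compute homotopy. The Tate spectral sequence has $E_{2}=\widehat{H}^{-*}(\rC_{p};\Z/p)$, which by \cref{ex-CpZp} is $\Z/p$ in every degree. It lies on a single row, so it collapses, and there are no additive extensions over $\Z/p$. Thus $\pi_{i}(H/p)^{t\rC_{p}}=\Z/p$ for all $i\in\Z$. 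To obtain the stated $\holim{}$ form, I write the Tate construction as the limit of the skeletal filtration, using the stunted spaces $\RP_{-k}$ of \cref{eq-RP-infinity} when $p=2$ and their lens space analogues for odd $p$. Smashing each stage with $H/p$ gives the bounded-below wedge $\bigvee_{i\geq -k}\Sigma^{i}H/p$. For $p=2$ the contrast with $\holim{}(\RP_{-i})\wedge\Sigma H/2=H/2$ then follows from \cref{thm-Tatei}: the smash product does not commute with the homotopy limit.

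For \cref{thm-Tateiii}, I follow \cite{AMS} and the Greenlees-May computation. $E(n)$ is complex oriented, so $E(n)^{*}(B\rC_{p})=E(n)_{*}[[x]]/[p](x)$, where $[p](x)=px+\dotsb+v_{n}x^{p^{n}}+\dotsb$ is the $p$-series. The Tate construction inverts the Euler class $x$, giving $\pi_{*}E(n)^{t\rC_{p}}=E(n)_{*}((x))/[p](x)$. I then show this ring is a Landweber exact $E(n-1)_{*}$-algebra. Modulo $(p,\dots,v_{n-2})$ the element $v_{n-1}$ becomes a unit multiple of a power of $x$ via Weierstrass preparation, and then Landweber exactness identifies the spectrum with a completed wedge of even suspensions of $E(n-1)$. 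I expect this identification to be the main obstacle in the whole proof.
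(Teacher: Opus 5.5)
The paper proves nothing here beyond the citations \cite{JFA-Segal} and \cite{AMS}, and the stunted projective space discussion before the theorem for (ii), so your proposal has to stand on its own. Part (ii) is fine as written. The Tate spectral sequence is concentrated on one row. An $H/p$-module with $\Z/p$ in each degree is a wedge (equivalently a product) of suspensions of $H/p$. The stunted projective and lens tower gives the stated limit, and the $p=2$ contrast is Lin's theorem, i.e.\ part (i) for $X=\mS$.

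In part (i), the reduction to $X=\mS$ and the use of the pullback square are fine, but the Burnside ring step, which you correctly identify as the crux, is wrong as stated.
\begin{itemize}
\item The summand $\Sigma^{\infty}B\rC_{p+}\simeq \mS\vee\Sigma^{\infty}B\rC_p$ is not $p$-complete, since its bottom cell is $\mS$.
\item $I$-adic completion does not act summand by summand on the tom Dieck splitting. With $t=[\rC_p/e]$, Frobenius reciprocity gives $t\cdot y=\mathrm{tr}(\mathrm{res}\,y)$. So multiplication by $t$ carries the unit summand into the transfer summand, and the unit summand is not an $A(\rC_p)$-submodule at all.
\item Concretely, $I=(t-p)$ with $I^{n}=p^{n-1}I$, so $\widehat{A}(\rC_p)=\Z\cdot 1\oplus \Z_{p}\cdot(t-p)$. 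The unit is \emph{not} $p$-completed.
\end{itemize}
The correct bookkeeping uses the cofiber sequence of $\mS^{\rC_p}$-modules $M\to \mS^{\rC_p}\xrightarrow{\mathrm{res}}\mS$, where $M\simeq\Sigma^{\infty}B\rC_{p+}$ is the transfer summand. The element $t-p$ acts by $0$ on $\mS$, so $\mS$ is $I$-complete. It acts by $-p$ on $M$, because $t$ acts through $\mathrm{res}$, which vanishes on $M$. Hence $I$-completion is $p$-completion of $M$, which changes only its bottom cell because $\Sigma^{\infty}B\rC_p$ is already $p$-complete. This recovers the fiber you wanted, $\mathrm{fib}(\mS\to\mS^{\wedge}_p)$, but for a different reason than the one you gave.

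Also, an abstract identification of fibers only shows that $\theta$ is a $p$-adic equivalence. To conclude that it is $p$-completion, you must either track the maps or show $\mS^{t\rC_p}$ is $p$-complete. The latter holds for bounded below $X$: $X^{t\rC_p}$ is the limit of the Tate constructions of its Postnikov truncations, and each of these is annihilated by a power of $p$.

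In part (iii), two things you have are correct: $\pi_*E(n)^{t\rC_p}=E(n)_*((x))/[p](x)$, and $v_{n-1}$ becomes a unit modulo $I_{n-1}=(p,\dots,v_{n-2})$. Granting regularity of $p,\dots,v_{n-2}$, this shows $E(n)^{t\rC_p}$ is Landweber exact of height $n-1$, hence has the Bousfield class of $E(n-1)$.

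The final step does not follow, however. Landweber exactness recovers the homology theory from the $MU_*$-module structure. That structure comes from the formal group of $E(n)$, which sends $v_n$ to a unit and so does not factor through $E(n-1)_*$. It gives you no map from $E(n-1)$ and no splitting.

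To get a completed wedge of $\Sigma^{2i}E(n-1)$'s you must do three things:
\begin{itemize}
\item pass to the $I_{n-1}$-adic completion, which is also what makes $v_{n-1}$ an honest unit and justifies any Weierstrass-type argument, since $E(n)_*$ is not complete;
\item identify the formal group over the completed Tate ring with one classified by a map from $\widehat{E(n-1)}_*$;
\item show the completed ring is a completed free $\widehat{E(n-1)}_*$-module on even generators.
\end{itemize}
That is the actual content of \cite{AMS}, and your proposal leaves it open.
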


\cref{thm-Tateii} above is a spectacular example of the failure of the smash
product to preserve limits.

\subsubsection{More about $G=\mT $}

\begin{defin}\label{def-rep-T}
{\bf Real orthogonal representations of $\mT$.}  Let $\lambda $ denote
the complex numbers with the usual multiplication by elements of
$\mT$, regarded as a 2-dimensional real vector space.  More generally
for each integer $r\geq 0$, let $\lambda^{r}$ denote complex numbers
with $\omega \in \mT $ acting by multiplication by $\omega^{r}$, again
regarded as a 2-dimensional real vector space. 
Using the notation of \cref{def-piHX}, let
\begin{align*}
\mT (r)&:=S (\lambda^{r}),&
\mS[\mT/\rC _{r}]
       &:=\Sigma^{\infty }_{+}\mT(r) &
\aand 
\mS^{(r)}
       &:=\Sigma^{\infty }S^{\lambda^{r}}.
\end{align*}

\noindent The map $a_{\lambda^{r}}:S^{0}\to S^{\lambda^{r}}$ of
\cref{def-piHX} leads to a map $a_{(r)}:\mS^{- (r)}\to \mS$, the {\bf
Euler class}.
\end{defin}

The notation $\mS[\mT/\rC _{r}]$ (but not $\mT (r)$) is abusive for
$r=0$ since there is no group $\rC_{0}$.

\begin{prop}\label{prop-Cpinfty}
{\bf The action of the Pr\"uffer group.}  For $r>0$, let 
$r=sp^{j}$ with $s$ not divisible by $p$.  Then the space
$\mT/\rC_{r}$ is $\rC_{p^{\infty }}$-{\eqvr}ly isomorphic to
$\mT/\rC_{p^{j}}$.

\end{prop}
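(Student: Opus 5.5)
We need to show that $\mT/\rC_{r}$ and $\mT/\rC_{p^{j}}$ are isomorphic as $\rC_{p^{\infty}}$-spaces when $r=sp^{j}$ with $p\nmid s$. The plan is to write down an explicit $\rC_{p^{\infty}}$-equivariant homeomorphism between the two circles. Both quotients are circles, and via the $r$th and $p^{j}$th power maps we identify $\mT/\rC_{r}$ with $\mT$ on which $\omega$ acts by multiplication by $\omega^{r}$, and $\mT/\rC_{p^{j}}$ with $\mT$ on which $\omega$ acts by $\omega^{p^{j}}$. This is the description $\mT(r)=S(\lambda^{r})$ of \cref{def-rep-T}. So it suffices to find a homeomorphism $h:\mT\to\mT$ with $h(\omega^{r}z)=\omega^{p^{j}}h(z)$ for all $\omega\in\rC_{p^{\infty}}$ and all $z\in\mT$.

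The key algebraic input is that $s$ is a unit in $\Z_{p}$, and that $\rC_{p^{\infty}}\cong\Q_{p}/\Z_{p}$ is a $\Z_{p}$-module. Hence the map $\omega\mapsto\omega^{s}$ is an automorphism of $\rC_{p^{\infty}}$; call its inverse $\omega\mapsto\omega^{1/s}$. This automorphism does not extend continuously to all of $\mT$, so we cannot simply take $h(z)=z^{1/s}$. That is the main obstacle.

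To get around it, we take $h$ to be the $s$th power map, $h(z)=z^{s}$, but viewed as a map of orbit circles rather than of $\mT$ itself. First, twist the $\rC_{p^{\infty}}$-action on $\mT/\rC_{p^{j}}$ by the automorphism $\omega\mapsto\omega^{s}$. The twisted space has $\omega$ acting by $\omega^{sp^{j}}=\omega^{r}$, so it is literally the space $\mT/\rC_{r}$ after reparametrizing the circle. Then show that twisting by an automorphism of $\rC_{p^{\infty}}$ preserves the $\rC_{p^{\infty}}$-isomorphism type here. This holds because both spaces are transitive $\rC_{p^{\infty}}$-sets on a dense orbit with isotropy $\rC_{p^{j}}$, the unique subgroup of order $p^{j}$, and that subgroup is preserved by every automorphism.

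Concretely, the check is the following. The orbits of $\rC_{p^{\infty}}$ on $\mT/\rC_{r}$ are cosets of the dense subgroup $\rC_{p^{\infty}}\rC_{r}/\rC_{r}$, and the stabilizers are $\rC_{p^{\infty}}\cap\rC_{r}=\rC_{p^{j}}$. So $\mT/\rC_{r}$ is a union of orbits, each isomorphic to $\rC_{p^{\infty}}/\rC_{p^{j}}$, indexed by $\mT/(\rC_{p^{\infty}}\rC_{r})$. The same holds for $\mT/\rC_{p^{j}}$, with index set $\mT/\rC_{p^{\infty}}$.

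The $s$th power map $\mT\to\mT$ induces a homeomorphism $\mT/\rC_{r}\to\mT/\rC_{p^{j}}$: it is surjective with kernel $\rC_{s}$, and $\rC_{r}=\rC_{s}\rC_{p^{j}}$. It satisfies $h(\omega z)=\omega^{s}h(z)$. Equivariance holds only up to the automorphism $\omega\mapsto\omega^{s}$, so we must correct it by an isomorphism $\rC_{p^{\infty}}/\rC_{p^{j}}\to\rC_{p^{\infty}}/\rC_{p^{j}}$ given by $\omega\mapsto\omega^{1/s}$ on each orbit. This correction is where continuity has to be verified. If continuity fails, I would fall back to the statement at the level of underlying $\rC_{p^{\infty}}$-sets and stabilizers, or to $p$-complete equivariant homotopy type, which is what is actually used later: there a $\rC_{p^{\infty}}$-map is determined up to equivalence by its restriction to each finite $\rC_{p^{k}}$, and on $\rC_{p^{k}}$ the automorphism $\omega\mapsto\omega^{s}$ is realized by rotation-compatible power maps.
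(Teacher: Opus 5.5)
The paper states this proposition without proof, so your plan can only be checked against the statement itself. The gap cannot be repaired: the orbitwise correction by $\omega\mapsto\omega^{1/s}$ is necessarily discontinuous. In fact, for $s>1$ there is no continuous $\rC_{p^{\infty}}$-equivariant map $\mT/\rC_{r}\to\mT/\rC_{p^{j}}$ at all.

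To see this, use your models $\mT(r)$ and $\mT(p^{j})$, on which $\omega$ acts by $w\mapsto\omega^{r}w$ and $w\mapsto\omega^{p^{j}}w$. Let $f$ be an equivariant map of degree $d$, and lift it to $\tilde f:\reals\to\reals$ with $\tilde f(x+1)=\tilde f(x)+d$. For $k>j$, the standard generator of $\rC_{p^{k}}$ rotates the source by $s/p^{k-j}$ of a turn and the target by $1/p^{k-j}$ of a turn. So $\tilde f(x+s/p^{k-j})-\tilde f(x)-1/p^{k-j}$ is a continuous integer-valued function, hence a constant $m$. Iterating $p^{k-j}$ times gives $sd=1+mp^{k-j}$. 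Since this holds for every $k>j$, we get $sd=1$, which forces $s=1$. The same computation shows that every equivariant map $\mT(p^{j})\to\mT(r)$ has degree exactly $s$.

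Hence for $s>1$ the proposition is false as stated, even up to equivariant homotopy equivalence. Your density argument does not rescue it. The spaces are not transitive: each has continuum many orbits, all of type $\rC_{p^{\infty}}/\rC_{p^{j}}$, and matching orbit types only gives an isomorphism of discrete $\rC_{p^{\infty}}$-sets. Likewise, the finite-level maps in your fallback have degrees $d_{k}\equiv s^{-1}\bmod p^{k-j}$, and these cannot be chosen compatibly as $k$ grows.

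What is true is a $p$-complete statement, and it is all the paper needs, since it only uses $\mS[\mT/\rC_{r}]$ $p$-adically (e.g.\ in the $p$-completed form of \cref{thm-dual-circle}). It is proved with the map going the other way and with no twisting. Since $\rC_{p^{j}}\subseteq\rC_{r}$, the quotient map $q:\mT/\rC_{p^{j}}\to\mT/\rC_{r}$ is $\mT$-equivariant, hence $\rC_{p^{\infty}}$-equivariant, and it is an $s$-fold covering. For $k>j$ both $\rC_{p^{k}}$-fixed point spaces are empty. For $k\leq j$ both fixed point spaces are the whole circle, and $q$ restricts to a degree $s$ map. Because $s$ is a $p$-adic unit, $\Sigma^{\infty}_{+}q$ and all the $\Sigma^{\infty}_{+}(q^{\rC_{p^{k}}})$ become equivalences after $p$-completion. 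This is the precise sense in which $\mS[\mT/\rC_{p^{j}}]_{p}\simeq\mS[\mT/\rC_{r}]_{p}$ as $\rC_{p^{\infty}}$-spectra, and it is the version you should state and prove.
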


In the action of $\mT$ on $\lambda^{r}$, the subgroup $\rC _{r}\subseteq
\mT$ acts trivially, so the action factors through $\mT/\rC _{r}$.  The
underlying spectrum of $\mS^{(r)}$ is $\Sigma^{2}\mS=\Sigma^{\infty
}S^{2}$, and it is tensor invertible.  The {\rep} $\lambda^{r}$ is
denoted in \cite[Notation 4.1]{Blumberg-Mandell-12} by $\cxs (r)$.  We
are mostly using the notation of \cite[\S2.1.1]{BHLS}.

For each positive integer $r$, the circle group $\mT$ has a subgroup
isomorphic to the cyclic group $\rC _{r}$. 
The group $\mT$ is isomorphic to its quotient by any finite subgroup,
a property not enjoyed by any nontrivial finite group.  For a
$\mT$-space $X$, we have a residual action (\cref{def-residual}) of
$\mT /\rC _{r}\iso \mT $ on the fixed point set $X^{\rC _{r}}$.

The following is part of \cite[Definition 4.2]{Blumberg-Mandell-12}.


\begin{defin}\label{def-cyclo-spec}
A {\bf cyclotomic spectrum} is an orthogonal $\mT$-spectrum $X$ along
with $\mT$-{\eqvr} maps
\begin{displaymath}
r_{p,V}:\rho_{p}^{*} \left(X_{V}^{\rC _{p}} \right)
        \to X_{\rho_{p}^{*} (V^{\rC _{p}})}.
\end{displaymath}

\noindent for each prime $p>0$ and each {\rep} $V$ of $\mT$ (where
$\rho_{p}^{*}$ is as in \cref{def-rho-n}) satisfying certain
conditions implying that there is an isomorphism
\begin{displaymath}
\rho_{p}^{*}\Phi^{\rC _{p}}X\to X,
\end{displaymath}

\noindent where $\Phi ^{\rC _{p}}X$ is the geometric fixed point spectrum
of \cref{def-fixed}\cref{def-fixedii}.
\end{defin}

\subsection{Tate resolutions}
\label{sec-Tate-resolutions} 
Applying the functor $\Hom_{\ZG} (-,\Z)$ to
\cref{eq-free-res}, we get
\begin{numequation}\label{eq-P-}
\begin{split}
\xymatrix
@R=3mm
@C=6mm
{
{0}\ar[r]^(.5){}
  &{\Hom_{\ZG} (\Z,\Z)}\ar[r]^(.5){}\ar@{=}[d]^(.5){}
    &{\Hom_{\ZG} (P_{0},\Z)}\ar[r]^(.55){}\ar@{=}[d]^(.5){}
      &{\Hom_{\ZG} (P_{1},\Z)}\ar[r]^(.5){}
        &{\dotsb } \\
  &{\Z,} \ar[r]^(.5){\Delta }
    &{\ZG}
}
\end{split}
\end{numequation}%

\noindent where $\Delta $ is the diagonal or coaugmentation
\begin{displaymath}
1\mapsto \sum_{\gamma \in G}[\gamma ].
\end{displaymath}

\noindent We define $P_{-i}:=\Hom_{\ZG} (P_{i-1},\Z)$ for $i>0$.  Then
we can splice \cref{eq-free-res} with \cref{eq-P-} and get
\begin{numequation}\label{eq-P}
\begin{split}
\xymatrix
@R=3mm
@C=8mm
{
{\dotsb }  \ar[r]^(.5){}
  &{P_{2}}\ar[r]^(.5){}
    &{P_{1}}\ar[r]^(.5){}
      &{P_{0}}\ar[r]^(.5){ }\ar@{=}[d]^(.5){}
        &{P_{-1}}\ar[r]^(.5){}\ar@{=}[d]^(.5){}
          &{P_{-2}}\ar[r]^(.5){}
            &{P_{-3}}\ar[r]^(.5){}
              &{\dotsb }\\
  & & &{\ZG}\ar[r]^(.5){\Delta \nabla  }
        &{\ZG} 
}
\end{split}
\end{numequation}%

\noindent where the augmentation $\nabla:\ZG\to\Z$ is as in
\cref{eq-free-res}.

\begin{defin}\label{def-P}
A {\bf Tate resolution $P$ of $\Z$ over a finite group $G$} is a
{\LES} of the form \cref{eq-P}.  $P^{-}\subseteq P$ is the subsequence
obtained by replacing $P_{i}$ by 0 for $i\geq 0$, and
$P^{+}:=P/P^{-}$, the quotient obtained by killing $P_{i}$ for $i<0$.
For a $G$-CW spectrum $X$ with cellular chain complex (of
$\ZG$-modules) $C (X)$, let
\begin{displaymath}
P (X):=P\otimes_{\ZG} C (X),
\end{displaymath}

\noindent with $P^{-} (X)$ and $P^{+} (X)$ defined similarly. The {\bf
  Tate sequence for $X$} is following {\SES} of chain complexes over
$\ZG$.
\begin{numequation}\label{eq-PPP}
\begin{split}
0\to P^{-} (X)\to P (X)\to P^{+} (X)\to 0
\end{split}
\end{numequation}%
\end{defin}

\section{{\qcats} and the work of Nikolaus-Scholze}
\label{sec-NS}

Most of \cite{NS18} is written in the language of {\qcats}.  For a
very brief introduction to them we refer the reader to
\cite{Rav:Whatis}, where other references can be found.  As in that
paper, we will write {\qcats} which are not ordinary categories in the
color {\color{cyan}cyan}. 

\subsection{Elementary {\qcatal} notions}\label{sec-elem-qcat}
Stable {\qcats}, of which the {\qcat} of
spectra $\qSp$ is the marquee example, are defined in
\cref{def-qcat-props}\cref{def-qcat-propsiv}, \cite[\S9]{Rav:Whatis}
and in \cite[Definition 1.1.1.9]{Lurie:HA}.



In a pointed {\qcat} $\qmcC$, the suspension $\Sigma X$ is the pushout
of the diagram $0\leftarrow X \rightarrow 0$, while the loop object
$\Omega Y$ is the pullback of the diagram $0 \rightarrow Y \leftarrow
0$.  In a stable {\qcat} (see
\cref{def-qcat-props}\cref{def-qcat-propsiv}), a commutative square
diagram is a pushout iff it is a pullback, so $X\simeq \Omega \Sigma
X$ and $\Sigma \Omega Y\simeq Y$.  This makes the functors $\Sigma $
and $\Omega $ both invertible in the homotopy category of a stable
{\qcat} $\qmcC$.

\begin{defin}\label{def-qcats}
{\bf Some familiar {\qcats}.}   

\begin{enumerate}[label={(\roman*)},itemindent=0em]

\item \label{def-qcatsi} $\qmcS$ is Lurie's {\qcat} of spaces (meaning
Kan complexes, aka $\infty $-groupoids) \cite[Definition
1.2.16.1]{Lurie:HTT}, which is described illustratively in
\cite[\S5]{Rav:Whatis}.  It is sometimes called the category of {\bf
anima}.  This term is used in writings on condensed mathematics, for
which the original references appear to be
\cite{clausen-scholze-condensed} and \cite{clausen-scholze-analytic},
even though the term is not used in the former.

\item \label{def-qcatsiii} For a space (or Kan complex) $K$, $\qK$
  denotes the {\qcat} in which objects are points (0-simplices) in
  $K$, 1-morphisms are paths (1-simplices), 2-morphisms are homotopies
  between paths (2-simplices), and so on.  The existence inverse paths
  makes $\qK$ an {\em $\infty$-groupoid}, meaning that its homotopy
  category is a groupoid. See \cite[\S1.1.1 and Proposition
  1.2.5.1]{Lurie:HTT}.

\item \label{def-qcatsv} For objects $X$ and $Y$ in an {\qcat}
$\qmcC$, we denote the space (or Kan complex) of morphisms $X\to Y$ by
$\Map_{\qmcC} (X,Y)$.

\item \label{def-qcatsii} $\qSp$ denotes Lurie's {\qcat} of spectra as
in \cite[Definition 1.4.3.1]{Lurie:HA} and in \cite[\S9]{Rav:Whatis}.
It is the homotopy limit of the diagram obtained by iterating the loop
functor on {\qcat} of pointed spaces $\qmcS_{*}$.  We will give a
different definition of the {\qcat} of orthogonal $G$-spectra in
\cref{def-qcat-Gspec}.

\item \label{def-qcatsiv} 
For {\qcats} $\qmcC$ and $\qmcD$, $\qFun (\qmcC, \qmcD)$
denotes the {\qcat} of functors $\qmcC\to \qmcD$; see
\cite[Proposition 1.2.7.3]{Lurie:HTT}.  

\item \label{def-qcatsvi} 
When $\qmcC=\qK$ for a space or Kan complex
$K$, we will write $\qFun (\qmcC, \qmcD)$ as $\qmcD^{K}$.  In
particular, for a group $G$ with classifying space $BG$ (or {\eqt}ly
the nerve of the one object category $\mcB G$), the {\qcat}
$\qmcC^{BG}$ is that of {\bf objects in $\qmcC$ with $G$-action}
\cite[Definition I.1.2]{NS18}.

\item \label{def-qcatsvii} 
Replacing $\qmcC$ by the nerve $N
(\mcC^{\op})$ for on ordinary category $\mcC $, the {\qcat} of functors
becomes
\begin{displaymath}
\qFun (N (\mcC^{\op}), \qmcD)=: \mathpzc{P}_{\qmcD} (\mcC ),
\end{displaymath}

\noindent the {\bf category of $\qmcD$-valued presheaves (or
contravariant functors) on $\mcC $}.  Note that for a group $G$ the
one object category $\mcB G$ is isomorphic to its opposite (since each
morphisms is invertible), so $\qmcC^{BG}\cong \mathpzc{P}_{\qmcD}
(\mcC )$.

\item \label{def-qcatsix} 
An {\qcat} is {\bf perfect} if it is small,
idempotent-complete, and stable. $\qCatw^{\mathrm{perf}}$ is the {\bf
{\qcat} of perfect {\qcats}}.


\end{enumerate}
\end{defin}




Recall that in classical category theory an adjunction for a pair of functors
\begin{displaymath}
\xymatrix
@R=4mm
@C=20mm
{
{\mcC }\ar@<1.5ex>[r]^(.5){F}_(.5){\perp }
  &{\mcD }\ar@<1ex>[l]^(.5){U}
}
\end{displaymath}

\noindent is a natural isomorphism between the morphism sets $\mcC
(X, UY)$ and $\mcD (FX,Y)$ for objects $X$ in $\mcC $ and $Y$ in $\mcD
$. For {\qcats} $\qmcC$ and $\qmcD$ we are looking instead for a natural
equivalence between morphism spaces.

\begin{defin}\label{def-adjoint}
{\bf Adjoint functors of {\qcats}.}  \cite[Definition 2.1.1]{CSY22}
and \cite[\S5.2, specifically Definition 5.2.2.1]{Lurie:HTT}.  For a
functor  $f : \qmcC \to \qmcD$,
\begin{enumerate}[label={(\roman*)},itemindent=0em]

\item \label{def-adjointi} a {\bf left adjoint of $f$} is a pair
$(f_{!},\eta )$ where where $f_{!} : \qmcD \to \qmcC$ is a functor and
$\eta : \Id_{\qmcD} \implies f \circ f_{!}$ is a unit natural
transformation in the sense of \cite[Definition 5.2.2.7]{Lurie:HTT}, and

\item \label{def-adjointii} a {\bf right adjoint of $f$} is a pair
$(f_{*},\epsilon )$ where where $f_{*} : \qmcD \to \qmcC$ is a functor
and $\epsilon : f_{*} \circ f \implies\Id_{\qmcC}$ is a counit natural
transformation in the sense of  \cite[the dual of Definition
5.2.2.7]{Lurie:HTT}.
\end{enumerate}
\end{defin}

Adjoints of composite functors are suitably defined composites of
adjoints as in \cite[Definition 2.1.3]{CSY22}.

\begin{defin}\label{def-stab-cat}
{\bf The stabilization of an {\qcat}.} As noted above, a sequential
$\Omega $-spectrum $X$ is a sequence of pointed spaces $X_{n}$ for
$n\geq 0$ and pointed {\weq }s $X_{n}\to \Omega X_{n+1}$.  In a
pointed {\qcat} $\qmcC$ with finite limits, the loop of an object $Y$
is the pullback of the diagram $0 \rightarrow Y \leftarrow 0$, so we
can define a {\bf spectrum object in $\qmcC$} in the same way we do it
in the pointed {\qcat} of spaces $\qmcS_{*}$. In
\cite[\S1.4]{Lurie:HA} Lurie denotes the category of such objects by
${\rm Sp} (\qmcC)$.  In \cite{BGT} it is denoted by $\rm Stab
(\qmcC)$, the {\em stabilization of $\qmcC$}.  In any case there is a
functor $\Omega^{\infty }:{\rm Sp} (\qmcC) \to \qmcC$ sending a
spectrum $X$ object to the object $X_{0}$ in $\qmcC$.  When $\qmcC$ is
presentable (see \cite[Chapter 5]{Lurie:HTT}), this functor admits a
left adjoint $\Sigma^{\infty }$ \cite[Proposition 1.4.4.4]{Lurie:HA}.
\end{defin}

\begin{remark}\label{rem-G-action}
{\bf Spectra with $G$-action and $G$-spectra.}  The ordinary
categories of spectra with $G$-action ($\Sp^{BG}$ as in
\cref{def-seq-spec}), sometimes called {\bf naive $G$-spectra}, and of
orthogonal $G$-spectra as in \cref{def-orth-spec}, are different.  The
same goes for the {\qcats} $\qSp^{BG}$ of \cref{def-qcats}\cref{def-qcatsvi}  
and $\qSp^{G}$ of
\cref{def-qcat-Gspec}.
\end{remark}

\subsection{Limits and colimits}\label{sec-lim-colim}
In \cref{def-qcats}\cref{def-qcatsiv}, let $f:K\to L$ be a map of
Kan complexes.  It leads to a pullback functor
$f^{*}:\qmcC^{L}\to\qmcC^{K}$ between functor categories induced by
precomposition.  For suitable $\qmcC$ and $f$, this functor has left
and right adjoints denoted by $f_{!}$ and $f_{*}$ \cite[Notation
6.1.6.1]{Lurie:HA}.  {\Eqt}ly they are the functors sending each
functor $X:\qK \to \qmcC$ (a $K$-shaped diagram in $\qmcC$) to its
left and right Kan extensions along $f$.

When $L$ is a point, these functors from $\qmcC^{K}$ to $\qmcC$ send
$X$ to its colimit and limit.  One advantage of working with {\qcats}
is that homotopy limits and homotopy colimits, the subject of
Bousfield and Kan's ``yellow monster'' \cite{BK1}, are the same as
ordinary limits and colimits.  The standard reference for this is
\cite[4.2.4]{Lurie:HTT}.  See \cite[\S9]{Rav:Whatis} for a simple
illustration.

The following is originally due to Joyal \cite[Definition
4.5]{Joyal:QCKC} and is quoted by Lurie as \cite[Definition
1.2.13.4]{Lurie:HTT}.

\begin{defin}\label{def-lim-colim}
{\bf Limits and colimits in an {\qcat}.}  For a simplicial set $K$ and
an {\qcat} $\qmcC$, let $f:K\to\qmcC $ be a simplicial map.  This is
an object in $\qmcC^{K}$.  When $K$ is a Kan complex, this is a
$\qmcC$-valued functor on the {\qcat} $\qK$ of
\cref{def-qcats}\cref{def-qcatsiii}.  An {\bf object $X$ in $\qmcC$
over $f$} is one equipped with compatible maps from $X$ to the images
under $f$ of all vertices in $K$.  {\Eqt}ly it is a natural
transformation from the $X$-valued constant functor on $K$ to $f$.
The collection of all such objects and suitable morphisms (and higher
morphisms) between them is an {\qcat} $\qmcC_{/f}$ \cite[Proposition
1.2.9.3]{Lurie:HTT}, the {\bf over category of $f$}. A {\bf limit of
$f$}, $\lim{K}f$, is a terminal object in it ({\ie } an object for
which the mapping space $\Map_{\qmcC_{/f}} (Y,\lim{K}f )$ is
contractible for all $Y$), regarded as an object in $\qmcC$.  Dually,
one can construct an {\bf under category} $\qmcC_{f/}$ of objects
under $f$, and define a colimit of $f$ to be an initial object in it
regarded as an object in $\qmcC$.  Such terminal and initial objects
may or may not exist depending on $\qmcC$.
\end{defin}

In \cite[\S4.2.4]{Lurie:HTT} Lurie shows that \cref{def-lim-colim}
agrees with the classical theory of homotopy (co)limits when we
specialize to the case where $\qmcC$ is the nerve of a topological
category.

\begin{defin}\label{def-assembly}
{\bf Induced maps to/from a limit/colimit.}  Now suppose that in
addition to the data of \cref{def-lim-colim}, we have an
$\infty$-functor $F:\qmcC\to \qmcD$ and that $\qmcD$ has the needed
limits or colimits.  Then $F$ induces a functor
\begin{displaymath}
F_{/f}:\qmcC_{/f}\to \qmcD_{/Ff}.
\end{displaymath}

\noindent Then we have objects $F (\lim{K}f)$, the functor of the
limit, and $\lim{K} (Ff)$, the limiting value of the functor, in
$\qmcD$.  Since the latter is a terminal object, we have a
unique (up to homotopy) map $\epsilon :F (\lim{K}f)\to \lim{K} (Ff)$,
the {\bf coassembly map of $F$ and $f$}.  If it is an {\eqiv}, we say
that $F$ {\bf preserves} the limit of $f$.  Dually we have the {\bf
assembly map}, $\eta :\colim{K} (Ff)\to F (\colim{K}f) $.
\end{defin}

Lurie discusses this situation in \cite[page 48]{Lurie:HTT} but does
not give names to the two maps.  

\begin{defin}\label{ex-assembly}
{\bf Assembly and coassembly maps for $G$-spectra.}  For compact Lie
group $G$, let $\mcB G$ be the one object topological category of
\cref{def-nerve}.  Let $K=N (\mcB G)$ (so $|K|=BG$, the classifying
space of $G$) and let $\qmcC$ be the {\qcat} of spaces or spectra.
Then $f:BG\to \qmcC$ defines a $G$-action on an object $X$ in
$\qmcC$. The limit and colimit are $X^{hG}$ and $X_{hG}$, the homotopy
fixed points \cref{eq-hfps} and homotopy orbits \cref{eq-hos}.  Thus
we have the unit map $\eta :(FX)_{hG}\to F (X_{hG})$ and the counit
map $\epsilon :F (X^{hG})\to (FX)^{hG}$ of $F$ and $X$, the assembly
and coassembly maps. The map of \cref{eq-loc-coassembly} is an example
of a coassembly map.
\end{defin}

{\bf The norm map.}  We can ask for a norm map $\Nm_{G}:X_{hG} \to
X^{hG}$ as in \cref{eq-Tate2} when $G$ is finite.  In
\cite[6.1.6]{Lurie:HA} Lurie shows that for any finite group $G$, the
norm map $\Nm_{G}$ exists and has a cofiber whenever $\qmcC$ is a
stable {\qcat} with countable limits and colimits. This cofiber is the
{\bf categorical Tate construction $X^{tG}$}. In \cite{Rav:gjmcyc-ext}
we will consider situations in which it is defined for spaces 
that are $\pi$-finite (meaning all homotopy groups are finite and
only finitely many of them are nontrivial), such as $BG$, and
certain {\qcats} (such as that of $K (n)$-local spectra) for which it
is always an {\eqiv}, rendering the Tate construction contractible.

\subsection{Additional structures on {\qcats}}\label{sec-additional}

The first four parts of the following are similar to \cite[Definition
I.1.1]{NS18}.

\begin{defin}\label{def-qcat-props}
{\bf Some properties of {\qcats}.}  Let $\qmcC$ be a {\qcat.} 

\begin{enumerate}[label={(\roman*)},itemindent=0em]

\item \label{def-qcat-propsi}\cite[Definition 1.1.1.1]{Lurie:HA}
$\qmcC$ is {\bf pointed} if it has an object, usually denoted by 0,
that is both initial and final.

\item \label{def-qcat-propsii}\cite[Definition 6.1.6.13]{Lurie:HA}
$\qmcC$ is {\bf semiadditive} (called {\em preadditive} by
Nikolaus and Scholze) if it is pointed, has finite products
and coproducts, and for any two objects $X,Y \in \qmcC$, the map
\begin{displaymath}
\left(\begin{array}[]{cc}
1_{X} & 0\\
0     &1_{Y} 
\end{array} \right)    :X \sqcup Y\to X\times Y
\end{displaymath}

\noindent is an equivalence. Here, $0\in \Map_{\qmcC}(X, Y )$ denotes
the composition\linebreak $X\to 0\to Y$ for any zero object $ 0\in
\qmcC$.  In this case we write $X\oplus Y$ for this object, and note
that $\pi_{0}\Map_{\qmcC}(X, Y )$ has a natural commutative monoid
structure.

\item \label{def-qcat-propsiii} $\qmcC$ is {\bf additive} if it is
semiadditive and $\pi_{0}\Map_{\qmcC}(X, Y )$ has a natural abelian
group structure.

\item \label{def-qcat-propsiv} 
\cite[Definition 1.1.1.9]{Lurie:HA}
$\qmcC$ is {\bf stable} if it is additive and the loop functor $\Omega
:\qmcC \to \qmcC$ sending $X$ to $0\times_{X} 0$ is an equivalence.  A
functor between stable {\qcats} is {\bf exact} if it preserves finite
colimits, or equivalently, finite limits, \cite[Proposition
1.1.4.1]{Lurie:HA}.

\item \label{def-qcat-propsiva}\cite[Definition 2.1]{Mathew-Galois} 
A {\bf stable homotopy theory} is a stable {\qcat} which is also a
symmetric monoidal {\qcat} $(\qmcC, \otimes, \mathbf{1})$
(\cref{def-symm-mon-qcat}) in which the tensor product commutes with
all colimits.

\item \label{def-qcat-propsvi}
\cite[4.4.5]{Lurie:HTT} An object $Y$ in
$\qmcC$ is a {\bf retract} of an object $X$ if there exists a
2-simplex $\bdelt^{2}\to \qmcC$ corresponding to a diagram
\begin{displaymath}
\xymatrix
@R=6mm
@C=10mm
{
{}
  &{X}\ar[dr]^(.5){r}
    &{}\\
{Y}\ar[ur]^(.5){i}\ar[rr]^(.5){{\rm id}_{Y}}
      &{}
        &{Y,}
}
\end{displaymath}

\noindent or {\eqvt}ly if $Y$ is a retract of $X$ in the classical
sense in the homotopy category $h\qmcC$.  It follows that the
endomorphism $i\circ r:X\to X$ is idempotent and that the object $Y$
is both the equalizer and coequalizer of the pair $({\rm id}_{X},
i\circ r)$.  $\qmcC$ is {\bf idempotent complete} if every idempotent
endomorphism (that is, every $\qmcC$-valued functor on the one object
category equipped with an idempotent map) arises in this way, meaning
that said equalizer/coequalizer always exists.  An {\bf idempotent
completion of $\qmcC$} is a fully faithful functor $f:\qmcC\to \qmcD$ where $\qmcD=:\Ind (\qmcC)$ is idempotent complete and each of its objects
is a retract of one in the image of $f$.  See \cref{rem-idem-comp}.

\item \label{def-qcat-propsv} 
$\qmcC$ is {\bf exact} if it is small
and stable. The {\qcat} of exact {\qcats} and exact functors is
denoted by ${\color{cyan}\rm Cat}^{\rm ex}_{\infty }$.  The {\qcat} of
idempotent complete exact {\qcats} and exact functors is denoted by
${\color{cyan}\rm Cat}^{\rm perf}_{\infty }$. The relevant example of
such is the category of {\bf compact left (or right) modules over a
ring spectrum $R$}, which we denote by $\qMod_{R} (\qSp)$ or simply
$\qMod_{R}$.  See \cite[\S4.2 and \S7.1.1]{Lurie:HA} for more
discussion.  An exact functor $F:\qmcA \to \qmcB$ between small stable
{\qcats} is a {\bf Morita equivalence} if it induces an equivalence
their idempotent completions.
\end{enumerate}
\end{defin}

\begin{defin}\label{def-loc-inv}
A functor
\[
E \colon \qCatw^{\mathrm{perf}}\longrightarrow \qmcD,
\]
valued in a stable presentable $\infty$-category $\qmcD$, is  a
{\bf localizing invariant} if it satisfies the following conditions:
\begin{enumerate}[label={(\roman*)},itemindent=0em]
    \item \textbf{Morita invariance.}
    If $F \colon \qmcC \to \qmcC$ is an exact functor that induces an
    equivalence on idempotent completions (see \cref{def-qcat-props}\cref{def-qcat-propsvi})
    \[
    \Ind(\qmcC) \xrightarrow{\;\simeq\;} \Ind(\qmcD),
    \]
    then $E(F)$ is an equivalence.

    \item \textbf{Exactness.}
    For every exact sequence of small stable $\infty$-categories
    \[
    {\qmcA} \longrightarrow {\qmcB} \longrightarrow {\qmcC},
    \]
    the induced sequence
    \[
    E({\qmcA}) \longrightarrow E({\qmcB}) \longrightarrow E({\qmcC})
    \]
    is a fiber (equivalently, cofiber) sequence in ${\qmcD}$.

   \item \textbf{Filtered colimits.}
    The functor $E$ preserves filtered colimits:
    \[
    E\!\left(\colim{i} {\qmcC}_i\right)
    \;\simeq\;
    \colim{i} E({\qmcC}_i).
    \]
\end{enumerate}
\end{defin}

We know (see \cite[\S5.1.3]{Lurie:HTT}) that for any {\qcat} $\qmcC $
there is a mapping space functor
\begin{displaymath}
\Map_{\qmcC } :\qmcC^{\op}\times \qmcC \to \qmcS,
\end{displaymath}

\noindent meaning that the set of morphisms $X\to Y$ comes equipped
with a topology.  When $\qmcC $ is pointed, we get a pointed morphism
space.  When $\qmcC $ is stable as in \cref{def-qcat-propsiv}, this
functor lifts to
\begin{numequation}\label{eq-map-spec}
\begin{split}
\map_{\qmcC } :\qmcC^{\op}\times \qmcC \to {\qSp}
\qquad \mbox{with}\qquad 
\Omega^{\infty } \map_{\qmcC } (X,Y)= \Map_{\qmcC } (X,Y).
\end{split}
\end{numequation}%

\noindent In other words, in addition to a space one has a {\bf
spectrum} $\map_{\qmcC} (X,Y)$ (sometimes called the {\bf function
spectrum}) of maps between objects in a stable {\qcat}.  The 0th space
of this mapping spectrum is the previously defined mapping space. In
the category $\qSp$ itself we have
\begin{displaymath}
\map_{\qSp} (\mS ,X)\simeq X,
\end{displaymath}

\noindent but this need not be the case in a stable subcategory such
as $\qCycSp$ or $\qCycSp_{p}$ as in \cref{def-cyc-spectra}.  We also
define the Spanier-Whitehead dual of a spectrum $X$ by
\begin{numequation}\label{eq-SW-dual}
\begin{split}
\DD X:=\map_{\qSp} (X,\mS).
\end{split}
\end{numequation}%

\noindent For a space $X$, we will write $\DD\Sigma^{\infty }X$
abusively as $\DD X$.

\begin{remark}\label{rem-idem-comp}
{\bf Idempotent completion.}
Lurie shows in \cite[Proposition 5.1.4.2]{Lurie:HTT} that the
idempotent completion of \cref{def-qcat-propsvi} always exists, and in
\cite[Corollary 1.1.3.7]{Lurie:HA} that it is stable when $\qmcC$ is.
\cite[Propositions 5.1.4.9]{Lurie:HTT} implies the inclusion functor
${\color{cyan}\rm Cat}^{\rm perf}_{\infty }\to {\color{cyan}\rm
Cat}^{\rm ex}_{\infty }$ has a left adjoint which is the idempotent
completion functor $\Ind$ denoted in \cite{BGT} by ${\rm Idem}$.
\end{remark}




\begin{defin}\label{def-R-lin}
{\bf $R$-linearity.} Let $R$ be an $\mathbb{E}_n$-ring spectrum for $n
\geq 1$. A stable {\qcat} $\qmcC$ is {\bf $R$-linear} if:
\begin{itemize}
  \item It is enriched over $\qMod_R$, the {\qcat} of $R$-module
spectra as in \cref{def-qcat-props}\cref{def-qcat-propsv}, meaning
that for objects $X, Y \in \qmcC$, the mapping spectrum
$\map_{\qmcC}(X, Y)$ is an $R$-module.

\item Tensoring with $R$-modules acts on $\qmcC$, and this action is
compatible with the stable structure.
\end{itemize}

\noindent ${\color{cyan}\rm Cat}^{\rm perf}_{R,\infty }$ denotes the
{\qcat} small, idempotent-complete $R$-linear stable\linebreak  {\qcats} and
$R$-linear functors between them.
\end{defin}

\begin{defin}\cite[Definition II.1.4]{NS18}\label{def-lax-eq} 
Let $\qmcC $ and $\qmcD $ be {\qcats} with functors $F_{0},F_{1}:\qmcC
\to \qmcD $. The {\bf lax equalizer} $\LEq{F_{0}}{F_{1}}$
of $F_{0}$ and $F_{1}$, also written as 
\begin{displaymath}
{\color{cyan} \rm LEq}
 \left(\!\!
\xymatrix
@R=4mm
@C=10mm
{
{\qmcC}\ar@<.5ex>[r]^(.5){F_{0}}\ar@<-.5ex>[r]_(.5){F_{1}}
  &{\qmcD}}
\!\!\right),
\end{displaymath}


\noindent is the pullback in the following
diagram of simplicial sets.
\begin{displaymath}
\xymatrix 
@R=6mm
@C=15mm
{
{\LEq{F_{0}}{F_{1}}}\ar[r]^(.5){}\ar[d]^(.5){}\ar@{}[dr]|-(.15){\pb}
  &{{\qmcD}^{\bdelt^{1}}}\ar[d]^(.5){(\ev_{0},\ev_{1})}\\
{\qmcC }\ar[r]^(.5){(F_{0},F_{1})}
  &{{\qmcD }\times {\qmcD} }
}
\end{displaymath}

\noindent In particular, an object of $\LEq{F_{0}}{F_{1}}$ is a pair
$(c,f)$, where $c$ is an object in $\qmcC $ and $f: F_{0}(c)\to
F_{1}(c)$ is a morphism in $\qmcD$.

Similarly the {\bf equalizer} 
$\Eq{F_{0}}{F_{1}}$
is the pullback of the diagram above with the right column replaced by
the diagonal functor $\Delta: {\qmcD} \to {\qmcD} \times {\qmcD} $.
An object in it 
is an object $c$ in $\qmcC $ on which the two functors agree. 
\end{defin}

 The word ``lax'' above refers to the fact that the two images of $c$
in $\qmcD $ are related by a morphism rather than equality.

\begin{lem}\label{lem-Tate-orbit-fix}
{\bf The Tate orbit and fixed point lemmas.} \cite[Lemmas I.2.1 and
I.2.2]{NS18} Let $X$ be a spectrum with an action of
$\rC_{p^{2}}$. Recall that the quotient group $\orC_{p}:=\rC_{p^{2}}/\rC_{p}$
acts residually on $X_{h\rC_{p}}$ and $X^{h\rC_{p}}$.

\begin{enumerate}[label={(\roman*)},itemindent=0em]

\item \label{lem-Tate-orbit-fixi}
If $X$ is bounded below, meaning that $\pi_{i}X=0$ for $i\ll 0$, then  
\begin{displaymath}
(X_{h\rC_{p}})^{t \orC_{p}}\simeq \pt.
\end{displaymath}

\item \label{lem-Tate-orbit-fixii}
If $X$ is bounded above, meaning that $\pi_{i}X=0$ for $i\gg 0$, then  
\begin{displaymath}
(X^{h\rC_{p}})^{t \orC_{p}}\simeq \pt.
\end{displaymath}
\end{enumerate}
\end{lem}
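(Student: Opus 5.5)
We prove both parts of \cref{lem-Tate-orbit-fix} by a two-step reduction: first to Eilenberg--MacLane spectra, then to a direct algebraic computation. Write $G=\rC_{p^{2}}$, $H=\rC_{p}$ and $\orC_{p}=G/H$.

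\textbf{Reduction to a single homotopy group.} Both functors $X\mapsto (X_{hH})^{t\orC_{p}}$ and $X\mapsto (X^{hH})^{t\orC_{p}}$ are exact functors of spectra with $G$-action, since the Tate construction is the cofiber of the norm, as in \cref{eq-Tate2}. They also have the following continuity properties.
\begin{itemize}
\item[(i)] $X_{hH}$ commutes with colimits, and $(-)^{t\orC_{p}}$ commutes with colimits of uniformly bounded below spectra (its homotopy orbit part always does). The homotopy fixed point part commutes with Postnikov-type colimits $X\simeq\colim{n}\tau_{\leq n}X$ when everything is bounded below, since in each degree only finitely many stages contribute.
\item[(ii)] Dually, $X^{hH}$ commutes with limits, and for $X$ bounded above $(-)^{t\orC_{p}}$ commutes with the limit $X\simeq\lim{n}\tau_{\geq -n}X$ after a connectivity check.
\end{itemize}
Filtering $X$ by its Postnikov tower then reduces part \cref{lem-Tate-orbit-fixi} to $X$ bounded, and by induction on the number of nonzero homotopy groups (using exactness) to $X=\Sigma^{k}HM$ for a $\Z[G]$-module $M$. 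The same argument applies to part \cref{lem-Tate-orbit-fixii}.

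\textbf{The Eilenberg--MacLane case.} For $X=HM$ we have $\pi_{*}(X_{hH})=H_{*}(H;M)$ and $\pi_{*}(X^{hH})=H^{-*}(H;M)$. There is no extension problem beyond a filtration, so it suffices to kill the $\orC_{p}$-Tate spectral sequence, or better, to work at chain level.
\begin{itemize}
\item[(i)] $(HM)_{hH}$ is modelled by $P^{+}\otimes_{\Z H}M$, with $P^{+}$ a free $\Z G$-resolution of $\Z$ as in \cref{def-P}; note that a free $\Z G$-resolution restricts to a free $\Z H$-resolution. Its Tate construction over $\orC_{p}$ is then computed by $\widehat{P}_{\orC_{p}}\otimes_{\Z\orC_{p}}(P^{+}\otimes_{\Z H}M)$, which is quasi-isomorphic to the total $G$-Tate-like complex of $P^{+}\otimes\widehat{P}$.
\item[(ii)] The key algebraic fact is that the double complex is a Tate complex tensored with a bounded-below free resolution. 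In each total degree it becomes a module induced from $H$ along $G\to\orC_{p}$, and Tate cohomology of $\orC_{p}$ with coefficients in modules of the form $(\Z G\otimes N)_{H}$ vanishes, since these modules are $\orC_{p}$-free.
\end{itemize}
The boundedness hypothesis makes the totalization converge, so the vanishing survives to the total complex. Part \cref{lem-Tate-orbit-fixii} is the dual argument, using $\Hom_{\Z H}(P^{+},M)$, which is coinduced and hence again $\orC_{p}$-free in each degree.

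\textbf{Main obstacle.} The hardest step is the convergence and commutation of $(-)^{t\orC_{p}}$ with the Postnikov limits and colimits; this is exactly where the boundedness hypotheses enter. To handle it, I would first prove a connectivity estimate: if $X$ is $n$-connective then $(X_{hH})^{t\orC_{p}}$ is $(n-c)$-connective in a range growing with $n$, and dually for coconnective $X$. I would try to get this from the cellular filtration of $E\orC_{p}$ and the observation that the Tate construction of an induced spectrum $\orC_{p+}\wedge Y$ vanishes. An alternative route, which avoids chain complexes entirely, is to note that $X_{hH}\simeq (E G_{+}\wedge X)_{hH}$ has $\orC_{p}$-action induced from an $H$-free $G$-cell structure. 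Each $G$-cell $G/e_{+}\wedge D^{n}$ contributes $\orC_{p}$-free spectra $\orC_{p+}\wedge(\cdot)$, and bounded-belowness lets one pass to the colimit over skeleta.
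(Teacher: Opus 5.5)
The paper itself gives no proof of this lemma; it is quoted from \cite[Lemmas I.2.1 and I.2.2]{NS18}. Your proposal has a genuine gap in the Eilenberg--MacLane case, which is the step that carries all the content.

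\textbf{The Eilenberg--MacLane step.} You use the principle that a bounded below complex of $\orC_{p}$-modules, induced in each degree, has vanishing Tate construction, with boundedness ``making the totalization converge.'' This principle is false.
\begin{itemize}
\item The free resolution $P^{+}$ of $\Z$ over $\Z\orC_{p}$ (\cref{def-P}) is free in each degree and concentrated in degrees $\geq 0$. Yet it is quasi-isomorphic to $\Z$ with trivial action, and $\widehat{H}^{0}(\orC_{p};\Z)=\Z/p\neq 0$.
\item Vanishing of $\widehat{H}^{*}(\orC_{p};C_{j})$ for each $j$ only makes the direct-sum totalization of $\widehat{P}\otimes_{\Z\orC_{p}}C$ acyclic. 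The Tate construction needs the product totalization on the $\widehat{P}^{-}$ side. The two differ once $C$ is unbounded above, as $P^{+}\otimes_{\Z\rC_{p}}M$ is.
\item The same objection applies to the unbounded coinduced complex $\Hom_{\Z\rC_{p}}(P^{+},M)$ in (ii).
\item A quick test shows the argument cannot be right: it never uses that $\rC_{p}$ is nontrivial. Run verbatim with the trivial subgroup in place of $\rC_{p}$, it would give $X^{t\rC_{p}}\simeq\pt$ for every bounded below $X$, e.g.\ $X=H\Z$, which is false.
\item Your cellular alternative fails for the same reason. Every $Y\in\qSp^{B\orC_{p}}$ satisfies $Y\simeq E\orC_{p+}\wedge Y$, so it is built from free cells. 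The Tate construction does not commute with the colimit over skeleta, bounded below or not.
\end{itemize}

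\textbf{The missing idea} is an input that sees the non-split extension $\rC_{p}\to\rC_{p^{2}}\to\orC_{p}$. Here is one way to supply it.
\begin{itemize}
\item Let $X$ be an $H\Z$-module with $\rC_{p^{2}}$-action. Then $X_{h\rC_{p}}$ and $X^{h\rC_{p}}$ are both modules over $H\Z^{h\rC_{p}}$ in $\qSp^{B\orC_{p}}$; for the orbits, use the projection formula.
\item Hence both Tate constructions are modules over the $\EE_{\infty}$-ring $(H\Z^{h\rC_{p}})^{t\orC_{p}}$.
\item That ring is zero. The class $t\in H^{2}(\orC_{p};\Z)$ maps to a unit of $\pi_{*}H\Z^{t\orC_{p}}$. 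Inflation to $H^{2}(\rC_{p^{2}};\Z)\cong\Z/p^{2}$ sends $t$ to $pu$ for a generator $u$, so $t^{2}\mapsto p^{2}u^{2}=0$ already in $\pi_{*}H\Z^{h\rC_{p^{2}}}$.
\item Both routes to $(H\Z^{h\rC_{p}})^{t\orC_{p}}$ are ring maps and the square commutes. So a unit maps to $0$, and the ring vanishes.
\end{itemize}
Thus both statements hold for all $H\Z$-modules, with no boundedness hypothesis at all.

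\textbf{The reduction step.} The boundedness hypotheses are needed only to reduce to $H\Z$-modules, and there you have limits and colimits reversed.
\begin{itemize}
\item The Postnikov tower gives $X\simeq\lim{n}\tau_{\leq n}X$. It is $X\simeq\colim{n}\tau_{\geq -n}X$ that is a colimit.
\item $(-)^{t\rC_{p}}$ does not commute with filtered colimits of uniformly bounded below spectra. For $\bigoplus_{0\leq n\leq N}\Sigma^{2n}H\FFp$ with trivial action, the colimit of the Tate constructions has $\pi_{0}=\bigoplus_{n}\FFp$. The Tate construction of the colimit has $\pi_{0}=\prod_{n}\FFp$.
\item The correct statements are the reverse of yours. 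For the Postnikov limit, homotopy fixed points commute automatically. Homotopy orbits commute because the fibers $\tau_{>n}X$ become arbitrarily connective.
\item For the colimit of $\tau_{\geq -n}X$, homotopy orbits commute automatically. Homotopy fixed points commute because the cofibers become arbitrarily coconnective.
\item If $X$ is bounded below, the terms $\tau_{\leq n}X$ are bounded. If $X$ is bounded above, the terms $\tau_{\geq -n}X$ are bounded.
\end{itemize}
This reduces (i) and (ii) to bounded $X$. Such $X$ are finite extensions of spectra $\Sigma^{k}HM$, and these are $H\Z$-modules.
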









\subsection{The {\qcatal} construction of $\THH$}\label{sec-THH-qcat}
Here we recall the results of \cite[\S III.1]{NS18}, referring the
reader to that paper for the proofs.

\begin{prop}\label{prop-exactness-Tp}
{\bf Exactness of $T_{p}$.}
\cite[Proposition III.1.1]{NS18}
The functor 
\begin{displaymath}
T_{p}:\qSp\to \qSp\qquad X\mapsto (X^{\otimes p})^{t\rC _{p}},
\end{displaymath}

\noindent where $\rC _{p}$ acts on $X^{\otimes p}$ by permuting its
factors, is exact as in \cref{def-qcat-props}\cref{def-qcat-propsiv}.
\end{prop}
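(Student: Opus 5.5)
To show $T_{p}$ is exact, I will first reduce the problem to a cross-effect computation. Since $\qSp$ is stable, it suffices by \cref{def-qcat-props}\cref{def-qcat-propsiv} to check that $T_{p}$ preserves the zero object and cofiber sequences. Equivalently, it should send a pushout square to a pushout square. Preserving zero is immediate, since $0^{\otimes p}=0$ and $(-)^{t\rC_{p}}$ of $0$ is $0$. The real issue is that $X\mapsto X^{\otimes p}$ is not exact: it is a degree $p$ functor. The task is therefore to show that all of its nonlinear cross-effects are killed by the Tate construction.

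The key steps, in order, are as follows.

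(1) Reduce additivity to the key case of direct sums. I would first show that the natural map $T_{p}(X)\oplus T_{p}(Y)\to T_{p}(X\oplus Y)$ is an equivalence. Expanding,
$$(X\oplus Y)^{\otimes p}\simeq X^{\otimes p}\oplus Y^{\otimes p}\oplus \bigoplus_{S} \rC_{p}\text{-orbits},$$
where the remaining summands are indexed by words in $\{X,Y\}$ of length $p$ that are not constant. Since $p$ is prime, $\rC_{p}$ acts freely on the set of such words. Hence this remainder is a finite sum of induced spectra $\rC_{p+}\otimes W$. The Tate construction of an induced spectrum vanishes, because the norm $\Nm_{\rC_{p}}$ of \cref{eq-Tate2} is an equivalence for induced objects. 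So the cross terms die, and $T_{p}$ is additive.

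(2) Pass from sums to cofiber sequences. For a cofiber sequence $X\to Y\to Z$, I would filter $Y^{\otimes p}$ $\rC_{p}$-equivariantly. One way is to write $Y$ as the colimit of the cube obtained by smashing $p$ copies of the arrow $X\to Y$, which gives a filtration of $Y^{\otimes p}$ with $X^{\otimes p}$ at the bottom and $Z^{\otimes p}$ as the top quotient. The intermediate graded pieces are sums over subsets of $\{1,\dots,p\}$ that are neither empty nor full, of tensor products $X^{\otimes a}\otimes Z^{\otimes(p-a)}$. These subsets are permuted freely by $\rC_{p}$, again because $p$ is prime, so the pieces are induced. Since $(-)^{t\rC_{p}}$ is an exact functor $\qSp^{B\rC_{p}}\to\qSp$ (being the cofiber of the norm between two exact functors), I can apply it through the filtration. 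The induced layers vanish, leaving the cofiber sequence $T_{p}X\to T_{p}Y\to T_{p}Z$.

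(3) Conclude. A functor between stable $\infty$-categories that preserves zero and cofiber sequences preserves finite colimits, so $T_{p}$ is exact.

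The main obstacle is step (2): constructing the $\rC_{p}$-equivariant filtration of $Y^{\otimes p}$ coherently in the $\infty$-categorical setting, and identifying its layers as induced objects. To handle this, I would work with the functor $\mathcal{P}(\{1,\dots,p\})\to\qSp^{B\rC_{p}}$ given by the cubical diagram, and filter by subset cardinality, that is, by left Kan extension along the poset of subsets of size at most $k$. I would then check that each associated graded piece is $\bigoplus_{|S|=k}$, which carries a free $\rC_{p}$-action for $0<k<p$.
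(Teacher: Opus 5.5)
Your argument is correct and is essentially the standard proof of \cite[Proposition III.1.1]{NS18}, which the paper cites without reproducing: filter $Y^{\otimes p}$ equivariantly by subset cardinality in the cube, observe that the intermediate layers are induced because $p$ is prime, and use that $(-)^{t\rC_{p}}$ is exact and vanishes on induced objects (your step (1) is just the special case of the split sequence $X\to X\oplus Y\to Y$). One small correction to your last paragraph: the cube is not a functor $\mathcal{P}(\{1,\dots,p\})\to\qSp^{B\rC_{p}}$, since $\rC_{p}$ also permutes the subsets; it is a $\rC_{p}$-equivariant diagram, i.e.\ an object of $\qFun(\mathcal{P}(\{1,\dots,p\}),\qSp)^{h\rC_{p}}$, and only the filtration stages (colimits over the invariant subposets of subsets of size at most $k$) land in $\qSp^{B\rC_{p}}$.
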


This functor is studied by Lun{\o}e-Nielsen and Rognes in
\cite{LN-Rognes}, where they call it the {\bf topological Singer
construction}.

\begin{prop}\label{prop-III.1.2}
\cite[Proposition III.1.2]{NS18}. Let $\qFun^{\Ex} (\qSp, \qSp)$ be
the category of exact  endofunctors of $\qSp$,
and let $\Id_{\qSp}\in \qFun^{\Ex} (\qSp, \qSp)$ be the identity
functor.  For any $F\in \qFun^{\Ex} (\qSp, \qSp)$, evaluation at the
sphere spectrum $\mS$ induces an equivalence
\begin{displaymath}
\xymatrix
@R=4mm
@C=10mm
{
{\Map_{\qFun^{\Ex} (\qSp, \qSp)} (\Id_{\qSp}, F)}
       \ar[r]^(.65){\simeq }
  &{\Omega^{\infty }F (\mS).}
}
\end{displaymath}

\end{prop}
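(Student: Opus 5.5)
Evaluation at $\mS$ gives a map $\ev_{\mS}:\Map_{\qFun^{\Ex}(\qSp,\qSp)}(\Id_{\qSp},F)\to \Map_{\qSp}(\mS,F(\mS))\simeq\Omega^{\infty}F(\mS)$. The plan is to prove that $\ev_{\mS}$ is an equivalence by identifying exact endofunctors of $\qSp$ with functors out of the full subcategory of finite spectra, and then using the fact that finite spectra are generated by $\mS$ under finite colimits and desuspensions.

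\textbf{Reduction to finite spectra.} The functor $F$ is exact but need not preserve filtered colimits, so we cannot pass to presentable functors right away. We first reduce to finite spectra. Write $\qSp = \mathrm{Ind}(\qSp^{\omega})$ in the colimit-completion sense. Restriction gives
\begin{displaymath}
\Map_{\qFun^{\Ex}}(\Id,F)\to\Map_{\qFun^{\Ex}(\qSp^{\omega},\qSp)}(\iota,F|_{\qSp^{\omega}}),
\end{displaymath}
where $\iota$ is the inclusion. This map is an equivalence. The identity functor is the left Kan extension of $\iota$ along $\qSp^{\omega}\subseteq\qSp$, because every spectrum is the filtered colimit of its finite subspectra and $\Id$ preserves filtered colimits. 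Maps out of a left Kan extension are computed by maps out of the restriction, by the universal property in \cite[\S4.3.3]{Lurie:HTT}. This step works for arbitrary $F$; we do not need $F$ to preserve colimits, only the target.

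\textbf{Finite spectra are freely generated by $\mS$.} The main claim is that the small stable {\qcat} $\qSp^{\omega}$ is the free stable {\qcat} on one object $\mS$. Precisely, evaluation at $\mS$ gives an equivalence $\qFun^{\Ex}(\qSp^{\omega},\qmcD)\simeq\qmcD$ for every stable $\qmcD$. This is \cite[Corollary 1.4.4.6]{Lurie:HA}, or its small variant, that $\qSp^{\omega}$ is the stabilization of finite pointed spaces $\qmcS^{\mathrm{fin}}_{*}$, together with the fact that $\qmcS^{\mathrm{fin}}_{*}$ is freely generated by $S^{0}$ under finite colimits. Granting this, exact functors $\qSp^{\omega}\to\qSp$ correspond to objects of $\qSp$: the inclusion $\iota$ corresponds to $\mS$, and $F|_{\qSp^{\omega}}$ corresponds to $F(\mS)$. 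Mapping spaces then match:
\begin{displaymath}
\Map(\iota,F|_{\qSp^{\omega}})\simeq \Map_{\qSp}(\mS,F(\mS))=\Omega^{\infty}F(\mS).
\end{displaymath}
The composite of the two identifications is the evaluation map $\ev_{\mS}$, since both steps are compatible with evaluation at $\mS$.

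\textbf{Main obstacle.} The main difficulty is the universal property of $\qSp^{\omega}$ among stable {\qcats}, as opposed to presentable ones. The route I would try first is to factor the comparison. By \cite[Proposition 1.4.2.21]{Lurie:HA}, exact functors $\qSp^{\omega}\to\qmcD$ agree with right exact functors $\qmcS^{\mathrm{fin}}_{*}\to\qmcD$. Those in turn are determined by their value on $S^{0}$, because $\qmcS^{\mathrm{fin}}_{*}$ is the finite-colimit completion of the point $S^{0}$ (the pointed version of \cite[Proposition 5.3.6.2]{Lurie:HTT}). A secondary subtlety is checking that the left Kan extension in the first step lands among exact functors. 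This holds because filtered colimits in $\qSp$ are exact.
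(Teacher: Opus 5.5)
Your argument is correct, but it takes a different route from the source. The paper gives no proof of its own and defers to \cite{NS18}, where the argument works on the \emph{target} side. Postcomposition with $\Omega^{\infty}$ identifies $\qFun^{\Ex}(\qSp,\qSp)$ with the left exact functors $\qSp\to\qmcS$; this is the universal property of $\qSp$ as the stabilization of spaces \cite[\S1.4.2]{Lurie:HA}. Under this identification $\Id_{\qSp}$ goes to $\Omega^{\infty}\simeq\Map_{\qSp}(\mS,-)$, which is corepresentable, and the Yoneda lemma finishes. Because Yoneda applies to arbitrary functors, that proof never has to deal with $F$ failing to preserve filtered colimits.

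You work on the \emph{source} side instead: a Kan extension from $\qSp^{\omega}$, followed by freeness of $\qSp^{\omega}$ on $\mS$. This is one step longer but tells you more. It shows explicitly that a transformation $\Id_{\qSp}\to F$ depends only on $F|_{\qSp^{\omega}}$. That matters for the intended application: on bounded below spectra $T_{p}$ is $p$-completion by \cref{thm-generalized-Segal-conj}, so it does not commute with the colimit $\mS[1/p]$ of $\mS\xrightarrow{p}\mS\xrightarrow{p}\dotsb$. Your route also identifies all of $\qFun^{\Ex}(\qSp^{\omega},\qSp)$ with $\qSp$.

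Two of your worries are unnecessary. The ``main obstacle'' disappears because the target $\qSp$ is presentable. Restriction identifies colimit-preserving endofunctors of $\mathrm{Ind}(\qSp^{\omega})=\qSp$ with exact functors $\qSp^{\omega}\to\qSp$. So the presentable statement that evaluation at $\mS$ identifies colimit-preserving endofunctors of $\qSp$ with $\qSp$ \cite[\S1.4.4]{Lurie:HA} already suffices; no universal property among arbitrary small stable $\infty$-categories is needed. The ``secondary subtlety'' is moot, since the left Kan extension of $\iota$ is $\Id_{\qSp}$ itself. Moreover $\qFun^{\Ex}(\qSp,\qSp)\subseteq\qFun(\qSp,\qSp)$ is full, so all mapping spaces can be computed among all functors.
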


\begin{defin}\label{def-Tate-diagonal}
\cite[Definition III.1.4]{NS18}
{\bf The Tate diagonal} is the natural transformation
\begin{align*}
\Delta_{p} :\Id_{\qSp}
 &\to T_{p}   \\
X & \mapsto  (X^{\otimes p})^{t\rC _{p}}  
\end{align*}

\noindent 
of endofunctors of $\qSp$ which, under the
equivalence of \cref{prop-III.1.2}, corresponds to the map
\begin{displaymath}
\mS \to \mS^{h\rC _{p}} \to \mS^{t\rC _{p}}.
\end{displaymath}
\end{defin}

Recall that the Segal Conjecture for the group $\rC _{p}$ implies that
the map above is $p$-adic completion.

\begin{theorem}\label{thm-generalized-Segal-conj}
{\bf The generalized  Segal Conjecture.} \cite[Theorem III.1.7]{NS18}
For any bounded below spectrum $X$, the Tate diagonal map 
\begin{displaymath}
\Delta_{p}:X\to (X^{\otimes p})^{t\rC _{p}}
\end{displaymath}

\noindent is $p$-adic completion.
\end{theorem}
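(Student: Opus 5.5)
The strategy is the one of Nikolaus--Scholze: reduce to the sphere spectrum and then use exactness and a convergence argument. Both sides of $\Delta_{p}:X\to (X^{\otimes p})^{t\rC_{p}}$ are exact functors of $X$. The left side trivially is, and the right side is exact by \cref{prop-exactness-Tp}. $p$-adic completion $X\mapsto X^{\wedge}_{p}=\lim{n}(X\otimes \mS/p^{n})$ is also exact. Moreover the target $T_{p}(X)$ is $p$-complete: it is a module over $\mS^{t\rC_{p}}$ (via the lax monoidal structure of the Tate construction), and every $\mS^{t\rC_{p}}$-module is $p$-complete, since $\mS^{t\rC_{p}}$ has $p$ acting nilpotently modulo a $p$-complete object. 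Concretely, $(-)^{t\rC_{p}}$ is a module over $\mS^{t\rC_{p}}\simeq \mS^{\wedge}_{p}$ by \cref{thm-Tate}\cref{thm-Tatei}. So $\Delta_{p}$ factors through $X\to X^{\wedge}_{p}$, and the claim becomes that the induced map $X^{\wedge}_{p}\to T_{p}(X)$ is an equivalence. Equivalently, $\Delta_{p}\otimes \mS/p$ is an equivalence after checking that $T_{p}(X)/p\simeq T_{p}(X/p)$ by exactness.

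First step: the case $X=\mS$. By \cref{def-Tate-diagonal}, $\Delta_{p}$ is the composite $\mS\to\mS^{h\rC_{p}}\to\mS^{t\rC_{p}}$, which is $p$-completion by the Segal conjecture, \cref{thm-Tate}\cref{thm-Tatei}. By exactness of both sides, the statement then holds for all finite spectra, which are built from $\mS$ by finitely many cofiber sequences, shifts and retracts. It also holds for arbitrary finite sums.

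Second step: pass to bounded below $X$. After shifting, assume $X$ is connective. Write $X$ as the colimit of its skeleta $X^{(n)}$, or better, compare $X$ with its Postnikov truncations. For fixed $k$, I will show that $\pi_{k}$ of both sides depends only on a finite stage. The source is clear. For the target, I need that $T_{p}(\mathrm{fib}(X\to\tau_{\le n}X))$ is highly connected, roughly $(n-C)$-connected. If $Y$ is $n$-connected, then $Y^{\otimes p}$ is $(pn)$-connected, but the Tate construction can lower connectivity unboundedly, so this is not automatic. The remedy is to compute $T_{p}(Y)$ as $(Y^{\otimes p})_{h\rC_{p}}$-cofiber data and to use the formula $T_{p}(Y)\simeq \lim{m}\bigl((Y^{\otimes p})\otimes \Sigma\RP^{\infty}_{-m}\bigr)$-type descriptions (cf.\ \cref{eq-RP-infinity}). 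Each term is roughly $(pn-m)$-connected, and the cells in negative degrees contribute through a Tate construction of a sum of free pieces that vanish. Instead, I prefer to reduce via exactness to $Y$ equal to an Eilenberg--MacLane spectrum, or better, to write $X$ as a filtered colimit of finite spectra $X_{\alpha}$ with $X_{\alpha}\to X$ being $n$-connected for any given $n$.

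Main obstacle: $T_{p}$ does not commute with filtered colimits. The key input is a uniform connectivity estimate: if $f:Y\to Y'$ is an $n$-connected map of bounded below spectra, then $T_{p}(f)$ is $(n-c)$-connected for a constant $c$ independent of $n$. I would prove this by expanding $(Y')^{\otimes p}$ along the cofiber $Z$ of $f$. The terms $Y^{\otimes i}\otimes Z^{\otimes (p-i)}$ with $0<i<p$ assemble into induced $\rC_{p}$-spectra, whose Tate construction vanishes. This leaves $T_{p}(Z)$, and for $Z$ that is $n$-connected one uses that $(Z^{\otimes p})^{t\rC_{p}}$ is $n$-connective up to a shift. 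This last point follows from the Tate spectral sequence together with the fact that the Tate diagonal, already known on spheres, gives the bottom cell. Granting the estimate, approximating $X$ by finite spectra through a range of degrees shows that $\pi_{k}\Delta_{p}$ agrees with $\pi_{k}$ of $p$-completion for each $k$. Since $p$-completion also behaves well in connectivity for bounded below spectra, this proves the theorem.
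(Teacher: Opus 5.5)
The paper gives no argument of its own here; it only quotes \cite[Theorem III.1.7]{NS18}. Your outline has a genuine gap at exactly the step you call the key input: the estimate that $T_p(Z)=(Z^{\otimes p})^{tC_p}$ is $(n-c)$-connective whenever $Z$ is $n$-connective. Neither of your justifications establishes it.

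The sphere case only controls finite $Z$. Passing from finite $Z_\alpha$ to $Z=\mathrm{colim}\,Z_\alpha$ is precisely the failure of $T_p$ to commute with filtered colimits that the estimate was supposed to overcome, so that route is circular. The Tate spectral sequence does not give the estimate either. For $Z=\Sigma^nH\mathbb{F}_p$, the bottom group $\pi_{pn}(Z^{\otimes p})=\mathbb{F}_p$ carries the trivial $C_p$-action, so the $E_2$-term already contains $\widehat{H}^{*}(C_p;\mathbb{F}_p)\neq 0$ in every total degree. Bounded-belowness of the abutment can only come from an infinite pattern of differentials that you have no way to control. Knowing that the Tate diagonal hits the bottom cell says nothing about what lies below it.

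In fact, your estimate for $Z=\Sigma^nH\mathbb{F}_p$ already asserts that $(H\mathbb{F}_p^{\otimes p})^{tC_p}$ is bounded below, which is essentially the Eilenberg--MacLane case of the theorem. That case is a separate hard computation: the topological Singer construction of \cite{LN-Rognes}, resting on a Steenrod-algebra Ext computation of the same kind as in Lin's and Gunawardena's proofs. It is not something you can extract formally from the sphere case and exactness.

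There are two further problems.

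First, even granting the estimate, approximating $X$ by finite spectra through a range of degrees requires $X$ to have finite type. For example, $\bigoplus_{\mathbb{N}}\mathbb{S}$ or $H\mathbb{Q}$ receives no $\pi_0$-surjection from a finite spectrum, and $T_p$ does not commute with infinite sums. So the general bounded below case needs an additional argument. For instance, you could split $(\bigoplus_I Y)^{\otimes p}$ into the diagonal part $\bigoplus_I Y^{\otimes p}$ plus induced summands, whose Tate construction vanishes, and then control the Tate construction of the diagonal sum.

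Second, your reason for $p$-completeness of $T_p(X)$ is wrong: modules over $\mathbb{S}^{\wedge}_p$ need not be $p$-complete ($H\mathbb{Q}_p$ is one). The correct reason uses the bounded-below hypothesis. For bounded below $Y$ with $C_p$-action, $Y^{tC_p}\simeq\lim{n}(\tau_{\le n}Y)^{tC_p}$. This holds because $(\tau_{>n}Y)_{hC_p}$ has connectivity tending to infinity, while $\lim{n}(\tau_{>n}Y)^{hC_p}\simeq(\lim{n}\tau_{>n}Y)^{hC_p}=0$. Moreover, the Tate construction of a spectrum with finitely many homotopy groups has homotopy of bounded $p$-power exponent, hence is $p$-complete. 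Since limits of $p$-complete spectra are $p$-complete, $Y^{tC_p}$ is $p$-complete.

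Applying the same reasoning to the tower $(\tau_{>n}X)^{\otimes p}$, together with exactness of $T_p$, gives $T_p(X)\simeq\lim{n}T_p(\tau_{\le n}X)$ for bounded below $X$, with no connectivity estimate at all. So the Postnikov route you abandoned is the workable one. After reducing mod $p$ (legitimate once $p$-completeness is known), it reduces the theorem to $X=HV$ for $V$ an $\mathbb{F}_p$-vector space. That means $H\mathbb{F}_p$ together with control of sums, which is exactly where an input beyond the Segal conjecture for $\mathbb{S}$ is required.
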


The following makes use of the definition of a symmetric monoidal
{\qcat} and related notions in \cref{sec-SM-qcat}.

\begin{defin}\label{def-Alg-E1}
\cite[Definition III.2.1]{NS18} The {\bf {\qcat} $\Alg_{\EE_{1}}
(\qSp)$ of $\EE_{1}$-ring spectra} is the {\qcat} of functors
$R^{\otimes }$ from $N (\Assoc^{\otimes })$ to $\qSpot$ over $N
(\Fin_{*})$ as shown below, where the sequence $(R,\dotsc ,R)$ has $n$
coordinates and the triangle commutes.
\begin{displaymath}
\xymatrix
@R=1mm
@C=8mm
{
  & & &{( R,\dotsc ,R)}\ar@/^3pc/@{|->}[dddd]^(.5){}
\\
{\langle n \rangle_{\Assoc}}\ar@/^/@{|->}[rrru]^(.5){}
                          \ar@{|->}[dddrrr]^(.5){}
  &{N (\Assoc^{\otimes  } )}\ar[rr]^(.55){R^{\otimes }}\ar[ddrr]^(.5){}
    & &{\qSpot}\ar[dd]^(.5){\pr}\\
{}\\
  & & &{N (\Fin_{*})} \\
  & & &{\langle n \rangle_{*}}   
}
\end{displaymath}
\end{defin}


\begin{defin}\label{def-THH-qcat}
\cite[Definition III.2.3]{NS18} 
{\bf The {\qcatal} form of $\THH$}
sends an $\EE_{1}$-ring spectrum $R^{\otimes }$ as above to the geometric
realization of the cyclic spectrum
\begin{displaymath}
\xymatrix
@R=1mm
@C=15mm
{
{N (\blamb^{\op})}\ar[r]^(.5){N (V^{\op})}
  &{N (\Assoc^{\otimes }_{\act })}\ar[r]^(.6){R^{\otimes }}
    &{\qSpot}\ar[r]^(.5){\otimes }
      &{\qSp}\\
{[n]_{\blamb}}\ar@{|->}[rrr]^(.5){}
  & & &{R^{\otimes  (n+1)},}
}
\end{displaymath}

\noindent where $V^{\op}$ is as in \cref{eq-V-op}.
\end{defin}

The following is needed by Hahn and Wilson \cite{Hahn-Wilson} in their
study of the relative spectrum $\THH (\BPn/MU)$.

\begin{defin}\label{def-relative-THH}
{\bf Relative $\THH$.}  Let $A$ be an $\EE_{\infty }$-ring spectrum,
and let $\qMod_{A}$ be the {\qcat} of $A$-module spectra. It is a
special case of the category ${\rm LMod}_{A} (\qSp)$ of
\cite[Notation 7.1.1.1]{Lurie:HA}.  (Lurie only requires $A$ to be
associative, so he has to distinguish between left and right modules.
Since our $A$ is commutative, modules over it are two sided.)
$\qMod_{A}$ is symmetric monoidal under the relative smash product
$X\otimes_{A}Y$, which is defined to be the coequalizer of the two
maps 
\begin{displaymath}
\xymatrix
@R=4mm
@C=10mm
{
{X\otimes A\otimes Y}\ar@<.5ex>[r]^(.5){}\ar@<-.5ex>[r]^(.5){}
  &{X\otimes Y}
}
\end{displaymath}

\noindent induced by the right $A$-module structure on $X$ and the left one
on $Y$.

For an $\EE_{1}$-algebra $R^{\otimes }$ in $\qMod_{A}$, $\THH (R/A)$
the geometric realization of the cyclic spectrumin the category of
$A$-modules,
\begin{displaymath}
\xymatrix
@R=1mm
@C=15mm
{
{N (\blamb^{\op})}\ar[r]^(.5){N (V^{\op})}
  &{N (\Assoc^{\otimes }_{\act })}\ar[r]^(.6){R^{\otimes_{A}}}
    &{\qMod}_{A}^{\color{cyan}\otimes }\ar[r]^(.5){\otimes }
      &{\qMod_{A}}\\
{[n]_{\blamb}}\ar@{|->}[rrr]^(.5){}
  & & &{R^{\otimes_{A}  (n+1)}.}
}
\end{displaymath}
\end{defin}

\subsection{The {\qcat} of cyclotomic spectra}\label{sec-qcat-cycsp}

Defining the {\qcat} $\qSp^{G}$ of orthogonal $G$-spectra involves
some technicalities that would distract us here, so we postpone that
discussion to \cref{sec-qcat-Gspec}, specifically
\cref{def-qcat-Gspec}.  The definition of a symmetric monoidal
structure on an {\qcat} $\qmcC$ is also complicated and is the subject
of \cref{sec-SM-qcat}, specifically \cref{def-symm-mon-qcat}.

\begin{defin}\label{def-cyclo-spec2}
\cite[Definition II.1.1]{NS18}

\begin{enumerate}[label={(\roman*)},itemindent=0em]
\item \label{def-cyclo-spec2i} 
A {\bf cyclotomic spectrum} $X $ is an
object in $\qSp^{\mT}$ (see \cref{def-qcat-Gspec}) with a
$\mT$-{\eqvr} {\bf cyclotomic structure map} (which Nikolaus and
Scholze call the {\em Frobenius map})
\begin{displaymath}
\varphi_{p}:X\to X^{t\rC _{p}},
\end{displaymath}

 \noindent where $X^{t\rC _{p}}$ is as in \cref{eq-Tate2}, for each prime
$p$.  We will write this as $(X,(\varphi_{p})_{p\in \PP})$, where
$\PP$ denotes the set of primes.

\item \label{def-cyclo-spec2ii} A {\bf $p$-cyclotomic spectrum} $X $
is an object in $\qSp^{\rC _{p^{\infty }}}$ (where $\rC _{p^{\infty
}}\subset \mT $ is the Pr\"uffer group consisting of the $p^{i}$th
roots of unity for all $i\geq 0$) with a $\rC _{p^{\infty}}$-{\eqvr}
map $\varphi_{p}$ as above.

\item \label{def-cyclo-spec2iii} 
A {\bf $p$-cyclotomic spectrum with
Frobenius lift} $X $ is an object in $\qSp^{\rC _{p^{\infty }}}$ with
a $\rC _{p^{\infty}}$-{\eqvr} map $F:X\to X^{h\rC _{p}}$ such that
$\varphi_{p} = s_{X}^{\rC_{p}}F$ for $s_{X}^{\rC_{p}}$ as in
\cref{eq-Tate2}.  This map $F$, which is a lifting of $\varphi_{p}$,
is not to be confused with the restriction map $\Frob^{G}_{H}$ of
\cref{def-residual}\cref{def-residuali}.
\end{enumerate}
\end{defin}

The word ``Frobenius'' (as a map) has several meanings in the
literature.  It has long been used for the $p$th power map in an
algebra in characteristic $p$.  Nikolaus-Scholze use it for the
cyclotomic structure map while McCandless uses it for a lifting of the
latter to $X^{h\rC_{p}}$ as in \cref{def-cyclo-spec2iii} above.
McCandless also has a more general notion of Frobenius lift in an
{\qcat} in \cite[Definition 2.1.2]{McCandless-curves} that involves
the {\Wittmonoid} $\mathscr{M}:=\mT \rtimes \mN^{\times }$ of
\cref{def-Witt-monoid}; see \cref{def-Frob-lifts}.

\begin{defin}\label{def-cyc-spectra}
\cite[Definition II.1.6]{NS18} and \cite[Definition 2.3]{BHLS}.

\begin{enumerate}[label={(\roman*)},itemindent=0em]
\item \label{def-cyc-spectrai} 
The {\bf {\qcat} $\qCycSp$ of cyclotomic spectra} is the lax equalizer (as
in \cref{def-lax-eq}) in which $\qmcC $ and $\qmcD $ are each
$\qSp^{\mT }$, and the two functors are the identity and the one
induced by $\prod_{p}\varphi_{p}$.

\item \label{def-cyc-spectraii}
That of {\bf $p$-cyclotomic spectra}, $\qCycSp_{p}$, is the lax equalizer in
which $\qmcC $ and $\qmcD $ are each $\qSp^{\rC _{p^{\infty }}}$, and the
two functors are the identity and the one induced by $\varphi_{p}$.

\item \label{def-cyc-spectraiii}
That of {\bf $p$-cyclotomic spectra with Frobenius lifts},
$\qCycSp_{p}^{\rm Fr}$, is the lax equalizer in which $\qmcC $ and
$\qmcD $ are each $\qSp^{\rC _{p^{\infty }}}$, and the two functors are
the identity and the one induced by $F$ as in
\cref{def-cyclo-spec2}\cref{def-cyclo-spec2iii}.
\end{enumerate}
\end{defin}

\begin{remark}\label{rem-presentable-stable-qcats}
All three of the above are presentable stable {\qcats} whose objects
are spectra with additional structure.  In each case the forgetful
functor to $\qSp$ is exact and preserves {\equi}s and small colimits.
Hence all three are enriched over spectra as in \cref{eq-map-spec}.
In each case function spectrum $\map_{\qmcC } (X,Y)$ differs
substantially from the underlying function spectra $\map_{\qSp }
(X,Y)$.
\end{remark}

\begin{prop}\label{prop-forgetful-functor}\cite[Proposition 10.3]{KN21}
The forgetful functor %
\begin{displaymath}
  U:\qCycSp_{p}^{\rm Fr} \to \qCycSp_{p}
  \qquad (X,F)\mapsto (X,s_{X}^{p}F)
\end{displaymath}
 
\noindent is left adjoint to the functor $\TR$ of \cref{def-TR}.
\end{prop}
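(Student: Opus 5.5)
The plan is to compare mapping spaces directly, using the lax equalizer description of both categories in \cref{def-cyc-spectra}. First I would record the general formula for mapping spaces in a lax equalizer $\LEq{F_{0}}{F_{1}}$ of \cref{def-lax-eq} (this is \cite[Proposition II.1.5]{NS18}). For objects $(c,f)$ and $(c',f')$, $\Map((c,f),(c',f'))$ is the equalizer of the two maps $\Map_{\qmcC}(c,c')\rightrightarrows \Map_{\qmcD}(F_{0}c,F_{1}c')$ given by $g\mapsto f'\circ F_{0}(g)$ and $g\mapsto F_{1}(g)\circ f$. Applying this to $\qCycSp_{p}^{\rm Fr}$ and to $\qCycSp_{p}$ gives, for $(X,F)$ with a Frobenius lift and $(Y,\varphi)$ a $p$-cyclotomic spectrum,
\begin{displaymath}
\Map_{\qCycSp_{p}}(U(X,F),(Y,\varphi))\simeq
\Eq\bigl(\Map(X,Y)\rightrightarrows \Map(X,Y^{t\rC_{p}})\bigr),
\end{displaymath}
with the two maps $g\mapsto \varphi g$ and $g\mapsto g^{t\rC_{p}}s^{p}_{X}F = s^{p}_{Y}g^{h\rC_{p}}F$. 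Here all mapping spaces are $\rC_{p^{\infty}}$-{\eqvr}.

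Next I would use the description of $\TR(Y)$ from \cref{def-TR} as the limit of the tower $\dotsb\to \TR^{n+1}(Y)\to\TR^{n}(Y)\to\dotsb\to \TR^{1}(Y)=Y$. Here
\begin{displaymath}
\TR^{n+1}(Y)=Y\times_{Y^{t\rC_{p}}}\TR^{n}(Y)^{h\rC_{p}},
\end{displaymath}
where $\TR^{n}(Y)^{h\rC_{p}}\to Y^{h\rC_{p}}\to Y^{t\rC_{p}}$ and the second leg is $\varphi$. The Frobenius lift $\TR(Y)\to\TR(Y)^{h\rC_{p}}$ is the limit of the projections $\TR^{n+1}(Y)\to\TR^{n}(Y)^{h\rC_{p}}$. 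The counit $U\TR(Y)\to Y$ is the projection to $\TR^{1}=Y$. One checks it is a map in $\qCycSp_{p}$ because the pullback square defining $\TR^{2}$ supplies exactly the homotopy $\varphi\circ\pr\simeq s\circ F$.

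The core step is to show that composing with the counit induces an equivalence
\begin{displaymath}
\Map_{\qCycSp_{p}^{\rm Fr}}((X,F),\TR(Y))\to \Map_{\qCycSp_{p}}(U(X,F),(Y,\varphi)).
\end{displaymath}
Using the lax equalizer formula on the left, a map consists of a $\rC_{p^{\infty}}$-map $h:X\to\TR(Y)=\lim_{n}\TR^{n}(Y)$ together with a homotopy $F_{\TR}h\simeq h^{h\rC_{p}}F$. I would unwind this by induction on $n$. Given $g:X\to Y$ with a homotopy $\varphi g\simeq s\,g^{h\rC_{p}}F$, one gets a map $h_{2}=(g,\,g^{h\rC_{p}}F,\text{homotopy}):X\to\TR^{2}(Y)$. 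Inductively one gets $h_{n+1}=(g,\,h_{n}^{h\rC_{p}}F,\dots)$, and by construction these are compatible with the Frobenius on the tower. Conversely, the homotopy $F_{\TR}h\simeq h^{h\rC_{p}}F$ forces each component $h_{n+1}$ to be determined by $h_{n}$ and $F$. Formally, the space of pairs $(h,\text{homotopy})$ is the limit over $n$ of spaces $M_{n}$. Each map $M_{n+1}\to M_{n}\times_{\Map(X,Y^{t\rC_{p}})}\Map(X,Y)$ is an equivalence because $\TR^{n+1}$ is defined as a pullback and $\Map(X,-)$ preserves pullbacks. The tower therefore telescopes to $M_{1}$, which is the equalizer displayed above.

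I expect the main obstacle to be this last step: keeping the homotopy coherence of the Frobenius-compatibility datum straight through the infinite tower, rather than only levelwise. My first attempt would be to phrase everything via the lax equalizer formula at each finite stage. That is, I would set up $\TR^{n}$ as a right adjoint of a truncated version of $U$ and pass to the limit, using that limits commute with equalizers. Naturality in $(X,F)$ and $(Y,\varphi)$ then follows because every identification is built from the functorial pullbacks and from $\Map$. Finally I would invoke \cite[Proposition 5.2.2.8]{Lurie:HTT} to promote these objectwise equivalences to the adjunction $U\dashv\TR$.
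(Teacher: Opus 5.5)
Your setup is correct: the lax-equalizer formula for mapping spaces \cite[Proposition II.1.5]{NS18}, the description of $\Map_{\qCycSp_{p}}(U(X,F),(Y,\varphi))$ as the equalizer $E$ of $g\mapsto \varphi g$ and $g\mapsto s\,g^{h\rC_{p}}F$ on $\Map(X,Y)$, the recursion $\TR^{n+1}(Y)\simeq Y\times_{Y^{t\rC_{p}}}\TR^{n}(Y)^{h\rC_{p}}$, the Frobenius lift on $\TR(Y)$, and the counit are all fine. (The paper gives no argument of its own, only the citation to \cite{KN21}.) The gap is the core step, exactly where you expected trouble. An equivalence $M_{n+1}\simeq M_{n}\times_{\Map(X,Y^{t\rC_{p}})}\Map(X,Y)$ would not make the tower telescope. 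It would exhibit $M_{n+1}\to M_{n}$ as a base change of $\varphi_{*}\colon\Map(X,Y)\to\Map(X,Y^{t\rC_{p}})$, which in general is not an equivalence. In fact your two claims contradict each other. If the tower were constant with value $M_{1}=E$, the formula would give $E\simeq E\times_{\Map(X,Y^{t\rC_{p}})}\Map(X,Y)$. When $Y^{t\rC_{p}}\simeq 0$ the right side is $E\times E$, which is typically not $E$. What goes wrong is this: a point of $M_{n+1}$ is a map to the next stage whose restriction along $\pi_{L}$ is the given point of $M_{n}$. That compatibility identifies the new $\Map(X,Y)$-coordinate and the new homotopy in $Y^{t\rC_{p}}$ with data already recorded in $M_{n}$, whereas your fiber product treats them as free. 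Your fallback does not fix this either. $\TR^{n}(Y)$ carries no Frobenius lift, only the shifted map $\TR^{n+1}(Y)\to\TR^{n}(Y)^{h\rC_{p}}$, so there is no evident truncated $U$ for it to be right adjoint to.

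The idea that removes the coherence problem is to go through cofree objects. Since $(-)^{h\rC_{p}}$ preserves limits, the forgetful functor $\qCycSp_{p}^{\rm Fr}\to\qSp^{B\rC_{p^{\infty}}}$ creates limits. It also has a right adjoint $C(Z)=\prod_{k\geq 0}Z^{h\rC_{p^{k}}}$, whose Frobenius lift is the shift $C(Z)\to C(Z)^{h\rC_{p}}\simeq\prod_{k\geq 1}Z^{h\rC_{p^{k}}}$. To see this, note that by the lax-equalizer formula a map $(X,F)\to C(Z)$ is a sequence $g_{k}\colon X\to Z^{h\rC_{p^{k}}}$ with paths $g_{k}\simeq g_{k-1}^{h\rC_{p}}F$. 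At each stage the new datum (a $g_{k}$ together with such a path) ranges over a based path space, which is contractible, so projecting to $g_{0}$ gives $\Map(X,Z)$. This is the only telescoping needed, and here it is clean. Now $\TR(Y)$ is the equalizer in $\qCycSp_{p}^{\rm Fr}$ of $C(\varphi)$ and the map $C(Y)\to C(Y^{t\rC_{p}})$ adjoint to $s\circ\mathrm{pr}_{1}\colon C(Y)\to Y^{h\rC_{p}}\to Y^{t\rC_{p}}$. The $k$th component of the latter is $s^{h\rC_{p^{k}}}\circ\mathrm{pr}_{k+1}$, so the underlying spectrum of the equalizer is the limit of \cref{eq-TR} and its Frobenius lift is the shift, i.e.\ your $F_{\TR}$. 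Apply $\Map_{\qCycSp_{p}^{\rm Fr}}((X,F),-)$ and use the adjunction. This turns the equalizer into the equalizer of $g\mapsto\varphi g$ and $g\mapsto s\,g^{h\rC_{p}}F$ on $\Map(X,Y)$, which is $E$, naturally in both variables. That is the adjunction $U\dashv\TR$, with your projection as counit, and no stage-by-stage analysis of the $\TR^{n}$ is required.
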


Nikolaus and Scholze show that the following are examples.

\begin{ex}\label{ex-cycl-spectra}
{\bf Some cyclotomic spectra.}

\begin{enumerate}[label={(\roman*)},itemindent=0em]
\item \label{ex-cycl-spectrai} For an $\EE_{1}$-ring spectrum $R$,
$\THH(R)$ is cyclotomic.  It is defined as the geometric realization
of a cyclic spectrum spelled out in \cite[Definition III.2.3]{NS18}
and in \cref{thm-THH-cyclo}.

\item \label{ex-cycl-spectraii} 
Every spectrum $X$ has a trivial
action of $\mT$ and a trivial cyclotomic structure whose $p$th
component is given by the composite
\begin{displaymath}
\xymatrix
@R=4mm
@C=10mm
{
{X}\ar[r]^(.5){}
  &{X^{h\rC_{p}}}\ar[r]^(.5){s_{X}^{\rC_{p}}}
    &{X^{t\rC_{p}}}
}
\end{displaymath}

\noindent where the first map is pullback along $B\rC_{p+}\to
S^{0}$ (see \cref{eq-triv-action}) and the second one is as in
\cref{eq-Tate2}.  The Segal Conjecture implies that this composite is
$p$-adic completion when $X$ is finite.  We sometimes
denote the resulting cyclotomic spectrum by $X^{\triv}$.


\item \label{ex-cycl-spectraiii} 
Every cyclotomic spectrum is a
$p$-cyclotomic spectrum by restriction.





\end{enumerate}
\end{ex}

\begin{theorem}\label{thm-THH-cyclo}
{\bf The cyclotomic structure on $\THH (A)$} as in \cref{def-THH-qcat}
is the composite map from the geometric realization of the top row to
that of the bottom row in the following diagram of cyclic spectra.
\begin{numequation}\label{eq-Tate-diag}
\begin{tikzcd}[column sep=8mm,row sep=4mm]
{\dotsb }\arrow[r,shift left=1.2ex]\arrow[r,shift left=.4ex]
         \arrow[r,shift right=1.2ex]\arrow[r,shift right=.4ex]
  &{A^{\otimes 3}}\arrow[loop above, "\rC _{3}"]\arrow{d}{\Delta_{p}}
        \arrow[r,shift left=.8ex]\arrow[r]
         \arrow[r,shift right=.8ex]
    &{A^{\otimes 2}}\arrow[loop above, "\rC _{2}"]\arrow{d}{\Delta_{p}}
         \arrow[r,shift left=.4ex]
         \arrow[r,shift right=.4ex]
      &{A}\arrow{d}{\Delta_{p}}\\
{\dotsb }\arrow[r,shift left=1.2ex]\arrow[r,shift left=.4ex]
         \arrow[r,shift right=1.2ex]\arrow[r,shift right=.4ex]
  &{((A^{\otimes 3})^{\otimes p})^{t\rC _{p}}}\arrow{d}{\otimes }
       \arrow[r,shift left=.8ex]\arrow[r]
         \arrow[r,shift right=.8ex]
    &{((A^{\otimes 2})^{\otimes p})^{t\rC _{p}}}\arrow{d}{\otimes }
         \arrow[r,shift left=.4ex]
         \arrow[r,shift right=.4ex]
      &{(A^{\otimes p})^{t\rC _{p}}}\arrow{d}{\otimes }\\
{\dotsb }\arrow[r,shift left=1.2ex]\arrow[r,shift left=.4ex]
         \arrow[r,shift right=1.2ex]\arrow[r,shift right=.4ex]
  &{(A^{\otimes 3})^{t\rC _{p}}}
       \arrow[r,shift left=.8ex]\arrow[r]
         \arrow[r,shift right=.8ex]
    &{(A^{\otimes 2})^{t\rC _{p}}}
         \arrow[r,shift left=.4ex]
         \arrow[r,shift right=.4ex]
      &{(A)^{t\rC _{p}},}
\end{tikzcd}
\end{numequation}%

\noindent where $\Delta_{p}$ is the Tate diagonal of
\cref{def-Tate-diagonal} and the action of $\rC_{p}$ on the spectra in
the bottom row is trivial.
\end{theorem}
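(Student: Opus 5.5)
The plan is to build the Frobenius $\varphi_p\colon \THH(A)\to \THH(A)^{t\rC_p}$ levelwise on cyclic spectra, following \cite[\S III.2--III.3]{NS18}, and then to check that geometric realization carries it to a $\mT$-{\eqvr} map of the required form. I read the bottom row of \cref{eq-Tate-diag}, after edgewise subdivision as in \cref{def-edgewise}, as the levelwise $\rC_p$-Tate construction of the $p$-subdivided cyclic spectrum $\sd^*_p\THH_\bullet(A)$. This is the spectrum analog of \cref{def-action}: in simplicial degree $n$ it is $A^{\otimes p(n+1)}=(A^{\otimes(n+1)})^{\otimes p}$, with $\rC_p$ cyclically permuting the $p$ blocks. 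The proof then has four steps.

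\emph{Step 1: the Tate diagonal is compatible with the cyclic structure.} I will use that the Tate diagonal $\Delta_p\colon X\to (X^{\otimes p})^{t\rC_p}$ of \cref{def-Tate-diagonal} is a lax symmetric monoidal transformation of functors $\qSp\to\qSp$. This is \cite[Proposition III.1.9]{NS18} and can be assumed. Applied to the cyclic object $[n]_{\blamb}\mapsto A^{\otimes(n+1)}$ of \cref{def-THH-qcat}, lax monoidality gives the following. The maps $\Delta_p$ on $A^{\otimes(n+1)}$ commute with all face maps, since these are built from multiplications and unit maps of the $\EE_1$-ring $A$. They also commute with the cyclic operators $t_n$, since those are symmetric monoidal permutations. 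So the top-to-middle vertical maps in \cref{eq-Tate-diag} assemble into a map of cyclic spectra.

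\emph{Step 2: the middle row is a cyclic object.} The middle row $[n]\mapsto ((A^{\otimes(n+1)})^{\otimes p})^{t\rC_p}$ is the levelwise Tate construction of the $\blamb_p$-spectrum $\sd^*_p\THH_\bullet(A)$ of \cref{def-r-cyclic}. Because $\tau_n^{n+1}$ generates the $\rC_p$-action, which becomes trivial after passing to $(-)^{t\rC_p}$ (\cref{prop-geo-r-cyc} and the discussion after it), the middle row descends along $\Proj_{p,1}$ to an honest cyclic spectrum. The map $\otimes$ down to the bottom row is the identification of this descended object, whose realization therefore carries the residual $\mT/\rC_p$-action.

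\emph{Step 3: realize and compare.} By \cref{lem-subdiv-homeo} and its spectral analog, $|\sd^*_p\THH_\bullet(A)|\simeq\THH(A)$ with $\rR/p\Z$ acting, so that $\rC_p$ acts through $\mT$. Since $(-)^{t\rC_p}$ does not commute with geometric realization, I use the canonical assembly map of \cref{def-assembly},
\[
|Y_\bullet^{t\rC_p}|\to |Y_\bullet|^{t\rC_p}.
\]
Composing the realizations of Steps 1 and 2 with this map gives $\varphi_p\colon\THH(A)\to \rho_p^*\THH(A)^{t\rC_p}$, which has the form \cref{eq-NS-def}. Doing this for every prime $p$ makes $\THH(A)$ an object of $\qCycSp$ as in \cref{def-cyc-spectra}.

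\emph{Main obstacle.} The hardest step is the equivariance. I must show that the $\mT$-action on the realization of the descended cyclic object in Step 2 matches the residual $\mT/\rC_p\cong\mT$-action on $\THH(A)^{t\rC_p}$, coherently and not just up to homotopy on underlying spectra. My first approach would be to use the $\infty$-categorical description $\qSp^{B\rC_p}$-valued $\blamb_p^{\op}$-diagrams, in the form ``$|\cdot|$ of a $\blamb_p$-object is a $\mT$-object with $\rC_p\subseteq \mT$ acting'' \cite[Appendix B]{NS18}. I would then check that the assembly map in Step 3 is natural in such $\blamb_p$-diagrams, which yields the $\mT/\rC_p$-equivariance directly.
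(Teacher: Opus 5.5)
Your overall shape (levelwise Tate diagonal, $p$-subdivision, levelwise $(-)^{t\rC_p}$, realization, then the canonical map $|Y_\bullet^{t\rC_p}|\to|Y_\bullet|^{t\rC_p}$) is the right one. The gap is Step~1: it does not produce the map of diagrams that the rest of your argument uses.

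What is needed is a natural transformation of $\blamb_p^{\op}$-diagrams. Its source is $\THH_{\bullet}(A)$ restricted along $\Proj_{p,1}$. Its target is $[n]\mapsto (A^{\otimes p(n+1)})^{t\rC_p}$ with the structure of $\sd^{*}_{p}\THH_{\bullet}(A)$. In that structure the cyclic operator on $A^{\otimes p(n+1)}$ is rotation by one factor of the whole $p(n+1)$-tuple. The last face multiplies the last factor of the $k$th block with the first factor of the $(k+1)$st. These maps are $\rC_p$-equivariant but are not of the form $f^{\otimes p}$. Under the shuffle $(A^{\otimes(n+1)})^{\otimes p}\cong\bigotimes_{j=0}^{n}A^{\otimes p}$, the rotation differs from $t_n^{\otimes p}$ by the generator of $\rC_p$ acting on a single factor $A^{\otimes p}$. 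Neither naturality nor the lax symmetric monoidal structure of $\Delta_p$ supplies compatibility with that.

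Suppose instead you give the middle row the blockwise structure $T_p\circ\THH_{\bullet}(A)$. Then Step~1 is just naturality, but Step~3 lands in $(\THH(A)^{\otimes p})^{t\rC_p}$, and you have only recovered $\Delta_p$ for the spectrum $\THH(A)$. In either reading, checking squares one at a time up to homotopy does not give a map of $\qSp$-valued diagrams; all higher coherences are needed.

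This is exactly the point the paper's proof is built on, following \cite[\S III.3]{NS18}. One works over $N(\Free_{\rC_p})\times_{N(\Fin)}\qSp^{\otimes}_{\act}$, whose morphisms include such block-mixing equivariant maps of free $\rC_p$-sets. On it one has the two lax symmetric monoidal functors $I$ and $\widetilde{T}_p$ of \cref{eq-ITp}. Since $I$ is initial among such functors satisfying the relevant exactness condition \cite[Lemma III.3.7]{NS18}, there is a unique transformation $I\to\widetilde{T}_p$ \cite[Corollary III.3.8]{NS18}. This extends the Tate diagonal to all of these maps coherently at once. Whiskering it with the functor out of $N(\blamb_p^{\op})$ in \cref{eq-ITp,eq-big-comp} gives the coherent map of diagrams that your Step~1 asserts. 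Realization and your assembly map then produce $\varphi_p$. The equivariance question you single out as the main obstacle is secondary: without $I\to\widetilde{T}_p$ there is no map of diagrams to realize.
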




\proof
 The geometric realizations of the first and third rows of
\cref{eq-Tate-diag} are $\THH(A)$ and $\THH(A)^{t\rC _{p}}$, and the
composite map between them is the desired cyclotomic structure on the
former.
The second row requires further explanation.
The $p$-fold subdivision endofunctor $\sd_{p}$ of $\bdelt$ (as in
\cref{def-edgewise}) induces one on $\blamb_{p}$ (in which $\bdelt$ is
a wide subcategory), which we follow by the projection onto $\blamb$
to give a functor which we abusively denote by 
\begin{displaymath}
\xymatrix
@R=0mm
@C=10mm
{
{\blamb_{p}^{\op}}\ar[r]^(.5){\sd_{p}}
  &{\blamb^{\op}}\\
{[n-1]_{\blamb_{p}}=\pow{n}_{\blamb_{p}}}\ar@{|->}[r]^(.5){}
    &{\pow{pn}_{\blamb}=[pn-1]_{\blamb} }
}
\end{displaymath}

\noindent The object on the lower right is a free $\rC _{p}$-set. We
denote the category of such sets by $\Free_{\rC _{p}}$ and a free
$\rC _{p}$-set with $pn$ elements by $\pow{pn}$.  For such a set $S$, we
denote its orbit set by $\overline{S} $.  Thus we have a functor
$\Free_{\rC _{p}}\to \Fin$ sending $S$ to $\overline{S} $, and for each
$n>0$ a composite

\begin{numequation}\label{eq-ITp}
\begin{split}
\xymatrix
@R=6mm
@C=0mm
{
&{N (\blamb_{p}^{\op})}\ar[d]^(.5){}
  &&&&{\pow{n}_{\blamb_{p}}}\ar@{|->}[d]^(.5){}\\
&{N (\Free_{\rC _{p}})\underset{N (\Fin)}{\times}N (\Assoc^{\otimes }_{\act})
           \hspace{-1cm}}
        \ar[d]_(.52){A^{\otimes }}
  &&&&{(\pow{pn}, \langle pn \rangle_{\Assoc})}
         \ar@{|->}[d]^(.5){}\\
&{N (\Free_{\rC _{p}})\underset{N (\Fin)}{\times}\qSp^{\otimes }_{\act}}
      \ar[d]_(.5){\Theta }\ar[ldd]_(.5){I}\ar[rdd]^(.5){\widetilde{T}_{p}}
  &&&&{(\pow{pn}, (A^{\otimes p},\dotsc ,A^{\otimes p}))
           \hspace{-1cm}}
         \ar@{|->}[d]^(.5){}\\
&{(\qSp^{\otimes}_{\act})^{B\rC _{p}}}
         \ar[rd]^(.5){}\ar[ld]^(.5){}
  &&&&{(A^{\otimes p},\dotsc ,A^{\otimes p}))}
         \ar@{|->}[ld]^(.5){}\ar@{|->}[rd]^(.5){}\\
{\qSp}\ar@{-->}[rr]^(.5){}
  &&{\qSp}
    &&{A^{\otimes np}} \ar@{|->}[rr]^(.5){}
     &&{(A^{\otimes n}})^{t\rC _{p}}
}
\end{split}
\end{numequation}%

\noindent Here an object in the third category is a pair $(S,
(X_{\overline{s} })_{\overline{s}\in \overline{S}})$ where $S$ is a
free $\rC _{p}$-set and each $X_{\overline{s}}$ is a spectrum.  This
category admits a functor $\Theta $ to
$(\qSp^{\otimes}_{\act})^{B\rC _{p}}$ by \cite[Proposition
III.3.6]{NS18}, where it is not named.  The functors $I$ and
$\widetilde{T}_{p}$ are given by
\begin{displaymath}
(S,(X_{\overline{s} })_{\overline{s}\in \overline{S}})
  \mapsto \bigotimes_{\overline{s}\in \overline{S}} X_{\overline{s} }
\qquad \aand 
(S,(X_{\overline{s} })_{\overline{s}\in \overline{S}})
  \mapsto \left(\bigotimes_{s\in S}X_{s} \right)^{t\rC _{p}}\hspace{-5mm},  
\end{displaymath}

\noindent where the action of $\rC _{p}$ on the smash product on the
right is induced by its free action on $S$.  Both functors are lax
symmetric monoidal.  In \cite[Lemma III.3.7]{NS18} they show that $I$
is an initial object in the {\qcat} of all such functors satisfying a
certain exactness condition.  \cite[Corollary III.3.8]{NS18} asserts
there is a unique natural transformation from $I$ to
$\widetilde{T}_{p}$, which is indicated by the broken arrow in the
bottom row of \cref{eq-ITp}.

In the image of $\pow{n}_{\blamb_{p}}$ in the fourth category of
\cref{eq-ITp}, there are $n$ spectrum coordinates, and $\rC _{p}$ acts on
each of them by permuting its factors.

Thus we
have a composite
\begin{numequation}\label{eq-big-comp}
\begin{split}
\xymatrix
@R=1mm
@C=1mm
{
&{\pow{n}_{\blamb_{p}}}\ar@{|->}[rr]^(.5){}
                       \ar@/_2.61pc/@{|->}[dddddd]^(.5){}
  &&{(\pow{pn}, (A^{\otimes p},\dotsc ,A^{\otimes p}))}
                      \ar@{|->}[rrr]^(.5){}
    && &{(A^{\otimes p},\dotsc ,A^{\otimes p})}\ar@{|->}[ddd]^(.5){}\\
&{N (\blamb_{p}^{\op})}\ar[rr]^(.35){A^{\otimes }}
                       \ar[dddd]^(.5){N (\Proj_{p,1})}
  &&{N (\Free_{\rC _{p}})\underset{N (\Fin)}{\times}\qSp^{\otimes }_{\act}}
         \ar[rr]^(.75){\Theta }
         \ar[rrdddd]_(.5){\widetilde{T}_{p}}
    &&{(\qSp^{\otimes}_{\act})^{B\rC _{p}}\hspace{-1cm}}
                      \ar[dd]^(.45){(\otimes)^{B\rC _{p}}\hspace{-1cm} }\\
{}\\
& && &&{\qSp^{B\rC _{p}}}
                      \ar[dd]^(.45){(-)^{t\rC _{p}}}
       &{A^{\otimes np}}\ar@{|->}[ddd]^(.5){}\\
{}\\
&{N (\blamb^{\op})}\ar[rrrr]^(.5){}
  && &&{\qSp}\\
&{\pow{n}_{\blamb}=[n-1]_{\blamb}}\ar@{|->}[rrrrr]^(.5){}
 && &&
      &{(A^{\otimes np})^{t\rC _{p}},}
}
\end{split}
\end{numequation}%

\noindent where $\Proj_{p,1}$ is the projection functor of
\cref{eq-proj-functor}.  The middle row of \cref{eq-Tate-diag} is
defined by the bottom functor $N (\blamb^{\op})\to \qSp$ above.
\qed

\subsection{Polygonic spectra}\label{sec-polygonic}
Polygonic spectra, which are controlled by a truncation set
(\cref{def-polygonic}) of natural numbers $T$ , are first defined and
studied by Krause, McCandless and Nikolaus in \cite{KMN23}.  The case
where $T=\left\{1,p \right\}$ is reviewed in \cite[\S2.1.3]{BHLS}, and
it figures in their definition of the Dehn twist trivialization of
\cite[\S4.2]{BHLS}.

Here is the topological analog of \cref{eq-Tor} and \cref{eq-HAM}.

\begin{defin}\label{def-THH-M}
The {\bf topological Hochschild homology} $\THH(R;M)$ of an
$\EE_{1}$-ring spectrum $R$ {\bf with coefficients} in an $R$-bimodule
$M$ is the geometric realization of a simplicial spectrum informally
depicted by
\begin{displaymath}
\xymatrix
@R=4mm
@C=10mm
{
{\dots }\ar@<1.5ex>[r]^(.5){}
        \ar@<.5ex>[r]^(.5){}
        \ar@<-.5ex>[r]^(.5){}
        \ar@<-1.5ex>[r]^(.5){}
  &{M\otimes R\otimes R}\ar@<1ex>[r]^(.5){}
                        \ar[r]^(.5){}
                        \ar@<-1ex>[r]^(.5){}
    &{M\otimes R}\ar@<.5ex>[r]^(.5){}
                 \ar@<-.5ex>[r]^(.5){}
      &{M.}
}
\end{displaymath}
\end{defin}

The simplicial structure here does not extend to a cyclic one, so the
spectrum $\THH (R,M)$ does not have a cyclotomic structure.  The point
of polygonic spectra, which we now define, is to provide a substitute
for such a structure, as explained  in \cite{KMN23}.

For them, part of the data is the following.

\begin{defin}\label{def-polygonic}
\cite[Definition 2.1 and Example
2.2]{KMN23}.  A {\bf truncation set} $T$ is a set of positive integers
closed under divisors.  Examples include
\begin{align*}
\ltrunc n \rtrunc
 & := \left\{m\in \mN_{>0}:m|n \right\} &&\mbox{for }n\in \mN_{>0}   \\
\ltrunc \infty  \rtrunc
 & := \left\{m\in \mN_{>0}\right\} \\
\ltrunc p^{\infty } \rtrunc
 & :=\left\{p^{k}:k\geq 0 \right\}&&\mbox{for $p$ prime} \\
T/n
 & := \left\{t\in  T:nt\in T \right\} 
     &&\mbox{for a truncation set $T$ and $n\in \mN_{>0}$.} 
\end{align*}
\end{defin}

In \cite{KMN23} and \cite{BHLS} the first three truncation sets are
denoted by $\langle - \rangle$.  We use the notation 
$\ltrunc - \rtrunc$ instead to avoid conflicting with the notation of
\cref{def-fin*}.

\begin{defin}\label{def--polygonic}\cite[Definition 2.4]{KMN23}.  
A {\bf $T$-typical polygonic spectrum} $X_{\pentagon}$ consists of the
following data:
\begin{enumerate}[label={(\roman*)},itemindent=0em]

\item A $T$-indexed collection of spectra $\left\{X_{t}:t\in T
\right\}$, where each $X_{t}$ is an object of $\qSp^{B\rC _{t}}$.

\item For every prime number $p$ and integer $t$ with $pt\in T$, a
$\rC _{t}$-equivariant map of spectra $\varphi_{p,t}:X_{t}\to
(X_{pt})^{t\rC _{p}}$, where the Tate construction on $X_{pt}$
carries the residual action of $\rC _{pt}/\rC _{p}\cong \rC _{t}$.
\end{enumerate}

For $T=\ltrunc p^{k} \rtrunc$, $T=\ltrunc p^{\infty } \rtrunc$ and
$T=\ltrunc \infty  \rtrunc$, we call these $p^{k}$-, 
$p^{\infty }$- and {\bf integer polygonic spectra.}
\end{defin}

\begin{ex}\label{ex-poly-spectra}
{\bf Some polygonic spectra.} \cite[Example 1.2]{KMN23}
\begin{enumerate}[label={(\roman*)},itemindent=0em]

\item \label{ex-poly-spectra0} A $\ltrunc 1 \rtrunc$-typical polygonic
spectrum is an ordinary spectrum.
 
\item \label{ex-poly-spectrai} For a cyclotomic spectrum $X$ with
  structure maps $\varphi_{p}:X\to X^{t\rC _{p}}$ for each prime $p$,
  for any $T$ we can define a constant $T$-typical polygonic spectrum
  $(i_{T}X)_{\pentagon}$ by $(i_{T}X)_{t}=X$ for all $t\in T$.  Since
  $X$ is a $\mT$-spectrum it is also a $\rC_{t}$-spectrum for each
  $t\in T$, and the required maps $\varphi_{p,t}$ are induced by the
  structure maps $\varphi_{p}$. See \cref{thm-cyclo-poly}.

\item \label{ex-poly-spectraii} 
The topological Hochschild homology of
an $\EE_1$-ring spectrum $R$ with coefficients in an $R$-bimodule $M$
canonically admits the structure of an integer polygonic spectrum
$\THH(R;M)_{\pentagon}$ with
\begin{displaymath}
\THH(R;M)_{t} := \THH(R, M^{\otimes_{R}t})
\end{displaymath}

\noindent for every $t\geq 1$. This is the motivating example for
\cite{KMN23}.  Their main result is \cref{thm-KMN23-main}.

\item \label{ex-poly-spectraiii} 
$\mL:=L_{\ltrunc p^{\infty } \rtrunc}\mS$
is the $p^{\infty }$-polygonic spectrum whose $p^{j}$th component (in
the notation of \cref{def-rep-T}) is
\begin{displaymath}
\mL _{p^{j}}:=
\mS[\mT/\rC _{p^{j}}].
\end{displaymath}

\noindent This spectrum is $L_{\ltrunc p^{\infty }\rtrunc}i_{\ltrunc
p^{\infty } \rtrunc}\mS$ in the notation of \cref{thm-cyclo-poly}, as
noted in the footnote to \cite[Example 2.5]{BHLS}.

It follows that we have a diagram
\begin{numequation}\label{eq-TR-diagram}
\begin{split}
\xymatrix
@R=4mm
@C=5mm
{
{\mL_{1}}\ar[dr]^(.5){\varphi_{p,1}}
  & &{\mL_{p}}\ar[dr]^(.5){\varphi_{p,p}}
            \ar[dl]^(.35){\theta^{\rC_{p}}}
      & &{\mL_{p^{2}}}\ar[dr]^(.5){\varphi_{p,p^{2}}}
            \ar[dl]^(.35){\theta^{\rC_{p}}}
          & &{\dotsb }\ar[dl]^(.45){}\\
  &{\mL_{p}^{t\rC_{p}}}
    & &{\mL_{p^{2}}^{t\rC_{p}}}
        & &{\mL_{p^3}^{t\rC_{p}}}
}
\end{split}
\end{numequation}%

\noindent in which $\theta^{\rC_{p}}$ is the map of \cref{def-can}.
The image of this under the functor $\Upsilon$ of
\cref{def-U-poly}, namely the wedge of all the $\mS[\mT/\rC_{p^{j}}]$,
corepresents the functor $\TR$ of \cref{def-TR} in $\qCycSp_{+}$, the
{\qcat} of bounded below cyclotomic spectra, by \cite[Lemma
2.6]{BHLS}.



\item \label{ex-poly-spectraiv}
 For a $p$-polygonic spectrum 
\begin{displaymath}
X_{\pentagon}= (X_{1}, X_{p},  \varphi_{p,1}:X_{1}\to X_{p}^{t\rC _{p}}),
\end{displaymath}

\noindent \cite[page 15]{BHLS} defines
\begin{displaymath}
X_{\pentagon}^{\Phi \rC _{p}}:=X_{1},\quad 
X_{\pentagon}^{\Phi e}:=X_{p},\quad 
X_{\pentagon}^{h\rC _{p}}:=X_{p}^{h\rC _{p}},\quad 
X_{\pentagon}^{t\rC _{p}}:=X_{p}^{t\rC _{p}},
\end{displaymath}

\noindent and $X_{\pentagon}^{\rC _{p}}$ is the pullback in 
\begin{displaymath}
\xymatrix
@R=8mm
@C=15mm
{
{X_{\pentagon}^{\rC _{p}}}\ar[r]^(.5){}\ar[d]^(.5){}
           \ar@{}[dr]|-(.15){\pb}
  &{X_{1}}\ar[d]^(.5){\varphi_{p,1}}\\
{X_{p}^{h\rC _{p}}}\ar[r]^(.5){s_{X_{p}}^{\rC_{p}}}
    &{X_{p}^{t\rC _{p}},}
}
\end{displaymath}

\noindent where $s^{\rC _{p}}_{X_{p}}$ is the map of \cref{eq-Tate2}.
\end{enumerate} 
\end{ex}

\begin{defin}\label{def-U-poly}
{\bf The cyclotomic spectrum associated with a $p^{\infty }$-polygonic
spectrum.} \cite[Construction 2.20]{KMN23} For the data 
\begin{displaymath}
X_{\pentagon}
=\left\{X_{p^{j}},
\varphi_{p,p^{j}}:X_{p^{j}}\to (X_{p^{j+1}})^{t\rC_{p}}
   :j\geq 0 \right\},
\end{displaymath}

\noindent we define
\begin{displaymath}
\Upsilon X_{\pentagon}
:=\bigoplus_{j\geq 0}X_{p^{j}} 
\end{displaymath}

\noindent with structure map $\varphi
^{\Upsilon X_{\pentagon}}_{p}:\Upsilon X_{\pentagon}\to
(\Upsilon X_{\pentagon})^{t\rC_{p}}$ induced by the maps
$\varphi_{p,p^{j}}$.

\end{defin}

One can define the cyclotomic spectrum associated with an integer polygonic
spectrum in a similar way.

\begin{defin}\label{def-PgcSp}
 {\bf {\qCats} of polygonic spectra.} \cite[Definition 2.6 and Example
2.9]{KMN23}.  The {\qcat} of
$T$-typical polygonic spectra $\qPgcSp_{T}$ is the lax equalizer as in
\cref{def-lax-eq} where
\begin{align*}
\qmcC & = \prod_{t\in T} \qSp^{B\rC _{t}},\\
\qmcD & = \prod_{p\in T\atop 
           p\,\, {\rm prime} }\prod_{s\in T/p } \qSp^{B\rC _{s}},
\end{align*}

\noindent and the $p$th components of the two functors $\qmcC\to \qmcD $
are the identity and the functor
\begin{displaymath}
F_{p}:\prod_{t\in T} \qSp^{B\rC _{t}}
 \to \prod_{s\in T/p } \qSp^{B\rC _{s}}
\end{displaymath}

\noindent which for each $s\in T/p$ is induced by the composite
\begin{displaymath}
\xymatrix
@R=4mm
@C=20mm
{
\displaystyle{\prod_{u\in T}\qSp^{B\rC_{u}}}\ar[r]^(.55){\pr}
  &{\qSp^{B\rC_{ps}}}\ar[r]^(.5){(-)^{t\rC_{ps}}}
    &{\qSp^{B\rC_{s}}.}
}
\end{displaymath}

For truncation sets $T'\subseteq T$ there is a restriction functor 
\begin{displaymath}
\qPgcSp_{T} \to \qPgcSp_{T'}.
\end{displaymath}

\noindent We abbreviate $\qPgcSp_{\ltrunc \infty \rtrunc}$ by
$\qPgcSp$, and (as indicated in \cite[Definition 2.8]{BHLS}).  The
{\qcat} of $p$-polygonic spectra $\qPgcSp_{\ltrunc p \rtrunc}$ is the
pullback in
\begin{displaymath}
\xymatrix
@R=6mm
@C=15mm
{
{\qPgcSp_{\ltrunc p \rtrunc}}
           \ar[r]^(.5){}\ar[d]^(.5){}\ar@{}[dr]|-(.15){\pb}
  &{\qSp^{\bdelt^{1}}}\ar[d]^(.5){{\rm ev_{1}}}\\
{\qSp^{B\rC _{p}}}\ar[r]^(.5){(-)^{t\rC _{p}}}
    &{\qSp.}
}
\end{displaymath}

\end{defin}

\begin{theorem}\label{thm-KMN23-main}
{\bf Polygonic spectra and $\THH$ with coefficients.}  \cite[Theorem
6.31]{KMN23} Let $R$ denote an $\mathbb{E}_1$-ring and let
$\qBMod_{R}$ denote the {\qcat} of two sided $R$-module spectra. The
topological Hochschild homology functor with coefficients
\[
\mathrm{THH}(R;-) \colon \qBMod_R \to \qSp
\]

\noindent of \cref{def-THH-M} canonically refines to a functor of
$\infty$-categories
\[
\mathrm{THH}(R;-)_{\pentagon} \colon \qBMod_R \to \qPgcSp
\]

\noindent with $\mathrm{THH}(R;-)_{t}$ as in
\cref{ex-poly-spectra}\cref{ex-poly-spectraii}.

We get a similar functor to $\qPgcSp_{T}$ for any $T$ by restriction
of truncation sets.
\end{theorem}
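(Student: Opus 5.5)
Here I would construct the polygonic refinement of $\THH(R;M)$ directly: first build each component $\THH(R;M)_{t}$ as a $\rC_{t}$-spectrum, then produce the structure maps $\varphi_{p,t}$ by the Nikolaus--Scholze Tate-diagonal argument used in the proof of \cref{thm-THH-cyclo}, and finally assemble everything functorially in $M$.

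For the first step, fix $t\ge 1$ and consider the simplicial spectrum whose $(tn-1)$-simplices are
\[
M\otimes R^{\otimes (n-1)}\otimes M\otimes R^{\otimes (n-1)}\otimes\dotsb\otimes M\otimes R^{\otimes (n-1)},
\]
with $t$ blocks. The key observation is that this is the $t$-fold edgewise subdivision (\cref{def-edgewise}) of the cyclic bar construction of $M^{\otimes_{R}t}$ over $R$. It is naturally a $t$-cyclic object, that is, a functor on $\blamb_{t}^{\op}$ as in \cref{def-r-cyclic}, in which rotation by one block generates a $\rC_{t}$-action. By \cref{lem-subdiv-homeo} its realization is $\THH(R;M^{\otimes_{R}t})$, and it inherits a $\rC_{t}$-action. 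To make this $\infty$-categorical I would mimic \cref{def-THH-qcat}. The idea is to replace $N(\Assoc^{\otimes}_{\act})$ by the operad for pairs $(R,M)$ consisting of an algebra and a bimodule, and to define a functor $V_{t}:\blamb_{t}^{\op}\to$ that operad which places $M$ at $t$ marked points, compatibly with rotation.

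Next I would construct $\varphi_{p,t}$ for $pt$ in the truncation set. Rerun \cref{eq-ITp} with $\blamb_{p}$ replaced by $\blamb_{pt}$, regarding the objects as free $\rC_{p}$-sets via $\sd_{p}$. This gives the lax symmetric monoidal functors $I$ and $\widetilde T_{p}$ on $N(\Free_{\rC_{p}})\times_{N(\Fin)}\qSp^{\otimes}_{\act}$, and \cite[Corollary III.3.8]{NS18} supplies a unique natural transformation $I\Rightarrow\widetilde T_{p}$. Levelwise, this is the Tate diagonal $\Delta_{p}$ of \cref{def-Tate-diagonal} applied to the $p$-fold tensor power of a block $M\otimes R^{\otimes (n-1)}$. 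Composing with the lax monoidal map $(-)^{t\rC_p}\otimes(-)^{t\rC_p}\to(-)^{t\rC_p}$ and then realizing yields
\[
\THH(R;M^{\otimes_{R}t})\to \bigl|(\text{subdivided object for }pt)^{t\rC_{p}}\bigr|\to \THH(R;M^{\otimes_{R}pt})^{t\rC_{p}}.
\]
The last map comes from commuting $|\cdot|$ past $(-)^{t\rC_{p}}$. As in \cref{thm-THH-cyclo}, we only need a map here, not an equivalence.

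The main obstacle is to check that these maps are $\rC_{t}$-equivariant for the residual action $\rC_{pt}/\rC_{p}$, and that everything is natural in $M\in\qBMod_{R}$. A pointwise construction will not give this, so I would make the whole construction a functor of the pair $(R,M)$ over the indexing category $\blamb_{pt}$. The uniqueness in \cite[Corollary III.3.8]{NS18} then forces the $\rC_{pt}$-compatibility, because the space of such transformations is contractible, and hence the residual action is automatically respected. Finally, I would package the collection $\{\THH(R;M)_{t},\varphi_{p,t}\}$ as a functor into the lax equalizer of \cref{def-PgcSp}. Restricting along $T'\subseteq T$ then gives the functor to $\qPgcSp_{T}$.
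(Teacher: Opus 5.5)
The paper gives no proof of this statement; it quotes \cite[Theorem 6.31]{KMN23}, so your outline has to stand on its own. Its overall shape is reasonable: a $t$-gonal bar construction with $\rC_{t}$-action, Frobenii from the Nikolaus--Scholze Tate-diagonal machinery, then a functor into the lax equalizer of \cref{def-PgcSp}. The final packaging and the restriction to $T$ are unproblematic once the maps $\varphi_{p,t}$ exist naturally in $\qSp^{B\rC_{t}}$.

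The indexing, however, is wrong as stated. Put $Y^{(t)}_{n-1}:=(M\otimes R^{\otimes(n-1)})^{\otimes t}$.
\begin{itemize}
\item It is not the $t$-fold edgewise subdivision of the bar construction computing $\THH(R;M^{\otimes_R t})$. That subdivision has $M^{\otimes_R t}\otimes R^{\otimes(tn-1)}$ in degree $n-1$.
\item For a general bimodule it is not a $\blamb_{t}^{\op}$-object, since rotating by one slot would carry an $M$ into an $R$ position. Only rotations by whole blocks act, and together with $\bdelt^{\op}$ they generate the subcategory $\bdelt^{\op}\times\mcB\rC_{t}\subseteq\blamb_{t}^{\op}$.
\item So \cref{lem-subdiv-homeo} does not give $|Y^{(t)}|\simeq\THH(R;M^{\otimes_R t})$. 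Instead, view $Y^{(t)}$ as the diagonal of a $t$-fold multisimplicial object whose realization is the iterated relative tensor product.
\item In the Frobenius step there is no $\blamb_{pt}$-structure to use. Since $Y^{(pt)}_{n-1}=(Y^{(t)}_{n-1})^{\otimes p}$, with $\rC_{p}\subseteq\rC_{pt}$ permuting the factors, the $pt$-gonal object itself plays the role of the subdivision. Its face maps are not $p$-fold tensor powers, which is why the $\Free_{\rC_{p}}$ formalism is needed.
\item The correct input to $I\Rightarrow\widetilde{T}_{p}$ is a functor $N(\bdelt^{\op}\times\mcB\rC_{pt})\to N(\Free_{\rC_{p}})\times_{N(\Fin)}\mathcal{O}^{\otimes}_{\act}$, followed by $(R,M)$, where $\mathcal{O}$ is the operad for algebra--bimodule pairs.
\end{itemize}

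The more serious problem is the step you flag as the main obstacle. Restricting $I\Rightarrow\widetilde{T}_{p}$ along such a functor gives a natural transformation of $\qSp$-valued functors on $N(\bdelt^{\op}\times\mcB\rC_{pt})$. Concretely, this is a $\rC_{pt}$-equivariant map from $Y^{(t)}$, inflated along $\rC_{pt}\to\rC_{t}$, to $(Y^{(pt)})^{t\rC_{p}}$ with $\rC_{pt}$ acting by functoriality. A polygonic structure map must instead be a morphism in $\qSp^{B\rC_{t}}$ whose target carries the residual action. These are not the same thing, for two reasons.
\begin{itemize}
\item Inflation along $B\rC_{pt}\to B\rC_{t}$ is not fully faithful: maps out of an inflated object correspond to maps into $(-)^{h\rC_{p}}$.
\item When $p\mid t$ the extension $\rC_{p}\to\rC_{pt}\to\rC_{t}$ does not split, and this covers every $t>1$ in $\ltrunc p^{\infty}\rtrunc$. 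Already in simplicial degrees $0$ and $1$ one checks that the block rotation does not lift to a $\rC_{t}$-action on the diagram of free $\rC_{p}$-sets. So the residual action cannot be read off by functoriality.
\end{itemize}
The contractibility in \cite[Corollary III.3.8]{NS18} concerns $\qSp$-valued functors and supplies no descent data along $B\rC_{pt}\to B\rC_{t}$. Your sentence ``the residual action is automatically respected'' is exactly what remains to be proved.

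To close the gap you need one of the following.
\begin{itemize}
\item An equivariant version of \cite[Proposition III.3.6 through Corollary III.3.8]{NS18}, for instance over free $\rC_{pt}$-sets; the face maps of the $pt$-gonal construction are $\rC_{pt}$-equivariant. Here $I$ and $\widetilde{T}_{p}$ would take values in $\qSp^{B\rC_{t}}$, the latter via the residual action, and you would need an initiality statement for $I$.
\item An analogue of the $\blamb_{p}$-machinery Nikolaus and Scholze use to make $\varphi_{p}$ on $\THH(R)$ equivariant for $\mT\simeq\mT/\rC_{p}$, where the same non-split phenomenon occurs.
\end{itemize}
This is the substantive part of the theorem, and your proposal asserts it rather than proving it.
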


\begin{defin}\label{def-res-pent}
{\bf Some functors to and from $\qPgcSp_{\ltrunc p \rtrunc}$.}
\cite[Definition 2.9]{BHLS} 

\begin{enumerate}[label={(\roman*)},itemindent=0em]
\item \label{def-res-penti}
The functor 
$\res_{\pentagon}:\qCycSp\to \qPgcSp_{\ltrunc p \rtrunc}$
sends a cyclotomic spectrum $X$ to the
$p$-polygonic spectrum
\begin{displaymath}
 (X, X,  \varphi_{p}:X\to X^{t\rC _{p}}).
\end{displaymath}

\item \label{def-res-pentii}
We denote by  
\begin{displaymath}
\res_{\varphi } :\qPgcSp_{\ltrunc p \rtrunc}\to
\qSp^{\bdelt^{1}}
\end{displaymath}

\noindent the lax symmetric monoidal functor sending $(X_{1},
X_{p},\,\, \varphi_{p,1}:X_{1} \to X_{p}^{t\rC _{p}})$ to the morphism
$\varphi_{p,1}$.
\end{enumerate}
\end{defin}

\begin{defin}\label{def-EQ}
Let $\EQ$ be the {\bf equalizer category} $\bullet \rightrightarrows
\bullet$, and let $\qmcC^{\EQ}$ denote the category of equalizer
diagrams in $\qmcC$. 
\end{defin}

An object $R \rightrightarrows S$ in $\Alg (\qSp)^{\EQ}$ is an
$R$-bimodule structure on the ring spectrum $S$, so the functor
$\mathrm{THH}(R;-)_{\pentagon}$ of \cref{thm-KMN23-main} leads to
\[
\mathrm{THH}_{\pentagon} \colon \Alg (\qSp)^{\EQ} \to \qPgcSp.
\]

\begin{lem}\label{lem-BHLS-2.16}
\cite[Lemma 2.16]{BHLS} 
There is a commuting diagram of lax symmetric monoidal functors
\begin{displaymath}
\xymatrix
@R=4mm
@C=15mm
{
{\Alg(\qSp)}\ar[r]^(.5){\THH}\ar[dd]^(.5){}
  &{\qCycSp}\ar[dr]^(.5){}
            \ar[dd]^(.5){\mathrm{res}_{\pentagon}}\\
  & &{\qSp^{\Delta^1}}\\
{\Alg(\qSp)^{\EQ}}\ar[r]_(.5){\THH_{\pentagon}}
  &{\qPgcSp_{\ltrunc p \rtrunc},}\ar[ur]_(.5){\mathrm{res}_\varphi}
}
\end{displaymath}

\noindent where the left vertical map sends an $\mathbb{E}_1$-algebra
$R$ to the constant diagram \( R \rightrightarrows R.  \)
\end{lem}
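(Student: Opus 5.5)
The plan is to check commutativity separately on each of the two triangles/squares. First compute the composite $\res_\varphi\circ\res_{\pentagon}\circ\THH$. By \cref{def-res-pent}\cref{def-res-penti}, $\res_{\pentagon}$ sends a cyclotomic spectrum $X$ to $(X,X,\varphi_p)$. Then $\res_\varphi$ extracts the arrow $\varphi_{p}:\THH(R)\to\THH(R)^{t\rC_p}$, which is the cyclotomic Frobenius. The diagonal functor $\qCycSp\to\qSp^{\bdelt^1}$ is, by definition of $\qCycSp$ as a lax equalizer (\cref{def-cyc-spectra}), the projection to the $p$th component of the morphism datum. So the right triangle commutes essentially by construction. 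Lax symmetric monoidality follows because each functor is built from lax equalizers of lax symmetric monoidal functors, with $(-)^{t\rC_p}$ lax symmetric monoidal.

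The real content is the square. For the constant bimodule $R\rightrightarrows R$, \cref{thm-KMN23-main} gives $\THH(R;R)_{\pentagon}$ with $t$-components $\THH(R;R^{\otimes_R t})\simeq\THH(R)$. Restricted to $\ltrunc p\rtrunc$, this yields the pair $(\THH(R),\THH(R))$ with a map $\varphi_{p,1}:\THH(R;R)\to\THH(R;R^{\otimes_R p})^{t\rC_p}$. We must identify this polygonic spectrum, naturally and monoidally in $R$, with $\res_{\pentagon}\THH(R)$. I would do this in three steps.

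\begin{enumerate}[label={(\roman*)},itemindent=0em]
\item Identify the $\rC_p$-action on $\THH(R;R^{\otimes_R p})$ with the restriction of the $\mT$-action on $\THH(R)$. This uses the $p$-fold edgewise subdivision of \cref{def-edgewise}: the cyclic bar construction with $p$ copies of $M=R$ inserted is the subdivided cyclic object $\sd_p^*$ of $\mathbf{HH}_\bullet(R)$, and by \cref{lem-subdiv-homeo} its realization is $\THH(R)$, with $\rC_p$ acting by the residual rotation.
\item Compare the maps. In \cite{KMN23}, $\varphi_{p,1}$ is built from the Tate diagonal $\Delta_p$ of \cref{def-Tate-diagonal} applied levelwise to $M\otimes R^{\otimes n}$, followed by the lax structure map of $(-)^{t\rC_p}$. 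With $M=R$, this is exactly the composite of the three rows in \cref{eq-Tate-diag} of \cref{thm-THH-cyclo}.
\item Invoke the uniqueness statement of \cite[Corollary III.3.8]{NS18}: the space of lax symmetric monoidal transformations $I\Rightarrow\widetilde T_p$ in \cref{eq-ITp} is contractible. This forces the two constructions to agree coherently, not just objectwise.
\end{enumerate}

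The main obstacle is step (iii): promoting the objectwise identification to a natural, lax symmetric monoidal equivalence of functors $\Alg(\qSp)\to\qPgcSp_{\ltrunc p\rtrunc}$. My approach is to show that both functors factor through the universal construction of \cref{eq-big-comp}, namely the same functor $N(\blamb_p^{\op})\to\qSp$ followed by realization. Uniqueness of the natural transformation from the initial exact functor $I$ then supplies the required coherences.
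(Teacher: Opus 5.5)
You handle the triangle correctly: by definition, $\res_\varphi\circ\res_{\pentagon}$ is the projection $\qCycSp\to\qSp^{\bdelt^{1}}$ sending $X$ to $\varphi_p\colon X\to X^{t\rC_p}$. Note also that $\res_{\pentagon}$ is literally the constant-polygonic functor $i_{\ltrunc p\rtrunc}$ of \cref{ex-poly-spectra}\cref{ex-poly-spectrai}. So the square asserts exactly that $\THH_{\pentagon}(R\rightrightarrows R)\simeq i_{\ltrunc p\rtrunc}\THH(R)$ as lax symmetric monoidal functors of $R$. The paper gives no argument for this. It quotes the lemma from \cite{BHLS}, where it is a statement about the functor $\THH_{\pentagon}$ constructed in \cite{KMN23}. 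Your steps (i)--(iii) try to prove it directly, and that is where the proposal has a gap.

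Step (ii) restates the square rather than proving it. Saying that the \cite{KMN23} map $\varphi_{p,1}$ on $\THH(R;R)$ ``is exactly'' the composite of \cref{eq-Tate-diag} is the claim at issue, and you have not checked how \cite{KMN23} actually construct $\varphi_{p,1}$. Step (iii) cannot supply this by itself. \cite[Corollary III.3.8]{NS18} concerns transformations $I\Rightarrow\widetilde{T}_p$ between two specific functors on $N(\Free_{\rC_p})\times_{N(\Fin)}\qSp^{\otimes}_{\act}$. It constrains the bimodule Frobenius only after you compare the indexing data, which requires a functor with two properties:
\begin{itemize}
\item along it, the source and target of the \cite{KMN23} transformation, specialized to $M=R$, restrict to $I$ and $\widetilde{T}_p$;
\item along it, the \cite{KMN23} simplicial $\rC_p$-object restricts to $\sd_p^{*}$ of the cyclic bar construction, with the two realizations identified as $\rC_p$-spectra.
\end{itemize}
That last identification is an $\infty$-categorical statement of the kind \cite{NS18} prove with the $\blamb_p$ machinery. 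The point-set \cref{lem-subdiv-homeo} does not provide it. Your sentence about both functors factoring through \cref{eq-big-comp} names this comparison without carrying it out, and the comparison is the entire content of the lemma. You need either to carry it out against the actual construction in \cite{KMN23}, or to locate the corresponding compatibility statement there. A smaller point: the final map in the Frobenius is the interchange $|(-)^{t\rC_p}|\to|-|^{t\rC_p}$ of the Tate construction with geometric realization, not a lax structure map. The lax monoidal structure of $(-)^{t\rC_p}$ enters earlier, in making the levelwise Tate diagonals compatible with the face maps.
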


\begin{lem}\label{lem-BHLS-2.17}
\cite[Lemma 2.17]{BHLS}
Suppose we are given $(A,B) \in \Alg(\qSp)^{E\mathbb{Q}}$ and 
$V \in \Alg(\qSp)$ such that the underlying spectrum of $V$ is a 
dualizable.
Then the natural map
\[
 \THH_{\pentagon}(\mathbb{S};V)
  \;\otimes\;
\THH_{\pentagon}(A;B)
\;\longrightarrow\;
\THH_{\pentagon}\bigl(A;\, V \otimes B\bigr),
\]

\noindent coming from the lax symmetric monoidal structure on
$\mathrm{res}_\varphi\, \THH(-;-)$, is an isomorphism.
\end{lem}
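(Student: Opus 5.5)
We must show that the map $\THH_{\pentagon}(\mathbb{S};V)\otimes\THH_{\pentagon}(A;B)\to\THH_{\pentagon}(A;V\otimes B)$ in $\qPgcSp_{\ltrunc p \rtrunc}$ is an equivalence. A $p$-polygonic spectrum is a triple $(X_{1},X_{p},\varphi_{p,1})$, as in the pullback description of \cref{def-PgcSp}. Equivalences there are detected by the two underlying components $X_{1}$ and $X_{p}$, since the structure map is data living over them. So we check two things separately: the map on $\THH(-;-)_{1}$, and the map on $\THH(-;-)_{p}$ as a map in $\qSp^{B\rC_{p}}$. In the latter case it suffices to check the underlying spectra, because equivalences in functor categories are detected objectwise.

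For the first component, the formula of \cref{ex-poly-spectra}\cref{ex-poly-spectraii} gives $\THH(\mathbb{S};V)\otimes\THH(A;B)\to\THH(A;V\otimes B)$. Here $A$ acts trivially on $V$ and $\mathbb{S}$ acts on $B$. The bimodule structure on $V\otimes B$ comes from the $A$-action on $B$ alone. We compare the simplicial spectra of \cref{def-THH-M} levelwise. At level $n$ the source is $(V\otimes\mathbb{S}^{\otimes n})\otimes(B\otimes A^{\otimes n})$, since $\THH(\mathbb{S};V)$ is the constant simplicial object $V$ up to degeneracies. The target is $V\otimes B\otimes A^{\otimes n}$. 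The levelwise map is the shuffle or diagonal identification. Geometric realization commutes with $\otimes$ in each variable, and the realization of the diagonal of a bisimplicial object is its total realization. Together these give the equivalence. Note that this step does not use dualizability.

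For the second component we need $\THH(\mathbb{S};V^{\otimes p})\otimes\THH(A;B^{\otimes_{A}p})\to\THH(A;(V\otimes B)^{\otimes_{A}p})$. First we identify $(V\otimes B)^{\otimes_{A}p}\simeq V^{\otimes p}\otimes B^{\otimes_{A}p}$. This holds because $V$ carries trivial $A$-actions, so the relative tensor products pass through the $V$ factors. This identification must be $\rC_{p}$-equivariant, with $\rC_{p}$ permuting factors cyclically on both sides. After that identification, the same levelwise argument as for the first component applies, with the cyclic rotation built into the $\THH$ with coefficients in a $p$-fold tensor power, following \cite{KMN23}.

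The main obstacle is compatibility with the structure maps. Since equivalences are detected on components, strictly we need only the two component equivalences. However, one must make sure that the map in question really is the lax monoidal structure map on both components, and that the $\rC_{p}$-action on the $p$-component is the one used to define the target. This is where dualizability of $V$ enters. The polygonic Frobenius involves a Tate construction, and in the formula for $\varphi_{p,1}$ the Tate diagonal on $V$ must interact well with $(-)^{t\rC_{p}}$. For dualizable $V$, the functor $V\otimes-$ commutes with limits, and in particular $(V^{\otimes p}\otimes Y)^{t\rC_{p}}$ is computed compatibly. I would therefore verify the structure-map compatibility using \cref{lem-BHLS-2.16} and the lax monoidality of $\res_{\varphi}$. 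Then I would appeal to the component detection to conclude.
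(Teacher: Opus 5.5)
The paper gives no proof of this lemma; it only cites \cite[Lemma 2.17]{BHLS}, so there is no internal argument to compare with. Your first three paragraphs nonetheless give a correct proof, once you make explicit two inputs you use silently. First, the symmetric monoidal structure on $\qPgcSp_{\ltrunc p \rtrunc}$ is componentwise: the Frobenius of $X\otimes Y$ is $X_{1}\otimes Y_{1}\to X_{p}^{t\rC_{p}}\otimes Y_{p}^{t\rC_{p}}\to (X_{p}\otimes Y_{p})^{t\rC_{p}}$. Hence the forgetful functor to $\qSp\times\qSp^{B\rC_{p}}$ is symmetric monoidal as well as conservative. Second, $\THH_{\pentagon}$ itself is lax symmetric monoidal into $\qPgcSp_{\ltrunc p \rtrunc}$ (this is part of \cref{lem-BHLS-2.16}). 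The components of its structure maps are the usual K\"unneth maps for $\THH$ with coefficients. With these inputs, the map in the lemma is a morphism of $p$-polygonic spectra. Its $\rC_{p}$-equivariance and its compatibility with the Frobenii are part of the data of that morphism, not something to verify. Your two componentwise equivalences then finish the proof.

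Your last paragraph should be removed, because what it asserts is false. Tensoring with $V^{\otimes p}$ (with the permutation action) does not commute with $(-)^{t\rC_{p}}$, even when $V$ is dualizable. Take $V=A=B=\mS$. The lax comparison $(V^{\otimes p})^{t\rC_{p}}\otimes \THH(A;B^{\otimes_{A}p})^{t\rC_{p}}\to (V^{\otimes p}\otimes \THH(A;B^{\otimes_{A}p}))^{t\rC_{p}}$ becomes $\mS^{t\rC_{p}}\otimes \mS^{t\rC_{p}}\to \mS^{t\rC_{p}}$. By the Segal conjecture this is the multiplication $\mS_{p}\otimes\mS_{p}\to\mS_{p}$, which is not an equivalence (on $\pi_{0}$ it is $\Z_{p}\otimes\Z_{p}\to\Z_{p}$). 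The same example shows that the map is \emph{not} an isomorphism after applying $\res_{\varphi}$, i.e.\ in $\qSp^{\bdelt^{1}}$. So the statement must be read in $\qPgcSp_{\ltrunc p \rtrunc}$, which is exactly where your conservativity argument operates. Finally, nothing in your componentwise check uses dualizability of $V$. Neither $V\otimes\THH(A;B)\simeq\THH(A;V\otimes B)$ nor $V^{\otimes p}\otimes\THH(A;B^{\otimes_{A}p})\simeq\THH(A;(V\otimes B)^{\otimes_{A}p})$ needs it. Whatever role that hypothesis plays in \cite{BHLS}, your proposal does not locate it, and you should not claim that it does.
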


\begin{defin}\label{def-spectral-line}
\cite[Notation 1.1.1]{McCandless-curves}
The {\bf spectral affine line} is the $\EE_{\infty }$-ring
\begin{displaymath}
\mS[x]:= \Sigma^{\infty }_{+}\mN
\end{displaymath}

\noindent with multiplication induced by addition in $\mN$.
$\mS[x]/x^{n}$ is the following pushout in $\EE_{\infty }$-rings
\begin{numequation}\label{eq-nth-pushout}
\begin{split}
\xymatrix
@R=8mm
@C=20mm
{
{\mS[x]}\ar[r]^(.5){x\mapsto x^{n}}
        \ar[d]_(.5){x\mapsto 0}
        \ar@{}[dr]|-(.85){\po}
  &{\mS[x]}\ar[d]^(.5){}\\
{\mS}\ar[r]^(.5){}
  &{\mS[x]/x^{n}}
}
\end{split}
\end{numequation}%

\noindent The reduced topological Hochschild homologies of $\mS[x]$
and $\mS[x]/x^{n}$, $\widetilde{\THH} (\mS[x])$ and
$\widetilde{\THH}(\mS[x]/x^{n})$, are the fibers of the maps of
cyclotomic spectra
\begin{align*}
\THH(\mS[x])
 & \to \THH(\mS)=\mS & \mbox{induced by
the left map in \cref{eq-nth-pushout}} \\
\aand \THH(\mS[x]/x^{n})
 & \to \mS& \mbox{induced by a similar map $\mS[x]/x^{n}\to \mS$}
\end{align*}

\noindent with the trivial cyclotomic structure on $\mS$.

For a cyclotomic spectrum $X$,
\begin{align*}
X\widehat{\otimes  }\widetilde{\THH}^{\rm cont}
   (\mS\llbracket x\rrbracket )
 & := \lim{n} (X\otimes \widetilde{\THH}(\mS[x]/x^{n}))\\
\aand 
X\widehat{\otimes}\,\Omega \widetilde{\THH}^{\rm cont}
  (\mS\llbracket x\rrbracket)
 & := \lim{n} (X\otimes \Omega \widetilde{\THH}(\mS[x]/x^{n})).
\end{align*}

\noindent where the limits on the right are induced by the evident maps
$\mS[x]/x^{n+1}\to\mS[x]/x^{n}$.

For a connective $\EE_{1}^{}$-ring $R$,
\begin{displaymath}
R[x]/x^{n}:= R\otimes \mS[x]/t^{n}.
\end{displaymath}
\end{defin}

Note that the $\EE_{\infty }$-ring $\mS[x]$ is not the free
$\EE_{\infty }$-ring on one generator. However the underlying
$\EE_{1}$-ring of $\mS[x]$ is the free $\EE_{1}$-ring on one generator
since $\mN$ is the free $\EE_{1}$-monoid on one generator in the
1-category of spaces.

On \cite[page 5]{McCandless-curves} McCandless asserts without proof that
\begin{displaymath}
\widetilde{\THH} (\mS[x])
\simeq \bigoplus_{r>0}\mS[\mT/\rC_{r}].
\end{displaymath}

\noindent The computation is discussed by Catherine Li in
\cite[Example 2.11]{Talbot24}.

\begin{theorem}\label{thm-cyclo-poly}
{\bf The cyclotomic/polygonic adjunction.}  \cite[Theorem 2.24]{KMN23}
For a truncation set $T$, let $i_{T}:\qCycSp:\to \qPgcSp_{T}$ be the
functor given in \cref{ex-poly-spectra}\cref{ex-poly-spectrai}.  It
has left and right adjoints $L_{T}$ and $R_{T}$.  We omit the
subscript when $T=\ltrunc \infty  \rtrunc$.

For a cyclotomic spectrum $X$
\begin{align*}
Li (X) & \simeq X \otimes \widetilde{\THH} (\mS[x])   \\
\aand 
Ri (X) & \simeq X \,\widehat{\otimes  }\,\,
           \Omega \widetilde{\THH}^{\rm cont} (\mS \llbracket x\rrbracket)
          :=\Omega\, \lim{n} \left(X \otimes 
          \widetilde{\THH} (\mS[x]/x^{n}) \right).
\end{align*}
\end{theorem}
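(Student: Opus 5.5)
The plan is to build the two adjoints by hand from the lax-equalizer presentations, rather than rely only on the adjoint functor theorem. $\qCycSp$ is the lax equalizer of $\Id$ and $\prod_p(-)^{t\rC_p}$ on $\qSp^{\mT}$, and $\qPgcSp_T$ is the lax equalizer in \cref{def-PgcSp}. Both are presentable stable by \cref{rem-presentable-stable-qcats}. The functor $i_T$ is induced by the restriction functors $\qSp^{\mT}\to\qSp^{B\rC_t}$, and it commutes with the structure maps. Limits and colimits in a lax equalizer are computed underlying whenever the source functor preserves them. Restriction preserves all limits and colimits, so $i_T$ preserves both, and the adjoint functor theorem gives $L_T$ and $R_T$. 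The real content is the two formulas for $T=\ltrunc\infty\rtrunc$.

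For $L$ I would reduce to a corepresentability statement and then check it on a generator. Since $Li$ is the left adjoint of $i$ precomposed with $i$, the natural transformation $X\otimes\widetilde{\THH}(\mS[x])\to Li(X)$ should come from the lax symmetric monoidal structure of $i$ together with a canonical map $\widetilde{\THH}(\mS[x])\to Li(\mS)$. First I compute $Li(\mS^{\triv})$. By adjunction, maps $Li(\mS)\to Y$ are maps of polygonic spectra $i\mS\to iY$. Each component $\mS\in\qSp^{B\rC_t}$ is the unit, so such a map is determined by a point of $Y^{h\rC_t}$ for each $t$, compatible through the $\varphi_p$. The plan is to recognize this as $\map(\bigoplus_{r>0}\mS[\mT/\rC_r],Y)$, using the standard description of $\widetilde{\THH}(\mS[x])$ quoted after \cref{def-spectral-line}; the summand $r$ accounts for the $\rC_r$-homotopy fixed points. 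Both sides are exact in $X$ and preserve colimits, and $\qCycSp$ is generated under colimits by objects of the form $\mS[\mT/\rC_r]\otimes$(finite) together with $\mS^{\triv}$. I would then reduce to these generators, using the projection formula for $i\dashv R$ and $L\dashv i$ (valid because $i$ is symmetric monoidal on dualizable objects).

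For $R$ I would dualize. I would show that $Ri(X)=\map_{\qCycSp}$-internal hom from $Li(\mS)$ into $X$, which should hold because $i$ is closed monoidal enough. I would then express the internal hom out of $\bigoplus_r\mS[\mT/\rC_r]$ as a limit that is identified with $\Omega\lim_n X\otimes\widetilde{\THH}(\mS[x]/x^n)$. The input is that $\widetilde{\THH}(\mS[x]/x^n)$ is built by finitely many cells $\mS[\mT/\rC_r]$ with $r<n$, together with shifts. This comes from the pushout \cref{eq-nth-pushout} and the fact that $\THH$ preserves colimits of $\EE_\infty$-rings. Consequently the truncations in $n$ give the Spanier–Whitehead duals of the finite stages of the sum, up to the suspension that produces the $\Omega$.

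I expect the main obstacle to be the identification of $Li(\mS)$ with $\widetilde{\THH}(\mS[x])$ as a cyclotomic spectrum, not merely as an underlying spectrum. The reason is that the Frobenii on $\bigoplus_r\mS[\mT/\rC_r]$ mix the summands $r\mapsto pr$, and I must check that they match the universal polygonic maps. I would attempt this by comparing both sides via \cref{def-U-poly}, adapted from $p^\infty$ to integer truncation sets. The hope is that $L(i\mS)$ is $\Upsilon$ of the free polygonic object, with component $t$ equal to $\bigoplus_{d\mid t}\mS[\rC_t/\rC_d]$-like terms.
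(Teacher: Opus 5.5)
The paper gives no proof of this statement; it only quotes \cite[Theorem 2.24]{KMN23}. Judged on its own, your outline has genuine gaps, and the hardest one is not the one you flag.

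\textbf{Left adjoint.} Your existence argument does not work for $L_T$.
In a lax equalizer of $F_0,F_1$, colimits are created by the forgetful functor when $F_0$ preserves them, but limits require $F_1$ to preserve them. In $\qCycSp$ and $\qPgcSp_{T}$ that second functor is built from $(-)^{t\rC_p}$, which does not commute with infinite products. For instance, for $\prod_{n\ge 0}\Sigma^{-n}H\mathbb{F}_p$ with trivial action, $\pi_0$ of the homotopy orbits is $\bigoplus_n\mathbb{F}_p$, not $\prod_n\mathbb{F}_p$. So ``restriction preserves limits'' says nothing about $i_T$, and only $R_T$ comes out of the adjoint functor theorem.

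For $T=\ltrunc\infty\rtrunc$ you should instead construct $L$ by hand:
$L(X_{\pentagon})=\bigoplus_{t\ge 1}\mathrm{Ind}^{\mT}_{\rC_t}X_t$. Its Frobenius is assembled from the $\varphi_{p,t}$, using $(\mathrm{Ind}^{\mT}_{\rC_{pt}}Y)^{t\rC_p}\simeq\mathrm{Ind}^{\mT/\rC_p}_{\rC_{pt}/\rC_p}(Y^{t\rC_p})$ and $(\mathrm{Ind}^{\mT}_{\rC_u}Y)^{t\rC_p}\simeq 0$ for $p\nmid u$. Writing both mapping spectra as equalizers then gives $L\dashv i$ directly. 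This uses $T/p=T$ for every prime $p$. For a general $T$ the same formula is not the left adjoint, because the cyclotomic side picks up extra conditions $\varphi_p\circ f_t\simeq 0$ whenever $pt\notin T$.

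The explicit $L$ also replaces your extension step, which does not work as written. Your generation claim for $\qCycSp$ is asserted without argument, and the cyclotomic structures on the proposed generators are never specified. The projection formula $L(iX)\simeq X\otimes L(i\mS^{\triv})$ is only formal for dualizable $X$. With $L$ explicit, you get $Li(X)=\bigoplus_t \mathrm{Ind}^{\mT}_{\rC_t}\mathrm{Res}^{\mT}_{\rC_t}X\simeq X\otimes\bigoplus_{t\ge1}\mS[\mT/\rC_t]$ for every $X$. What remains is to match Frobenii with those of $\THH(\mS[x])=\Sigma^{\infty}_{+}B^{\mathrm{cyc}}(\mathbb{N})$, which carry the weight-$r$ summand to the $t\rC_p$-construction of the weight-$pr$ summand. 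No ``free polygonic object'' is needed: $L(i\mS^{\triv})$ is just $\bigoplus_t\mS[\mT/\rC_t]$.

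\textbf{Right adjoint.} Once $Li(Y)\simeq Y\otimes Li(\mS^{\triv})$ holds for all $Y$, Yoneda formally gives $Ri(X)\simeq\underline{\map}_{\qCycSp}(\widetilde{\THH}(\mS[x]),X)$. The content lies in identifying this internal hom with $\Omega\lim{n}(X\otimes\widetilde{\THH}(\mS[x]/x^{n}))$, and your mechanism for that rests on false premises.

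First, $\widetilde{\THH}(\mS[x]/x^{n})$ is not finite. The $\EE_{\infty}$-pushout only yields a relative tensor product over $\THH(\mS[x])$. Hesselholt--Madsen's computation of the cyclic bar construction of $\{0,1,x,\dots,x^{n-1}\}$ gives a nonzero summand in every weight $s\ge 1$. It equals $\mS[\mT/\rC_s]$ only for $s<n$; for $s\ge n$ it is built from $S^{\lambda_d}\wedge(\mT/\rC_s)_{+}$, where $\lambda_d=\lambda\oplus\lambda^{2}\oplus\dots\oplus\lambda^{d}$ and $d=\lfloor (s-1)/n\rfloor$.

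Second, internal homs in $\qCycSp$ are in general not computed on underlying spectra, so Spanier--Whitehead duality of underlying objects is the wrong tool. Moreover, the summands of weight $<n$ form a quotient of $\widetilde{\THH}(\mS[x])$ in $\qCycSp$, not a subobject, since $\varphi_p$ multiplies weights by $p$. So your finite stages do not exhibit $\widetilde{\THH}(\mS[x])$ as a colimit, and the agreement you see in weights $<n$ is only a statement about underlying $\mT$-spectra.

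To see how much is at stake, apply $\TopC=\map_{\qCycSp}(\mS^{\triv},-)$ with $X=\THH(R)$. The formula becomes $\TR(R)\simeq\lim{n}\Omega\widetilde{\TopC}(R[x]/x^{n})$, which is McCandless's theorem that $\TR$ is curves on $\TopC$ \cite{McCandless-curves}. A proof therefore needs an actual comparison map of cyclotomic spectra and control of the Frobenius on the weight-$\ge n$ pieces as $n\to\infty$. Your proposal supplies neither, and this, rather than identifying $Li(\mS^{\triv})$, is the main obstacle.
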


A formula for $R_{T}$ is given in \cite[Definition 2.23]{KMN23}.

\subsection{Epicyclic spaces and spectra}\label{sec-epicyc}

Recall \cref{thm-circle-action}, which says that the geometric
realization of a cyclic space has an action of the circle group $\mT$.
In this subsection we will state a similar result for epicyclic spaces
in {\qcatal} language, where by \cref{def-epicyc-object} an epicyclic
object in a category $\mcC $ is a $\mcC $-valued functor on
$\widetilde{\blamb} ^{\op}$.

\begin{defin}\label{def-epi-qcat}
\cite[Definition 2.1.14]{McCandless-curves}, {\bf An epicyclic object
in an {\qcat} $\qmcC$} is a map of simplicial sets $N
(\widetilde{\blamb}^{\op})\to \qmcC$, that is a $\qmcC$-valued
presheaf on $\widetilde{\blamb}$ as in
\cref{def-qcats}\cref{def-qcatsvii}.
\end{defin}

For the {\Wittmonoid} $\mathscr{M}$ of \cref{def-Witt-monoid}, we have
a one object topological category $\mcB\mathscr{M}$ with $\mathscr{M}$
as endomorphism space.  It contains the self dual $\mcB \mT$ as a
subcategory. Recall that a functor $\mcB \mT\to \qmcC$ is an object
$X$ in $\qmcC$ equipped with an action of $\mT$ and hence with each of
its finite subgroups $\rC_{r}$. When $\qmcC$ has finite limits and
colimits, the limit and colimit of the functor restricted to $\mcB
\rC_{r}$ are $X^{h\rC_{r}}$ and $X_{h\rC_{r}}$.

\begin{defin}\label{def-Frob-lifts}
\cite[Definition 2.1.2]{McCandless-curves}.
For an {\qcat} $\qmcC$, the {\bf
{\qcat} of objects in $\qmcC$ with Frobenius lifts} is
\begin{displaymath}
\qmcC^{\Fr} := \qFun (\mcB\mathscr{M}^{\op}, \qmcC)
    =\mathpzc{P}_{\qmcC}(\mcB\mathscr{M}),
\end{displaymath}

\noindent the category of $\qmcC$-valued presheaves on $\mcB\mathscr{M}$,
where $\msM$ is the {\Wittmonoid} of \cref{def-Witt-monoid}.  We omit the
subscript when $\qmcC=\qmcS$, the {\qcat} of topological spaces.
\end{defin}

Such a functor defines an object $X$ in $\qmcC$ with a $\mT$-action
since $\mcB \mT \subseteq \mcB \mathscr{M}$.  If $\qmcC$ has finite limits and
colimits, there are $\mT$-{\eqvr} maps $\psi_{k}:X\to
\rho^{*}_{k}X^{h\rC_{r}}$ for each $k>0$, for $\rho^{*}_{k}$ as in
\cref{def-rho-n}.  These are compatible in that for two integers
$k_{1}, k_{2}>0$, we have a commutative diagram
\begin{displaymath}
\xymatrix
@R=8mm
@C=10mm
{
{\rho_{k_{1}}^{*}X^{h\rC_{1}}}
           \ar[d]_(.5){\rho_{k_{1}}^{*} (\psi_{k_{2}})^{h\rC_{k_{1}}}}
  &{X}\ar[l]_(.4){\psi_{k_{1}}}
      \ar[r]^(.4){\psi_{k_{2}}}
    &{\rho_{k_{2}}^{*}X^{h\rC_{2}}}
           \ar[d]^(.5){\rho_{k_{2}}^{*} (\psi_{k_{1}})^{h\rC_{2}}}\\
{\rho_{k_{1}k_{2}}^{*} (X^{h\rC_{k_{1}}})^{h\rC_{k_{2}}}}
    \ar[rr]^(.5){\simeq }
  & &{\rho_{k_{2}k_{1}}^{*} (X^{h\rC_{k_{2}}})^{h\rC_{k_{1}}}.}
}
\end{displaymath}

\begin{prop}\label{prop-conservative}
\cite[Proposition 2.2.1]{McCandless-curves}.
The forgetful functor
\begin{displaymath}
\mathpzc{P}(\mcB\mathscr{M}) \to \mathpzc{P}(\mcB\mT)
\end{displaymath}

\noindent
from spaces
with Frobenius lefts to $\mT$-spaces is conservative and preserves
small limits and colimits.
\end{prop}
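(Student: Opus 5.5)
The statement is \cref{prop-conservative}: the forgetful functor $\mathpzc{P}(\mcB\mathscr{M}) \to \mathpzc{P}(\mcB\mT)$ is conservative and preserves small limits and colimits. The forgetful functor is restriction $i^{*}$ along the inclusion $i:\mcB\mT\to \mcB\mathscr{M}$ of one-object topological categories. The plan is to treat the three properties separately, with the colimit statement being the only one that needs real input.

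\textbf{Limits.} Limits in presheaf $\infty$-categories $\qFun(N(\mcC^{\op}),\qmcS)$ are computed pointwise (\cite[\S5.1]{Lurie:HTT}). Restriction along any functor is precomposition, so it commutes with pointwise limits. Equivalently, $i^{*}$ has a left adjoint, namely left Kan extension $i_{!}$ as in \cref{sec-lim-colim}.

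\textbf{Colimits.} Colimits are also computed pointwise, so $i^{*}$ preserves them for the same formal reason. Equivalently, $i^{*}$ has a right adjoint $i_{*}$, right Kan extension, which exists because $\qmcS$ is complete.

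\textbf{Conservativity.} Both categories have a single object. A presheaf $X$ on $\mcB\mathscr{M}$ is one space $X(\ast)$ with a right action of $\mathscr{M}$. A morphism $f:X\to Y$ is an equivalence in a presheaf category iff it is an equivalence at every object. Here there is only the object $\ast$, and $i$ is the identity on objects. So $f$ is an equivalence iff $f_{\ast}:X(\ast)\to Y(\ast)$ is an equivalence of spaces. This happens iff $i^{*}f$ is an equivalence in $\mathpzc{P}(\mcB\mT)$, which is what conservativity asks.

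\textbf{Main obstacle.} The formal arguments apply only after a technical point is settled. $\mathscr{M}=\mT\rtimes\mN^{\times}$ is a topological monoid, not a discrete one. One must therefore interpret $\mcB\mathscr{M}$ as an $\infty$-category, for instance the nerve of the simplicial category obtained by applying $\Sing$ to its endomorphism space. Under that interpretation:
\begin{itemize}
\item it still has exactly one object;
\item $i$ is essentially surjective;
\item objectwise equivalences detect equivalences in $\qFun$ (\cite[Theorem 3.1.4]{Lurie:HTT}-type statement: a natural transformation is an equivalence iff all its components are).
\end{itemize}
I would verify these points first, using \cref{def-Frob-lifts}. Once they hold, all three claims follow formally, and no property of $\mathscr{M}$ beyond $\mT\subseteq\mathscr{M}$ sharing its unique object is used.
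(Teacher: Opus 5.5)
Your argument is correct. The forgetful functor is restriction along $\mcB\mT\to\mcB\mathscr{M}$, which is surjective on objects, and limits, colimits and equivalences in presheaf $\infty$-categories are all computed or detected pointwise; this is the standard formal argument behind \cite[Proposition 2.2.1]{McCandless-curves}, which the paper quotes without proof, and the ``main obstacle'' you flag is not really one, since any $\infty$-categorical model of $\mcB\mathscr{M}$ has a single object and the points you list are immediate.
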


\begin{theorem}\label{thm-geo-epi}
\cite[Proposition 2.1.15]{McCandless-curves}.
{\bf Geometric realizations of cyclic and epicyclic objects.}  
Let $\qmcC$ be an {\qcat} that admits geometric realizations. 
Then there is a commutative diagram
\begin{displaymath}
\xymatrix
@R=8mm
@C=20mm
{
  {\mathpzc{P}_{\qmcC} (\widetilde{\blamb})}
     \ar[r]^(.5){}\ar[d]^(.5){}
    &{\mathpzc{P}_{\qmcC} (\mcB\mathscr{M})=\qmcC^{\Fr}}
     \ar[d]^(.5){}\\
  {\mathpzc{P}_{\qmcC} (\blamb)}
     \ar[r]^(.5){}
    &{\mathpzc{P}_{\qmcC} (\mcB\mT)=\qmcC^{B\mT}}
}
\end{displaymath}

\noindent in which the vertical arrows are induced by the inclusions
$\blamb\to \widetilde{\blamb}$ and $\mT\to \mathscr{M}$, and each horizontal
arrow is geometric realization of the corresponding simplicial object
(induced by the inclusion functors $\bdelt \to \blamb \to
\widetilde{\blamb}$) in $\qmcC$.
\end{theorem}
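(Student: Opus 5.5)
The plan is to build both horizontal functors from a single mechanism, left Kan extension along a functor from the indexing category to the one-object category, and then obtain commutativity of the square from naturality of Kan extensions. For the bottom row, I would use the known presentation of $\mT$-objects as cyclic objects. By \cref{thm-circle-action}, the geometric realization of a cyclic space carries a canonical $\mT$-action. In $\infty$-categorical terms, there is a functor $j:N(\blamb^{\op})\to \mcB\mT$, or its groupoid completion $|N(\blamb)|\simeq B\mT$, and geometric realization of the underlying simplicial object agrees with left Kan extension along $j$. The reason is that $\bdelt^{\op}\to\blamb^{\op}$ is cofinal onto the fibers of $j$. Equivalently, the fiber of $j$ over the point is $\bdelt^{\op}$ up to a cofinal map; this is \cref{prop-geocyc} in disguise, since it says $|\blamb^{n}|\simeq \mT\times|\bdelt^{n}|$. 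So the bottom arrow is $j_{!}$, computed pointwise as the colimit over $\bdelt^{\op}$, and its underlying object is the geometric realization.

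For the top row I would copy the same construction with $\widetilde{\blamb}$ and the Witt monoid. I would construct a functor $\tilde{j}:N(\widetilde{\blamb}^{\op})\to \mcB\mathscr{M}^{\op}$ extending $j$. Concretely, a morphism of degree $k$ goes to an element of $\mathscr{M}$ with $\mN^{\times}$-component $k$, and its $\mT$-component is recorded by the rotation part coming from the factorization \cref{eq-epi-unique}. The relations (i)–(iii) in \cref{def-epicyclic} should match the semidirect product law $(z_1,k_1)(z_2,k_2)=(z_1z_2^{k_1},k_1k_2)$. Relation (iii), $\tau_n\pi^k_n=\pi^k_n\sd_k(\tau_n)$, is the one that encodes $z\mapsto z^k$. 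I would then define the top arrow as $\tilde{j}_{!}$. It only remains to check that its underlying object is again the geometric realization. Here I would use \cref{prop-conservative}: the forgetful functor $\mathpzc{P}(\mcB\mathscr{M})\to\mathpzc{P}(\mcB\mT)$ preserves colimits, so the underlying $\mT$-object of $\tilde{j}_!X$ can be computed as $j_!$ of the restriction of $X$ to $\blamb^{\op}$. This would also be the commutativity of the square.

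Concretely, the square commutes if the Beck–Chevalley map $j_!\,\iota^* \to \iota'^*\,\tilde{j}_!$ is an equivalence, where $\iota:\blamb\to\widetilde{\blamb}$ and $\iota':\mT\to\mathscr{M}$ are the inclusions. Pointwise, this amounts to showing that the square of indexing $\infty$-categories
\[
N(\blamb^{\op})\to N(\widetilde{\blamb}^{\op}),\qquad \mcB\mT\to\mcB\mathscr{M}^{\op}
\]
is left Kan extension compatible. The criterion I would use is that for the unique object $*$ of $\mcB\mathscr{M}$, the comma category $\widetilde{\blamb}^{\op}\times_{\mcB\mathscr{M}^{\op}}(\mcB\mathscr{M}^{\op})_{/*}$ receives a cofinal functor from $\blamb^{\op}\times_{\mcB\mT}(\mcB\mT)_{/*}$. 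Roughly, this is a consequence of the unique factorization \cref{eq-epi-unique}. It says $\widetilde{\blamb}$ is generated over $\blamb$ by the $\pi^k_n$, which map to the pure $\mN^{\times}$ part of $\mathscr{M}$, so $\mathscr{M}$ factors as $\mT\times\mN^{\times}$ compatibly with the factorization in $\widetilde{\blamb}$.

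I expect this cofinality check, or equivalently the identification of the comma categories, to be the main obstacle. I would first try to reduce it to the cyclic case already known by \cref{prop-geocyc} and \cref{prop-geo-r-cyc}. The argument would show that the slice over $*$ decomposes as a disjoint union over $k\in\mN^{\times}$ of copies of the cyclic slice, each reindexed by $\sd_k$. \cref{lem-subdiv-homeo} then guarantees that subdivision does not change geometric realization. As a sanity check, \cref{thm-Witt-action} gives the space-level version: the realization of an epicyclic space has an $\mathscr{M}$-action whose underlying $\mT$-action is that of \cref{thm-circle-action}. The statement above is the $\infty$-categorical form of that fact.
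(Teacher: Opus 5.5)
Your overall architecture is the right one: both rows as left Kan extensions, and commutativity as a base-change property of the square of indexing categories. However, the two steps that carry the content do not work as proposed.

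First, $\tilde{j}$ cannot be obtained by sending each morphism of $\widetilde{\blamb}$ to an element of $\mathscr{M}$ and checking relations (i)--(iii) of \cref{def-epicyclic} against the semidirect product law. On $\blamb$ such an assignment is a strict functor $\blamb^{\op}\to\mcB\mT$ out of a category with discrete hom-sets, so it factors through $\mcB(\mT^{\delta})$. Since $|N\blamb|$ is simply connected, the induced map $|N\blamb^{\op}|\to B\mT$ is null, and the functor is equivalent to a constant one. Its comma category over the point is then $\blamb^{\op}\times\mT$, and the resulting left Kan extension is not geometric realization: for $\qmcC=\qmcS$ and $X=*$ it gives $\mT\times B\mT$ instead of a point. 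The $j$ you need classifies the $B\Z$-torsor $\blamb_{\infty}\to\blamb$, so its fiber is $\blamb_{\infty}^{\op}$. Its $\mT$-component is genuinely higher-categorical data; since $\pi_{0}\mathscr{M}=\mN^{\times}$, the only $1$-categorical content of $\tilde{j}$ is the degree. One way to construct $\tilde{j}$ is to exhibit $\widetilde{\blamb}^{\op}\to\mcB\mathscr{M}^{\op}$ as the coCartesian fibration classifying an action of $\mathscr{M}\simeq B\Z\rtimes\mN^{\times}$ on $\blamb_{\infty}^{\op}$. This action is generated by the shift $x\mapsto x+1$ and the edgewise subdivisions $\pow{n}\mapsto\pow{kn}$ (restriction of the $\Z$-action along multiplication by $k$). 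Its fiber is $\blamb_{\infty}^{\op}$, and its base change along $\mcB\mT\to\mcB\mathscr{M}^{\op}$ is $j$. Left Kan extension along a coCartesian fibration is computed on fibers and commutes with base change. Hence both rows have underlying object $\mathrm{colim}_{\blamb_{\infty}^{\op}}X\simeq\mathrm{colim}_{\bdelt^{\op}}X=|X|$. That last step is cofinality of $\bdelt^{\op}\to\blamb_{\infty}^{\op}$, i.e.\ contractibility of $|\blamb^{n}_{\infty}|\cong\reals\times|\bdelt^{n}|$. \cref{prop-geocyc}, which gives $\mT\times|\bdelt^{n}|$, is not the relevant input.

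Second, the commutativity argument fails at two points. \cref{prop-conservative} (restriction preserves colimits) does not yield the Beck--Chevalley equivalence $j_{!}\iota^{*}\simeq{\iota'}^{*}\tilde{j}_{!}$. It lets you move ${\iota'}^{*}$ past a colimit indexed by $\widetilde{\blamb}^{\op}$, but not replace that colimit by one indexed by $\blamb^{\op}$. Your cofinality check also rests on a false decomposition. The comma category $\widetilde{\blamb}^{\op}\times_{\mcB\mathscr{M}^{\op}}(\mcB\mathscr{M}^{\op})_{/*}$ is not a disjoint union over $k$, since morphisms of degree $k$ go from the layer of degree $k\ell$ to the layer of degree $\ell$. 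Were it a disjoint union, its colimit would be $\coprod_{k}|X|$ and the degree-one layer would not be cofinal. What is true is that each object of degree $k$ has an initial map into the degree-one layer, lying over $(\pi^{k}_{n})^{\op}$. So the inclusion of that layer is a right adjoint and hence cofinal; equivalently, $(\pi^{k}_{n})^{\op}$ is a coCartesian lift. This does not follow from a strict unique factorization. Relation (iii), with $\sd_{k}(\tau_{n})=\tau_{k(n+1)-1}$, gives $\pi^{k}_{n}\tau_{k(n+1)-1}^{\,n+1}=\tau_{n}^{\,n+1}\pi^{k}_{n}=\pi^{k}_{n}$. So a factorization through $\pi^{k}_{n}$ is determined only up to an automorphism of order $k$. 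It is exactly the twist $z\mapsto z^{k}$ in $\mathscr{M}$, whose fibers are $\rC_{k}$, that compensates; treating $\mathscr{M}$ as $\mT\times\mN^{\times}$ throws away precisely this.
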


\begin{ex}\label{ex-free-loop-epi}
\cite[Example 2.2.2]{McCandless-curves}.
{\bf The free loop space.}  For a space $X$ we define an epicyclic space by 
\begin{displaymath}
\pow{n}_{\widetilde{\blamb}} \mapsto 
\Map_{\qmcS} (|N (\pow{n}_{\widetilde{\blamb}} ) |, X)\simeq \mcL(X),
\end{displaymath}

\noindent so we may regard the free loop space $\mcL X$ as a space
with Frobenius lift by \cref{thm-geo-epi}.  This functor is constant
on the objects of $\widetilde{\blamb}^{\op}$ but not on its morphisms.
The self-map of $\mcL X$ determined by the cyclic operator $\tau_{n}$
is induced by the self-map of $\mT$ given by rotation by $2\pi /
(n+1)$ as in \cref{sec-Connes}. The self-map of $\mcL X$ determined by
the epicyclic operator $\pi^{k}_{*}$ of \cref{eq-pi-nk} is induced by
the self-map of $\mT$ given by the $k$th power map.  This is
compatible with the action of $\mathscr{M}$ on $\mcL X$ of
\cref{eq-Witt-free-loop}.
\end{ex}

\begin{defin}\label{def-E1-monoid}
An {\bf $\EE_{1}$-monoid $M$ is a map of simplicial sets $N
(T^{\op}_{\Assoc})\to \qmcS$}, where $T_{\Assoc}$ is the opposite category
of Lawvere's algebraic theory of monoids, which is described in
\cref{sec-univ-alg}.  Its
{\bf epicyclic bar construction} $B^{\epi}M$ as in \cite[Definition 
2.2.4]{McCandless-curves} is the geometric realization of the epicyclic
space given by the composite functor
\begin{displaymath}
\xymatrix
@R=0mm
@C=10mm
{
{N (\widetilde{\blamb}^{\op})}\ar[r]^(.5){j^{\op}}
  &{N (T^{\op}_{\Assoc})}\ar[r]^(.65){M}
    &{\qmcS}\\
{\pow{n}}\ar@{|->}[r]^(.5){}
  &{\langle n \rangle,}
}
\end{displaymath}

\noindent while its {\bf cyclic bar construction} $B^{\cyc}M$ as in
\cref{def-cyc-nerve} is that of
\begin{displaymath}
\xymatrix
@R=0mm
@C=10mm
{
{N (\blamb^{\op})}\ar[r]^(.5){}
  &{N (\widetilde{\blamb}^{\op})}\ar[r]^(.5){j^{\op}}
    &{N (T^{\op}_{\Assoc})}\ar[r]^(.65){M}
      &{\qmcS.}
}
\end{displaymath}

The {\bf epicyclic topological Hochschild homology} $\THH^{\epi}
(\qmcC)$ of a small {\qcat} $\qmcC$ is the geometric realization of the
epicyclic space given by
\begin{displaymath}
\pow{n}\mapsto \qFun (\pow{n}, \qmcC)^{\simeq },
\end{displaymath}

\noindent where the superscript on the right denotes the maximal Kan
complex within the given simplicial set.
\end{defin}

\begin{lem}\label{lem-Bepi-THHepi}
\cite[Proposition 2.2.7]{McCandless-curves} {\bf The epicyclic bar
construction and $\THH$.}  For an $\EE_{1}$-monoid $M$, let $\mcB M$
denote the {\qcat} with one object and $M$ as endomorphisms. Then
there is an equivalence of spaces with Frobenius lifts
\begin{displaymath}
B^{\epi}M \to \THH^{\epi} (\mcB M).
\end{displaymath}
\end{lem}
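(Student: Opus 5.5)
We need a map $B^{\epi}M\to \THH^{\epi}(\mcB M)$ of epicyclic spaces (that is, of presheaves on $\widetilde{\blamb}$), and then to show it is an equivalence on underlying spaces. By \cref{prop-conservative}, the forgetful functor from spaces with Frobenius lifts to $\mT$-spaces is conservative. By \cref{thm-geo-epi}, geometric realization is compatible with restriction from $\widetilde{\blamb}$ to $\blamb$. So it suffices to construct the map at the level of epicyclic spaces, natural in $\widetilde{\blamb}^{\op}$, and then check that it is an equivalence after forgetting down to simplicial spaces. Since equivalences of $\mT$-spaces are detected on underlying spaces, checking levelwise suffices.

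For the construction, I would compare the two epicyclic spaces levelwise. On $\pow{n}$ the bar side gives $M^{n}$, the value of $M$ on $\langle n\rangle$ via $j^{\op}$. The $\THH$ side gives the groupoid core $\qFun(\pow{n},\mcB M)^{\simeq}$, where $\pow{n}$ is the category of \cref{def-pownr}, whose nerve realizes to a circle. A functor $\pow{n}\to\mcB M$ is a choice of a morphism of $\mcB M$ for each of the $n$ generating arrows of the cyclic poset. This is the cyclic nerve description of \cref{def-cyc-nerve}, with $[n-1]^{\cyc}$ having exactly $n$ generating arrows. This gives a natural map $M^{n}\to \qFun(\pow{n},\mcB M)^{\simeq}$, namely the inclusion of the functors that are strictly the given tuple. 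The map is functorial in $\widetilde{\blamb}^{\op}$ for the following reasons:
\begin{itemize}
\item Morphisms of degree one act on both sides by composing adjacent entries, inserting units, and rotating.
\item The degree $k$ operators $\pi^{k}_{n}$ of \cref{eq-pi-nk} act on both sides by $k$-fold repetition of the tuple, because pulling a functor back along the $k$-fold covering functor $\pow{kn}\to\pow{n}$ repeats its values.
\end{itemize}
Making this precise as a natural transformation of $\infty$-functors, rather than just levelwise, is the first technical point. I would do it by observing that both functors are restrictions along $\widetilde{\blamb}^{\op}\to \qCat$, $\pow{n}\mapsto\pow{n}$, of $\qFun(-,\mcB M)^{\simeq}$. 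The $M$ side arises from the fact that $\qFun(\pow{n},\mcB M)$ is computed by a right Kan extension from the generating graph.

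The main obstacle is showing that the levelwise map is not an equivalence but becomes one after realization. The core $\qFun(\pow{n},\mcB M)^{\simeq}$ contains natural isomorphisms given by conjugation by units of $M$, which $M^{n}$ does not see. So levelwise it is at best $M^{n}$ modulo the action of $M^{\times}$ on each of the $n$ objects. I would therefore compare after restricting to $\bdelt^{\op}$ via $\iota_1$. The $\THH$ side is then the simplicial space $[m]\mapsto \qFun(\pow{m+1},\mcB M)^{\simeq}$. Since $\mcB M$ has a single object, its realization should identify with the realization of $N^{\cyc}(M)$: the extra automorphism data forms contractible "gauge" factors, namely free $(M^{\times})^{m+1}$-orbits. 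These are killed upon realization by a bar-construction argument, or equivalently by viewing the $\THH$ side as the cyclic nerve of the groupoid-enriched category $\mcB M$ and invoking invariance of the cyclic nerve under Dwyer--Kan equivalence.

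If that is awkward, my fallback is to check both realizations against the free loop description. Both sides are $\mcL B M$ when $M$ is grouplike, by \cref{thm-eqvr-equiv}. For general $M$, I would reduce by writing $M$ as a colimit of free monoids, where both sides can be computed explicitly.
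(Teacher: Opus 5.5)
The paper gives no argument of its own here; it quotes McCandless. So your sketch has to stand on its own, and it breaks at the decisive step: the claim that the extra data in $\qFun(\pow{m+1},\mcB M)^{\simeq}$ dies on realization. As written this is not correct. The levelwise fiber of $N^{\cyc}_{m}M=M^{m+1}\to\qFun(\pow{m+1},\mcB M)^{\simeq}$ is the gauge group $(M^{\times})^{m+1}$, which is not contractible, and calling the extra data ``free orbits'' does not make it go away. What is missing is to assemble the gauge groups into a simplicial object and use that \emph{its realization} is contractible. Evaluation at the $m+1$ objects of $\pow{m+1}$ gives a map of simplicial spaces $\qFun(\pow{\bullet+1},\mcB M)^{\simeq}\to E_{\bullet}(BM^{\times})$, with $E_{\bullet}$ as in \cref{def-EX}: faces delete an object and degeneracies repeat one. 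Since $\pow{m+1}$ is the free category on a cycle of $m+1$ arrows, the fiber of this map over the constant base point is $N^{\cyc}_{\bullet}M$. The base is levelwise pointed and connected, so this identifies the $\THH$ side, naturally in $\bdelt^{\op}$, with $(M^{\bullet+1})_{hG_{\bullet}}$, where $G_{\bullet}=E_{\bullet}(M^{\times})$ acts by gauge transformations. Realization commutes with finite products, so $|(M^{\bullet+1})_{hG_{\bullet}}|\simeq|N^{\cyc}_{\bullet}M|_{h|G_{\bullet}|}$. Moreover $|E_{\bullet}(M^{\times})|\simeq *$ by the extra degeneracy (cf.\ \cref{prop-EX}). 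That is what makes the map an equivalence after realization, and together with \cref{prop-conservative} and \cref{thm-geo-epi} it finishes the proof.

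Neither of your alternatives replaces this. Dwyer--Kan invariance of the cyclic nerve is classically a statement about categories with a discrete set of objects. Here you must compare the cyclic nerve of the Segal space $N_{\bullet}M$ with that of its completion, whose object space is $BM^{\times}$, and that comparison is the lemma itself. The reduction to free monoids is circular, because $M\mapsto\qFun(\pow{n},\mcB M)^{\simeq}\simeq(M^{n})_{h(M^{\times})^{n}}$ does not preserve geometric realizations. Free monoids have trivial units, yet a group $G$, which is a realization of free monoids, gives $\mcL BG$ rather than $G^{n}$. You would therefore need sifted-colimit preservation of the \emph{realized} $\THH$ side, which is equivalent to the statement.

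There are also two smaller points. First, ``checking levelwise suffices'' contradicts your own later observation, since the levelwise map is an equivalence only when $M^{\times}\simeq *$. Second, your construction of the comparison map is not right as stated, because $M\circ j^{\op}$ is not a restriction of $\qFun(-,\mcB M)^{\simeq}$. Build it instead from the functors $u_n\colon\pow{n}\to\mcB\,\mathrm{Fr}(n)$ that send the $i$th generating arrow to the $i$th generator of the free monoid. These intertwine morphisms of $\widetilde{\blamb}$ with the maps of free monoids given by $j$. Then the composite $M^{n}\simeq\Map_{\EE_1}(\mathrm{Fr}(n),M)\simeq\Map_{*}(\mcB\,\mathrm{Fr}(n),\mcB M)\to\Map(\pow{n},\mcB M)$, obtained by forgetting the base point and precomposing with $u_n$, is natural on $\widetilde{\blamb}^{\op}$ and lands in the fiber described above.
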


\subsection{Topological cyclic homology $\TopC$ and
  friends}\label{sec-TC-et-al}

The first two parts of the following appears to be quite different
from the original definition of $\TopC$ given in \cite{BHM93},
\cref{def-TCF}\cref{def-TCFi}.  The equivalence of the two is due to
Blumberg and Mandell and is the main theorem of
\cite{blumberg_cyclotomic}.

\begin{defin}\label{def-TC}
 \cite[Definition II.1.8]{NS18} and \cite[Definition 2.4]{BHLS}. Let $
(X, (\varphi_{p} )_{p\in \PP})$ be a cyclotomic spectrum as in
\cref{def-cyclo-spec2}.

\begin{enumerate}[label={(\roman*)},itemindent=0em]
\item \label{def-TCi} The {\bf integral topological cyclic homology
$\TopC(X) $ of $X$} is the mapping spectrum
\begin{displaymath}
\map_{\qCycSp}(\mS^{\triv}, X)
\in \qSp
\end{displaymath}

\noindent for $\mS^{\triv}$ as in
\cref{ex-cycl-spectra}\cref{ex-cycl-spectraii}.

\item \label{def-TCii} The {\bf $p$-typical topological cyclic
homology $\TopC(X, p)$} is the mapping spectrum
\begin{displaymath}
\map_{\qCycSp_{p}}(\mS^{\triv} , X)\in \qSp_{p}.
\end{displaymath}

\item \label{def-TCiii}
Let $R\in \Alg_{\EE_{1}}(\qSp)$ be an associative ring
spectrum. (For a discrete ring $R$, the associative ring spectrum is
the {\SESM} spectrum $HR$.) Then
\begin{align*}
\TopC(R)&:=\TopC(\THH(R)),&
\TopC(R,p)&:=\TopC(\THH(R), p),\\
\TopC^{-}(R)&:=\TopC^{-}(\THH(R)),&
\TopC^{-}(R,p)&:=\TopC^{-}(\THH(R),p),\\
\TP(R)&:=\TP(\THH(R)) &
\aand \TP(R,p)&:=\TP(\THH(R),p)
\end{align*}

\noindent for $\TopC^{-} (-)$, $\TopC^{-} (-,p)$, $\TP (-)$ and
$\TP (-,p)$ as in \cref{def-TCTP}, namely
\begin{align*}
\TopC^{-}(X)
 & :=  X^{h\mT},&
\TopC^{-}(X,p)
 &:=  X^{h\rC_{p^{\infty }}}\\
\TP(X)&:=  X^{t\mT}&
\qquad \aand 
\TP(X,p)&:=  X^{t\rC_{p^{\infty }}}.
\end{align*}
\end{enumerate}
\end{defin}

\begin{remark}\label{rem-TC-fixed}
{\bf $\TopC $ as a type of fixed point functor.}  A cyclotomic
spectrum $X$ is a $\mT$-spectrum with additional structure, as is the
sphere spectrum $\mS$.  In the category $\mcT^{G}$ of pointed
$G$-spaces, the fixed point set $X^{G}$ of a pointed $G$-space $X$ is
the morphism object $\mcT^{G} (S^{0}, X)$ by \cref{prop-fpms}, where
the symmetric monoidal unit $S^{0}$ has trivial $G$-action.
\cref{def-TC} has the same flavor. See \cref{rem-T-action}.
\end{remark}

From \cref{def-TC}\cref{def-TCi} we deduce that 
\begin{align*}
\map_{\qCycSp}(\mS^{\triv}, X)
 & =  \TopC(X)  
  = \map_{\qSp}(\mS, \TopC(X)  )  ,
\end{align*}

\noindent so we have the  adjunction of  \cite[Proposition IV.4.14]{NS18}
\begin{numequation}\label{eq-CycSp-adjunct}
\begin{split}
\xymatrix
@R=4mm
@C=20mm
{
{\qSp}\ar@<1.2ex>[r]^(.5){(-)^{\triv}}_(.45){\perp }
  &{\qCycSp.}\ar@<1ex>[l]^(.55){\TopC}
}
\end{split}
\end{numequation}%

If $X$ is an $\EE_{\infty }$-ring spectrum, so is $\TopC (X)$, and
there is a similar adjunction to \cref{eq-CycSp-adjunct} between the
corresponding categories of $\EE_{\infty }$-ring objects.

The following is the best known result of Nikolaus-Scholze and is a major
breakthrough.

\begin{theorem}\label{prop-TC}
{\bf How to compute $\TopC$.}
\cite[Proposition II.1.9]{NS18}
\begin{enumerate}[label={(\roman*)},itemindent=0em]
\item \label{prop-TCi} Let $(X,(\varphi_{p})_{p\in \PP})$ be a
cyclotomic spectrum.  There is a functorial fiber sequence
\begin{displaymath}
\xymatrix
@R=4mm
@C=8mm
{
{\TopC(X)}\ar[r]^(.5){}
  &{X^{h\mT}}\ar[rrr]^(.45){(\varphi_{p}^{h\mT }-\can)_{p\in \PP}}
    & &&{\prod_{p\in \PP}} (X^{t\rC _{p}})^{h\mT },\\
}
\end{displaymath}

\noindent where the maps are given by
\begin{displaymath}
\varphi_{p}^{h\mT }:X^{h\mT }\to (X^{t\rC _{p}})^{h\mT }
\end{displaymath}

\noindent and the $p$th component of $\can$ is the composite in the
diagram
\begin{numequation}\label{eq-can-p}
\begin{split}
\xymatrix
@R=8mm
@C=2mm
{
{\TopC^{-} (X)}\ar@{=}[r]^(.5){}
  &{X^{h\mT}}\ar[rrrr]^(.45){\can_{p}}\ar[d]_(.45){\simeq }
    &&&&{\left(X^{t\rC_{p}} \right)^{h\mT}}\\
  &{\left(X^{h\rC_{p}} \right)^{h (\mT/\rC_{p})}}
             \ar[rrrr]^(.55){\simeq }
    &&&&{\left(X^{h\rC_{p}} \right)^{h\mT}}
                \ar[u]_(.55){\left(s^{\rC_{p}}_{X} \right)^{h\mT},}
}
\end{split}
\end{numequation}%

\noindent where the left vertical map is the residual action
equivalence of \cref{def-residual}, the lower equivalence comes from
the $p$-th root isomorphism\linebreak  $\mT /\rC _{p}\cong \mT$ of
\cref{def-rho-n}, and $s^{\rC_{p}}_{X}$ is as in \cref{eq-Tate2}.



\item \label{prop-TCii}  
Let $(X,\varphi_{p})$ be a
$p$-cyclotomic spectrum.  There is a functorial fiber sequence
\begin{displaymath}
\xymatrix
@R=4mm
@C=8mm
{
{\TopC(X,p)}\ar[r]^(.5){}
  &{X^{h\rC _{p^{\infty}}}}
       \ar[rrr]^(.45){(\varphi_{p})^{h\rC _{p^{\infty}}}-\can_{p}}
    & &&{(X^{t\rC _{p}})^{h\rC _{p^{\infty}}}}
}
\end{displaymath}

\noindent with notation as in \cref{prop-TCi}.  
\end{enumerate}
\end{theorem}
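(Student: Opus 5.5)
The approach is to compute $\TopC(X)=\map_{\qCycSp}(\mS^{\triv},X)$ directly from the lax equalizer description of $\qCycSp$ in \cref{def-cyc-spectra}. The general fact to use first is this: for a lax equalizer $\LEq{F_0}{F_1}$ of stable \qcats\ with exact functors, mapping spectra between objects $(c,f)$ and $(c',f')$ sit in a fiber sequence
\begin{displaymath}
\map_{\LEq{F_0}{F_1}}((c,f),(c',f'))\to \map_{\qmcC}(c,c')\to \map_{\qmcD}(F_0c,F_1c'),
\end{displaymath}
where the second map sends $g$ to the difference $f'\circ F_0(g)-F_1(g)\circ f$. This follows from the pullback description in \cref{def-lax-eq} by computing mapping spaces in the pullback of simplicial sets, and then delooping using stability as in \cref{eq-map-spec}. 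Here $\qmcC=\qSp^{\mT}$, $F_0=\Id$, and $\qmcD=\prod_p\qSp^{\mT}$ with $F_1=\prod_p(-)^{t\rC_p}$.

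Next I specialize to $c=\mS^{\triv}$. In $\qSp^{\mT}=\qFun(B\mT,\qSp)$, mapping spectra out of the trivial-action unit are homotopy fixed points, so $\map_{\qSp^{\mT}}(\mS,X)\simeq X^{h\mT}=\TopC^{-}(X)$. Likewise the $p$th target term is $\map_{\qSp^{\mT}}(\mS,X^{t\rC_p})\simeq (X^{t\rC_p})^{h\mT}$. The two terms of the difference map are then identified separately.

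The term $f'\circ F_0(g)$ is postcomposition with $\varphi_p$, which gives $\varphi_p^{h\mT}$. The term $F_1(g)\circ f$ is precomposition with the structure map of $\mS^{\triv}$, which by \cref{ex-cycl-spectra}\cref{ex-cycl-spectraii} is $\mS\to\mS^{h\rC_p}\to\mS^{t\rC_p}$, applied after $g^{t\rC_p}$. Naturality of $s^{\rC_p}$ from \cref{eq-Tate2} rewrites this as the composite $X^{h\mT}\simeq (X^{h\rC_p})^{h(\mT/\rC_p)}\to (X^{t\rC_p})^{h\mT}$ of \cref{eq-can-p}, which is $\can_p$. Assembling over all primes gives \cref{prop-TCi}.

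The $p$-typical case \cref{prop-TCii} is the same argument with $\mT$ replaced by $\rC_{p^\infty}$ and only the single prime $p$.

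I expect the main obstacle to be this identification of $F_1(g)\circ f$ with $\can_p$. The difficulty is keeping track of the $\mT/\rC_p\cong\mT$ reindexing through $\rho_p^*$ from \cref{def-rho-n}, and of the lax monoidal compatibility of $(-)^{t\rC_p}$ with the unit. I would first handle this at the level of homotopy categories, using the universal property of the trivial-action functor (pullback along $B\rC_p\to\pt$), and then argue that it holds coherently because everything is natural in $X$.
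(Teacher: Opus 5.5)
Your proposal is correct and is essentially the argument behind \cite[Proposition II.1.9]{NS18}, which the paper quotes without proof: the formula for mapping spectra in a lax equalizer, the identification $\map_{\qSp^{B\mT}}(\mS,-)\simeq(-)^{h\mT}$, and naturality of $s^{\rC_p}$. To identify the second map with $\can_p$ you also use the unit $\mS\to\mS^{h\rC_p}$ of the adjunction between pullback along $B\mT\to B(\mT/\rC_p)$ and $(-)^{h\rC_p}$; this unit already gives the identification coherently in $X$, so you do not need a separate homotopy-category step.

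Note that the argument depends on reading $\qSp^{\mT}$ as the Borel category $\qFun(B\mT,\qSp)$, as Nikolaus--Scholze do, and not as the genuine category of \cref{def-qcat-Gspec}. In the genuine category, $\map(\mS,X)$ would give genuine rather than homotopy fixed points.
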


The last statement above identifies $\TopC(X,p)$ as an equalizer. A
similar identification in terms of $\TR$, the subject of
\cref{sec-TR}, is \cref{thm-TC-TR}.

For a cyclotomic spectrum $X$, we have $\mT$-{\eqvr} maps
$\varphi_{p}:X\to X^{t\rC _{p}}$ from \cref{def-cyclo-spec2} and
$s_{p}:X^{h\rC _{p}}\to X^{t\rC _{p}}$ from \cref{eq-Tate2}.  Applying the
functor $(-)^{h\rC _{p^{i}}}$ to each gives maps
\begin{align*}
\xymatrix
@R=4mm
@C=20mm
{
{X^{h\rC _{p^{i}}}}\ar[r]^(.35){\varphi_{p^{i+1}}}
  &{(X^{t\rC _{p}})^{h\rC _{p^{i}}}\phantom{=X^{t\rC _{p^{i+1}}}}}
}\\
\aand 
\xymatrix
@R=4mm
@C=20mm
{
{(X^{h\rC _{p}})^{h\rC _{p^{i}}}=X^{h\rC _{p^{i+1}}}}
          \ar[r]^(.6){s_{p^{i+1}}}
  &{(X^{t\rC _{p}})^{h\rC _{p^{i}}},}
}
\end{align*}

\noindent and $(X^{t\rC _{p}})^{h\rC _{p^{i}}}\simeq X^{t\rC
_{p^{i+1}}}$ when $X$ is bounded below by \cite[Lemma II.4.1]{NS18}.


\subsection{The cyclotomic spectra $\THH (\Z/p)$ and $\THH (\Z)$}
\label{sec-THHZp}

Here we will report on the results of \cite[\S IV.4]{NS18}, which the
reader should consult for details.  They denote our $\THH (\Z/p)$ (as
in \cref{def-TC}\cref{def-TCiii}) by $\THH (H\FFp )$.  $\TopC (\Z/p)$
is first computed in \cite[Theorem B]{HM1}, and redone by Nikolaus and
Scholze using techniques from \cite{Bokstedt-Madsen94}.

The latter paper makes use of three ``skeleton spectral sequences,''
one for each of the spectra in the bottom row of \cref{eq-Klein-norm}.
The Tate version is a whole plane spectral sequence that may or may
not converge.  The upper and lower half plane portions of it are
related to the homotopy orbit and homotopy fixed point spectra.  This
is explained nicely following the proof of \cite[Theorem
2.15]{Bokstedt-Madsen94} and later by Alice Hedenlund and John Rognes
in \cite{HedRog}.

\subsubsection{Toward $\TopC (\Z/p)$}\label{sec-TC(Zp)}
It follows from \cref{prop-TC} that in order to find $\TopC (\Z/p)$,
we must first determine $\TopC^{-}(\Z/p) :=\THH (\Z/p)^{h\mT}$.

\begin{prop}\label{prop-THH-Zp-hT}
\cite[Proposition IV.4.6]{NS18}
\begin{displaymath}
\pi_{i}\THH (\Z/p)^{h\mT}=\mycases{    
\Z_{p} &\mbox{for $i$ even}\\
0      &\mbox{otherwise}
}
\end{displaymath}

\noindent and as a ring,
\begin{displaymath}
\pi_{*}\THH (\Z/p)^{h\mT}
= \Z_{p}[\tilde{u},v]/ (\tilde{u}v-p)
\end{displaymath}

\noindent with $\tilde{u}\in \pi_{2}$ and $v\in \pi_{-2}$, where
$\tilde{u}$ maps to the class $u\in \pi_{2}\THH (\Z/p)$ of
\cref{thm-Bokstedt}\cref{thm-Bokstedti}.
\end{prop}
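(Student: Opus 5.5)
We compute $\pi_{*}\THH(\Z/p)^{h\mT}$ with the homotopy fixed point spectral sequence
\begin{displaymath}
E_{2}^{s,t}=H^{-s}(B\mT;\pi_{t}\THH(\Z/p))\Longrightarrow \pi_{s+t}\THH(\Z/p)^{h\mT}.
\end{displaymath}
By \cref{thm-Bokstedt}\cref{thm-Bokstedti}, $\pi_{*}\THH(\Z/p)=\Z/p[u]$ with $|u|=2$, and $H^{*}(B\mT;\Z/p)=\Z/p[t]$ with $t$ in filtration $-2$, i.e.\ total degree $-2$. Hence
\begin{displaymath}
E_{2}=\Z/p[t,u],
\end{displaymath}
and $E_{2}$ is concentrated in even total degrees. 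Every differential changes total degree by $-1$, so $E_{2}=E_{\infty}$. The spectral sequence is multiplicative because $\THH(\Z/p)$ is an $\EE_{\infty}$-ring, so the ring structure on $E_{\infty}$ is that of $\Z/p[t,u]$. The odd homotopy vanishes. Convergence holds because $\THH(\Z/p)$ is bounded below, so in each total degree the filtration is complete and Hausdorff.

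Fix an even degree $2n$. The associated graded of $\pi_{2n}$ is a copy of $\Z/p$ in each filtration $t^{k}u^{n+k}$ with $k\geq\max(0,-n)$. There are infinitely many such pieces, so $\pi_{2n}$ is complete and has associated graded an infinite product of $\Z/p$'s. This does not yet determine the group, since it could also be $\prod\Z/p$. To pin it down we need one extension: multiplication by $p$ must shift filtration by exactly one, carrying $t^{k}u^{n+k}$ to $t^{k+1}u^{n+k+1}$ up to a unit. Once that holds, each $\pi_{2n}$ is torsion free, complete, and of rank one in each filtration step, so $\pi_{2n}\cong\Z_{p}$.

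The key extension, and the main obstacle, is $p=\tilde u v$ up to a unit. Here $v$ is the class detected by $t$ and $\tilde u$ is any lift of $u$, which is a permanent cycle. Since $t\cdot u$ lies in filtration $-2$ of total degree $0$, the claim is equivalent to saying $p\in\pi_{0}$ is detected by $tu$. The plan is to compare with the unit map $\mS\to\THH(\Z/p)$, or with $\Z\to\Z/p$, in degree $0$.

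Concretely, $\pi_{0}\THH(\Z/p)^{h\mT}\to\pi_{0}\THH(\Z/p)=\Z/p$ sends $p$ to $0$, so $p$ lies in positive filtration. It remains to show $p$ is nonzero modulo filtration $\geq 2$ ($t^{2}u^{2}$). I would try to do this using the $\mT$-action. By the Connes operator in \cref{eq-Connes-operator}, the $d_{2}$-type extension in the homotopy fixed point spectral sequence is governed by the circle action. Bökstedt's class $u$ is the image under $\sigma$, the circle suspension, of the Bockstein element, i.e.\ of the class detecting $p$ in $\pi_{1}(\mS/p)$. This is visible from \cref{thm-Bokstedt}\cref{thm-Bokstedtiii} via $\THH(\Z)\to\THH(\Z/p)$. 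That identification gives $tu\leftrightarrow p$ and is the technical heart.

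Given the extension, the ring structure follows. $\pi_{*}$ is generated by $\tilde u$ and $v$, and is $p$-complete in each degree. The single relation is $\tilde u v=p$, after rescaling $\tilde u$ by a unit. Both $\tilde u$ and $v$ are non-zero-divisors because $E_{\infty}$ is a polynomial ring. Comparing associated gradeds then shows
\begin{displaymath}
\Z_{p}[\tilde u,v]/(\tilde uv-p)\to\pi_{*}\THH(\Z/p)^{h\mT}
\end{displaymath}
is an isomorphism after completion. The source is already complete in each degree, since $\tilde u^{a}v^{b}$ with $a-b$ fixed spans a copy of $\Z_{p}$.
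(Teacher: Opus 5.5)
Your reduction matches the paper's. The homotopy fixed point spectral sequence has $E_2=\FFp[t,u]$ in even total degrees, so it collapses, and both the additive and the ring statements follow once $p\in\pi_0\THH(\Z/p)^{h\mT}$ is known to be detected by a unit multiple of $tu$. The paper does not prove this extension; it cites \cite[Lemma IV.4.7]{NS18}. The argument you sketch for it does not work, for three reasons. First, no class in $\pi_1(\mS/p)$ detects $p$: for odd $p$ this group is zero, and for $p=2$ it is generated by the image of $\eta$. Second, nothing about $u$ is visible from \cref{thm-Bokstedt}\cref{thm-Bokstedtiii}. We have $\pi_1\THH(\Z)=\pi_2\THH(\Z)=0$ (the $i=1$ factor $K(\Z/1,1)$ is a point), so $u$ is not in the image of $\THH(\Z)\to\THH(\Z/p)$. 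The Bockstein there is the connecting map for reduction mod $p$ in degrees $2pj-1\to 2pj$, not the circle operator. Third, and most basically, $\pi_*\THH(\Z/p)$ is concentrated in even degrees, so the Connes operator on it is zero and $d_2=t\sigma$ vanishes identically. The relation $p\sim tu$ is therefore a hidden extension that the circle action on $\pi_*\THH(\Z/p)$ cannot see. A ``$u=\sigma\epsilon$ with $\epsilon$ recording $p$'' argument needs a $\mT$-equivariant comparison map to an object in which such a degree-one class actually exists.

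One such comparison is the map $\THH(\Z/p)\to\THH(\Z/p)\otimes_{\THH(\Z)}H\Z\simeq\HH(\FFp/\Z)$. It is $\mT$-equivariant, and it is an isomorphism on $\pi_{\le 2}$ because $\THH(\Z)\to H\Z$ is (its fiber has homotopy only in degrees $\geq 3$). So $u$ maps to a generator of $\pi_2\HH(\FFp/\Z)\cong\FFp$.
\begin{itemize}
\item Model $\FFp$ over $\Z$ by the Koszul algebra $\Z[\epsilon]/(\epsilon^2)$ with $|\epsilon|=1$ and $d\epsilon=p$. Then that generator is represented by $\sigma\epsilon=B\epsilon$.
\item In the negative cyclic complex, $(b+d+tB)(\epsilon)=p+t\,\sigma\epsilon$. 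Hence $p=\pm tu$ in $\pi_0\HH(\FFp/\Z)^{h\mT}$, detected in filtration exactly $2$. This spectral sequence also collapses, since its $E_2$-term is $\FFp[t]\otimes\Gamma_{\FFp}(u)$.
\item The induced map of homotopy fixed point spectral sequences preserves filtration. So $p\in\pi_0\THH(\Z/p)^{h\mT}$ cannot lie in filtration $\geq 4$, and it is detected by a nonzero multiple of $tu$.
\end{itemize}
That is exactly the input the rest of your argument needs; alternatively, cite \cite[Lemma IV.4.7]{NS18} as the paper does.
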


The proof makes use of the homotopy fixed point spectral sequence, one
of the three skeleton spectral sequences alluded to above, for which
\begin{displaymath}
E_{2}^{i,j} = H^{i} (B\mT; \pi_{-j}\THH (\Z/p))
\implies \pi_{-i-j}\THH (\Z/p)^{h\mT}.
\end{displaymath}

\noindent See \cite[Chapter 5]{HedRog} for more details. They identify
the $E_{2}^{i,j}$ (which they denote by $E^{2}_{-i,-j}$) in
\cite[Theorem 5.14]{HedRog}.

Since $H^{*}B\mT=H^{*}\cxs P^{\infty }$ and $\pi_{*}\THH (\Z/p)$ are
both even dimensional, $E_{2}^{i,j}$ is nontrivial only when $i$ and
$j$ are both even, and the spectral sequence collapses.  The equation
$\tilde{u}v=p$ is the subject of \cite[Lemma IV.4.7]{NS18}.

\begin{cor}\label{rem-nontriv}
  The action of  $\mT$ on $\THH (\Z/p)$ is nontrivial. 
\end{cor}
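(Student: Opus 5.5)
The plan is to argue by contradiction: if the $\mT$-action on $\THH (\Z/p)$ were trivial, then $\THH (\Z/p)^{h\mT}$ would be the function spectrum $\map (B\mT_{+}, \THH (\Z/p))$. Its homotopy can be computed directly and then compared with \cref{prop-THH-Zp-hT}. Since $\THH (\Z/p)$ is an $H\Z/p$-module (it is an $\EE_{\infty}$-algebra under $\Z/p$), a trivial action would give
\begin{displaymath}
\pi_{*}\map (B\mT_{+}, \THH (\Z/p)) \cong \prod_{k\geq 0}\pi_{*+2k}\THH (\Z/p).
\end{displaymath}
Here we use $H^{*}(B\mT;\Z/p)=\Z/p[t]$ with $|t|=2$, and the fact that the homotopy fixed point spectral sequence collapses with no room for extensions over $\Z/p$-module spectra.

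The key point is then that every homotopy group on the right is an $\FF_p$-vector space: $p$ acts as zero because $\THH (\Z/p)$ is a module over $H\Z/p$, and so is any mapping spectrum into it. In particular $\pi_{0}$ would be a product of copies of $\Z/p$ (indeed $\prod_{k}\Z/p$, using \cref{thm-Bokstedt}\cref{thm-Bokstedti}), which is killed by $p$. By contrast, \cref{prop-THH-Zp-hT} gives $\pi_{0}\THH (\Z/p)^{h\mT}=\Z_{p}$, which is torsion free. This is a contradiction, so the action is nontrivial. An equivalent way to see it: the relation $\tilde{u}v=p$ exhibits $p\neq 0$ in $\pi_{0}$, which is impossible for a trivial action.

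The only step needing care is the claim that a trivial action on an $H\Z/p$-module yields homotopy fixed points that are again an $H\Z/p$-module with $p$-torsion homotopy. I would justify it directly, without spectral sequences. With trivial action, $(-)^{h\mT}=\map (B\mT_{+},-)$ is an exact functor of the spectrum, and $\map (B\mT_{+}, M)$ inherits an $H\Z/p$-module structure from $M$ through the target. Multiplication by $p$ on $H\Z/p$ is null as an $H\Z/p$-module map, hence null on $M$, and therefore null on $\map (B\mT_{+},M)$. This avoids any convergence or extension issues, and it is the main (mild) obstacle in the argument.
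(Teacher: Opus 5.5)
Your argument is correct and is essentially the paper's proof. In both, assuming the action is trivial forces $\pi_{*}\THH(\Z/p)^{h\mT}$ to be an $\FFp$-vector space, which contradicts the torsion-free answer $\Z_{p}$ of \cref{prop-THH-Zp-hT}. The only difference is how you get the $p$-torsion: the paper uses collapse of the homotopy fixed point spectral sequence for $H/p$ together with the splitting \cref{eq-THH-split}, while you use the $H\Z/p$-module structure on $\map(B\mT_{+},\THH(\Z/p))$, which cleanly avoids any convergence or extension questions.
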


\proof
  For any
$\mT$-action (including the trivial one) on $H/p$, the homotopy fixed
point spectral sequence collapses without any additive extensions for
degree reasons, showing that $\pi_{*} (H/p)^{h\mT}$ is an $\FFp
$-vector space, namely
\begin{displaymath}
\pi_{i} (H/p)^{h\mT}\cong H^{-i} (\cxs P^{\infty };\Z/p).
\end{displaymath}

\noindent If the $\mT$-action were trivial on
\begin{numequation}\label{eq-THH-split}
\begin{split}
\THH (\Z/p)\simeq \bigoplus_{j\geq 0}\Sigma^{2j}H/p,
\end{split}
\end{numequation}%

\noindent then $\pi_{*}\THH (\Z/p)^{h\mT}$ would also be a graded
$\FFp $-vector space.  By \cref{prop-THH-Zp-hT}, we find that this
graded abelian group is torsion free instead.  It follows that the
splitting of \cref{eq-THH-split} is {\em not} $\mT$-{\eqvr}.
\qed\bigskip 

There is a similar spectral sequence for the Tate construction (see
\cite[Chapter 6]{HedRog}), which yields the following.

\begin{cor}\label{cor-THH-Zp-tT}
\cite[Propositions IV.4.6 and IV.4.9]{NS18}
As a ring,
\begin{displaymath}
\pi_{*}\THH (\Z/p)^{t\mT}
= \Z_{p}[v^{\pm 1}]
\end{displaymath}

\noindent and for all even integers $i$, the map 
\begin{displaymath}
\pi_{i} \THH(\Z/p)^{h\mT} \cong \Z_{p} 
\rightarrow
\pi_{i} \THH(\Z/p)^{t\mT} \cong \Z_{p} 
\end{displaymath}

\noindent induced by $\varphi_{p}^{h\mT}$ is injective. For $i\leq 0$, it
is an isomorphism, while for $i=2j\geq 0$, it has image $p^{j}\Z_{p}$.
\end{cor}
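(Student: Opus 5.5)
We compute the Tate construction by the same device used for \cref{prop-THH-Zp-hT}: the Tate spectral sequence for the $\mT$-action on $\THH(\Z/p)$. By \cref{thm-Bokstedt}\cref{thm-Bokstedti}, $\pi_*\THH(\Z/p)=\Z/p[u]$ with $u\in\pi_2$. Hence the $E_2$-term is $\widehat H^{-*}(\mT;\Z/p[u])=\Z/p[t^{\pm1},u]$, with $t$ in total degree $-2$. Everything sits in even total degree, so the spectral sequence collapses. Each total degree $2j$ receives the infinitely many classes $t^{k}u^{k+j}$, so every $\pi_{2j}$ is a $p$-complete group with associated graded $\prod\Z/p$.

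The first step is to resolve the multiplicative extensions. The norm sequence of \cref{eq-Klein-norm} gives a ring map $\THH(\Z/p)^{h\mT}\to\THH(\Z/p)^{t\mT}$. Its source is $\Z_p[\tilde u,v]/(\tilde uv-p)$ by \cref{prop-THH-Zp-hT}, and $v$ is detected by $t$. Since $t$ is invertible on $E_2$, $v$ becomes a unit in $\pi_*\THH(\Z/p)^{t\mT}$. In degree $0$, the class $tu$ detects $\tilde uv=p$. This identifies $\pi_0$ as a complete ring with associated graded $\Z/p[tu]$ and $tu\leftrightarrow p$, that is, $\pi_0\cong\Z_p$. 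Multiplying by powers of $v$ then gives $\pi_*\THH(\Z/p)^{t\mT}=\Z_p[v^{\pm1}]$, in which $\tilde u=pv^{-1}$.

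The second step is to analyse $\varphi_p^{h\mT}$. We use that $\varphi_p:\THH(\Z/p)\to\THH(\Z/p)^{t\rC_p}$ is $p$-completion on connective covers; this is the Segal-conjecture type input of \cref{thm-generalized-Segal-conj} together with the Tate-diagonal construction of \cref{thm-THH-cyclo}. We also need $(X^{t\rC_p})^{h\mT}\simeq X^{t\mT}$ for $X$ bounded below, which is \cite[Lemma II.4.1]{NS18}. These combine to say that $\varphi_p^{h\mT}$ is an equivalence in degrees $\le 0$ up to a $p$-adic identification. Concretely, it should send $v$ to a unit multiple of $v$. Once that holds, $\varphi_p^{h\mT}$ is an isomorphism for $i\le 0$.

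For positive degrees we need the image of $\tilde u$. Since $\varphi_p$ is a ring map and $\tilde uv=p$, we get $\varphi(\tilde u)\varphi(v)=p$. If $\varphi(v)=\lambda v$ with $\lambda$ a unit, this forces $\varphi(\tilde u)=\lambda^{-1}pv^{-1}$, which generates $p\Z_p$ in $\pi_2\cong\Z_p v^{-1}$. Ring multiplicativity then gives image $\tilde u^{j}\mapsto p^{j}\Z_p v^{-j}$ in degree $2j\ge 0$. Injectivity follows because $\Z_p$ is torsion free and the image is nonzero.

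The main obstacle is the claim that $\varphi_p^{h\mT}(v)$ is a unit multiple of $v$. This requires identifying $\varphi_p$ on $\pi_0$ and in negative degrees via the Segal conjecture and the Tate orbit lemma, rather than by any formal argument.
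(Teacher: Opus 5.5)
Your first step is fine. The Tate spectral sequence together with the ring map $\can$ gives $\pi_*\THH(\Z/p)^{t\mT}=\Z_p[v^{\pm 1}]$, with $\can(v)=v$ and $\can(\tilde u)=pv^{-1}$. But that is exactly the pattern the statement attributes to $\varphi_p^{h\mT}$: isomorphism for $i\le 0$, image $p^j\Z_p$ in degree $2j\ge 0$. The gloss $\tilde u\mapsto pv^{-1}$ after the statement is likewise the formula for $\can$. For $\varphi_p^{h\mT}$ this pattern is false, because the inequalities are swapped.

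The step that breaks is your claim that $\varphi_p^{h\mT}(v)$ is a unit multiple of $v$, and your own input refutes it. If $\varphi_p$ identifies $\THH(\Z/p)$ with $\tau_{\ge 0}\THH(\Z/p)^{t\rC_p}$, its cofiber has homotopy only in degrees $\le -2$. Homotopy fixed points preserve this, so $\varphi_p^{h\mT}$ is an isomorphism on $\pi_i$ for $i\ge -1$, the opposite of the range you deduced. Hence $\varphi_p^{h\mT}(\tilde u)=\lambda v^{-1}$ with $\lambda\in\Z_p^{\times}$, and $\tilde u v=p$ forces $\varphi_p^{h\mT}(v)=\lambda^{-1}pv$. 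The correct statement is therefore: injective in every even degree, an isomorphism for $i\ge 0$, with image $p^j\Z_p$ for $i=-2j\le 0$.

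This is also the version \cref{cor-TC-Zp} needs. Its argument uses that in each degree $i\ne 0$ one of $\can$, $\varphi_p^{h\mT}$ is an isomorphism and the other is divisible by $p$. With the inequalities as printed, both would be divisible by $p^j$ in degree $2j>0$, and that argument fails.

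Separately, the connective-cover property of $\varphi_p$ is not a formal consequence of \cref{thm-generalized-Segal-conj}. That theorem controls only the levelwise Tate diagonal in \cref{thm-THH-cyclo}. The Frobenius of $\THH$ also involves commuting $(-)^{t\rC_p}$ with a geometric realization, and that is where the content lies. In this paper the property appears in \cref{prop-T-Cp-equivariant-equiv}, which is built on \cref{cor-TC-Zp} and hence on the statement you are proving, so quoting it is circular.

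You can avoid it entirely. Since $\pi_{-2}\THH(\Z/p)=0$, naturality of the forgetful map $(-)^{h\mT}\to(-)$ shows that $\varphi_p^{h\mT}(v)$ maps to $\varphi_p(0)=0$ in $\pi_{-2}\THH(\Z/p)^{t\rC_p}$. Under $(\THH(\Z/p)^{t\rC_p})^{h\mT}\simeq\THH(\Z/p)^{t\mT}$, this forgetful map is the restriction from $\mT$ to $\rC_p$. On $\pi_{-2}$ it is reduction $\Z_p\to\Z/p$, as you can see by comparing the two Tate spectral sequences. Hence $\varphi_p^{h\mT}(v)=pav$ for some $a\in\Z_p$. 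Writing $\varphi_p^{h\mT}(\tilde u)=bv^{-1}$, the relation $\tilde uv=p$ gives $pab=p$, so $ab=1$. Both $a$ and $b$ are therefore units, and the corrected statement follows.
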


In other words, the ring of \cref{prop-THH-Zp-hT} is embedded into
that of \cref{cor-THH-Zp-tT} by sending $\tilde{u}$ to $pv^{-1}$.
With this result in hand, an easy application of \cref{prop-TC} gives
the following.

\begin{cor}\label{cor-TC-Zp}
\cite[Corollary IV.4.10]{NS18}
The homotopy type of $\TopC(\Z/p)$ is
\begin{displaymath}
\TopC(\Z/p)\simeq H\Z_{p}\oplus \Sigma^{-1}H\Z_{p}.
\end{displaymath}
\end{cor}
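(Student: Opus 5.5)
We apply \cref{prop-TC}\cref{prop-TCii}, working $p$-typically; since $\THH(\Z/p)$ is $p$-complete and bounded below, the integral $\TopC$ agrees with $\TopC(-,p)$ after $p$-completion. The $\ell$-components for primes $\ell\neq p$ contribute nothing: $X^{t\rC_{\ell}}$ vanishes for a $p$-complete bounded below $X$ because $\ell$ acts invertibly.

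So $\TopC(\Z/p)$ is the fiber of
\begin{displaymath}
\varphi_{p}^{h\mT}-\can:\THH(\Z/p)^{h\mT}\to (\THH(\Z/p)^{t\rC_{p}})^{h\mT}\simeq \THH(\Z/p)^{t\mT},
\end{displaymath}
where the last equivalence uses \cite[Lemma II.4.1]{NS18} for bounded below $X$. By \cref{prop-THH-Zp-hT} and \cref{cor-THH-Zp-tT}, both source and target have homotopy $\Z_{p}$ in every even degree and zero in odd degrees. Consequently the long exact sequence splits degreewise into the maps $\pi_{2j}(\varphi-\can):\Z_{p}\to\Z_{p}$. Once we know these maps, the homotopy of the fiber is their kernel (in degree $2j$) and their cokernel (in degree $2j-1$).

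The key step is to identify $\can$ on homotopy. By \cref{prop-THH-Zp-hT} and \cref{cor-THH-Zp-tT}, $\can$ is the Tate map $s$, which on homotopy is the localization
\begin{displaymath}
\Z_{p}[\tilde u,v]/(\tilde uv-p)\to \Z_{p}[v^{\pm1}],\qquad v\mapsto v,\ \tilde u\mapsto pv^{-1}.
\end{displaymath}
From this, $\can$ is an isomorphism in degrees $\leq 0$ and is multiplication by $p^{j}$ in degree $2j>0$. For $\varphi$, \cref{cor-THH-Zp-tT} gives the reverse pattern: it is an isomorphism in degrees $\geq 0$, and we expect it to be multiplication by $p^{j}$ in degree $-2j<0$, sending $v$ to a unit multiple of $p\,v$. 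In each nonzero degree $2j$ we therefore have a difference of the form $\text{unit}-p^{|j|}(\text{unit})$ on $\Z_{p}$, which is again a unit, so these maps are isomorphisms. In degree $0$ both $\varphi$ and $\can$ are ring maps sending $1\mapsto 1$, so their difference is $0$.

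Hence $\pi_{0}\TopC=\Z_{p}$ (the kernel in degree $0$), $\pi_{-1}\TopC=\Z_{p}$ (the cokernel in degree $0$), and all other homotopy groups vanish. A spectrum with homotopy concentrated in two adjacent degrees, each equal to $\Z_{p}$, has a $k$-invariant in $[H\Z_{p},\Sigma^{2}H\Z_{p}]$, which is $H^{2}(H\Z;\Z_{p})=0$. So the spectrum splits as $H\Z_{p}\oplus\Sigma^{-1}H\Z_{p}$.

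The main obstacle is pinning down $\varphi$ precisely in negative degrees and $\can$ precisely in positive degrees. The cited corollary only gives injectivity and the images of these maps, and the argument requires that $\varphi-\can$ is a unit in every nonzero degree. If the image statements of \cref{cor-THH-Zp-tT} are used as stated, the potential trouble is in degrees where both maps could be non-units. Divisibility settles this: in degree $2j>0$, $\varphi$ is a unit while $\can$ is divisible by $p$; in negative degrees the roles reverse, since $\can$ is a unit there and $\varphi$ lands in $p\Z_{p}$ via the Frobenius on $v$. Either way the difference is a unit.
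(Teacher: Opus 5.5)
Your argument is correct and is essentially the paper's: both read off the long exact sequence of the fiber sequence in \cref{prop-TC}\cref{prop-TCii}, note that in every nonzero even degree $\varphi_p^{h\mT}-\can$ is a unit minus a multiple of $p$ on $\Z_p$, and note that it vanishes in degree $0$. Your assignment of roles ($\can$ a unit in degrees $\le 0$, $\varphi_p^{h\mT}$ a unit in degrees $\ge 0$) is the correct one from \cite[Proposition IV.4.9]{NS18}, even though \cref{cor-THH-Zp-tT} as printed attributes the localization pattern $\tilde{u}\mapsto pv^{-1}$ to $\varphi_p^{h\mT}$; your $k$-invariant computation also supplies the splitting step that the paper leaves implicit.
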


This is proved by looking at the {\LES} of homotopy groups associated
with the fiber sequence of \cref{prop-TC}\cref{prop-TCii} for $X=\THH
(\Z/p)$. The second and third spectra have even dimensional homotopy
groups, so  for each integer $i$ we get a 4-term sequence
\begin{displaymath}
\xymatrix
@R=8mm
@C=6mm
{
{0}\ar[r]^(.5){}
  &{\pi_{2i}\TopC (\Z/p)}
   \ar[r]^(.5){}
    &{\pi_{2i}\THH(\Z/p)^{h\mT}=\Z_{p}}
                                \ar[d]_(.5){\can - \varphi^{h\mT}_{p} }\\
  & &{\Z_{p}=\pi_{2i}\THH(\Z/p)^{t\mT}}\ar[r]^(.5){}
      &{\pi_{2i-1}\TopC (\Z/p)}\ar[r]^(.5){}
        &{0.}
}
\end{displaymath}

\noindent When $i\neq 0$ the vertical homomorphism is the
difference between the identity on $\Z_{p}$ and a map divisible by
$p$, and therefore an isomorphism. For $i=0$, it is trivial, and the
result follows.

\begin{remark}\label{rem-simplest}
{\bf The simplest instance of chromatic redshift.}
Recall that $H/p$ has {\fp}-type $-1$ (as explained in \cref{ex-BPn}),
and we see that 
\begin{displaymath}
\TopC(\Z/p):=\TopC (\THH(\Z/p))
\end{displaymath}

\noindent has {\fp}-type $0$.  Note that $\THH (\Z/p)$ itself does not
have any {\fp}-type because its smash product with any nontrivial
finite spectrum has infinitely many nontrivial homotopy groups. 
\end{remark}

It follows from \cref{cor-TC-Zp} that there is a unique map (up to
unit $p$-adic scalar)
\begin{numequation}\label{eq-unique-mP}
\begin{split}
H\Z_{p}\to \TopC(\Z/p) :=\TopC (\THH(\Z/p))
\end{split}
\end{numequation}%

\noindent that is surjective in homotopy since $H\Z_{p}$ is the
connective cover $\tau_{\geq 0}\TopC(\Z/p)$.  Under the adjunction of
\cref{eq-CycSp-adjunct} this determines a map of cyclotomic spectra\linebreak 
$H\Z_{p}^{\triv}\to \THH(\Z/p)$.

Note that since the underlying spectrum of $\THH(\Z/p)$ is a wedge of
evenly suspended copies of $H\FFp $, a map to it from $H\Z_{p}$ is
given by a sequence of even dimensional mod $p$ cohomology classes in
the latter with suitable multiplicative properties.  We do not know
which such sequence our map corresponds to, but we do know this.

\begin{prop}\label{prop-T-Cp-equivariant-equiv}
{\bf A $\mathbb{T}/\rC_{p}$-equivariant equivalence.} \cite[Corollary
IV.4.13]{NS18} The $\mathbb{T}$-equivariant map of
$\mathbb{E}_\infty$-algebras
\[
  H\mathbb{Z}_p^{\triv} \longrightarrow \mathrm{THH}(\Z/p)
\]
adjoint to \cref{eq-unique-mP} induces a
$\mathbb{T}/\rC_{p}$-equivariant equivalence of
$\mathbb{E}_\infty$-algebras
\[
  H\mathbb{Z}_p^{t\rC_{p}}
  \;\simeq\; \mathrm{THH}(\Z/p)^{t\rC_{p}}
   =: \TP(\Z/p).
\]
In particular,
\[
  \pi_* \mathrm{THH}(\Z/p)^{t\rC_{p}} \;\cong\; \mathbb{F}_p[v^{\pm 1}].
\]

Moreover, the $\mathbb{T} \simeq \mathbb{T}/\rC_{p}$-equivariant map of
$\mathbb{E}_\infty$-algebras
\[
  \varphi_p \colon \mathrm{THH}(\Z/p) \longrightarrow 
  \mathrm{THH}(\Z/p)^{t\rC_{p}}
\]
identifies $\mathrm{THH}(\Z/p)$ with the connective cover
\[
  \tau_{\ge 0}\,\mathrm{THH}(\Z/p)^{t\rC_{p}}
  \;\simeq\;
  \tau_{\ge 0}\,H\mathbb{Z}_p^{t\rC_{p}}.
\]
\end{prop}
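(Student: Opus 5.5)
Since $\THH(\Z/p)$ and $H\Z_{p}$ are bounded below, the plan is to prove the first equivalence by computing homotopy groups on both sides and checking that the map induces an isomorphism, then deduce the second statement from the Segal-type results.

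\textbf{Step 1: compute the target.} By \cref{thm-Bokstedt}\cref{thm-Bokstedti}, $\pi_{*}\THH(\Z/p)=\FF_p[u]$ is even. The Tate spectral sequence for $\rC_{p}$,
\begin{displaymath}
\widehat{H}^{-s}(\rC_{p};\pi_{t}\THH(\Z/p))\implies \pi_{s+t}\THH(\Z/p)^{t\rC_{p}},
\end{displaymath}
has $E_2$-page $\Lambda(a)\otimes\FF_p[x^{\pm1}]\otimes\FF_p[u]$ as in \cref{ex-CpZp}. I would not compute the differentials directly. Instead I would use the $\mT$-action: since $(X^{t\rC_{p}})^{h\mT}\simeq X^{t\mT}$ after $p$-completion for bounded below $X$ (the remark after \cref{prop-TC}), the known answer $\pi_{*}\THH(\Z/p)^{t\mT}=\Z_p[v^{\pm1}]$ of \cref{cor-THH-Zp-tT} forces the residual $\mT$-homotopy fixed point spectral sequence to be concentrated in even degrees. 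From this one reads off that $\pi_{*}\THH(\Z/p)^{t\rC_{p}}$ is $\FF_p$ in each even degree, i.e. $\FF_p[v^{\pm1}]$, where $v$ is the image of the generator of $\pi_{-2}$.

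\textbf{Step 2: compute the source.} For $H\Z_p$ with trivial action, $\pi_{*}H\Z_p^{t\rC_{p}}=\widehat{H}^{-*}(\rC_{p};\Z_p)$. This is $\Z/p$ in even degrees and $0$ in odd degrees, and as a ring it is $\FF_p[v^{\pm1}]$ with $v=x^{-1}$.

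\textbf{Step 3: compare.} The map is a ring map, so it is enough to show that it is nonzero in degree $-2$. It is an isomorphism in degree $0$ because the unit is preserved. For degree $-2$, pass to $\mT$-homotopy fixed points: $\pi_{-2}H\Z_p^{t\mT}\to\pi_{-2}\THH(\Z/p)^{t\mT}$ sends the generator to $v$, since the map $H\Z_p\to\TopC(\Z/p)\to\THH(\Z/p)^{h\mT}$ is an isomorphism on $\pi_{\le 0}$ by \cref{cor-TC-Zp} and \cref{prop-THH-Zp-hT}. Reducing mod $p$ along $(-)^{t\mT}\to(-)^{t\rC_{p}}$ shows that $v$ is not killed. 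Since a ring map $\FF_p[v^{\pm1}]\to\FF_p[v^{\pm1}]$ sending $v$ to a unit multiple of $v$ is an isomorphism, this gives the equivalence. Equivariance and the $\EE_\infty$ structure come for free, because the map is the image under the lax monoidal functor $(-)^{t\rC_p}$ of an $\EE_\infty$ $\mT$-map.

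\textbf{Step 4: the second statement.} By \cref{cor-THH-Zp-tT}, $\varphi_p^{h\mT}$ sends $\tilde u$ to $pv^{-1}$, which is nonzero. Therefore $\varphi_p$ sends $u$ to a unit multiple of $v^{-1}$ in $\pi_2\THH(\Z/p)^{t\rC_p}$. Hence $\varphi_p$ carries $\FF_p[u]$ isomorphically onto $\FF_p[v^{-1}]=\pi_{\ge0}$, which gives the claimed identification with the connective cover.

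\textbf{Main obstacle.} I expect the delicate point to be Step 3, in particular the degree-$-2$ check and the compatibility of the $\mT$-Tate and $\rC_p$-Tate comparisons. The fallback is a multiplicative-extension argument in the Tate spectral sequence, using $\tilde u v=p$.
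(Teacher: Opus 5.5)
The paper gives no proof of its own here; it simply quotes \cite[Corollary IV.4.13]{NS18}. Your outline therefore has to stand on its own, and as written it has two genuine gaps.

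\textbf{Step~1.} This is where all the content of the first claim lies. Knowing that the residual homotopy fixed point spectral sequence for $Y:=\mathrm{THH}(\mathbb{Z}/p)^{tC_p}$ abuts to $\mathbb{Z}_p[v^{\pm1}]$ does not let you ``read off'' $\pi_*Y$. Nothing you say rules out odd classes that cancel in pairs, or groups such as $\mathbb{Z}/p^2$ with a different extension pattern, and you never show that $p=0$ in $\pi_*Y$.

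The missing ingredient is an Euler class (Gysin) sequence. It is available because the map in the statement makes $\mathrm{THH}(\mathbb{Z}/p)$ a $\mathbb{T}$-equivariant $H\mathbb{Z}_p^{\mathrm{triv}}$-module. Then $\pi_*Y$ is computed from multiplication by the Euler class of the residual circle on $\pi_*Y^{h(\mathbb{T}/C_p)}\cong\pi_*\mathrm{THH}(\mathbb{Z}/p)^{t\mathbb{T}}$, and that class acts by $[p](t)=pt$. Comparing homotopy fixed point spectral sequences in filtration $2$ shows that $H\mathbb{Z}_p^{h\mathbb{T}}\to\mathrm{THH}(\mathbb{Z}/p)^{h\mathbb{T}}$ sends $t$ to a unit multiple of $v$. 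Hence $pt$ acts injectively on $\mathbb{Z}_p[v^{\pm1}]$, and $\pi_*Y\cong\mathbb{Z}_p[v^{\pm1}]/p$.

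The same comparison does more. It shows that $H\mathbb{Z}_p^{t\mathbb{T}}\to\mathrm{THH}(\mathbb{Z}/p)^{t\mathbb{T}}$ is an equivalence, so taking cofibers of $pt$ on both sides gives $H\mathbb{Z}_p^{tC_p}\simeq\mathrm{THH}(\mathbb{Z}/p)^{tC_p}$ directly. It also shows that $\pi_*\mathrm{THH}(\mathbb{Z}/p)^{t\mathbb{T}}\to\pi_*\mathrm{THH}(\mathbb{Z}/p)^{tC_p}$ is reduction mod $p$.

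This makes Step~3 unnecessary. Its degree $-2$ check is automatic anyway: a unital ring map out of the graded field $\mathbb{F}_p[v^{\pm1}]$ into a nonzero ring is injective. The justification you give there is also wrong. The map $H\mathbb{Z}_p\to\mathrm{THH}(\mathbb{Z}/p)^{h\mathbb{T}}$ is not an isomorphism on $\pi_{-2}$, since $\pi_{-2}H\mathbb{Z}_p=0$. What is needed is the Euler class statement above.

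\textbf{Step~4.} This step fails as written. If $\varphi_p^{h\mathbb{T}}(\tilde u)=pv^{-1}$, its image in $\pi_2\mathrm{THH}(\mathbb{Z}/p)^{tC_p}\cong\mathbb{F}_p$ is $0$. Your argument would then prove $\varphi_p(u)=0$: an element that is nonzero in $\mathbb{Z}_p$ can still vanish after reduction mod $p$.

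The description you took from \cref{cor-THH-Zp-tT} is really that of $\mathrm{can}$, which is an isomorphism in degrees $\le 0$ and sends $\tilde u\mapsto pv^{-1}$. For the Frobenius it is the other way round: $\varphi_p^{h\mathbb{T}}$ is an isomorphism in degrees $\ge 0$ \cite[Proposition IV.4.9]{NS18}.

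You can get what you need without citing that result:
\begin{itemize}
\item $\varphi_p^{h\mathbb{T}}$ and $\varphi_p$ are compatible with the forgetful maps, and $\pi_{-2}\mathrm{THH}(\mathbb{Z}/p)=0$. So $\varphi_p^{h\mathbb{T}}(v)$ maps to $\varphi_p(0)=0$ in $\pi_{-2}\mathrm{THH}(\mathbb{Z}/p)^{tC_p}$.
\item By the mod-$p$ reduction statement above, this forces $\varphi_p^{h\mathbb{T}}(v)\in p\mathbb{Z}_p v$.
\item $\varphi_p^{h\mathbb{T}}$ is a ring map and $\tilde uv=p$ in the torsion-free ring $\mathbb{Z}_p[v^{\pm1}]$. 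Hence $\varphi_p^{h\mathbb{T}}(\tilde u)$ is a unit multiple of $v^{-1}$.
\item Its reduction is $\varphi_p(u)$, which is therefore nonzero. Multiplicativity then gives $\mathbb{F}_p[u]\cong\pi_{\ge 0}\mathrm{THH}(\mathbb{Z}/p)^{tC_p}$.
\end{itemize}
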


\subsubsection{The {\Bok}-Madsen computation of $\TopC (\Z_{p})$}
\label{sec-Bok-Mad}

Nikolaus and Scholze do not determine $\TopC(H\Z_{p})$, but it is
treated earlier by {\Bok} and Madsen in \cite{Bokstedt-Madsen94}.  In
his MathSciNet review Peter May says
\begin{quote}
This difficult and important paper gives a pioneering application of
equivariant stable homotopy theory to calculations in algebraic
$K$-theory.
\end{quote}

\noindent The authors determine not just the homotopy groups but the
homotopy type of $\TopC (\Z_{p})$, relating it to $BU$ and $\Im\, J$.
They show (without using that terminology since it had not been
invented yet) that it has {\fp}-type $1$ as in
\cref{def-MR99}\cref{def-MR99i}.  See \cite[(0.7)]{Bokstedt-Madsen94}
for a precise statement. The groups ``agree with [the answer]
predicted by a generalized version of the Lichtenbaum-Quillen
conjecture formulated by Dwyer and Friedlander \cite{DF85}.''  {\Bok}
and Madsen's proof relies on \cite[Conjecture 4.3]{Bokstedt-Madsen94},
which was proved while their paper was in press by Stavros Tsalidis in
\cite{Tsalidis}.

\cite{Bokstedt-Madsen94} also anticipates the {\em redshift philosophy}
introduced by Christian Ausoni and John Rognes in
\cite{Ausoni-Rognes1}, which says that algebraic $K$-theory and the
related functor $\TopC$ each raise chromatic height (or {\fp}-type as
in \cref{def-MR99}) by one.  In this case the input is the integer
{\SESM} spectrum, which has {\fp}-type 0, while the spectra associated
with $BU$ and $\Im\, J$ have {\fp}-type 1.  The case where the input
is the mod $p$ {\SESM} spectrum is discussed in \cref{sec-TC(Zp)}.

The first theorem about redshift at all heights is that of Dylan
Wilson and Hahn \cite{Hahn-Wilson} proved 28 years later.

There is a complementary notion of {\em chromatic blueshift} having to
do with the Tate construction's lowering chromatic height; see
\cref{sec-Segal-conjecture}.


\subsection{$t$-structures and boundedness}\label{sec-t-structures}
The original definition of a $t$-structure on a triangulated category
is due to Alexander Beilinson, Joseph Bernstein and Pierre Deligne,
\cite[Definition 1.3.1]{BBD82}.  A {\bf $t$-structure} on a stable
{\qcat} $\qmcC$ is a system of full sub-{\qcats} $\qmcC_{\geq n}$ and
$\qmcC_{\leq n}$ for $n\in \Z$ with certain properties spelled out in
\cite[Definition 1.2.1.4]{Lurie:HA} and in \cite[Appendix A]{AN21}.
These properties imply that the subcategories are determined by
$\qmcC_{\geq 0}$ and $\qmcC_{\leq 0}$, the {\bf aisle} and {\bf
co-aisle}. $\qmcC_{\geq 0}$ is an example of a {\bf prestable
{\qcat}}, a notion studied in \cite[Appendix C]{Lurie:SAG}, that
abstracts the properties of the {\qcat} of connective spectra.

$\qmcC_{\geq n}$ and $\qmcC_{\leq n}$ are sometimes called the
subcategories of {\bf $n$-connective objects} and {\bf $n$-coconnective
objects}.  Objects that are in $\qmcC_{\geq n}$ for some $n$ are said
to be {\bf bounded below}, and those in $\qmcC_{\leq n}$ for some $n$
are said to be {\bf bounded above}. The full subcategories of such
objects are denoted by $\qmcC_{+}$ and $\qmcC_{-}$.  The
subcategory\linebreak $\qmcC_{\geq 0}\cap \qmcC_{\leq 0}$ is called
{\bf the heart} $\qmcC^{\heartsuit}$.  Its homotopy category is known
to be abelian.  For $m\leq n$, let
\begin{displaymath}
\qmcC_{[m,n]}:=\qmcC_{\geq m}\cap\qmcC_{\leq n}.
\end{displaymath}

Let $\qmcC$ be a stable {\qcat} $\qmcC$ with $t$-structure. One has
has a {\bf truncation functor} $\tau_{\leq n}:\qmcC \to \qmcC_{\leq
n}$ generalizing the classical $n$th Postnikov section.  The fiber of
the map $X\to \tau_{\leq n}X$ is denoted by $\tau_{>n}X:=\tau_{\geq
n+1}X$, the generalization of the $n$-connected cover of $X$.  The
functor $\tau_{>n}:\qmcC\to \qmcC_{>n}$ is also called a truncation
functor.

We need the following informal definition, which is discussed in much
more detail by Weibel in \cite[Chapter 10]{Weibel-H}.

\begin{defin}\label{def-derived}
{\bf The derived category $\mcD (\mcA)$ of an abelian category
$\mcA$.}  Let $\Ch (\mcA)$ denote the category of chain complexes with
objects in $\mcA$.  One can define homology and chain homotopy in the
usual way.  Let $\mcK (\mcA) $ be the category whose objects are chain
complexes and whose morphisms are chain homotopy classes of chain
maps.  A {\bf quasi-isomorphism} is a morphism in $\Ch (\mcA)$ or
$\mcK (\mcA)$ that induces an isomorphism in homology. (This notion is
weaker than chain homotopy equivalence.) $\mcD (\mcA)$ is the category
obtained from $\mcK (\mcA) $ by formally inverting all
quasi-isomorphisms.
\end{defin}

The stable {\qcat} of spectra $\qSp$ has the {\bf Postnikov
$t$-structure} in which the subcategories are those of spectra with
trivial homotopy groups in dimensions outside the range indicated by the
subscripts. Its heart is the category of {\SESM} spectra with homotopy
groups concentrated in dimension 0, which is known to be {\eqt} to the
derived category of abelian groups $\mcD (\Ab)$ of \cref{def-derived}.
Its homotopy category is $\Ab$.  There is a similar $t$-structure on
$\qSp^{BG}$ for any compact Lie group $G$, in which connectivity and
coconnectivity is that of the underlying spectrum.

\begin{defin}\label{def-t-exact}
{\bf Exactness with respect to $t$-structures.} Let $\qmcC$ and
$\qmcD$ be stable {\qcats} each equipped with a $t$-structure.  A
functor $\qmcC \to \qmcD$ is\begin{itemize}
\item [$\bullet$] {\bf left exact} if sends $\qmcC_{\geq 0}$ to
$\qmcD_{\geq 0}$,

\item [$\bullet$] {\bf right exact} if sends $\qmcC_{\leq 0}$ to
$\qmcD_{\leq 0}$ and 

\item [$\bullet$] {\bf $t$-exact} if both conditions hold, meaning
that it sends $\qmcC^{\heartsuit}$ to $\qmcD^{\heartsuit}$.
\end{itemize}
\end{defin}

\begin{defin}\label{def-can-van}
{\bf Canonical vanishing.}
Suppose we are given a $\mT$-spectrum $X$ which is bounded below.  We
say that:
\begin{enumerate}[label={(\roman*)},itemindent=0em]
\item \label{def-can-vani}
  \cite[Definition 2.18(1)]{BHLS} 
  Recall the map $S_{X}^{G}:X^{hG}\to X^{tG}$
of \cref{eq-Tate2} for $G\subseteq \mT$.  $X$ satisfies {\bf weak
canonical vanishing} with parameter $b$ ($\mathrm{WCV}(\le b)$ for
short) if for each $H=\rC_{p^{j}}$ with $1 \le j \le \infty$, the composite
\begin{displaymath}
\xymatrix
@R=4mm
@C=10mm
{
{\tau_{>b}X^{hH}}\ar[r]^(.5){}
  &{X^{hH}}\ar[r]^(.5){s_{X}^{H}}
    &{X^{tH}}
}
\end{displaymath}

\noindent is null.

\item \label{def-can-vanii}
  \cite[Definition 2.18(2)]{BHLS} 
Recall the Euler class of \cref{def-rep-T},\linebreak 
$a_{(1)}:\mS^{-(1)}\to\mS$, and let
\begin{displaymath}
a_{(1)}^{d}:X \to \Sigma^{d (1)}X
\end{displaymath}

\noindent be the smash product of its $d$th smash power with the
twisted suspension $\Sigma^{d (1)}X$.  $X$ satisfies {\bf strong
canonical vanishing} with parameter $b$\linebreak ($\mathrm{SCV}(\le
b)$ for short) if there exists some $d \ge 0$ for which the
composition
\[
\tau_{> b} X \longrightarrow X 
\;\xrightarrow{ a_{(1)}^{d}  }\;
\Sigma^{d(1)} X
\]

\noindent is $\mT$-equivariantly null.
\end{enumerate}
\end{defin}

They show in \cite[Lemma 2.19]{BHLS} that \cref{def-can-vanii} implies
\cref{def-can-vani}. 

\subsection{The surprising Antieau-Nikolaus $t$-structure on
  $\qCycSp$} \label{sec-AN-t-structure} There is a $t$-structure on
$\qCycSp_{p}$ due to Nikolaus and Ben Antieau \cite{AN21}.  In
\cite{BHLS} it is introduced in \S2.1.4, and discussed further in
\S2.2 and \S2.4.

The Antieau-Nikolaus connectivity of a cyclotomic spectrum is the
Postnikov connectivity of the underlying spectrum, but cyclotomic
coconnectivity is much more interesting, as \cref{thm-AN2}
illustrates.  In any case $\qCycSp_{p,+}$ (denoted in \cite{AN21} by
$\mathbf{CycSp}_{p}^{-}$) is the full subcategory of cyclotomic
spectra underlain by spectra that are bounded below in the Postnikov
sense.

\begin{theorem}\label{thm-AN1}
\cite[Theorem 1]{AN21} The {\qcat} $\qCycSp_{p,+}$ of connective
$p$-typical cyclotomic spectra is the connective part of a unique
$t$-structure on $\qCycSp_p$, the {\qcat} of $p$-typical cyclotomic
spectra as in \cref{def-cyc-spectra}\cref{def-cyc-spectraii}.  The heart
$\qCycSp_p^{\heartsuit}$ is equivalent to the abelian category of
derived $V$-complete $p$-typical Cartier modules, to be defined in
\cref{def-Cart-mod}\cref{def-Cart-modi}.
\end{theorem}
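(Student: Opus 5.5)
The plan is to construct the $t$-structure directly from its connective part, following \cite[Appendix A]{AN21} and \cite[Proposition 1.4.4.11]{Lurie:HA}. By \cref{rem-presentable-stable-qcats}, $\qCycSp_p$ is a presentable stable {\qcat}. For a presentable stable {\qcat}, any full subcategory closed under small colimits and extensions and generated under colimits by a \emph{set} of objects is the aisle of a unique $t$-structure. The coaisle is then forced to be the right orthogonal. So I would verify three things about $\qCycSp_{p,+}$, read here as the cyclotomic spectra whose underlying spectrum is connective.

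First, closure under colimits and extensions. The forgetful functor $\qCycSp_p\to\qSp$ is exact and preserves small colimits by \cref{rem-presentable-stable-qcats}. Connective spectra are closed under colimits and extensions in $\qSp$, so $\qCycSp_{p,+}$ is too.

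Second, presentability, which means exhibiting a set of generators. Because $\qCycSp_p$ is a lax equalizer (\cref{def-cyc-spectra}), I would use the left adjoint of the forgetful functor to $\qSp^{\rC_{p^\infty}}$. For connective input this adjoint should be describable explicitly, roughly as an induction of the form $X\mapsto\bigoplus_{k\ge0}(\text{iterated } (-)^{t\rC_p}\text{-type shifts})$. Applying it to $\Sigma^n\mS[\rC_{p^\infty}/\rC_{p^j}]$ with $n\ge0$ gives a generating set. The polygonic construction $\Upsilon$ of \cref{def-U-poly}, together with $\mL$ from \cref{ex-poly-spectra}, suggests concrete connective generators; \cite[Lemma 2.6]{BHLS} says $\Upsilon\mL$ corepresents $\TR$ on bounded below objects. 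The main obstacle is checking that these generators stay connective and detect everything, since the Tate construction does not preserve connectivity. Here I would rely on the generalized Segal conjecture, \cref{thm-generalized-Segal-conj}, and on the Tate orbit lemma, \cref{lem-Tate-orbit-fix}, which controls $(-)^{t\rC_p}$ on bounded below objects. This is also what makes the mapping spectra in $\qCycSp_{p,+}$ computable, as an equalizer generalizing \cref{prop-TC}\cref{prop-TCii}.

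Third, the heart. An object $X$ is coconnective exactly when $\map_{\qCycSp_p}(G,X)$ is coconnective for every connective generator $G$. With $G=\Upsilon\mL$ this says $\TR(X)$ is coconnective. For $X$ in the heart, $M:=\pi_0X=\pi_0\TR(X)$ then inherits the $\TR$ structure maps $F$ and $V$ with $FV=p$, mirroring the Witt vector identities of \cref{def-WVrF}. Derived $V$-completeness should come from writing $\TR$ as a limit along restriction maps. Conversely, from a derived $V$-complete Cartier module I would build a cyclotomic spectrum via the adjunction of \cref{prop-forgetful-functor}, taking $X=U(HM)$ with Frobenius lift induced by $F$. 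I would then check, using the fully faithfulness of $\TR$ on suitable objects, that the two constructions are inverse equivalences of abelian categories. Uniqueness of the $t$-structure is automatic once its aisle is prescribed.
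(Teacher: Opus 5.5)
Your existence and uniqueness argument is the formal one the paper quotes from \cite{AN21}. The connective objects are closed under colimits and extensions and form a presentable $\infty$-category, so \cite[Proposition 1.4.4.11]{Lurie:HA} applies. (Reading $\qCycSp_{p,+}$ as the connective objects is right, since bounded below objects are not closed under infinite coproducts.) Your second step is a detour, though, and a shaky one. The forgetful functor $\qCycSp_p\to\qSp^{\rC_{p^\infty}}$ preserves colimits, but it is only guaranteed to preserve those limits that $(-)^{t\rC_p}$ preserves, and $(-)^{t\rC_p}$ does not commute with infinite products. So you cannot assume a left adjoint exists, and the Segal conjecture and Tate orbit lemma are not needed. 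Presentability is formal: the connective part is the lax equalizer of the inclusion $(\qSp^{\rC_{p^\infty}})_{\geq 0}\to\qSp^{\rC_{p^\infty}}$ and $(-)^{t\rC_p}$. These are a colimit-preserving and an accessible functor between presentable $\infty$-categories, so the lax equalizer is presentable by \cite[\S II.1]{NS18}.

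The genuine gap is the heart. You set $M:=\pi_0X=\pi_0\TR(X)$, but an object of the heart need not have a discrete underlying spectrum, and $\pi_0X\neq\pi_0\TR(X)$ in general. By \cref{thm-AN2}, $\THH(k)$ lies in the heart although $\pi_*\THH(k)\cong k[b]$; here $\pi_0\THH(k)=k$ while $\TR(\THH(k))\simeq HW(k)$. In general $X\simeq\TR(X)/\ANV$, so $\pi_0X\cong M/\ANV M$. The Cartier module must be $\pi_0\TR(X)$. Knowing that $\TR(X)$ is concentrated in degree $0$ requires the $t$-exactness of $\TR$ on bounded below objects, which is the real content here (\cref{thm-AN3.21}). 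Your corepresentability argument gives at most that $\TR(X)$ is coconnective when $X$ is, and only if $\Upsilon\mL$ lies in the aisle. It does not show that $\TR(X)$ is connective for connective $X$, which is not automatic since $\TR$ is a limit of homotopy fixed points and Tate constructions. Nor does it give the converse, which would need $\Upsilon\mL$ to generate the aisle.

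Your inverse functor is also wrong, because $U(HM)$ forgets $\ANV$ entirely. For $M=W(\FFp)=\Z_p$ with $\ANF=1$ and $\ANV=p$ it produces $H\Z_p^{\triv}$, whose underlying spectrum is discrete. The heart object corresponding to this $M$ is $\THH(\Z/p)$. What is actually needed:
\begin{itemize}
\item $\TR$ identifies bounded below cyclotomic spectra with the $\ANV$-complete objects of $\qTCart_{p,+}$ (\cref{def-TCart}), with left adjoint $Y\mapsto Y/\ANV$ as in \cref{ex-top-Cart}\cref{ex-top-Cartiv}.
\item The heart of $\qTCart_p$ is identified with algebraic Cartier modules via $M\mapsto HM$, as in \cref{ex-top-Cart}\cref{ex-top-Carti}.
\item Topological $\ANV$-completeness of $HM$ agrees with derived $\ANV$-completeness of $M$.
\end{itemize}
The inverse equivalence is then $M\mapsto HM/\ANV$; for instance $HW(k)/\ANV\simeq\THH(k)$. $\ANV$-completeness gives $\TR(HM/\ANV)\simeq HM$.
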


In their words,
\begin{quote}
The existence and uniqueness of such a $t$-structure is a formal
consequence of the fact that $\qCycSp_{p,+}$ is presentable and
is closed under colimits and extensions in $\qCycSp_p$.  The difficult
part of the theorem is the identification of the heart.

Recall {\Bok}'s [\cref{thm-Bokstedt}], which says that
\begin{displaymath}
\pi_{*}\THH(\mathbb{F}_{p}) \cong \mathbb{F}_{p}[b],
\end{displaymath}

\noindent a polynomial ring on a degree~$2$ generator.  More
generally, using the vanishing of the cotangent complex, one deduces
that
\begin{displaymath}
\pi_{*}\THH(k) \cong k[b]
\end{displaymath}

\noindent for any perfect ring~$k$.
Our interest in the cyclotomic $t$-structure was piqued by the
discovery of the next result.
\end{quote}

\begin{theorem}\label{thm-AN2}
\cite[Theorem 2]{AN21}
If $k$ is a perfect ring of characteristic~$p$, then 
$\THH(k) \in \qCycSp_p^{\heartsuit}$.
\end{theorem}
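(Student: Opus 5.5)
To prove \cref{thm-AN2}, we show that $\THH(k)$ lies in both $\qCycSp_{p,\geq 0}$ and $\qCycSp_{p,\leq 0}$ for the Antieau-Nikolaus $t$-structure of \cref{thm-AN1}. Connectivity is immediate. The connective part is $\qCycSp_{p,+}$ with connectivity measured on underlying spectra, and $\THH(k)$ is connective since $k$ is. Every difficulty therefore lies in coconnectivity. Since $\qCycSp_{p,\leq 0}$ is the right orthogonal of $\qCycSp_{p,\geq 1}$, we must show
\[
\map_{\qCycSp_p}(Y,\THH(k))
\]
has vanishing $\pi_0$, and hence vanishing $\pi_{i}$ for $i\le 0$, for every cyclotomic spectrum $Y$ whose underlying spectrum is $1$-connective. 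I would reduce to a tractable formula for this mapping spectrum. By the lax-equalizer description in \cref{def-cyc-spectra}\cref{def-cyc-spectraii}, there is a fiber sequence generalizing \cref{prop-TC}\cref{prop-TCii}:
\[
\map_{\qCycSp_p}(Y,X)\to \map_{\qSp^{\rC_{p^\infty}}}(Y,X)\xrightarrow{\ \varphi_*-\varphi^*\ } \map_{\qSp^{\rC_{p^\infty}}}(Y,X^{t\rC_p}).
\]
It then suffices to show two things. First, the middle term has $\pi_i=0$ for $i\le 0$. Second, the right-hand map is surjective on $\pi_1$ and injective on $\pi_0$.

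The key input is \cref{prop-T-Cp-equivariant-equiv}, extended from $\FFp$ to a perfect $k$ by the same argument using the vanishing of the cotangent complex. It says that $\varphi_p$ identifies $\THH(k)$ with $\tau_{\ge0}\THH(k)^{t\rC_p}$, and that $\THH(k)^{t\rC_p}\simeq HW(k)^{t\rC_p}$ has homotopy $k[v^{\pm1}]$ in even degrees. Write $X=\THH(k)$. Maps out of a $1$-connective $Y$ only detect degrees at least $1$. Because $\varphi_p$ is an equivalence onto the connective cover, the map
\[
\varphi_*:\map(Y,X)\to\map(Y,X^{t\rC_p})
\]
is an equivalence on connective covers. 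Concretely, $\map(Y,\tau_{<0}X^{t\rC_p})$ contributes only in positive degrees of the mapping spectrum, and I would compute its contribution separately.

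I would then exhibit $\varphi_*-\varphi^*$ as $\varphi_*$ perturbed by a topologically nilpotent operator. Heuristically, $\varphi^*$ involves the Frobenius structure on $Y$ composed with the canonical map, and it behaves like the $p$-divisible map seen in the proof of \cref{cor-TC-Zp}. For instance, $\tilde u\mapsto pv^{-1}$ versus the identity in the computation following \cref{cor-THH-Zp-tT}. Since everything is $p$-complete, the resulting geometric series converges, so $\varphi_*-\varphi^*$ is an equivalence in the relevant range. That range is where $\varphi_*$ is an equivalence, namely degrees $\le 0$ of the mapping spectra. It follows that $\pi_i\map_{\qCycSp_p}(Y,X)=0$ for $i\le 0$.

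The main obstacle is making this nilpotence precise for arbitrary $1$-connective $Y$, rather than only for the case $Y=\mS^{\triv}$ treated in \cref{cor-TC-Zp}. I would first reduce to generators of $\qCycSp_{p,\ge1}$, using that the mapping functor converts colimits to limits. Following \cref{ex-poly-spectra}\cref{ex-poly-spectraiii}, natural generators are $\Sigma\mS^{\triv}$ and suspensions of $\mS[\mT/\rC_{p^j}]$ with induced cyclotomic structure. For these, $\map(-,X)$ becomes $X^{h\rC_{p^j}}$-type objects. The needed vanishing then reduces to explicit computations with the homotopy fixed point and Tate spectral sequences for $\THH(k)$ over $\rC_{p^j}$, which collapse for parity reasons as in \cref{prop-THH-Zp-hT}.
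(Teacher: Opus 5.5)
Your degree bookkeeping is reversed, and this hides where the difficulty lies. Since $\qCycSp_{p,\ge 1}$ is closed under $\Sigma$ but not under $\Sigma^{-1}$, what you need is $\pi_i\map_{\qCycSp_p}(Y,\THH(k))=0$ for $i\ge 0$. Likewise $\map(Y,\tau_{<0}X^{t\rC_p})$ is concentrated in degrees $\le -2$, so $\varphi_*$ is an isomorphism on $\pi_i$ for $i\ge -1$, not for $i\le 0$. Your ``First'' claim fails in either convention. For $Y=\Sigma^2\mS^{\triv}$ one has $\pi_0\map_{\qSp^{\rC_{p^\infty}}}(Y,\THH(k))\cong\pi_2\THH(k)^{h\rC_{p^\infty}}$, which is nonzero; for $k=\FFp$ it is $\Z_p\tilde u$, cf.\ \cref{prop-THH-Zp-hT}. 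So the whole argument rests on $\varphi_*-\varphi^*$ being an isomorphism on $\pi_{\ge 1}$ and injective on $\pi_0$. Your heuristic that it is an equivalence wherever $\varphi_*$ is cannot be right. For $Y=\Sigma\mS^{\triv}$ and $k=\FFp$, the difference is zero on $\pi_{-1}$, even though $\varphi_*$ is an isomorphism there. This is exactly how $\pi_0\TopC(\FFp)\cong\Z_p$ of \cref{cor-TC-Zp} arises.

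The genuine gap is the nilpotence step itself. Nothing in the proposal explains why $f\mapsto f^{t\rC_p}\circ\varphi_Y$ should be $p$-adically small relative to $\varphi_{X*}$ for an arbitrary $1$-connective $Y$ with an arbitrary Frobenius. The computation $\tilde u\mapsto pv^{-1}$ only handles $Y=\Sigma^n\mS^{\triv}$, so it only proves $\pi_{>0}\TopC(\THH(k))=0$.

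The reduction to generators does not repair this. The individual $\mS[\mT/\rC_{p^j}]$ carry no cyclotomic structure: in $\mL$ of \cref{eq-mL} the structure map sends the $j$th summand into the Tate construction of the $(j+1)$st. Moreover, the claim that suspensions of $\mL$ (and of $\mS^{\triv}$) generate $\qCycSp_{p,\ge1}$ under colimits and extensions is unproved; it is essentially the content of \cref{thm-AN3.21}.

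The missing idea is to route the problem through $\TR$, as Antieau--Nikolaus do and as the paper indicates. By \cref{thm-AN3.21}, a bounded below $Y$ satisfies $Y\simeq\TR(Y)/\ANV$, so $\map_{\qCycSp_p}(Y,X)\simeq\map_{\qTCart_p}(\TR(Y),\TR(X))$, and $\TR$ is $t$-exact. Hence $\THH(k)$ lies in the heart iff $\TR(\THH(k))$ is discrete. That is the Hesselholt--Madsen computation $\TR(\THH(k))\simeq HW(k)$, which is the Cartier module $W(k)$ in the quotation after the theorem.

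This computation is not a parity-collapse statement. Even once you know that each $\TR^n(\THH(k))$ has $W_{n+1}(k)$ in every even nonnegative degree, the vanishing of $\pi_{>0}$ in the limit comes from the transition maps. They multiply the degree-$2m$ generator by $p^m$ times a unit, so the tower is Mittag--Leffler with zero limit in positive degrees.
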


Again in the words of \cite{AN21},
\begin{quote}
Despite the higher homotopy groups, $\THH(k)$ is discrete [meaning in
the heart of the $t$-structure] as a cyclotomic spectrum.  On the
Cartier module side of the story, when $k$ is a perfect ring of
characteristic~$p$, $\THH(k)$ corresponds to $W(k)$, the ring of
$p$-typical Witt vectors over~$k$, with its Witt vector Verschiebung
and Frobenius operations.  The fact that $\THH(k)$ is in
$\CycSp_p^{\heartsuit}$ is consistent with the fact, due to
Hesselholt--Madsen \cite[Theorem~B]{HM1}, that for perfect fields of
characteristic~$p$, that $\pi_i \TopC(k) = 0$ for $i>0$ [see
\cref{cor-TC-Zp}].  However, the theorem is much stronger: it says
that for any cyclotomic spectrum $X$ with $\pi_i X = 0$ for $i<0$, one
has
\[
  \Map_{\qCycSp_p}\bigl(\Sigma^{k}X,\, \THH(k)\bigr) \simeq \pt
     \qquad \text{for } k>0.
\]
\end{quote}

For any $p$-typical cyclotomic spectrum $X$, the homotopy groups with
respect to the $t$-structure of \cref{thm-AN1} are denoted by
$\pi^{\mathrm{cyc}}_{i} X$.  These are objects in the homotopy category of
$\qCycSp_p^{\heartsuit} \subseteq \qCycSp_p$.  Thus, they can be considered
either as $p$-typical cyclotomic spectra, with underlying spectrum with
$S^{1}$-action and Frobenius
\[
  \varphi \colon \pi^{\mathrm{cyc}}_{i} X \longrightarrow
  \bigl(\pi^{\mathrm{cyc}}_{i} X\bigr)^{t C_{p}},
\]
or as derived $V$-complete $p$-typical Cartier modules under the
equivalence of \cref{thm-AN1}.

It is known that this $t$-structure is not compatible with filtered
colimits; see \cite[Example 3.27]{AN21}.

\subsubsection{Cartier modules}\label{sec-cartier-modules}

Before describing the Antieau-Nikolaus $t$-structure more explicitly,
we discuss the relevant abelian category, the homotopy category of its
heart, and a related category of spectra.

\begin{defin}\label{def-Cart-mod}
{\bf Two flavors of Cartier modules.} \cite[Definition 3.1]{AN21} 
\begin{enumerate}[label={(\roman*)},itemindent=0em]

\item \label{def-Cart-modi} A {\bf $p$-typical Cartier
module} is an abelian group $M$ equipped with endomorphisms $\ANV$ and
$\ANF$ (the {\bf Verschiebung} and {\bf Frobenius} maps) such that
$\ANF\ANV=p$. (We do not require that $\ANV\ANF=p$.)

Such a module $M$ is {\bf derived $\ANV$-complete} \cite[Definition
3.24]{AN21} if the map
\begin{displaymath}
M\to \lim{n}M/\ANV^{n} 
\end{displaymath}

\noindent is an equivalence in the derived category of abelian groups
$\mcD (\Ab)$ as in \cref{def-derived}.  We will sometimes refer to
such an object as a $p$-typical {\bf algebraic} Cartier module to
distinguish it from what comes next.

\item \label{def-Cart-modii} A {\bf $p$-typical topological Cartier
module} is a $\mT$-spectrum $X$ with a $\mT$-{\eqvr} factorization of
the $\rC _{p}$-norm of \cref{eq-stable-norm}
\begin{numequation}\label{eq-Fv-norm}
\begin{split}
\xymatrix
@R=4mm
@C=10mm
{
{\rho_{p}^{*}X_{h\rC _{p}}}\ar[r]^(.6){\ANV}
  &{X}\ar[r]^(.4){\ANF}
    &{\rho_{p}^{*}X^{h\rC _{p}}},
}
\end{split}
\end{numequation}%

\noindent where $X_{h\rC _{p}}$ and $X^{h\rC _{p}}$ each have a
residual action of $\omT:=\mT/\rC _{p}\cong \mT$. The isomorphism
$\rho _{p}:\mT\to \omT$ is the $p$th root map of \cref{def-rho-n}.  

$X$ is {\bf $\ANV$-complete} \cite[Definition 3.20]{AN21} if the limit of
the tower
\begin{displaymath}
\xymatrix
@R=4mm
@C=15mm
{
{\dotsb }\ar[r]^(.45){\rho_{p^{2}}^{*}\ANV}
  &{\rho_{p^{2}}^{*}X_{h\rC_{p^{2}}}}\ar[r]^(.5){\rho_{p}^{*}\ANV}
    &{\rho_{p}^{*}X_{h\rC_{p}}}\ar[r]^(.6){\ANV}
      &{X}
}
\end{displaymath}

\noindent is contractible.
\end{enumerate}
\end{defin}

The maps $\ANF$ and $\ANV$ here are related to the maps in
\cref{def-WVrF} and to the maps $\pi _{k,L}$ and $\pi_{k,R}$ of
\cref{def-TR}, but the former is not the restriction $F$ of
\cref{def-residual}.  We use the bold font to avoid confusion.

\begin{remark}\label{rem-Cartier-module}\cite[\S4.1]{AN21}
A $p$-typical topological Cartier module is a $p$-cyclotomic
spectrum with additional structure.  The Frobenius
map $\ANF:X\to X^{h\rC _{p}}$ can be composed with the map
$s^{p}:X^{h\rC _{p}}\to X^{t\rC _{p}}$ of \cref{eq-Tate2} to give a
cyclotomic structure on $X$.  This is the same as a spectrum with
Frobenius lifts as in \cite[Definition 2.1.2]{McCandless-curves} and
\cref{def-cyc-spectra}\cref{def-cyc-spectraiii}.
\end{remark}

\begin{ex}\label{ex-top-Cart}
{\bf Some Cartier modules.}

\begin{enumerate}[label={(\roman*)},itemindent=0em]
\item \label{ex-top-Carti} For a $p$-typical algebraic Cartier module $M$ as in
\cref{def-Cart-mod}\cref{def-Cart-modi}, consider the {\SESM} spectrum
$HM$ with trivial $\mT$-action.  Then $\ANV$ and $\ANF$ induce endomorphisms
$H\ANV$ and $H\ANF$ of $HM$, and we have
\begin{displaymath}
\xymatrix
@R=6mm
@C=2mm
{
{HM_{h\rC _{p}}}\ar[rr]^(.5){}\ar[d]^(.5){\simeq }
  &&{(HM_{h\rC _{p}})_{\leq 0}}\ar[d]^(.5){\simeq }
    &{}
      &{(HM^{h\rC _{p}})_{\geq 0}}\ar[r]^(.5){}
        &{HM^{h\rC _{p}}}\\
{HM\wedge B}
  &&{HM}\ar[r]^(.5){H\ANV }
    &{HM}\ar[r]^(.5){H\ANF }
      &{HM}\ar[u]^(.5){\simeq  }
        &{\map_{\qSp }(B, HM),}
              \ar[u]_(.5){\simeq }
}
\end{displaymath}

\noindent where $B:=\Sigma^{\infty }B\rC_{p+}$, $\map_{\qSp } (-,-)$
is as in \cref{eq-map-spec}, and the composite\linebreak $HM_{h\rC
_{p}}\to HM^{h\rC _{p}}$ is the norm $\Nm_{\rC_{p}}$ of
\cref{eq-NormG}.  This makes $HM$ a $p$-typical topological Cartier module.

\item \label{ex-top-Cartii} For a $p$-typical topological Cartier
module $X$, one has endomorphisms $\ANV$ and $\ANF$ in $\pi^{\mathrm{cyc}}_{*}X$
induced by the composites
\begin{displaymath}
\xymatrix
@R=4mm
@C=6mm
{
{X}\ar[r]^(.45){i_{*}}
  &{X_{h\rC _{p}}}\ar[r]^(.55){\ANV}
    &{X}
}
\qquad \aand 
\xymatrix
@R=4mm
@C=6mm
{
{X}\ar[r]^(.45){\ANF}
  &{X^{h\rC _{p}}}\ar[r]^(.55){i^{*}}
    &{X.}
}
\end{displaymath}

\noindent for $i_{*}$ and $i^{*}$ as in \cref{eq-orbit-spaces} and
\cref{eq-fp-spaces}.
These make $\pi_{*}X$ a graded $p$-typical Cartier module.

\item \label{ex-top-Cartiv} For a $p$-typical topological Cartier
module $X$, let $X/\ANV$ denote the cofiber of the $\mT$-{\eqvr} map
$\ANV:\rho_{p}^{*}X_{h\rC_{p}}\to X$, where $\rho _{p}^{*}$ is as in
\cref{def-rho-n}.  It is a $p$-cyclotomic spectrum but need not be a
topological Cartier module. Consider the diagram
\begin{displaymath}
\xymatrix
@R=2mm
@C=10mm
{
{\rho_{p}^{*}X_{h\rC_{p}}}\ar@{=}[dd]^(.5){}\ar[r]^(.5){\ANV}
  &{X}\ar[dd]_(.5){\ANF}\ar[r]^(.5){j }
    &{X/\ANV}\ar[dd]_(.5){\ANF'}\ar[dr]^(.4){\varphi_{p}}\\
  & &  &{(X/\ANV)^{t\rC_{p}}}\\ 
{\rho_{p}^{*}X_{h\rC_{p}}}\ar[r]^(.5){\Nm_{\rC_{p}}^{X}}
  &{\rho_{p}^{*}X^{h\rC_{p}}}\ar[r]^(.5){s_{p}^{X}}
    &{X^{t\rC_{p}}}\ar[ur]^(.4){ }_(.45){j^{t\rC_{p}}}
}
\end{displaymath}

\noindent where the top and bottom rows are cofiber sequences and
$\ANF'$ is induced by $\ANF$.  Then the map
$\varphi_{p}:=j^{t\rC_{p}}\ANF'$ is a cyclotomic structure on
$X/\ANV$.  The fiber of $j^{t\rC_{p}}$ is $(X_{h\rC_{p}})^{t
(\orC_{p})}$ (where $\orC_{p}:=\rC_{p^{2}}/\rC_{p}$ acting residually
on $X_{h\rC_{p}}$ as in \cref{def-residual}\cref{def-residualii}),
which is contractible by
\cref{lem-Tate-orbit-fix}\cref{lem-Tate-orbit-fixi} when $X$ is
bounded below.
\end{enumerate}
\end{ex}

\begin{defin}\label{def-TCart}\cite[Definition 3.6]{AN21} 
The {\bf {\qcat} of $p$-typical topological Cartier modules}
$\qTCart_{p}$ is the pullback
\begin{displaymath}
\xymatrix
@R=1mm
@C=15mm
{
{\qTCart_{p}}\ar[r]^(.5){}\ar[ddd]^(.5){}\ar@{}[dddr]|-(.15){\pb}
  &{\left(\qSp^{\mT} \right)^{\bdelt^{2}}}
            \ar[ddd]^(.5){(\ev_{1},d_{1})}\\
{}\\
{}\\
{\qSp^{\mT}}\ar[r]^(.4){(id, \Nm_{\rC _{p}})}
  &{\qSp^{\mT}\times \left(\qSp^{\mT} \right)^{\bdelt^{1}}}\\
{X}\ar@{|->}[r]^(.5){}
  &{(X,\Nm_{\rC _{p}}:X_{h\rC _{p}}\to X^{h\rC _{p}})}
}
\end{displaymath}

\noindent where the right arrow has the form
\begin{displaymath}
\xymatrix
@R=2mm
@C=5mm
{
{}
  &{X_{1}}\ar[ddr]^(.5){f}\\
{}& &{}\ar@{|->}[rr]^(.5){}
      & &{(X_{1}, n:X_{0}\to X_{2}).}\\
{X_{0}}\ar[uur]^(.5){v}\ar[rr]^(.5){n}
  & &{X_{2}}
}
\end{displaymath}

\noindent We will write an object in this category as $(X,\ANV_{X},\ANF_{X},
\sigma _{X})$, where $X$ is a\linebreak  $\mT$-spectrum,
\begin{displaymath}
\ANV_{X}:X_{h\rC _{p}}\to X,
\qquad 
\ANF_{X}:X\to X^{h\rC _{p}},
\end{displaymath}

\noindent and $\sigma_{X}$ is a 2-simplex corresponding to a
factorization $\Nm_{\rC _{p}}\simeq \ANF_{X}\cdot \ANV_{X}$.
\end{defin}

The following is a consequence of the above and a similar statement
about topological Cartier modules proved by Mathew as
\cite[Proposition 6.5]{Mathew-TR}.

\begin{lem}\label{lem-BHLS-2.12}
\cite[Lemma 2.12]{BHLS} A filtered colimit of cyclotomic spectra
bounded in the range $[a, b]$ is itself bounded in the range
$[a,b+3]$.
\end{lem}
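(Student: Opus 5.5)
Lemma 2.12 of BHLS says that a filtered colimit of cyclotomic spectra bounded in $[a,b]$ is bounded in $[a,b+3]$. Here is how I would prove it.

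Connectivity is the easy half. Antieau--Nikolaus connectivity is just Postnikov connectivity of the underlying spectrum, and the forgetful functor $\qCycSp_{p}\to\qSp$ preserves colimits (\cref{rem-presentable-stable-qcats}). A filtered colimit of $a$-connective spectra is $a$-connective, so $\colim X_{i}\in\qCycSp_{\geq a}$. Everything therefore rests on the coconnective bound. The failure of the $t$-structure to commute with filtered colimits (\cite[Example 3.27]{AN21}) is exactly why some loss, here $3$, is expected.

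For coconnectivity I would pass through topological Cartier modules, following the hint that Mathew's \cite[Proposition 6.5]{Mathew-TR} is the input. I would use the functor $\TR$ of \cref{def-TR}, which is right adjoint to the forgetful functor $U$ of \cref{prop-forgetful-functor}. For bounded below objects, $\TR$ should identify $\qCycSp_{p,+}$ with $\ANV$-complete topological Cartier modules (\cref{def-Cart-mod}). Under this identification, I expect cyclotomic coconnectivity of $X$ to be detected by Postnikov coconnectivity of $\TR(X)$, up to a fixed small shift. This follows Antieau--Nikolaus's identification of the heart with derived $\ANV$-complete Cartier modules (\cref{thm-AN1}): $X\in\qCycSp_{\leq b}$ should be equivalent to $\TR(X)$ lying in degrees $\leq b+1$ or so. The shift of one comes from derived $\ANV$-completion, i.e.\ the $\lim^{1}$ term in $M\to\lim M/\ANV^{n}$.

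The argument then has three steps.
\begin{enumerate}[label={(\arabic*)},itemindent=0em]
\item Each $X_{i}$ is bounded in $[a,b]$, so $\TR(X_{i})$ is Postnikov-bounded in roughly $[a,b+1]$.
\item Compare $\colim\TR(X_{i})$ with $\TR(\colim X_{i})$. This is where Mathew's result enters: $\TR$ of bounded below objects commutes with filtered colimits only after $\ANV$-completion. The cofiber of the comparison map is governed by $\lim^{1}$-type terms of the $\ANV$-tower (equivalently, of the $\Res$-tower defining $\TR$). Those terms raise the homotopy degree by at most one, and by at most one more after $p$-completion.
\item Translate back through the criterion of the previous paragraph to get $\colim X_{i}\in\qCycSp_{\leq b+3}$.
\end{enumerate}

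The main obstacle is the precise shift bookkeeping. I would count one unit for derived $\ANV$-completion in the heart criterion, one for the $\lim^{1}$ in $\TR$ failing to commute with filtered colimits, and one for $p$-completion. I would first check that each of these losses is at most one by testing on Eilenberg--MacLane Cartier modules $HM$ with trivial action (\cref{ex-top-Cart}); there the towers are explicit $\ANV$-adic towers of abelian groups. If the count comes out smaller, that is harmless, since the statement only claims $b+3$.
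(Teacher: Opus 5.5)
Your connective half is fine, and your route is the one the paper indicates: pass through $\TR$ to topological Cartier modules and invoke Mathew's result there. But the coconnective half, which is the entire content of the lemma, is not proved, and two of your ingredients are misstated.

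First, there is no shift in the criterion. By \cref{thm-AN3.21}, $\TR$ is fully faithful and $t$-exact on bounded below objects, and the $t$-structure on topological Cartier modules is that of the underlying spectra. So for bounded below $X$, $X\in\qCycSp_{\leq b}$ if and only if $\TR(X)$ is Postnikov $b$-truncated. For ``if'', note that $\Map(Y,X)\simeq\Map(\TR Y,\TR X)$ is contractible whenever $Y\in\qCycSp_{\geq b+1}$. In particular $\TR(X_{i})$ lies in $[a,b]$ exactly, not roughly $[a,b+1]$.

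Second, your step (2) should be made precise. Since $\TR(X_{i})/\ANV\simeq X_{i}$ and $(-)/\ANV$ is a left adjoint, $\mathrm{colim}_{i}X_{i}\simeq M/\ANV$ for $M:=\mathrm{colim}_{i}\TR(X_{i})$. This $M$ is a topological Cartier module whose underlying spectrum lies in $[a,b]$. Hence
\begin{displaymath}
\TR\bigl(\mathrm{colim}_{i}X_{i}\bigr)\simeq M^{\wedge}_{\ANV}
\simeq\mathrm{cofib}(T\to M),
\qquad T:=\mathrm{lim}_{n}\,\rho_{p^{n}}^{*}M_{h\rC_{p^{n}}},
\end{displaymath}
with the limit taken over the $\ANV$-tower. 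So the lemma reduces to the following claim: $\ANV$-completion carries topological Cartier modules in $[a,b]$ into $[a,b+3]$. That is the Cartier-module input the paper takes from Mathew, and it is exactly what your step (2) leaves unproved. There is also no separate $p$-completion step anywhere.

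Your heuristic for that step cannot work. The spectra $\rho_{p^{n}}^{*}M_{h\rC_{p^{n}}}$ are not bounded above even when $M$ is, so ``$\mathrm{lim}^{1}$ terms raise degree by at most one'' gives no bound on $T$ at all. The missing idea is that the $\ANV$-tower is pro-zero in high degrees.

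Here is one way to see it. Using the Postnikov filtration of $M$ in topological Cartier modules and exactness of $\ANV$-completion, reduce to $M=\Sigma^{k}HN$ with $N$ an algebraic Cartier module; the $\mT$-action is then necessarily trivial. Then $\pi_{k+m}\,\rho_{p^{n}}^{*}M_{h\rC_{p^{n}}}\cong H_{m}(B\rC_{p^{n}};N)$. The transition map is $\ANV$ composed with the map induced by the quotient $\rC_{p^{n+1}}\to\rC_{p^{n}}$. Up to units:
\begin{itemize}
\item on $H_{2j-1}=N/p^{n}$ it is $p^{j-1}\ANV$ composed with reduction;
\item on $H_{2j}=N[p^{n}]$ it is $p^{j}\ANV$.
\end{itemize}
For $j\geq 2$ these towers are pro-zero. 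So only $\pi_{0}$, $H_{1}$ and $H_{2}$ contribute to $T$, through $\mathrm{lim}$ and $\mathrm{lim}^{1}$. This gives $T\in[k-1,k+2]$ and hence $M^{\wedge}_{\ANV}\in[k,k+3]$. The exact criterion above then yields $\mathrm{colim}_{i}X_{i}\in\qCycSp_{\leq b+3}$.

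So the $3$ comes from degrees $1$ and $2$ of the homology of $B\rC_{p^{n}}$ plus the cofiber shift, not from the three losses you list. The effect is genuine, and it shows up in exactly the Eilenberg--MacLane test you proposed: for $N=\Z_{p}$ with $\ANV=1$ and $\ANF=p$ one gets $(HN)^{\wedge}_{\ANV}\simeq\Sigma^{2}H\Z_{p}$.
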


\begin{defin}\label{def-Segal-cond}
  \cite[Definition 2.25]{BHLS} 
For $X \in \qCycSp$ and $b \in \mathbb{Z}$, we say that $X$ satisfies
the {\bf Segal condition} with parameter $b$ ($\Segal(\le b)$ for
short) if the fiber of the map
\[
X \xrightarrow{\ \varphi\ } X^{t C_p}
\]
is cyclotomically $b$-truncated.  
\end{defin}

This condition is so named because the Segal conjecture implies that
the map in question is an equivalence when $X$ is a $p$-local finite
complex with trivial cyclotomic structure.

The conditions of \cref{def-Segal-cond,def-can-van}
are related as follows.
 
\begin{prop}\label{prop-BHLS2.30}\cite[Proposition 2.30]{BHLS}
  Let $X \in \qCycSp_{p}$ be a bounded below.
\begin{enumerate}[label={(\roman*)},itemindent=0em]
\item If $X$ satisfies ${\rm SCV}(\leq b)$, then $X$ satisfies
  ${\rm WCV}(\leq b)$.

  \item If $X$ satisfies ${\rm WCV}(\leq b)$ and $\Segal(\leq b)$,
    then $X \in \qCycSp_{\leq b}$.

  \item If $X \in \qCycSp_{\leq b}$, then $X$ satisfies $\Segal(\leq b)$.

  \item If $X \in \qCycSp_{[c,b]}$ and $p^m$ acts by zero on $X$, then
    $X$ satisfies
  \[
    {\rm SCV}\bigl(\leq b + 2(b - c + 1)m\bigr).
  \]
\end{enumerate}
 \end{prop}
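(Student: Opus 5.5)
The plan is to prove the four parts in the order (i), (iii), (ii), (iv). Parts (i) and (iii) are short formal arguments. Part (ii) is the heart of the matter. Part (iv) is a nilpotence computation with Euler classes.

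\textbf{Part (i).} For $H=\rC_{p^{j}}$, I will identify the map $s^{H}_{X}:X^{hH}\to X^{tH}$ with the map to the localization that inverts the Euler class $a_{(1)}$ restricted to $H$. Concretely,
\begin{displaymath}
X^{tH}\simeq \colim{d}\,(\Sigma^{d(1)}X)^{hH},
\end{displaymath}
using the Greenlees--May diagram \cref{eq-Tate-diagram}, with $\widetilde{E}H$ written as the colimit of the spheres $S^{d\lambda}$. A $\mT$-equivariant nullhomotopy of $a_{(1)}^{d}\circ(\tau_{>b}X\to X)$ then gives, after applying $(-)^{hH}$, a nullhomotopy of
\begin{displaymath}
(\tau_{>b}X)^{hH}\to (\Sigma^{d(1)}X)^{hH}\to X^{tH}.
\end{displaymath}
It remains to compare $(\tau_{>b}X)^{hH}$ with $\tau_{>b}(X^{hH})$. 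There is a natural map $\tau_{>b}(X^{hH})\to X^{hH}$ whose lift through $(\tau_{>b}X)^{hH}$ is not automatic. However, the cofiber $(\tau_{\le b}X)^{hH}$ is $b$-truncated, so $\tau_{>b}(X^{hH})\to X^{hH}$ does factor through $(\tau_{>b}X)^{hH}$. This settles WCV.

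\textbf{Part (iii).} The fiber of $\varphi$ is computed by the equalizer-type formula for mapping spectra in a lax equalizer. For $Y$ cyclotomically $b$-truncated, I will show that the fiber of $\varphi$ is again in $\qCycSp_{\le b}$. Tate-type constructions preserve cyclotomic coconnectivity, using $(X^{tC_p})$ with its induced cyclotomic structure as in \cref{ex-top-Cart}\cref{ex-top-Cartiv}. The category $\qCycSp_{\le b}$ is closed under fibers.

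\textbf{Part (ii), the main obstacle.} The goal is to test $\Map(\Sigma^{k}Z,X)$ for $Z$ connective, $k>b$. The mapping spectrum in the lax equalizer is the equalizer of
\begin{displaymath}
\map_{\mT}(Z,X)\rightrightarrows \map_{\mT}(Z,X^{tC_p}).
\end{displaymath}
Using $\Segal(\le b)$, I will replace $X$ by $X^{tC_p}$ up to a $b$-truncated error. This reduces the question to the vanishing of maps from highly connective $Z$ into $X^{tC_{p^{j}}}$ that factor through $s$. Here WCV kills exactly the $\can$ contributions from the $\tau_{>b}$ part, leaving an equivalence $\varphi$ on the relevant piece, and hence a contractible equalizer. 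I expect the bookkeeping of the tower over $j$ — passing $(X^{tC_p})^{hC_{p^{j}}}\simeq X^{tC_{p^{j+1}}}$ for bounded below $X$, as recalled after \cref{prop-TC} — to be the delicate point.

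\textbf{Part (iv).} I will induct on cyclotomic homotopy objects $\pi^{\rm cyc}_iX$ for $c\le i\le b$, giving $b-c+1$ steps. For a heart object $M$, a derived $\ANV$-complete Cartier module killed by $p^{m}$, I will show that $a_{(1)}^{2m}$ kills $\tau_{>b}$ of the underlying spectrum, shifting the bound by $2m$. This uses $\ANF\ANV=p$: each power of $p$ is absorbed by two Euler classes. Then $d$ adds over the $b-c+1$ layers of the extensions, yielding $\mathrm{SCV}(\le b+2(b-c+1)m)$.
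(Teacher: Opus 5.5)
The paper only cites \cite[Proposition 2.30]{BHLS}, so your plan is judged here against what a complete argument needs. Part (i) is fine, including the lift of $\tau_{>b}(X^{hH})\to X^{hH}$ through $(\tau_{>b}X)^{hH}$.

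The real gap is in (ii) and (iii). You never use the input that makes cyclotomic coconnectivity computable: \cref{thm-AN3.21}, or equivalently that $\mL$ of \cref{eq-mL} corepresents $\TR$. It gives the following criterion: for bounded below $X$, $X\in\qCycSp_{\le b}$ iff the underlying spectrum of $\TR(X)$ is $b$-truncated.

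Testing against an arbitrary connective $Z$ does not work as you describe. The second leg of the equalizer is $f\mapsto f^{t\rC_p}\circ\varphi_Z$, which depends on the cyclotomic structure of $Z$ and contains no $\can$ at all. The map $\can$ and the tower of $X^{h\rC_{p^k}}$ appear only when $Z=\mL$.

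Once you reduce to $\TR$, (ii) is a short computation. Using $(X^{t\rC_p})^{h\rC_{p^k}}\simeq X^{t\rC_{p^{k+1}}}$ for bounded below $X$, there is a fiber sequence
\[
\TR(X)\to\prod_{k\ge 0}X^{h\rC_{p^k}}\xrightarrow{\ \delta\ }\prod_{k\ge 0}X^{t\rC_{p^{k+1}}},\qquad \delta(y_k)_k=\bigl(\varphi(y_k)-\can(y_{k+1})\bigr)_k .
\]
$\mathrm{WCV}(\le b)$ kills $\can$ on $\pi_i$ for $i>b$. $\Segal(\le b)$ makes $(\mathrm{fib}\,\varphi)^{h\rC_{p^k}}$ $b$-truncated, so $\varphi$ is injective on $\pi_i$ for $i>b$ and surjective for $i>b+1$. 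Hence for $i>b$, $\delta$ is injective on $\pi_i$ and surjective on $\pi_{i+1}$, so $\pi_i\TR(X)=0$.

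For (iii), the pullback square \cref{eq-TR-pullback} gives $\mathrm{fib}(\varphi)\simeq\mathrm{fib}\bigl(\TR(X)\to\TR(X)^{h\rC_p}\bigr)$, a fiber of $b$-truncated spectra. This is $b$-truncation of the underlying spectrum of $\mathrm{fib}(\varphi)$, which is exactly what (ii) consumes, so that is how $\Segal(\le b)$ has to be read. Your claim that $(-)^{t\rC_p}$ preserves cyclotomic coconnectivity is unsupported: $X^{t\rC_p}$ is not bounded below, and \cref{ex-top-Cart}\cref{ex-top-Cartiv} concerns $X/\ANV$, not $X^{t\rC_p}$.

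In (iv), inducting over the $b-c+1$ layers is the right shape. However, the heart case is only asserted, and your bookkeeping conflates the degree shift $2m$ with the number $d$ of Euler classes.

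Here is an argument that works for the heart. Take $X$ in the heart with $p^mM=0$, where $M$ is its Cartier module. Use \cref{thm-AN3.21} to write $X\simeq\TR(X)/\ANV=\mathrm{cofib}\bigl(\rho_p^*(HM)_{h\rC_p}\to HM\bigr)$, with $HM$ discrete and $\mT$ acting trivially. On $\rho_p^*(HM)_{h\rC_p}$, $a_{(1)}$ is induced by $a_{\lambda^{p}}$ on $HM$. On the $H\Z$-module $HM$ this is multiplication by the Euler class of $\lambda^p$, which is $p$ times a generator of $H^2(B\mT;\Z)$. So $a_{(1)}^m=0$ on that piece. Therefore $a_{(1)}^m\circ(\tau_{>2m}X\to X)$ lifts to $\Sigma^{m(1)}HM$, which is $2m$-truncated, and so the composite is null. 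Thus a layer in degree $i$ satisfies $\mathrm{SCV}(\le i+2m)$ with $d=m$.

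For the extension step, let $A\to X\to B$ be a cofiber sequence with $\mathrm{SCV}(\le\beta_A)$ via $d_A$ and $\mathrm{SCV}(\le\beta_B)$ via $d_B$. Then $X$ satisfies $\mathrm{SCV}\bigl(\le\max(\beta_B,\beta_A+2d_B)\bigr)$ via $d_A+d_B$, because $\Sigma^{d_B(1)}$ costs $2d_B$ in underlying connectivity. Stacking the $b-c+1$ layers gives exactly $b+2(b-c+1)m$.

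With your $a_{(1)}^{2m}$ per layer, the same bookkeeping gives $4m$ per layer, so your count does not produce the stated bound. The relation $\ANF\ANV=p$ is also not the mechanism; what matters is that $a_{(1)}$ acts on the induced piece through $a_{\lambda^p}$.
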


\subsection {Topological restriction homology $\TR$}\label{sec-TR}

In \cite[\S2]{BHLS} this functor is discussed and figures in the proofs
of \cite[Theorem 3.22]{BHLS} (our \cref{thm-failure}) and \cite[Lemma
4.26]{BHLS}, which is needed for \cite[Theorem C]{BHLS}, our
\cite[Theorem 5.3]{Rav:gjmcyc-ext}.

For a cyclotomic $\Omega $-spectrum $X$, Blumberg and Mandell
\cite[Definition 6.3]{BM15} define
\begin{displaymath}
\TR^{k} (X,p):=X^{\rC_{p^{k}}}\mbox{ for }k\geq 0
\qquad \aand \TR (X,p) :=\holim{k} \TR^{k} (X),
\end{displaymath}

\noindent which they call a ``mapping microscope.''  They also define
a global version.

When $X$ is bounded below, the above definition is equivalent to the
following one by \cite[Theorem 3.3.12]{McCandless-curves}, which is
used in \cite{BHLS}, where we leave the prime implicit.

\begin{defin}\label{def-TR}
{\bf Topological restriction homology} \cite[Example 3.4 and
Construction 3.18]{AN21} and \cite[Definition 2.4]{BHLS}.  For a
cyclotomic spectrum $X$, the {\bf spectrum $\TR(X)$} (which turns out
to be cyclotomic) is the limit of the diagram
\begin{numequation}\label{eq-TR}
\begin{split}
\xymatrix
@R=2mm
@C=2mm
{
{X}\ar[ddr]^(.5){\varphi_{p}}
  & &{X^{h\rC _{p}}}\ar[ddl]^(.4){s^{p}}
                 \ar[ddr]^(.5){\varphi_{(p^{2})}}
      & &{{X^{h\rC _{p^{2}}}}}\ar[ddl]^(.35){s^{(p^{2})}}
                 \ar[ddr]^(.5){\varphi_{(p^{3})}}
          & &{{X^{h\rC _{p^{3}}}}}\ar[ddl]^(.35){s^{(p^{3})}}
                               \ar[ddr]^(.5){\varphi_{(p^{4})}}    \\
{}\\
  &{X^{t\rC _{p}}}
   & &{(X^{t\rC _{p}})^{h\rC _{p}}}
        & &{(X^{t\rC _{p}})^{h\rC _{p^{2}}}}
            & &{\dotsb, }
}
\end{split}
\end{numequation}%

\noindent where $\varphi_{p}$ is the cyclotomic structure map for $X$
as in \cref{def-cyclo-spec2}\cref{def-cyclo-spec2i} and $s^{p}$ is as
in \cref{eq-TateCp}. The other maps are defined by
\begin{displaymath}
\varphi_{(p^{i+1})}:=(\varphi_{p})^{h\rC _{p^{i}}}
\qquad \aand 
s^{(p^{i+1})}:= (s^{p})^{h\rC _{p^{i}}}
\qquad \mbox{for }i>0 .
\end{displaymath}

For each $k\geq 0$, the {\bf spectrum $\TR^{k} (X)$}  is the limit of the
following subdiagram of \cref{eq-TR}.
\begin{numequation}\label{eq-TRk}
\begin{split}
\xymatrix
@R=2mm
@C=2mm
{
{X}\ar[ddr]^(.5){\varphi_{p}}
  & &{X^{h\rC _{p}}}\ar[ddl]^(.4){s^{p}}
                 \ar[ddr]^(.5){\varphi_{(p^{2})}}
      & &{\dots}\ar[ddl]^(.35){}
                 \ar[ddr]^(.5){}
          & &{{X^{h\rC _{p^{k}}}}}\ar[ddl]^(.35){s^{(p^{k})}}
                \\
{}\\
  &{X^{t\rC _{p}}}
   & &{(X^{t\rC _{p}})^{h\rC _{p}}}
        & &{(X^{t\rC _{p}})^{h\rC _{p^{k-1}}}}
}
\end{split}
\end{numequation}%

For $k>0$, let $\pi_{k,L}:\TR^{k} (X)\to \TR^{k-1} (X)$ be projection
away from the last two factors, and let
$\pi_{k,R}:\TR^{k} (X)\to \TR^{k-1} (X)^{h\rC _{p}}$ be projection
away from the first two factors, as in \cref{eq-V} and \cref{eq-F}.
\end{defin}

The limit of \cref{eq-TRk} consists of sequences $(x_{0}, x_{1},\dotsc
,x_{k})$, where $x_{i}\in X^{h\rC _{p^{i}}}$ with $s_{p^{i}}
(x_{i})=\varphi_{p^{i}} (x_{i-1})$ for $1\leq i\leq k$.  It is the
equalizer of
\begin{displaymath}
\xymatrix
@R=4mm
@C=10mm
{
{X\vee X^{h\rC _{p} }\vee \dotsb \vee X^{h\rC _{p^{k} }}}
    \ar@<.75ex>[r]^(.4){s} \ar@<-.75ex>[r]_(.4){\varphi }
  &{X^{t\rC _{p}}\vee (X^{t\rC _{p}})^{h\rC_{p}}\vee
        \dotsb \vee (X^{t\rC _{p}})^{h\rC _{p^{k-1}}}}
}
\end{displaymath}

\noindent in which $s (x_{0})$ and $\varphi (x_{k})$ are each
understood to be the basepoint. The limit of \cref{eq-TR} consists of
infinite sequences $(x_{0}, x_{1},\dotsc)$ satisfying similar
conditions.

$\TR^{k} (X)$ as defined above is denoted by $X^{\rC_{p^{k}}}$ in
\cite[Definition 9.1]{KN21}.  They justify this notation by proving in
\cite[Proposition 9.2]{KN21} that when $X$ is bounded below, there is
an orthogonal $\mT$-spectrum whose genuine $\rC_{p^{k}}$ fixed point
is $\TR^{k} (X)$.

There is a $\mT$-{\eqvr} map $\pi:\TR (X)\to X $ given by 
\begin{displaymath}
(x_{0}, x_{1},\dotsc)\mapsto x_{0}
\end{displaymath}

\noindent and  a $\mT$-{\eqvr} equivalence
\begin{displaymath}
\Phi :\TR(X)\to X\times_{X^{t\rC_{p}}}\TR (X)^{h\rC_{p}}
\end{displaymath}

\noindent given by the pullback diagram
\begin{numequation}\label{eq-TR-pullback}
\begin{split}
\xymatrix
@R=1mm
@C=5mm
{
{(x_{0},x_{1},x_{2},\dotsc )}\ar@{|->}[ddddd]^(.5){}
                             \ar@{|->}[rrrr]^(.5){}
  & & & &{(x_{1},x_{2},\dotsc )}\ar@{|->}[dddd]^(.5){}\\
  &{\TR(X)}\ar[rr]^(.45){\pi_{k,R}}
           \ar[ddd]_(.45){\pi }
           \ar@{}[dddrr]|-(.15){\pb}
    & &{\TR (X)^{h\rC_{p}}}\ar[ddd]^(.45){s^{p}\pi^{h\rC_{p}}}\\
{}\\
{}\\
  &{X}\ar[rr]^(.5){\varphi_{p}}
    & &{X^{t\rC_{p}}}
        &{s^{p} (x_{1})}\ar@{=}[d]^(.5){}\\
{x_{0}}\ar@{|->}[rrrr]^(.5){}
  & & & &{\varphi_{p} (x_{0}).}
}
\end{split}
\end{numequation}%

\begin{remark}\label{rem-TR-Cartier-modules}
{\bf $\TR$ and topological Cartier modules.}
The maps $\pi_{k,L}$ and $\pi_{k,R}$ are $\ANF$ and $\ANV$ in
\cref{def-Cart-mod}.  For a cyclotomic spectrum $X$, the
spectrum $\TR (X)$ of \cref{def-TR} has a topological Cartier module
structure induced by maps between diagrams similar to that of
\cref{eq-TR} as follows. Recall that $\TR(X)$ is defined to be the
limit of that diagram.

For $\ANV$ we have
\begin{numequation}\label{eq-V}
\begin{split}
\xymatrix
@R=0mm
@C=3mm
{
{\pt}\ar[ddr]^(.5){}\ar[ddd]^(.5){}
  & &{X_{h\rC _{p}}}\ar[ddl]^(.4){}\ar[ddd]^(.5){}
                 \ar[ddr]^(.5){}
      & &{(X^{h\rC _{p}})_{h\rC _{p}}}\ar[ddl]_(.45){}
      \ar[ddr]^(.5){}        \ar[ddd]^(.5){}
          & & {\dotsb }\ar[ddl]^(.5){} \\
{}\\
  &{\pt}\ar[ddd]^(.5){}
   & &{(X^{t\rC _{p}})_{h\rC _{p}}}\ar[ddd]^(.5){}
        & &{((X^{t\rC _{p}})^{h\rC _{p}})_{h\rC _{p}}}\ar[ddd]^(.5){}
\\
{X}\ar[ddr]_(.45){}
  & &{X^{h\rC _{p}}}\ar[ddl]^(.4){}
                 \ar[ddr]_(.45){}
      & &{{X^{h\rC _{p^{2}}}}}\ar[ddl]^(.35){}
                 \ar[ddr]_(.45){}
          & & {\dotsb }\ar[ddl]^(.5){}  
\\
{}\\
  &{X^{t\rC _{p}}}
   & &{(X^{t\rC _{p}})^{h\rC _{p}}}
        & &{(X^{t\rC _{p}})^{h\rC _{p^{2}}}}
}
\end{split}
\end{numequation}

\noindent where each vertical arrow save the first two is
$\Nm_{\rC _{p}}$. The bottom two rows are those of \cref{eq-TR}, and the
top two rows, ignoring the first entry of each, comprise its image
under the functor $(-)_{h\rC _{p}}$.

For $\ANF$ we have
\begin{numequation}\label{eq-F}
\begin{split}
\xymatrix
@R=0mm
@C=2mm
{
  {X}\ar[ddr]^(.5){}  \ar[ddd]^(.5){}
& &{X^{h\rC _{p}}}\ar[ddl]_(.5){}   \ar[ddd]^(.5){}
                 \ar[ddr]^(.5){}
      & &{{X^{h\rC _{p^{2}}}}}\ar[ddl]_(.5){}
      \ar[ddr]^(.5){}       \ar[ddd]^(.5){}
          & &{{X^{h\rC _{p^{3}}}}}\ar[ddl]_(.5){}
                               \ar[ddr]^(.35){}\ar[ddd]^(.5){}    \\
{}\\
  &{X^{t\rC _{p}}}\ar[ddd]^(.5){} 
   & &{(X^{t\rC _{p}})^{h\rC _{p}}}\ar[ddd]^(.5){} 
        & &{(X^{t\rC _{p}})^{h\rC _{p^{2}}}}\ar[ddd]^(.5){} 
            & &{\dotsb }\\
{\pt}\ar[ddr]^(.5){}
  & &{X^{h\rC _{p}}}\ar[ddl]^(.4){}
                 \ar[ddr]^(.5){}
      & &{{X^{h\rC _{p^{2}}}}}\ar[ddl]^(.35){}
                 \ar[ddr]^(.5){}
          & &{{X^{h\rC _{p^{3}}}}}\ar[ddl]^(.35){}
                               \ar[ddr]^(.35){}    \\
{}\\
  &{\pt}
   & &{(X^{t\rC _{p}})^{h\rC _{p}}}
        & &{(X^{t\rC _{p}})^{h\rC _{p^{2}}}}
            & &{\dotsb, }
                }
\end{split}
\end{numequation}

\noindent where each vertical arrow save the first two is the identity
map.  Here the top two rows are those of \cref{eq-TR}, and the bottom
two rows, ignoring the first entry of each, form its image under the
functor $(-)^{h\rC _{p}}$.

Thus we can regard $\TR$ as a functor from $\qCycSp_{p}$ to
$\qTCart_{p}$ that preserves bounded below objects.
\end{remark}

A global form of a polygonal analog of $\TR$ is defined in
\cite[Definition 1.3]{KMN23}.  It is corepresented in $\qPgcSp$ by
$i\mS$, the constant $\mS$-valued  polygonal spectrum.

It follows that
\begin{displaymath}
\TR (X)=\lim{\pi_{k,L}}\TR^{k} (X),
\end{displaymath}

\noindent and the maps $\pi_{k,R}$ assemble into a map
$\pi_{R}:\TR(X)\to \TR (X)^{h\rC_{p}}$.  The maps $\pi_{k,L}$ assemble
into a map $\pi_{L}:\TR(X)\to \TR (X)$.

\begin{theorem}\label{thm-AN3.21}
{\bf A fully faithful right adjoint.} \cite[Theorem 3.21]{AN21} and
\cite[Lemma 2.11]{BHLS}. The functor $\TR:\qCycSp_{p,+} \to
\qTCart_{p,+}$ on the bounded below subcategory is fully faithful
and $t$-exact with left adjoint $X \mapsto X/\ANV$. The essential
image is the full subcategory of $\ANV$-complete bounded below
p-typical topological Cartier modules.
\end{theorem}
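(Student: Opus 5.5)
The plan is to construct the left adjoint first, then compute the counit and unit, and deduce full faithfulness and the essential image from them.

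\textbf{Step 1: the adjunction.} For a bounded below topological Cartier module $X$, \cref{ex-top-Cart}\cref{ex-top-Cartiv} gives a cyclotomic structure on $X/\ANV$. Its structure map is $\varphi_p=j^{t\rC_p}\ANF'$, and $j^{t\rC_p}$ is an equivalence because $(X_{h\rC_p})^{t\orC_p}\simeq\pt$ by \cref{lem-Tate-orbit-fix}\cref{lem-Tate-orbit-fixi}. This construction is functorial and lands in $\qCycSp_{p,+}$.

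Next compute $\TR(Y)/\ANV$ for $Y$ in $\qCycSp_{p,+}$. Using \cref{rem-TR-Cartier-modules}, $\ANV$ is induced by the map of diagrams \cref{eq-V}. Take cofibers termwise. On each entry after the first, the cofiber of $\Nm_{\rC_p}$ is the corresponding Tate construction. Iterating the identification $(Y^{t\rC_p})^{h\rC_{p^i}}\simeq Y^{t\rC_{p^{i+1}}}$ (valid since $Y$ is bounded below), the cofiber of the map of limit diagrams collapses to the first term $Y$. This gives an equivalence $\TR(Y)/\ANV\simeq Y$ of cyclotomic spectra, which will be the counit.

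For the unit $X\to\TR(X/\ANV)$, use the pullback description \cref{eq-TR-pullback}. The map $X\to X/\ANV$ and the Frobenius $\ANF:X\to X^{h\rC_p}$ determine compatible maps $X\to (X/\ANV)^{h\rC_{p^k}}$ for every $k$, and these assemble into a map to the limit \cref{eq-TR}. I would then check the triangle identities by tracing the components.

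\textbf{Step 2: full faithfulness.} Since the counit $\TR(Y)/\ANV\to Y$ is an equivalence, $\TR$ is fully faithful.

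\textbf{Step 3: essential image.} This is the step I expect to be the main obstacle. The task is to show that the unit $X\to\TR(X/\ANV)$ is an equivalence exactly when $X$ is $\ANV$-complete.

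Filter $X$ by the images of $\ANV^n$, that is, by the tower in \cref{def-Cart-mod}\cref{def-Cart-modii}. Identify the successive quotients $\rho_{p^n}^*(X/\ANV)_{h\rC_{p^n}}$ with the layers of $\TR^{k}(X/\ANV)$ along $\pi_{k,L}$. The fibers of $\TR^k\to\TR^{k-1}$ should be homotopy orbits, obtained via the norm cofiber sequences \cref{eq-Tate2}. Both towers then have matching associated gradeds, so the unit is an equivalence on each finite stage $X/\ANV^k\simeq\TR^k(X/\ANV)$. Passing to the limit gives $\lim_k X/\ANV^k\simeq\TR(X/\ANV)$, and the left side agrees with $X$ precisely when $X$ is $\ANV$-complete. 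Conversely, $\TR(Y)$ is always $\ANV$-complete, because the limit of the $\ANV$-tower maps to the tower of $\TR^k$ fibers, whose limit vanishes.

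\textbf{Step 4: $t$-exactness.} Connectivity is preserved because the underlying spectrum of $\TR(Y)$ is a limit of connective pieces whose fibers are homotopy orbits, so $\lim^1$ causes no trouble. Coconnectivity follows from the adjunction: the cyclotomic coaisle is defined by orthogonality to $\qCycSp_{p,\geq 1}$, and $-/\ANV$ preserves connectivity. Hence for coconnective $Y$, $\Map(\Sigma X,\TR Y)=\Map(\Sigma X/\ANV,Y)\simeq\pt$ for every connective $X$, so $\TR Y$ is coconnective.
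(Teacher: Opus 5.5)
The paper gives no proof of its own here; it only cites \cite[Theorem 3.21]{AN21} and \cite[Lemma 2.11]{BHLS}. Your outline is a correct reconstruction along the lines of the cited Antieau--Nikolaus argument. It computes the counit $\TR(Y)/\ANV\simeq Y$ from the termwise cofibers of \cref{eq-V} plus bounded-below Tate vanishing, identifies the unit with $\ANV$-completion by comparing the $\ANV$-adic tower with the $\TR^{k}$ tower, and gets $t$-exactness by adjunction because $X\mapsto X/\ANV$ preserves connective objects.

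Two points need to be made explicit. First, $(-)_{h\rC_p}$ commutes with the limit defining $\TR(Y)$ because the layers $Y_{h\rC_{p^k}}$ of that tower are uniformly bounded below; without this, the termwise cofiber is not $\TR(Y)/\ANV$. Second, the finite stages match as $X/\ANV^{k+1}\simeq\TR^{k}(X/\ANV)$ (your index is off by one), and this should come from an actual map of towers rather than abstractly matching layers. The unit factors through $X/\ANV^{k+1}$ because $\ANF^{i}\ANV^{k+1}$ factors through $(\ANV^{k+1-i})^{h\rC_{p^i}}$ and $j\ANV\simeq 0$, and the resulting map induces the norm equivalences on layers.
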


This implies the following, which can be viewed as an {\qcatal} reformulation
of \cite[Definition 5.12(i) and Remark 5.13]{BHM93}.

\begin{theorem}\label{thm-TC-TR}
{\bf The relation between $\TopC$ and $\TR$.}
$\TopC (X,p)$
is the equalizer in
\begin{displaymath}
\xymatrix
@R=4mm
@C=10mm
{
{\TopC (X,p)}\ar[r]^(.5){}
  &{\TR (X)}\ar@<.75ex>[r]^(.5){\pi_{L}}\ar@<-.75ex>[r]_(.5){1}
    &{\TR (X).}
}
\end{displaymath}
\end{theorem}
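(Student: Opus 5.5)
We want $\TopC(X,p)\simeq \mathrm{Eq}(\pi_L,1:\TR(X)\rightrightarrows\TR(X))$. The plan is to use \cref{def-TC}\cref{def-TCii}, which presents $\TopC(X,p)$ as the mapping spectrum $\map_{\qCycSp_p}(\mS^{\triv},X)$, together with the adjunction of \cref{prop-forgetful-functor} and \cref{thm-AN3.21}. By \cref{thm-AN3.21}, $\TR$ is right adjoint to $Y\mapsto Y/\ANV$ on bounded below objects. The first step is to identify a topological Cartier module (equivalently, a spectrum with Frobenius lift, by \cref{rem-Cartier-module}) whose quotient by $\ANV$ is $\mS^{\triv}$. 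The natural candidate is the free object on $\mS$ with trivial $\mT$-action. Concretely, I expect the topological Cartier module $\mS^{\triv}$ with $\ANF$ given by the trivial-action map $\mS\to\mS^{h\rC_p}$ to satisfy
\begin{displaymath}
\map_{\qTCart_p}(\mS^{\triv},Y)\simeq \mathrm{Eq}\bigl(Y^{h\mT}\rightrightarrows (Y^{h\rC_p})^{h\mT}\bigr),
\end{displaymath}
the maps being $\ANF^{h\mT}$ and the canonical one. This is the equalizer/lax-equalizer formula for mapping spectra in the lax equalizer of \cref{def-cyc-spectra}\cref{def-cyc-spectraiii}, which is the same mechanism that underlies \cref{prop-TC}\cref{prop-TCii}.

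The second step is to apply this with $Y=\TR(X)$ and to compute $\mS^{\triv}/\ANV$. Here I would not insist on an exact identification. For $\mS$ with trivial Frobenius lift, the cofiber of $\ANV:\mS_{h\rC_p}\to\mS$ is in general not $\mS$. So instead of using the left adjoint literally, I would compare the two sides directly. By \cref{prop-TC}\cref{prop-TCii}, $\TopC(X,p)$ is the fiber of $\varphi^{h\rC_{p^\infty}}-\can$ on $X^{h\rC_{p^\infty}}$. From \cref{def-TR}, the spectrum $\TR(X)$ is the limit of the zigzag \cref{eq-TR}. Its $\rC_{p^\infty}$-homotopy fixed points, after dividing out the $\pi_L$-action, should recover exactly the terms $X^{h\rC_{p^i}}$ glued by $s$ and $\varphi$. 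Concretely, the equalizer of $\pi_L$ and $1$ consists of sequences $(x_0,x_1,\dots)$ with $x_i=x_{i+1}$ in a compatible sense. Because $\pi_L$ shifts the sequence, this forces all terms to come from a single point of $X^{h\rC_{p^\infty}}=\lim_i X^{h\rC_{p^i}}$ satisfying $s(x)=\varphi(x)$. That is exactly the fiber of $\varphi-\can$.

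To make the previous step rigorous, I would write both $\TR(X)$ and the equalizer as limits over explicit diagrams in $\qSp$ and use cofinality. The equalizer of the shift $\pi_L$ with the identity on $\lim_{\pi_{k,L}}\TR^k(X)$ is the limit of the diagram obtained by identifying consecutive stages, which is cofinal with the diagram $X^{h\rC_{p^\infty}}\rightrightarrows (X^{t\rC_p})^{h\rC_{p^\infty}}$. Limits commute with limits, so the homotopy fixed points pass through. This reproduces the fiber sequence of \cref{prop-TC}\cref{prop-TCii}, and the statement then follows from that proposition.

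The main obstacle is the identification of the shift $\pi_L$ on the infinite sequence model with the correct map. It must be checked that the equalizer in $\qSp$ really computes the fiber of $\varphi-\can$ and not merely its $\rC_{p^\infty}$-invariants on some truncated level. This requires controlling the higher coherences of the limit \cref{eq-TR} (the maps $s^{(p^i)}=(s^p)^{h\rC_{p^{i-1}}}$ being compatible with passage to $\rC_{p^\infty}$). It may also need the bounded-below hypothesis via \cite[Lemma II.4.1]{NS18}, used to replace $(X^{t\rC_p})^{h\rC_{p^i}}$ by $X^{t\rC_{p^{i+1}}}$.
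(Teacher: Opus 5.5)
Your first paragraph does not feed into the argument, and as written it could not. The mapping-spectrum formula you quote is the one for $\qCycSp_p^{\rm Fr}$; in $\qTCart_p$, morphisms must also respect $\ANV$ and the factorization of the norm. Moreover, with $Y=\TR(X)$ that formula produces an equalizer on $\TR(X)^{h\rC_{p^\infty}}$, not on $\TR(X)$. Together with $\mS/\ANV\not\simeq\mS$, which you noticed, this means the adjunction of \cref{thm-AN3.21} (from which the paper deduces the statement) would need a different source object than $\mS^{\triv}$. So everything rests on your direct comparison.

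There you first have to fix the map. It must be the Frobenius
\[
F\colon \TR(X)\xrightarrow{\ \pi_R\ }\TR(X)^{h\rC_p}\longrightarrow\TR(X),\qquad (x_0,x_1,\dots)\mapsto(\mathrm{res}\,x_1,\mathrm{res}\,x_2,\dots),
\]
which drops the \emph{first} entry. The maps $\pi_{k,L}$ drop the last entry and are the structure maps of the tower with $\TR(X)=\varprojlim_{\pi_{k,L}}\TR^k(X)$. The self-map they assemble to is therefore the canonical equivalence, homotopic to $1$, and its equalizer with $1$ is $\TR(X)\oplus\Sigma^{-1}\TR(X)$. (With $\pi_{k,L}$ as defined in \cref{def-TR}, the displayed statement is only correct if $\pi_L$ is read as $F$.) Your heuristic (``$x_i=x_{i+1}$ in a compatible sense'') uses $F$. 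Your ``rigorous'' step, however, is phrased with the tower maps, and ``identifying consecutive stages'' of $\varprojlim_{\pi_{k,L}}\TR^k(X)$ along its own structure maps imposes no condition at all.

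Second, cofinality is not the mechanism. $X^{h\rC_{p^\infty}}$ and $(X^{t\rC_p})^{h\rC_{p^\infty}}$ are not values of the diagram \cref{eq-TR}; they are limits of towers of its values. The real content is that the homotopies $F(t)\simeq t$ in the equalizer cancel all the gluing data except one copy, and that is exactly the ``main obstacle'' you leave open. It can be closed concretely. Since $F$ commutes with the $\pi_{k,L}$, the maps $F$ and $\pi_{k+1,L}$ are two maps of towers $\{\TR^{k+1}(X)\}_k\to\{\TR^k(X)\}_k$, inducing $F$ and $1$ on limits. Because limits commute with equalizers,
\[
\mathrm{Eq}(F,1)\simeq\varprojlim_k\,\mathrm{Eq}\bigl(F,\pi_{k+1,L}\colon\TR^{k+1}(X)\rightrightarrows\TR^{k}(X)\bigr).
\]
Pasting the pullback squares that define $\TR^{k+1}(X)$ in \cref{eq-TRk} identifies the $k$th term with the equalizer of $\varphi_{(p^{k+1})}\circ\mathrm{res}$ and $s^{(p^{k+1})}$ from $X^{h\rC_{p^{k+1}}}$ to $(X^{t\rC_p})^{h\rC_{p^k}}$. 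The point is that the last entry and its gluing homotopy determine everything else, and this identification is compatible with restriction in $k$. Passing to the limit gives the equalizer of $\varphi_p^{h\rC_{p^\infty}}$ and $\can_p$ on $X^{h\rC_{p^\infty}}$, which is $\TopC(X,p)$ by \cref{prop-TC}\cref{prop-TCii}.

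This argument needs neither a bounded-below hypothesis nor the identification $(X^{t\rC_p})^{h\rC_{p^i}}\simeq X^{t\rC_{p^{i+1}}}$. With it filled in, your route is a legitimate hands-on alternative to the paper's, which deduces the statement formally from the fully faithful right adjoint of \cref{thm-AN3.21} and the original BHM definition.
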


\section{$\THH$ of cochains on the circle: the foot in the
door}\label{sec-BHLS}

In this section we will explain how the machinery developed above can
be used to approach the {\TC}.  More details will be given in
\cite{Rav:gjmcyc-ext}.

\subsection{$\mS$-cochains on the circle}\label{sec-cochains-circle} The category (or
{\qcat}) of
spectra is cotensored over that of topological spaces, pointed or not.
This means that for a spectrum $E$ and a space $X$ one can define a
spectrum $E^{X}$ in which the $n$th component is the mapping space
$\Map (X, E_{n})$ if $X$ is pointed, and $\Map (X_{+}, E_{n})$
otherwise.  The same goes for both naive and genuine $G$-spectra, for
a pointed or unpointed $G$-space $X$.

The authors of \cite{BHLS} refer to $\mS^{X}$ for an unpointed space
$X$ as the spectrum of {\bf $\mS$-cochains on $X$}.  When $X$ is a finite
CW-complex, it is more traditionally known as $\DD X$, the {\bf
Spanier-Whitehead dual of $X$} as in \cref{eq-SW-dual}.


Let $R\in \Alg (\qSp)^{B\Z}$ be a $p$-complete $\EE_{1}$-ring spectrum with
an action of the integers $\Z$.  Then one has a diagram
of cyclotomic spectra
\begin{numequation}\label{eq-subgroups}
\begin{split}
\THH (R^{h\Z})
   \to\THH (R^{h (p\Z)})
      \to\THH (R^{h (p^{2}\Z)})
          \to \dotsb \to \THH (R).
\end{split}
\end{numequation}%

\noindent It is the image of the evident diagram of subgroups of $\Z$,
\begin{displaymath}
\xymatrix
@R=4mm
@C=14mm
{
{\Z\,}
  &{\,p\Z\,}\ar@{_{(}->} [l]^(.5){}
    &{\,p^{2}\Z\,}\ar@{_{(}->} [l]^(.5){}
      &{\,\dotsb\, }\ar@{_{(}->} [l]^(.5){}
        &{\,e}\ar@{_{(}->} [l]^(.5){}
}
\end{displaymath}

\noindent under the contravariant functor $\THH(R^{h (-)})$.  There is
a similar diagram with $\Z$ replaced by the $p$-adic integers
$\Z_{p}$.  The two are used interchangeably in \cite{BHLS} since they
assume that their spectra (including $\mS$) are $p$-adically complete.
Half of that paper (\S3, 4 and 5) is devoted to the study of the cyclotomic
spectra in \cref{eq-subgroups}.

When the action of $\Z$ on $R$ is trivial, then by \cref{eq-triv-action}
we have
\begin{displaymath}
R^{h (p^{i}\Z)}\simeq R^{B (p^{i}\Z)_{+}}
\simeq R\vee \Sigma^{-1}R.
\end{displaymath}

\noindent When $R=\mS$, this spectrum is that of {\bf cochains on the
circle} (the circle being the space $B (p^{i}\Z)\cong B (\Z)$), also
known as the {\bf dual circle}. It is the subject of \cite[\S3]{BHLS},
where they say
\begin{quote}
The key idea governing our analysis is that, since $\mS^{B\Z}$ is the
$\mS$-cochains on $B\Z_{p}$, $\THH(\mS^{B\Z})$ is controlled by the
geometry the free loop space of $B\Z_{p}$.
\end{quote}

That free loop space is described in \cref{sec-LS1}.  The connection
between $\THH$ and the free loop space is evident in
\cref{prop-THH-loop}, where the suspension spectrum of $\mcL X$ is
related to a {\Bok} functor.  The underlying cyclotomic spectrum of
$\THH(\mS^{B\Z})$ is studied by Malkiewich in \cite{malkiewic},
and the underlying commutative algebra is studied by Levy and David
Jongwon Lee in \cite[Lemma 4.6]{Lee-Levy}.

Malkiewich studies $\mS^{B\Z}$, which he calls $\DD S^{1}$, as an
$\EE_{1}$-ring spectrum and proves the following.

\begin{theorem}\label{thm-dual-circle}
{\bf $\THH$ of the dual circle.} 
\cite[Corollary 1.3]{malkiewic} 
As genuine $\mT$-spectra,
\begin{numequation}\label{eq-THHDS1}
\begin{split}
\THH (\mS^{B (p^{k}\Z)}) \simeq \THH (\mS^{B\Z}) \simeq \mS \oplus 
    \bigoplus_{r\geq 1}\Sigma^{-1}\mS[\mT/\rC_{r}]
\end{split}
\end{numequation}%

\noindent for $\mS[\mT/\rC_{r}]$ as in \cref{def-rep-T}.

Completing both $\Z$ and $\mS$ at $p$ gives 
\begin{displaymath}
\mS_{p} \oplus 
    \bigoplus_{k\geq 1}\left(\prod_{s\in \Z_{p}^{\times }}
         \Sigma^{-1}\mS[\mT/\rC_{p^{k}}]_{p} \right).
\end{displaymath}
\end{theorem}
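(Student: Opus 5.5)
The plan is to identify $\THH(\mS^{B\Z})$ with a Thom-spectrum-free, Koszul-dual version of the free loop space of $B\Z$, and then read off the genuine $\mT$-structure from \cref{eq-LS1}. First I will reduce to $\mS^{B\Z}$ itself. For trivial action $\mS^{B(p^k\Z)}\simeq \mS^{B\Z}$ as $\EE_1$-rings, because the inclusion $p^k\Z\subseteq\Z$ is an isomorphism of groups and $B(p^k\Z)\cong B\Z$ as spaces. So the first equivalence in \cref{eq-THHDS1} is formal, and everything rests on computing $\THH(\DD S^1)$.

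For the main computation, $\DD S^1=\mS^{S^1}$ is the square-zero extension $\mS\oplus\Sigma^{-1}\mS$. I would use coalgebra/Koszul duality: $\DD S^1$ is the $\mS$-linear dual of the coalgebra $\Sigma^\infty_+ S^1$, which is in turn Koszul dual to the group ring $\mS[\Omega S^1]=\mS[\Z]$.

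The key input is that, for a finite simply-connected or nilpotent-enough space $X$, $\THH(\mS^X)$ is the dual of $\Sigma^\infty_+\mcL X$ in a suitable sense. This is the cochain analogue of \cref{prop-THH-loop}. $S^1$ is not simply connected, so the naive statement fails; Malkiewich's point is that one instead gets the colimit/cofinite piece:
\[
\THH(\DD S^1)\simeq \bigoplus_{r\in\Z}\DD_{\mT}\bigl(\Sigma^\infty_+\mcL_r S^1\bigr)
\]
with a sum rather than a product over winding numbers. Concretely, I would compute via the cyclic bar construction $[n]\mapsto (\DD S^1)^{\otimes(n+1)}\simeq\DD\bigl((S^1)^{n+1}\bigr)$. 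Then $\THH$ becomes a geometric realization of duals of the cosimplicial space $(S^1)^{\bullet+1}$, whose totalization is $\mcL S^1$. The comparison map is the assembly map of \cref{def-assembly} (realization of duals versus dual of the totalization). I expect it to be an equivalence after splitting by winding number, since each component $\mcL_rS^1\simeq \mT/\rC_{|r|}$ is a finite $\mT$-CW complex.

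Next comes the equivariant identification, using \cref{eq-LS1}. The winding-zero part is $\mT/\rC_0=S^1$ with trivial action, whose dual contributes $\mS\oplus\Sigma^{-1}\mS$. For $r\neq0$ the orbit $\mT/\rC_{|r|}$ is a free-ish orbit, and the genuine equivariant Atiyah duality for orbits gives $\DD_{\mT}(\mT/\rC_r)_+\simeq \Sigma^{-1}\mS[\mT/\rC_r]$. The shift comes from the adjoint representation, which is trivial and one-dimensional. Pairing $r$ with $-r$, the summands $\mS\oplus\Sigma^{-1}\mS$ and $\Sigma^{-1}\mS[\mT/\rC_r]$ for $r\ne0$ have to be repackaged to land in the stated form $\mS\oplus\bigoplus_{r\ge1}\Sigma^{-1}\mS[\mT/\rC_r]$. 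That requires checking which of the $r=0$ classes survive, since the square-zero unit interacts with the $\Sigma^{-1}$ class. I would check this against the underlying-spectrum count mentioned in the introduction: one cell in dimension $-1$ and infinitely many in dimension $0$. To make the genuine, not merely Borel, statement I would run the same cyclic-bar argument after $r$-fold edgewise subdivision, using \cref{prop-fixed-point-sets} and \cref{lem-subdiv-homeo}, so that $\rC_r$-geometric fixed points correspond to the cyclotomic map $\mcL X^{\rC_r}\cong\mcL X$ of \cref{ex-free-loop}.

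The $p$-complete statement then follows from \cref{prop-Cpinfty}. After $p$-completion, $\mT/\rC_r\cong\mT/\rC_{p^j}$ for $r=sp^j$, and regrouping the sum over $s$ prime to $p$ completes to a product indexed by $\Z_p^\times$, matching \eqref{eq-LBZp}.

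The main obstacle is the convergence/assembly step: showing that the geometric realization of the duals $\DD((S^1)^{n+1})$ yields the direct sum over winding numbers rather than the dual of $\Sigma^\infty_+\mcL S^1$. It must also be shown to do so genuinely equivariantly. I would try first to filter by winding number, which the cyclic bar construction respects via the $\Z$-grading coming from $\mS[\Z]$ under Koszul duality, and then verify the equivalence one graded piece at a time, where everything is finite.
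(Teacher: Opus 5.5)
Your central step, the identification
\[
\THH(\DD S^1)\simeq\bigoplus_{r\in\Z}\DD_{\mT}\bigl(\Sigma^\infty_+\mcL_rS^1\bigr),
\]
is false, so the ``repackaging'' you defer cannot be done.

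By \cref{eq-LS1} and Atiyah duality, the right side is $\mS\oplus\Sigma^{-1}\mS\oplus\bigoplus_{r\neq0}\Sigma^{-1}\mS[\mT/\rC_{|r|}]$. Here the summand $\Sigma^{-1}\mS$ carries the trivial action, and there are two copies of each $\Sigma^{-1}\mS[\mT/\rC_r]$.

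The trivial-action $\Sigma^{-1}\mS$ is already visible Borel-equivariantly and rationally. It contributes $\pi_1(\Sigma^{-1}\Sigma^\infty_+B\mT)\otimes\mathbb{Q}\cong\mathbb{Q}$ to $\pi_1$ of the homotopy orbits. But $\pi_1(\THH(\DD S^1)_{h\mT})\otimes\mathbb{Q}=\HC_1(\Lambda_{\mathbb{Q}}(\epsilon))=0$ for $|\epsilon|=-1$. Indeed, by HKR, $\HH_*=\Lambda(\epsilon)\otimes\mathbb{Q}[d\epsilon]$ with $|d\epsilon|=0$, and Connes' operator $\epsilon\,(d\epsilon)^{n-1}\mapsto(d\epsilon)^n$ is an isomorphism in each weight $n\geq1$.

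The same vanishing holds for the right side of \cref{eq-THHDS1}, since $(\Sigma^{-1}\mS[\mT/\rC_r])_{h\mT}\simeq\Sigma^{-1}\Sigma^\infty_+B\rC_r$. Genuinely, $\Phi^{\mT}$ also distinguishes the two candidates: $\mS\oplus\Sigma^{-1}\mS$ versus $\mS$.

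The underlying-cell check you propose cannot detect any of this, since both candidates are countable wedges of $\mS$ and $\Sigma^{-1}\mS$. Moreover, the right side of \cref{eq-THHDS1} has one $(-1)$-cell for each $r\geq1$. So the ``single $(-1)$-cell'' count you intended to compare with does not match the formula you are proving.

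The underlying problem is that $\THH(\DD S^1)$ has no winding-number grading, so there is nothing to check ``one graded piece at a time.'' The Koszul dual $\mathrm{End}_{\DD S^1}(\mS)$ is $\mS[[x]]$, the completion of the free $\EE_1$-ring $\mS[x]$. It agrees with the completion of $\mS[\Z]=\mS[t^{\pm1}]$ at $x=t-1$, but what survives is the $x$-adic weight, not the $t$-degree.

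Concretely, the comparison map $\THH(\DD S^1)\to\prod_{w\in\Z}\DD_{\mT}(\Sigma^\infty_+\mcL_wS^1)$ sends a class of weight $n$ to a numerical polynomial of degree $n$ in the winding number $w$, never to a finitely supported function. Rationally on $\pi_0$, $1\otimes\epsilon^{\otimes n}$ goes to something like $\binom{w}{n}$. So the index $r$ in \cref{eq-THHDS1} is this weight, not a winding number.

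What works is to use the weight grading directly. $\DD S^1\simeq\mS\oplus\Sigma^{-1}\mS$ is the trivial square-zero extension, so its cyclic bar construction splits by the number $n$ of factors from $\Sigma^{-1}\mS$. Weight $0$ gives $\THH(\mS)\simeq\mS$. For $n\geq1$ the weight-$n$ part is induced from $\rC_n$ (configurations of $n$ points on the circle, with collisions sent to the base point), namely
\[
\mT_+\wedge_{\rC_n}\bigl(S^{\overline{\varrho}_n}\wedge(\Sigma^{-1}\mS)^{\wedge n}\bigr)\simeq\mT_+\wedge_{\rC_n}S^{\overline{\varrho}_n-\varrho_n}\simeq\Sigma^{-1}\mS[\mT/\rC_n].
\]
Here $\varrho_n$ and $\overline{\varrho}_n$ are the regular and reduced regular representations of $\rC_n$, which permutes the smash factors, so $(\Sigma^{-1}\mS)^{\wedge n}\simeq S^{-\varrho_n}$ as a genuine $\rC_n$-spectrum. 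Your edgewise-subdivision argument then supplies the genuine structure, with $\Phi^{\rC_m}$ carrying weight $mn$ to weight $n$.

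For the $p$-complete form, regrouping via \cref{prop-Cpinfty} gives, as $\rC_{p^\infty}$-spectra,
\[
\mS_p\oplus\bigl(\bigoplus_{j\geq0}\Sigma^{-1}\mS[\mT/\rC_{p^j}]\otimes\bigoplus_{p\nmid s}\mS\bigr)^\wedge_p,
\]
which includes the free orbits $j=0$. A $p$-completed countable sum has countable-dimensional mod $p$ homotopy, so it is not a product over the set $\Z_p^\times$. This is where your winding-number intuition does become correct. By Mahler's theorem, the numerical polynomials in $w$ complete $p$-adically to continuous functions of $w\in\Z_p$. That is how $\mcL B\Z_p$ of \cref{eq-LBZp} and the factor $\mW(C^0(\Z_p^\times))$ of \cref{thm-3.19} enter.
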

 
In both cases the CW-spectrum in question has a single cell in
dimension $-1$ and infinitely many in dimension 0.  Note that by
\cref{eq-LS1},
\begin{align*}
\Sigma^{\infty -1}_{+}\mcL S^{1}
 &  \simeq \bigoplus_{r\in \Z}\Sigma^{-1}\mS[\mT/\rC_{|r|}]  \\
 &  \simeq \left(\bigoplus_{r\leq -1}\Sigma^{-1}\mS[\mT/\rC_{|r|}] \right)
     \vee \mS \vee 
     \left(\bigoplus_{r\geq 1}\Sigma^{-1}\mS[\mT/\rC_{|r|}] \right) ,
\end{align*}

\noindent which is similar to the expression of \cref{eq-THHDS1}.

On the other hand we know by \cref{ex-THH-Id} that $\THH (\mS)\simeq
\mS$.  This means the coassembly map
\begin{displaymath}
\epsilon :\THH (\mS^{B\Z})\to \THH (\mS)^{B\Z}
\simeq \mS^{B\Z}
\simeq \mS \oplus  \Sigma^{-1}\mS
\end{displaymath}

\noindent is very far from an equivalence. {\em This proposition will
enable us to show that various $\TopC$ coassembly maps are not
isomorphisms.}


For the next theorem we need some notation.

\begin{itemize}
\item [$\bullet$]  For a profinite set $X$, let
$C^{0} (X)$ denote the ring of continuous (meaning locally constant)
$\FFp $-valued functions on $X$.

\item [$\bullet$] For a commutative perfect $\FFp $-algebra $A$ (such
as $C^{0} (X)$ or a finite field), let $\mW (A)$ denote the
commutative ring spectrum of spherical Witt vectors as in
\cite[Example 5.2.7]{Lurie:Ell2}, which is typically a countable coproduct
of $p$-adic sphere spectra.  Let $\Perf_{p}$ denote the category of
commutative perfect $\FFp $-algebras, making $\mW$ a functor from it
to $\CAlg\qSp$, the category of commutative ring spectra. 
It is known (\cite[Proposition 2.2]{BSY}) to have a right adjoint, the
{\bf tilting functor} $(-)^{\flat}$, which is inverse limit along
Frobenius on $\pi_{0} (-)/p$.

\item [$\bullet$] Let $L_{\ltrunc p^{\infty }\rtrunc}\mS_{p}$ be the
polygonic spectrum of
\cref{ex-poly-spectra}\cref{ex-poly-spectraiii}. Then the cyclotomic
spectrum 
\begin{numequation}\label{eq-mL}
\begin{split}
  \mL
  :=\Upsilon  L_{\ltrunc p^{\infty }\rtrunc}\mS_{p}
 = \bigoplus_{j\geq 0} \mS[\mT/\rC_{p^{j}}], 
\end{split}
\end{numequation}%

\noindent for $\Upsilon $ as in \cref{def-U-poly}, is the
corepresenting object in $\qCycSp$ for $\TR$.  This is proved in
\cite[Lemma 2.6]{BHLS}. Note that by \cref{eq-LS1}, $\mL$ is a summand
of $\Sigma^{\infty }_{+}\mcL S^{1}$.
\end{itemize}

\begin{ex}\label{ex-C0Zp}
{\bf Continuous $\FFp $-valued functions on the $p$-adic integers.}  Each $a\in \Z_{p}$ can be written uniquely as 
\begin{displaymath}
a=\sum_{j\geq 0}a_{j}p^{j}\qquad \mbox{with }a_{j}^{p}=a_{j}. 
\end{displaymath}

\noindent By abuse of notation we can regard each coefficient $a_{j}$
as a continuous $\FFp $-valued function (by reducing mod $p$) that
factors though the quotient group $\Z/p^{j+1}$. It follows that
\begin{align*}
C^{0} (\Z/p^{j})
 & \cong  \FFp [a_{0},a_{1},\dotsc ,a_{j-1}]/ (a_{k}^{p}-a_{k}:0\leq k< j),\\
C^{0} (\Z_{p})
 & \cong  \colim{j}C_{0} (\Z/p^{j})
   \cong  \FFp [a_{0},a_{1},\dotsc ]/ (a_{k}^{p}-a_{k}:k\geq 0),\\
C^{0} (p^{k}\Z_{p})
 & \cong  C_{0} (\Z_{p})/ (a_{0},\dotsc ,a_{k-1}),\\
\aand C^{0} (\Z_{p}^{\times })
 & \cong  (a_{0})\subseteq  C^{0} (\Z_{p}).
\end{align*}

\noindent The ring $C^{0} (\Z/p^{j})$ has rank $p^{j}$ as an $\FFp
$-vector space.
\end{ex}

\begin{prop}\label{thm-3.19}
{\bf The fibers of two coassembly maps.}
\begin{enumerate}[label={(\roman*)},itemindent=0em]
\item \label{prop-3.19i}\cite[Proposition 3.19]{BHLS} 
The fiber of the
coassembly map  (see \cref{ex-assembly})
\begin{displaymath}
\epsilon :\THH(\mS_{p}^{B\Z_{p}})\to
\THH(\mS_{p})^{B\Z_{p}}
\end{displaymath}

\noindent is
\begin{displaymath}
\Sigma^{-1}\mL\otimes\mW (C^{0} (\Z^{\times }_{p})) 
\end{displaymath}

\noindent for $\mL$ as in \cref{eq-mL} and $C^{0} (\Z^{\times }_{p})$
as in \cref{ex-C0Zp}.

\item \label{prop-3.19ii}\cite[Corollary 3.21]{BHLS} For a connective
ring spectrum $R$, the thick subcategory generated by the fiber of
$\epsilon :\TopC(R^{B\Z_{p}})\to \TopC(R)^{B\Z_{p}}$ contains
\begin{displaymath}
\mW (C^{0} (\Z^{\times }_{p}))\otimes \TopC(R).
\end{displaymath}

\noindent In particular $\epsilon $ is not an equivalence when
$\TopC(R)$ is nontrivial.
\end{enumerate}
\end{prop}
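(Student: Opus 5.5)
For part (i), the plan is to compute both sides of the coassembly map $p$-adically and identify the difference. By \cref{thm-dual-circle}, $p$-completed, the source is
\begin{displaymath}
\THH(\mS_{p}^{B\Z_{p}})\simeq \mS_{p}\oplus \bigoplus_{k\geq 0}\Bigl(\prod_{s\in p^{k}\Z_{p}^{\times}}\Sigma^{-1}\mS[\mT/\rC_{p^{k}}]_{p}\Bigr)\oplus(\text{contribution of }0).
\end{displaymath}
This is best organized by the free loop space decomposition \cref{eq-LBZp}: $\mcL B\Z_{p}$ splits as a trivial summand $B\Z_{p}$ (winding number $0$) plus summands $p^{j}\Z_{p}^{\times}\times\mT_{p}/\rC_{p^{j}}$. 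Following the slogan quoted from \cite{BHLS}, I would write $\THH(\mS_{p}^{B\Z_{p}})$ as the ``$\mS$-cochains'' analog of $\Sigma^{\infty}_{+}\mcL B\Z_{p}$. The profinite set $p^{j}\Z_{p}^{\times}$ then enters through the spherical Witt vectors $\mW(C^{0}(p^{j}\Z_{p}^{\times}))$, which are the $p$-complete replacement of the product over that profinite set. The winding-zero summand should map isomorphically onto the target $\THH(\mS_{p})^{B\Z_{p}}\simeq\mS_{p}^{B\Z_{p}}\simeq\mS_{p}\oplus\Sigma^{-1}\mS_{p}$, since it is exactly the constant-loop part, whose $\mT$-action is trivial. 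The coassembly map restricted there is the identity; I would check this on the cells in dimensions $0$ and $-1$.

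The fiber of $\epsilon$ is therefore the sum over $j\geq 0$ of
\begin{displaymath}
\Sigma^{-1}\mS[\mT/\rC_{p^{j}}]\otimes\mW(C^{0}(p^{j}\Z_{p}^{\times})).
\end{displaymath}
Multiplication by $p^{j}$ identifies $p^{j}\Z_{p}^{\times}\cong\Z_{p}^{\times}$, so each term becomes $\Sigma^{-1}\mS[\mT/\rC_{p^{j}}]\otimes\mW(C^{0}(\Z_{p}^{\times}))$. Summing over $j$ and using \cref{eq-mL} gives $\Sigma^{-1}\mL\otimes\mW(C^{0}(\Z_{p}^{\times}))$. The main obstacle is making this identification cyclotomic, not merely $\mT$-equivariant. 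Concretely, one has to verify that the Frobenius $\varphi_{p}$ carries the winding number $p^{j}s$ summand to the $p^{j+1}s$ summand. This matches the polygonic structure of $L_{\ltrunc p^{\infty}\rtrunc}\mS_{p}$ in \cref{eq-TR-diagram}, and it would follow from the epicyclic description in \cref{ex-free-loop-epi}, where the power maps realize exactly this shift.

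For part (ii), I would apply $\TopC(-)=\map_{\qCycSp}(\mS^{\triv},-)$ after tensoring with $\THH(R)$. Because $R$ is connective, there is a Künneth-type identification $\THH(R^{B\Z_{p}})\simeq\THH(R)\otimes\THH(\mS_{p}^{B\Z_{p}})$, compatible with coassembly in the spirit of \cref{lem-BHLS-2.17}. Under this identification, the fiber of the $\TopC$ coassembly map is $\TopC(\Sigma^{-1}\mL\otimes\mW(C^{0}(\Z_{p}^{\times}))\otimes\THH(R))$. Since $\mL$ corepresents $\TR$, this spectrum is $\Sigma^{-1}\TR(\THH(R))\otimes\mW(C^{0}(\Z_{p}^{\times}))$, up to how $\TopC$ interacts with $\TR$. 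By \cref{thm-TC-TR}, $\TopC(R)$ is the fiber of $\pi_{L}-1$ on $\TR$, so it lies in the thick subcategory generated by $\TR$ (after tensoring with $\mW$). This gives the containment claimed in (ii), and nontriviality of $\epsilon$ follows whenever $\TopC(R)\neq 0$.
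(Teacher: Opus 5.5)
The paper does not prove this proposition; it only quotes \cite{BHLS}. I am therefore judging your sketch on its own terms, and each part has a genuine gap.

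\emph{Part (i).} The step ``write $\THH(\mS_{p}^{B\Z_{p}})$ as the $\mS$-cochains analog of $\Sigma^{\infty}_{+}\mcL B\Z_{p}$'' is the whole content of (i), not a step toward it. $\THH$ does not commute with cochains in general; that failure is exactly what $\epsilon$ measures. \Cref{thm-dual-circle} cannot fill this hole: it is an abstract equivalence of $\mT$-spectra and says nothing about the map $\epsilon$ or about $\varphi_{p}$. Your bookkeeping also conflicts with it. The integral decomposition has a single trivial summand $\mS$ and no trivial-action $\Sigma^{-1}\mS$, so your extra ``contribution of $0$'' is not there. Already on $H\Z$-homology (Hochschild homology of $\Z[\epsilon]/\epsilon^{2}$ with $|\epsilon|=-1$), the $\Sigma^{-1}\mS$ of the target is detected on the bottom cell of the free summand $\Sigma^{-1}\mS[\mT]$. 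The fiber of the collapse $\Sigma^{-1}\mS[\mT]\to\Sigma^{-1}\mS$ is the representation sphere $\Sigma^{\lambda-2}\mS$, which is not induced. So the fiber is not obtained by discarding a constant-loop summand, and checking underlying cells in degrees $0$ and $-1$ cannot identify an equivariant fiber.

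The induced answer appears only $p$-adically. There, $0\in\Z_{p}$ is a limit of the sets $p^{j}\Z_{p}^{\times}$, and the fiber should be locally constant, compactly supported cochains over $\Z_{p}\smallsetminus\{0\}$. That is why $\bigoplus_{j}$ and $\mW(C^{0}(-))$ occur, rather than products over profinite sets. Making this precise needs a real theorem. One possibility is to write $\mS_{p}^{B\Z_{p}}$ as a colimit of $\mS_{p}^{B\Z/p^{k}}$. For $\pi$-finite $p$-spaces one can then use that cochains carry $\mcL X=X\times_{X\times X}X$ to $\THH$, together with a Segal-type description of $(-)^{t\rC_{p}}$ that produces $\varphi_{p}$. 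The epicyclic structure of \cref{ex-free-loop-epi} lives on $\mcL X$ itself (spaces, hence chains). It does not give the Tate-valued Frobenius on $\THH$ of cochains.

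\emph{Part (ii).} The reduction is fine. For $p$-complete $R$, the Künneth step comes from dualizability of $\mS_{p}^{B\Z_{p}}$ and monoidality of $\THH$, not from connectivity of $R$. The fiber is then $\TopC(\Sigma^{-1}\mL\otimes Y)$ with $Y=\mW(C^{0}(\Z_{p}^{\times}))\otimes\THH(R)$.

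The next step has the variance backwards. Corepresentability gives $\map_{\qCycSp}(\mL,Y)\simeq\TR(Y)$, whereas $\TopC(\mL\otimes Y)=\map_{\qCycSp}(\mS^{\triv},\mL\otimes Y)$. Since $\mL$ is an infinite sum it is not dualizable, and summandwise Atiyah duality turns $\bigoplus$ into $\prod$. The claimed equivalence $\TopC(\Sigma^{-1}\mL\otimes Y)\simeq\Sigma^{-1}\TR(Y)$ is false already for $Y=\mS_{p}$:
\begin{itemize}
\item $\TR(\mS_{p})$ is connective; by \cref{ex-THH-Id} it is $\prod_{j}\Sigma^{\infty}_{+}B\rC_{p^{j}}$ after $p$-completion;
\item $\TopC(\mL)$ is not connective, as the next paragraph shows.
\end{itemize}

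What works is to use corepresentability through Yoneda. The endomorphism of $\TR=\map_{\qCycSp}(\mL,-)$ whose equalizer with the identity is $\TopC$ (\cref{thm-TC-TR}) is induced by some $f\colon\mL\to\mL$. Hence $\mS^{\triv}\simeq\mathrm{cofib}(1-f\colon\mL\to\mL)$ in $\qCycSp_{+}$. Since $\pi_{-1}\TopC(\mS_{p})\cong\Z_{p}$, this cofiber sequence forces $\TopC(\mL)$ to be non-connective, which justifies the counterexample above. Tensoring the sequence with $Y$ and applying the exact functor $\TopC$ puts $\TopC(Y)$ in the thick subcategory generated by the fiber. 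It remains to identify $\TopC(Y)$ with $\mW(C^{0}(\Z_{p}^{\times}))\otimes\TopC(R)$. This means commuting $\TopC$ past the $p$-completed filtered colimit of finite free $\mS_{p}$-modules that defines $\mW(C^{0}(\Z_{p}^{\times}))$.
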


An important step toward the disproof of the {\TC} is the
following, which is a consequence of \cref{thm-3.19}.

\begin{theorem}\label{thm-failure}
{\bf The $T (n+1)$-local $K$-theory coassembly map for the trivial
$\Z$-action.}  \cite[Theorem 3.22 for $X=\mS$]{BHLS} Let $R$ be a $T
(n)$-local $\EE_{1}$-ring spectrum for $n\geq 1$.  If $\rK_{T (n+1)}
(R)$ (see \cref{def-KT-KK}) is nontrivial, then the coassembly map
(\cref{ex-assembly}) for the trivial action of $\Z$ on $R$,
\begin{displaymath}
\epsilon :\rK_{T (n+1)} (R^{B\Z}) \to \rK_{T (n+1)} (R)^{B\Z},
\end{displaymath}

\noindent is {\em not} an equivalence.
\end{theorem}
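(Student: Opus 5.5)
The plan is to pass from $\TopC$ to $\rK$ using the Dundas--Goodwillie--McCarthy theorem together with the Land--Mathew--Meier--Tamme purity results. Then we apply \cref{thm-3.19}\cref{prop-3.19ii} after $T(n+1)$-localization. I will assume, as in \cite{BHLS}, that everything is $p$-complete, so $B\Z$ and $B\Z_p$ can be used interchangeably.

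The first step is to reduce from $R$ to a connective ring. Since $R$ is $T(n)$-local, the purity theorem says $\rK_{T(n+1)}(R)\simeq \rK_{T(n+1)}(\tau_{\geq 0}R')$ for a suitable connective $\EE_1$-ring $R'$ (for instance $\tau_{\geq 0}R$ after adjusting by $L_n^{p,f}$). The same holds for $R^{B\Z}\simeq R\oplus\Sigma^{-1}R$, which is again $T(n)$-local, and this compatibility persists through the coassembly map. Next we apply DGM to the map of connective rings $\tau_{\geq 0}(R^{B\Z})\to\tau_{\geq 0}R$ and its iterates. This identifies the fiber of the $\rK$-coassembly map with that of the $\TopC$-coassembly map, up to $\rK(\Z)$-type terms, and those terms are killed by $L_{T(n+1)}$ once $n\geq 1$. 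In addition, $L_{T(n+1)}\rK(\pi_0)\simeq L_{T(n+1)}\TopC(\pi_0)$ for $n\ge 1$, by LMMT. The upshot is that $\operatorname{fib}(\epsilon_{\rK})$ becomes equivalent, after $T(n+1)$-localization, to $L_{T(n+1)}\operatorname{fib}(\epsilon_{\TopC})$ for the connective model.

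The second step uses \cref{thm-3.19}\cref{prop-3.19ii}: the thick subcategory generated by $\operatorname{fib}(\epsilon_{\TopC})$ contains $\mW(C^0(\Z_p^\times))\otimes\TopC(R)$. Since $L_{T(n+1)}$ is exact, the thick subcategory generated by the localized fiber contains $\mW(C^0(\Z_p^\times))\otimes \TopC_{T(n+1)}(R)$. By the first step, $\TopC_{T(n+1)}(R)\simeq \rK_{T(n+1)}(R)\neq 0$, and $\mW(C^0(\Z_p^\times))$ is a nonzero sum of copies of $\mS_p$. Their tensor product is therefore nonzero, so the localized fiber is nonzero and $\epsilon$ is not an equivalence.

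The main obstacle is the first step. We must check that the DGM and purity identifications are natural enough to commute with the coassembly maps. We must also check that $\rK_{T(n+1)}(R)^{B\Z}$ agrees with the limit of $\TopC$-terms, because homotopy fixed points do not obviously commute with localization. For the latter, I would use that $B\Z$ is a finite complex, so $(-)^{B\Z}$ is a finite limit and therefore commutes with $L_{T(n+1)}$. This handles the target, leaving the naturality of the DGM square as the real work.
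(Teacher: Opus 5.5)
The gap is in your first step. Your second step is fine once $R$ is replaced by the connective ring $R_0:=\tau_{\geq 0}R$, which is the ring to which \cref{thm-3.19}\cref{prop-3.19ii} applies. Note that $L_{T(n)}R_0\simeq R$ and that $R_0$ is $T(n+1)$-acyclic, because $\tau_{<0}R$ is bounded above and $R$ is $L_n^f$-local.

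The $\TopC$-coassembly map controlled by \cref{thm-3.19}\cref{prop-3.19ii} has source $\TopC(R_0^{B\Z})$. The ring $R_0^{B\Z}\simeq R_0\oplus\Sigma^{-1}R_0$ is only $(-1)$-connective, with $\pi_{-1}=\pi_0R$. Neither DGM nor the trace equivalence of \cref{thm-purity} says anything about $\rK$ versus $\TopC$ for such a ring.

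Your ``connective model'' $\tau_{\geq 0}(R^{B\Z})$ does not repair this.
\begin{itemize}
\item It is a different ring: its $\pi_0$ is $\pi_0R\oplus\pi_1R$ and it has no $\pi_{-1}$.
\item It is not a cochain ring $A^{B\Z}$, so it has no coassembly map to which \cref{thm-3.19} could be applied.
\item DGM for the augmentation $\tau_{\geq0}(R^{B\Z})\to\tau_{\geq 0}R$ only compares relative $\rK$ and relative $\TopC$ of that augmentation.
\end{itemize}
Identifying $\TopC_{T(n+1)}(\tau_{\geq0}(R^{B\Z}))$ with $\TopC_{T(n+1)}(R_0^{B\Z})$ is the original problem in disguise. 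It amounts to knowing that the trace is a $T(n+1)$-local equivalence for $R_0^{B\Z}$. So the obstacle is not the naturality of a DGM square; there is no DGM square for the ring you actually need.

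The missing ingredient is Levy's theorem, \cref{thm-Levy-trunc}. This is how the paper, via \cref{cor-BHLS6.3}, gets around exactly this non-connectivity.
\begin{itemize}
\item The fiber $E$ of $\trc\colon\rK\to\TopC$ is a truncating invariant; this is what DGM gives.
\item The map $R_0\to H\pi_0R_0$, with trivial $\Z$-actions, is $1$-connective. Levy's theorem then gives $E(R_0^{h\Z})\simeq E((H\pi_0R_0)^{h\Z})$.
\item The ring $(H\pi_0R_0)^{h\Z}$ is an $H\Z$-algebra, so its $\rK$ and $\TopC$ are modules over $\rK(\Z)$ and $\TopC(\Z)$. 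Both of these are $T(n+1)$-acyclic because $n+1\geq 2$. This is the precise form of your ``$\rK(\Z)$-type terms''.
\item Hence $\trc$ gives $\rK_{T(n+1)}(R_0^{B\Z})\simeq\TopC_{T(n+1)}(R_0^{B\Z})$, compatibly with coassembly by naturality of $\trc$.
\end{itemize}
Combine this with purity for $R_0^{B\Z}$, using $L_{T(n)\oplus T(n+1)}(R_0^{B\Z})\simeq (L_{T(n)}R_0)^{B\Z}\simeq R^{B\Z}$ since $B\Z$ is finite. The result is the commuting diagram of \cref{cor-BHLS6.3}, and only then does your second step finish the argument.
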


\subsection{$K$-theory and $\TopC$}\label{sec-K-theory-TC}

In \cite[\S6.1]{BHLS} they explain why $T(n + 1)$-localized algebraic
$K$-theory and $T(n + 1)$-localized $\TopC$ coincide for examples of
interest. For connective rings,  we have the following.

\begin{theorem}
\label{thm-purity}
  {\cite[Purity Theorem]{LMMT}} and
  {\cite[Cor.\ 4.11]{CMNN2}}.
  Let $R$ be a connective
$\EE_1$-algebra. For $n \ge 1$, the $(T(n)\oplus T(n+1))$-localization
map and the cyclotomic trace induce equivalences
\begin{displaymath}
\xymatrix
@R=4mm
@C=10mm
{
{\rK_{T(n+1)} \!\bigl(L_{T(n)\oplus T(n+1)} R\bigr)}
  &{\rK_{T(n+1)} (R)}\ar[r]^(.47){\trc}_(.47){\simeq}
                      \ar[l]_(.37){\eta}^(.37){\simeq}
    &{\TopC_{T(n+1)} (R).}
 }
\end{displaymath}

\end{theorem}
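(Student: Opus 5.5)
We split the claim into its two equivalences and treat each by its own mechanism. For the left map $\eta$, the key input is the purity phenomenon: for a localizing invariant $E$ (\cref{def-loc-inv}), the $T(n+1)$-localization $L_{T(n+1)}E$ only sees the ``height $\le n+1$'' part of its input. Concretely, we will show that for connective $R$ the fiber of $R\to L_{T(n)\oplus T(n+1)}R$ is built, as an $R$-bimodule, from pieces on which $L_{T(n+1)}\rK$ vanishes. The plan is to use the chromatic fracture / $L_{n+1}^{f}$ tower to reduce to showing that $\rK_{T(n+1)}(R)\simeq \rK_{T(n+1)}(L_{n+1}^{f}R)$ and that $\rK_{T(n+1)}(L^{f}_{n-1}R)=0$. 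Both statements follow from a vanishing result of the form $L_{T(m)}\rK(A)=0$ whenever $A$ is $L_{k}^{f}$-local for some $k<m-1$ or is killed by $T(j)$-localization for $j\le m$. From these, the pullback square expressing $L_{n+1}^f R$ in terms of $L_{T(n)\oplus T(n+1)}R$ and $L_{n-1}^fR$ finishes the argument, using exactness of $\rK$ on the associated localization sequences of module categories.

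The main obstacle is this vanishing input, namely the height-shifting statement that $\rK$ raises height by at most one. Our first attempt would follow the strategy of \cite{LMMT}. We would reduce, via Morita invariance and filtered colimits, to a statement about the $\EE_1$-ring $\mS$ adjoined with a $v_{k}$-self map, namely $\rK(\mS[\,v_k^{-1}])$ for a type $k$ complex. We would then use the trace map to $\TopC$ together with the Dundas--Goodwillie--McCarthy theorem to handle nilpotent extensions, and an induction on height using the Nullstellensatz-type vanishing of $T(n+1)$-localized $\TopC$ of $L_{n-1}^f$-local rings. Where this gets stuck, we would instead invoke the descent-theoretic argument of \cite{CMNN2}, namely nilpotence of the $\rK$-theoretic action of a suitable finite group and the vanishing of Tate constructions $T(n+1)$-locally for $n\geq1$.

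For the right map $\trc$, we use the Dundas--Goodwillie--McCarthy theorem: for a map of connective rings with nilpotent kernel on $\pi_0$, the fiber of $\rK$ agrees with the fiber of $\TopC$. We apply it to $R\to \tau_{\le 0}R\to \pi_0R$ and reduce, by passage to Postnikov limits and the fact that $T(n+1)$-localization commutes with the relevant limits after the convergence estimates of \cref{sec-t-structures}, to the discrete ring $\pi_0 R$. For discrete rings the cofiber of $\trc$ is, by the Hesselholt--Madsen and Clausen--Mathew--Morrow rigidity theorems, related to $L_{K(1)}\rK(\pi_0R[1/p])$, which is $T(n+1)$-acyclic for $n\ge1$ since it has height $\le1$. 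This last step, heights of $\rK$ of $\Z[1/p]$-algebras, again uses the purity vanishing above, so the whole proof hinges on that single step.
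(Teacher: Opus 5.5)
The paper gives no argument for this statement beyond citing \cite{LMMT} and \cite{CMNN2}. Your outline matches how those cited results are usually combined: purity $\rK_{T(n+1)}(R)\simeq\rK_{T(n+1)}(L^f_{n+1}R)$, vanishing of $\rK_{T(n+1)}$ on $L^f_{n-1}$-local rings, the chromatic fracture square, and Dundas--Goodwillie--McCarthy for $\trc$. Several steps, however, would not go through as you state them.

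\textbf{The $\eta$ half.} $\rK$ does not carry pullback squares of ring spectra to pullback squares. Also, ``the fiber of $R\to L_{T(n)\oplus T(n+1)}R$ as an $R$-bimodule'' is not something a localizing invariant sees. What makes the fracture square usable is the following. The maps $\mathrm{Perf}(L^f_{n+1}R)\to\mathrm{Perf}(L^f_{n-1}R)$ and $\mathrm{Perf}(L_{T(n)\oplus T(n+1)}R)\to\mathrm{Perf}(L^f_{n-1}L_{T(n)\oplus T(n+1)}R)$ are smashing localizations. Their kernels are generated by $L^f_{n+1}R\otimes F$ and $L_{T(n)\oplus T(n+1)}R\otimes F$ respectively, for a finite type $n$ complex $F$. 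Base change identifies the endomorphism rings of these generators, because the fiber of $L^f_{n+1}R\to L_{T(n)\oplus T(n+1)}R$ is $L^f_{n-1}$-local and hence annihilated by $F\otimes DF\otimes(-)$. So the kernels agree and the square of $\rK$-theories is cartesian. After $T(n+1)$-localization its bottom row vanishes by \cite{CMNN2}; this includes the fourth corner $L^f_{n-1}L_{T(n)\oplus T(n+1)}R$.

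Your ``first attempt'' at the vanishing inputs themselves cannot work. The rings they must be applied to, namely $R\otimes\mathrm{End}(F(n+2))$ and the $L^f_{n-1}$-local rings above, are in general not connective. So Dundas--Goodwillie--McCarthy gives no comparison between their $\rK$ and $\TopC$, and no statement about $\TopC_{T(n+1)}$ transfers to $\rK_{T(n+1)}$. These inputs must be imported from \cite{LMMT} and \cite{CMNN2}, exactly as the paper does.

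\textbf{The $\trc$ half.} This part contains a false step and a hand-wave.
\begin{enumerate}[label={(\roman*)},itemindent=0em]
\item Dundas--Goodwillie--McCarthy applies directly to $R\to\pi_0R$, which is an isomorphism on $\pi_0$. It shows that $\trc$ has the same fiber for $R$ and for $\pi_0R$, so no Postnikov limits are needed. Your justification for them is also wrong: $L_{T(n+1)}$ does not commute with such limits. It kills every bounded-above spectrum, for example each $\tau_{\le k}\mS$, but not $\mS=\lim_k\tau_{\le k}\mS$.
\item For the discrete ring, saying the cofiber of $\trc$ is ``related to'' $L_{K(1)}\rK(\pi_0R[1/p])$ is not an argument, and it ignores the $\TopC$ side. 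What you need is that both ends vanish $T(n+1)$-locally.
\begin{itemize}
\item $\pi_0R$ is an $H\Z$-module, so $L_{T(n)\oplus T(n+1)}\pi_0R=0$, and the first half gives $\rK_{T(n+1)}(\pi_0R)=0$.
\item $\TopC(\pi_0R)$ is a module over the ring $\TopC(\Z)$. This ring is $T(m)$-acyclic for $m\ge 2$, because modulo $p$ it agrees with $\TopC(\Z_p)$, which has fp-type $1$ by B\"okstedt--Madsen. Hence $\TopC_{T(n+1)}(\pi_0R)=0$.
\end{itemize}
\end{enumerate}
So the common fiber of $\trc$ is $T(n+1)$-acyclic. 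This is exactly where connectivity of $R$ and $n\ge 1$ are used.
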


To disprove the telescope conjecture, we will need to understand the
topological cyclic homologies of fixed points of $\Z$-actions on
connective $\EE_{1}^{}$-algebras. Such fixed points are
(-1)-connective, but often not 0-connective, so the above theorem does
not apply.

In order to get around this we will use \cref{thm-Levy-trunc} below,
which requires the following.

\begin{defin}\label{def-trunc-inv}\cite[Definition 3.1]{Land-Tamme} 
A localizing invariant as in \cref{def-loc-inv}
\[
E \colon  \qCatw^{\mathrm{perf}} \longrightarrow
\qSp
\]

\noindent (with source as in \cref{def-qcats}\cref{def-qcatsix}) is a
{\bf truncating invariant} if for every connective \(\EE_1\)-ring
spectrum $R$, the canonical map
\[
E(R) \longrightarrow E(H\pi_0(R))
\]

\noindent is an equivalence.  Here $E (R)$ is understood to be the
value of $E$ on the {\qcat} $\qMod_{R}$ of
\cref{def-qcat-props}\cref{def-qcat-propsv} and similarly for the
{\SESM} ring spectrum $H\pi_0(R)$.
\end{defin}
 
In other words, the truncating invariant $E$ does not see the positive
dimensional homotopy groups of $R$.

\begin{theorem}\label{thm-Levy-trunc}
\cite[Theorem B]{Levy:K(1)-local} Let \(R_{0}\) and \(R_{1}\) be connective
\(\EE_1\)-algebras with \(\mathbb{Z}\)-action.  Let
\[
f \colon R_{0} \longrightarrow R_{1}
\]
be a \(1\)-connective (meaning it induces an isomorphism in
$\pi_{0}$), \(\mathbb{Z}\)-equivariant \(\EE_1\)-algebra map. For any
truncating invariant \(E\), the induced map
\[
E(R_{0}^{h\mathbb{Z}}) \longrightarrow E(R_{1}^{h\mathbb{Z}})
\]
is an equivalence.
\end{theorem}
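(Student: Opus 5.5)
The plan is to reduce to the case where $f$ is the truncation map $R\to H\pi_0(R)$ itself, by passing through a resolution that exhibits $R^{h\Z}$ as a "$\pi_0$-controlled" ring.

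First I would write $R^{h\Z}$ as the fiber of $1-\sigma\colon R\to R$, where $\sigma$ is the generator of the $\Z$-action. For connective $R$ this shows that $R^{h\Z}$ is $(-1)$-connective, and that $f$ induces an equivalence on $\tau_{\le 0}$ after twisting. The obstruction is that $R^{h\Z}$ is not connective, so the truncating property does not apply directly.

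To get around this, I would consider the square-zero or Postnikov filtration $R_1\to\tau_{\le n}R_1$. Truncating invariants are nilinvariant and satisfy excision (Land--Tamme), and I would use both properties. The key construction is the following. Take the pullback-type ring $R_0\times_{R_1}R_1$ (the identity), and instead compare via the Milnor square
\begin{displaymath}
R_0^{h\Z}\to R_1^{h\Z}, \qquad \pi_0R_0^{h\Z}\to \pi_0 R_1^{h\Z}.
\end{displaymath}
Concretely, I would show that the map of $\EE_1$-rings $R_0^{h\Z}\to R_1^{h\Z}$ is a pullback of the form $A\times_{C}B$, where $A\to C$ induces a nilpotent-extension map on $\pi_0$ with connective fiber. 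I would then apply Land--Tamme excision, via the $\odot$-ring, to identify $E(R_0^{h\Z})\to E(R_1^{h\Z})$ with $E$ applied to a map of connective rings that is an isomorphism on $\pi_0$. Truncation then finishes the argument.

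An alternative route, which I would try first, is to write
\begin{displaymath}
R^{h\Z}\simeq \lim_n (\tau_{\le n}R)^{h\Z}.
\end{displaymath}
Each map in this tower is a square-zero extension with $\Z$-action, so $(\tau_{\le n}R)^{h\Z}\to(\tau_{\le n-1}R)^{h\Z}$ is a square-zero extension by the fiber $(\Sigma^n\pi_nR)^{h\Z}$. This fiber is $(n-1)$-connective, hence connective for $n\ge1$. Truncating invariants are invariant under square-zero extensions by connective bimodules, even over nonconnective bases, by a Land--Tamme argument. This reduces both sides to $(\pi_0R_i)^{h\Z}$, and $f$ induces an isomorphism there.

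The remaining steps are to justify convergence, that is, $E$ commuting with this limit, and to prove the nonconnective square-zero invariance. Convergence is the step I expect to be the main obstacle. I would address it by comparing directly $R_0^{h\Z}\to R_1^{h\Z}$, which is a map whose fiber is $1$-connective up to a shift. I would try to realize this map as a single extension with connective "kernel", using the Land--Tamme $\odot$-construction to avoid the limit entirely.
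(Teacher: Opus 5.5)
Your proposal has a genuine gap, and it sits exactly where the truncating hypothesis has to be used. A truncating invariant is constrained only on \emph{connective} rings. Every ring in your second route is in general nonconnective: $R^{h\mathbb{Z}}$, $(\tau_{\le n}R)^{h\mathbb{Z}}$, and the trivial square-zero extensions $(\tau_{\le n-1}R)^{h\mathbb{Z}}\oplus\Sigma^{n+1}(\pi_nR)^{h\mathbb{Z}}$ through which the Postnikov steps factor (for the trivial action, $\pi_{-1}R^{h\mathbb{Z}}\cong\pi_0R$). The Land--Tamme argument for square-zero invariance needs two rings to be connective with the same $\pi_0$ as $A$, namely $A\oplus\Sigma M$ and the $\odot$-ring $A\odot^{\tilde A}_{A\oplus\Sigma M}A$. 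The latter has underlying spectrum $A\otimes_{\tilde A}A$, and its fiber over $A\oplus\Sigma M$ is a shift of $M\otimes_{\tilde A}M$. Only then does the truncating hypothesis identify their $E$-values. Over a nonconnective base neither input is available. So ``invariance under square-zero extensions by connective bimodules over nonconnective bases'' does not follow from the definition. It is a statement of the same kind as the theorem itself, which asserts that $f^{h\mathbb{Z}}$, a map of nonconnective rings with bounded below fiber $(\mathrm{fib}\,f)^{h\mathbb{Z}}$, is an $E$-equivalence. Separately, even granting each step, you would need $E$ to commute with the limit over $n$ of the $(\tau_{\le n}R)^{h\mathbb{Z}}$. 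Localizing invariants do not preserve limits, and truncating invariants carry no connectivity estimates that could force convergence; you flag this but do not resolve it. Your first route does not get started: a nonconnective ring has no natural ring map to its $\pi_0$, so the proposed Milnor square does not exist, and the pullback presentation with connective kernel is never constructed. Also, $f^{h\mathbb{Z}}$ need not be an equivalence on $\tau_{\le 0}$, since $\pi_0R^{h\mathbb{Z}}$ contains $(\pi_1R)_{\mathbb{Z}}$.

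The missing idea is to express $E(R^{h\mathbb{Z}})$ through $E$ of connective rings by a localization sequence, rather than by deforming $R$. One way to do this:
\begin{itemize}
\item Let $A=R_\sigma[t]$ be the twisted polynomial ring. It is connective, with $\pi_0A=(\pi_0R)_\sigma[t]$.
\item With $t$ acting by $\sigma$, $R\simeq\mathrm{cofib}(A\xrightarrow{\cdot(t-1)}A)$ is a perfect $A$-module, and $\mathrm{End}_A(R)\simeq\mathrm{fib}(R\xrightarrow{\sigma-1}R)=R^{h\mathbb{Z}}$. Hence $\mathrm{Perf}(R^{h\mathbb{Z}})\simeq\mathrm{thick}\langle R\rangle\subseteq\mathrm{Perf}(A)$.
\item Up to idempotent completion, the Verdier quotient is $\mathrm{Perf}(L)$, where $L$ is the localization of $A$ inverting $t-1$.
\item $L$ is connective: to localize a connective module you only ever cone off maps from $\Sigma^kR$ with $k\ge -1$. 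Moreover $\pi_0L$ is the Cohn localization of $\pi_0A$ at $t-1$, so it depends only on $\pi_0R$ with its action.
\end{itemize}
This gives a fiber sequence $E(R^{h\mathbb{Z}})\to E(R_\sigma[t])\to E(L)$, natural in $R$. When $f$ is an isomorphism on $\pi_0$, the two right-hand terms are unchanged because they are $E$ of connective rings with the same $\pi_0$. Hence the left-hand term is unchanged too, with no Postnikov tower or convergence argument needed.
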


\begin{cor}\label{cor-BHLS6.3}
\cite[Corollary 6.3]{BHLS} 
For $n \ge 1$, let $R$ be a $T(n+1)$-acyclic, connective
$\EE_1$-algebra with a $\mathbb{Z}$-action.  The coassembly map
$\epsilon $, the $T(n)$-localization map $\eta $, and the
cyclotomic trace $\trc$ fit into a commuting diagram
\begin{displaymath}
\xymatrix
@C=5em
@R=3em{
  \rK_{T(n+1)}(L_{T(n)}R^{h\mathbb{Z}})    
    \ar[d]_{\epsilon}
&
  \rK_{T(n+1)}(R^{h\mathbb{Z}})
    \ar[d]^{\epsilon }
    \ar[l]_(.45){\rK_{T(n+1)} (\eta  )}^(.45){\simeq}
    \ar[r]^{\trc}_{\simeq}
&
  \TopC_{T(n+1)}(R^{h\mathbb{Z}})\ar[d]_{\epsilon}
\\
  \rK_{T(n+1)}(L_{T(n)}R)^{h\mathbb{Z}}
&
  \rK_{T(n+1)}(R)^{h\mathbb{Z}}
    \ar[r]^{\trc}_{\simeq}
    \ar[l]_(.45){\rK_{T(n+1)} (\eta  )}^(.45){\simeq}
&
  \TopC_{T(n+1)}(R)^{h\mathbb{Z}}
}
\end{displaymath}

\noindent
where each horizontal map is an equivalence.
\end{cor}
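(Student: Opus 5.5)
The plan is to establish the three squares of the diagram separately and then verify that the horizontal maps are equivalences. Commutativity is formal throughout. The coassembly maps $\epsilon$ of \cref{ex-assembly} are natural in the functor $F$ by their universal description in \cref{def-assembly}. The localization $R\to L_{T(n)}R$ is $\Z$-equivariant, since $L_{T(n)}$ is a functor, and it induces a natural transformation $\rK_{T(n+1)}(-)\Rightarrow \rK_{T(n+1)}(L_{T(n)}(-))$. The cyclotomic trace $\trc:\rK\Rightarrow\TopC$ is likewise a natural transformation of functors on $\EE_{1}$-rings. Naturality of $\epsilon$ with respect to natural transformations between the functors applied to the $\Z$-diagram then gives both squares. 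One point needs checking: $L_{T(n)}(R^{h\Z})\simeq (L_{T(n)}R)^{h\Z}$, because localization commutes with the finite limit $(-)^{h\Z}$ (a fiber of $1-\gamma$). This identification is what lets me write the upper left corner as $L_{T(n)}R^{h\Z}$.

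For the bottom row I apply \cref{thm-purity} to the connective ring $R$. The hypothesis that $R$ is $T(n+1)$-acyclic gives $L_{T(n)\oplus T(n+1)}R\simeq L_{T(n)}R$, so the purity equivalences read $\rK_{T(n+1)}(L_{T(n)}R)\simeq\rK_{T(n+1)}(R)\simeq\TopC_{T(n+1)}(R)$. These maps are $\Z$-equivariant by functoriality, and $(-)^{h\Z}$ preserves equivalences, so the bottom row consists of equivalences.

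The top row is where I expect the real difficulty. The ring $R^{h\Z}$ is only $(-1)$-connective, so \cref{thm-purity} does not apply to it directly. My approach is to pass to the connective cover: consider $\tau_{\geq0}(R^{h\Z})\to R^{h\Z}$. A more useful route is to use \cref{thm-Levy-trunc} with truncating invariants. Take $f:R\to H\pi_0 R$, where the $\Z$-action on $H\pi_0R$ is the induced one; this map is $1$-connective. The fiber of $\rK\to\TopC$ is truncating after $p$-completion by Dundas--Goodwillie--McCarthy, and so is the fiber of $\rK(-)\to\rK(L_{T(n)}\text{-}(-))$ after $T(n+1)$-localization, which I would justify via the purity theorem of \cite{LMMT}, since $T(n+1)$-localized $K$-theory of $L_{T(n)}$-type constructions is truncating. \cref{thm-Levy-trunc} then reduces the claim for $R^{h\Z}$ to the claim for $(H\pi_0R)^{h\Z}$. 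This is a ring with homotopy concentrated in degrees $0$ and $-1$, and it is an $H\Z$-algebra, so it is $T(n+1)$-acyclic. For such rings both $\rK_{T(n+1)}$ and $\TopC_{T(n+1)}$ should vanish for $n\geq1$, by the $L_{T(n+1)}\rK(H\Z\text{-alg})=0$ results (redshift bounds and Mitchell's theorem for $\Z$-algebras), and the analogous statement for $L_{T(n)}$ of an $H\Z$-algebra, which is zero for $n\ge1$. The horizontal maps in the top row are then equivalences between zero objects modulo the truncating fibers.

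The main obstacle is the third step: identifying the fibers of $\trc$ and of $\rK_{T(n+1)}(\eta)$ as truncating invariants, so that \cref{thm-Levy-trunc} applies. I would first try to cite the statement that $L_{T(n+1)}$ of the fiber of the cyclotomic trace, and of $\rK(A)\to\rK(L_{T(n)}A)$, are truncating; this is the content underlying \cite{LMMT} and \cite{CMNN2}.
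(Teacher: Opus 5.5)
\textbf{The gap is the top left map} $\rK_{T(n+1)}(R^{h\mathbb{Z}})\to\rK_{T(n+1)}(L_{T(n)}R^{h\mathbb{Z}})$. You want to treat $A\mapsto L_{T(n+1)}\mathrm{fib}(\rK(A)\to\rK(L_{T(n)}A))$ as a truncating invariant and apply \cref{thm-Levy-trunc}, but nothing supports this.

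First, it is not evidently a localizing invariant. $L_{T(n)}$ is not smashing, so $A\mapsto\mathrm{Perf}(L_{T(n)}A)$ is not base change along a fixed ring, and there is no reason for it to preserve Verdier sequences.

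More seriously, being truncating is a condition on \emph{every} connective $\EE_1$-ring. Take $A=\mathbb{S}$. The value on $\pi_0\mathbb{S}=\mathbb{Z}$ is $\rK_{T(n+1)}(\mathbb{Z})=0$, so you would need $\rK_{T(n+1)}(\mathbb{S})\simeq\rK_{T(n+1)}(L_{T(n)}\mathbb{S})$. Purity only compares $\rK_{T(n+1)}(\mathbb{S})$ with $\rK_{T(n+1)}(L_{T(n)\oplus T(n+1)}\mathbb{S})$. Discarding the $T(n+1)$ part is exactly what the $T(n+1)$-acyclicity hypothesis buys, and a truncating invariant must satisfy its defining property with no such hypothesis. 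Neither \cite{LMMT} nor \cite{CMNN2} asserts what you need.

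The repair is simpler than your route. The localization half of the purity theorem in \cite{LMMT} holds for arbitrary $\EE_1$-rings; connectivity in \cref{thm-purity} is needed only for the comparison with $\TopC$, where Dundas--Goodwillie--McCarthy enters. Since $R^{h\mathbb{Z}}$ is the fiber of $1-\gamma\colon R\to R$, it is $T(n+1)$-acyclic. Hence $L_{T(n)\oplus T(n+1)}(R^{h\mathbb{Z}})\simeq L_{T(n)}(R^{h\mathbb{Z}})$ by your bottom-row argument, and purity applied directly to $R^{h\mathbb{Z}}$ gives the top left equivalence. Levy's theorem is then needed only for the trace.

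\textbf{The rest is fine}: commutativity, the bottom row, and the reduction for the top right map via \cref{thm-Levy-trunc} for $R\to H\pi_0R$ with the DGM-truncating fiber $\mathrm{K}^{\mathrm{inv}}$ of the trace. Apply the theorem to $\mathrm{K}^{\mathrm{inv}}$ itself and localize afterwards, so you need not ask whether $L_{T(n+1)}\mathrm{K}^{\mathrm{inv}}$ still preserves filtered colimits.

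Your final vanishing step is misjustified, however. $T(n+1)$-acyclicity of $(H\pi_0R)^{h\mathbb{Z}}$ gives nothing: $T(n+1)$-acyclic rings with $\rK_{T(n+1)}\neq 0$ are the whole point of redshift and of this corollary. The right reason is that $\rK$ and $\TopC$ of an $\EE_1$-$H\mathbb{Z}$-algebra are modules over $\rK(\mathbb{Z})$ and $\TopC(\mathbb{Z})$. Both are $T(n+1)$-acyclic for $n\geq 1$, by \cref{thm-purity} applied to $H\mathbb{Z}$, whose $T(n)\oplus T(n+1)$-localization vanishes.
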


The coassembly maps in \cref{cor-BHLS6.3} need not be equivalences.
Indeed \cref{thm-failure} says the middle one is not an equivalence
when the group action is trivial and $\rK_{T(n+1)}(R)$ is nontrivial.

If we knew that the $K (n+1)$-local analog of the coassembly map of
\cref{thm-failure} was an equivalence for $n\geq 1$, we would know
that the {\TC} is false.  What we do know is two steps removed from
this. There is a {\em particular $R$}, namely $L_{T (n)}BP\langle n
\rangle$, with a {\em nontrivial} action of $\Z$ for which the
coassembly map is a $K (n+1)$-local but not a $T (n+1)$-local
equivalence. This will be discussed in \cite{Rav:gjmcyc-ext}, where we
will see that a crucial ingredient is \cite[Theorem C]{BMCSY23}.

\subsection{Coming attractions}\label{sec-coming-attractions}

In the next paper we will talk about how to relax the triviality
hypothesis for the group action of \cref{thm-failure}. 

\begin{defin}\label{def-loc-uni}
{\bf Local unipotence.}  For an action of $\Z$ on an abelian group
$A$, let $\Psi :A\to A$ be the automorphism induced by a generator of
$\Z$. Then the action is {\bf unipotent} if $\Psi -1$ is a nilpotent
endomorphism of $A$.  An action of $\Z$ on a spectrum $R$ is {\bf
locally unipotent} if the induced action onn each
homotopy group is unipotent.
\end{defin}

Such group actions are studied in \cite[\S4]{BHLS}.  The specific
$\Z$-action on $\BPn$ by Adams operations is the subject of
\cite[\S5]{BHLS}.  

In \cite[\S6]{BHLS} they show that for \(n \geq 1\) and \(k \geq 0\),
the \(T(n+1)\)-localized coassembly map
\[
\epsilon :
\rK_{T (n+1)}\!\left(BP\langle n\rangle^{h p^{k}\mathbb{Z}}\right)
\;\longrightarrow\;
\rK_{T (n+1)}\!\left(BP\langle n\rangle\right)^{h p^{k}\mathbb{Z}}
\]

\noindent is not an equivalence, but becomes one after
\(K(n+1)\)-localization.  Thus, the functors $\rK_{T (n+1)}$ and
$\rK_{K (n+1)}$ differ, so the telescope conjecture fails.

Here we repeat the words of \cite{BHLS}  that we quoted in \cref{sec-roadmap}.

\begin{quote}
We do this by looking at the coassembly map from two highly divergent
perspectives, which are connected via trace theorems:
\begin{enumerate}
  \item From the perspective of locally unipotent $\mathbb{Z}$-actions
on ring spectra, the results of \cite[\S4]{BHLS} tell us that the coassembly
map cannot be an isomorphism.

  \item From the perspective of cyclotomic redshift of \cite{BMCSY23},
the map 

\[ L_{T(n)}\BPn ^{h p^k \mathbb{Z}}
\;\longrightarrow\; L_{T(n)}\BPn
\] 

\noindent splits after base change to the maximal abelian extension of
the $K(n)$-local sphere, and therefore the coassembly map is a
$K(n+1)$-local isomorphism.
\end{enumerate}
\end{quote}

\appendix

\preto\appendix{%
  \renewcommand{\thesection}{\Alph{section}}%
  \renewcommand{\theequation}{\thesection.\arabic{equation}}%
  \renewcommand{\theHequation}{\thesection.\arabic{equation}}%
  \setcounter{equation}{0}%
}

\section{Some {\eqvr} homotopy theory}\label[appendix]{sec-EHT}

\subsection{Residual action and restriction}\label{sec-residual-action}
\begin{defin}\label{def-residual}
Let $G$ be a group acting on a space or spectrum $X$, and let
$H\subseteq G$ be a subgroup.

\begin{enumerate}[label={(\roman*)},itemindent=0em]
\item \label{def-residualii} When $H$ is normal in $G$, the action of
$G$ on $X$ induces {\bf residual actions} of $G/H$ on $X^{H}$ and
$X_{H}$, and a {\bf residual action homeomorphisms}
\begin{displaymath}
\Res ^{G}_{H}:X^{G}\to (X^{H})^{G/H}
\qquad \aand 
\widehat{\Res} ^{G}_{H}:(X_{H})_{G/H}\to X_{G}.
\end{displaymath}

\noindent We also have  {\bf residual action homotopy equivalences}
\begin{displaymath}
h\Res ^{G}_{H}:X^{hG}\to (X^{hH})^{h (G/H)}
\qquad \aand 
h\widehat{\Res} ^{G}_{H}:(X_{hH})_{h (G/H)}\to X_{hG}.
\end{displaymath}

\item \label{def-residuali} 
For arbitrary $H\subseteq G$, the {\bf
restriction maps}
\begin{align*}
\Frob^{G}_{H}&:X^{G}\to X^{H}&
\mbox{and} &&
\widehat{\Frob}^{G}_{H}&:X_{H}\to X_{G}\\
h\Frob^{G}_{H}&:X^{hG}\to X^{hH}&
\mbox{and} &&
h\widehat{\Frob}^{G}_{H}&:X_{hH}\to X_{hG}
\end{align*}


\noindent are defined by the fact that any point fixed by $G$ is fixed
by its subgroup $H$ and any $H$-orbit is part of a $G$-orbit.
\end{enumerate}

In both cases the indices may be omitted when they are clear from the
context.  In this paper the groups will always be finite cyclic $p$-groups,
and we will sometimes write
\begin{displaymath}
\Res =\Res _{p}:= \Res ^{\rC_{p^{n+1}}}_{\rC_{p}}
\qquad \aand 
\Frob=\Frob_{p}:=\Frob^{\rC _{p^{n+1}}}_{\rC _{p^{n}}}.
\end{displaymath}
\end{defin}

This notation was introduced by Madsen in \cite{Madsen} and has been
used in most other papers on this subject since then. Since the words
`residual' and `restriction' both begin with the same letter, the
choice is not obvious, and occasionally one sees $\Res^{}_{}$ used for
the restriction map instead.  The maps were originally denoted by
$\Phi $ and $D$ respectively in \cite{BHM93}, \cite{Bokstedt-Madsen94}
and \cite{Madsen94}.  The letters in \cref{def-residual} stand for
{\bf restriction} (in a different sense) and {\bf Frobenius}, as
explained by Hesselholt and Madsen in \cite[page 3]{HM1}.  They are
related to similarly named maps in the theory of Witt vectors, which
Serre describes in \cite[\S II.6]{Serre-Local}.

The following is elementary.

\begin{prop}\label{prop-RF}
{\bf $\Frob$ and $\Res$ commute.}  Let $K\lhd H\lhd G$ be groups
acting on a space or spectrum $X$.  Then the following diagram
commutes.
\begin{displaymath}
\xymatrix
@R=10mm
@C=20mm
{
{X^{G}}\ar[r]^(.45){\Res^{G}_{K}}_(.45){\cong }
       \ar[d]_(.5){\Frob^{G}_{H}}
  &{(X^{K})^{G/K}}\ar[d]^(.5){\Frob^{G/K}_{H/K}}\\
{X^{H}}\ar[r]^(.45){\Res^{H}_{K}}_(.45){\cong }
  &{(X^{K})^{H/K}.}
}
\end{displaymath}
\end{prop}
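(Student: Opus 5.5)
The plan is to check commutativity directly at the level of points (for spaces) and then levelwise (for spectra), since all four maps are defined by inclusions of fixed point sets together with identifications of group actions. First I will make the two horizontal maps explicit. By \cref{def-residual}\cref{def-residualii}, $\Res^{G}_{K}$ is the identity on underlying sets: a point $x\in X^{G}$ is fixed by $K$, so $x\in X^{K}$. Moreover it is fixed by every coset $gK$, because the residual action of $G/K$ on $X^{K}$ is $gK\cdot x:=gx$, which is well defined on $X^{K}$ since $K$ is normal in $G$. Hence $\Res^{G}_{K}$ is the tautological bijection $X^{G}=(X^{K})^{G/K}$ as subsets of $X$. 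In the same way, $\Res^{H}_{K}$ is the bijection $X^{H}=(X^{K})^{H/K}$. Here $K\lhd H$, and $H/K\subseteq G/K$ is a subgroup. I will also note that $K$ must be normal in $G$ for the top map to make sense, and I will read the hypothesis that way.

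Next, the vertical maps are inclusions. $\Frob^{G}_{H}$ is the inclusion $X^{G}\subseteq X^{H}$. $\Frob^{G/K}_{H/K}$ is the inclusion $(X^{K})^{G/K}\subseteq (X^{K})^{H/K}$, since a point fixed by all of $G/K$ is fixed by the subgroup $H/K$. All four maps are therefore restrictions of the identity of $X$ to subsets. Both composites $X^{G}\to (X^{K})^{H/K}$ then send $x\mapsto x$, so the square commutes strictly.

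For spectra (orthogonal $G$-spectra, using categorical fixed points as in \cref{def-fixed}\cref{def-fixedi}), I will apply the space-level argument in each level $V$, i.e.\ to $X_{V}^{G}$, $X_{V}^{H}$ and $X_{V}^{K}$. One must check that the indexing is compatible: a representation pulled back from $G/K$ and then restricted to $H/K$ is the same as one pulled back from $H/K$. This holds because the functors $q$ of \cref{def-phiGN}\cref{def-phiGNiii} are given by precomposition with quotient maps, and these commute with restriction to subgroups.

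The only point needing care, and the likely obstacle, is this bookkeeping for spectra, together with the variants $h\Res$ and $h\Frob$ for homotopy fixed points. For the homotopy version I would apply the argument to the $G$-space $\Map(EG,X)$, using $EG$ as a model for $EH$ and, after taking $K$-fixed points appropriately, for $E(G/K)$. Then commutativity holds up to the canonical equivalences; a careful treatment would choose $EG\times E(G/K)$ as a model so that the maps again become inclusions.
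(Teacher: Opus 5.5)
Your proof is correct and is exactly the elementary argument the paper has in mind; the paper states the proposition without proof, calling it elementary. All four maps are restrictions of the identity of $X$ (levelwise for spectra), so the square commutes strictly, and your remark that the hypothesis must be read with $K\lhd G$ (normality is not transitive, and $\Res^{G}_{K}$ needs it) is well taken, though it is automatic in the paper's cyclic-group applications.
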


For a prime $p$, integer $n>0$ and groups $\rC_{p}\lhd
\rC_{p^{n}}\lhd \rC_{p^{n+1}}$, \cref{prop-RF} gives
\begin{numequation}\label{eq-RF}
\begin{split}
\xymatrix 
@R=6mm
@C=16mm
{
{X^{{\rC}_{p^{n+1}}}}\ar[r]^(.4){\Res}_(.4){\cong }
       \ar[d]_(.5){\Frob}
  &{(X^{{\rC}_{p}})^{{\rC}_{p^{n+1}} / {\rC}_{p}}}
    \ar[d]^(.5){\Frob}\\
{X^{{\rC}_{p^{n}}}}\ar[r]^(.4){\Res}_(.4){\cong }
  &{(X^{{\rC}_{p}})^{{\rC}_{p^{n  }}/{\rC}_{p}}.}
}
\end{split}
\end{numequation}%

\begin{remark}\label{rem-T-action}
When the group is $\mT$, we have fixed point sets for all of its finite
subgroups, and there is a global analog of \cref{eq-RF} that we do
not need here.  These maps are also discussed by Blumberg and Mandell in
\cite[Definition 6.1]{BM15} where they are used to define functors
$\TR$, $\TF$ and $\TopC$ on fibrant cyclotomic spectra as homotopy
limits obtained by iterating $\rR$, $\rF$ or both. They give
$p$-typical versions, which they denote by $\TR (-;p)$ and so on, in
\cite[Definition 6.3]{BM15}.  Then they show that the right derived
functors of $\TR (-;p)$ (\cite[Theorem 6.5]{BM15}) and $\TopC (-;p)$
(\cite[Theorem 6.8]{BM15}) are corepresentable by $p$-cyclotomic
spectra described in \cite[Constructions 6.4 and 6.6]{BM15}.  They do
this for the global $\TR$ in \cite[Theorem 6.12]{BM15}, which
\cite{McCandless-curves} cites as justification for his claim about
$\THH$ of the spectral affine line.  Their proof makes use of the
definition of $\TR$ as a homotopy limit (or ``mapping microscope'') of
fixed point sets.

In \cite[Construction 5.11]{BM15} they define mapping spaces
$F^{h}_{Cyc} (X,Y)$ in the $p$-cyclotomic category that anticipates
\cref{def-cyc-spectra}.  They show it is the categorical mapping space
when $X$ is cofibrant and $Y$ is fibrant.  Presumably when we pass the
the {\qcatal} world, we need not worry about derived functors or
fibrancy and cofibrancy.  See \cref{rem-TC-fixed}.
\end{remark}

\subsection{Fixed point spaces and orbit spaces}\label{sec-fixed-orbit-spaces}

For a compact Lie group $G$ (such as $\mT$), let $\mcT_{G}$
denote the category of pointed $G$-spaces (assumed to be compactly
generated and weak Hausdorff for technical reasons) where the
basepoint is fixed by $G$, and continuous (but not necessarily
{\eqvr}) pointed maps. This category is enriched over itself: for
pointed $G$-spaces $X$ and $Y$, the morphism set $\mcT_{G}
(X,Y)$ equipped with the compact-open topology is itself a pointed
$G$-space, where for an element $\gamma \in G$ and a map
$f:X\to Y$, $\gamma (f)$ is defined to be $\gamma f\gamma^{-1} $.  The
map $f$ is {\eqvr} iff $\gamma (f)=f$ for all $\gamma \in G$, so the
fixed point set $\mcT_{G} (X,Y)^{G}$ (which does not have a
$G$-action) is the pointed space of {\eqvr} maps from $X$ to $Y$.
Thus we can define the category $\mcT^{G}$ of pointed
$G$-spaces and {\eqvr} maps by $\mcT^{G} (X,Y):=\mcT_{G}
(X,Y)^{G}$. It is enriched over the category of pointed spaces
$\mcT$ rather than over $\mcT_{G}$ or itself.

A pointed $G$-space can also be regarded as a $\mcT$-valued functor on
$\mathcal{B}G$, the one object topological category with an
automorphism for each element of $G$, with composition given by the
binary operation of $G$.  (A category is topological if each of its
morphism sets has a topology with suitable continuity conditions.)
One then sees that the fixed point space $X^{G}$ and orbit space
$X_{G}$ are the limit and colimit of this functor.

\begin{prop}\label{prop-fpms}
{\bf The fixed point set as a mapping space.}  For a pointed $G$-space
$X$, the fixed point set $X^{G}$ is also $\mcT^{G} (S^{0},X)$, where
the group action on $S^{0}$ is trivial. More generally for each
subgroup $H\subseteq G$, $X^{H}=\mcT^{G} (G/H_{+},X)$.
\end{prop}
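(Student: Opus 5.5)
The plan is to prove \cref{prop-fpms} directly from the definitions of $\mcT^{G}$ as fixed points of the enriched hom $\mcT_{G}(X,Y)$ under conjugation. I will do this by exhibiting the evaluation map and checking that it is a homeomorphism onto the stated fixed point set. For the first assertion, consider the evaluation at the non-base point,
\begin{displaymath}
\ev:\mcT_{G}(S^{0},X)\to X,\qquad f\mapsto f(1),
\end{displaymath}
where $1\in S^{0}$ is the non-base point. Since a pointed map $S^{0}\to X$ is determined by the image of $1$, this is a bijection. It is a homeomorphism for the compact-open topology, because $S^{0}$ is discrete and finite, so $\mcT_{G}(S^{0},X)$ is just $X$ with its given topology. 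The conjugation action $\gamma(f)=\gamma f\gamma^{-1}$ becomes, under $\ev$, $x\mapsto \gamma x$, since $\gamma^{-1}$ fixes both points of $S^{0}$. Hence $\ev$ is a $G$-equivariant homeomorphism. Passing to $G$-fixed points and using $\mcT^{G}(S^{0},X)=\mcT_{G}(S^{0},X)^{G}$ gives $X^{G}$.

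For the general statement, use instead evaluation at the coset $eH\in G/H_{+}$. Here $\mcT^{G}(G/H_{+},X)$ is the space of $G$-equivariant pointed maps $f$, and any such $f$ is determined by $x=f(eH)$ via $f(gH)=gx$. This formula is well defined exactly when $hx=x$ for all $h\in H$, so equivariance forces $x\in X^{H}$. Conversely, every $x\in X^{H}$ defines an equivariant map by this formula. This gives a bijection $\mcT^{G}(G/H_{+},X)\to X^{H}$, and it is continuous in both directions. The map $f\mapsto f(eH)$ is evaluation at a point, which is continuous. The inverse $x\mapsto(gH\mapsto gx)$ is adjoint to the action map $G/H\times X^{H}\to X$, which is continuous because $G\times X^{H}\to X$ is and $G\to G/H$ is a quotient map; here I use that products with compact $G$ behave well in compactly generated spaces.

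The only step needing care is this last topological point, namely continuity of the inverse for compact Lie $G$. I would handle it with the exponential law in compactly generated weak Hausdorff spaces, together with $G\times X^{H}\to G/H\times X^{H}$ being a quotient map because $G/H$ is locally compact. Everything else is formal, and the first assertion is the special case $H=G$, since $G/G_{+}=S^{0}$ with trivial action.
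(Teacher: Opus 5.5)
Your argument is correct and is the reasoning the paper leaves implicit. The paper states the proposition without proof, right after defining $\mcT^{G}(X,Y):=\mcT_{G}(X,Y)^{G}$ as the space of equivariant maps, and evaluation at the non-base point of $S^{0}$, or at the coset $eH$, is the intended identification. One small fix to your topological aside: $G\times X^{H}\to G/H\times X^{H}$ is a quotient map because $G\to G/H$ is open, so its product with an identity map is an open surjection; local compactness of $G/H$ is not the reason, and what compactness of $G$ and $G/H$ actually buys you is that these products are already compactly generated and that the exponential law you use holds.
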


The analog of this in the {\qcat} $\qSp^{BG}$ of
\cref{def-qcats}\cref{def-qcatsvi} is
\begin{displaymath}
\map_{\qSp^{BG}} (\Sigma^{\infty }G/H_{+}, Y)\cong Y^{hH}.
\end{displaymath}

\noindent In particular for a naive $\mT$-spectrum $Y$,
\begin{displaymath}
\map_{\qSp^{B\mT}} (\mS[\mT/\rC _{p^{k}} ], Y)\cong Y^{h\rC _{p^{k}}},
\end{displaymath}

\noindent where $\mS[\mT/\rC _{p^{k}} ]$ is as in \cref{def-rep-T}.

There is a way to construct a contractible $G$-space with free
$G$-action, commonly denoted by $EG$; it is unique up to {\eqvr}
homotopy equivalence.  Its orbit space is the classifying space $BG$.

\begin{ex}\label{ex-S1}
 {\bf The case $G=\mT $.} We regard $\mT$ as the multiplicative
group of complex numbers on the unit circle. For each $k>0$, it acts
freely by scalar multiplication of $S^{2k-1}$, the space of unit
vectors in $\cxs^{k}$.  The orbit space $S^{2k-1}_{\mT}$ is the complex
projective space $\cxs P^{k-1}$.  

We have {\eqvr} maps
\begin{numequation}\label{eq-odd-spheres}
\begin{split}
\xymatrix
@R=4mm
@C=10mm
{
{\cdots }\ar[r]^(.5){}
  &{S^{2k-1}}\ar[r]^(.5){}
    &{S^{2k+1}}\ar[r]^(.5){}
      &{S^{2k+3}}\ar[r]^(.5){}
        &{\cdots}
}
\end{split}
\end{numequation}%

\noindent with contractible colimit.  This is our contractible free
$\mT$-space $E\mT$, It also serves as a contractible $H$-space for any
subgroup $H\subseteq \mT$.  For $H=\rC _{r}$, the orbit space
$S^{2k-1}_{\rC _{r}}$ is a lens space, and $E\mT_{\rC _{r}}$ is the {\SESM}
space $B\rC _{r}=K (\rC _{r}, 1)$.

The diagram of $\mT$-orbit spaces for  \cref{eq-odd-spheres} is
\begin{displaymath}
\xymatrix
@R=4mm
@C=10mm
{
{\cdots }\ar[r]^(.5){}
  &{\CP^{k-1}}\ar[r]^(.5){}
    &{\CP^{k}}\ar[r]^(.5){}
      &{\CP^{k+1}}\ar[r]^(.5){}
        &{\cdots,}
}
\end{displaymath}

\noindent where $\CP^{k}$ is $k$-dimensional complex projective space,
the space of complex lines through the origin in $\cxs^{k+1}$.  The
colimit is $\CP^{\infty }=K (\Z, 2)$.

If we replace $\mT$ by its subgroup $\rC _{2}=\left\{\pm 1 \right\}$, the
orbit space diagram becomes
\begin{displaymath}
\xymatrix
@R=4mm
@C=10mm
{
{\cdots }\ar[r]^(.5){}
  &{\RP^{2k-1}}\ar[r]^(.5){}
    &{\RP^{2k+1}}\ar[r]^(.5){}
      &{\RP^{2k+3}}\ar[r]^(.5){}
        &{\cdots,}
}
\end{displaymath}

\noindent for which the colimit is 
\begin{numequation}\label{eq-RP}
\begin{split}
\RP^{\infty }=\colim{m}\RP^{m}=B\rC _{2},
\end{split}
\end{numequation}%

\noindent where $\RP^{m}$ is $m$-dimensional real projective space,
the space of real lines through the origin in $\reals^{m+1}$. Note
that $\cxs^{k+1}$ as a real vector space is $\reals^{2k+2}$.
\end{ex}

Then we define the {\em homotopy fixed point space} as
\begin{numequation}\label{eq-hfps}
\begin{split}
X^{hG} := \mcT^{G} (EG_{+},X),
\end{split}
\end{numequation}%

\noindent where $EG_{+}$ denotes $EG$ with a disjoint base point. This
may or may not be homotopy equivalent to $X^{G}$.  Its homotopy type
is known to be independent of the choice of $EG$.  The map
$j:EG_{+}\to S^{0}$ of \cref{eq-useful} induces a map to $X^{hG}$ from
$\mcT^{G} (S^{0},X)=X^{G}$, so one might think of $X^{hG}$ as a
fattened up version of $X^{G}$.

Consider the $G$-{\eqvr} maps
\begin{numequation}\label{eq-GES}
\begin{split}
\xymatrix
@R=4mm
@C=10mm
{
{G_{+}}\ar[r]^(.5){i}
  &{EG_{+}}\ar[r]^(.5){j}
    &{S^{0},}
}
\end{split}
\end{numequation}%

\noindent where $i$sends $G$ to some orbit of $EG$ and $j$ send $EG$
to the nonbase point in $S^{0}$ as in \cref{eq-useful}. Applying the
functor $\mcT^{G} (-,X)$ gives maps
\begin{numequation}\label{eq-fp-spaces}
\begin{split}
\xymatrix
@R=4mm
@C=10mm
{
{X}
  &{X^{hG}}\ar[l]_(.45){i^{*}}
    &{X^{G}.}\ar[l]_(.4){j^{*}}
}
\end{split}
\end{numequation}%

When the action of $G$ on $X$ is trivial, an {\eqvr} map $EG\to X$
factors through the orbit space $EG_{G}=BG$, so
\begin{numequation}\label{eq-triv-action}
\begin{split}
X^{hG}\simeq \mcT (BG_{+}, X).
\end{split}
\end{numequation}%
 
One also has the {\em homotopy orbit space}
\begin{numequation}\label{eq-hos}
\begin{split}
X_{hG}: = EG_{+}\wedge_{G}X = EG\times_{G}X,
\end{split}
\end{numequation}%

\noindent which is defined as follows.  The smash product
$EG_{+}\wedge X$ is the quotient of $EG\times X$ by $EG\times \left\{
x_{0} \right\}$ for the base point $x_{0}\in X$.  The diagonal action
of $G$ on $EG\times X$ induces one on this quotient, and $X_{hG}$ is
its orbit space.  The projection map $EG\times X\to EG$ leads to a map
\begin{displaymath}
X_{hG}\to EG \times _{G} (*) = (EG)_{G}= BG
\end{displaymath}

\noindent with fiber $X_{G}$.  Here the map
$j:EG_{+}\to S^{0}$ induces one from $X_{hG}$ to
\begin{displaymath}
S^{0}\wedge_{G}X=X_{G},
\end{displaymath}

\noindent so $X_{hG}$ is a space over $X_{G}$.

Applying the functor $-\wedge_{G}X$ to \cref{eq-GES} gives maps
\begin{numequation}\label{eq-orbit-spaces}
\begin{split}
\xymatrix
@R=4mm
@C=10mm
{
{X}\ar[r]^(.45){i_{*}}
  &{X_{hG}}\ar[r]^(.5){j_{*}}
    &{X_{G}.}
}
\end{split}
\end{numequation}%

\subsection{{\Eqvr} homotopy groups}\label{sec-eqvr-homotopy-groups}

\begin{defin}\label{def-piHX}
{\bf Twisted loop spaces, twisted suspensions and {\eqvr} homotopy
groups.}  For a finite dimensional orthogonal {\rep} $V$ of a compact
Lie group $G$, we denote by $S^{V}$ the one point compactification of
$V$ (its {\bf {\rep} sphere}), and by $S (V)$ its space of unit
vectors.  For a pointed $G$-space $X$, let
\begin{displaymath}
\Omega^{V}X := \mcT_{G} (S^{V},X)
\qquad \aand 
\Sigma^{V}X := S^{V}\wedge X,
\end{displaymath}

\noindent the {\bf twisted loop space} and {\bf twisted suspension} of
$X$.  When $V^{G}=0$, we denote the inclusion of fixed points
$S^{0}\to S^{V}$ by $a_{V}$.

For each closed subgroup $H\subseteq G$, let
\begin{displaymath}
\pi^{H}_{V}X:=\pi_{0}\mcT^{H} (S^{V},X),
\end{displaymath}

\noindent the set of homotopy classes of $H$-{\eqvr} maps $S^{V}\to
X$, where we are regarding $S^{V}$ and $X$ as pointed $H$-spaces by
restricting the $G$-action to $H$.  We omit the superscript $H$ when
it is the trivial subgroup. If the action of $H$ on $V$ is trivial, we
write $\pi^{H}_{V}X$ as $\pi^{H}_{|V|}X$.
\end{defin}

The set $\pi^{H}_{V}X$ has a natural
group structure when the fixed point vector space $V^{H}$ is
nontrivial.  This group is abelian when $|V^{H}|>1$.  Thus we have
homotopy groups indexed by such {\rep}s for each subgroup.  The set
$\pi^{H}_{V}X$ depends only on the action of $H$ (rather than $G$) on
$V$ and $X$.

A theorem of Bredon \cite{Bredon} says that a map $f:X\to Y$ of path
connected pointed $G$-CW complexes (see \cite[Definition
8.4.4]{HHR:ESHT}) is an {\eqvr} homotopy {\equi} iff it induces an
isomorphism is $\pi_{*}^{H}$ (meaning the integer graded groups) for
each $H\subseteq G$.

It is known that for any module $M$ over the finite group $\pi_{0}G$,
there is an {\SESM} $G$-spectrum $\HM$ with
\begin{numequation}\label{eq-HM}
\begin{split}
\pi^{H}_{i}\HM=\mycases{    
M       
       &\mbox{for }i=0\\
0      &\mbox{for nonzero integers $i$}
}
\end{split}
\end{numequation}%

\noindent for each closed subgroup $H\subseteq G$.  A similar
statement is true if we replace $M$ by a Mackey functor $\uM$, but we
do not need this here.  Note that we are requiring only that $\HM$ has
just one nontrivial {\em integer graded} homotopy group.  In general
there will be other {\rep}s $V$ for which $\pi^{H}_{V}HM$ is
nontrivial.  For the case of finite $G$, see \cite[\S9.1H]{HHR:ESHT}.







From the sequence
\begin{displaymath}
\rC _{m}\subseteq \rC _{mn}\subseteq \mT
\end{displaymath}

\noindent we get 
\begin{displaymath}
\rC _{mn}/\rC _{m}\to \mT/\rC _{m}\to \mT/\rC _{mn}
\end{displaymath}

\noindent corepresenting
\begin{displaymath}
(i^{\mT}_{\rC _{mn}}X)^{\rC _{m}} \leftarrow X^{\rC _{m}}
 \leftarrow X^{\rC _{mn}}
\end{displaymath}

\section{The {\qcat} of genuine $G$-spectra}\label[appendix]{sec-qcat-Gspec}

In \cite[Definition II.2.5]{NS18} Nikolaus and Scholze say
${\color{cyan}G\Sp}$ is the {\qcat} obtained from the simplicial set
$N (\Sp^{G})$ (for $\Sp^{G}$ as in \cref{def-orth-spec}) by inverting
stable equivalences. This process is treated in Lurie's Kerodon
[\url{https://kerodon.net}] as follows.

\begin{defin}\label{def-FCD}
{\rm [\url{https://kerodon.net/tag/01M5}]} Let $F:\mcC \to \mcD $ be a
functor between categories and let $\mcW $ be a collection of
morphisms in $\mcC $.  We say that {\bf $F$ exhibits $\mcD $ as a
strict localization of $\mcC $ with respect to $\mcW $} if, for every
category $\mcE$, precomposition with F induces a bijection
\begin{displaymath}
\xymatrix
@R=10mm
@C=10mm
{
{\left\{\mbox {Functors } \mcD \to \mcE \right\}}\ar[d]^(.5){}\\
{\left\{\mbox {Functors $\mcC \to \mcE$ carrying each 
  $w\in \mcW$  to an isomorphism in $\mcE $}   \right\}.}
}
\end{displaymath}
\end{defin}

It turns out that such a category $\mcD $ is uniquely determined by
the category $\mcC $ and the morphism collection $\mcW $, and we will
denote it by $\mcC [\mcW^{-1}]$. Explicitly, the category $\mcC
[\mcW^{-1}]$ can be constructed from $\mcC $ by adjoining a new
morphism $w^{-1}:Y\to X$ for each morphism $w:X \to Y$ of $\mcW $, and
imposing the relations $w^{-1}\cdot w= 1_{X}$ and $w\cdot w^{-1}=
1_{Y}$.  

\bigskip Now we generalize the situation above, replacing the
categories $\mcC $ and $\mcD$ by simplicial sets $\widetilde{\mcC} $
and $\widetilde{\mcD} $ (which Lurie calls $\mcC $ and $\mcD $, like
the categories above) not required to be nerves, meaning simplicial
sets to which each map of an inner horn $\partial \bdelt ^{n}_{i}$
with $0<i<n$ (see \cref{def-simp-sets}) has a unique extension to the
full simplex $\bdelt^{n}$.  The domain simplicial set
$\widetilde{\mcC} $ comes equipped with a collection of edges
$\widetilde{\mcW} $.  The target category $\mcE $ above gets replaced
by an {\qcat} $\qmcE$.

\cref{prop-exist-loc} says that for each $\widetilde{\mcC} $ and
$\widetilde{\mcW} $ there is an {\qcat} $\qmcD$ that does the
job that $\mcD $ does in \cref{def-FCD}.

\begin{defin}\label{def-loc-qcat}
{\rm [\url{https://kerodon.net/tag/01MP}]} Let $F :
\widetilde{\mcC}\to \widetilde{\mcD} $ be a morphism of simplicial
sets and let $\widetilde{\mcW} $ be a collection of edges in
$\widetilde{\mcC}$. We say that {\bf $F$ exhibits $\widetilde{\mcD} $
as a localization of $\widetilde{\mcC} $ with respect to
$\widetilde{\mcW} $} if, for every {\qcat} $\qmcE$, the precomposition
map $\qFun(\widetilde{\mcD}, {\qmcE}) \circ F \to \qFun(\widetilde{\mcC}
, \qmcE)$ is fully faithful, and its essential image is the full
subcategory of functors that send edges in $\mcW $ to isomorphisms in
$\qmcE$.
\end{defin}

Lurie denotes this essential image by
\begin{displaymath}
\qFun(\widetilde{\mcC} [\widetilde{\mcW} ^{-1} ], {\qmcE})
\end{displaymath}

\noindent without defining $\widetilde{\mcC}[\widetilde{\mcW}^{-1}]$.
The next result says there is an {\qcat} having the properties one
might expect of this undefined simplicial set.

\begin{prop}\label{prop-exist-loc}
{\bf Existence of localizations.}  
{\rm [\url{https://kerodon.net/tag/01N0}]} 
Let $\widetilde{\mcC}$ be a simplicial set and let
$\widetilde{\mcW} $ be a collection of edges of $\widetilde{\mcC}
$. Then there exists an {\qcat} $\qmcD$ and a morphism of simplicial
sets $F:\widetilde{\mcC} \to \qmcD$ which exhibits $\qmcD$ as a
localization of $\widetilde{\mcC}$ with respect to $\widetilde{\mcW}$.
\end{prop}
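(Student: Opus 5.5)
The plan is to construct $\qmcD$ by a fibrant replacement in the Joyal model structure, following Kerodon. First I would form the simplicial set $\widetilde{\mcC}[\widetilde{\mcW}^{-1}]'$ by gluing a copy of the nerve of the free-living isomorphism (the contractible groupoid $J$ on two objects) along each edge in $\widetilde{\mcW}$. Concretely, this is the pushout of
\begin{displaymath}
\coprod_{w\in \widetilde{\mcW}} \bdelt^{1} \to \widetilde{\mcC}
\qquad \aand
\coprod_{w\in \widetilde{\mcW}} \bdelt^{1} \to \coprod_{w\in \widetilde{\mcW}} N(J).
\end{displaymath}
Next I would choose an inner anodyne map (equivalently, a Joyal-fibrant replacement) $\widetilde{\mcC}[\widetilde{\mcW}^{-1}]'\to \qmcD$ with $\qmcD$ an {\qcat}, obtained by the small object argument on inner horn inclusions $\partial\bdelt^{n}_{i}\subseteq \bdelt^{n}$ with $0<i<n$. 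The map $F$ is then the composite $\widetilde{\mcC}\to \widetilde{\mcC}[\widetilde{\mcW}^{-1}]'\to\qmcD$.

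To verify the universal property of \cref{def-loc-qcat}, I would fix an {\qcat} $\qmcE$ and apply $\qFun(-,\qmcE)$. The inner anodyne map induces a trivial fibration $\qFun(\qmcD,\qmcE)\to \qFun(\widetilde{\mcC}[\widetilde{\mcW}^{-1}]',\qmcE)$, since $\qmcE$ has extensions along inner anodyne maps and these are stable under products with monomorphisms (Joyal's pushout-product lemma). The pushout becomes a pullback:
\begin{displaymath}
\qFun(\widetilde{\mcC}[\widetilde{\mcW}^{-1}]',\qmcE)\cong \qFun(\widetilde{\mcC},\qmcE)\times_{\prod_{w}\qFun(\bdelt^{1},\qmcE)}\prod_{w}\qFun(N(J),\qmcE).
\end{displaymath}

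The main obstacle is the key input that restriction $\qFun(N(J),\qmcE)\to\qFun(\bdelt^{1},\qmcE)$ is fully faithful with essential image the isomorphisms. In fact it is a trivial fibration onto the full subcategory of invertible edges. To prove this I would use Joyal's theorem that an edge of an {\qcat} is an isomorphism iff it extends along $\bdelt^{1}\subseteq N(J)$, together with the fact that the extension is unique up to a contractible space of choices. This in turn rests on the special outer horn lifting $\partial\bdelt^{n}_{0}\to\qmcE$ for horns whose first edge is invertible. Granting this, base change gives that $\qFun(\widetilde{\mcC}[\widetilde{\mcW}^{-1}]',\qmcE)\to\qFun(\widetilde{\mcC},\qmcE)$ is a trivial fibration onto the full subcategory of functors inverting $\widetilde{\mcW}$. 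Composing with the first trivial fibration then yields full faithfulness and the stated essential image.
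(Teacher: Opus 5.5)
Your proof is correct. The paper gives no argument of its own beyond the Kerodon citation, and what you describe is the standard argument, essentially the one at that reference: glue $N(J)$ onto each edge of $\widetilde{\mcW}$, take an inner anodyne replacement by an $\infty$-category, and reduce through the pullback square to the fact that $\qFun(N(J),\qmcE)\to\qFun(\bdelt^{1},\qmcE)$ is a trivial fibration onto the invertible edges.

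The one step to tighten is your sketch of that key input. Contractible fibres over the invertible edges do not by themselves give a trivial fibration: the inclusion $\partial\bdelt^{1}\subseteq\bdelt^{1}$ is an isofibration, surjective on vertices, with contractible fibres, yet it is not a trivial fibration. You can fix this in either of two ways. One is to apply the special outer horn lifting in families, i.e.\ against $(\partial\bdelt^{n}\times N(J))\cup(\bdelt^{n}\times\bdelt^{1})\subseteq\bdelt^{n}\times N(J)$. The other is to note that the restriction map is an isofibration which is also an equivalence, because evaluation at $0$ identifies both sides with $\qmcE$.
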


\begin{remark}
{\bf Not in HTT.}  
As far as we can tell,  no similar result is proved in
\cite{Lurie:HTT}.  In \S5.2.7 Lurie says the following.

\begin{quote}
Suppose we are given an {\qcat} $\qmcC$ and a collection $S$ of
morphisms of $\qmcC$ which we would like to invert. In other words, we
wish to find an {\qcat} $S^{-1}\qmcC$ equipped with a functor $\eta
:\qmcC \to S^{-1}\qmcC$ which carries each morphism in $S$ to an
equivalence and is in some sense universal with respect to these
properties. One can give a general construction of $S^{-1}\qmcC$ using
the formalism of \S3.1.1. \dots 
However, this construction is
generally very difficult to analyze, and the properties of
$S^{-1}\qmcC$ are very difficult to control. For example, it might be
the case that $\qmcC$ is locally small and $S^{-1}\qmcC$ is not.

Under suitable hypotheses on $S$ (see \S5.5.4), there is a drastically
simpler approach: we can find the desired {\qcat} $S^{-1}\qmcC$ {\em
inside} $\qmcC$ as the full subcategory of $S$-local objects of
$\qmcC$.

\end{quote}

Finding the localized category inside the original one was the approach
taken by Bousfield in his theorem about the localization of a model
category.  See \cite[\S10]{Rav:Whatis} and \cite[Chapter 6]{HHR:ESHT}
for more discussion.

\end{remark}

Recall that a {\em relative category} $(\mcC,\mcW)$ consists of a
category $\mcC $ and a wide subcategory $\mcW $, which can be
identified with its morphism collection, which we also denote by
$\mcW$. This morphism collection, since it is that of a subcategory,
contains all identities and is closed under composition.

\begin{cor}\label{cor-qcat-relcat}
{\bf The {\qcat} for a relative category.}  Let $(\mcC,\mcW ) $ be a
relative category, let $\widetilde{\mcC} =N (\mcC )$, and let
$\widetilde{\mcW }$ be the collection of edges in $\widetilde{\mcC }$
corresponding to the collection $\mcW $ of morphisms in $\mcC $.  Then
there exists an {\qcat} $\qmcC[\mcW^{-1}]$ and a morphism of
simplicial sets $F:N (\mcC ) \to \qmcC[\mcW^{-1}]$ which exhibits
$\qmcC[\mcW^{-1}]$ as a localization of $N (\mcC )$ with respect to
$\widetilde{\mcW} $ as in \cref{def-loc-qcat}.
\end{cor}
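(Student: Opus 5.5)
The plan is to derive \cref{cor-qcat-relcat} directly from \cref{prop-exist-loc} by specializing its data. I would take the simplicial set in that proposition to be $\widetilde{\mcC}=N(\mcC)$, the nerve of \cref{def-nerve}, and the edge collection to be $\widetilde{\mcW}$, the set of $1$-simplices of $N(\mcC)$ coming from morphisms in $\mcW$. Since $N_{1}(\mcC)=\mcCat([1],\mcC)$ is exactly the set of morphisms of $\mcC$, this is a genuine collection of edges, and it requires no closure properties. \cref{prop-exist-loc} then produces an {\qcat} $\qmcD$ together with a morphism of simplicial sets $F:N(\mcC)\to\qmcD$. This $F$ exhibits $\qmcD$ as a localization in the sense of \cref{def-loc-qcat}, and I would set $\qmcC[\mcW^{-1}]:=\qmcD$.

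The remaining steps are bookkeeping. First, $N(\mcC)$ is itself an {\qcat}, since every inner horn in a nerve has a (unique) filler. It is nonetheless treated here only as a simplicial set, and \cref{prop-exist-loc} needs nothing more. Second, the condition in \cref{def-loc-qcat} concerns functors that send edges in $\widetilde{\mcW}$ to equivalences. For the nerve of an ordinary category these are exactly the functors sending the morphisms of $\mcW$ to isomorphisms in the homotopy category of $\qmcE$. So the universal property is the expected one, and for $\qmcE$ the nerve of an ordinary category it reduces to the strict localization of \cref{def-FCD}.

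I expect no real obstacle, because the corollary is just \cref{prop-exist-loc} specialized. The one point needing care is notational: \cref{def-loc-qcat} speaks of $\mcW$ where it means $\widetilde{\mcW}$, and this should be read consistently. I would also note that the relative-category axioms on $\mcW$ (it contains all identities and is closed under composition) are not used. They only ensure that $\qmcC[\mcW^{-1}]$ is unchanged if $\widetilde{\mcW}$ is enlarged to its closure under composition. That invariance follows because any functor inverting $\widetilde{\mcW}$ also inverts composites of its members, since equivalences in $\qmcE$ compose.
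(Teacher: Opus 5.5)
Your proposal is correct and matches how the paper obtains the corollary: the paper gives no separate proof, stating it as the special case $\widetilde{\mcC}=N(\mcC)$ of \cref{prop-exist-loc}, and your reading of $\mcW$ as $\widetilde{\mcW}$ in \cref{def-loc-qcat} is the intended one. One caveat on an aside the proof does not need: for $\qmcE$ the nerve of an ordinary category, the universal property recovers \cref{def-FCD} only up to equivalence of categories (the homotopy category of $\qmcC[\mcW^{-1}]$ is equivalent, not necessarily isomorphic, to $\mcC[\mcW^{-1}]$), because \cref{def-loc-qcat} asks for an equivalence of functor categories rather than a bijection on functors.
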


\begin{defin}\label{def-qcat-Gspec}
The {\bf {\qcat} of orthogonal  $G$-spectra} $\qSp^{G}$ is that obtained as in
\cref{cor-qcat-relcat} where $\mcC $ the category of of orthogonal
$G$-spectra, $\Sp^{G}$ as in \cref{def-orth-spec}, and $\mcW $ is the
collection of stable {\eqiv}s as in \cref{def-pi-stable}.
\end{defin}

For trivial $G$ the {\qcat} $\qSp^{G}$, that of orthogonal spectra, is
not the same as Lurie's $\qSp$ (as in \cite[Definition
1.4.3.1]{Lurie:HA}), which might be called the {\qcat} of $\Omega
$-spectra.





\section{Some universal algebra}\label[appendix]{sec-univ-alg}
In \cite[Construction 2.2.3]{McCandless-curves} McCandless mentions
Lawvere's theory of monoids $T_{\Assoc}$, referring to
\cite{Lawvere63}. More details can be found in
\cite{Eilenberg-Wright}.  We need it in \cref{def-E1-monoid}.

\begin{defin}\label{def-theory}
\cite[\S4]{Eilenberg-Wright} An {\bf algebraic theory} is a category
$T$ whose objects are finite sets $\langle n\rangle=\left\{1,2,\dotsc
,n \right\}$ for $n\geq 0$ as in \cref{def-fin*}, which contains the
category of finite sets $\Fin$ as a wide subcategory.  We will denote
the singleton $\langle 1\rangle$ by $I$ and the empty set $\langle 0
\rangle$ by $\emptyset $.  Given morphisms $\phi_{i}:I\to \langle
p\rangle$ in $T$ for $1\leq i\leq n$, there is a unique morphism
\begin{displaymath}
\phi :\langle n\rangle \to\langle p\rangle
\end{displaymath}

\noindent such that for $1\leq i\leq p $, $\phi_{i}$ is the composition 
\begin{displaymath}
\xymatrix
@R=0mm
@C=10mm
{
{I}\ar[r]^(.4){i}
  &{\langle n\rangle }\ar[r]^(.5){\phi }
    &{\langle p\rangle }\\
{1}\ar@{|->}[r]^(.5){}
  &{i.}
}
\end{displaymath}

\noindent Here $i:I\to \langle n \rangle$ denotes the map sending the
single element of $I$ to $i\in \langle n \rangle$.  We will sometimes
denote $\phi $ by $(\phi_{1},\dotsc ,\phi_{n})$, and say that the
$\phi_{i}$ are its {\bf components}.
\end{defin}

In other words the object $\langle n\rangle$ the $n$-fold coproduct of
the object $I$.  This implies there is a unique morphism $\langle 0
\rangle\to \langle n \rangle$ in $T$, as in the subcategory $\Fin$.
However unlike $\Fin$, $T$ could have morphisms $\langle n\rangle\to
\langle 0 \rangle$ for $n>0$.

\begin{defin}\label{def-T-alg}
\cite[\S5 and \S6]{Eilenberg-Wright} A {\bf $T$-algebra $A$} is a
presheaf on $T$, that is a contravariant $\Set $-valued functor, that
converts coproducts to products.  This means $\langle n \rangle\mapsto
A_{n}:= A_{1}^{\times n}$, where $A_{1}:=A (I)$. 

We also require that for a morphism $\phi :\langle n \rangle\to
\langle p \rangle$ in $\Fin$, the induced map
\begin{displaymath}
A (\phi ):A_{1}^{\times p}\to A_{1}^{\times n} 
\qquad \mbox{is}\qquad 
(x_{1},\dotsc ,x_{p})\mapsto (x_{\phi (1)},\dotsc x_{\phi (n)}). 
\end{displaymath}

A morphism of $T$-algebras is a natural transformation of such
functors.  

A morphism $\phi :I\to \emptyset$ in $T$ defines an element
\begin{numequation}\label{eq-element}
\begin{split}
\phi_{A}:=A (\phi ) (A_{0})\in A_{1}.
\end{split}
\end{numequation}%

The category of $T$-algebras is denoted by $T^{\natural}$.  (It is denoted by $T^{\flat}$ in \cite{Eilenberg-Wright}.)

The {\bf free $T$-algebra on $k$ generators} (denoted by $A_{k}$
in \cite[\S6]{Eilenberg-Wright}) is the Yoneda functor
\begin{displaymath}
\yo^{T}_{k}
  :=T (-,\langle k\rangle).
\end{displaymath}
\end{defin}

The Yoneda lemma says that 
\begin{displaymath}
T^{\natural} (\yo^{T}_{k}, A)
 = A_{k}
 \cong  A_{1}^{\times k}
 = \Set (\langle k \rangle, A_{1}).
\end{displaymath}

\noindent Thus a $k$-tuple of elements in $A_{1}$ determines a
$T$-algebra morphism $\yo^{T}_{k}\to A$, and all such morphisms arise in
this way. This justifies the term ``free $T$-algebra on $k$
generators.''


Lawvere's definition of an algebraic theory in \cite{Lawvere63}, and
the one used in \cite[Construction 2.2.3]{McCandless-curves}, was the
opposite category of that of \cref{def-theory}, so for him a
$T$-algebra was a covariant product preserving $\Set$-valued functor.
The same goes for Borceaux's \cite[Definition 3.3.1]{Borceux2}.  We
will use the Eilenberg-Wright definition.

\bigskip

The following is originally due to Lawvere \cite{Lawvere-thesis} and
is proved as \cite[Proposition 3.2.9]{Borceux2}.

\begin{theorem}\label{thm-structure}
{\bf The categorical structure of an algebraic theory.} 
An algebraic theory $T$ as in \cref{def-theory} is {\eqt} to the
category of finitely generated free $T$-algebras as in
\cref{def-T-alg}.  The equivalence sends $\langle k \rangle$ to
$\yo^{T}_{k}$.
\end{theorem}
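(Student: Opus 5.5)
The theorem says that an algebraic theory $T$ is equivalent to the full subcategory of $T^{\natural}$ on the free algebras $\yo^{T}_{k}$. I will prove it by the Yoneda lemma. The candidate functor is $Y\colon T\to T^{\natural}$ with $\langle k\rangle\mapsto \yo^{T}_{k}=T(-,\langle k\rangle)$, acting on morphisms by postcomposition. Two things must be checked.

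\textbf{Step 1: $Y$ lands in $T^{\natural}$.} The presheaf $T(-,\langle k\rangle)$ must convert coproducts in $T$ into products. The definition of an algebraic theory says that $\langle n\rangle$ is the $n$-fold coproduct of $I$. Concretely, a morphism $\langle n\rangle\to\langle k\rangle$ is the same as an $n$-tuple of components $I\to\langle k\rangle$. This gives a natural bijection
\[
T(\langle n\rangle,\langle k\rangle)\cong T(I,\langle k\rangle)^{\times n},
\]
so $\yo^{T}_{k}$ takes $\langle n\rangle$ to $(\yo^{T}_{k})_{1}^{\times n}$.

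There is a second condition to check. For a morphism $\phi$ of $\Fin$, the induced map must be coordinate permutation and duplication $(x_{1},\dotsc,x_{p})\mapsto(x_{\phi(1)},\dotsc,x_{\phi(n)})$. To see this, precompose a tuple $(\psi_{1},\dotsc,\psi_{p})$ with $\phi$. Since $\Fin\subseteq T$ and the composite of $i\colon I\to\langle n\rangle$ with $\phi$ is the inclusion of $\phi(i)$, the $i$th component of $\psi\circ\phi$ is $\psi_{\phi(i)}$, as required.

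\textbf{Step 2: $Y$ is fully faithful.} This follows from the Yoneda lemma as already stated after the definition of $T^{\natural}$:
\[
T^{\natural}(\yo^{T}_{k},\yo^{T}_{m})\cong(\yo^{T}_{m})_{k}=T(\langle k\rangle,\langle m\rangle).
\]
Here $T^{\natural}$ is a full subcategory of presheaves, so the general Yoneda lemma applies directly. One must also check that this bijection is inverse to the map induced by $Y$, which is the standard identity-element argument.

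\textbf{Step 3: conclude.} Essential surjectivity onto the finitely generated free algebras holds by definition, since those algebras are exactly the $\yo^{T}_{k}$, up to isomorphism. Hence $Y$ is an equivalence onto that subcategory, and it sends $\langle k\rangle$ to $\yo^{T}_{k}$.

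\textbf{Main obstacle.} I expect the hardest part to be the bookkeeping in Step 1: verifying that precomposition with $\Fin$-morphisms acts on component tuples exactly as the definition demands. A further subtlety is that $T$ may contain morphisms $\langle n\rangle\to\langle 0\rangle$, the ``constants''. These do not break the argument, because only the coproduct decomposition of the source object is used.
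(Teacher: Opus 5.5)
Your proposal is correct and follows essentially the same route as the argument the paper relies on. The paper gives no proof of its own and defers to Borceux's Proposition 3.2.9, which is the standard Yoneda argument you give: send $\langle k\rangle$ to $T(-,\langle k\rangle)$, check that this presheaf lies in $T^{\natural}$ because $\langle n\rangle$ is the $n$-fold coproduct of $I$ with coproduct injections in $\Fin$, and then deduce full faithfulness and essential surjectivity onto the free algebras from the Yoneda lemma.
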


In \cite[\S7]{Eilenberg-Wright} the authors defined a {\bf free theory}
$T=\Fin[\Omega ]$ on a sequence of sets
\begin{numequation}\label{eq-Omega}
\begin{split}
\Omega =\left\{\Omega_{0},\Omega_{1},\dotsc  \right\}
\qquad \mbox{with}\qquad 
\Omega_{n}\subseteq T (I,\langle n \rangle). 
\end{split}
\end{numequation}%

\noindent Roughly speaking, it is the smallest algebraic theory that
contains the additional morphisms (to those of $\Fin$) of
\cref{eq-Omega}.

They assigned a {\bf degree} to each morphism in $\Fin[\Omega
]$ such that morphisms in $\Fin$ have degree zero, the degree of a
morphism out of $\langle n \rangle$ is the sum of the degrees of its
$n$ components, and precomposition with a morphism in some
$\Omega_{n}$ increases degree by 1.

They used induction on degree to prove the following.

\begin{theorem}\label{thm-free-theory}
\cite[\S7]{Eilenberg-Wright}
{\bf The free theory on the morphism set $\Omega $.}  There is a unique
algebraic theory $\Fin[\Omega ]$ with the following properties.

\begin{enumerate}[label={(\roman*)},itemindent=0em]
\item \label{thm-free-theoryi} For each morphism $\phi :I\to \langle p
\rangle$ of positive degree there is a unique $k\geq 0$ with a unique
factorization
\begin{displaymath}
\xymatrix
@R=4mm
@C=10mm
{
{I}\ar[r]^(.5){\omega }
  &{\langle k \rangle}\ar[r]^(.5){\psi }
    &{\langle p\rangle}
}\qquad \mbox{with $\omega \in \Omega_{k}$.} 
\end{displaymath}

\item \label{thm-free-theoryii} For any algebraic theory $T'$
containing the morphisms of \cref{eq-Omega}, there is a unique
morphism $\Fin[\Omega ]\to T'$ of theories.

\item \label{thm-free-theoryiii} Given a set $A_{1}$ and functions
$\overline{\omega }:A_{1}^{\times n}\to A_{1} $ for all $\omega \in
\Omega_{n}$ and $n\geq 0$, there exists a unique $\Fin[\Omega
]$-algebra structure on $A$ such that 
\begin{displaymath}
A (\omega ) (x_{1}, ..., x_{n})
= \overline{\omega } (x_{1}, ..., x_{n}).
\end{displaymath}
\end{enumerate}
\end{theorem}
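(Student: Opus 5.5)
We would prove the three assertions of \cref{thm-free-theory} by constructing $\Fin[\Omega]$ explicitly as a category of ``terms'' and then checking the universal properties by induction on degree, following \cite[\S7]{Eilenberg-Wright}.

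\emph{Construction.} For each $p\geq 0$, define by induction on degree the set $T(I,\langle p\rangle)$. In degree $0$ it consists of the maps $I\to\langle p\rangle$ of $\Fin$, i.e.\ the variables $x_{1},\dotsc,x_{p}$. In degree $d>0$ it consists of formal expressions $\omega(\psi_{1},\dotsc,\psi_{k})$ with $\omega\in\Omega_{k}$ and $\psi_{i}\in T(I,\langle p\rangle)$ of degrees summing to $d-1$; these are the well-formed trees. Then set $T(\langle n\rangle,\langle p\rangle):=T(I,\langle p\rangle)^{\times n}$. This forces $\langle n\rangle$ to be the $n$-fold coproduct of $I$, as \cref{def-theory} requires. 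Composition is substitution, $\phi\circ(\chi_{1},\dotsc,\chi_{p})$, defined by induction on the degree of $\phi$: for a variable $x_{j}$ it returns $\chi_{j}$, and for $\omega(\psi_{1},\dotsc,\psi_{k})$ it returns $\omega(\psi_{1}\circ\chi,\dotsc,\psi_{k}\circ\chi)$. Associativity and the unit laws follow by the same induction. The functions of $\Fin$ embed as tuples of variables, and composing them matches composition in $\Fin$, so $\Fin$ is a wide subcategory. Each $\omega\in\Omega_{k}$ is identified with $\omega(x_{1},\dotsc,x_{k})$, and the degree function agrees with the one described before the theorem.

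\emph{Part (i).} A morphism $\phi:I\to\langle p\rangle$ of positive degree is by construction $\omega(\psi_{1},\dotsc,\psi_{k})=(\psi_{1},\dotsc,\psi_{k})\circ\omega$. This gives the factorization with $\psi=(\psi_{1},\dotsc,\psi_{k})$. Uniqueness of $k$, $\omega$ and $\psi$ is uniqueness of the outermost symbol of a formal tree, which holds because elements are freely generated formal expressions.

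\emph{Part (iii).} Given a set $A_{1}$ with operations $\overline\omega$, define $A(\phi):A_{1}^{p}\to A_{1}$ for $\phi:I\to\langle p\rangle$ by induction on degree. A variable $x_{j}$ acts as the $j$th projection, and we set
\begin{displaymath}
A\bigl(\omega(\psi_{1},\dotsc,\psi_{k})\bigr):=\overline\omega\circ\bigl(A(\psi_{1}),\dotsc,A(\psi_{k})\bigr).
\end{displaymath}
Extend to $\langle n\rangle$ componentwise. Functoriality, $A(\phi\circ\chi)=A(\chi)\circ A(\phi)$ in the contravariant convention of \cref{def-T-alg}, is an induction on the degree of $\phi$ using the substitution formula. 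Compatibility with $\Fin$ holds by construction. Any other algebra structure with $A(\omega)=\overline\omega$ must agree on generators, and by (i) and functoriality it then agrees on every term by induction; this gives uniqueness.

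\emph{Part (ii).} Given $T'$ containing the $\Omega_{n}$, define $F:\Fin[\Omega]\to T'$ as the identity on $\Fin$. On higher-degree terms set
\begin{displaymath}
F\bigl(\omega(\psi_{1},\dotsc,\psi_{k})\bigr):=(F\psi_{1},\dotsc,F\psi_{k})\circ\omega,
\end{displaymath}
using the coproduct structure of $T'$, and extend componentwise. Preservation of composition is again an induction on degree. Uniqueness follows since any morphism of theories must fix $\Fin$ and each $\omega$, and must preserve tupling; by (i) it is then determined by induction. Uniqueness of the theory itself up to isomorphism follows from (ii) by the usual universal-property argument.

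The main obstacle is bookkeeping rather than any idea. The key step is checking that substitution is well defined and associative, and that the inductions in (ii) and (iii) respect composition. We would organize all of these as a single lemma: for every degree $d$ and all composable $\phi$ of degree $d$, composition identities hold. That lemma is proved by induction on $d$ with the generator case handled by (i).
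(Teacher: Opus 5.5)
Your proposal is correct and is essentially the argument the paper points to (it gives no proof beyond citing \cite[\S7]{Eilenberg-Wright} and noting that it goes by induction on degree): build $\Fin[\Omega]$ from tuples of formal $\Omega$-terms, with substitution as composition, and prove (i)--(iii) by induction on degree. The only thing to tidy is your composition convention: for $\phi:I\to\langle p\rangle$ and $\chi:\langle p\rangle\to\langle q\rangle$, the categorical composite $\chi\circ\phi$ is the term obtained by substituting $\chi_{j}$ for $x_{j}$ in $\phi$. That is the order you use in (i) and (ii), but not the one in your construction paragraph, and with it the functoriality in (iii) should read $A(\chi\circ\phi)=A(\phi)\circ A(\chi)$.
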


In \cite[\S8]{Eilenberg-Wright} they specified how to construct the
quotient of a free algebraic theory $\Fin[\Omega ]$

\begin{defin}\label{def-congruence}
A {\bf congruence $Q$} in an algebraic theory $T$ is a family of
equivalence relations $\sim$ in the sets $T (\langle n \rangle, \langle p
\rangle)$ such that

\begin{enumerate}[label={(\roman*)},itemindent=0em]
\item \label{def-congruencei}  Given a diagram 
\begin{displaymath}
\xymatrix
@R=4mm
@C=10mm
{
{\langle m \rangle}\ar[r]^(.5){\psi }
  &{\langle n \rangle}\ar@<.5ex>[r]^(.5){\phi_{1} }
                      \ar@<-.5ex>[r]_(.5){\phi_{2} }
    &{\langle p \rangle}\ar[r]^(.5){\gamma  }
      &{\langle q \rangle}
}
\end{displaymath}

\noindent in $T$, $\phi_{1}\sim \phi_{1}$ implies $\gamma \phi_{1}\sim
\gamma \phi_{1}$ and $\phi_{1}\psi \sim \phi_{1}\psi $.  Congruence
plays nicely with composition.

\item \label{def-congruenceii}  
For
\begin{displaymath}
\xymatrix
@R=4mm
@C=10mm
{
{I}\ar[r]^(.45){i }
  &{\langle n \rangle}\ar@<.5ex>[r]^(.5){\phi_{1} }
                      \ar@<-.5ex>[r]_(.5){\phi_{2} }
    &{\langle p \rangle,}
}
\end{displaymath}

\noindent if $\phi_{1}i\sim \phi_{2}i$ for $1\leq i\leq n$, then
$\phi_{1}\sim \phi_{2}$.  Two morphisms are congruent iff each of their
components are.

\item \label{def-congruenceiii} If $\phi_{1},\phi_{2}:I\to \langle p
\rangle$ are in $\Fin$ with $\phi_{1}\sim \phi_{2}$, then
$\phi_{1}=\phi_{2}$.  Distinct morphisms in the subcategory $\Fin$ are
never congruent.
\end{enumerate}
\end{defin}

These conditions enable one to define an algebraic theory $T/Q$ whose
morphism sets are the sets of congruence classes in the morphism sets in
$T$.  A $T/Q$-algebra is a $T$-algebra $A$ in which 
\begin{displaymath}
A (\phi_{1}) (x_{1},\dotsc ,x_{p})
 = A (\phi_{2}) (x_{1},\dotsc ,x_{p})
\qquad \mbox{whenever }\phi_{1}\sim \phi_{2}. 
\end{displaymath}

\noindent  This makes $(T/Q)^{\natural}$ a subcategory of $T^{\natural}$.

\begin{ex}\label{ex-algebras}
In a $T$-algebra $A$, one has a set $A_{1}=A (I)$, and each morphism $I\to
\langle n \rangle$ in $T$ determines an {\bf $n$-ary operation}, meaning
a map $A^{\times n}_{1}\to A_{1}$.

\begin{enumerate}[label={(\roman*)},itemindent=0em]
\item \label{ex-algebrasi} 
If $T=\Fin$, the least interesting case, one has $n$ such maps.  The
resulting operations merely define the coordinates of an $n$-tuple.
The corresponding ``algebras'' are sets.

\item \label{ex-algebrasii} 
We will construct the theory $T_{\sg}$ whose algebras are semigroups,
meaning sets with an associative binary operation.  We need a morphism
$\pi :I \to \langle 2 \rangle$ outside of $\Fin$ for the binary
operation.  Thus we define $\Omega $ as in \cref{thm-free-theory} by
\begin{displaymath}
\Omega_{n}=\mycases{    
\left\{\pi  \right\}
       &\mbox{for }n=2\\
\emptyset 
       &\mbox{otherwise}
}
\end{displaymath}

\noindent Then a $\Fin[\Omega ]$-algebra is a set equipped with a
binary operation.  We need a congruence to make it associative.  The
following diagram must commute in the quotient theory
\begin{displaymath}
\xymatrix
@R=6mm
@C=20mm
{
{I}\ar[r]^(.5){\pi }\ar[d]_(.5){\pi }
  &{\langle 2 \rangle}\ar[d]^(.5){((1,2)\pi ,3)}\\
{\langle 2 \rangle}\ar[r]_(.5){(1,(2,3)\pi )}
    &{\langle 3 \rangle.}
}
\end{displaymath}

\noindent Here we are using the notation of \cref{def-theory} for the
two morphisms $\langle 2 \rangle\to\langle 3 \rangle$.

\item \label{ex-algebrasiii} We will construct the theory $T_{\Assoc}$
whose algebras are monoids, meaning semigroups with an identity
element.  We will derive to from $T_{\sg}$ by adjoining a morphism
$e:I\to \emptyset $, which defines the identity element $e_{A}$ as in
\cref{eq-element}.  Then we must pass to a suitable quotient to insure
that $e_{A}$ has the desired properties.  Let $e_{1}:I\to I$ be the
composite
\begin{displaymath}
\xymatrix
@R=4mm
@C=10mm
{
{I}\ar[r]^(.5){e}
  &{\emptyset }\ar[r]^(.5){\sigma }
    &{I.}
}
\end{displaymath}

\noindent  where $\sigma :\emptyset \to I$ is the unique such morphism.

Then the following diagram must commute in $T_{\Assoc}$.
\begin{displaymath}
\xymatrix
@R=8mm
@C=15mm
{
{I}\ar[r]^(.5){\pi }\ar[d]_(.5){\pi }\ar[dr]^(.5){1}
  &{\langle 2 \rangle}\ar[d]^(.5){(e_{1}, 1)}\\
{\langle 2 \rangle}\ar[r]_(.5){(1, e_{1})}
    &{I.}
}
\end{displaymath}
\end{enumerate}  
\end{ex}

\section{Symmetric monoidal {\qcats}}\label[appendix]{sec-SM-qcat} 

The structure of a symmetric monoidal {\qcat} $\qmcC$ is not as simple
as a functor $\qmcC \times \qmcC \to \qmcC$ with the expected
properties. It is discussed at length by Lurie leading up to  \cite[Definition
2.0.0.7]{Lurie:HA}, by Moritz Groth in \cite[3.2]{Groth}, and by
Nikolaus-Scholze in \cite[Appendix A]{NS18}.

We begin with three definitions in {\em ordinary category theory} that
will motivate Lurie's definition of a symmetric monoidal {\qcat}.

\begin{defin}\label{def-fin*}
The {\bf categories of finite sets $\Fin$ and pointed finite sets
$\Fin_{*}$} have as  objects the sets
\begin{numequation}\label{eq-langle-n}
\begin{split}
\langle n\rangle
 &:= \left\{1,\dotsc ,n \right\}\\
\aand 
\langle n\rangle_{*}
 &:= \left\{0,1,\dotsc ,n \right\}
\end{split}
\end{numequation}%

\noindent respectively, with 0 as basepoint in $\Fin_{*}$. A morphism
$f:\langle n\rangle_{*}\to \langle m\rangle_{*}$ is a map of sets sending 0
to 0.  For $1\leq i\leq n$, we define $\varrho^{i}:\langle
n\rangle_{*}\to \langle 1\rangle_{*}$ by 
\begin{numequation}\label{eq-rho-i}
\begin{split}
\varrho^{i} (j):=\mycases{    
1      &\mbox{for }j=i\\
0      &\mbox{otherwise.}
}
\end{split}
\end{numequation}%

A morphism $f$ in $\Fin_{*}$ as above is {\bf active} if it sends each
nonzero element in the domain to one in the codomain. It is {\bf
inert} if the preimage of each nonzero nonzero element in the codomain
is a singleton.  Such a map defines an injection $\langle n \rangle\to
\langle m \rangle$ sending each element to its preimage under $f$.
\end{defin}

Note that 
\begin{numequation}\label{eq-subset-S}
\begin{split}
\xymatrix
@R=4mm
@C=10mm
{
{\langle n\rangle_{*}}\ar[r]^(.5){f}
  &{\langle m\rangle_{*}}
    &{\leadsto }
      &{\langle n\rangle_{*}\supseteq f^{-1} (\langle m\rangle)=:S}
           \ar[r]^(.7){\alpha }
        &{\langle m\rangle,}
}
\end{split}
\end{numequation}%

\noindent {\ie } $f$ leads to a partially defined map $\langle
n\rangle\to\langle m\rangle$.  The subset $S$
above is all of $\langle n\rangle$ iff $f$ is active, so the
wide subcategory of active morphisms is isomorphic to $\Fin$.

We will give a short introduction to operads, of which the following is
a special case, in \cite[Appendix B]{Rav:gjmcyc-ext}.

\begin{defin}\label{def-A-otimes}\cite[Definition 4.1.1.3]{Lurie:HA}
The {\bf associative operad $\Assoc^{\otimes }$}  is a
category whose objects are those of $\Fin_{*}$. When $\langle
n\rangle_{*}$ as in \cref{eq-langle-n} is an object of $\Assoc^{\otimes}$,
we write it as $\langle n\rangle_{\Assoc}$.  A morphism
\begin{displaymath}
\tilde{f}:\langle n\rangle_{\Assoc}\to \langle m\rangle_{\Assoc}
\end{displaymath}

\noindent is a
map $f:\langle n\rangle_{*}\to \langle m\rangle_{*}$ in $\Fin_{*}$ together
with a linear ordering on each inverse image $f^{-1} (i)\subseteq
\langle n \rangle_{*}$ for $1\leq i\leq m$.  For a composite
\begin{displaymath}
\xymatrix
@R=1mm
@C=10mm
{
  &{j}\ar@{|->}[r]^(.5){}
    &{i}\\
{\langle n \rangle_{\Assoc}}\ar[r]^(.5){\tilde{f}}
  &{\langle m\rangle_{\Assoc}}\ar[r]^(.5){\tilde{g}}
    &{\langle \ell \rangle_{\Assoc},}
}
\end{displaymath}

\noindent in $\Assoc^{\otimes } $, we define a linear ordering on $(gf)^{-1}
(i)=f^{-1} (g^{-1} (i))$ for $1\leq i\leq \ell $ as follows.  For each
$j\in g^{-1} (i)$ we have a linear ordering of $f^{-1} (j)$ associated
with $\tilde{f}$, and the $j$s themselves have an ordering associated
with $\tilde{g}$.  These lead to a lexicographic ordering on
$(gf)^{-1} (i)$ and thus determine the composite $\tilde{g}\tilde{f}$
in $\Assoc^{\otimes }$.

The subcategory $\Assoc^{\otimes }_{\act } \subseteq \Assoc^{\otimes }$ is
the wide subcategory whose morphisms project to active morphisms in
$\Fin_{*}$, namely
\begin{numequation}\label{eq-Ass-act}
\begin{split}
\Assoc^{\otimes }_{\act }:=\Assoc^{\otimes }\times_{\Fin_{*}}\Fin.
\end{split}
\end{numequation}%

We denote by $V:\Assoc^{\otimes}\to \Fin_{*} $ the functor that assigns to each object in $\Assoc^{\otimes}$ its underlying pointed finite set.
\end{defin}







\begin{remark}\label{rem-assoc-alg}
{\bf Associative algebras as functors.}  A symmetric monoidal functor
$A$ from $\Assoc^{\otimes }$ to a symmetric monoidal category $(\mcC
,\otimes )$ defines an associative algebra $A\langle 1 \rangle$ in
$\mcC $, and each such algebra determines such a functor sending
$\langle n \rangle$ to $A^{\otimes n}$.  The {\qcatal} analog of this
statement is a special case of \cite[Proposition 2.2.4.9]{Lurie:HA}.
\end{remark}

\begin{defin}\label{def-seq-cat}
\cite[Construction 2.0.0.1]{Lurie:HA} Let $(\mcC ,\otimes )$ be a
symmetric monoidal category.  The {\bf sequence category
$\mcC^{\otimes }$} has as objects finite (possibly empty) sequences of
objects $C_{i}$ in $\mcC $ for $1\leq i\leq n$, which we denote by
$( C_{1},\dotsc ,C_{n})$.  A morphism
\begin{displaymath}
(C_{1},\dotsc ,C_{n})\to (C'_{1},\dotsc ,C'_{m})
\end{displaymath} 

\noindent consists of a subset $S\subseteq \langle n
\rangle_{*}$ as in \cref{eq-subset-S}, a map of finite sets
$\alpha :S\to \langle m \rangle_{*}$, and a collection of
morphisms
\begin{numequation}\label{eq-fj}
\begin{split}
f_{j}:\bigotimes_{i\in \alpha^{-1} (j)}C_{i} \to C'_{j}
\qquad \mbox{for }1\leq j\leq m, 
\end{split}
\end{numequation}%

\noindent where each of the tensor products is defined up to canonical
isomorphism by the monoidal structure on $\mcC $.  In the  composite
\begin{displaymath}
\xymatrix
@R=1mm
@C=10mm
{
{(C_{1},\dotsc ,C_{n })}\ar[r]^(.5){f}
  &{(C'_{1},\dotsc ,C'_{m })}\ar[r]^(.5){g}
    &{(C''_{1},\dotsc ,C''_{\ell  })}\\
{\langle n\rangle_{*}\supseteq S}
  \ar[r]^(.5){\alpha }
  &{\langle m\rangle_{*}}\\
  &{\langle m\rangle\supseteq T}
               \ar[r]^(.5){\beta }
     &{\langle \ell \rangle_{*}}
}
\end{displaymath}

\noindent The subset $U\subseteq \langle n \rangle$
associated with $gf$ is $\alpha^{-1} (T)=\alpha^{-1}\beta^{-1}
(\langle m \rangle)$, and the map $\gamma :U\to \langle \ell
\rangle$ is $\beta \alpha $.

We have a forgetful functor 
\begin{numequation}\label{eq-pr}
\begin{split}
\xymatrix
@R=0mm
@C=20mm
{
{\mcC^{\otimes }}\ar[r]^(.5){\pr}
  &{\Fin_{*}}\\
{(C_{1},\dotsc ,C_{n })}\ar@{|->}[r]^(.5){}
    &{\langle n\rangle_{*}.}
}
\end{split}
\end{numequation}%

\noindent
We denote by $\mcC^{\otimes }_{\langle n \rangle}$ the full
subcategory of sequences of length $n$.  The full subcategory of
active morphisms is
\begin{displaymath}
\mcC^{\otimes }_{\act}:= \pr ^{-1} (\Fin),
\end{displaymath}

\noindent where $\Fin$ is understood to be the wide subcategory of
active morphisms in $\Fin_{*}$.
\end{defin}


\begin{ex}\label{ex-assoc-operad-seq-cat}
{\bf The associative operad as a sequence category.}  When $\mcC $ is the
category with a single object  and a single morphism, and hence a
unique monoidal structure which is symmetric, $\mcC^{\otimes }$ is the
associative operad $\Assoc^{\otimes}$.  Each $\mcC^{\otimes }_{\langle n
\rangle}$ has a single object which we can identify with $\langle n
\rangle_{\Assoc}$.  The linear ordering of each $f^{-1} (i)$ is that
inherited from the notational ordering of the set $\langle n \rangle_{*}$.
\end{ex}

\begin{prop}\label{prop-forget-props}
{\bf Properties of the forgetful functor to $\Fin_{*}$.}

Let  $\pr$ be the forgetful functor of \cref{eq-pr}.

\begin{enumerate}[label={(\roman*)},itemindent=0em]
\item \label{prop-forget-propsi} $\pr $ is a {\bf Grothendieck
op-fibration of categories} (see \cite[Definition 2.8.1]{HHR:ESHT}),
meaning that for every object
\begin{displaymath}
 C =
(C_{1},\dotsc  , C_{n}) \in  \mcC^{\otimes }_{\langle n \rangle}
\end{displaymath}

\noindent
and every morphism $f:\langle n \rangle_{*}\to \langle m \rangle_{*}$ in
$\Fin_{*}$, there exists a morphism
\begin{displaymath}
\overline{f} : C \to C' = (C'_{1}, \dotsc  , C'_{m})
\end{displaymath}

\noindent which covers $f$ (meaning $\pr (\overline{f})=f$), and is
universal in the sense that composition with $\overline{f}$ induces a
bijection
\begin{numequation}\label{eq-op-fib}
\begin{split}
\xymatrix
@R=6mm
@C=10mm
{
{\Hom_{\mcC^{\otimes }}(C', C'')}\ar[d]^(.5){\cong }\\
{\Hom_{\mcC^{\otimes }}(C, C'')
   \times_{\Hom_{\Fin_{*}} (\langle n \rangle_{*},  \langle \ell  \rangle_{*})}
    \Hom_{\Fin_{*}} (\langle m \rangle_{*},  \langle \ell  \rangle_{*})}
}
\end{split}
\end{numequation}%

\noindent for every object $C''=(C''_{1},\dotsc ,C''_{\ell })$ in
$\mcC^{\otimes }_{\langle \ell \rangle}$.  (Such a morphism can be
constructed using the morphisms $f_{j}$ of \cref{eq-fj}.)

\item \label{prop-forget-propsii} 
$\mcC^{\otimes }_{\langle 1 \rangle}$ is {\eqt} to $\mcC $, and
functors associated with the morphisms $\varrho^{i}$ of
\cref{eq-rho-i} lead to an equivalence of $\mcC^{\otimes }_{\langle n
\rangle}$ with $\mcC^{\times n} $ for all $n\geq 1$.
\end{enumerate}
\end{prop}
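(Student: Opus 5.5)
The statement is \cref{prop-forget-props}, and both parts follow from unwinding \cref{def-seq-cat}.

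For \cref{prop-forget-propsi}, fix $C=(C_{1},\dotsc ,C_{n})$ and $f:\langle n\rangle_{*}\to\langle m\rangle_{*}$. Let $S=f^{-1}(\langle m\rangle)$ and $\alpha=f|_{S}$ as in \cref{eq-subset-S}. I will take
\begin{displaymath}
C'_{j}:=\bigotimes_{i\in\alpha^{-1}(j)}C_{i}\qquad\mbox{for }1\leq j\leq m,
\end{displaymath}
with the empty tensor product equal to the unit. The lift $\overline{f}$ has underlying data $(S,\alpha)$, and each component $f_{j}$ of \cref{eq-fj} is the identity (the canonical isomorphism). By construction $\pr(\overline{f})=f$.

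For the universal property \cref{eq-op-fib}, take $g:C'\to C''$ with data $(T,\beta,(g_{k}))$. Precomposition sends $g$ to $g\overline{f}$. By the composition rule in \cref{def-seq-cat}, $g\overline{f}$ has subset $\alpha^{-1}(T)$ and map $\beta\alpha$. Its $k$th component is
\begin{displaymath}
\bigotimes_{i\in(\beta\alpha)^{-1}(k)}C_{i}\;\cong\;\bigotimes_{j\in\beta^{-1}(k)}C'_{j}\xrightarrow{\;g_{k}\;}C''_{k}.
\end{displaymath}

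Conversely, suppose we are given $h:C\to C''$ covering some $g_{0}\circ f$ with $g_{0}:\langle m\rangle_{*}\to\langle\ell\rangle_{*}$. Then $h$ has data $(U,\gamma,(h_{k}))$ with $U=\alpha^{-1}(T)$ and $\gamma=\beta\alpha$, where $(T,\beta)$ is the data of $g_{0}$. Each $h_{k}$ is a map out of $\bigotimes_{j\in\beta^{-1}(k)}C'_{j}$, using the same canonical reassociation isomorphism. So $h$ determines a unique $g$ with $g\overline{f}=h$ and underlying map $g_{0}$. This gives the bijection.

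The one point needing care is coherence. The reassociation isomorphism $\bigotimes_{(\beta\alpha)^{-1}(k)}\cong\bigotimes_{\beta^{-1}(k)}\bigotimes_{\alpha^{-1}(j)}$ must be canonical and compatible with composition. This is Mac Lane coherence for symmetric monoidal categories, and I will invoke it rather than check it by hand. This coherence step is the only real obstacle.

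For \cref{prop-forget-propsii}, objects of $\mcC^{\otimes}_{\langle 1\rangle}$ are single objects $(C_{1})$. The morphisms in this fiber lie over $\mathrm{id}_{\langle 1\rangle_{*}}$, so $S=\langle 1\rangle$, and they are exactly maps $C_{1}\to C'_{1}$. Hence $\mcC^{\otimes}_{\langle 1\rangle}\cong\mcC$.

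For general $n$, the op-fibration of part \cref{prop-forget-propsi} gives pushforward functors $\varrho^{i}_{!}:\mcC^{\otimes}_{\langle n\rangle}\to\mcC^{\otimes}_{\langle 1\rangle}$, and with the choice of lifts above these send $(C_{1},\dotsc,C_{n})\mapsto C_{i}$. Their product is a functor $\mcC^{\otimes}_{\langle n\rangle}\to\mcC^{\times n}$. It is bijective on objects by inspection. On morphisms, a map in the fiber over $\mathrm{id}_{\langle n\rangle_{*}}$ has $S=\langle n\rangle$ and $\alpha=\mathrm{id}$. It is therefore exactly an $n$-tuple of maps $C_{i}\to C'_{i}$, and $\varrho^{i}_{!}$ extracts the $i$th one. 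So the functor is an isomorphism, in particular an equivalence.
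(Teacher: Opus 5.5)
Your argument is correct and follows the route the paper indicates. The paper gives no proof beyond the parenthetical remark that the lift $\overline{f}$ is built from the morphisms $f_j$ of \cref{eq-fj}, and you make this precise by taking $C'_j=\bigotimes_{i\in\alpha^{-1}(j)}C_i$ with identity components. The coherence you flag is really what makes $\mcC^{\otimes}$ a category at all; the bijection itself only needs the reassociation map to be invertible.

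You also read $\mcC^{\otimes}_{\langle n\rangle}$ as the fiber $\pr^{-1}\langle n\rangle_{*}$, whose morphisms lie over the identity, which is how the paper uses it later. That reading is necessary. The wording of \cref{def-seq-cat} instead calls it the full subcategory on sequences of length $n$, and under that reading (ii) fails. For instance, that subcategory contains extra morphisms $(C_1)\to(C'_1)$ lying over the map $\langle 1\rangle_{*}\to\langle 1\rangle_{*}$ sending $1$ to $0$.
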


{\bf The punchline.}  Now we come to a brilliant observation of Lurie.
{\em Suppose we forget how the sequence category $\mcC^{\otimes }$ was
constructed, and assume only that it is equipped with a
$\Fin_{*}$-valued functor $\pr $ having analogs of the two properties in
\cref{prop-forget-props}.}

Then we have full subcategories $\mcC^{\otimes }_{\langle n
\rangle}:=\pr ^{-1}\langle n \rangle_{*}$ for all $n\geq 0$, and we can
define $\mcC $ to be $\mcC^{\otimes }_{\langle 1 \rangle}$.  In
\cref{eq-op-fib}, the objects $C$, $C'$ and $C''$ lie in
$\mcC^{\otimes }_{\langle n \rangle}$, $\mcC^{\otimes }_{\langle m
\rangle}$ and $\mcC^{\otimes }_{\langle \ell \rangle}$ respectively.
For \cref{prop-forget-propsii} we can still require that functors
associated with the morphisms $\varrho^{i}$ of \cref{eq-rho-i} lead to
an equivalence of $\mcC^{\otimes }_{\langle n \rangle}$ with
$\mcC^{\times n} $ for all $n\geq 1$.

For example 
\begin{itemize}
\item [$\bullet$] $\mcC^{\otimes }_{\langle 0 \rangle}$ has one
object, and the unique map $\langle 0 \rangle_{*}\to\langle 1 \rangle_{*}$
induces a functor $\mcC^{\otimes }_{\langle 0 \rangle}\to \mcC $
identifying the unit object.

\item [$\bullet$] The unique active map $f:\langle 2 \rangle_{*}\to\langle
1 \rangle_{*}$ leads to the monoidal structure\linebreak 
$\mcC\times \mcC  \to \mcC $.

\item [$\bullet$] Let $t:\langle 2 \rangle_{*}\to\langle 2 \rangle_{*}$ be the
automorphism that interchanges 1 and 2.  The identity $f t=f $ leads
to the symmetry condition.

\item [$\bullet$] The unique active map $\langle 3 \rangle_{*}\to\langle 1
\rangle_{*}$ can be factored as the composite of order preserving active
maps $\langle 3 \rangle_{*}\to\langle 2 \rangle_{*}\to\langle 1 \rangle_{*}$ in
two different ways, which leads to the associativity condition.
\end{itemize}

Further relations in $\Fin_{*}$ lead to further structure in $\mcC$
that would be tedious to spell out explicitly.  The functor $\pr$ gives
us a painfree and obvious way to specify it.

This suggests replacing $\mcC^{\otimes }$ by an {\qcat} $\qmcCot$ and
the forgetful functor $\pr$ by a map of simplicial sets to $N(\Fin_{*})$, the
nerve of \cref{def-nerve}.

\begin{defin}\label{def-symm-mon-qcat}
\cite[Definition 2.0.0.7]{Lurie:HA} and \cite[Definition A.1]{NS18}. 
A {\bf symmetric monoidal {\qcat}} is an {\qcat} $\qmcCot$
(the {\bf total space}) with a coCartesian fibration (see
\cite[Definition 2.4.2.1]{Lurie:HTT}) of simplicial sets  
\begin{displaymath}
\pr :\qmcCot
\to N (\Fin_{*}),
\end{displaymath}

\noindent in which we define $\qmcCot_{\langle n
\rangle}:=\pr ^{-1}\langle n \rangle_{*}$, having the following property
For each $n\geq 0$, the morphisms $\varrho^{i}$ of
\cref{eq-rho-i} induce functors 
\begin{displaymath}
\varrho^{i}_{!}:\qmcCot_{\langle n \rangle}
\to \qmcCot_{\langle 1\rangle}
\qquad \mbox{ for } \leq i\leq n
\end{displaymath}

\noindent which determine an equivalence $\qmcCot_{\langle n
\rangle}\simeq (\qmcCot_{\langle 1 \rangle})^{n}$.

The {\bf underlying {\qcat} of $\qmcCot$} is $\qmcC:=\qmcCot_{\langle
1 \rangle}$.
\end{defin}

Lurie then goes on to say 

\begin{quote}
One of our main goals in this book is to show that Definition 2.0.0.7
is reasonable: that is, it provides a robust generalization of the
classical theory of symmetric monoidal categories.
\end{quote}

As in \cref{rem-assoc-alg}, the {\qcat} of associative algebras in
$\qmcC$ is
\begin{displaymath}
\Alg (\qmcC):= \Fun^{\otimes } (\Assoc^{\otimes }, \qmcC),
\end{displaymath}

\noindent the {\qcat} of symmetric monoidal functors; see
\cite[Proposition 2.2.4.9]{Lurie:HA}.

\bibliography{../../math} 
\bibliographystyle{alpha} 

\end{document}